\font \smallrm=cmr10 at 10truept
\font \smallsl=cmsl10 at 10truept
\font \ssmallrm=cmr10 at 9truept
\numberwithin{equation}{section}
\newcommand{\subu}[2]{{#1}_{\raise-2pt\hbox{$ \scriptstyle #2 $}}}
\newcommand{\subd}[3]{{#1}_{\raise-2pt\hbox{$ \scriptstyle #2 #3 $}}}
\newtheorem{lema}{Lemma}[subsection]
\newtheorem{theorem}[lema]{Theorem}
\newtheorem{prop}[lema]{Proposition}
\theoremstyle{definition}
\newtheorem{definition}[lema]{Definition}
\newtheorem{exa}[lema]{Example}
\newtheorem{rmk}[lema]{Remark}
\newtheorem{rmks}[lema]{Remarks}
\newtheorem{free text}[lema]{}
\newtheorem{obs}[lema]{Observation}
\newtheorem{obs's}[lema]{Observations}
\theoremstyle{remark}
\def \NN {\mathbb{N}}
\def \ZZ {\mathbb{Z}}
\def \k {\Bbbk}
\def \kh {{\Bbbk[[\hbar]]}}
\def \khp {{\Bbbk(\hskip-1,7pt(\hbar)\hskip-1,7pt)}}
\def \cF {\mathcal{F}}
\def \cL {\mathcal{L}}
\def \cK {\mathcal{K}}
\def \cG {\mathcal{G}}
\def \cR {\mathcal{R}}
\def \cHA {\mathcal{HA}}
\def \cO {\mathcal{O}}
\def \cP {\mathcal{P}}
\def \cS {\mathcal{S}}
\def \cT {\mathcal{T}}
\def \QUEA {{\mathcal{Q\hskip1ptUE\hskip-1ptA}}}
\def \QFSHA {{\mathcal{Q\hskip1ptFSHA}}}
\def \JJ {\mathfrak{J}}
\def \lieg {{\mathfrak{g}}}
\def \lieh {{\mathfrak{h}}}
\def \lieb {{\mathfrak{b}}}
\def \liem {{\mathfrak{m}}}
\def \lieso{\mathfrak{so}}
\def \lies{\mathfrak{s}}
\def \ug {{U(\hskip0,5pt\lieg)}}
\def \ugs {{U(\hskip0,5pt\lieg^*)}}
\def \fg {{F[[G\hskip1,5pt]]}}
\def \fgs {{F[[G^*\hskip0,5pt]]}}
\def \uhg{{U_\hbar(\hskip0,5pt\lieg)}}
\def \uhgs{{U_\hbar(\hskip0,5pt\lieg^*)}}
\def \Uhg{{\mathbb{U}_\hbar(\hskip0,5pt\lieg)}}
\def \uhbm{{U_\hbar(\hskip0,5pt\lieb^-)}}
\def \uhbp{{U_\hbar(\hskip0,5pt\lieb^+)}}
\def \fhg{{F_\hbar[[G\hskip1,5pt]]}}
\def \fhgs{{F_\hbar[[G^*]]}}
\def \fhbm{{F_\hbar[[B^-\hskip1,5pt]]}}
\def \fhbp{{F_\hbar[[B^+\hskip1,5pt]]}}
\def \ot{{\otimes}}
\def \otimeshat {{\,\widehat{\otimes}\,}}
\def \otimestilde {{\,\widetilde{\otimes}\,}}
\def \Ker {\textsl{Ker}}
\def \Prim {\operatorname{Prim}}
\def \id {\operatorname{id}}
\newcommand \rk{\operatorname{rk}}
\def \ad {\operatorname{ad}}
\def \Ad {\operatorname{Ad}}
\def \Hom {\textsl{Hom}}
\def \op {\operatorname{op}}
\def \uRPhg{U^{\,\cR}_{\!P,\hskip0,7pt\hbar}(\hskip0,5pt\lieg)}
\def \Rpicc {{\scriptscriptstyle \cR}}
\def \Ppicc {{\scriptscriptstyle P}}
\def \Lie {\textsl{Lie}}
\def \pf{\begin{proof}}
\def \epf{\end{proof}}
\theoremstyle{plain}
\begin{document}

{\ }
 \vskip-1pt

\title[QUANTUM GROUP DEFORMATIONS and  $ R $--(CO)MATRICES
vs.\ QUANTUM DUALITY]
 {Quantum Group Deformations  \\
  and Quantum  $ R $--(co)matrices vs.  \\
  Quantum Duality Principle}

\author[Gast{\'o}n Andr{\'e}s GARC{\'I}A\,, \ \  Fabio GAVARINI]
{Gast{\'o}n Andr{\'e}s Garc{\'\i}a${}^\flat$\,,  \ \ Fabio Gavarini$\,{}^\sharp$}

\address{\newline
 Departamento de Matem\'atica, Facultad de Ciencias Exactas
   \newline
 Universidad Nacional de La Plata   ---   CMaLP-CIC-CONICET
   \newline
 \phantom{aa} 1900 La Plata, Pcia.\ de Buenos Aires, ARGENTINA
   \newline
 \phantom{aa} E-mail: \  {\tt ggarcia@mate.unlp.edu.ar}
 \vspace*{0.7cm}
   \newline
Dipartimento di Matematica,
   \newline
 Universit\`a degli Studi di Roma ``Tor Vergata''
   \newline
 \phantom{aa} Via della ricerca scientifica 1   \, --- \,   I\,-00133 Roma, ITALY
   \newline
 \phantom{aa} E-mail: \  {\tt gavarini@mat.uniroma2.it}
   \newline
 Istituto Nazionale di Alta Matematica ``Francesco Severi'' --- GNSAGA }

\thanks{\noindent 2020 \emph{MSC:}\,  17B37, 17B62   ---
\emph{Keywords:} Quantum Groups, Quantum Enveloping Algebras.}


\begin{abstract}
 \medskip
   In this paper we describe the effect on quantum groups
   --- namely, both QUEA's and QFSHA's ---   of deformations by twist and by 2--cocycles,
   showing how such deformations affect the semiclassical limit.
                                                                       \par
   As a second, more important task, we discuss how these deformation procedures can be
   ``stretched'' to a new extent, via a formal variation of the original recipes, using
   \textit{polar twists\/}  and  \textit{polar 2--cocycles}.
   These recipes seemingly should make no sense at all, yet we prove that they actually work,
   thus providing well-defined, more general deformation procedures.
   Later on, we explain the underlying reason that motivates such a result in light of the
   Quantum Duality Principle, through which every ``polar twist/2--cocycle'' for a given
   quantum group can be seen as a standard twist/2--cocycle for another quantum group,
   associated to the original one via the appropriate Drinfeld functor.
                                                                       \par
   As a third task, we consider standard constructions involving  $ R $--(co)matrices in
   the general theory of Hopf algebras.  First we adapt them to quantum groups,
   then we show that they extend to the case of  \textsl{polar}  $ R $--(co)matrices,
   and finally we discuss how these constructions interact with the Quantum Duality Principle.
   As a byproduct, this yields new special symmetries (isomorphisms) for the underlying pair
   of dual Poisson (formal) groups that one gets by specialization.
\end{abstract}

 {\ } \vskip9pt

\maketitle

\tableofcontents



\section{Introduction}  \label{sec: intro}  {\ }
 \vskip3pt
   In Hopf algebra theory, there exists a well-established theory of ``deformations'' that are
   produced via specific tools, namely  \textit{twists\/}  in one case and   \textit{2--cocycles\/}
   in the other case (the terminology is not entirely agreed upon, yet our choice of terms seems
   to be the standard one among Hopf algebraists, at least).  Given a Hopf algebra  $ H $,
   a twist for it is a suitable element  $ \, \cF \in H \otimes H \, $,  while (dually) a 2--cocycle is a
   suitable 2--form  $ \, \sigma \in {(H \otimes H)}^* \, $.  Deformation by the twist  $ \cF $
   provides  $ H $ with a new Hopf algebra structure, by modifying the coproduct
   (and the antipode) but not the product, whereas deformation by the 2--cocycle  $ \sigma $
   endows  $ H $ with yet another Hopf structure by changing the product alone
   (and the antipode) but not the coproduct.
 \vskip5pt
   Quantum groups are Hopf algebras of special type, which come in two versions: QUEAs
   (= quantized universal enveloping algebras) and QFSHAs (= quantized formal series Hopf
   algebras).
 Roughly speaking, a QUEA is a (topological) Hopf algebra  $ U_\hbar $  over the
 $ \k $--algebra  of formal power series  $ \kh $  such that
 $ \, U_0 := U_\hbar \big/ \hbar\,U_\hbar \, $  is isomorphic to  $ \ug $  for some Lie algebra
 $ \lieg \, $.  It follows then that  $ \ug $  inherits from  $ U_\hbar $  a Poisson cobracket,
 which makes it into a co-Poisson Hopf algebra, hence  $ \lieg $  bears a Lie cobracket making it
 into a Lie bialgebra.  One then says that  $ U_\hbar $  is a  \textsl{quantization\/}
 of the co-Poisson Hopf algebra  $ \ug \, $,  or just of the Lie bialgebra  $ \lieg \, $.
 Dually, a QFSHA is a (topological) Hopf algebra  $ F_\hbar $  over  $ \kh $  such that
 $ \, F_0 := F_\hbar \big/ \hbar\,F_\hbar \, $  is isomorphic to  $ \fg $
 for some formal algebraic group  $ G \, $.  Then  $ \fg $  inherits from  $ F_\hbar $  a
 Poisson bracket, which makes it into a Poisson Hopf algebra, thus  $ G $  bears a
 Poisson structure which makes it into a formal Poisson (algebraic) group.  One says then that
 $ F_\hbar $  is a  \textsl{quantization\/}  of the Poisson Hopf algebra  $ \fg \, $,  or just of the
 (formal) Poisson group  $ G \, $.
 \vskip5pt
   As a general philosophy, from any Hopf-theoretical notion   --- at the quantum level ---
   one typically infers a Lie-theoretical counterpart   --- at the semiclassical level.
   When dealing with deformations, this leads to devising suitable notions of ``twists'' and
   ``2--cocycles'' for Lie bialgebras as well as ``deformations'' (of Lie bialgebras) by them.
   In particular, a deformation by twist yields a new Lie bialgebra structure where only the Lie
   cobracket is modified, whereas deformation by 2--cocycle defines yet another, similar
   structure where only the Lie bracket is changed.
                                                                            \par
   Following this recipe, the following should hold: when one deforms (as a Hopf algebra) a
   quantization  $ U_\hbar $  of  $ \lieg $  by a twist which is trivial modulo  $ \hbar \, $,
   the outcome is a quantization of  $ \lieg' \, $,  with the latter being a deformation by twist
   (as a Lie bialgebra) of  $ \lieg \, $:  moreover, the (Lie) twist working upon  $ \lieg $
   is directly ``induced'' by the (Hopf) twist that works upon  $ U_\hbar \, $,
   namely the former (Lie) twist is the ``semiclassical limit''
 of the latter (Hopf) twist.
                                                                            \par
   Dually, the following also should hold: when one deforms (as a Hopf algebra) a quantization
   $ F_\hbar $  of  $ G \, $  by a 2--cocycle which is trivial modulo  $ \hbar \, $,
   the outcome is a quantization of  $ G' \, $,  the latter being a (formal) Poisson group whose
   cotangent Lie bialgebra is a deformation by 2--cocycle of the cotangent Lie bialgebra
   $ \, \lieg^* := {\Lie(G)}^* \, $  of  $ G \, $:  moreover, the (Lie) 2--cocycle acting on  $ \lieg^* $
   is ``induced'' by the (Hopf) 2--cocycle that acts on  $ F_\hbar \, $,  namely the former (Lie)
   2--cocycle is the ``semiclassical limit''
 of the latter (Hopf) 2--cocycle.
                                                                            \par
   Nevertheless, neither of the two results mentioned above seems to be published anywhere in
   literature (to the best of the authors' knowledge, say).  Therefore, as a first contribution in
   this paper we provide a full, complete statement and proof for the above sketched results,
   turning them into well-established theorems.
 \vskip5pt
   As a second step   --- our main contribution in this paper ---   we extend the notions of
   (Hopf) twist and 2--cocycle, as well as the construction of (Hopf) deformations by them,
   to a wider setup.  Namely, we introduce the notions of  \textit{polar twist\/}  for a QFSHA
   and of  \textit{polar 2--cocycle\/}  for a QUEA: roughly speaking, a polar twist for
   $ F_\hbar $  has the formal Hopf properties of a twist but has the form
   $ \, \exp\big( \hbar^{-1} \varphi \big) \, $,  while any twist (trivial modulo  $ \hbar \, $)
   looks like  $ \, \exp\big( \hbar^{+1} \phi \big) \, $   --- and similarly for the link between
   polar 2--cocycles and 2--cocycles.  Thus even the very definition of these ``polar objects'',
   at least in this form, seems to be problematic, to say the least   --- as multiplying by
   $ \, \hbar^{-1} \, $  in a  $ \kh $--module  is meaningless.  In spite of this, we show that the
   recipe defining deformations by twist, resp.\ by 2--cocycle, for a QFSHA, resp.\ for a QUEA,
   still makes sense if we replace ``twists'' with ``polar twists'', resp.\ ``2--cocycles'' with
   ``polar 2--cocycles''.  Moreover, we can describe the semiclassical limit of these
   deformations (by ``polar objects''), again in terms of deformations of Lie bialgebras by some
   (Lie) twist, resp.\ 2--cocycle, that can be read out as the semiclassical limit of the
   quantum (Hopf) polar twist, resp.\ polar 2--cocycle, that we started with.  In a nutshell, we find the perfect
   ``polar versions'' of the results above for standard quantum group deformations,
   i.e.\ those by twist or by 2--cocycle.
 \vskip5pt
   The fact that ``deformations by  \textsl{polar\/}  objects''  do make sense, albeit problematic at
   first sight, can be explained in light of the  \textit{Quantum Duality Principle (=QDP)}.
   In fact, the latter provides functorial recipes   -- via \textit{Drinfeld's functors}  ---
   which turn any QUEA into a QFSHA and any QFSHA into a QUEA.
   Then, through the QDP lens, every ``polar twist'' for a QFSHA, resp.\ every
   ``polar 2--cocycle'' for a QUEA, is just a sheer standard twist, resp.\ 2--cocycle, for the
   QUEA, resp.\ the QFSHA, obtained when applying the appropriate Drinfeld functor.
   In this way, our deformation procedures ``by polar objects'' turn out to be tightly related
   with standard ones, but applied to different quantum groups.  Nevertheless, one still has to
   prove that the (standard) deformation applied to the new quantum group can actually be
   ``adapted'' (by restriction or by extension, depending on the type of quantum group and
   Drinfeld functor involved) to the original quantum group; in fact, this still requires a detailed,
   careful analysis.
 \vskip5pt
   As a third contribution, we finally consider some constructions of morphisms that,
   in the setup of general Hopf algebra theory, are provided by  $ R $--matrices  or
   $ \varrho $--comatrices.  We apply these constructions to the case of quantum groups,
   showing that their outcome is much finer than expected from the general theory, and
   bringing to light their geometrical meaning at the semiclassical level.
   In addition, we improve those results as follows: we introduce the notions of
   \textit{polar  $ R $--matrices\/}  and  \textit{polar  $ \varrho $--comatrices}
   (much in the same spirit as with polar twists and polar 2--cocycles), and then we extend
   the construction of morphisms induced by  $ R $--matrices  and  $ \varrho $--comatrices
   to the case of polar  $ R $--matrices  and polar  $ \varrho $--comatrices, with a clear
   interpretation that once more is provided by the QDP.
 \vskip7pt
   The paper is organized as follows.
                                                               \par
   In  \S \ref{sec: QG's-QDP-deform.'s}  we quickly recall the material we work with: Lie
   bialgebras and their deformations, Hopf algebras and their deformations, quantum groups
   (in the formal setting, both as QUEAs and as QFSHAs) and the
   \textit{Quantum Duality Principle}.
                                                               \par
   In  \S \ref{sec: deform.'s-QG's}  we present the bulk of the paper   --- its main core, in a sense.  Namely, we first study deformations of QUEAs by twist and of QFSHAs by 2--cocycles, then we present the new notions of polar 2--cocycle (for a QUEA) and polar twist (for a QFSHA) and the procedures of deformations by these.  Later on, all this material is discussed again in  \S \ref{sec: deform.'s-QDP},  in light of the  \textit{Quantum Duality Principle}.
                                                               \par
   Finally, in  \S \ref{sec: (co)quasitriangular}  we study the morphisms defined through
   $ R $--matrices  or  $ \varrho $--comatri\-ces,  showing the general results
   (for any Hopf algebra) actually improve in the case of quantum groups and explaining
   their meaning at the semiclassical limit.  Moreover, we also extend those constructions and
   results to the newly minted notions of polar  $ R $--matrices  and polar  $ \varrho $--comatrices.

\vskip17pt

   \centerline{\ssmallrm ACKNOWLEDGEMENTS}
 \vskip3pt
 {\smallrm This article was partially supported by:
                                                         \par
   \quad  \textit{(1)}\,  CONICET, ANPCyT, Secyt (Argentina),
                                                         \par
   \quad  \textit{(2)}\,  INdAM research branch GNSAGA (Italy),
                                                         \par
   \quad  \textit{(3)}\,  the MIUR  {\smallsl Excellence Department Project
   MatMod@TOV (CUP E83C23000330006)\/}  awarded to the Department of Mathematics,
   University of Rome ``Tor Vergata'' (Italy),
                                                         \par
   \quad  \textit{(4)}\,  the European research network
   {\smallsl COST Action CaLISTA CA21109}.
                                                       \par
 The authors thank Benjamin Enriquez for sparking their interest in the
   topics of \S \ref{sec: (co)quasitriangular}.}

\bigskip
 \medskip

\section{Quantum groups, Quantum Duality Principle, and deformations}
\label{sec: QG's-QDP-deform.'s}

\vskip13pt

   In this section we recap the basic notions we deal with in this paper:
   Lie bialgebras, quantum groups, deformations of both, and the Quantum Duality Principle.

\medskip

\subsection{Lie bialgebras and Lie deformations}  \label{subsec: Lie-bialg's & deform's}  {\ }
 \vskip7pt
   In this subsection we recall some definitions and basic facts about Lie
   bialgebras and their deformations.  For a more detailed treatment we refer to \cite{CP},
   \cite{Mj}.
                                                        \par
Throughout the paper,  $ \k $  will be a field of characteristic zero.

\vskip9pt

\begin{free text}  \label{gen's on LbA's}
 {\bf Generalities.}  A  \textsl{Lie bialgebra\/}  is a triple
 $ \, \big(\, \lieg \, ; \, [\,\ ,\,\ ] \, , \, \delta \,\big) \, $  such that  $ \, (\lieg, [\,\ ,\,\ ]) \, $
 is a Lie algebra
over  $ \k \, $, $(\lieg, \delta)$ is a \textit{Lie coalgebra}
 with \textit{Lie cobracket\/}  $ \, \delta : \lieg \longrightarrow \lieg \wedge \lieg \, $,
 i.e.\  $ \, \delta^* : \lieg^* \wedge \lieg^* \longrightarrow \lieg^* \, $  is a Lie algebra bracket
 on  $ \lieg^* \, $),
 and the two structures are linked by the constraint that  $ \delta $  is a  $ 1 $--cocycle
for the Chevalley-Eilenberg cohomology of the Lie algebra  $ \, \big(\, \lieg \, ; \, [\,\ ,\,\ ] \,\big) \, $
 with coefficients in  $ \, \lieg \wedge \lieg \, $:
\begin{equation}  \label{eq:compat-bracket-cobracket}
   \begin{aligned}
      \delta([x,y])  &  \; = \;  \ad_x\big(\delta(y)\big) - \ad_y\big(\delta(x)\big)  \; =  \\
                     &  \; = \;  \big[x,y_{[1]}\big] \otimes y_{[2]} + y_{[1]} \otimes \big[x,y_{[2]}\big] -
                     \big[y,x_{[1]}\big] \otimes x_{[2]} - x_{[1]} \otimes \big[y,x_{[2]}\big]
   \end{aligned}
\end{equation}
 using Sweedler's-like notation  $ \, \delta(x) = x_{[1]} \otimes x_{[2]} \, $  for any
 $ \, x \in \lieg \, $.  We write also
 $ \, x \wedge y := 2^{-1} (x \otimes y - y \otimes x) \, $  and thus we identify
 $ \, \lieg \wedge \lieg \, $
 with the subspace  $ \, {(\lieg \otimes \lieg)}^{\ZZ_2} \, $.
                                                                        \par
   Finite-dimensional Lie bialgebras are  \textit{self-dual},  in the sense that
 $ \, \big(\, \lieg \, ; \, [\,\ ,\,\ ] \, , \, \delta \,\big) \, $  is a Lie bialgebra if and only if
 $ \, \big(\, \lieg^* \, ; \, \delta^* , \, {[\,\ ,\,\ ]}^* \,\big) \, $ is so; the latter is called the
 \textit{dual\/}
Lie bialgebra to  $ \, \big(\, \lieg \, ; \, [\,\ ,\,\ ] \, , \, \delta \,\big) \, $.
This also holds in the infinite-dimensional case, up to technicalities.
We denote a Lie bialgebra simply by  $ \lieg \, $,  and by $ \lieg^* $ its dual.
                                                             \par
   Given  $ \, r = r_{1} \otimes r_{2} \, $ in  $ \, \lieg \otimes \lieg \, $,  we write
   $ \; r_{2,1} := r_{2} \otimes r_{1} \; $ and
   $ \; r_{1,2} := r_{1} \otimes r_{2} \otimes 1\; $,
   $ \; r_{2,3} := 1\otimes r_{1} \otimes r_{2} \; $
   $ \; r_{1,3} := r_{1} \otimes 1\otimes r_{2}\in  \, \lieg \otimes \lieg \otimes \lieg $.
   For example, given
   $ \, s = s_{1} \otimes s_{2} \in \lieg\ot\lieg $ we define the element
\begin{align*}
[[ r , s ]] & :=  [r_{1,2},s_{1,3}] + [r_{1,2},s_{2,3}] + [r_{1,3},s_{2,3}]\\
& = \big[r_{1},s_{1}\big] \otimes r_{2} \otimes s_{2} + r_{1} \otimes
  \big[r_{2},s_{1}\big] \otimes s_{2} +
  r_{1} \otimes s_{1} \otimes \big[r_{2},s_{2}\big] .
\end{align*}
\end{free text}

\vskip9pt

\begin{free text}  \label{deformations of LbA's}
 {\bf Deformations of Lie bialgebras.}  In this work, we are mainly interested in
 two kinds of deformations, where either the Lie cobracket or the Lie bracket alone
 is deformed.  A general theory of deformations for Lie bialgebras using cohomology
 theory exists, see e.g.\  \cite{CG},  \cite{MW}, and references therein for more details.
 \vskip3pt
   Let  $ \big(\, \lieg \, ; \, [\,\ ,\,\ ] \, , \, \delta \,\big) \, $
   be a Lie bialgebra and  $ \, c \in \lieg \otimes \lieg \, $
   be such that
\begin{equation}  \label{eq: twist-cond_Lie-bialg}
   \ad_x\!\big((\delta \otimes \id)(c) + \text{c.p.} - [[ c \, , c \,]] \,\big)  \; = \; 0  \;\; ,
   \quad  \ad_x\!\big( c + c_{\,2,1} \big) \, = \, 0   \qquad   \forall \;\; x \in \lieg
\end{equation}
 where  $ \, \ad_x \, $  denotes the standard adjoint action of  $ x $  and \,c.p. means
 cyclic permutations on the tensor factors of the previous summand.
 Then,  the  map
   $ \, \delta^{\,c} : \lieg \relbar\joinrel\longrightarrow \lieg \wedge \lieg \, $  defined by
\begin{equation}  \label{eq: def_twist-delta}
  \delta^{\,c}  \, := \;  \delta - \partial(c) \;\; ,  \qquad  \text{i.e.\ \ \ }
  \delta^{\,c}(x) \, := \, \delta(x) - \ad_x(c)   \qquad \forall \;\; x \in \lieg
\end{equation}
 defines a new Lie cobracket on  $ \big(\, \lieg \, ; \, [\,\ ,\,\ ] \,\big) \, $  making
 $ \, \big(\, \lieg \, ; \, [\,\ ,\ ] \, , \, \delta^{c} \,\big) \, $  into a new Lie bialgebra
 (cf.\ \cite[Theorem 8.1.7]{Mj}).
\end{free text}

\vskip9pt

\begin{definition}  \label{def: twist-deform_Lie-bialg's}
 An element  $ \, c \in \lieg \otimes \lieg \, $  satisfying  \eqref{eq: twist-cond_Lie-bialg}
 is called a  \textit{twist\/}  of the Lie bialgebra  $ \lieg \, $,
 \,and the corresponding Lie bialgebra
 $ \; \lieg^{\,c} \, := \, \big(\, \lieg \, ; \, [\,\ ,\,\ ] \, , \, \delta^{\,c} \,\big) \; $
 is called a \textit{deformation by twist}  or  \textit{twist deformation\/}  of $ \lieg \, $.
 \hfill  $ \diamondsuit $
\end{definition}

\vskip7pt

\begin{rmk}  \label{rmk: differ-Mj-Nosotros_x_def-twist}
 To be precise, we are adopting here conventions that are slightly
 different from those in  \cite{Mj},  yet equivalent.
 Indeed, we choose to  \textsl{define\/}  the deformed Lie cobracket in
 \eqref{eq: def_twist-delta}
 via the formula  $ \; \delta^c := \delta - \partial(c) \; $, \;whereas Majid's definition is
 $ \; \delta^c := \delta + \partial(c) \; $.  This change entails that  \eqref{eq: twist-cond_Lie-bialg}  the term  $ \, - [[ c \, , c \,]] \, $  has opposite sign with respect to Majid's book.
 However, our choice of sign yields better-reading statements for our results that connect,
 via specialization, deformations by twist at the quantum level with those at
 the semi-classical level  (cf.\ Theorem \ref{thm: twist-deform-QUEA}  and
 Theorem \ref{thm: propt.'s qs-twist-deform-QFSHA});
 that is why we chose such a different option, lest we should insert an odd-looking
   sign in those results.
 The equivalence stands in that any  $ \, c \in \lieg \otimes \lieg \, $
 is a twist element in our sense if and only if  $ \, -c \, $  is a twist in Majid's sense.
\end{rmk}

\vskip7pt

   Now we introduce a deformation of the Lie bracket.
   Let  $ \big(\, \lieg \, ; \, [\,\ ,\,\ ] \, , \, \delta \,\big) \, $
   be a Lie bialgebra and
   $ \, \gamma \in \Hom_\Bbbk\!\big(\, \lieg \otimes \lieg \, , \Bbbk \,\big) \, $.
   For $\lieg$ finite-dimensional, we
   identify  $ \; \Hom_\Bbbk\!\big(\, \lieg \otimes \lieg \, , \Bbbk \,\big) \, = \,
   {( \lieg \otimes \lieg )}^* \, = \, \lieg^*\otimes \lieg^* \; $.
   There exists some technicalities in the infinite-dimensional case,
   yet the outcome is always the same.
   Dualizing the notion of twist for $\lieg^{*}$ we obtain the dual notion of $2$-cocycle
   as follows.
   Condition  \eqref{eq: twist-cond_Lie-bialg}  with  $ \lieg^* $  replacing  $ \lieg $  and
   $ \gamma $  in the role of  $ c $  yields
\begin{equation}  \label{eq: cocyc-cond_Lie-bialg}
 \begin{aligned}
   \ad_\psi\!\big(\,\partial_*(\gamma) + {[[ \gamma \, , \gamma \,]]}_* \,\big)  \, = \, 0  \;\, ,
   \;\;\;  \ad_\psi\!\big(\, \gamma + \gamma_{{}_{2,1}} \big) \, = \, 0
   \qquad   \forall \;\; \psi \in \lieg^*
 \end{aligned}
\end{equation}
 where  $ \, \gamma_{{}_{2,1}} := \gamma^{\scriptscriptstyle T} \, $ and
 $\big(\partial_*(\gamma)\!\big)(a\,,b\,,c\,) \; = \; \gamma\big([a\,,b\,]\,,c\big) \, + \, \text{c.p.} \, $.
 Similarly, $ \, {[[\,\ ,\,\ ]]}_* \, $  has the same meaning as above but with respect to  $ \lieg^* $.
 \vskip5pt
  For any  $ \gamma $  satisfying  \eqref{eq: cocyc-cond_Lie-bialg},
  the map  $ \; {[\,\ ,\ ]}_\gamma : \lieg \wedge \lieg \relbar\joinrel\longrightarrow \lieg \, $
  given by
\begin{equation}  \label{eq: def_cocyc-bracket}
  {[x,y]}_\gamma  \, := \;  [x,y] \, + \, \gamma\big(x_{[1]},y\big) \, x_{[2]} \, - \, \gamma\big(y_{[1]},x\big) \, y_{[2]}
  \qquad \qquad \forall \;\; x, y \in \lieg
\end{equation}
defines a new Lie bracket on the Lie coalgebra  $ \big(\, \lieg \, ; \, \delta \,\big) \, $  making
 $ \, \big(\, \lieg \, ; \, {[\,\ ,\ ]}_\gamma \, , \, \delta \,\big) \, $  into a new Lie bialgebra (cf.\ \cite[Exercise 8.1.8]{Mj}).

\vskip11pt

\begin{definition}  \label{def: cocyc-deform_Lie-bialg's}
 Every  $ \, \gamma \in \Hom_\Bbbk\!\big(\, \lieg \wedge \lieg \, , \Bbbk \,\big) \, $
 that obeys  \eqref{eq: cocyc-cond_Lie-bialg}  is called a  \textit{2--cocy\-cle\/}  of the Lie bialgebra  $ \lieg \, $,
 \,and the Lie bialgebra  $ \, \lieg_\gamma \, := \, \big(\, \lieg \, ; \, {[\,\ ,\ ]}_\gamma \, , \, \delta \,\big) \, $
 is called a  \textit{deformation by 2--cocycle} or  \textit{2--cocycle deformation\/})  of  $ \lieg \, $.
 \hfill  $ \diamondsuit $
\end{definition}

\vskip7pt

\begin{rmk}  \label{rmk: differ-Mj-Nosotros_x_def-2coc}
 Another observation which is dual to  Remark \ref{rmk: differ-Mj-Nosotros_x_def-twist}
 applies to our given definition of 2--cocycle and of 2--cocycle deformation,
 in comparison to the original one in  \cite{Mj}.
 Again, our notion of 2--cocycle is equivalent to Majid's because any
 $ \, \gamma \in {( \lieg \otimes \lieg )}^* \, $
 is a 2--cocycle in our sense if and only if its opposite  $ \, -\gamma \, $
 is a 2--cocycle in Majid's, and viceversa.
\end{rmk}

\vskip5pt

The following result, which is standard, formalizes the fact that
 the notions of ``twist'' and of  ``$ 2 $--cocycle''  for Lie bialgebras
 are devised to be dual to each other.

\vskip11pt

\begin{prop}  \label{prop: duality-deforms x LbA's}
 Let  $ \lieg $  be a Lie bialgebra, and  $ \lieg^* $  the dual Lie bialgebra.
 \vskip3pt
   {\it (a)}\,  Let  $ c $  be a twist for  $ \lieg \, $,  and  $ \gamma_c $
   the image of  $ c $  in $ {(\lieg \otimes \lieg)}^* $
   for the natural composed embedding
   $ \, \lieg \otimes \lieg \lhook\joinrel\relbar\joinrel\longrightarrow \lieg^{**} \otimes \lieg^{**}
   \lhook\joinrel\relbar\joinrel\longrightarrow {\big( \lieg^* \otimes \lieg^* \big)}^* \, $.
   Then  $ \gamma_c $  is a 2--cocycle  for  $ \lieg^* \, $,
   and there exists a canonical isomorphism
   $ \, {\big( \lieg^* \big)}_{\gamma_c} \cong {\big( \lieg^c \big)}^* \, $.
 \vskip3pt
   {\it (b)}\,  Let  $ \gamma $  be a 2--cocycle  for  $ \lieg \, $;
   assume that  $ \lieg $  is finite-dimensional, and let  $ c_{\,\gamma} $
   be the image of  $ \chi $  in the natural identification
   $ \, {(\lieg \otimes \lieg)}^* = \lieg^* \otimes \lieg^* \, $.
   Then  $ c_{\,\gamma} $  is a twist for  $ \lieg^* \, $,
   and there exists a canonical isomorphism
   $ \, {\big( \lieg^* \big)}^{c_{\,\gamma}} \cong {\big( \lieg_\gamma \big)}^* \, $.
\qed
\end{prop}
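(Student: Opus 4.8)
The plan is to reduce everything to the already-established duality between deformations by twist (Definition~\ref{def: twist-deform_Lie-bialg's}, formula \eqref{eq: def_twist-delta}) and by 2--cocycle (Definition~\ref{def: cocyc-deform_Lie-bialg's}, formula \eqref{eq: def_cocyc-bracket}), which was built precisely so that the cobracket-side condition \eqref{eq: twist-cond_Lie-bialg} for $\lieg^*$ \emph{is} the bracket-side condition \eqref{eq: cocyc-cond_Lie-bialg} for $\lieg$. So for part \textit{(a)} I would first observe that, under the embedding $\lieg\otimes\lieg\hookrightarrow(\lieg^*\otimes\lieg^*)^*$, the defining conditions transcribe verbatim: the adjoint action of $\lieg$ on $\lieg\otimes\lieg$ dualizes to the coadjoint action appearing in \eqref{eq: cocyc-cond_Lie-bialg}, and the term $(\delta\otimes\id)(c)+\text{c.p.}-[[c,c]]$ dualizes to $\partial_*(\gamma_c)+[[\gamma_c,\gamma_c]]_*$ because the sign flips exactly once (the single $\delta$ in the cobracket-co-Jacobiator becomes the single bracket in $\partial_*$, while the quadratic term keeps its form). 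This is the content hidden in Remarks~\ref{rmk: differ-Mj-Nosotros_x_def-twist} and~\ref{rmk: differ-Mj-Nosotros_x_def-2coc}: our sign choices are arranged so the transcription is literal. Hence $c$ a twist for $\lieg$ $\iff$ $\gamma_c$ a 2--cocycle for $\lieg^*$.

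For the isomorphism $(\lieg^*)_{\gamma_c}\cong(\lieg^c)^*$, I would simply compare brackets and cobrackets on the underlying space, which is $\lieg^*$ on both sides. The cobracket of $(\lieg^c)^*$ is, by self-duality (Generalities, \ref{gen's on LbA's}), the dual of the bracket $[\,\ ,\,\ ]$ of $\lieg$, which is unchanged by twisting — so it matches the (unchanged) cobracket $\delta$ of $(\lieg^*)_{\gamma_c}$, which is the dual of the original cobracket of $\lieg^*$, i.e.\ of $[\,\ ,\,\ ]^*$. The bracket of $(\lieg^c)^*$ is the dual of $\delta^c=\delta-\partial(c)$; pairing against $x\wedge y\in\lieg\wedge\lieg$ and unwinding Sweedler notation in \eqref{eq: def_twist-delta} produces exactly the right-hand side of \eqref{eq: def_cocyc-bracket} with $\gamma=\gamma_c$. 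So the identity map $\lieg^*\to\lieg^*$ is the canonical isomorphism. Part \textit{(b)} is dual and even easier: with $\lieg$ finite-dimensional, $\lieg\otimes\lieg=(\lieg^*\otimes\lieg^*)^*$ canonically, a 2--cocycle $\gamma$ for $\lieg$ becomes under this identification a $c_\gamma\in\lieg^*\otimes\lieg^*$, and \eqref{eq: cocyc-cond_Lie-bialg} for $\lieg$ becomes \eqref{eq: twist-cond_Lie-bialg} for $\lieg^*$ by the same sign bookkeeping; then $(\lieg^*)^{c_\gamma}\cong(\lieg_\gamma)^*$ follows by comparing structure maps on $\lieg^*$ as before. (I read the ``$\chi$'' in the statement as a typo for $\gamma$.)

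The only genuine obstacle is the \emph{infinite-dimensional} case, flagged repeatedly in the excerpt with ``up to technicalities'' and ``there exists some technicalities \dots yet the outcome is always the same.'' There $\lieg\otimes\lieg$ need not equal $(\lieg^*\otimes\lieg^*)^*$, only embed into it, which is why part \textit{(a)} (twist $\to$ 2--cocycle, using only an embedding) is stated unconditionally while part \textit{(b)} (2--cocycle $\to$ twist, needing a genuine identification) carries the finite-dimensionality hypothesis. For \textit{(a)} one must check that the pieces $(\delta\otimes\id)(c)$, $[[c,c]]$, $c+c_{2,1}$ all land in the image of the embedding so that the conditions really do transcribe — this holds because $c\in\lieg\otimes\lieg$ and all operations are built from $\delta$ and $[\,\ ,\,\ ]$, which preserve $\lieg\otimes\lieg$ (resp.\ $\lieg\otimes\lieg\otimes\lieg$). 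I would handle this with one short paragraph citing the restricted-dual formalism of \cite{CP} or \cite{Mj}. Everything else is the bookkeeping comparison of structure maps sketched above, which I would present compactly rather than in full, since the key identities \eqref{eq: def_twist-delta}--\eqref{eq: def_cocyc-bracket} and the cocycle conditions \eqref{eq: twist-cond_Lie-bialg}--\eqref{eq: cocyc-cond_Lie-bialg} already display both sides of each equation.
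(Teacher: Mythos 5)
The paper gives no proof of this proposition at all: it is introduced as ``standard'' and closed with an immediate \verb|\qed|, and the only argument the paper supplies is implicit in the passage just before Definition~\ref{def: cocyc-deform_Lie-bialg's}, where condition \eqref{eq: cocyc-cond_Lie-bialg} is \emph{obtained} by transcribing \eqref{eq: twist-cond_Lie-bialg} to $\lieg^*$ --- which is exactly the dualization you carry out. Your sketch is the intended standard argument and is correct, including the correct explanation of why only part \textit{(b)} needs finite-dimensionality and the reading of ``$\chi$'' as a typo for $\gamma$; the one place to tighten is the comparison of $\big[\,\ ,\ \big]_{\gamma_c}$ with the dual of $\delta^{\,c}$, where one should first replace $c$ by its antisymmetric part (harmless, since \eqref{eq: twist-cond_Lie-bialg} forces $\ad_x(c+c_{2,1})=0$, so the symmetric part does not affect $\partial(c)$) before the Sweedler bookkeeping matches \eqref{eq: def_cocyc-bracket} term by term.
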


\medskip

\subsection{Hopf algebra deformations and  $ R $--(co)matrices}
\label{subsec: HA-deform's_&_R-(co)mat.'s}  {\ }
 \vskip7pt
   Below we recall some notions on deformations for Hopf algebras.
   Our main reference for general theory of Hopf algebras is  \cite{Ra}.
   For topological (and complete) Hopf algebras, we refer to  \cite{Ks},
   \cite{CP}  and  \cite{KS}.
 \vskip7pt
   Let us fix our notation for Hopf algebra theory (mainly standard, indeed).
   The comultiplication is denoted  $ \Delta $, the counit  $\epsilon  \, $
   and the antipode  $ \cS \, $;
   for the first, we use the Heyneman-Sweedler notation, namely  $ \, \Delta(x) = x_{(1)} \otimes x_{(2)} \, $.
   The augmentation ideal of any Hopf algebra  $ H $  is denoted by  $ \, H^+ := \Ker(\epsilon) \, $.
   In general, we denote by  $ \k $  the ground ring of our algebras, coalgebras, etc.; we identify it with its image
   $ \, \Bbbk\,1_{{}_H} \, $  through the unit map of  $ H $,  with  $ \, 1_{{}_H} \in H \, $  being the unit element.
                                                                        \par
   For a Hopf algebra $ H $ (or just bialgebra),  we write  $ H^{\text{op}} \, $,  resp.\  $ H^{\text{cop}} \, $,
   for the Hopf algebra (or bialgebra) given by taking in  $ H $  the opposite product, resp.\ coproduct.
 \vskip5pt
   There exist two standard methods to deform Hopf algebras, leading to so-called
   ``$ 2 $--cocycle  deformations'' and to ``twist deformations'': hereafter we recall
   both procedures, and their link via duality.
   They also adapt to the setup of  \textsl{topological\/}  Hopf algebras; later on,
   we apply them to quantum groups.

\vskip11pt

\begin{definition}  \label{def: twist & R-mat.'s}
 Let  $ H $  be a bialgebra (possibly topological, over some commutative ground ring),
 and let  $ \, \cF \in H \otimes H \, $.  Then:
 \vskip3pt
   \textit{(a)}\,  $ \cF $  is said to be  \textit{unitary\/}  if
\begin{equation}  \label{eq: unitary-cond.'s x twist}
  \big( \epsilon \otimes \text{id} \big)(\cF\,)  \; = \;  1  \; = \;
  \big( \text{id} \otimes \epsilon \big)(\cF\,)
\end{equation}
 \vskip3pt
   \textit{(b)}\,  $ \cF $  is called a  \textit{twist\/}  if it is  \textsl{invertible\/}  in
   $ \, H \otimes H \, $,  it is  \textsl{unitary},  and
\begin{equation}  \label{eq: twist-cond.'s}
  \cF_{1{}2} \, \big( \Delta \otimes \text{id} \big)(\cF\,)  \,\; = \;\,
  \cF_{2{}3} \, \big( \text{id} \otimes \Delta \big)(\cF\,)
\end{equation}
 \vskip3pt
   \textit{(c)}\,  $ \cF $  is called a  \textit{(quantum)  $ R $--matrix\/}
   if it is  \textsl{invertible\/}  in  $ \, H \otimes H \, $  and
\begin{equation}  \label{eq: R-mat_properties}
  \big( \Delta \otimes {id} \,\big)(\cF\,) \, = \, \cF_{13} \, \cF_{23}  \quad ,
  \qquad  \big(\, {id} \otimes \Delta \big)(\cF\,) \, = \, \cF_{13} \, \cF_{12}
\end{equation}
 \vskip3pt
   \textit{(d)}\,  $ \cF $  is called a  \textit{(quantum)  $ R $--matrix  twist\/}
   if it complies both   \textit{(b)\/}  and   \textit{(c)\/}  above
 \vskip7pt
   \textit{(e)}\,  $ \cF $  is said to be a  \textit{solution of the quantum
   Yang-Baxter equation (=QYBE)\/}  if
\begin{equation}  \label{eq: qYB-equation x F}
  \cF_{1{}2} \, \cF_{1{}3} \, \cF_{2{}3}  \; = \;  \cF_{2{}3} \, \cF_{1{}3} \, \cF_{1{}2}
\end{equation}
\end{definition}

\vskip9pt

\begin{rmks}  \label{rmks: about R-matrix twists}
 \textit{(a)}\,  If  $ H $  is a Hopf algebra (including a topological one)
 and there exists  $ \, \cF \in H \otimes H \, $  which is  \textsl{invertible\/}  and such that
\begin{equation}  \label{eq: quasi-cocommutative}
  \cF \, \Delta(x) \, \cF^{-1} = \Delta^{\!\text{op}}(x)   \qquad \qquad  \forall \;\; x \in H
\end{equation}
 then  $ H $  is said to be  \textsl{quasicocommutative}.
 If in addition  $ \cF $  obeys also  \eqref{eq: R-mat_properties},
 then  $ H $  itself is said to be  \textsl{quasitriangular}.
 Indeed, the standard notion of  ``$ R $--matrix''
 in literature usually demands the constraint  \eqref{eq: quasi-cocommutative}
 besides condition  \eqref{eq: R-mat_properties}.
                                                                                \par
   Our notion of  ``$ R $--matrix''  as in  Definition \ref{def: twist & R-mat.'s}\textit{(c)\/}
   above is given the name  ``weak  $ R $--matrix''  in  \cite{Ch},  Definition 1.1.
 \vskip3pt
   \textit{(b)}\,  Every  $ R $--matrix  in the sense of
   Definition \ref{def: twist & R-mat.'s}\textit{(c)\/}
   above is automatically  \textsl{unitary},  cf.\ \cite{Ch},  Lemma 1.2.
   Conversely, if  $ \cF $  is  \textsl{unitary\/}  and enjoys  \eqref{eq: R-mat_properties},
   then it is  \textsl{invertible\/}  too, hence it is an  $ R $--matrix.
   In short, the conditions ``invertible'' and ``unitary'' for an  $ R $--matrix
   are equivalent and interchangeable.
 \vskip3pt
   \textit{(c)}\,  If  $ \, \cR \, $  is an  $ R $--matrix  for  $ H $,
   then so is  $ \, {\big( \cR^{-1} \big)}_{2{}1} = {\big( \cR_{2{}1} \big)}^{-1} \, $;
   moreover,  $ \cR_{2{}1} $  and  $ \cR^{-1} $  are  $ R $--matrices  for  $ H^{\text{op}} $
   and  $ H^{\text{cop}} $  alike   --- see  \cite{Mj}, \cite{Ra}.
 \vskip3pt
   \textit{(d)}\,
 It follows by definitions that  \eqref{eq: twist-cond.'s}  and  \eqref{eq: R-mat_properties}
   jointly imply  \eqref{eq: qYB-equation x F},  while  \eqref{eq: R-mat_properties}  and
   \eqref{eq: qYB-equation x F}  jointly imply  \eqref{eq: twist-cond.'s}.
\end{rmks}

\vskip11pt

\begin{free text}  \label{twist-deform.'s}
 \textbf{Deformations by twist.}
 The importance of twists comes from the following construction.
 Let  $ H $  be a bialgebra (over some commutative ring  $ \k \, $),
 and let  $ \, \cF \in H \otimes H \, $  be a twist for it
 --- as in  Definition \ref{def: twist & R-mat.'s}\textit{(b)\/}  above.
 Then  $ H $  bears a second bialgebra structure, denoted  $ H^\cF $
 and called  \textit{twist deformation\/}  of the old one,
 with the old product, unit and counit, but with a new ``twisted'' coproduct
 $ \Delta^\cF $  given by
  $$  \Delta^{\!\cF}(x)  \, := \,  \cF \, \Delta(x) \, \cF^{-1}   \eqno  \forall \;\; x \in H   \qquad  $$
 If in addition  $ H $  is a Hopf algebra with antipode  $ \cS \, $,
 then this ``twisted'' bialgebra  $ H^\cF $
 is again a Hopf algebra with antipode  $ \cS^\cF $  given by
  $$  \cS^\cF(x) \, := \, v\,\cS(x)\,v^{-1}   \eqno  \forall \;\; x \in H   \qquad  $$
where  $ \, v := \sum_\cF \cS(f'_1)\,f'_2 \, $   --- with
$ \, \sum_\cF f'_1 \otimes f'_2 = \cF^{-1} \, $  ---
 is invertible in  $ H $  (see,  \cite[ \S 4.2.{\sl E}]{CP},  for further details).
 \textsl{When  $ H $  is in fact a
 \textit{topological}  bialgebra or Hopf algebra},
 then the same notions still make sense, and the related results apply again,
 up to properly reading them.
\end{free text}

\vskip7pt

   We present now the dual picture:

\vskip11pt

\begin{definition}  \label{def: 2-cocyc.'s & rho-comat.'s}
 Let  $ H $  be a bialgebra (possibly topological, over some commutative ground ring),
 and let  $ \, \sigma \in {\big( H^{\otimes 2} \big)}^* \, $.  Then:
 \vskip3pt
   \textit{(a)}\,  $ \sigma $  is said to be  \textit{unitary\/}  if
\begin{equation}  \label{eq: unitary-cond.'s x 2-cocycle}
  \sigma (a,1)  \; = \;  \epsilon(a)  \; = \;  \sigma(1,a)   \qquad \qquad  \forall \; a \in H
\end{equation}
 \vskip3pt
   \textit{(b)}\,  $ \sigma $  is called a  \textit{$ 2 $--cocycle\/}  if it is
   \textsl{(convolution) invertible\/}  in
   $ \, {\big( H^{\otimes 2} \big)}^* \, $,  it is unitary, and such that
\begin{equation}  \label{eq: 2-cocyc-cond.'s}
    \hskip-5pt
  \sigma(a_{(1)},b_{(1)}) \, \sigma(a_{(2)}b_{(2)},c)  \,\; = \;\,
  \sigma(b_{(1)},c_{(1)}) \, \sigma(a,b_{(2)}c_{(2)})
\end{equation}
 for all  $ \, a, b, c \in H \, $   --- where we abuse of notation identifying
 $ \, \sigma \in {\big( H \otimes H \big)}^* \, $  with the corresponding  $ \k $--bilinear  map
 $ \, \rho : H \times H \longrightarrow \k \, $,  and we adapt notation accordingly;
%
 \vskip3pt
   \textit{(c)}\,  $ \sigma $  is called a  \textit{(quantum)  $ \varrho $--comatrix\/}  if
 it is  \textsl{(convolution) invertible\/}  in  $ \, {\big( H^{\otimes 2} \big)}^* \, $
   and --- for all  $ \, a, b, c \in H \, $  ---   we have
\begin{equation}  \label{eq: rho-comat_prop.'s}
  \sigma(a\,b\,,c) \, = \, \sigma\big(a\,,c_{(1)}\big) \, \sigma\big(b\,,c_{(2)}\big)  \;\; ,  \qquad
  \sigma(a\,,b\,c) \, = \, \sigma\big(a_{(1)},c\big) \, \sigma\big(a_{(2)},b\big)
\end{equation}
 \vskip3pt
   \textit{(d)}\,  $ \sigma $  is called a  \textit{(quantum)
   $ \varrho $--comatrix  $ 2 $--cocycle\/}
   if it complies with   \textit{(b)\/}  and   \textit{(c)\/};
 \vskip7pt
   \textit{(e)}\,  $ \sigma $  is said to be a  \textit{solution of the quantum
   Yang-Baxter equation (=QYBE)\/}  if
\begin{equation}  \label{eq: qYB-equation x sigma}
  \sigma_{1{}2} * \sigma_{1{}3} * \sigma_{2{}3}  \,\; = \;
  \sigma_{2{}3} * \sigma_{1{}3} * \sigma_{1{}2}
\end{equation}
 where hereafter  ``$ \, * \, $''  denotes the convolution product.
\end{definition}

\vskip9pt

\begin{rmks}  \label{rmks: about rho-comatrix 2-cocycles}
 \textit{(a)}\,  If  $ H $  is a Hopf algebra and there exists
 $ \, \sigma \in {(H \otimes H)}^* \, $  which is  \textsl{(convolution) invertible\/}  and such that
\begin{equation}  \label{eq: quasi-commutative}
  \sigma \, * \, m * \sigma^{-1}  \, = \;  m_{\,\text{op}}
\end{equation}
 then  $ H $  is said to be  \textsl{quasicommutative}.
 If in addition  $ \sigma $  obeys also  \eqref{eq: rho-comat_prop.'s},  then  $ H $
 itself is said to be  \textsl{coquasitriangular}.  Indeed, the standard notion
 of  ``$ \varrho $--comatrix'',
 or ``dual  $ R $--matrix'',  in literature usually demands
 \eqref{eq: quasi-commutative}  besides  \eqref{eq: rho-comat_prop.'s}.
                                                        \par
   Following the spirit of  \cite{Ch},  one might also use such terminology as
   ``weak  $ \varrho $--comatrix'',  or ``weak dual  $ R $--matrix''.
 \vskip3pt
   \textit{(b)}\,  Every  $ \varrho $--matrix  in the sense of
   Definition \ref{def: 2-cocyc.'s & rho-comat.'s}\textit{(c)\/}  above is  \textsl{unitary}.
   Conversely, if  $ \rho $  is  \textsl{unitary\/}  and enjoys  \eqref{eq: rho-comat_prop.'s},
   then it is  \textsl{(convolution) invertible\/}  too, hence it is a  $ \varrho $--comatrix
   (cf.\ \cite{Mj}, Lemma 2.2.2).  In short, the conditions ``invertible'' and ``unitary'' for a
   $ \varrho $--comatrix  are equivalent and interchangeable.
 \vskip3pt
   \textit{(c)}\,  Much like for  $ R $--matrices, if  $ \, \rho \, $  is a  $ \varrho $--comatrix
   for  $ H $,  then so is  $ \, {\big( \rho^{-1} \big)}_{2{}1} = {\big( \rho_{2{}1} \big)}^{-1} \, $;
   moreover,  $ \rho_{2{}1} $  and  $ \rho^{-1} $  are  $ \varrho $--comatrices  for
   $ H^{\text{op}} $  and  $ H^{\text{cop}} $  alike.
 \vskip3pt
   \textit{(d)}\,
   It follows by definitions that  \eqref{eq: 2-cocyc-cond.'s}  and
   \eqref{eq: rho-comat_prop.'s}  jointly imply  \eqref{eq: qYB-equation x sigma},
   while  \eqref{eq: rho-comat_prop.'s}  and  \eqref{eq: qYB-equation x sigma}
   jointly imply  \eqref{eq: 2-cocyc-cond.'s}.
\end{rmks}

\vskip9pt

\begin{free text}  \label{2-cocyc-deform.'s}
 \textbf{Deformations by  $ 2 $--cocycles.}
 We can use  $ 2 $--cocycles  to perform a different type of deformation.
 Let  $ H $  be a bialgebra (over some commutative ring  $ \k \, $),
 and let  $ \, \sigma \in {(H \otimes H)}^* \, $  be a $ 2 $--cocycle  for it.
 Then  $ H $  bears a second bialgebra structure, denoted  $ H_\sigma $
 and called  \textit{$ 2 $--cocycle  deformation\/}  of the old one, with the
 old coproduct, counit and unit, but with new product
 $ \, m_{\sigma} = \sigma * m * \sigma^{-1} : H \otimes H \longrightarrow H \, $
 given by
  $$
  m_{\sigma}(a,b)  \, = \,  a \cdot_{\sigma} b  \, =  \,
  \sigma(a_{(1)},b_{(1)}) \, a_{(2)} \, b_{(2)} \, \sigma^{-1}(a_{(3)},b_{(3)})
  \eqno \forall \;\, a, b \in H  \quad
  $$
 If in addition  $ H $  is a Hopf algebra with antipode  $ \cS \, $,
 then this ``deformed'' bialgebra  $ H_\sigma $  is again a Hopf algebra
 with antipode
 $ \; \cS_{\sigma}  \, $,
 \,which in detail reads
  $$  \cS_{\sigma}(a)  \, = \,  \sigma(a_{(1)},\cS(a_{(2)})) \, \cS(a_{(3)}) \,
  \sigma^{-1}(\cS(a_{(4)}),a_{(5)})
  \eqno \forall \;\, a \in H  \quad  $$
 (see  \cite{Doi}  for more details).
  \textsl{If  $ H $  is a  \textit{topological}  bialgebra or Hopf algebra},
  all this construction applies again, as well as the related results,
  up to technicalities.
\end{free text}

\vskip11pt

   The two notions of  ``$ 2 $--cocycle''  and of ``twist'',
   as well as the corresponding deformations,
   are so devised as to be dual to each other with respect to Hopf duality
   (cf.\ \cite{Mj}),  also in the setup of  \textsl{topological\/}
   Hopf algebras as with QUEA's and QFSHA's.
   The same holds for the notions of  ``$ \varrho $--comatrix''  and
   of  ``$ R $--matrix''.  All this is recorded in the following result,
   whose proof is trivial (an exercise in Hopf theory):

\vskip13pt

\begin{prop}  \label{prop: duality-deforms}
 Let  $ H $  be a Hopf algebra (possibly topological), and  $ H^* $
 its dual Hopf algebra (possibly in topological sense).
 \vskip5pt
   {\it (a)}\,  Let  $ \cF $  be a twist, resp.\ an  $ R $--matrix,  for  $ H \, $,
   and  $ \sigma_{{}_\cF} $  the image of  $ \cF $  in $ {(H \otimes H)}^* $
   for the natural embedding
   $ \, H \otimes H \lhook\joinrel\relbar\joinrel\longrightarrow H^{**} \otimes H^{**}
   \lhook\joinrel\relbar\joinrel\longrightarrow {\big( H^* \otimes H^* \big)}^* \, $.
   Then  $ \sigma_{{}_\cF} $  is a 2--cocycle, resp.\ a  $ \varrho $--comatrix,
   for  $ H^* \, $.  Moreover, in the first case there exists a canonical Hopf
   algebra isomorphism
  $ \, {\big( H^* \big)}_{\sigma_{{}_\cF}} \cong {\big( H^\cF \,\big)}^* \, $.
 \vskip3pt
   {\it (b)}\,  Let  $ \sigma $  be a 2--cocycle, resp.\ a  $ \varrho $--comatrix,
   for  $ H \, $;  assume that we have a natural identification
   $ \, {(H \otimes H)}^* = H^* \otimes H^* \, $
   (e.g., if  $ H $  is finite-dimensional), and let  $ \cF_\sigma $
   be the image of  $ \sigma \, $  in  $ \, H^* \otimes H^* \, $  via this identification.
   Then  $ \cF_\sigma $  is a twist, resp.\ an  $ R $--matrix,  for  $ H^* \, $.
   Moreover, in the first case there exists a canonical Hopf algebra isomorphism
   $ \, {\big( H^* \big)}^{\cF_\sigma} \cong {\big( H_\sigma \big)}^* \, $.
\qed
\end{prop}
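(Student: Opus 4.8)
The two statements are dual to each other, and the argument for (a) will in fact establish equivalences --- twist $\iff$ $2$--cocycle and $R$--matrix $\iff$ $\varrho$--comatrix --- the relevant pairings being non-degenerate by reflexivity of $H$ (in the topological sense recalled in \S\ref{sec: QG's-QDP-deform.'s} for QUEAs and QFSHAs). Granting this, the plan is to prove (a) in full and then deduce (b): writing $\cF_\sigma$ for the element of $H^*\otimes H^*$ attached to a $2$--cocycle, resp.\ a $\varrho$--comatrix, $\sigma$ of $H$ through the identification $(H\otimes H)^* = H^*\otimes H^*$, one has $\sigma_{\cF_\sigma} = \sigma$ modulo reflexivity $H^{**}\cong H$, so the equivalences of (a), read with $H^*$ in the role of $H$, show that $\cF_\sigma$ is a twist, resp.\ an $R$--matrix, for $H^*$ and that $H_\sigma \cong \big((H^*)^{\cF_\sigma}\big)^*$; dualizing once more then gives $(H^*)^{\cF_\sigma}\cong (H_\sigma)^*$.

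For (a) I would first record the structure of $H^*$: product transpose to $\Delta_H$, i.e.\ $(\phi\,\psi)(x) = \phi\big(x_{(1)}\big)\,\psi\big(x_{(2)}\big)$; coproduct $\Delta_{H^*}$ transpose to $m_H$, i.e.\ $\Delta_{H^*}(\phi)$ is characterized by $\phi(a\,b) = \phi_{(1)}(a)\,\phi_{(2)}(b)$; unit $\epsilon_H$; counit $\phi\mapsto\phi(1_H)$; antipode transpose to $\cS_H$. With $\, \cF = \sum f_1\otimes f_2 \,$, the bilinear form $\sigma_\cF$ on $H^*$ is $\, \sigma_\cF(\phi,\psi) = \sum \phi(f_1)\,\psi(f_2) \,$. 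Then: pairing $(\epsilon\otimes\id)(\cF)$ and $(\id\otimes\epsilon)(\cF)$ against an arbitrary $\psi\in H^*$ turns \eqref{eq: unitary-cond.'s x twist} into \eqref{eq: unitary-cond.'s x 2-cocycle}, so $\cF$ is unitary iff $\sigma_\cF$ is. Writing $\cF^{-1} = \sum g_1\otimes g_2$ and using the defining relation of $\Delta_{H^*}$, one gets $\big(\sigma_\cF * \sigma_{\cF^{-1}}\big)(\phi,\psi) = \sum \phi(f_1\,g_1)\,\psi(f_2\,g_2) = \phi(1_H)\,\psi(1_H)$ since $\cF\,\cF^{-1} = 1\otimes 1$, so $\sigma_\cF$ is convolution-invertible, with inverse $\sigma_{\cF^{-1}}$, iff $\cF$ is invertible. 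Finally --- this is the crux --- pairing the twist identity \eqref{eq: twist-cond.'s}, an equality in $H^{\otimes 3}$, against an arbitrary $\phi\otimes\psi\otimes\chi\in (H^*)^{\otimes 3}$ and repeatedly invoking $\phi(a\,b) = \phi_{(1)}(a)\,\phi_{(2)}(b)$ collapses its left-hand side to $\sum \sigma_\cF\big(\phi_{(1)},\psi_{(1)}\big)\,\sigma_\cF\big(\phi_{(2)}\psi_{(2)},\chi\big)$ --- the left-hand side of the $2$--cocycle identity \eqref{eq: 2-cocyc-cond.'s} --- and its right-hand side to the right-hand side of \eqref{eq: 2-cocyc-cond.'s}. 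The same bookkeeping applied to \eqref{eq: R-mat_properties} turns it termwise into \eqref{eq: rho-comat_prop.'s}, settling the $\varrho$--comatrix case.

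For the Hopf algebra isomorphism $(H^*)_{\sigma_\cF}\cong \big(H^\cF\big)^*$ I would take the identity map on the underlying space $H^*$. Since $H^\cF$ has the same product, unit and counit as $H$, while the $\sigma_\cF$--deformation does not alter the coproduct of $H^*$, the two Hopf algebras already agree as coalgebras and share unit and counit, so only the products and the antipodes need comparing. The product of $\big(H^\cF\big)^*$ is transpose to $\Delta^\cF = \cF\,\Delta(\cdot)\,\cF^{-1}$; evaluating $(\phi\cdot\psi)(x) = (\phi\otimes\psi)\big(\cF\,\Delta(x)\,\cF^{-1}\big)$ and expanding through $\Delta_{H^*}$ reproduces exactly $\sum \sigma_\cF\big(\phi_{(1)},\psi_{(1)}\big)\,\phi_{(2)}\big(x_{(1)}\big)\,\psi_{(2)}\big(x_{(2)}\big)\,(\sigma_\cF)^{-1}\big(\phi_{(3)},\psi_{(3)}\big) = \big(\phi\cdot_{\sigma_\cF}\psi\big)(x)$, the $\sigma_\cF$--deformed product on $H^*$; performing the same manipulation on $\, v = \sum_\cF \cS(f'_1)\,f'_2 \,$ (notation as in \S\ref{twist-deform.'s}) and on the antipode formula of \S\ref{2-cocyc-deform.'s} identifies the antipodes of the two sides.

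The only genuinely delicate point --- the ``main obstacle'', modest as it is --- is the topological bookkeeping for QUEAs and QFSHAs: there $H^*$, $\big(H^{\otimes 2}\big)^*$, the embedding $H\otimes H\hookrightarrow \big(H^*\otimes H^*\big)^*$ and the identity $(H\otimes H)^* = H^*\otimes H^*$ must all be understood with the appropriate topological duals and completed tensor products, and one must check that $\Delta_{H^*}$ lands in the relevant completion of $H^*\otimes H^*$ and that the evaluations above are legitimate. This is exactly the mutual duality of QUEAs and QFSHAs recalled in \S\ref{sec: QG's-QDP-deform.'s}, so it amounts to working in the right framework rather than to a new difficulty; in the finite-dimensional or purely algebraic case there is nothing to check.
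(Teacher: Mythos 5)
Your proposal is correct: the paper itself gives no proof of this proposition (it is declared ``trivial (an exercise in Hopf theory)'' and closed with \qed), and your argument is exactly the standard transposition argument that is intended --- pairing \eqref{eq: twist-cond.'s} and \eqref{eq: R-mat_properties} against $\phi\otimes\psi\otimes\chi$ to recover \eqref{eq: 2-cocyc-cond.'s} and \eqref{eq: rho-comat_prop.'s}, identifying the transpose of $\Delta^{\cF}$ with the $\sigma_{{}_\cF}$--deformed product, and deducing \textit{(b)} from \textit{(a)} via reflexivity under the identification $(H\otimes H)^*=H^*\otimes H^*$. The topological caveat you flag is the right one and is handled by the QUEA/QFSHA duality of \S\ref{equiv-&-(stand)-duality}.
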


\vskip11pt

\begin{free text}  \label{Hopf-morph.'s}
 \textbf{Hopf morphisms from  $ R $--matrices  and  $ \varrho \, $--comatrices.}
 Let  $ H $  be a Hopf algebra, possibly in topological sense.
 We assume that its (possibly topological) finite dual  $ H^* $
 is a Hopf algebra as well (possibly in a topological sense):
 the easiest example is when  $ H $  is defined over a field and it is finite dimensional,
 but the extension to less trivial cases is straightforward in a wide variety of situations.
                                                             \par
   Hereafter we recall some well-known constructions, somewhat shortly:
   further details can be found, e.g., in  \cite{CP},  \cite{KS}  and  \cite{Mj}.
\end{free text}

\vskip7pt

\begin{prop}  \label{prop: morph.'s from R-mat}  {\ }
 \vskip3pt
   (a)\;  Every  $ R $--matrix  $ \, \cR = \cR_1 \otimes \cR_2 \, $
   (using Sweedler's-like notation) for  $ H $
   provides two Hopf algebra morphisms
  $$
  \overleftarrow{\Phi}_{\!\cR} : H^* \!\longrightarrow H^{\text{\rm cop}}
  \;\;  \big(\, \eta \, \mapsto \, \eta(\cR_1) \, \cR_2 \,\big) \quad ,
  \qquad  \overrightarrow{\Phi}_{\!\cR} : H^* \!\longrightarrow H^{\text{\rm op}}
  \;\; \big(\, \eta \, \mapsto \, \cR_1 \, \eta(\cR_2) \,\big)
  $$
 \vskip1pt
   (b)\;
If  $ \, \cR \, $  is an  $ R $--matrix  for  $ H $,  and
$ \cR^{-1} $  is its inverse, then  $ \, \overleftarrow{\Phi}_{\!\cR} \, $,
resp.\  $ \, \overrightarrow{\Phi}_{\!\cR} \, $,  is convolution invertible,
with convolution inverse  $ \, \overleftarrow{\Phi}_{\!\cR^{-1}} \, $,
resp.\  $ \, \overrightarrow{\Phi}_{\!\cR^{-1}} \, $.
\end{prop}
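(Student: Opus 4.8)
The plan is to prove part~(a) by checking directly that $\overleftarrow{\Phi}_{\!\cR}$, resp.\ $\overrightarrow{\Phi}_{\!\cR}$, is a morphism of bialgebras into $H^{\text{cop}}$, resp.\ $H^{\text{op}}$; since a bialgebra morphism between Hopf algebras automatically intertwines the antipodes, it is then a Hopf algebra morphism for free. Throughout I use that $H^*$ carries the convolution product $(\eta\,\psi)(x) = \eta\big(x_{(1)}\big)\,\psi\big(x_{(2)}\big)$, the coproduct dual to the product of $H$ (so that $\langle\eta_{(1)}\otimes\eta_{(2)},x\otimes y\rangle = \langle\eta,x\,y\rangle$), counit $\eta\mapsto\eta(1_H)$ and antipode $\eta\mapsto\eta\circ\cS$, and that $H^{\text{cop}}$ has the algebra of $H$ with the opposite coalgebra while $H^{\text{op}}$ has the coalgebra of $H$ with the opposite algebra. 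In the intended topological applications $H^*$ is the (topological) finite dual and the tensor products are the appropriate completed ones, but the manipulations are the standard ones and carry over verbatim.

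For the algebra property of $\overleftarrow{\Phi}_{\!\cR}$ I would write $\overleftarrow{\Phi}_{\!\cR}(\eta\,\psi) = \langle\eta\otimes\psi,\Delta(\cR_1)\rangle\,\cR_2$ and substitute the first identity of \eqref{eq: R-mat_properties}, $(\Delta\otimes\id)(\cR)=\cR_{13}\,\cR_{23}$, to split $\Delta(\cR_1)\otimes\cR_2$; the legs paired against $\eta$ and $\psi$ recombine into $\overleftarrow{\Phi}_{\!\cR}(\eta)\,\overleftarrow{\Phi}_{\!\cR}(\psi)$ for the product of $H$, i.e.\ of $H^{\text{cop}}$. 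Unitality is immediate from $\overleftarrow{\Phi}_{\!\cR}(\epsilon) = (\epsilon\otimes\id)(\cR) = 1_H$, which holds by \eqref{eq: unitary-cond.'s x twist} since every $R$--matrix is unitary (Remark \ref{rmks: about R-matrix twists}(b)). The same computation for $\overrightarrow{\Phi}_{\!\cR}$ uses instead the second identity of \eqref{eq: R-mat_properties}, $(\id\otimes\Delta)(\cR)=\cR_{13}\,\cR_{12}$, and produces $\overrightarrow{\Phi}_{\!\cR}(\psi)\,\overrightarrow{\Phi}_{\!\cR}(\eta)$ --- it reverses the product, exactly as required for a morphism into $H^{\text{op}}$.

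Dually, for the coalgebra property I would compute $\Delta_{H^{\text{cop}}}\!\big(\overleftarrow{\Phi}_{\!\cR}(\eta)\big) = \langle\eta,\cR_1\rangle\,\Delta^{\text{op}}(\cR_2)$, substitute $(\id\otimes\Delta)(\cR)=\cR_{13}\,\cR_{12}$, and split $\langle\eta,x\,y\rangle$ as $\langle\eta_{(1)}\otimes\eta_{(2)},x\otimes y\rangle$ to recognise $\overleftarrow{\Phi}_{\!\cR}\big(\eta_{(1)}\big)\otimes\overleftarrow{\Phi}_{\!\cR}\big(\eta_{(2)}\big)$; the counit condition is $\epsilon\big(\overleftarrow{\Phi}_{\!\cR}(\eta)\big) = \langle\eta,(\id\otimes\epsilon)(\cR)\rangle = \eta(1_H)$, again by unitarity. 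For $\overrightarrow{\Phi}_{\!\cR}$ one runs the mirror argument with $(\Delta\otimes\id)(\cR)=\cR_{13}\,\cR_{23}$ and the ordinary coproduct of $H^{\text{op}}$; alternatively this whole half follows from the $\overleftarrow{}$--case applied to $H^{\text{op,cop}}$ and the $R$--matrix $\cR_{21}$ (Remark \ref{rmks: about R-matrix twists}(c)), after identifying $\big(H^{\text{cop}}\big)^*=\big(H^*\big)^{\text{op}}$.

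For part~(b) I would compute the convolution in $\Hom\big(H^*,H^{\text{cop}}\big)$ directly: $\big(\overleftarrow{\Phi}_{\!\cR}*\overleftarrow{\Phi}_{\!\cR^{-1}}\big)(\eta) = \overleftarrow{\Phi}_{\!\cR}\big(\eta_{(1)}\big)\,\overleftarrow{\Phi}_{\!\cR^{-1}}\big(\eta_{(2)}\big) = \big(\langle\eta,-\rangle\otimes\id\big)\big(\cR\,\cR^{-1}\big) = \big(\langle\eta,-\rangle\otimes\id\big)(1_H\otimes 1_H) = \eta(1_H)\,1_H$, which is precisely the convolution unit $u_H\circ\epsilon_{H^*}$; the reversed composite is identical via $\cR^{-1}\cR = 1_H\otimes 1_H$, and the case of $\overrightarrow{\Phi}_{\!\cR}$ is the same. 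The only point requiring real care, both here and in~(a), is the bookkeeping of which opposite structure each arrow lands in --- equivalently, which of the two hexagon identities in \eqref{eq: R-mat_properties} gets invoked --- together with, in the quantum-group setting, checking that the finite-dual pairing and completed tensor products are the ones making all of the above pairings legitimate.
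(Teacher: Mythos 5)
Your proposal is correct and follows essentially the same route as the paper's proof: the left-hand identity of \eqref{eq: R-mat_properties} gives multiplicativity, the right-hand one gives comultiplicativity, unitarity handles unit and counit, and part (b) is the direct convolution computation using $\cR\,\cR^{-1}=1\otimes 1$. You merely write out explicitly the bookkeeping that the paper leaves implicit, including the standard fact that a bialgebra morphism of Hopf algebras automatically respects antipodes.
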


\begin{proof}
 \textit{(a)}\,  Consider for instance the case of  $ \overleftarrow{\Phi}_{\!\cR} \, $.
 The left-hand side, resp.\ the right-hand side, of  \eqref{eq: R-mat_properties}
 implies that it preserves multiplication, resp.\ comultiplication.
 On the other hand, from the right-hand side of  \eqref{eq: twist-cond.'s},
 the identity  $ \, \big( \epsilon \otimes \id \big)(\cR) = 1 \, $  implies that
 $ \overleftarrow{\Phi}_{\!\cR} \, $  preserves the unit, and the identity
 $ \, \big( \id \otimes \epsilon \big)(\cR) = 1 \, $  implies that it preserves the counit.
 The proof for  $ \overrightarrow{\Phi}_{\!\cR} $  is similar, namely it is left-right symmetric.
 \vskip3pt
   \textit{(b)}\,  This follows directly from definitions, through sheer bookkeeping.
\end{proof}

\vskip7pt

   The previous result has its dual counterpart, whose proof is again straightforward:

\vskip7pt

\begin{prop}  \label{prop: morph.'s from rho-comat}  {\ }
 \vskip3pt
   (a)\;  Every  $ \varrho $--comatrix  $ \, \rho \, $  for  $ H $
   provides two Hopf algebra morphisms
  $$
  \overleftarrow{\Psi}_{\!\rho} :
  H \longrightarrow {\big( H^* \big)}^{\text{\rm cop}} \, ,
  \;\; \ell \, \mapsto \, \rho(\ell \, , \, - \,)  \quad ,
  \qquad  \overrightarrow{\Psi}_{\!\rho} :
  H \longrightarrow {\big( H^* \big)}^{\text{\rm op}} \, ,  \;\; \ell \, \mapsto \,
  \rho(\, - \, , \ell \,)
  $$
 \vskip1pt
   (b)\;
 If  $ \, \rho \, $  is a  $ \varrho $--comatrix  for  $ H $,  and  $ \rho^{-1} $
 is its (convolution) inverse, then  $ \, \overleftarrow{\Psi}_{\!\rho} \, $,  resp.\
  $ \, \overrightarrow{\Psi}_{\!\rho} \, $,  is convolution invertible, with
  convolution inverse  $ \, \overleftarrow{\Psi}_{\!\rho^{-1}} \, $,  resp.\
  $ \, \overrightarrow{\Psi}_{\!\rho^{-1}} \, $.
   \hfill  $ \square $
\end{prop}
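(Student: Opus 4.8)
The plan is to mirror, essentially verbatim, the proof of Proposition~\ref{prop: morph.'s from R-mat}, transported through Hopf duality: the two identities in \eqref{eq: rho-comat_prop.'s} are precisely the duals of the two in \eqref{eq: R-mat_properties}, so the whole argument reduces to bookkeeping with the defining relations, exactly as announced for the analogous Propositions~\ref{prop: morph.'s from R-mat} and \ref{prop: duality-deforms}.

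First I would check that $\overleftarrow{\Psi}_{\!\rho}$ is well defined, i.e.\ that $\ell \mapsto \rho(\ell,-)$ does land in $H^*$ (respectively in the relevant finite or topological dual), which is immediate since $\rho \in {\big( H^{\otimes 2}\big)}^*$. Then I verify the four structural compatibilities. Evaluating $\overleftarrow{\Psi}_{\!\rho}(\ell\,\ell')$ at an arbitrary $a \in H$ and invoking the first identity in \eqref{eq: rho-comat_prop.'s} gives $\rho(\ell\,\ell',a) = \rho\big(\ell,a_{(1)}\big)\,\rho\big(\ell',a_{(2)}\big) = \big(\overleftarrow{\Psi}_{\!\rho}(\ell) * \overleftarrow{\Psi}_{\!\rho}(\ell')\big)(a)$, so $\overleftarrow{\Psi}_{\!\rho}$ preserves the product, since the product of ${\big(H^*\big)}^{\text{cop}}$ coincides with that of $H^*$. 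For the coproduct, recall that $\Delta_{{(H^*)}^{\text{cop}}}$ sends $\phi$ to the functional $a \otimes b \mapsto \phi(b\,a)$ on $H \otimes H$; the second identity in \eqref{eq: rho-comat_prop.'s} yields $\rho(\ell,b\,a) = \rho\big(\ell_{(1)},a\big)\,\rho\big(\ell_{(2)},b\big)$, which is exactly $\big(\overleftarrow{\Psi}_{\!\rho}\otimes\overleftarrow{\Psi}_{\!\rho}\big)\big(\Delta(\ell)\big)$ evaluated on $a\otimes b$ --- and here is where the ``$\text{cop}$'' decoration is forced. Unitality and counitality are then automatic, using that every $\varrho$-comatrix is unitary by Remark~\ref{rmks: about rho-comatrix 2-cocycles}\textit{(b)\/}: $\overleftarrow{\Psi}_{\!\rho}(1_H)(a) = \rho(1_H,a) = \epsilon(a)$ shows $\overleftarrow{\Psi}_{\!\rho}(1_H) = \epsilon = 1_{H^*}$, and the counit of $H^*$ evaluated on $\overleftarrow{\Psi}_{\!\rho}(\ell)$ is $\rho(\ell,1_H) = \epsilon(\ell)$. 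The assertion for $\overrightarrow{\Psi}_{\!\rho} : H \to {\big(H^*\big)}^{\text{op}}$ is proved identically, now putting $\ell$ in the second slot of $\rho$: multiplicativity then rests on the second identity in \eqref{eq: rho-comat_prop.'s} and comultiplicativity on the first, and this is what forces the ``$\text{op}$'' decoration in place of ``$\text{cop}$''.

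For part \textit{(b)\/} I would compute the convolution $\overleftarrow{\Psi}_{\!\rho} * \overleftarrow{\Psi}_{\!\rho^{-1}}$ inside $\Hom_\k\!\big(H,{(H^*)}^{\text{cop}}\big)$ --- which, as a convolution algebra, is literally $\Hom_\k(H,H^*)$, since ``$\text{cop}$'' affects neither the source coalgebra nor the target algebra. Evaluating at $\ell$ and then at $a$ produces $\rho\big(\ell_{(1)},a_{(1)}\big)\,\rho^{-1}\big(\ell_{(2)},a_{(2)}\big)$, and this equals $\epsilon(\ell)\,\epsilon(a)$ precisely because $\rho^{-1}$ is the convolution inverse of $\rho$ in ${\big(H^{\otimes 2}\big)}^*$ with respect to the tensor-coalgebra structure of $H \otimes H$; hence $\overleftarrow{\Psi}_{\!\rho} * \overleftarrow{\Psi}_{\!\rho^{-1}}$ is the unit $\ell \mapsto \epsilon(\ell)\,1_{H^*}$ of that convolution algebra, and the opposite order is symmetric. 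The same computation, after exchanging the two tensor slots, handles $\overrightarrow{\Psi}_{\!\rho}$.

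I do not anticipate any genuine obstacle: as with Propositions~\ref{prop: morph.'s from R-mat} and \ref{prop: duality-deforms}, this is ``an exercise in Hopf theory''. The only points deserving attention are (i) matching each of the two identities in \eqref{eq: rho-comat_prop.'s} with the correct structure map --- product versus coproduct --- and confirming that this pins down exactly the ``$\text{cop}$'', resp.\ ``$\text{op}$'', appearing in the statement; and (ii), in the topological setting relevant to QUEA's and QFSHA's, checking that the pairing $H \times H^* \to \k$ and the identifications of the various $\Hom$-spaces with (completed) tensor products behave as in the purely algebraic case, which they do up to the usual technicalities.
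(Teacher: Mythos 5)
Your proof is correct and follows exactly the route the paper intends: the paper omits the argument entirely (declaring it the "straightforward" dual counterpart of Proposition~\ref{prop: morph.'s from R-mat}), and your verification supplies precisely that dualized bookkeeping, with the identities in \eqref{eq: rho-comat_prop.'s} correctly matched to product and coproduct so as to force the ``cop''/``op'' decorations. Nothing to object to.
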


\vskip9pt

\begin{rmk}  \label{rmk: duality morph.'s from R-mat/rho-comat}
 Inasmuch as any  $ R $--matrix,  resp.\ any  $ \varrho $--comatrix,
 for  $ H $  is a  $ \varrho $--comatrix, resp.\ an  $ R $--matrix,
 for the dual Hopf algebra  $ H^* $
 --- cf.\  Proposition \ref{prop: duality-"polar"deforms}  ---
 applying  Proposition \ref{prop: morph.'s from R-mat}  to  $ H^* $  we get
 Proposition \ref{prop: morph.'s from rho-comat},  and, conversely,
 applying  Proposition \ref{prop: morph.'s from rho-comat}  to  $ H^* $
 we get  Proposition \ref{prop: morph.'s from R-mat}.
 In the same spirit, the following result about Hopf algebras in duality
 follows from the very definitions:
\end{rmk}

\vskip7pt

\begin{prop}  \label{prop: duality-morph.'s_R-mat/rho-comat}
 Let  $ K $  and  $ \varGamma $  be two Hopf algebras
 (over the same ground ring, and possibly topological)
 that are dual to each other, say  $ \, \varGamma = K^* \, $  and
 $ \, K = \varGamma^\star \, $  for suitably defined dual functors  $ \, {(\ )}^* $
 and  $ \, {(\ )}^\star \, $.  Let also  $ \, \cR = \rho \, $  be an  $ R $--matrix  for
 $ K $  and a  $ \varrho $--comatrix  for  $ \varGamma $
 --- applying  Proposition \ref{prop: duality-deforms}.
 Then for the morphisms in  Proposition \ref{prop: morph.'s from R-mat}
 and  Proposition \ref{prop: morph.'s from rho-comat}  we have canonical
 identifications
  $$
  \overleftarrow{\Phi}_{\!\cR} \! = \! \overleftarrow{\Psi}_{\!\rho} \, : \, K^* \! =
  \varGamma \! \relbar\joinrel\longrightarrow \! {\big( \varGamma^\star \! =
  K \big)}^{\text{\rm cop}}
   \; ,  \hskip9pt
      \overrightarrow{\Phi}_{\!\cR} \! = \! \overrightarrow{\Psi}_{\!\rho} \, : \, K^* \! =
      \varGamma \! \relbar\joinrel\longrightarrow \! {\big( \varGamma^\star \! =
      K \big)}^{\text{\rm op}}  \hskip11pt   \eqno \square
      $$
\end{prop}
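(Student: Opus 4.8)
The plan is to unwind all the definitions involved and to check that, once the two canonical dualities are written out explicitly, the two pairs of maps literally agree as functions; since each of them is already known to be a Hopf algebra morphism by Proposition~\ref{prop: morph.'s from R-mat} and Proposition~\ref{prop: morph.'s from rho-comat}, this pointwise identification is all that is required. First I would spell out the hypothesis ``$\cR = \rho$'': writing $\, \cR = \cR_1 \otimes \cR_2 \,$ in Sweedler-like notation, the identification of the $ R $--matrix $\cR$ for $\, K = \varGamma^\star \,$ with the $ \varrho $--comatrix $\rho$ for $\, \varGamma = K^* \,$ furnished by Proposition~\ref{prop: duality-deforms} is exactly the statement that
$$
  \rho(\eta,\xi) \; = \; \eta(\cR_1)\,\xi(\cR_2)
  \qquad   \forall \;\; \eta, \xi \, \in \, \varGamma = K^* \; ,
$$
that is, $\rho$ is the restriction to $\, \varGamma \otimes \varGamma \,$ of evaluation against $\, \cR \in K \otimes K \,$.

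Next I would compute the ``comatrix'' morphisms on a generic $\, \eta \in \varGamma = K^* \,$. By definition $\, \overleftarrow{\Psi}_{\!\rho}(\eta) = \rho(\eta,-) \in \varGamma^\star \,$; viewing $\, \varGamma^\star = K \,$ via the posited (mutually inverse) dual functors, this functional sends $\, \xi \in \varGamma = K^* \,$ to $\, \rho(\eta,\xi) = \eta(\cR_1)\,\xi(\cR_2) = \xi\big(\eta(\cR_1)\,\cR_2\big) \,$, hence it \emph{is} the element $\, \eta(\cR_1)\,\cR_2 \in K \,$, which is precisely $\, \overleftarrow{\Phi}_{\!\cR}(\eta) \,$. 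The same computation with the two tensor legs exchanged gives $\, \overrightarrow{\Psi}_{\!\rho}(\eta) = \cR_1\,\eta(\cR_2) = \overrightarrow{\Phi}_{\!\cR}(\eta) \,$. It then remains only to see that the target Hopf structures are matched by the dualities: the product of $\varGamma^\star$ is dual to the coproduct of $\, \varGamma = K^* \,$ and hence equals the product of $K$, while the coproduct of $\varGamma^\star$ is dual to the product of $K^*$ and hence equals the coproduct of $K$; therefore $\, \varGamma^\star = K \,$ as Hopf algebras and $\, {(\varGamma^\star)}^{\text{\rm cop}} = K^{\text{\rm cop}} \,$, so the codomain of $\overleftarrow{\Psi}_{\!\rho}$ coincides with that of $\overleftarrow{\Phi}_{\!\cR} \,$, and similarly with ``op'' in place of ``cop''. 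This establishes all the asserted identifications.

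The only genuinely delicate ingredient — and the point I would handle with care — is the topological/infinite-dimensional bookkeeping hidden in the phrases ``$\varGamma = K^*$'', ``$K = \varGamma^\star$'' and ``$ {(H \otimes H)}^* = H^* \otimes H^* $'': one must work inside a choice of dual functors for which the canonical map $\, K \otimes K \lhook\joinrel\relbar\joinrel\longrightarrow {\big( K^* \otimes K^* \big)}^* \,$ is available and has $\rho$ in its image, and for which $\, {(\,\cdot\,)}^* \,$ and $\, {(\,\cdot\,)}^\star \,$ are mutually inverse on the relevant (finite-type) subspaces, so that a functional on $K^*$ of the form $\, \xi \mapsto \xi(k) \,$ is unambiguously the element $\, k \in K \,$. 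This is exactly the framework recalled just before Proposition~\ref{prop: morph.'s from R-mat}, and it is precisely what Proposition~\ref{prop: duality-deforms} is designed to supply; granting it, the argument above is a routine diagram chase and the proposition follows — consistently, as already noted in Remark~\ref{rmk: duality morph.'s from R-mat/rho-comat}, with the fact that applying Proposition~\ref{prop: morph.'s from R-mat} to $H^*$ recovers Proposition~\ref{prop: morph.'s from rho-comat}, and conversely.
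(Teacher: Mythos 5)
Your proposal is correct and is exactly the argument the paper intends: the proposition is stated in the text as one that "follows from the very definitions" (with the proof omitted), and your explicit unwinding — identifying $\rho(\eta,\xi)$ with $\eta(\cR_1)\,\xi(\cR_2)$ and reading $\rho(\eta,-)$ back as the element $\eta(\cR_1)\,\cR_2$ of $K = \varGamma^\star$ — is the routine check being alluded to. Your closing remarks on the topological duality bookkeeping are an appropriate, if brief, acknowledgment of the only subtlety involved.
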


\medskip

\subsection{Quantum groups}  \label{subsec: QG's & QG-deform's}  {\ }
 \vskip7pt
   We recall hereafter the basic notions on quantum groups,
   in the shape of either quantized universal enveloping algebras (=QUEA's)
   or quantized formal series Hopf algebras (=QFSHA's).
   Both types of ``quantum groups'' are Hopf algebras in a topological sense,
   that we shall presently fix.

\vskip13pt

\begin{free text}  \label{prel_Q-Groups}
 \textbf{Classical and quantum preliminaries.}
 Hereafter we fix a base field  $ \Bbbk $  of characteristic zero.
 We recall the following from  \cite{CP}.
 \vskip5pt
   For any Lie algebra  $ \lieg $  over  $ \Bbbk \, $,
   its universal enveloping algebra  $ U(\lieg) $
   has a canonical structure of Hopf algebra,
   which is cocommutative and connected.
   If  $ \lieg $  is also a Lie bialgebra, with Lie cobracket
   $ \delta \, $,  then  $ \delta $  uniquely extends to define a
   \textsl{Poisson cobracket\/}
   $ \, \delta : U(\lieg) \longrightarrow U(\lieg) \otimes U(\lieg) \, $,
   just by imposing that it fulfill the co-Leibnitz identity
   $ \; \delta(x\,y) \, = \, \delta(x) \, \Delta(y) + \Delta(x) \, \delta(y) \; $.
   Conversely, if the Hopf algebra  $ U(\lieg) $  is actually even a Hopf
   \textsl{co-Poisson\/}  algebra, then its Poisson co-bracket  $ \delta $
   maps  $ \lieg $  into  $ \, \lieg \otimes \lieg \, $,
   thus yielding a Lie cobracket for  $ \lieg $  that makes the latter into a
   Lie bialgebra.
                                                                    \par
   Dually, let  $ G $  be any formal algebraic group  $ G $  over  $ \Bbbk \, $:
   by this we loosely mean that  $ G $  is the spectrum of its formal function algebra
   $ \fg \, $,  the latter being a topological Hopf algebra which is commutative and
   $ I $--adically  complete, where  $ \, I := \textsl{Ker}\,(\epsilon) \, $
   is the augmentation ideal of $ \fg \, $.  Then  $ G $  is a (formal) Poisson group
   if and only if its formal function algebra  $ \fg $  is a  \textsl{Poisson\/}
   (formal) Hopf algebra, with respect to some Poisson bracket  $ \{\,\ ,\ \} \, $.
   In this case, the cotangent space  $ \, I \big/ I^2 \, $  of  $ G $
   has a Lie bracket induced by  $ \{\,\ ,\ \} $  via
   $ \; [x,y] := \big\{ x' , y' \big\} \; \big( \text{mod\ } I^2 \,\big) \; $  for all
   $ \, x , y \in I \big/ I^2 \, $  with  $ \, x = x' \; \big( \text{mod\ } I^2 \,\big) \, $,
   $ \, y = y' \; \big( \text{mod\ } I^2 \,\big) \, $:  this makes  $ I \big/ I^2 $
   into a Lie algebra, but its dual  $ \, \lieg = \Lie\,(G) := {\big( I \big/ I^2 \,\big)}^* \, $
   is also a Lie algebra (the tangent Lie algebra to  $ G \, $)
   and the two structures are compatible, so that
   $ \, \lieg^\star := I \big/ I^2 \, $  is a  \textsl{Lie bialgebra\/}  indeed.
 \vskip5pt
   We come now to  \textsl{quantizations\/}
   of the previous co-Poisson/Poisson structures.
 \vskip5pt
   Let  $ \cT_\otimeshat $  be the category whose objects are all topological
   $ \Bbbk[[h]] $--modules  which are topologically free
 (i.e.~isomorphic to  $ V[[h]] $  for some  $ \Bbbk $--vector  space  $ V $,
 with the  $ h $--adic topology)
 and whose morphisms are the  $ \Bbbk[[h]] $--linear  maps (which are
 automatically continuous).  This is a tensor category w.r.t.~the tensor product
 $ \, T_1 \otimeshat T_2 \, $  defined to be the separated  $ h $--adic
 completion of the algebraic tensor product
 $ \, T_1 \otimes_{\Bbbk[[h]]} T_2 \, $  (for all  $ T_1 $,  $ T_2 \in \cT_\otimeshat $).
                                            \par
   Let  $ \cP_\otimestilde $  be the category whose objects are all topological
   $ \Bbbk[[h]] $--modules  isomorphic to modules of the type  $ {\Bbbk[[h]]}^E $
%
%
 for some set  $ E \, $:  \,these are complete w.r.t.~to the weak topology
%
%
and whose morphisms in  $ \cP_\otimestilde $  are the  $ \Bbbk[[h]] $--linear
 continuous maps.
 This is a tensor category w.r.t.~the tensor product
 $ \, P_1 \otimestilde P_2 \, $  defined to be the completion of the
 algebraic tensor product  $ \, P_1 \otimes_{\Bbbk[[h]]} P_2 \, $
 w.r.t.~the weak topology:  therefore
 $ \, P_i \cong {\Bbbk[[h]]}^{E_i} $  ($ i = 1 $,  $ 2 $)  yields
 $ \, \, P_1 \otimestilde P_2 \cong {\Bbbk[[h]]}^{E_1 \times E_2} \, $
 (for all $ P_1 $,  $ P_2 \in \cP_\otimestilde $).
                                            \par
   Note that the objects of  $ \cT_\otimeshat $  and of  $ \cP_\otimestilde $
   are complete and separated w.r.t.~the
$ h $--adic  topology, so one has  $ \, X \cong X_0[[h]] \, $
for every such object  $ X $,  with  $ \, X_0 := X \Big/ \hbar\,X \, $.
                                            \par
   We denote by  $ \cHA_\otimeshat $  the subcategory of  $ \cT_\otimeshat $
   whose objects are all the Hopf algebras in  $ \cT_\otimeshat $
   and whose morphisms are all the Hopf algebra morphisms in $ \cT_\otimeshat $.
   Similarly, we call  $ \cHA_\otimestilde $  the subcategory
of  $ \cP_\otimestilde $  whose objects are all the Hopf algebras in
$ \cP_\otimestilde $  and whose morphisms are all the Hopf algebra morphisms in
$ \cP_\otimestilde $.  To simplify notation, we shall usually drop the subscripts
``$ \; \widehat{\ } \; $''  and  ``$ \; \widetilde{\ } \; $''  from the symbol
``$ \, \otimes \, $''.
 \vskip3pt
Finally, when dealing with any  $ \kh $--module  $ M $  by such notation as  $ \cO\big( \hbar^s \big) $
 we shall mean any (unspecified) element belonging to  $ \, \hbar^s \, M \, $,  \,for all  $ \, s , n \in \NN \, $;
 \,in other words, for any  $ \, x \in M \, $  by writing  $ \, x = \cO\big( \hbar^s \big) \, $  we mean that
 $ \, x \equiv 0 \, \big(\, \text{mod} \; \hbar^s M \,\big) \, $.
\end{free text}

\vskip3pt

   We are ready now to define quantum groups, in two different incarnations:

\vskip11pt

\begin{free text}  \label{QUEA's}
 \textbf{Quantized Universal Enveloping Algebras (=QUEA's).}
 Retain notation as in  \S \ref{prel_Q-Groups} above.
                                               \par
   A  \emph{quantized universal enveloping algebra}
   --- or QUEA in short ---   is a (topological) Hopf algebra
   $ U_\hbar $  in  $ \cHA_\otimeshat $  such that
 $ \, U_0 := U_\hbar \big/ \hbar\,U_\hbar \, $  is a connected,
 cocommutative Hopf algebra over  $ \Bbbk $   --- or, equivalently,
 $ \, U_0 \, $  is isomorphic to an enveloping algebra  $ U(\lieg) $	
 for some Lie algebra  $ \lieg \, $.  Then the formula
\begin{equation}  \label{eq: def-cobracket}
   \delta(x) \, := \, \frac{\; \Delta\big(x'\big) - \Delta^{\text{op}}\big(x'\big) \;}{\hbar} \;
   \mod \hbar\,U_\hbar^{\,\widehat{\otimes}\,2}
\end{equation}
 --- where  $ \, x' \in U_\hbar \, $  is any lift of  $ \, x \in \lieg \, $
 ---   defines a co-Poisson structure on  $ \, U_0 = U(\lieg) \, $,
 \,hence a Lie bialgebra structure on  $ \lieg \, $.
 In this case, we say that  $ U_\hbar $  is a  \emph{quantization\/}
 of the co-Poisson Hopf algebra  $ U(\lieg) \, $,  or
 (with a slight abuse of language) of the Lie bialgebra  $ \lieg \, $;  \,conversely,
 $ U(\lieg) $   --- or just  $ \lieg $  alone ---   is the  \textit{semiclassical limit\/}  of
 $ U_\hbar \, $.  We summarize it writing  $ \, \uhg := U_\hbar \, $.
 In the following, we denote by  $ \, \QUEA \, $
  the full subcategory of  $ \, \cHA_\otimeshat $  whose objects are all of the QUEAs.
\end{free text}

\vskip9pt

\begin{free text}  \label{QFSHA's}
 \textbf{Quantized Formal Series Hopf Algebras (=QFSHA's).}
 Retain again notation as in  \S \ref{prel_Q-Groups} above.
                                               \par
   A  \emph{quantized formal series Hopf algebra}
   --- or QFSHA in short ---   is a (topological) Hopf algebra  $ F_\hbar $  in
   $ \cHA_\otimestilde $  such that
 $ \, F_0 := F_\hbar \big/ \hbar\,F_\hbar \, $  is a commutative,
 $ I $--adically  complete topological Hopf algebra over  $ \Bbbk \, $,
 where  $ I $  is the augmentation ideal   --- or, equivalently,  $ \, F_0 \, $
 is isomorphic to the algebra of functions for some
 formal algebraic group  $ \fg \, $.
 Then the formula
\begin{equation}  \label{eq: def-bracket}
   \{x,y\} \, := \, \frac{\; x' \, y' \, - \, y' \, x' \;}{\;\hbar\;} \; \mod \hbar\,F_\hbar
\end{equation}
 --- where  $ \, x' , y' \in F_\hbar \, $  are lifts of  $ \, x, y \in \fg \, $  ---
 defines a Poisson bracket in  $ \fg \, $,  thus making  $ G $
 into a (formal) Poisson group.  In this case, we say that  $ F_\hbar $
 is a  \emph{quantization\/}  of the Poisson Hopf algebra  $ \fg \, $,  or
 (stretching a point) of the formal Poisson group  $ G \, $;  \,conversely,  $ \fg $
 --- or just  $ G $  alone ---   is the  \textit{semiclassical limit\/}  of  $ F_\hbar \, $.
 We summarize it writing  $ \, \fhg := F_\hbar \, $.
 In the following, we denote by  $ \, \QFSHA \, $  the full subcategory of
  $ \, \cHA_\otimestilde $  whose objects are all of the QFSHAs.
\end{free text}

\vskip9pt

\begin{free text}  \label{equiv-&-(stand)-duality}
 \textbf{Equivalence and duality between quantizations.}  If  $ H_1 $,  $ H_2 $,
 are two QUEA's, respectively two QFSHA's, we say that  \textsl{$ H_1 $
 is equivalent to  $ H_2 $},  and we write  $ \, H_1 \equiv H_2 \, $,
 if there is an isomorphism $ \, \varphi \, \colon H_1 \cong H_2 \, $
 (in  $ \QUEA $,  resp.\ in  $ \QFSHA $)
 such that  $ \, \varphi = \hbox{\it id} \mod h \, $.
 In particular, in both cases the semiclassical limit of either
 $ H_1 $  or  $ H_2 $  is the same.
 \vskip5pt
%
By their very construction, the categories
  $ \QUEA $  and  $ \QFSHA $  are dual to each other (w.\ r.\ to the natural,
  topological linear duality functors in both directions).
 In detail, by  \textit{dual\/}  of any  $ \, U_\hbar \in \QUEA \, $,  denoted
 $ U_\hbar^{\,*} $,  we take the set of all  $ \kh $--linear  functions from
 $ U_\hbar $  to  $ \kh $  (which are automatically continuous w.\ r.\ to the  $ \hbar $--adic
 topology): this is naturally an object in $ \QFSHA \, $.
 On the other hand, by  \textit{dual\/}  of any  $ \, F_\hbar \in \QFSHA \, $,
 denoted  $ F_\hbar^{\,\star} $,  we take the set of all maps from  $ F_\hbar $  to
 $ \kh $  that are continuous with respect to the  $ \hbar $--adic  topology  on  $ \kh $
 and to the  $ I_\hbar $--adic  topology on  $ F_\hbar \, $,
 with  $ \, I_\hbar := \hbar\,F_\hbar + \Ker\big(\epsilon_{F_\hbar}\big) \, $;
 this  $ F_\hbar^{\,\star} $  is an object in  $ \QUEA \, $.  Finally,  $ {(\ )}^* $
 and  $ {(\ )}^\star $  are contravariant functors inverse to each other
   --- cf.\ \cite{Ga1}.
\end{free text}

\vskip9pt

   We finish this part with a trivial, technical result, that we will use several times:

\vskip13pt

\begin{lema}   \label{lemma: technic-Hopf}
 Let  $ H $  be a Hopf algebra (possibly topological).
 We denote by  $ \, [\ \,,\ ] \, $  the commutator operation in  $ H $,
 and write  $ \, H^+ := \Ker\,(\epsilon) \, $.   Then:
 \vskip3pt
   (a)\,  There exists a splitting into direct sum  $ \, H = \Bbbk \oplus H^+ \, $.
   With respect to that splitting, every  $ \, z \in H \, $  uniquely splits into
   $ \, z = \epsilon(z) + z^+ \, $  with  $ \, z^+ := z - \epsilon(z) \in H^+ \, $.
 \vskip2pt
   (b)\,  For any  $ \, x, y \in H \, $  we have  $ \; [x\,,y\,] = \big[x^+\,,y^+\big] \; $
--- see (a) ---   so  $ \; [H\,,H\,] \subseteq H^+ \, $.
 \vskip2pt
   (c)\,  Assume that  $ \, H = \fhg \, $  is a QFSHA, with  $ \, J_\hbar := H^+ \, $.
   Then we have  $ \; [H\,,H\,] = \big[ J_\hbar \, , J_\hbar \,\big] \subseteq \hbar \, J_\hbar \, $,
   \,and more in general (for all  $ \, k, r_1, r_2, r_3, \dots , r_k, s \in \NN_+ \, $)
  $$
  \big[ J_\hbar^{\,r_1} , \big[ J_\hbar^{\,r_2} , \big[ J_\hbar^{\,r_3} ,
  \cdots \big[ J_\hbar^{\,r_k} , J_\hbar^{\,s} \big] \cdots \big] \big] \big]  \; \subseteq \;
  (1 - \delta_{s,0}) \, {\textstyle \prod\limits_{i=1}^k} (1 - \delta_{r_i,0}) \,
  \hbar^k \, J_\hbar^{\,r_1 + r_2 + r_3 + \cdots + r_k + s - k}
  $$
 \vskip0pt
   (d)\,  For any  $ \, z \in H \, $,  \,we have
  $$
  \Delta(z)  \; = \;  \epsilon(z) \cdot 1 \otimes 1 \, + \, z^+ \otimes 1 \, + \, 1 \otimes z^+ \, +
  \, {\big(z_{(1)}\big)}^+ \otimes {\big(z_{(2)}\big)}^+
  $$
 or also
  $$
  \Delta(z)  \; = \;  -\epsilon(z) \cdot 1 \otimes 1 \, + \, z \otimes 1 \, +
  \, 1 \otimes z \, + \, {\big(z_{(1)}\big)}^+ \otimes {\big(z_{(2)}\big)}^+
  $$
 in particular, for  $ \; \nabla := \Delta - \Delta^{\!\text{\rm op}} \, $  this yields
 $ \; \nabla(z) \in {\Ker\,(H)}^{\otimes 2} \, $,  \,with
  $$
  \nabla(z)  \; = \;  {\big(z_{(1)}\big)}^{\!+} \!\otimes\! {\big(z_{(2)}\big)}^{\!+} \! -
  {\big(z_{(2)}\big)}^{\!+} \!\otimes\! {\big(z_{(1)}\big)}^{\!+}   \eqno \qed
  $$
\end{lema}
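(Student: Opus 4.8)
The plan is to verify the four parts in order, each being a direct computation with the Hopf-algebra axioms.

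For part \textit{(a)}, I would use that $\epsilon$ is a $\Bbbk$-algebra map with $\epsilon(1_H)=1$, so $\Bbbk\cdot 1_H$ is a complement to $H^+=\Ker(\epsilon)$: given $z\in H$, the element $z-\epsilon(z)1_H$ lies in $H^+$ since $\epsilon\big(z-\epsilon(z)1_H\big)=\epsilon(z)-\epsilon(z)=0$, and the decomposition $z=\epsilon(z)1_H+z^+$ is unique because $\Bbbk 1_H\cap H^+=0$. For part \textit{(b)}, I would write $x=\epsilon(x)+x^+$, $y=\epsilon(y)+y^+$ and expand $[x,y]$ by bilinearity of the commutator; the three terms involving a scalar factor vanish since scalars are central, leaving $[x,y]=[x^+,y^+]$. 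Then $\epsilon\big([x^+,y^+]\big)=\epsilon(x^+)\epsilon(y^+)-\epsilon(y^+)\epsilon(x^+)=0$, so $[H,H]\subseteq H^+$.

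For part \textit{(c)}, the base case $[H,H]=[J_\hbar,J_\hbar]\subseteq\hbar\,J_\hbar$ is the key input: it follows from Lemma~\ref{lemma: technic-Hopf}\textit{(b)} together with the QFSHA structure, since $F_0=F_\hbar/\hbar F_\hbar=\fg$ is commutative, so the image of any commutator in $F_0$ vanishes, forcing $[J_\hbar,J_\hbar]\subseteq\hbar F_\hbar$, and since commutators already lie in $J_\hbar=F_\hbar^+$ we get $[J_\hbar,J_\hbar]\subseteq\hbar F_\hbar\cap J_\hbar=\hbar J_\hbar$. To get the finer statement with powers $J_\hbar^{\,r}$, I would first note that $J_\hbar$ is an ideal (being the augmentation ideal), that $[J_\hbar^{\,a},J_\hbar^{\,b}]\subseteq J_\hbar^{\,a+b-1}$ by the Leibniz-type expansion of a commutator of products (each term keeps at least $a+b-1$ factors from $J_\hbar$ because one commutator $[J_\hbar,J_\hbar]$ is replaced, but that commutator lands in $\hbar J_\hbar$, gaining one factor of $\hbar$ and losing only one power of $J_\hbar$), so in fact $[J_\hbar^{\,a},J_\hbar^{\,b}]\subseteq\hbar\,J_\hbar^{\,a+b-1}$. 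Iterating this inductively on $k$ gives the nested-commutator bound; the factors $(1-\delta_{s,0})\prod_i(1-\delta_{r_i,0})$ just record that the whole expression is zero if any exponent is $0$ (since $J_\hbar^{\,0}=H$ and $[H,J_\hbar^{\,m}]$ still lands in $J_\hbar^{\,m}$ but the $\hbar$-count would be off, so one treats that degenerate case separately — actually more simply, $J_\hbar^{0}$ contributes the scalar $1$, whose commutator is $0$). I would present this iteration cleanly by induction on $k$.

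For part \textit{(d)}, the first identity comes from applying part \textit{(a)} inside each tensor leg of $\Delta(z)=z_{(1)}\otimes z_{(2)}$: write $z_{(1)}=\epsilon(z_{(1)})1+(z_{(1)})^+$ and $z_{(2)}=\epsilon(z_{(2)})1+(z_{(2)})^+$, expand the tensor product into four terms, and use the counit axioms $\epsilon(z_{(1)})z_{(2)}=z=z_{(1)}\epsilon(z_{(2)})$ to simplify the mixed terms to $z^+\otimes 1$ and $1\otimes z^+$, while the scalar-scalar term is $\epsilon(z)1\otimes 1$ (using $\epsilon(z_{(1)})\epsilon(z_{(2)})=\epsilon(z)$). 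The second identity is just the first with $z^+=z-\epsilon(z)1$ substituted in the first two terms. Finally $\nabla(z)=\Delta(z)-\Delta^{\text{op}}(z)$ is obtained by subtracting the formula from its flip; the symmetric terms $\epsilon(z)1\otimes1$, $z^+\otimes1+1\otimes z^+$ cancel, leaving exactly $(z_{(1)})^+\otimes(z_{(2)})^+-(z_{(2)})^+\otimes(z_{(1)})^+\in (H^+)^{\otimes2}$. I do not anticipate a genuine obstacle anywhere; the only point requiring mild care is the bookkeeping of $\hbar$-powers versus $J_\hbar$-powers in the induction of part \textit{(c)}, which is where I would be most careful to state the inductive hypothesis precisely.
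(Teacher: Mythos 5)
Your proof is correct. The paper states this lemma without proof (it is explicitly flagged as "a trivial, technical result"), and your argument is exactly the routine verification intended — the splitting off of $\Bbbk\cdot 1_H$, centrality of scalars for (b), commutativity of $F_\hbar/\hbar F_\hbar$ plus a Leibniz expansion and induction on $k$ for (c), and the counit axiom for (d); the only tiny slip is your parenthetical remark that $J_\hbar^{\,0}$ "contributes the scalar $1$" (by the usual convention $J_\hbar^{\,0}=H$, not $\Bbbk\cdot 1$), but since the statement restricts all exponents to $\NN_+$ that degenerate case never actually arises and your main induction is unaffected.
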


\medskip

\subsection{The Quantum Duality Principle}  \label{subsec: QDP}  {\ }
 \vskip7pt
   We recall hereafter the main facets of the so-called ``Quantum Duality Principle'',
   which establishes an explicit equivalence between the category of QUEA's
   and that of QFSHA's (whereas linear duality provides an
   \textsl{anti\/}equivalence  instead).  Further details can be found in  \cite{Ga1}.

\medskip

\begin{definition}  \label{def: Drinfeld's functors}
 \textit{(Drinfeld's functors)}  We define Drinfeld's functors from  $ \QUEA $  to
 $ \QFSHA $  and viceversa as follows:
 \vskip3pt
   \textit{(a)}\,  Let  $ \uhg $  be any QUEA, and assume for simplicity that
   $ \lieg $  is finite-dimensio\-nal.
   Let  $ \, \iota : \kh \relbar\joinrel\longrightarrow \uhg \, $  and
   $ \, \epsilon : \uhg \relbar\joinrel\longrightarrow \kh \, $
   be its unit and counit maps; moreover, for every  $ \, n \in \NN \, $  set
   $ \, \delta_n := {(\id - \iota \circ \epsilon)}^{\otimes n} \circ \Delta^{(n-1)} \, $   --- mapping  $ \uhg $  to  $ {\uhg}^{\widehat{\otimes}\, n} $.  Then we define
 \vskip5pt
   \hfill   $ {\uhg}'  \; := \;  \Big\{\, \eta \in \uhg \,\Big|\; \delta_n(\eta) \in
   \hbar^n\,{\uhg}^{\otimes n} \;\; \forall \; n \in \NN \,\Big\} $   \hfill {\ }
 \vskip3pt
   This defines the functor  $ {(\ )}' $, from  $ \QUEA $  to  $ \QFSHA \, $,
 onto objects: then onto morphisms it is clearly defined by taking restriction.
 \vskip3pt
   \textit{(b)}\,  Let  $ \fhg $  be any QFSHA,
   and assume for simplicity that  $ G $  be finite-dimensional.
   Let  $ \, \epsilon_{\scriptscriptstyle F} : \fhg \relbar\joinrel\longrightarrow \kh \, $
   be its counit map, and consider also
   $ \, I_{\fhg} := \hbar\,\fhg + \textsl{Ker\/}(\epsilon_{\scriptscriptstyle F}) \, $.
   Then we define
 \vskip5pt
   \qquad \qquad \hfill   $ {\fhg}^\vee  \; := \;  \hbar $--adic  completion of
   $ \, \sum_{n \geq 0} \hbar^{-n} I_{\fhg}^{\;{}^{\scriptstyle n}} $   \hfill {\ }
 \vskip3pt
   This defines the functor  $ {(\ )}^\vee $, from  $ \QFSHA $  to  $ \QUEA \, $,
 onto objects: onto morphisms, we define it via scalar extension
 --- from  $ \kh $  to  $ \khp $  ---   followed by restriction and completion.
 \hfill  $ \diamondsuit $
\end{definition}

\vskip9pt

   The original recipes for these functors were given in  \cite[\S 7]{Dr};
   the corresponding proofs (that everything is well-defined, etc.) can be found in  \cite{Ga1}.
   Indeed, the overall result is very strong, involving linear duality for Lie bialgebras,
   as follows:

\vskip11pt

\begin{theorem}  \label{thm: QDP}
 \textit{(``The quantum duality principle''; cf.~\cite{Dr}, \cite{Ga1})}
 \vskip3pt
   (a)\,  The assignments  $ \, H \mapsto H' \, $  and  $ \, H \mapsto H^\vee \, $
   respectively define functors of tensor categories
   $ \, \QUEA \relbar\joinrel\longrightarrow \QFSHA \, $  and
   $ \, \QFSHA \relbar\joinrel\longrightarrow \QUEA \, $,  that are
   \textsl{inverse to each other},  thus yielding an equivalence of catefories.
 \vskip3pt
   (b)\,  For all  $ \, \uhg \in \QUEA \, $  and all  $ \, \fhg \in \QFSHA \, $  one has
  $$  {\uhg}' \Big/ h \, {\uhg}' \, = \; F[[G^*\hskip0,7pt]] \quad ,
\qquad\;  {\fhg}^\vee \Big/ h \, {\fhg}^\vee \, = \; U(\lieg^*)  $$
 \vskip-3pt
\noindent
 that is,
 if  $ \, \uhg $  is a quantization of  $ \, \ug $  then  $ \, {\uhg}' $  is a quantization of
 $ \, F[[G^*\hskip0,7pt]] \, $,  and if  $ \, \fhg $  is a quantization of  $ \, \fg $  then
 $ \, {\fhg}^\vee $  is a quantization of  $ \, U(\lieg^*) \, $.
 \vskip3pt
   (c)\,  Both Drinfeld's functors preserve equivalence, that is
   $ \, H_1 \equiv H_2 \, $  implies that  $ \, {H_1}' \equiv {H_2}' \, $  and
   $ \, {H_1}^{\!\vee} \equiv {H_2}^{\!\vee} \, $  in either case.   \hfill  $ \square $
\end{theorem}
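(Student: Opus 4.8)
Following \cite{Dr,Ga1}, the plan is to establish (a) and (b) together and then read off (c). On morphisms, $H\mapsto H'$ is just restriction and $H\mapsto H^\vee$ is scalar extension followed by restriction and completion (Definition \ref{def: Drinfeld's functors}), so functoriality is automatic, and monoidality --- $\big(H_1\,\widehat{\otimes}\,H_2\big)'\cong H_1'\,\widehat{\otimes}\,H_2'$ and $\big(K_1\,\widetilde{\otimes}\,K_2\big)^\vee\cong K_1^\vee\,\widetilde{\otimes}\,K_2^\vee$ --- follows from the defining recipes (for the second, from $I_{K_1\widetilde{\otimes}K_2}=I_{K_1}\,\widetilde{\otimes}\,K_2+K_1\,\widetilde{\otimes}\,I_{K_2}$ one gets $\hbar^{-n}I_{K_1\widetilde{\otimes}K_2}^{\,n}=\sum_{a+b=n}\big(\hbar^{-a}I_{K_1}^{\,a}\big)\,\widetilde{\otimes}\,\big(\hbar^{-b}I_{K_2}^{\,b}\big)$, and symmetrically for $(\ )'$). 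Hence everything reduces to three statements about objects: (1)\, $\uhg'$ is a QFSHA with $\uhg'/\hbar\,\uhg'=\fgs$; (2)\, $\fhg^\vee$ is a QUEA with $\fhg^\vee/\hbar\,\fhg^\vee=\ugs$; (3)\, $\big(\uhg'\big)^\vee\cong\uhg$ and $\big(\fhg^\vee\big)'\cong\fhg$, naturally. Of these, (3) is the crux.

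\emph{On (1).} With $\delta_n={(\id-\iota\circ\epsilon)}^{\otimes n}\circ\Delta^{(n-1)}$ the argument rests on two identities. Expanding each tensor leg of $\Delta^{(n-1)}$ as $\epsilon(\cdot)\,1+(\cdot)^+$ (Lemma \ref{lemma: technic-Hopf}(a)) one obtains, for $x,y\in\uhg$, a formula $\delta_n(x\,y)=\sum_{A\cup B=\{1,\dots,n\}}\pm\,\big(\delta_{|A|}(x)\big)_{A}\big(\delta_{|B|}(y)\big)_{B}$, where $(\cdot)_A$ places the tensor factors into the slots indexed by $A$ (and $1$ elsewhere) and only the pairs with $A\cup B=\{1,\dots,n\}$ survive the $(\id-\iota\epsilon)$--legs, so that $|A|+|B|=n+|A\cap B|\ge n$ in every surviving term; coassociativity gives $\big(\delta_m\otimes\delta_n\big)\circ\Delta=\delta_{m+n}$. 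From the first identity and the bound $\delta_k\big(\uhg'\big)\subseteq\hbar^k\,\uhg^{\otimes k}$ it follows that $\delta_n(x\,y)\in\hbar^{|A|+|B|}\,\uhg^{\otimes n}\subseteq\hbar^n\,\uhg^{\otimes n}$ for $x,y\in\uhg'$, so $\uhg'$ is a $\kh$--subalgebra; from the second, $\Delta\big(\uhg'\big)\subseteq\uhg'\,\widehat{\otimes}\,\uhg'$ and $\cS\big(\uhg'\big)\subseteq\uhg'$, so $\uhg'$ is a topological Hopf subalgebra of $\uhg$, $\hbar$--torsion free and $\hbar$--adically closed in $\uhg$, hence topologically free. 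Moreover $\uhg'/\hbar\,\uhg'$ is commutative: for $x,y\in\uhg'$ the summands of $\delta_n(x\,y)$ and of $\delta_n(y\,x)$ indexed by disjoint pairs $A,B$ agree (factors in disjoint slots commute) and cancel in $\delta_n\big([x,y]\big)$, while the surviving summands have $|A\cap B|\ge1$; thus $\delta_n\big([x,y]\big)\in\hbar^{n+1}\,\uhg^{\otimes n}$ for all $n$, i.e.\ $\hbar^{-1}[x,y]\in\uhg'$ and $[x,y]\in\hbar\,\uhg'$. Therefore $\uhg'/\hbar\,\uhg'$ is a commutative, complete topological Hopf algebra, hence $F[[G'\,]]$ for a formal group $G'$; computing its Poisson bracket via $\{\bar x,\bar y\}=\overline{\hbar^{-1}[x,y]}$ and the resulting structure on the cotangent space, one identifies the cotangent Lie bialgebra of $G'$ with $\lieg$ carrying its original bracket and its cobracket $\delta$ of \eqref{eq: def-cobracket}, so $G'=G^*$ and $\uhg'/\hbar\,\uhg'=\fgs$.

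\emph{On (2).} Put $I_\hbar:=\hbar\,\fhg+\Ker(\epsilon)=I_{\fhg}$, so that $\fhg^\vee$ is the $\hbar$--adic completion of $\sum_{n\ge0}\hbar^{-n}I_\hbar^{\,n}$ inside $\fhg\otimes_{\kh}\khp$. Closure under product is immediate since $\hbar^{-m}I_\hbar^{\,m}\cdot\hbar^{-n}I_\hbar^{\,n}=\hbar^{-(m+n)}I_\hbar^{\,m+n}$; closure under $\Delta$ follows from $\Delta(I_\hbar)\subseteq I_\hbar\,\widehat{\otimes}\,\fhg+\fhg\,\widehat{\otimes}\,I_\hbar$, whence $\hbar^{-n}\Delta\big(I_\hbar^{\,n}\big)\subseteq\sum_{a+b=n}\big(\hbar^{-a}I_\hbar^{\,a}\big)\,\widehat{\otimes}\,\big(\hbar^{-b}I_\hbar^{\,b}\big)\subseteq\fhg^\vee\,\widehat{\otimes}\,\fhg^\vee$; and $\cS(I_\hbar)=I_\hbar$. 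So $\fhg^\vee$ is a topological Hopf subalgebra of $\fhg\otimes_{\kh}\khp$, $\hbar$--torsion free and, by construction, $\hbar$--adically complete; checking separatedness $\bigcap_n\hbar^n\fhg^\vee=0$ gives topological freeness. For the semiclassical limit I would invoke Lemma \ref{lemma: technic-Hopf}: by part (d), for $x\in\Ker(\epsilon)$ the element $\hbar^{-1}x$ is primitive modulo $\hbar$, the correction term $\hbar^{-1}\,{(x_{(1)})}^{+}\otimes{(x_{(2)})}^{+}$ lying in $\hbar^{-1}\Ker(\epsilon)^{\otimes2}\subseteq\hbar\,(\fhg^\vee)^{\widehat{\otimes}\,2}$; by part (c) these elements form a Lie subalgebra, $\big[\hbar^{-1}x,\hbar^{-1}y\big]=\hbar^{-2}[x,y]\in\hbar^{-1}\Ker(\epsilon)\subseteq\fhg^\vee$, on which the bracket induced modulo $\hbar$ coincides, via \eqref{eq: def-bracket}, with the one induced on $\Ker(\epsilon_{\fg})/\Ker(\epsilon_{\fg})^2$ by the Poisson bracket of $\fg$, i.e.\ with the Lie bialgebra $\lieg^\star=\lieg^*$ of \S\ref{prel_Q-Groups}. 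Since these primitives generate $\fhg^\vee/\hbar\,\fhg^\vee$ --- which is accordingly connected and cocommutative --- a Poincar\'e--Birkhoff--Witt count yields $\fhg^\vee/\hbar\,\fhg^\vee=\ugs$.

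\emph{On (3) and (c).} To get $\big(\uhg'\big)^\vee\cong\uhg$ I would unravel the definitions: writing $J':=\Ker(\epsilon|_{\uhg'})$ and $I_{\uhg'}=\hbar\,\uhg'+J'$, the $\delta_\bullet$--estimates above show $\hbar^{-n}I_{\uhg'}^{\,n}\subseteq\uhg$ for every $n$, and a careful analysis of $\hbar$--adic orders shows these submodules exhaust $\uhg$, giving $\big(\uhg'\big)^\vee=\uhg$; the mirror identity $\big(\fhg^\vee\big)'=\fhg$ I would prove symmetrically, or deduce from the first by means of the linear--duality functors of \S\ref{equiv-&-(stand)-duality} once their compatibility with Drinfeld's functors is checked. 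Naturality being clear, (a) follows together with (b). Finally, for (c): if $\varphi\colon H_1\to H_2$ is an isomorphism with $\varphi\equiv\id\pmod{\hbar}$, then, $\varphi$ being a Hopf morphism, $\delta_n^{H_2}\circ\varphi=\varphi^{\otimes n}\circ\delta_n^{H_1}$ (and the same for $\varphi^{-1}$), so $\varphi$ restricts to an isomorphism $H_1'\cong H_2'$ which, by the naturality of the identifications in (b), is again $\equiv\id\pmod{\hbar}$; likewise $\varphi$ yields $H_1^\vee\equiv H_2^\vee$. The main obstacle is precisely (3): both assignments are ``obviously'' sub-functors, but proving the reverse inclusions $\uhg\subseteq\big(\uhg'\big)^\vee$ and $\fhg\subseteq\big(\fhg^\vee\big)'$, together with the topological freeness of $\uhg'$ and $\fhg^\vee$, requires genuine $\hbar$--adic estimates rather than formal bookkeeping.
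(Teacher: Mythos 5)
The paper does not actually prove this theorem: it is imported verbatim from the literature (the statement closes with $\square$ and the surrounding text explicitly defers all proofs to \cite{Dr} and \cite{Ga1}), so there is no in-paper argument to compare yours against. Judged against the standard proof in \cite{Ga1}, your outline is the right one and most of its ingredients are correctly identified: the inversion formula $\Delta^{(n-1)}=\sum_{A}j_A\circ\delta_{|A|}$ leading to $\delta_n(xy)=\sum_{A\cup B=\{1,\dots,n\}}j_A\big(\delta_{|A|}(x)\big)\,j_B\big(\delta_{|B|}(y)\big)$ (note: there are \emph{no} signs $\pm$ in this formula --- each surviving term appears with coefficient $+1$, and the inequality $|A|+|B|=n+|A\cap B|\ge n$ is what closes $\uhg'$ under multiplication); the cancellation of the $A\cap B=\emptyset$ terms in $\delta_n([x,y])$ giving commutativity of $\uhg'/\hbar\,\uhg'$; the Hopf-ideal argument for $\Delta\big(\fhg^\vee\big)$; and the identification of the semiclassical limits via the induced (co)Poisson structures. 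For the identification $\uhg'/\hbar\,\uhg'=\fgs$ you should also say why the quotient is a formal power series algebra of the right size --- this is where the generation statement recorded in the paper as Proposition \ref{prop: exist-lifts-x_i} enters; without it ``$G'=G^*$'' is only a statement about the cotangent Lie bialgebra, not yet about the formal group.

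The genuine gap is exactly where you locate it: item (3), the mutual-inverse property $\big(\uhg'\big)^\vee=\uhg$ and $\big(\fhg^\vee\big)'=\fhg$. Your text for this step consists of the sentence ``a careful analysis of $\hbar$--adic orders shows these submodules exhaust $\uhg$,'' which is an announcement, not an argument. The inclusion $\big(\uhg'\big)^\vee\subseteq\uhg$ is indeed formal, but the reverse inclusion requires producing, for each $z\in\uhg$, an $N$ and a $z'\in I_{\uhg'}^{\,N}$ with $z\equiv\hbar^{-N}z'\pmod{\hbar\,\uhg}$ --- precisely the estimate the present paper later re-derives and uses in Lemma \ref{lemma: FACT-on-z-z'} \emph{assuming} Theorem \ref{thm: QDP}(a); here it must be proved from scratch (in \cite{Ga1} this is done via the filtration of $U(\lieg)$ and the lifts of Proposition \ref{prop: exist-lifts-x_i}). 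Likewise $\fhg\subseteq\big(\fhg^\vee\big)'$ needs the bound $\delta_n\big(I_{\fhg}^{\,m}\big)\subseteq\sum\hbar^{?}(\cdots)$ made quantitative. Since the whole categorical equivalence in (a), and hence the naturality you invoke in (c), rests on (3), the proposal as written establishes (b) and the ``subfunctor'' halves of (a) but not the theorem itself.
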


\vskip7pt

   In a very precise sense, Drinfeld's functors are dual to each other: namely
   --- cf.\ \cite{Ga1}  ---   one has (with notation as in  \S \ref{equiv-&-(stand)-duality})
\begin{equation}   \label{eq: QDP-duality}
  {\big( {\uhg}^* \big)}^{\!\vee}  =  {\big( {\uhg}' \,\big)}^\star   \qquad  \text{and}
  \qquad   {\big( {\fhg}^\vee \,\big)}^* \! = \! {\big( {\fhg}^\star \big)}'
\end{equation}
                                                                   \par
   On the other hand, it is worth stressing a strong asymmetry between these
   functors.  Indeed, the definition of  $ {\fhg}^\vee $  is pretty  \textsl{concrete\/}
   (through an explicit generating  procedure) whereas that of  $ {\uhg}' $  is somewhat
   \textsl{implicit\/}  (it is described as the set of solution of a system of countably many
   equations), hence way more tough to work out.  \textsl{Nevertheless},
   an alternative description for  $ {\uhg}' $  is available, that we shall make use of in the
   later on, namely the following  (cf.\ \cite[Proposition 3.1.2]{Ga2}):

\vskip11pt

\begin{prop}  \label{prop: exist-lifts-x_i}
 For any\/  $ \k $--basis  $ {\{ \overline{y}_i \}}_{i \in I} $  of\/  $ \lieg \, $,
 \,there are  $ \, y_i \in \uhg \, $  such that:
 \vskip3pt
   (a)\;  $ \, \epsilon(y_i) = 0 \, $,  $ \, \big(\, y_i \!\mod \hbar\,\uhg \big) =
   \overline{y}_i \, $  and  $ \; y'_i := \hbar\,y_i \, \in \, {\uhg}' \, $  for all
   $ \, i \in I \, $;
 \vskip3pt
   (b)\;  $ \, {\uhg}' $  is the completion of the unital\/  $ \kh $--subalgebra  of
   $ \, \uhg $  generated by all the  $ x'_i $'s  with respect to its  $ I'_\hbar $--adic
   topology, where  $ I'_\hbar $  is the ideal (in that subalgebra) generated by
   $ \hbar $  and all the  $ x'_i $'s,  so that
  $ \; {\uhg}' = \Bbbk\big[\big[ {\{ x'_i \}}_{i \in I} \cup \{\hbar\} \big]\big] \; $.
  \hfill  $ \square $
%
\end{prop}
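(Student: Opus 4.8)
The plan is to read off both statements from the Quantum Duality Principle (Theorem~\ref{thm: QDP}) together with the elementary structure theory of QFSHAs. By Theorem~\ref{thm: QDP}, $(\uhg)'$ is a QFSHA whose semiclassical limit is $F[[G^*]]$, the formal function algebra of the dual formal Poisson group $G^*$; since $\Lie(G^*) = \lieg^*$, the cotangent space $\mathfrak{m}\big/\mathfrak{m}^2$ of $F[[G^*]]$, with $\mathfrak{m} := \Ker\,\epsilon$, is canonically $(\lieg^*)^* = \lieg$. Now for \emph{(a)}: if $\eta \in (\uhg)'$ with $\epsilon(\eta) = 0$, then the case $n = 1$ of the defining condition of $(\uhg)'$ gives $\eta = \delta_1(\eta) \in \hbar\,\uhg$, so $\eta = \hbar\,y$ for the unique $y \in \uhg$ with $\eta = \hbar\,y$ (which then has $\epsilon(y) = 0$); the case $n = 2$ gives $\hbar\,\delta_2(y) = \delta_2(\eta) \in \hbar^2\,\uhg^{\widehat{\otimes}\, 2}$, hence $\delta_2(y) \in \hbar\,\uhg^{\widehat{\otimes}\, 2}$, i.e.\ $y \bmod \hbar\,\uhg$ is primitive in $U(\lieg)$ and so lies in $\lieg$. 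This produces a $\k$--linear map $\vartheta \colon \Ker\big(\epsilon|_{(\uhg)'}\big) \longrightarrow \lieg$, $\eta \mapsto \big(\hbar^{-1}\eta \bmod \hbar\,\uhg\big)$, which vanishes on $\Ker\big(\epsilon|_{(\uhg)'}\big)^{\,2}$ and on $\hbar\,\Ker\big(\epsilon|_{(\uhg)'}\big)$, hence descends to a map from the cotangent space $\mathfrak{m}\big/\mathfrak{m}^2$ of $F[[G^*]]$ to $\lieg$ which one checks to be exactly the canonical identification above; in particular $\vartheta$ is surjective. Thus, given the prescribed basis $\{\overline{y}_i\}_{i \in I}$ of $\lieg$, choose $\eta_i \in \Ker\big(\epsilon|_{(\uhg)'}\big)$ with $\vartheta(\eta_i) = \overline{y}_i$, write $\eta_i = \hbar\,y_i$, and set $y'_i := \eta_i = \hbar\,y_i$; these $y_i$ satisfy every requirement of (a).

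\textbf{Part (b).} Write $A$ for the unital $\kh$--subalgebra of $\uhg$ generated by the $x'_i := y'_i$, let $I'_\hbar \subseteq A$ be the ideal generated by $\hbar$ and the $x'_i$, and let $\widehat{A}$ be the $I'_\hbar$--adic completion of $A$. The inclusion $\widehat{A} \subseteq (\uhg)'$ is formal: $(\uhg)'$ is a Hopf algebra by Theorem~\ref{thm: QDP}, hence a $\kh$--subalgebra of $\uhg$; it is closed in $\uhg$, being the intersection over $n$ of the preimages $\delta_n^{-1}\big(\hbar^n\uhg^{\widehat{\otimes}\, n}\big)$ of $\hbar$--adically closed sets under continuous maps; it contains each $x'_i$ by (a); and $\bigcap_N {(I'_\hbar)}^N \subseteq \bigcap_N \hbar^N\uhg = 0$ makes the natural map $\widehat{A} \to \uhg$ injective. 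For the reverse inclusion one invokes the structure of QFSHAs: a QFSHA $F_\hbar$ is complete for the $I_\hbar$--adic topology ($I_\hbar := \hbar\,F_\hbar + \Ker\,\epsilon$) and coincides with the completion — taken for the ideal generated by $\hbar$ together with those elements — of the $\kh$--subalgebra generated by any family of lifts of a $\k$--basis of the cotangent space of $F_\hbar\big/\hbar F_\hbar$; this is the standard bootstrap along $\hbar$ from the corresponding fact for the $I$--adically complete commutative Hopf algebra $F_\hbar\big/\hbar F_\hbar$. Applied to $F_\hbar = (\uhg)'$, whose cotangent space is represented — by the previous paragraph — by the family $\{x'_i\}$, this yields $(\uhg)' = \widehat{A}$ together with the freeness of its underlying topological $\kh$--module on the ordered monomials in the $x'_i$, which is the content of the final equality $(\uhg)' = \Bbbk\big[\big[\,{\{x'_i\}}_{i \in I} \cup \{\hbar\}\,\big]\big]$.

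\textbf{Main obstacle.} The genuinely delicate points are, in (a), checking that $\vartheta$ descends to the \emph{canonical} cotangent identification $\mathfrak{m}\big/\mathfrak{m}^2 \cong \lieg$ — equivalently, that $\delta_1$ and $\delta_2$ already detect the cotangent space of $G^*$ faithfully — and, in (b), the inclusion $(\uhg)' \subseteq \widehat{A}$: here the naive ``reduce mod $\hbar$, subtract, divide by $\hbar$'' recursion breaks down, because $\hbar\,\uhg \cap (\uhg)'$ is strictly larger than $\hbar\,(\uhg)'$ (e.g.\ $\hbar\,y_i$ lies in the former while $y_i \notin (\uhg)'$), forcing one to run the approximation along the $I_\hbar$--adic filtration of $(\uhg)'$ rather than the $\hbar$--adic one — precisely the content of the QFSHA structure statement invoked above, and the technical heart of the proof. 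If one prefers to bypass the QDP in part (a), the same lifts $y_i$ can be built directly by successive approximation so that $\delta_n(y_i) \in \hbar^{n-1}\uhg^{\widehat{\otimes}\, n}$ for all $n$ — whence $\delta_n(\hbar\,y_i) \in \hbar^n\uhg^{\widehat{\otimes}\, n}$ — the inductive step using the coassociativity relation $\delta_{n+1} = \big(\delta_2 \otimes \id^{\,\widehat{\otimes}\,(n-1)}\big)\circ\delta_n$ on $\Ker\,\epsilon$ and a vanishing statement for the relevant cohomology of the cocommutative connected Hopf algebra $U(\lieg)$ (a reflection of the Poincar\'e--Birkhoff--Witt theorem) to annihilate the obstruction at each stage by a correction lying in $\hbar^{n-1}\uhg$.
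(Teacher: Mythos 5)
The paper does not actually prove this statement: it is imported verbatim from \cite[Proposition 3.1.2]{Ga2} (ultimately resting on \cite[\S 3]{Ga1}), so there is no internal proof to compare against. Judged on its own, your proposal has the right architecture and correctly locates the hard points, but both load-bearing steps are asserted rather than proven. In part \textit{(a)}, the computation showing that any $\eta \in \Ker\big(\epsilon|_{\uhg'}\big)$ has the form $\hbar\,y$ with $\overline{y}$ primitive is fine, but the whole content of the claim is the \emph{surjectivity} of your map $\vartheta$, i.e.\ that every $\overline{y}_i \in \lieg$ is actually hit. Deducing this from Theorem \ref{thm: QDP}\textit{(b)} is problematic on two counts: first, knowing abstractly that $\uhg'\big/\hbar\,\uhg' \cong \fgs$ gives you a linear map $\liem\big/\liem^2 \to \lieg$ between spaces of the same dimension, but neither its injectivity nor its identification with the canonical cotangent isomorphism is free --- you would have to unwind how that isomorphism is constructed; second, in the literature that construction is carried out precisely by producing the elements $y'_i = \hbar\,y_i$ of the present proposition, so invoking the QDP here is circular in substance. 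The non-circular route is the one you relegate to a final aside: show directly that for any lift $y$ of a primitive $\overline{y}$ with $\epsilon(y)=0$ one has $\delta_n(y) \in \hbar^{\,n-1}\,\uhg^{\widehat{\otimes}\,n}$ for all $n$, by induction on $n$ using $\delta_{n+1} = (\delta_n \otimes \delta_1)\circ\Delta$ together with the fact that $\delta_n$ kills the $(n-1)$-st piece of the coradical filtration of $U(\lieg)$ (a PBW consequence). That is essentially Lemma 3.3 of \cite{Ga1}; as written, your sketch does not carry out the inductive step, and in particular does not address why the obstruction class at each stage (a cocycle for the relevant coalgebra cohomology of $U(\lieg)$) vanishes or can be absorbed into a correction of the lift.

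In part \textit{(b)}, the inclusion $\widehat{A} \subseteq \uhg'$ is indeed formal, and your diagnosis of why the naive $\hbar$-adic recursion fails (namely $\hbar\,\uhg \cap \uhg' \supsetneq \hbar\,\uhg'$, witnessed by $\hbar\,y_i$) is exactly right. But the reverse inclusion $\uhg' \subseteq \widehat{A}$ is the entire nontrivial content of the claim, and you dispose of it by citing a ``standard bootstrap'' for QFSHAs --- that such an algebra is the $I_\hbar$-adic completion of the subalgebra generated by lifts of a basis of its cotangent space. That statement is true but is not elementary: it needs both the smoothness of the semiclassical limit (i.e.\ $\uhg'\big/\hbar\,\uhg' \cong \k\big[\big[\{\overline{x}_i\}\big]\big]$ as a formal power series algebra, which again rides on the QDP) and an approximation argument along the $I_\hbar$-adic filtration showing $\uhg' \subseteq \widehat{A} + I_\hbar^{\;N}$ for all $N$. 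Since this is precisely the technical heart of the proposition, naming it as a black box leaves a genuine gap. In short: correct skeleton, correct identification of the obstacles, but the two decisive lemmas are invoked rather than established, and the primary argument offered for \textit{(a)} is circular relative to how the QDP itself is proved.
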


\vskip21pt

\section{Deformations of quantum groups}  \label{sec: deform.'s-QG's}

   This section is dedicated to explore the effect of deformations of quantum groups,
   either by twist or by 2-cocycle, seeting the cases of QUEA's and QFSHA's apart.

\vskip13pt

\subsection{Deformations by twist of QUEA's}  \label{subsec: twist-QUEAs}  {\ }
 \vskip7pt
   In this subsection we consider deformations by twist of QUEA's;
   in some sense, this is the easiest case.  We begin with a technical result:

\vskip5pt

\begin{lema}  \label{lemma: twist-cond's/exp ==> twist-cond's/log}
 Let  $ H $  be an  $ \hbar $--adically complete Hopf algebra over  $ \kh \, $,
 and let consider an element of the form
 $ \, \cF = \exp\!\big( \hbar\,\varphi \big) \in H \otimes H $,  with
 $ \, \varphi = \varphi_1 \otimes \varphi_2 \in H^{\otimes 2} \, $,
 \,such that  $ \, (\epsilon \otimes \id)(\cF\,) = 1 = (\id \otimes \epsilon)(\cF\,) \, $.
 Then
  $$
  \epsilon(\varphi_1) \otimes \varphi_2 \, = \, 0 \;\; ,
  \qquad  \varphi_1 \otimes \epsilon(\varphi_2) \, = \, 0 \;\; ,
  \qquad  \epsilon(\varphi_1) \otimes \epsilon(\varphi_2) \, = \, 0
  $$
 As a consequence, one can assume
 $ \, \varphi_1 = \varphi_1^+ , \, \varphi_2 = \varphi_2^+ \in \Ker(\epsilon) \, $,
 \,so  $ \, \varphi \in \Ker(\epsilon)^{\,\otimes\, 2} \, $.
\end{lema}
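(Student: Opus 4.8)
The plan is to run the unitarity hypothesis through the three ``counit contractions'' $\epsilon\otimes\id$, $\id\otimes\epsilon$ and $\epsilon\otimes\epsilon$. Each of these is a $\kh$-linear map --- hence, between $\hbar$-adically complete modules, automatically continuous --- and also multiplicative, since $\epsilon$ is an algebra homomorphism; consequently each of them commutes with the series defining $\exp$. Applying it to $\cF = \exp(\hbar\varphi)$ turns the hypothesis into a one-variable identity of the form $\exp(\hbar z) = 1$ in $H$ (or in $\kh$), which I then solve.

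In detail, I would keep the Sweedler-like shorthand $\varphi = \varphi_1\otimes\varphi_2$ and set $\beta := \epsilon(\varphi_1)\otimes\varphi_2 = (\epsilon\otimes\id)(\varphi)$ and $\alpha := \varphi_1\otimes\epsilon(\varphi_2) = (\id\otimes\epsilon)(\varphi)$ in $H$, and $\lambda := \epsilon(\varphi_1)\otimes\epsilon(\varphi_2) = (\epsilon\otimes\epsilon)(\varphi)$ in $\kh$. Since $\epsilon\otimes\id$ is a continuous algebra map, $(\epsilon\otimes\id)\big(\exp(\hbar\varphi)\big) = \exp\big(\hbar\,\beta\big)$, so the hypothesis $(\epsilon\otimes\id)(\cF) = 1$ reads $\exp(\hbar\,\beta) = 1$; in the same way $\exp(\hbar\,\alpha) = 1$ and $\exp(\hbar\,\lambda) = 1$ (the last one being also the image of the first under $\epsilon$).

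Next I would dispose of the one-variable statement: if $\exp(\hbar z) = 1$, then $\hbar z = -\sum_{n\ge 2}(\hbar z)^n/n!$, and since the right-hand side lies in $\hbar^{k+1}H$ as soon as $\hbar z \in \hbar^k H$, an immediate induction starting from $k = 1$ gives $\hbar z \in \bigcap_{k\ge 1}\hbar^k H = (0)$ by separatedness; as $H$ is $\hbar$-torsion free --- which holds throughout this paper, $H$ being topologically free over $\kh$ --- this forces $z = 0$. Taking $z = \beta$, $z = \alpha$ and $z = \lambda$ then yields the three asserted identities.

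Finally, for the ``consequence'' I would use the splitting $H = \kh\oplus H^+$ of Lemma \ref{lemma: technic-Hopf}{\it (a)} together with the associated projection $p\colon H\to H^+$, $p(z) = z^+$: a direct check gives $(p\otimes p)(\varphi) = \varphi - \alpha\otimes 1 - 1\otimes\beta + \lambda\,(1\otimes 1)$, which equals $\varphi$ by the three vanishings just established, so that $\varphi \in H^+\otimes H^+ = \Ker(\epsilon)^{\otimes 2}$. Rewriting $\varphi$ through this inclusion, its Sweedler factors may be chosen in $\Ker(\epsilon)$, i.e.\ $\varphi_1 = \varphi_1^+$ and $\varphi_2 = \varphi_2^+$. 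I do not expect any real obstacle here: the only slightly delicate points are the continuity needed to pull the contractions through the infinite exponential sum, and the passage $\hbar z = 0 \Rightarrow z = 0$ --- and both are provided by the standing hypotheses, namely $\hbar$-adic completeness and $\hbar$-torsion freeness of $H$.
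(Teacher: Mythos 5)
Your proposal is correct and follows essentially the same route as the paper's proof: both rest on the fact that the counit contractions are continuous algebra maps commuting with the defining power series, followed by the expansion $\varphi_i = \epsilon(\varphi_i) + \varphi_i^+$ for the final inclusion $\varphi \in \Ker(\epsilon)^{\otimes 2}$. The only (immaterial) difference is that the paper applies $\log_{\phantom{*}}$ to get $\hbar\,\epsilon(\varphi_1)\otimes\varphi_2 = (\epsilon\otimes\id)(\log\cF) = 0$ term by term, whereas you prove the injectivity of $\exp$ on $\hbar H$ by an $\hbar$-adic bootstrapping argument using separatedness and torsion-freeness --- both of which are indeed standing hypotheses in this setting.
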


\begin{proof}
 By assumption one has  $ \, (\epsilon \otimes \id)(\cF\,) = 1 \, $.  Since
 $ \, \cF = \exp\!\big( \hbar\,\varphi \big) \, $,  this yields
  $$
  \displaylines{
   \hbar \, \epsilon(\varphi_1) \otimes \varphi_2  \; = \;
   (\epsilon \otimes \id)(\hbar \, \varphi)  \; = \;
   (\epsilon \otimes \id)\big(\log(\cF\,)\big)  \; =   \hfill  \cr
   = \;  (\epsilon \otimes \id)\bigg(\, {\textstyle \sum\limits_{n>0}} \,
   \frac{{(-1)}^{n-1}}{\;n\;} \, {\big( \cF - 1 \big)}^n \bigg)  \; = \;
   {\textstyle \sum\limits_{n>0}} \, \frac{{(-1)}^{n-1}}{\;n\;} \,
   {\big( (\epsilon \otimes \id)(\cF\,) - 1 \big)}^n  \; = \;  0  }
   $$
 thus  $ \, \hbar \, \epsilon(\varphi_1) \otimes \varphi_2 = 0 \, $,
 whence  $ \, \epsilon(\varphi_1) \otimes \varphi_2 = 0 \, $  as claimed.
 Similarly, the condition  $ \, (\id \otimes \epsilon)(\cF\,) = 1 \, $  implies
 $ \, \varphi_1 \otimes \epsilon(\varphi_2) = 0 \, $.
 Finally, expanding  $ \varphi_i $  as
 $ \, \varphi_i = \epsilon(\varphi_i) + \varphi_i^+ \, $  ($ \, i \in \{1\,,2\} \, $)
 --- cf.\ Lemma \ref{lemma: technic-Hopf}  ---   and using
 $ \, \epsilon(\varphi_1) \otimes \varphi_2 = 0 =
 \varphi_1 \otimes \epsilon(\varphi_2) \, $  we get
  $$  \displaylines{
   \varphi  \; = \;  \varphi_1 \otimes \varphi_2  \; = \;
%
%
 \epsilon(\varphi_1) \otimes \epsilon(\varphi_2) +
 \epsilon(\varphi_1) \otimes \varphi_2^+ + \varphi_1^+
\otimes \epsilon(\varphi_2) + \varphi_1^+ \otimes \varphi_2^+  \; =   \hfill  \cr
   = \;   \epsilon(\varphi_1) \otimes \epsilon(\varphi_2) +
   \epsilon(\varphi_1) \otimes \varphi_2 -
   \epsilon(\varphi_1) \otimes \epsilon(\varphi_2) +
   \varphi_1 \otimes \epsilon(\varphi_2) - \epsilon(\varphi_1) \otimes
   \epsilon(\varphi_2) + \varphi_1^+ \otimes \varphi_2^+  \; =  \cr
   \hfill   = \;   - \epsilon(\varphi_1) \otimes \epsilon(\varphi_2) +
   \epsilon(\varphi_1) \otimes \varphi_2 + \varphi_1 \otimes \epsilon(\varphi_2) +
   \varphi_1^+ \otimes \varphi_2^+  \; =  \cr
   \hfill   = \;   - \epsilon(\varphi_1) \otimes \epsilon(\varphi_2) + \varphi_1^+ \otimes \varphi_2^+  }  $$
 in short  $ \, \varphi = - \epsilon(\varphi_1) \otimes \epsilon(\varphi_2) +
 \varphi_1^+ \otimes \varphi_2^+ \, $.
 But  $ \; \epsilon(\varphi_1) \otimes \epsilon(\varphi_2) =
 (\id \otimes \epsilon)\big(\epsilon(\varphi_1) \otimes \varphi_2\big) = 0 \, $,
 \,hence  $ \; \varphi \, = \, \varphi_1^+ \otimes \varphi_2^+ \, \in \,
 {\Ker(\epsilon)}^{\otimes\, 2} $,  \,q.e.d.
\end{proof}

\vskip3pt

   We are now ready for our first meaningful result:

\vskip3pt

\begin{theorem}  \label{thm: twist-deform-QUEA}  {\ }
 \vskip3pt
   Let  $ \uhg $  be a QUEA over the Lie bialgebra
   $ \, \lieg = \big(\, \lieg \, ; \, [\,\ ,\ ] \, , \, \delta \,\big) \, $.
   Let  $ \, \cF \in {\uhg}^{\widehat{\otimes}\,2} \, $  be a twist for
   $ \uhg $  such that
   $ \; \cF \equiv 1 \; \Big(\, \text{\rm mod} \; \hbar \, \uhg^{\,\widehat{\otimes}\, 2} \,
   \Big) \, $;  \,then
   $ \, \kappa := \hbar^{-1} \log(\cF\,) \in {\uhg}^{\,\widehat{\otimes}\, 2} \, $,
   \,and  $ \, \cF = \exp\big( \hbar\,\kappa \big) \, $.
   Last, we set  $ \, \kappa_a := \kappa - \kappa_{2,1} \, $.  Then we have:
 \vskip5pt
   (a)\;  $ \kappa $  is antisymmetric, i.e.\  $ \, -\kappa = \kappa_{2,1} \, $,
   iff\/  $ \cF $  is orthogonal, i.e.\  $ \, \cF^{-1} = \cF_{2,1} \, $;
 \vskip5pt
   (b)\;  the element  $ \, c := \,  \overline{\kappa_a} \, = \,
   \kappa_a \; \Big(\, \text{\rm mod} \; \hbar \, \uhg^{\,\widehat{\otimes}\, 2} \,\Big) \, $
   belongs to  $ \, \lieg \otimes \lieg \, $,  \,and it is an
 \textsl{antisymmetric twist}  element
 for the Lie bialgebra  $ \lieg \, $;
 \vskip5pt
   (c)\;  the deformation  $ \, {\big( \uhg \big)}^\cF $  of  $ \, \uhg $  by the twist
   $ \cF $  is a QUEA for the Lie bialgebra
   $ \, \lieg^c = \big(\, \lieg \, ; \, [\,\ ,\ ] \, , \, \delta^{\,c} \big) \, $
   which is the deformation of  $ \, \lieg $  by the twist  $ c \, $;
   in a nutshell, we have
   $ \; {\big( \uhg \big)}^\cF \cong \, U_\hbar\big(\lieg^c\,\big) \; $.
\end{theorem}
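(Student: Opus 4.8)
The plan is to handle the three claims in order, using Lemma \ref{lemma: twist-cond's/exp ==> twist-cond's/log} to localize everything inside $\Ker(\epsilon)^{\otimes 2}$ and then reducing modulo $\hbar$. For part \textit{(a)}, the key observation is that $\cF^{-1} = \exp(-\hbar\kappa)$ while $\cF_{2,1} = \exp(\hbar\kappa_{2,1})$, and since the exponential map is a bijection between $\hbar\,\uhg^{\widehat\otimes 2}$ and $1 + \hbar\,\uhg^{\widehat\otimes 2}$ (via the mutually inverse $\exp$/$\log$ power series, both convergent $\hbar$-adically), the identity $\cF^{-1} = \cF_{2,1}$ holds if and only if $-\hbar\kappa = \hbar\kappa_{2,1}$, i.e.\ iff $-\kappa = \kappa_{2,1}$. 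One small point to check is that $\kappa$ and $\kappa_{2,1}$ commute (they do, since $\kappa_{2,1}$ is just $\kappa$ with tensor factors swapped, and both are obtained from the single element $\cF$ whose powers commute), so that $\exp(-\hbar\kappa)\cdot\exp(\hbar\kappa_{2,1})$ would only equal $1$ under the stated equality — actually the cleanest route is simply to apply $\log$ to both sides.

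For part \textit{(b)}, first note that $\kappa \in \Ker(\epsilon)^{\otimes 2}$ by Lemma \ref{lemma: twist-cond's/exp ==> twist-cond's/log}, hence so is $\kappa_a := \kappa - \kappa_{2,1}$, so its reduction $c := \overline{\kappa_a}$ lands in $(\lieg^+)^{\otimes 2} = \lieg\otimes\lieg$ (identifying $\lieg$ with the primitives inside $U_0 = U(\lieg)$ — more precisely $c$ lies in $\lieg\wedge\lieg$ by construction since it is manifestly antisymmetric). Antisymmetry of $c$ is immediate from the definition of $\kappa_a$. The substantive content is that $c$ satisfies the twist conditions \eqref{eq: twist-cond_Lie-bialg}. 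I would derive these by taking the Hopf-twist conditions — unitarity \eqref{eq: unitary-cond.'s x twist} and the cocycle identity \eqref{eq: twist-cond.'s} $\cF_{12}(\Delta\otimes\id)(\cF) = \cF_{23}(\id\otimes\Delta)(\cF)$ — substituting $\cF = \exp(\hbar\kappa)$, expanding both sides in powers of $\hbar$ using the Baker–Campbell–Hausdorff formula, and extracting the $\hbar^1$ and $\hbar^2$ coefficients. The $\hbar^1$ term should give that $\bar\kappa$ is a Hochschild-type cocycle (harmless), and the $\hbar^2$ term, after antisymmetrization and reduction modulo $\hbar$, should produce exactly $(\delta\otimes\id)(c) + \text{c.p.} - [[c,c]]$ killed by all $\ad_x$; here the Poisson cobracket $\delta$ on $U(\lieg)$ enters precisely through the definition \eqref{eq: def-cobracket} of the semiclassical cobracket from $\Delta - \Delta^{\mathrm{op}}$ divided by $\hbar$, which is why the $\hbar^2$ coefficient of the quantum identity converts to a first-order-in-$\delta$ statement. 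The second condition of \eqref{eq: twist-cond_Lie-bialg}, $\ad_x(c + c_{2,1}) = 0$, is automatic since $c$ is antisymmetric.

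For part \textit{(c)}, the deformed algebra ${(\uhg)}^\cF$ has the same product, unit, counit, and underlying $\kh$-module as $\uhg$, hence it is again topologically free and lies in $\cHA_{\widehat\otimes}$; its reduction mod $\hbar$ is $U(\lieg)$ as an algebra (the product is unchanged and $\cF \equiv 1$), so ${(\uhg)}^\cF$ is again a QUEA. It remains to identify its semiclassical Lie bialgebra. By \eqref{eq: def-cobracket}, its cobracket is $\delta^\cF(x) = \hbar^{-1}(\Delta^\cF(x') - (\Delta^\cF)^{\mathrm{op}}(x')) \bmod \hbar$; substituting $\Delta^\cF(x') = \cF\,\Delta(x')\,\cF^{-1} = \exp(\hbar\,\mathrm{ad}_\kappa)(\Delta(x'))$ (adjoint action in the algebra $\uhg^{\widehat\otimes 2}$) and expanding, the leading terms give $\delta^\cF(x) = \delta(x) + [\kappa, \Delta(x')]\bmod\hbar\,\cdots$; reducing, the commutator $[\bar\kappa, x\otimes 1 + 1\otimes x]$ becomes $\mathrm{ad}_x(\bar\kappa - \bar\kappa_{2,1}) = \mathrm{ad}_x(c) = \partial(c)(x)$ after the antisymmetrization inherent in $\Delta - \Delta^{\mathrm{op}}$ — matching exactly the definition \eqref{eq: def_twist-delta} of $\delta^c$ (with the sign convention fixed in Remark \ref{rmk: differ-Mj-Nosotros_x_def-twist}). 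Hence ${(\uhg)}^\cF$ quantizes $\lieg^c$, i.e.\ ${(\uhg)}^\cF \cong U_\hbar(\lieg^c)$.

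\textbf{Main obstacle.} The delicate part is part \textit{(b)}: carefully expanding the exponential twist cocycle identity \eqref{eq: twist-cond.'s} to second order in $\hbar$, tracking the BCH corrections, and checking that after reduction modulo $\hbar$ and antisymmetrization the output is \emph{precisely} $\ad_x\big((\delta\otimes\id)(c) + \text{c.p.} - [[c,c]]\big) = 0$ with the correct sign on the $[[c,c]]$ term — this is where the bookkeeping is heaviest and where the sign convention of Remark \ref{rmk: differ-Mj-Nosotros_x_def-twist} must be honored. A secondary subtlety is justifying that $\log(\cF) \in \hbar\,\uhg^{\widehat\otimes 2}$ (so that $\kappa$ is well-defined over $\kh$, not just over $\khp$), which follows from $\cF \equiv 1 \pmod{\hbar}$ together with $\hbar$-adic completeness, but should be stated explicitly.
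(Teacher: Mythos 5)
Your route is essentially the paper's own: part \textit{(a)} via injectivity of $\log/\exp$ between $\hbar\,\uhg^{\widehat{\otimes}\,2}$ and $1+\hbar\,\uhg^{\widehat{\otimes}\,2}$; part \textit{(b)} via the $\hbar$--adic expansion of the Hopf twist identity \eqref{eq: twist-cond.'s} with $\cF=\exp(\hbar\kappa)$ and antisymmetrization of the order--$2$ coefficient by $\textsl{Alt}_{\,3}$; part \textit{(c)} via conjugation of $\Delta$ by $\exp(\hbar\kappa)$ and reduction modulo $\hbar$, giving $\delta^{\cF}=\delta-\ad_{(-)}(c)=\delta^{c}$.

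There is, however, one genuine gap, in part \textit{(b)}: the claim that $c:=\overline{\kappa_a}$ lies in $\lieg\otimes\lieg$. You derive it from ``$\kappa\in\Ker(\epsilon)^{\otimes 2}$, hence $\overline{\kappa_a}\in(\lieg^{+})^{\otimes 2}=\lieg\otimes\lieg$'', but that identification is false: reducing $\Ker(\epsilon_{\uhg})^{\widehat{\otimes}\,2}$ modulo $\hbar$ only places $\overline{\kappa}$ in $J^{\otimes 2}$ with $J:=\Ker\big(\epsilon_{U(\lieg)}\big)$ the augmentation ideal of $U(\lieg)$, which is much larger than $\lieg$ (it contains all nonempty products of elements of $\lieg$); the copy of $\lieg$ inside $U(\lieg)$ is the set of \emph{primitive} elements, not the whole augmentation ideal. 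The membership $\overline{\kappa}\in\lieg\otimes\lieg$ is a real theorem, and the paper proves it precisely from the order--$1$ coefficient of the twist identity --- the very identity you set aside as a ``harmless Hochschild-type cocycle condition''. Concretely, that first-order identity reads
\[
\overline{\kappa_1}\otimes\overline{\kappa_2}\otimes 1+\Delta(\overline{\kappa_1})\otimes\overline{\kappa_2}
\;=\;
1\otimes\overline{\kappa_1}\otimes\overline{\kappa_2}+\overline{\kappa_1}\otimes\Delta(\overline{\kappa_2})
\]
and, writing $\Delta(\overline{\kappa_s})=\overline{\kappa_s}\otimes 1+1\otimes\overline{\kappa_s}+(\text{terms of strictly lower coradical filtration})$ and cancelling, a degree argument on the coradical filtration of $U(\lieg)$ forces those lower terms to vanish, i.e.\ forces each $\overline{\kappa_s}$ to be primitive. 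Without this step neither the twist condition for $c$ in \textit{(b)} nor the formula $\delta^{\cF}(x)=\delta(x)-\ad_x(c)$ in \textit{(c)} --- both of which presuppose $c\in\lieg\otimes\lieg$ --- gets off the ground. The rest of your outline matches the paper's argument and would go through once this is repaired.
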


\pf
 \textit{(a)}\,  By construction, this follows from standard identities
 for exponentials and for logarithms.
 \vskip7pt
   \textit{(b)}\,  We fix hereafter the notation  $ \, U_\hbar := \uhg \, $  and
   $ \, J_\hbar := \Ker\big(\epsilon_{U_\hbar}\big) \, $,  \, and we write
   $ \, \kappa \in U_\hbar^{\,\widehat{\otimes} 2} \, $  with Sweedler's like
   $ \sigma $--notation  $ \, \kappa = \kappa_1 \otimes \kappa_2 \, $.
%
%
 By  Lemma \ref{lemma: twist-cond's/exp ==> twist-cond's/log}
 we can assume (as we shall do henceforth) that
 $ \, \kappa_1 , \kappa_2 \in J_\hbar \, $,  \,hence
 $ \, \kappa \in J_\hbar^{\,\widehat{\otimes} 2} \, $.
 \vskip3pt
   Now we consider the identity
 $ \; \cF_{1{}2} \, \big( \Delta \otimes \text{id} \big)(\cF\,) \, = \,
 \cF_{2{}3} \, \big( \text{id} \otimes \Delta \big)(\cF\,) \; $.
 Writing  $ \, \cF = \exp\big( \hbar \, \kappa_1 \otimes \kappa_2 \big) \, $  and
 $ \, \Delta(\kappa_s) = \kappa_s^{(1)} \otimes \kappa_s^{(2)} \, (s=1,2) \, $
 this reads
  $$
  \exp\!\big( \hbar \, \kappa_1 \otimes \kappa_2 \otimes 1 \big) \,
  \exp\!\Big( \hbar \, \kappa_1^{(1)} \otimes \kappa_1^{(2)} \otimes \kappa_2 \Big)
  \; = \;
  \exp\!\big( \hbar \, 1 \otimes \kappa_1 \otimes \kappa_2 \big) \,
  \exp\!\Big( \hbar \, \kappa_1 \otimes \kappa_2^{(1)} \otimes \kappa_2^{(2)} \Big)
  $$
 Now taking  $ \hbar $--adic  expansion in both sides of this last identity, at order 0
 --- in  $ \hbar $ ---   we get  $ \, 1 \otimes 1 \otimes 1 = 1 \otimes 1 \otimes 1 \, $,
 \,hence from order 1 we get the non-trivial identity
\begin{equation}  \label{eq: first order}
  \kappa_1 \otimes \kappa_2 \otimes 1 +
  \kappa_1^{(1)} \otimes \kappa_1^{(2)} \otimes \kappa_2
  \; \underset{\hbar}{\equiv} \;  1 \otimes \kappa_1 \otimes \kappa_2 +
  \kappa_1 \otimes \kappa_2^{(1)} \otimes \kappa_2^{(2)}
\end{equation}
 where hereafter any symbol  $ \, \underset{\hbar^n}{\equiv} \, $
 means ``congruent modulo
 $ \, \hbar^n \, U_\hbar^{\,\widehat{\otimes} 3} \, $''  (for any  $ \, n \in \NN \, $).
 Then taking  \eqref{eq: first order}  modulo  $ \hbar $  we get
\begin{equation}  \label{eq: first order modulo h}
  \overline{\kappa_1} \otimes \overline{\kappa_2} \otimes 1 \, + \,
  \overline{\kappa_1}^{\,(1)} \otimes \overline{\kappa_1}^{\,(2)} \otimes \overline{\kappa_2}  \,\; = \;\,  1
  \otimes \overline{\kappa_1} \otimes \overline{\kappa_2} \, + \,
  \overline{\kappa_1} \otimes \overline{\kappa_2}^{\,(1)} \otimes \overline{\kappa_2}^{\,(2)}
\end{equation}
 where hereafter  $ \, \overline{x} := x \, \big( \text{mod} \; \hbar \big) \, $''
 and we took into account that
 $ \, \overline{\kappa_s^{(i)}} = \overline{\kappa_s}^{\,(i)} \, $  for all
 $ \, s , i \in  \{1\,,2\} \, $.
 Now,
%
%
 $ \, \overline{\kappa_s^{(1)}} \otimes \overline{\kappa_s^{(2)}} =
 \Delta(\,\overline{\kappa_s}\,) \, $
 with  $ \; \overline{\kappa_s} \in U_\hbar \big/ \hbar \, U_\hbar \, = \, U(\lieg) \; $
 has the form
\begin{equation}  \label{eq: class-coprod}
  \overline{\kappa_s}^{\,(1)} \otimes \overline{\kappa_s}^{\,(2)}  \,\; = \;\,
  \overline{\kappa_s} \otimes 1 \, + \,
  1 \otimes \overline{\kappa_s} \, + \,
  \dot{\overline{\kappa_s}}^{\,(1)} \otimes \dot{\overline{\kappa_s}}^{\,(2)}
\end{equation}
 for some  $ \, \dot{\overline{\kappa_s}}^{\,(i)} \in \Ker\big(\epsilon_{U(\lieg)}\big) \, $
 ---  $ \, i \in \{1\,,2\} \, $  ---
 having the following property: if we denote by  $ {U(\lieg)}_n $  the  $ n $--th
 piece in the canonical filtration of  $ U(\lieg) $
 --- its coradical filtration, in Hopf theoretical language ---
 and for any  $ \, \text{x} \in {U(\lieg)}_n \setminus {U(\lieg)}_{n-1} \, $
 we set  $ \, \partial(\text{x}) := n \, $,
 \,then in  \eqref{eq: class-coprod}  we have
 $ \; \partial\Big( \dot{\overline{\kappa_s}}^{\,(i)} \Big)
 \lneqq \partial\big(\,\overline{\kappa_s}\,\big) \; $.
 Now, using  \eqref{eq: class-coprod}  to re-write
 \eqref{eq: first order modulo h}  we find,
 after cancelling out three summands on both sides, the following
\begin{equation*}
%
%
  \dot{\overline{\kappa_1}}^{\,(1)} \otimes \dot{\overline{\kappa_1}}^{\,(2)} \otimes
  \overline{\kappa_2}  \,\; = \;\,
  \overline{\kappa_1} \otimes \dot{\overline{\kappa_2}}^{\,(1)}
  \otimes \dot{\overline{\kappa_2}}^{\,(2)}
\end{equation*}
 and then the condition  $ \; \partial\Big( \dot{\overline{\kappa_s}}^{\,(i)} \Big)
 \lneqq \partial\big(\,\overline{\kappa_s}\,\big) \; $  forces
 $ \, \dot{\overline{\kappa_1}}^{\,(1)} \!\otimes \dot{\overline{\kappa_1}}^{\,(2)} \! =
 0 = \dot{\overline{\kappa_2}}^{\,(1)} \!\otimes \dot{\overline{\kappa_2}}^{\,(2)} \, $.
 Thus  \eqref{eq: class-coprod}  reads
 $ \; \Delta(\overline{\kappa_s}\,) =
 \overline{\kappa_s}^{\,(1)} \otimes \overline{\kappa_s}^{\,(2)} = \,
 \overline{\kappa_s} \otimes 1 + 1 \otimes \overline{\kappa_s} \; $;
 \;this means  $ \, \overline{\kappa_s} \in \lieg \in \big( \subseteq U(\lieg) \,\big) \, $
 ---  $ \, s \in \{1\,,2\} \, $  ---
 so  $ \, \overline{\kappa} =
 \overline{\kappa_1} \otimes \overline{\kappa_2} \in \lieg \otimes \lieg \, $,
 \,hence $ \, c := \overline{\kappa_a} \in \lieg \otimes \lieg \, $.
 \vskip5pt
   Now we have to prove that  $ c $  is an
  \textsl{antisymmetric twist\/}
 for the Lie bialgebra  $ \lieg \, $.
                                                                        \par
   Keeping notation from above, since  $ \, \overline{\kappa_s} \in \lieg \, $
   we have
\begin{equation}  \label{eq: cobracket(k_s)}
  \Delta(\kappa_s)  \,\; \underset{\hbar^2}{\equiv} \;\,  \kappa_s \otimes 1 \, +
  \, 1 \otimes \kappa_s \, + \, \hbar \, \kappa_s^{\,[1]} \otimes \kappa_s^{\,[2]}
\end{equation}
 with  $ \; \overline{\kappa_s}^{\,[1]} \otimes \overline{\kappa_s}^{\,[2]} -
 \overline{\kappa_s}^{\,[2]} \otimes \overline{\kappa_s}^{\,[1]} =
 \delta(\overline{\kappa_s}) \; $
 being the Lie cobracket of  $ \, \overline{\kappa_s} \, $,  \,by assumption.
 When we plug  \eqref{eq: cobracket(k_s)}  in the  $ \hbar $--adic
 expansion of the identity
  $$
  \exp\!\big( \hbar \, \kappa_1 \otimes \kappa_2 \otimes 1 \big) \,
  \exp\!\Big( \hbar \, \kappa_1^{(1)} \otimes \kappa_1^{(2)} \kappa_2 \Big)  \; = \;
  \exp\!\big( \hbar \, 1 \otimes \kappa_1 \otimes \kappa_2 \big) \,
  \exp\!\Big( \hbar \, \kappa_1 \otimes \kappa_2^{(1)} \otimes \kappa_2^{(2)} \Big)
  $$
 we find that at order 2   --- in  $ \hbar $  ---   it implies an identity
\begin{equation}  \label{eq: second order modulo h x F}
  \hskip0pt   \overline{\kappa_1}^{\,[1]} \otimes \, \overline{\kappa_1}^{\,[2]} \otimes \,
  \overline{\kappa_2} \,
  + \, \overline{\kappa_{1,2}} \cdot \overline{\kappa_{1,3}} \, +
  \, \overline{\kappa_{1,2}} \cdot \overline{\kappa_{2,3}}  \; = \,
  \overline{\kappa_1} \otimes \, \overline{\kappa_2}^{\,[1]} \otimes \,
  \overline{\kappa_2}^{\,[2]} \, + \,
  \overline{\kappa_{2,3}} \cdot \overline{\kappa_{1,2}} \, + \,
  \overline{\kappa_{2,3}} \cdot \overline{\kappa_{1,3}}
\end{equation}
 where each  $ \overline{\kappa_{i,j}} \, $,  as usual, is the tensor in  $ \lieg^{\otimes 3} $
 which sports the  $ \kappa_1 $'s  in position  $ i \, $,  the  $ \kappa_2 $'s  in position  $ j \, $,
 \,and a (repeated) tensor factor 1 in the last remaining position.
                                                         \par
   Now let us consider  $ \, \Bbbk\big[\mathbb{S}_3\big] \, $,  \,the group algebra over
   $ \Bbbk $  of the symmetric group  $ \mathbb{S}_3 \, $,  \,the ``antisymmetrizer''
 $ \; \textsl{Alt}_{\,3} := \big( \id - (1\,2) - (2\,3) - (3\,1) + (1\,2\,3) + (3\,2\,1) \big) \; $
 in  $ \, \Bbbk\big[\mathbb{S}_3\big] \, $,  \,and the natural action of
 $ \, \Bbbk\big[\mathbb{S}_3\big] \, $  onto  $ {U(\lieg)}^{\otimes 3} \, $.
 Let  $ \, \textsl{Alt}_{\,3} \, $  act on the identity  \eqref{eq: second order modulo h x F}:
 a sheerly straightforward calculation
 shows that the outcome, using notation
 $ \, c := \overline{\kappa_a} = \overline{\kappa} - \overline{\kappa}_{2,1} \, $,
 \,eventually is
\begin{equation*}   
  \big( \delta \otimes \id \big)(c) \, + \, \text{c.p.} \, + \, [[c\,,c\,]]  \,\; = \;\,  0
\end{equation*}
 This means exactly that  $ \, c \, $  is a twist for the Lie bialgebra
 $ \lieg \, $,  \,as in  Definition \ref{def: twist-deform_Lie-bialg's},
 which is obviously antisymmetric (by construction), q.e.d.
 \vskip7pt
   \textit{(c)}\,  Due to the peculiar form of the twist
   --- namely, its being trivial modulo  $ \hbar $  ---
   it is easy to see that the Hopf algebra  $ {\uhg}^\cF $  is again a QUEA,
   over some bialgebra
 $ \tilde{\lieg} \, $,  i.e.\
 $ \, {\uhg}^\cF \Big/ \hbar \, {\uhg}^\cF \, = \, U\big(\tilde{\lieg}\big) \, $,
 \,and even that one has  $ \, \tilde{\lieg} = \lieg \, $  as Lie algebras.
%
%
 In fact, since the twist  $ \cF $  is trivial modulo  $ \hbar \, $,  we have that
 $ \, \uhg \big/ \hbar\,\uhg \, $
 and  $ \, {\uhg}^\cF \big/ \hbar \, {\uhg}^\cF \, $  are  \textsl{isomorphic as
 Hopf algebras\/};  in particular, then,
 $ {\uhg}^\cF $  itself is again a QUEA, on the same Lie algebra  $ \lieg $
 from  $ \uhg $
 but possibly inducing on $ \lieg $  a different Lie cobracket.
 Indeed, what is actually affected, a priori, is the co-Poisson structure on the
 semiclassical limit
 --- hence the Lie cobracket on  $ \lieg $  ---   which in general on
 $ \, {\uhg}^\cF \big/ \hbar \, {\uhg}^\cF \, $
 will be different from that on  $ \, \uhg \big/ \hbar\,\uhg \, $.
 \vskip3pt
   Let us compute the Lie coalgebra structure of  $ \tilde{\lieg} $
   given by  \eqref{eq: def-cobracket}.
   Given  $ \; \text{x} \in \tilde{\lieg} \; $,  \,let  $ \, x \in {\uhg}^\cF $  be any lift of
   $ \text{x} \, $:  \,using obvious notation, its twisted coproduct is
  $$
  \displaylines{
  \;\;   \Delta^{\scriptscriptstyle \!\cF}(x)  \; = \;  \cF \, \Delta(x) \, \cF^{-1}  \; =
   \;  e^{\hbar\,\kappa} \, \Big( x \otimes 1 + 1 \otimes x +
   \hbar \, {\textstyle \sum}_i \, x^{[i]}_1 \otimes x^{[i]}_2 +
   O\big(\hbar^2\big) \Big) \, e^{-\hbar\,\kappa}  \; =   \hfill  \cr
   \qquad   = \;  \big( 1 \otimes 1 + \hbar \, \kappa \big) \, \Big( x \otimes 1 +
   1 \otimes x +
   \hbar \, {\textstyle \sum}_i \, x^{[i]}_1 \otimes x^{[i]}_2 \Big) \, \big( 1 \otimes 1
   - \hbar \, \kappa \big) + O\big(\hbar^2\big)  \; =   \hfill  \cr
   \qquad \qquad   = \;  x \otimes 1 + 1 \otimes x + \hbar \, \big[\, \kappa \, ,
   \, x \otimes 1 + 1 \otimes x \,\big] +
   \hbar \, {\textstyle \sum}_i \, x^{[i]}_1 \otimes x^{[i]}_2 + O\big(\hbar^2\big)
   \; = \hfill  \cr
   \qquad \qquad \qquad   = \;  x \otimes 1 + 1 \otimes x +
   \hbar \, \Big(\, {\textstyle \sum}_i \, x^{[i]}_1 \otimes x^{[i]}_2 -
   \ad_x\!(\kappa) \Big) \, + \, O\big(\hbar^2\big)   \hfill  }
   $$
 On the other hand, the opposite twisted coproduct is
  $$
  \displaylines{
   \quad   \big( \Delta^{\scriptscriptstyle \!\cF}\big)^{\op}(x)  \; =
   \;  {(\cF\,)}_{21} \, \Delta^{\op}(x) \, {(\cF\,)}_{21}^{-1}  \; = \;
   e^{\hbar\, \kappa_{2,1}} \, \Delta^{\op}(x) \, e^{-\hbar\, \kappa_{2,1}}  \; =   \hfill  \cr
   \hfill   = \;  x \otimes 1 + 1 \otimes x +
   \hbar \, \Big(\, {\textstyle \sum}_i \, x^{[i]}_2 \otimes x^{[i]}_1 -
   \ad_x\!\big(\kappa_{2,1}\big) \Big) \, + \, O\big(\hbar^2\big)   \hfill  }
   $$
   \indent   Thus, by the very definition of the cobracket
   --- as in  \eqref{eq: def-cobracket}  ---   we have
  $$
  \delta^\cF(x)  \; := \;  \delta(x) +
  \big( \ad_x\!\big( \kappa_{2,1} - \kappa \big) \big)  \;\  (\,\text{mod} \ \hbar\,)
  \; = \;  \delta(x) - \ad_x(c)  \; =: \;  \delta^c(x)
  $$
hence  $ \, \tilde{\lieg} \, $  is the twist deformation by  $ c $
of the Lie bialgebra  $ \lieg \, $,  \,as claimed.
\epf

\vskip9pt

\begin{obs}  \label{obs: trivial deform QUEA}
 Let us point out that the twists  $ \cF $  considered in
 Theorem \ref{thm: twist-deform-QUEA}  above are those of ``trivial type'',
 as they are the identity modulo  $ \hbar \, $.  This ensures that twisting  $ \uhg $
 by such an  $ \cF $  does not affect the Hopf structure of the semiclassical limit;
 in particular, it still is of the form  $ U\big(\tilde{\lieg}\big) \, $,  with  $ \tilde{\lieg} $
 equal to  $ \lieg $  as a Lie algebra but with a different Lie coalgebra structure.
 A more general twist might be ``unfit'', i.e.\ the deformed Hopf algebra  $ {\uhg}^\cF $
 might no longer be a QUEA.
\end{obs}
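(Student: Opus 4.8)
The plan is to justify the two claims packed into this Observation: first (the positive one), that the hypothesis $\,\cF \equiv 1 \; \big(\text{mod}\ \hbar\,\big)\,$ is exactly what forces $\uhg^\cF$ to stay a QUEA on the very same Lie algebra $\lieg\,$; and second (the negative one), that abandoning this hypothesis can produce a deformed Hopf algebra which is no longer a QUEA. Both are really statements about the semiclassical limit, so in each case I would reduce modulo $\hbar$ and read off the coalgebra structure.

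For the positive claim I would argue as follows. By the construction recalled in \ref{twist-deform.'s}, the twist deformation leaves the product, unit and counit of $\uhg$ untouched and only alters the coproduct via $\, \Delta^\cF(x) = \cF\,\Delta(x)\,\cF^{-1}\,$ (and the antipode via conjugation by $v$). Hence the only semiclassical datum that could change is the coproduct. Reducing modulo $\hbar$ and using $\, \cF \equiv 1 \; \big(\text{mod}\ \hbar\,\uhg^{\,\widehat{\otimes}\,2}\big)\,$, i.e.\ $\overline{\cF} = 1 \otimes 1\,$, gives $\; \overline{\Delta^\cF(x)} = \overline{\cF}\,\overline{\Delta(x)}\,\overline{\cF}^{\,-1} = \overline{\Delta(x)}\;$ (and likewise the antipode is unchanged, since the element $v$ of \ref{twist-deform.'s} reduces to $1$ modulo $\hbar$). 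Therefore the entire Hopf structure of $\, \uhg^\cF\big/\hbar\,\uhg^\cF\,$ coincides with that of $\, \uhg\big/\hbar\,\uhg = \ug\,$, which is connected and cocommutative; by the very definition of a QUEA this shows $\uhg^\cF$ is again a QUEA with the same semiclassical Lie algebra $\lieg\,$, only the induced co-Poisson structure — hence the Lie cobracket — being affected, precisely as computed in Theorem \ref{thm: twist-deform-QUEA}\,(c), where $\, \tilde{\lieg} = \lieg^c\,$.

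For the negative claim I would drop the hypothesis and set $\, \overline{\cF} := \cF \; \big(\text{mod}\ \hbar\big) \in \ug^{\otimes 2}\,$, which is then itself a classical unitary twist of the Hopf algebra $\ug\,$. The same reduction now yields $\; \overline{\Delta^\cF(x)} = \overline{\cF}\,\overline{\Delta(x)}\,\overline{\cF}^{\,-1}\;$, i.e.\ the semiclassical coproduct is the Hopf-theoretic twist of the cocommutative coproduct of $\ug$ by $\overline{\cF}\,$. Such a twist need not preserve cocommutativity: in general $\, \Delta^{\overline{\cF}} \neq \big(\Delta^{\overline{\cF}}\big)^{\op}\,$. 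Since a QUEA must have a connected \emph{cocommutative} semiclassical limit, $\uhg^\cF$ fails to be a QUEA the moment $\overline{\cF}$ breaks cocommutativity; this is the precise sense in which a non-trivial-modulo-$\hbar$ twist is ``unfit''.

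To make the phenomenon concrete I would exhibit one explicit case: take a nonabelian $\lieg$ with its trivial quantization $\, \uhg = \ug[[\hbar]]\,$ (zero cobracket), and let $\, \cF = \overline{\cF}\,$ be a constant-in-$\hbar$ nontrivial unitary twist of $\ug$ (for a nonabelian $\lieg$ such twists exist, e.g.\ of Jordanian type). Then the coproduct of $\uhg^\cF$ is $\overline{\cF}$-conjugate to the standard one, and a short check shows it is not cocommutative modulo $\hbar$, so $\, \uhg^\cF\big/\hbar\,\uhg^\cF\,$ is a noncocommutative Hopf algebra, not of the form $\,U\big(\tilde{\lieg}\big)\,$. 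The only delicate point in the whole discussion is this last step: one must invoke the standard fact that $\ug$ genuinely admits nontrivial twists breaking cocommutativity (equivalently, nontrivial classical $r$-matrices / gauge transformations), which is where the real content of the ``unfit'' case resides; everything else is an immediate consequence of reducing the twisting formulas modulo $\hbar\,$.
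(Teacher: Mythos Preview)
Your positive argument is correct and is exactly what the paper does: the same reduction-mod-$\hbar$ reasoning appears verbatim in the proof of Theorem \ref{thm: twist-deform-QUEA}\textit{(c)}, and the Observation itself carries no separate proof in the paper.

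Your negative argument, however, has a genuine gap. The explicit example you propose cannot exist: since $\,U(\lieg)\otimes U(\lieg)\cong U(\lieg\oplus\lieg)\,$ and every universal enveloping algebra over a field of characteristic zero has no invertible elements other than nonzero scalars (immediate from the PBW filtration, as the associated graded is a polynomial ring), any invertible unitary $\overline{\cF}\in U(\lieg)\otimes U(\lieg)$ is forced to equal $1\otimes 1$. The ``Jordanian'' twists you invoke are formal expressions such as $\exp\!\big(H\otimes\log(1+E)\big)$, which live only in a completion and never in the algebraic tensor square; they cannot serve as constant-in-$\hbar$ twists in the paper's framework. In fact this observation shows that for a QUEA in the sense of \S\ref{QUEA's} the hypothesis $\cF\equiv 1\pmod{\hbar}$ is \emph{automatic} for any twist. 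The ``unfit'' clause in the Observation is therefore best read as a heuristic caution --- relevant, say, for other quantization schemes or for the polar objects introduced later in the paper --- rather than as a claim that admits a concrete counterexample inside the present setup. The paper itself offers no example and does not attempt to justify that clause.
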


\vskip7pt

   We present now a concrete example, taken from \cite{GaGa2}, where formal ``multiparameter'' QUEAs are studied in detail.

\vskip11pt

\begin{exa}  \label{example: toral twists for FoMpQUEAs}
 Let  $ \, n \in \NN_+ \, $  and  $ \, I := \{1,\ldots,n\} \, $.
 We fix a free  $ \kh $--module  $ \lieh $  of finite rank  $ t \, $,  and we pick subsets
 $ \, \Pi^\vee := {\big\{ T^+_i , T^-_i \big\}}_{i \in I} \! \subseteq \lieh \, $,
 $ \, \Pi := {\big\{ \alpha_i \big\}}_{i \in I} \subseteq \lieh^* :=
 \Hom_\kh\!\big(\, \lieh \, , \kh \big) \, $.
 Let  $ \, P \in M_n\big(\kh\big) \, $  be any  $ (n \times n) $--matrix  with entries in
 $ \kh \, $.  A \textsl{realization\/}  of  $ P \, $  over  $ \kh $  of rank  $ t $  is a triple
 $ \, \cR := \big(\, \lieh \, , \Pi \, , \Pi^\vee \,\big) \, $  where
 $ \, \alpha_j\big(\,T^+_i\big) = p_{\,ij} \, $,  $ \, \alpha_j\big(\,T^-_i\big) = p_{j\,i} \; $
 ($ \, \forall \; i, j \in I \, $),  and
 $ \, \overline{\Sigma} := {\big\{\, \overline{S}_i := 2^{-1} \big(\, T^+_i \! + T^-_i \big) \;
 (\text{\,mod\ } \hbar\,\lieh \,) \big\}}_{i \in I} \, $  is  $ \Bbbk $--linearly
 independent as a subset in  $ \, \overline{\lieh} := \lieh \Big/ \hbar\,\lieh \, $.
                                                                                            \par
  Let  $ \, A := {\big(\, a_{ij} \big)}_{i, j \in I} \in M_{n}(\Bbbk) \, $  be a
  symmetrisable generalized Cartan matrix, with associated diagonal matrix
  $ \, D := {\big(\hskip0,7pt d_i \, \delta_{ij} \big)}_{i, j \in I} \, $.  We say that a matrix
  $ \, P \in M_n(\kh) \, $  is  \textsl{of Cartan type\/}  with corresponding Cartan matrix
  $ A $  if  $ \; P_s := 2^{-1} \big( P + P^{\,\scriptscriptstyle T} \big) = DA \; $.
                                                                                            \par
   A  \textsl{formal multiparameter quantum universal enveloping algebra\/}
   (=FoMpQUEA) with multiparameter  $ P $  and realization  $ \cR $  is the unital,
   associative, topological,  $ \hbar $--adically  complete  $ \kh $--algebra  $ \uRPhg $
   generated by the  $ \kh $--submodule  $ \lieh $  and all
 $ \, E_i \, $,  $ F_i \, $  (for all  $ \, i \in I \, $),
 with relations (for all  $ \, T , T' , T'' \in \lieh \, $,  $ \, i \, , j \in I \, $)
 \vskip-9pt
\begin{equation}  \label{eq: comm-rel's_x_uPhg}
 \begin{aligned}
   T \, E_j \, - \, E_j \, T  \, = \,  +\alpha_j(T) \, E_j  \;\; ,  \qquad
   T \, F_j \, - \, F_j \, T  \; = \;  -\alpha_j(T) \, F_j   \hskip33pt  \\
   T' \, T''  \; = \;  T'' \, T'  \;\; ,  \qquad   E_i \, F_j \, - \, F_j \, E_i  \; = \;
   \delta_{i,j} \, {{\; e^{+\hbar \, T_i^+} - \, e^{-\hbar \, T_i^-} \;} \over
   {\; q_i^{+1} - \, q_i^{-1} \;}}   \hskip37pt  \\
   \sum\limits_{k = 0}^{1-a_{ij}} (-1)^k {\left[ { 1-a_{ij} \atop k }
\right]}_{\!q_i} q_{ij}^{+k/2\,} q_{ji}^{-k/2} \, E_i^{1-a_{ij}-k} E_j E_i^k  \; = \;  0
\qquad  (\, i \neq j \,)   \hskip25pt  \\
   \sum\limits_{k = 0}^{1-a_{ij}} (-1)^k {\left[ { 1-a_{ij} \atop k }
\right]}_{\!q_i} q_{ij}^{+k/2\,} q_{ji}^{-k/2} \, F_i^k F_j F_i^{1-a_{ij}-k}  \; = \;  0
\qquad  (\, i \neq j \,)   \hskip25pt
 \end{aligned}
\end{equation}
 By \cite[Theorem 4.3.2]{GaGa2},  every FoMpQUEA  $ \uRPhg $
 bears a structure of topological Hopf algebra over\/  $ \kh $
 --- with coproduct taking values into the  $ \hbar $--adically  completed tensor product
 $ \; \uRPhg \!\mathop{\widehat{\otimes}}\limits_\kh\! \uRPhg \; $
 ---   given by  ($ \; \forall \; T \in \lieh \, $,  $ \, \ell \in I $)
%
%
%
  $$  \displaylines{
   \Delta \big(E_\ell\big) = E_\ell \otimes 1 + e^{\hbar \, T_\ell^+} \! \otimes E_\ell \; ,  \;\;
   \Delta\big(T\big) = T \otimes 1 + 1 \otimes T \; ,  \;\;
   \Delta\big(F_\ell\big) = F_\ell \otimes e^{-\hbar \, T_\ell^-} \! + 1 \otimes F_\ell  \cr
    \epsilon\big(E_\ell\big) \, = \, 0  \;\;\; ,  \hskip71pt
    \epsilon\big(T\big) \, = \, 0  \;\;\; ,  \hskip71pt
    \epsilon\big(F_\ell\big) \, = \, 0  \cr
   \hskip11pt  \cS\big(E_\ell\big)  \, = \,  - e^{-\hbar \, T_\ell^+} E_\ell  \;\; ,   \hskip41pt
   \cS\big(T\big)  \, = \,  - T  \;\; ,   \hskip41pt
   \cS\big(F_\ell\big)  \, = \,  - F_\ell \, e^{+\hbar \, T_\ell^-}  }  $$
   \indent   Furthermore, by  \cite[Theorem 6.1.4]{GaGa2},  $ \uRPhg $  is a
   \textsl{quantized universal enveloping algebra\/} whose semiclassical limit is
   $ U\big(\lieg^{\bar{\Rpicc}}_{\bar{\Ppicc}}\big) \, $,  \,where
   $ \lieg^{\bar{\Rpicc}}_{\bar{\Ppicc}} $  is a Lie \textit{multiparameter\/}  Lie bialgebra.
   In short, for each pair  $ \, (P,\cR) \, $  as above,
   \textsl{$ U\big(\lieg^{\bar{\Rpicc}}_{\bar{\Ppicc}}\big) $  is the specialization of
   $ \, \uRPhg \, $,  or   --- equivalently ---   $ \uRPhg $  is a quantization of
   $ U\big(\lieg^{\bar{\Rpicc}}_{\bar{\Ppicc}}\big) \, $}   ---
   or also, by a standard abuse of language,
   \textsl{$ \uRPhg $  is a quantization of  $ \lieg^{\bar{\Rpicc}}_{\bar{\Ppicc}} \, $}.
   In particular, writing again  $ T $, $ E_i $  and  $ F_i $  for the
   ``specialized'' images of the generators $ \, T \in \lieh \, $  and
   $ E_i \, $,  $ F_i $  ($ \, i \in I \, $), the Lie algebra structure of
   $ \lieg^{\bar{\Rpicc}}_{\bar{\Ppicc}} $  is given by \eqref{eq: comm-rel's_x_uPhg}
   with the commutator replaced by the (Lie) bracket and the quantum Serre relations
   by the adjoint actions  $ \, {\ad(E_i)}^{1-a_{ij}}(E_j) = 0 \, $  and
   $ \, {\ad(F_i)}^{1-a_{ij}}(F_j) = 0 \, $,
 \,whereas the coalgebra structure is determined by
$$
\delta\big(\,T\big) \, = \, 0  \quad ,   \qquad   \delta\big(E_i\big) \, =
\, 2 \; T^+_i \!\wedge E_i  \quad ,   \qquad  \delta\big(F_i\big) \, = \,
2 \; T^-_i \!\wedge F_i
$$
   \indent   For example, if we take $ \, P := DA \, $, $ \, r := \rk\!\big(DA\big) \, $
   and  $ \, \cR := \big(\, \lieh \, , \Pi \, , \Pi^\vee \big) \, $  a realization of  $ \, DA  \, $,
   where  $ \, \rk(\lieh) = 2\,n-r \, $  and  $ \, T_i^+ = T_i^- \, $ in  $ \Pi^{\vee} \, $,
   for all  $ \, i \in I \, $, one has that  $ U^{\,\cR}_{\!DA,\hbar}(\lieg) $  is
   the ``quantum double version'' of
   the usual Drinfeld's QUEA  $ U_\hbar\big(\lieg_{{}_A}\big) $
   for the Kac-Moody algebra  $ \lieg_{{}_A} $  associated with the Cartan matrix $ A \, $;
   \,in particular, its semiclassical limit is
   $ U\big(\lieg_A^{\scriptscriptstyle \textsl{MD}}\big) \, $,
   \,where  $ \lieg_A^{\scriptscriptstyle \textsl{MD}} $  is the ``Manin double version'' of
   $ \lieg_{{}_A} \, $.
 \vskip5pt
   Now take any  $ \kh $--basis  $ \, {\big\{ H_g \big\}}_{g \in \cG} \, $  of  $ \lieh $
   where  $ \, |\cG| = \rk(\lieh) = t \, $.  For any antisymmetric matrix
   $ \; \Phi = \big( \phi_{i,j} \big)_{1\leq i,j\leq n} \, $  with entries in  $ \kh $  we define
  $$  \JJ_\Phi  \; := \;  {\textstyle \sum_{i,j=1}^n} \phi_{gk} \, H_g \otimes H_k  \; \in \;  \lieh \otimes \lieh  \; \subseteq \;  U^{\,\cR}_{\!P,\hskip0,7pt\hbar}(\lieh) \otimes U^{\,\cR}_{\!P,\hskip0,7pt\hbar}(\lieh)  $$
 By direct check, one sees that the element
  $$  \cF_\Phi  \,\; := \;\,  e^{\,\hbar \, 2^{-1} \JJ_\Phi}  \,\; = \;\,  \exp\Big(\hskip1pt \hbar \, 2^{-1} \, {\textstyle \sum_{g,k=1}^t} \phi_{gk} \, H_g \otimes H_k \Big)  $$
 in  $ \, U^{\,\cR}_{\!P,\hskip0,7pt\hbar}(\lieh) \,\widehat{\otimes}\,
 U^{\,\cR}_{\!P,\hskip0,7pt\hbar}(\lieh) \, $  is actually a  {\sl twist\/}  for  $ \uRPhg \, $.
 For  $ \, i \in I \, $,  define the elements
 $ \,\; \cL_{\Phi, i} := e^{+ \hbar \, 2^{-1} \sum_{g,k=1}^t \alpha_i(H_g) \, \phi_{gk} H_k} \;\, $
 and  $ \,\; \cK_{\Phi,i} := e^{+ \hbar \, 2^{-1} \sum_{g,k=1}^t \alpha_i(H_g) \,
 \phi_{kg} H_k} \; $.  Then, the new coproduct in  $ \big( \uRPhg\big)^{\cF_{\Phi}} $
 is given by
  $$  \displaylines{
   \qquad   \Delta^{\scriptscriptstyle \!\Phi}\big(E_i\big)  \; = \;
   E_i \otimes \cL_{\Phi, i}^{+1} \, +
 \, e^{+\hbar \, T^+_i} \cK_{\Phi, i}^{+1} \otimes E_i   \quad \qquad
 \big(\, \forall \;\; i \in I \,\big)  \cr
   \qquad \qquad \quad   \Delta^{\scriptscriptstyle \!\Phi}\big(T\big)  \; = \;
   T \otimes 1 \, + \, 1 \otimes T
   \quad \quad \qquad \qquad  \big(\, \forall \;\; T \in \lieh \,\big)  \cr
   \qquad   \Delta^{\scriptscriptstyle \!\Phi}\big(F_i\big)  \; = \;
   F_i \otimes \cL_{\Phi, i}^{-1} \, e^{-\hbar \, T^-_i } + \, \cK_{\Phi, i}^{-1} \otimes F_i
   \quad \qquad  \big(\, \forall \;\; i \in I \,\big)  }  $$
 Similarly, the ``twisted'' antipode  $ \cS^{\cF_\Phi} \, $  and the counit
 $ \, \epsilon^{\scriptscriptstyle \Phi} := \epsilon \, $   are given by
  $$  \begin{matrix}
   \qquad   \cS^{\scriptscriptstyle \Phi}\big(E_i\big)  \, = \,
   - e^{-\hbar \, T^+_i} \cK_{\Phi, i}^{-1} \, E_i \, \cL_{\Phi, i}^{-1}  \quad ,
   &  \qquad  \epsilon^{\scriptscriptstyle \Phi}\big(E_i\big) \, = \, 0
     \qquad \qquad  \big(\, \forall \;\; i \in I \,\big)  \\
   \qquad   \cS^{\scriptscriptstyle \Phi}\big(T\big)  \, = \,  -T  \quad ,
 \phantom{\Big|^|}
   &  \qquad  \epsilon^{\scriptscriptstyle \Phi}\big(T\big) \, = \, 0
     \qquad \qquad  \big(\, \forall \;\; T \in \lieh \,\big)  \\
  \qquad   \cS^{\scriptscriptstyle \Phi}\big(F_i\big)  \, = \,
  - \cK_{\Phi, i}^{+1} \, F_i \, \cL_{\Phi, i}^{+1} \, e^{+\hbar \, T^-_i}  \quad ,
   &  \qquad  \epsilon^{\scriptscriptstyle \Phi}\big(F_i\big) \, = \, 0
     \qquad \qquad  \big(\, \forall \;\; i \in I \,\big)
\end{matrix}  $$
   \indent   A key feature of this family of quantum groups is that the
   multiparameter encodes different types of deformations:
   there exists a multiparameter matrix  $ P_\Phi $  and a realization
   $ \cR_{\scriptscriptstyle \Phi} $  such that
   $ \; U_{\!P_\Phi,\,\hbar}^{\cR_{\scriptscriptstyle \Phi}}(\lieg) \cong
   {\big(\, U_{\!P,\,\hbar}^{\,\cR}(\lieg)\big)}^{\cF_\Phi} \; $  as topological Hopf algebras.
   In particular, the class of all FoMpQUEAs of any fixed Cartan type and
   of fixed rank is stable by toral twist deformations, see \cite[Theorem 5.1.4]{GaGa2}.
   Furthermore, it turns out that, under certain restrictions on the realization,
   every FoMpQUEA can be realized as a toral twist deformation of the ``standard''
   FoMpQUEA by Drinfeld   --- see \cite[Theorem 5.1.5]{GaGa2} for further details.
                                                                  \par
   With respect to the semiclassical limit,  $ \, \overline{\JJ}_{\Phi} := \JJ_{\Phi} \pmod{\hbar} \, $  is actually a (toral) twist for the Lie bialgebra  $ \lieg^{\bar{\Rpicc}}_{\bar{\Ppicc}} \, $.
The deformed Lie cobracket is given by the formula
   $$  \delta^{\overline{\JJ}_{\Phi}}(x)  \; := \;  \delta(x) - \ad_x\!\big(\overline{\JJ}_{\Phi}\big)  \; = \;
  \delta(x) - {\textstyle \sum_{g,k=1}^t} \overline{\phi_{gk}} \,
  \big( \big[x,H_g\big] \otimes H_k + H_g \otimes \big[x,H_k\big] \big)  $$
--- for all  $ \, x \in \lieg^{\bar{\Rpicc}}_{\bar{\Ppicc}} \, $,  with  $ \, \overline{\phi_{gk}} := \phi_{gk} \! \pmod{\hbar} \, $  ---   that on generators reads
  $$  \delta^{\overline{\JJ}_{\Phi}}(E_i) = 2 \, T^+_{\Phi,i} \wedge \! E_i  \; ,   \!\quad
  \delta^{\overline{\JJ}_{\Phi}}(T) = 0  \; ,   \!\quad
  \delta^{\overline{\JJ}_{\Phi}}(F_i) = 2 \, T^-_{\Phi,i} \wedge \! F_i  \; ,
     \quad  \forall \; i \in I \, , \, T \!\in \lieh  $$
where  $ \,\; T^{\pm}_{\Phi,i} = T^{\pm}_i \pm {\textstyle \sum_{g,k=1}^t} \, \overline{\phi_{kg}} \,
\alpha_i(H_g) \, H_k \;\, $  for all  $ \, i \in I \, $.
                                                                  \par
   In conclusion, one may consider the deformation  $ \big( \lieg^{\bar{\Rpicc}}_{\bar{\Ppicc}} \big)^{\overline{\JJ}_\Phi} $  of  $ \lieg^{\bar{\Rpicc}}_{\bar{\Ppicc}} $  by the (Lie) twist  $ \overline{\JJ}_\Phi \, $,  \,as well as the deformation  $ {\big(\,\uRPhg\big)}^{\cF_\Phi} $  of  $ \uRPhg $  by the (Hopf) twist  $ \cF_\Phi \, $.  By  \cite[Theorem 6.2.2]{GaGa2},  we know that  $ {\big(\, \uRPhg \big)}^{\cF_\Phi} $  is a QUEA, whose semiclassical limit is  $ \, U\Big(\! {\big( \lieg^{\bar{\Rpicc}}_{\bar{\Ppicc}} \big)}^{\overline{\JJ}_{\Phi}} \Big) \, $:  \,indeed, we have  $ \; {\big(\, \uRPhg \big)}^{\cF_\Phi} \cong \, U_{\!P_{\,\Phi},\hbar}^{\,\cR_\Phi}(\lieg) \; $  and  $ \; {\big( \lieg^{\bar{\Rpicc}}_{\bar{\Ppicc}} \big)}^{\overline{\JJ}_{\Phi}} \cong \, \lieg_{\bar{\Ppicc}_\Phi}^{\bar{\Rpicc}_\Phi} \; $.
\end{exa}

\vskip13pt

\subsection{Deformations by 2-cocycle of QFSHA's}  \label{subsec: 2coc-QFSHAs}  {\ }
 \vskip7pt
   We consider now deformations by 2-cocycle of QFSHA's.
   Due to the (linear) duality between the notions of QUEA and QFSHA,
   and similarly for those of twist and 2-cocycle, the outcome we find is nothing but
   the dual counterpart of  Theorem \ref{thm: twist-deform-QUEA}  above
   (and, consistently, it might be deduced
   from the latter by duality).

\vskip11pt

\begin{theorem}  \label{thm: 2cocycle-deform-QFSHA}  {\ }
 \vskip3pt
   Let  $ \fhg $  be a QFSHA over the Poisson group  $ G $,
   with tangent Lie bialgebra  $ \, \lieg = \big(\, \lieg \, ; \, [\,\ ,\ ] \, , \, \delta \,\big) \, $.
   Let  $ \, \sigma $  be a 2--cocycle for  $ \fhg $  s.t.\
   $ \; \sigma \equiv \epsilon^{\otimes 2} \; \Big(\, \text{\rm mod} \; \hbar \,
   \Big( \fhg^{\,\widetilde{\otimes}\, 2} \Big)^{\!*} \,\Big) \, $;  \,then
   $ \, \varsigma := \hbar^{-1} \log_*(\sigma) \in \Big( \fhg^{\,\widetilde{\otimes}\, 2}
   \Big)^{\!*} \, $,  \,where  ``$ \; \log_* $''
   is the logarithm with respect to the convolution product, and
   $ \, \sigma = \exp_*\!\big( \hbar\,\varsigma \big) \, $.  Last, we set
   $ \, \varsigma_a := \varsigma -\, \varsigma_{2,1} \, $.  Then:
 \vskip5pt
   (a)\;  $ \varsigma $  is antisymmetric, i.e.\
   $ \, \varsigma_{2,1} = -\varsigma \, $,  iff\/
   $ \sigma $  is orthogonal, i.e.\  $ \, \sigma_{2,1} = \sigma^{-1} \, $;
 \vskip5pt
   (b)\;  the element
   $ \, \overline{\varsigma_a} \, := \, \varsigma_a \; \Big( \text{\rm mod} \; \hbar \,
   \Big( \fhg^{\widetilde{\otimes}\, 2} \Big)^{\!*} \,\Big) \, $
   provides a well-defined element
   $ \, \zeta \in {\big( \lieg^* \otimes \lieg^* \big)}^* \! = \lieg \otimes \lieg \, $
   that is an
 \textsl{antisymmetric 2-cocycle} \hbox{for the Lie bialgebra  $ \lieg^* $;}
 \vskip5pt
   (c)\; letting  $ \, \zeta \, $  be as in claim (b),
 the deformation  $ {\big( \fhg \big)}_\sigma $  of  $ \, \fhg $  by the 2--cocycle
 $ \sigma $  is a QFSHA for the formal Poisson group  $ G_\sigma $
 with cotangent Lie bialgebra
  $$
  {\Lie\,(G_\zeta)}^*  \; = \;\,  {(\,\lieg^*)}_\zeta  \,\; = \;\,  \big(\, \lieg^* \, ;
  \, {\big( {[\,\ ,\ ]}_* \big)}_\zeta \, , \, \delta_* \,\big)  $$
 which is the deformation of  $ \, \lieg^* $  by the 2-cocycle  $ \zeta \, $;  in short,
 $ \; {\big( \fhg \big)}_\sigma \cong \, F_\hbar[[G_\zeta\hskip0,5pt]] \; $.
\end{theorem}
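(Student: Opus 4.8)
The plan is to derive the statement from Theorem \ref{thm: twist-deform-QUEA} by passing to linear duals, exploiting the (topological) duality between $\QFSHA$ and $\QUEA$ recalled in \S \ref{equiv-&-(stand)-duality} together with the duality of deformations of Proposition \ref{prop: duality-deforms}; alternatively, one repeats the proof of Theorem \ref{thm: twist-deform-QUEA} verbatim in the dual setting.

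For the first route, I would set $\, U_\hbar := {\big( \fhg \big)}^\star \, $, which by \S \ref{equiv-&-(stand)-duality} is a QUEA with $\, U_\hbar \big/ \hbar\,U_\hbar = \ug \, $, where $\, \lieg = \Lie(G) \, $ is equipped with the Lie bialgebra structure dual to the Poisson structure of $\fg$. Under the natural identification $\, {\big( \fhg^{\,\widetilde{\otimes}\, 2} \big)}^* \cong U_\hbar^{\,\widehat{\otimes}\, 2} \, $ the convolution product on the left corresponds to the ordinary product on the right and $\log_*$ to $\log$; hence the $2$--cocycle $\sigma$, being trivial modulo $\hbar$, corresponds to a twist $\, \cF := \cF_\sigma \in U_\hbar^{\,\widehat{\otimes}\, 2} \, $ for $U_\hbar$ with $\, \cF \equiv 1 \pmod{\hbar\,U_\hbar^{\,\widehat{\otimes}\, 2}} \, $ — here one invokes the topological version of Proposition \ref{prop: duality-deforms} and checks that unitarity of $\sigma$ transfers to that of $\cF$ (cf.\ Lemma \ref{lemma: twist-cond's/exp ==> twist-cond's/log}). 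Putting $\, \kappa := \hbar^{-1}\log(\cF\,) \, $ one gets $\, \kappa = \varsigma \, $ and $\, \kappa_a = \varsigma_a \, $ under this identification, so the well-definedness of $\varsigma$ and the formula $\, \sigma = \exp_*(\hbar\,\varsigma) \, $ follow from the corresponding facts for $\kappa$.

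Claim (a) is then immediate: the flip $\, (-)_{2,1} \, $ commutes with the identification and with $\log_*/\log$, so $\, \varsigma_{2,1} = -\varsigma \, $ iff $\, \kappa_{2,1} = -\kappa \, $, which by Theorem \ref{thm: twist-deform-QUEA}(a) holds iff $\, \cF^{-1} = \cF_{2,1} \, $, i.e.\ iff $\, \sigma^{-1} = \sigma_{2,1} \, $. For claim (b), Theorem \ref{thm: twist-deform-QUEA}(b) gives that $\, c := \overline{\kappa_a} \in \lieg \otimes \lieg \, $ is an antisymmetric twist for $\lieg$; transporting back, $\, \zeta := \overline{\varsigma_a} \, $ is precisely the image of $c$ in $\, {\big( \lieg^* \otimes \lieg^* \big)}^* = \lieg \otimes \lieg \, $, and by Proposition \ref{prop: duality-deforms x LbA's}(a) the image of a twist for $\lieg$ in this space is an (automatically antisymmetric) $2$--cocycle for the dual Lie bialgebra $\lieg^*$. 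For claim (c), the deformation $\, (\fhg)_\sigma \, $ has linear dual $\, {\big((\fhg)^\star\big)}^{\cF_\sigma} = U_\hbar^{\cF} \, $ by Proposition \ref{prop: duality-deforms}; by Theorem \ref{thm: twist-deform-QUEA}(c) the latter is $\, U_\hbar\big(\lieg^c\big) \, $, a QUEA. Dualizing once more, $\, (\fhg)_\sigma \, $ is a QFSHA whose semiclassical limit is the formal Poisson group $G_\zeta$ with cotangent Lie bialgebra $\, {\big(\lieg^c\big)}^* \cong {\big(\lieg^*\big)}_{\gamma_c} = {\big(\lieg^*\big)}_\zeta \, $ — the isomorphism being the canonical one of Proposition \ref{prop: duality-deforms x LbA's}(a) — which yields $\, {(\fhg)}_\sigma \cong F_\hbar[[G_\zeta]] \, $ as wanted.

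Should a self-contained argument be preferred, it runs parallel to the proof of Theorem \ref{thm: twist-deform-QUEA}: one first establishes the dual of Lemma \ref{lemma: twist-cond's/exp ==> twist-cond's/log} (so $\varsigma$ may be taken to vanish on $\kh$ in each slot, i.e.\ to live on $\, J_\hbar^{\,\widetilde{\otimes}\, 2} \, $ with $\, J_\hbar := \Ker\,\epsilon_{\fhg} \, $), then substitutes $\, \sigma = \exp_*(\hbar\,\varsigma) \, $ into the $2$--cocycle identity \eqref{eq: 2-cocyc-cond.'s} and expands $\hbar$--adically: the order--$1$ term, read modulo $\hbar$ and analysed through the $I$--adic filtration of $\fg$, forces $\overline{\varsigma}$ to be a biderivation vanishing on $\kh$, hence to descend to $\, {\big( \lieg^* \otimes \lieg^* \big)}^* \, $, while applying the antisymmetrizer $\textsl{Alt}_{\,3}$ to the order--$2$ term produces $\, \partial_*(\zeta) + {[[ \zeta \, , \zeta \,]]}_* = 0 \, $, i.e.\ \eqref{eq: cocyc-cond_Lie-bialg}; claim (c) then follows by computing the Poisson bracket \eqref{eq: def-bracket} on $\, (\fhg)_\sigma \big/ \hbar\,(\fhg)_\sigma \, $ — which equals $\fg$ as a Hopf algebra since $\, \sigma \equiv \epsilon^{\otimes 2} \pmod{\hbar} \, $ — and matching the induced bracket on the cotangent space with \eqref{eq: def_cocyc-bracket} for $\lieg^*$ and cocycle $\zeta$. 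In either route the main obstacle is the same: controlling the infinite-dimensional, topological linear duality — identifying $\, {\big( \fhg^{\,\widetilde{\otimes}\, 2} \big)}^* \, $ with $\, U_\hbar^{\,\widehat{\otimes}\, 2} \, $ compatibly with convolution, unitarity and the flip, and checking that the canonical isomorphisms of Propositions \ref{prop: duality-deforms} and \ref{prop: duality-deforms x LbA's} survive the $\kh$--adic completions at play — together with the sign bookkeeping needed to land exactly on the conventions of Definition \ref{def: cocyc-deform_Lie-bialg's} and Remark \ref{rmk: differ-Mj-Nosotros_x_def-2coc}.
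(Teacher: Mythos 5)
Your fallback, ``self-contained'' argument is essentially the paper's actual proof: the authors expand $ \sigma = \exp_*(\hbar\,\varsigma) $ inside the $2$--cocycle identity \eqref{eq: 2-cocyc-cond.'s}, read off at order $\hbar$ the balanced relation $ \varsigma(a,b\,c) \underset{\hbar}{\equiv} \varsigma(a\,b,c) $ for $ a,b,c \in \Ker(\epsilon_{\fhg}) $, combine it with commutativity modulo $\hbar$ to show that the \emph{antisymmetrization} $ \varsigma_a $ annihilates $ \liem\otimes\liem^2 + \liem^2\otimes\liem $, then apply $ \textsl{Alt}_{\,3} $ to the order--$\hbar^2$ term to obtain the Lie $2$--cocycle identity for $\zeta$, and finally compute the $\sigma$--deformed commutator to identify the bracket induced on $ \liem\big/\liem^2 $ with $ {({[\,\ ,\ ]}_*)}_\zeta $. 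One correction to your sketch of that route: the order-one analysis does not make $ \overline{\varsigma} $ a biderivation, and $ \overline{\varsigma} $ itself does \emph{not} descend to $ \lieg^*\otimes\lieg^* $; only $ \varsigma_a = \varsigma - \varsigma_{2,1} $ vanishes on products, which is exactly why the theorem defines $ \zeta $ from $ \overline{\varsigma_a} $ rather than from $ \overline{\varsigma} $.

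Your primary route --- deduction by duality from Theorem \ref{thm: twist-deform-QUEA} --- is the one the authors explicitly say ``might be'' followed but do not carry out, and as written it has a genuine gap. The theorem takes $ \sigma $ in the \emph{full} linear dual $ \big( \fhg^{\,\widetilde{\otimes}\,2} \big)^{*} $, whereas the dual QUEA is built from the \emph{continuous} dual: $ {\uhg}^{\widehat{\otimes}\,2} = \big( \fhg^{\,\widetilde{\otimes}\,2} \big)^{\star} \subsetneq \big( \fhg^{\,\widetilde{\otimes}\,2} \big)^{*} $, with $ \uhg := \fhg^{\star} $. A $2$--cocycle for $ \fhg $ need not be continuous for the $ I_\hbar $--adic topology, hence need not correspond to any element $ \cF_\sigma \in {\uhg}^{\widehat{\otimes}\,2} $; the hypothesis ``$ (H\otimes H)^* = H^*\otimes H^* $'' in Proposition \ref{prop: duality-deforms}\textit{(b)} is precisely what fails. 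You flag this identification as ``the main obstacle'' to be checked, but it is not a verifiable technicality: it is false unless one adds a continuity assumption on $ \sigma $, which would prove only a weaker statement. The paper's direct proof sidesteps the issue entirely by never converting $ \sigma $ into a tensor --- every step evaluates $ \sigma $ as a bilinear form on elements of $ \fhg $. So your second route is the viable one, and the first establishes only the special case of continuous $2$--cocycles.
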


\pf

 \textit{(a)}\,  This is obvious, just by construction.
 \vskip7pt
   \textit{(b)}\,  As a first step, we have to prove that
   $ \, \overline{\varsigma_a} \, := \, \varsigma_a \;
   \Big(\, \text{\rm mod} \; \hbar \, \Big( \fhg^{\,\widetilde{\otimes}\, 2} \Big)^{\!*} \,\Big)\, $
   determines a uniquely defined element
   $ \, \zeta \in {\big( \lieg^* \otimes \lieg^* \big)}^* = \lieg \otimes \lieg \, $.
   Hereafter we realize  $ \lieg^* $  as  $ \, \lieg^* = \liem \Big/ \liem^2 \, $  where
   $ \; \liem := \Ker\big( \epsilon_{{}_{F[[G\,]]}} \big) \; $;  \;then we have also
  $$
  \lieg^* \otimes \lieg^*  \,\; = \;\,
  \Big( \liem \Big/ \liem^2 \Big) \otimes \Big( \liem \Big/ \liem^2 \Big)  \,\; \cong \;\,
  \big( \liem \otimes \liem \big) \bigg/ \Big( \liem \otimes \liem^2 +
  \liem^2 \otimes \liem \Big)
  $$
 thus in the end we have to prove that the function
 $ \; \overline{\varsigma_a} \, := \, \varsigma_a \; \big(\, \text{\rm mod} \; \hbar \,\big)
 \; $
 --- defined from
 $ \; {F[[G\,]]}^{\widetilde{\otimes}} = {\fhg}^{\widetilde{\otimes}} \;
 \big(\, \text{\rm mod} \; \hbar \,\big) \; $  to  $ \k $  ---   does vanish onto
 $ \, \liem \otimes \liem^2 + \liem^2 \otimes \liem \, $,  \, hence induces
 $ \, \zeta \, $  defined onto
 $ \, {\big( \lieg^* \otimes \lieg^* \big)}^* =
 \big( \liem \otimes \liem \big) \bigg/ \Big( \liem \otimes \liem^2 +
 \liem^2 \otimes \liem \Big) \, $  by the canonical recipe
 $ \; \zeta\,\big(\, \overline{u \otimes v} \,\big) \, := \,
 \overline{\varsigma_a}\,(u \otimes v) \; $
 for each  $ \, u , v \in \liem \, $.
 In fact, since  $ \, \varsigma_a \, $  is antisymmetric it is enough to prove that
 $ \; \overline{\varsigma_a}\,\big( \liem \otimes \liem^2 \big) = 0 \; $;
 \,in turn, this amounts to showing that   --- writing  $ \varsigma_a $
 as a bilinear function rather than a morphism on a tensor product module ---
 one has
\begin{equation}  \label{eq: varsigma_a(a,bc)}
 \qquad   \varsigma_a\big( a \, , b\,c \,\big) \, \underset{\hbar}{\equiv} \, 0
 \qquad \qquad  \forall \;\; a , b , c \in J_\hbar := \Ker\big( \epsilon_{{}_\fhg} \big)
\end{equation}
   \indent   For the given
   $ \, a , b , c \in J_\hbar := \Ker\big( \epsilon_{{}_\fhg} \big) \, $,
   \,the 2--cocycle nature of  $ \sigma $  gives
\begin{equation}  \label{eq: sigma(a,bc)}
 \qquad   \sigma\big( b_{(1)} \, , c_{(1)} \,\big) \,
 \sigma\big( a \, , b_{(2)} \, c_{(2)} \,\big)  \,\; = \;\,
 \sigma\big( a_{(1)} \, , b_{(1)} \,\big) \, \sigma\big( a_{(2)} \, b_{(2)} \, , c \,\big)
\end{equation}
 Now we expand  $ \sigma $  as
 (cf.\ \S \ref{prel_Q-Groups}  for notation  ``$ \, \cO\big(\hbar^2\big) \, $'')
  $$  \sigma  \,\; = \;\,  \exp_*\big(\, \hbar \, \varsigma \,\big)  \,\; =
  \;\,  \epsilon^{\otimes 2} \, + \, \hbar \, \varsigma \, + \, \cO\big(\hbar^2\big)  \,\;
  = \;\,  \epsilon^{\otimes 2} \, + \,
  \hbar \; {\textstyle \sum\limits_\varsigma} \, \varsigma' \otimes \varsigma'' \, +
  \, \cO\big(\hbar^2\big)  $$
 where we used sort of Sweedler's-like notation
 $ \; \varsigma \, = \, {\textstyle \sum\limits_\varsigma} \, \varsigma' \otimes
 \varsigma'' \; $  to denote  $ \varsigma \, $;  \,
 plugging this into  \eqref{eq: sigma(a,bc)}
 and expanding everything out, we end up with
  $$
  \displaylines{
   \qquad   \epsilon(a) \, \epsilon(b) \, \epsilon(c) \, +
   \, \hbar \, \Big( {\textstyle \sum\limits_\varsigma} \, \varsigma'(a) \,
   \varsigma''(b\,c) \, + \, \epsilon(a) \, {\textstyle \sum\limits_\varsigma} \,
   \varsigma'(b) \, \varsigma''(c) \Big) \, + \, \cO\big(\hbar^2\big)  \,\; =   \hfill  \cr
   \hfill   = \;\,  \epsilon(a) \, \epsilon(b) \, \epsilon(c) \, + \,
   \hbar \, \Big( {\textstyle \sum\limits_\varsigma} \, \varsigma'(a\,b) \, \varsigma''(c) \,
   + \, {\textstyle \sum\limits_\varsigma} \, \varsigma'(a) \, \varsigma''(b) \, \epsilon(c)
   \Big) \, + \, \cO\big(\hbar^2\big)   \qquad }
   $$
 which implies   --- since  $ \, \epsilon(a) = 0 = \epsilon(c) \, $
 by assumption ---   also
  $$
  \hbar \, {\textstyle \sum\limits_\varsigma} \, \varsigma'(a) \, \varsigma''(b\,c) \,
  + \, \cO\big(\hbar^2\big)  \,\; = \;\,
  \hbar \, {\textstyle \sum\limits_\varsigma} \, \varsigma'(a\,b) \, \varsigma''(c) \, +
  \, \cO\big(\hbar^2\big)
  $$
 whence we argue
\begin{equation}  \label{eq: zeta(a,bc)=zeta(ab,c)}
 \qquad   \varsigma(a\,,b\,c\,) \; = \;
 {\textstyle \sum\limits_\varsigma} \, \varsigma'(a) \, \varsigma''(b\,c\,)  \,\;
 \underset{\hbar}{\equiv} \;\,  {\textstyle \sum\limits_\varsigma} \, \varsigma'(a\,b) \,
 \varsigma''(c\,) \; = \; \varsigma(a\,b \, , c\,)
\end{equation}
 Recall also that  $ \fhg $  is commutative modulo  $ \hbar \, $,  \,so that
 $ \; x \, y \, \underset{\hbar}{\equiv} \, y \, x \, $  for all  $ \, x , y \in \fhg \, $.
 Using this along with several instances of  \eqref{eq: zeta(a,bc)=zeta(ab,c)}
 one gets
  $$
  \varsigma(a\,,b\,c\,)  \, \underset{\hbar}{\equiv} \,
  \varsigma(a\,b\,,c\,)  \, \underset{\hbar}{\equiv} \,  \varsigma(b\,a\,,c\,)  \,
  \underset{\hbar}{\equiv} \,  \varsigma(b\,,a\,c\,)  \, \underset{\hbar}{\equiv} \,
  \varsigma(b\,,c\,a)  \, \underset{\hbar}{\equiv} \,  \varsigma(b\,c\,,a)
  $$
 from which we eventually conclude that
  $$
  \varsigma_a(a\,,b\,c\,)  \; := \;  \varsigma(a\,,b\,c\,) \, - \, \varsigma(b\,c,\,a)
  \; \underset{\hbar}{\equiv} \;  \varsigma(b\,c,\,a) \, - \, \varsigma(b\,c,\,a)  \; = \;
  0  \quad ,   \qquad \text{q.e.d.}
  $$
 \vskip3pt
   As a second step, we note that  $ \, \zeta \, $  is antisymmetric, by construction, since
$ \, \varsigma_a \, $  is.
 \vskip7pt
 Third, we need to prove that
 $ \, \zeta : \lieg^* \otimes \lieg^* \relbar\joinrel\longrightarrow \k \, $
 satisfies the remaining condition of \eqref{eq: cocyc-cond_Lie-bialg},
 so that it is indeed a 2--cocycle.  Now, expanding  $ \sigma $  up to order 3, as
  $$
  \sigma  \,\; = \;\,  \exp_*\!\big(\, \hbar \, \varsigma \big)  \,\; = \;\,
  \epsilon^{\otimes 2} \, + \, \hbar \, \varsigma \, + \, \hbar^2 \, \varsigma^{*2} \big/ 2 \, + \,
  \cO\big(\hbar^3\big)
  $$
 and plugging this into  \eqref{eq: sigma(a,bc)},  we find, for all
 $ \, a , b , c \in J_\hbar := \Ker\big( \epsilon_{{}_\fhg} \big) \, $  again,
  $$
  \displaylines{
   \bigg( \epsilon\big(b_{(1)}\big) \, \epsilon\big(c_{(1)}\big) \, +
   \, \hbar \, \varsigma\big(b_{(1)}\,,c_{(1)}\big) \, + \,
   \hbar^2 \, \varsigma\Big({b_{(1)}}_{(1)},{c_{(1)}}_{(1)}\Big) \,
   \varsigma\Big({b_{(1)}}_{(2)},{c_{(1)}}_{(2)}\Big) \Big/ 2 \, + \,
   \cO\big(\hbar^3\big) \bigg) \,\cdot   \hfill  \cr
   \qquad \qquad   \cdot\, \bigg( \epsilon(a) \, \epsilon\big(b_{(2)}\big) \,
   \epsilon\big(c_{(2)}\big) \, + \, \hbar \, \varsigma\big(a\,,b_{(2)}\,c_{(2)}\big) \; +
   \hfill  \cr
   \hfill   + \; \hbar^2 \, \varsigma\Big(a_{(1)} \, , {b_{(2)}}_{(1)} {c_{(2)}}_{(1)}\Big) \,
   \varsigma\Big(a_{(2)} \, , {b_{(2)}}_{(2)} {c_{(2)}}_{(2)}\Big) \Big/ 2 \, + \,
   \cO\big(\hbar^3\big) \bigg)  \,\; =  \cr
   = \;   \bigg(\! \epsilon\big(a_{(1)}\big) \, \epsilon\big(b_{(1)}\big) \, + \, \hbar \,
   \varsigma\big(a_{(1)}\,,b_{(1)}\big) \, + \,
   \hbar^2 \, \varsigma\Big(\! {a_{(1)}}_{(1)},{b_{(1)}}_{(1)} \!\Big) \,
   \varsigma\Big(\! {a_{(1)}}_{(2)},{b_{(1)}}_{(2)} \!\Big) \Big/ 2 \, + \,
   \cO\big(\hbar^3\big) \!\bigg) \,\cdot   \hfill  \cr
   \qquad \qquad   \cdot\, \bigg( \epsilon\big(a_{(2)}\big) \, \epsilon\big(b_{(2)}\big) \,
   \epsilon(c) \, + \, \hbar \, \varsigma\big(a_{(2)}\,b_{(2)} \, , c\big) \; +
   \hfill  \cr
   \hfill   + \; \hbar^2 \, \varsigma\Big({a_{(2)}}_{(1)} {b_{(2)}}_{(1)} , c\Big) \,
   \varsigma\Big({a_{(2)}}_{(2)} {b_{(2)}}_{(2)} , c\Big) \Big/ 2 \, + \,
   \cO\big(\hbar^3\big) \bigg)  }
   $$
 and then performing multiplication and truncating at order 3 we get
  $$  \displaylines{
   \epsilon(a\,b\,c\,) \; + \; \hbar \, \big(\, \varsigma(a\,,b\,c\,) \, + \, \epsilon(a) \,
   \varsigma(b\,,c\,) \big) \; +   \hfill  \cr
   \hfill   + \; \hbar^2 \, \Big( \varsigma^{*2}(a\,,b\,c\,) \big/ 2 \, + \,
   \varsigma\big(b_{(1)} \, , c_{(1)}\big) \, \varsigma\big(a\,,b_{(2)} \, c_{(2)}\big) \, + \,
   \epsilon(a) \, \varsigma^{*2}(b\,,c\,) \big/ 2 \Big) \, + \, \cO\big(\hbar^3\big)  \,\; =
   \cr
   = \;\,  \epsilon(a\,b\,c\,) \; + \; \hbar \, \big(\, \varsigma(a\,b\,,c\,) \, + \,
   \varsigma(a\,,b\,) \, \epsilon(c) \big) \; +   \hfill  \cr
   \hfill   + \; \hbar^2 \, \Big( \varsigma^{*2}(a\,,b\,) \, \epsilon(c) \big/ 2 \, + \,
   \varsigma\big( a_{(1)} \, , b_{(1)}\big) \, \varsigma\big(a_{(2)}\,b_{(2)} \, , c \,\big) \,
   + \, \varsigma^{*2}(a\,b\,,c\,) \big/ 2 \Big) \, + \, \cO\big(\hbar^3\big)  }
   $$
 which in turn   --- since  $ \, \epsilon(a) = 0 =\epsilon(c) \, $  by assumption ---
 simplifies into
  $$  \displaylines{
   \varsigma(a\,,b\,c\,) \; + \; \hbar \, \Big( \varsigma^{*2}(a\,,b\,c\,) \big/ 2 \, + \,
   \varsigma\big(b_{(1)} \, , c_{(1)}\big) \,
   \varsigma\big(a\,,b_{(2)} \, c_{(2)}\big) \Big) \, + \, \cO\big(\hbar^2\big)  \,\; =
   \hfill  \cr
   \hfill   =  \;\,  \varsigma(a\,b\,,c\,) \, + \, \hbar \, \Big(\, \varsigma\big(a_{(1)} \, ,
   b_{(1)}\big) \, \varsigma\big( a_{(2)}\,b_{(2)} \, , c \,\big) \, + \,
   \varsigma^{*2}(a\,b\,,c\,) \big/ 2 \,\Big) \, + \, \cO\big(\hbar^2\big)  }
   $$
 that we eventually we re-write as
\begin{equation}  \label{eq: ident-sigma(a,b,c)}
  \begin{aligned}
     &  \varsigma(a\,,b\,c\,) \; - \; \varsigma(a\,b\,,c\,) \;\; +
 \\
      &  \;\quad   + \; \hbar \; \Big(\, \varsigma^{*2}(a\,,b\,c\,) \big/ 2 \, -
 \, \varsigma^{*2}(a\,b\,,c\,) \big/ 2  \;\; +  \\
   &  \;\quad \;\quad   + \;  \varsigma\big(b_{(1)} \, , c_{(1)}\big) \,
   \varsigma\big(a\,,b_{(2)} \, c_{(2)}\big) \, - \,
  \varsigma\big(a_{(1)} \, , b_{(1)}\big) \,
  \varsigma\big( a_{(2)}\,b_{(2)} \, , c \,\big) \,\Big)  \,\; \underset{\,\hbar^2}{\equiv} \;\,
   0  \\
  \end{aligned}
\end{equation}
 Now let  $ \, \Bbbk\big[\mathbb{S}_3\big] \, $  act onto  $ {\fhg}^{\otimes 3} $
 and consider in particular the action of the antisymmetrizer
 $ \; \textsl{Alt}_{\,3} := \big( \id - (1\,2) - (2\,3) - (3\,1) + (1\,2\,3) + (3\,2\,1) \big) \; $
 onto the equation in  \eqref{eq: ident-sigma(a,b,c)},
 which yields a new equation: denoting equation
 \eqref{eq: ident-sigma(a,b,c)}  by  $ \, \circledast = 0 \, $,  \,we will write
 $ \, \textsl{Alt}_{\,3}(\circledast) = 0 \, $  for the newly found equation.
 To see the latter explicitly, we compute the left-hand member
 $ \, \textsl{Alt}_{\,3}(\circledast) \, $,  \,starting by computing the action of
 $ \, \textsl{Alt}_{\,3} \, $  onto the
 first line
 in  \eqref{eq: ident-sigma(a,b,c)}:
 concretely, we find
\begin{equation}   \label{eq: Alt_3(first-line)}
  \begin{aligned}
     &  \textsl{Alt}_{\,3}\big(\,\textsl{1${}^{\text{\,st}}$  line in\ }
     (\ref{eq: ident-sigma(a,b,c)}) \big)  \,\; =  \\
     &  \;   = \;\,  \varsigma(a\,,b\,c\,) \, - \, \varsigma(b\,,a\,c\,) \, - \,
     \varsigma(a\,,c\,b\,) \, - \, \varsigma(c\,,b\,a\,) \, + \, \varsigma(c\,,a\,b\,) \, + \,
     \varsigma(b\,,c\,a\,)  \; -  \\
     &  \;\;   - \;  \varsigma(a\,b\,,c\,) \, + \, \varsigma(b\,a\,,c\,) \, + \,
     \varsigma(a\,c\,,b\,) \, + \, \varsigma(c\,b\,,a\,) \, - \, \varsigma(c\,a\,,b\,) \, - \,
     \varsigma(b\,c\,,a\,)  \; =  \\
     &  \hskip17pt   = \;\,  \varsigma_a\big(a\,,[b\,,c\,]\big) \; + \;
     \varsigma_a\big(b\,,[c\,,a\,]\big) \; + \; \varsigma_a\big(c\,,[a\,,b\,]\big)  \,\; = \;\,
     \varsigma_a\big(a\,,[b\,,c\,]\big) \; + \; \text{c.p.}
  \end{aligned}
\end{equation}
 where standard notation  $ \, [\,u\,,v\,] := u\,v - v\,u \, $
 is used to denote the usual commutator.
 Modulo  $ \hbar \, $,  \,such a commutator in  $ \fhg $  yields the
 Poisson bracket in  $ F[[G\,]] \, $,  \,hence we can write
 $ \; [\,u\,,v\,] \, = \, \hbar \, \big\{\, \overline{u} \, , \overline{v} \,\big\}' \; $
 where we use notation
 $ \; \overline{z} := \big( z \ \textrm{mod} \ \hbar\,\fhg \big) \; $
 for each $ \, z \in \fhg \, $  and  $ \, \textrm{f\,}' := $  some lift in  $ \fhg $  of any
 $ \, \textrm{f} \in F[[G\,]] \, $,  \,i.e.\  $ \, \overline{\textrm{f\,}'} = \textrm{f} \, $;
 \, note that  $ \textrm{f\,}' $  is only defined up to  $ \,  \hbar^2 \, \fhg \, $,
 \,yet that is enough for our purposes.
 Thanks to this,  \eqref{eq: Alt_3(first-line)}  turns into
\begin{equation}   \label{eq: Alt_3(first-line)-Poisson}
  \textsl{Alt}_{\,3}\big(\textsl{1${}^{\text{\,st}}$  line in\ }
  (\ref{eq: ident-sigma(a,b,c)}) \big)  \,\; = \;\,
\hbar \, \bigg(\, \varsigma_a\Big( a \, ,
\big\{ \overline{b} \, , \overline{c} \,\big\}' \Big) \; + \; \text{c.p.} \,\bigg)
\end{equation}
 Looking at  \eqref{eq: ident-sigma(a,b,c)},  this entails that the  $ \hbar $--adic
 expansion of  $ \, \textsl{Alt}_{\,3}(\circledast) \, $  has zero term at order 0\,,
 while at order 1 it also has a contribution coming from
 \eqref{eq: Alt_3(first-line)-Poisson}.
 \vskip5pt
   Now we go and compute the contribution to
   $ \, \textsl{Alt}_{\,3}(\circledast) \, $  issuing from the third line in
   \eqref{eq: ident-sigma(a,b,c)}.  Again, direct calculations give
\begin{equation}  \label{eq: Alt_3(third-line)}
   \textsl{Alt}_{\,3}\big(\textsl{3${}^{\text{\,rd}}$  line in\ }
   (\ref{eq: ident-sigma(a,b,c)}) \big)  \,\; = \;\,
   \varsigma_a\big(\, a_{(1)} \, , b_{(1)} \big) \,
   \varsigma_a\big(\, c \, , a_{(2)} \, b_{(2)} \big)  \; + \;  \text{c.p.}
\end{equation}
 Since one always has  $ \, x = \epsilon(x) + x_+ \, $  with
 $ \, x_+ := \big( x - \epsilon(x) \big) \in \Ker(\epsilon) \, $,
 \,applying this to each element  $ \, x \in \{a\,,b\,,c\,\} \, $
 occurring in  \eqref{eq: Alt_3(third-line)},
 then expanding everything and taking into account that
 $ \,\; \varsigma_a\big( J_\hbar \, , J_\hbar^{\,2} \big) \,
 \underset{\hbar}{\equiv} \, 0 \, \underset{\hbar}{\equiv} \,
 \varsigma_a\big( J_\hbar^{\,2} , J_\hbar \big) \;\, $
 --- cf.\ \eqref{eq: varsigma_a(a,bc)}  ---   we obtain
  $$
  \displaylines{
   \textsl{Alt}_{\,3}\big(\textsl{3${}^{\text{\,rd}}$  line in\ }
   (\ref{eq: ident-sigma(a,b,c)}) \big)  \,\; \underset{\hbar}{\equiv} \;\,
   \varsigma_a\big(\, a \, , b_{(1)} \big) \, \varsigma_a\big(\, c \, , b_{(2)} \big) \,
   + \, \varsigma_a\big(\, a_{(1)} \, , b \,\big) \, \varsigma_a\big(\, c \, , a_{(2)} \big)  \;
   + \;  \text{c.p.}   \hfill  \cr
   \hfill   = \;\,  \varsigma_a\big(\, a \, ,
   b^{\,\wedge}_{(1)} \big) \, \varsigma_a\big(\, c \, , b^{\,\wedge}_{(2)} \big)  \;
   + \;  \text{c.p.}  }  $$
 where we make use of short-hand notation
 $ \; x^{\,\wedge}_{(1)} \otimes x^{\,\wedge}_{(2)} :=
 x_{(1)} \otimes x_{(2)} - x_{(2)} \otimes x_{(1)} \; $.
 \vskip5pt
   Finally, we go and compute the contribution to
   $ \, \textsl{Alt}_{\,3}(\circledast) \, $  issuing from the second line in
   \eqref{eq: ident-sigma(a,b,c)}.  Dropping the coefficients
   $ \, \hbar \, $  and  $ \, 1\big/2 \, $  we find the following:
  $$
  \displaylines{
  \textsl{Alt}_{\,3}\big(\textsl{2${}^{\text{\,nd}}$  line in\ }
  (\ref{eq: ident-sigma(a,b,c)}) \big)  \; = \;
  \textsl{Alt}_{\,3}\big(\, \varsigma^{*2}(a\,,b\,c\,) \, - \,
  \varsigma^{*2}(b\,,a\,c\,) \big)  \,\; =   \hfill  \cr
   = \;\,  \varsigma^{*2}(a\,,b\,c\,) \, - \, \varsigma^{*2}(b\,,a\,c\,) \, - \,
   \varsigma^{*2}(a\,,c\,b\,) \, - \, \varsigma^{*2}(c\,,b\,a\,) \, + \,
   \varsigma^{*2}(c\,,a\,b\,) \, + \, \varsigma^{*2}(b\,,c\,a\,)  \; -  \cr
   \qquad   - \;  \varsigma^{*2}(a\,b\,,c\,) \, + \, \varsigma^{*2}(b\,a\,,c\,) \, + \,
   \varsigma^{*2}(a\,c\,,b\,) \, + \, \varsigma^{*2}(c\,b\,,a\,) \, - \,
   \varsigma^{*2}(c\,a\,,b\,) \, - \, \varsigma^{*2}(b\,c\,,a\,)  \; =
  \cr
   \qquad   = \;\,  \varsigma^{*2}\big(a\,,[b\,,c\,]\big) \, + \,
   \varsigma^{*2}\big(b\,,[c\,,a\,]\big) \, + \, \varsigma^{*2}\big(c\,,[a\,,b\,]\big) \; -
   \cr
   \qquad \qquad \qquad   - \;  \varsigma^{*2}([b\,,c\,]\,,a\,\big) \, - \,
   \varsigma^{*2}([c\,,a\,]\,,b\,\big) \, - \, \varsigma^{*2}([a\,,b\,]\,,c\,\big)  }
   $$
 which in turn implies
  $ \,\; \textsl{Alt}_{\,3}\big(\, \varsigma^{*2}(a\,,b\,c\,) \, - \,
  \varsigma^{*2}(b\,,a\,c\,) \big) \; = \; \cO(\hbar) \;\, $
   --- since  $ \, [\,u\,,v\,] = \cO(\hbar) \, $  for all  $ \, u , v \in \fhg \, $.
   The outcome then is that the contribution to  $ \, \textsl{Alt}_{\,3}(\circledast) \, $
   given by the second line in  \eqref{eq: ident-sigma(a,b,c)}  is trivial modulo
   $ \hbar^2 $.
 \vskip7pt
   Summing up, the outcome of the previous analysis is that
  $$
  \varsigma_a\Big( a \, , \big\{ \overline{b} \, , \overline{c} \,\big\}' \Big)  \; + \;
  \text{c.p.}  \; + \;  \varsigma_a\big(\, a \, ,
  b^{\,\wedge}_{(1)} \big) \, \varsigma_a\big(\, c \, , b^{\,\wedge}_{(2)} \big)  \; +
  \;  \text{c.p.}
   \,\; \underset{\hbar}{\equiv} \;\,  0
   $$
 Taking the latter modulo  $ \, \hbar \, \fhg \, $  we find
  $$  \varsigma_a\big(\, \overline{a} \, , \big\{ \overline{b} \, , \overline{c} \,\big\} \big)
  \; + \;  \text{c.p.}  \; + \;
  \varsigma_a\Big(\, \overline{a} \, , \overline{b}^{\,\wedge}_{(1)} \Big) \,
  \varsigma_a\Big(\, \overline{c} \, , \overline{b}^{\,\wedge}_{(2)} \Big)  \; + \;
  \text{c.p.}  \,\; = \;\,  0  $$
 for the elements  $ \, \overline{a} \, , \overline{b} \, , \overline{c} \in F[[G\,]] \, $.
                                                                   \par
   Now recall that for  $ \, x \in J_\hbar \, $  with
   $ \, \overline{x} := \big(\, x \text{\ mod\ } \hbar\,\fhg \big) \, $  and
   $ \, \text{x} := \big(\, \overline{x} \text{\ mod\ } \liem^2 \big) \, $
   we have
 $ \; \delta(\text{x}) \, := \,
 \text{x}^{\,\wedge}_{(1)} \otimes \text{x}^{\,\wedge}_{(2)} \; $
 for the induced Lie cobracket of  $ \, \lieg^* = \liem \big/ \liem^2 \, $
 computed on  $ \text{x} \, $,
 \,by definition; this means that, using our previously established notation
 $ \; \delta(\text{x}) \, := \, \text{x}_{[1]} \otimes \text{x}_{[2]} \, $,
 \,the last formula above yields
\begin{equation}  \label{eq: polar zeta_cocycle}
   \zeta\big(\, \text{a} \, , [\, \text{b} \, , \text{c} \,] \big)  \; + \;  \text{c.p.}  \; +
   \;  \zeta\big(\, \text{a} \, , \text{b}_{[1]} \big) \,
   \zeta\big(\, \text{c} \, , \text{b}_{[2]} \big)  \; + \; \text{c.p.}  \,\; = \;\,  0
\end{equation}
 Finally, the antisymmetry of  $ \zeta $  gives
 $ \,\; \zeta\big(\, \text{a} \, , [\, \text{b} \, , \text{c} \,] \big) \, + \, \text{c.p.} \; = \;
 -\zeta\big(\, [\, \text{a} \, , \text{b} \,] \, , \text{c} \,\big) \, + \, \text{c.p.} \; $,
 \;while a straightforward check shows that
 $ \,\; \zeta\big(\, \text{a} \, , \text{b}_{[1]} \big) \,
 \zeta\big(\, \text{c} \, , \text{b}_{[2]} \big) \, + \, \text{c.p.} \; = \;
 -{[[\,\zeta \, , \zeta \,]]}_* \;\, $.  Therefore,  \eqref{eq: polar zeta_cocycle}
 is equivalent to
  $$
  \zeta\big(\, [\, \text{a} \, , \text{b} \,] \, , \text{c} \,\big)  \; + \;
  \text{c.p.}  \; + \;  {[[\,\zeta \, , \zeta \,]]}_*  \,\; = \;\,  0
  $$
 which means that  $ \, \zeta \, $  is indeed a (strong type of) 2--cocycle for
 $ \lieg^* $,  \;q.e.d.
 \vskip7pt
   \textit{(c)}\,  Let us consider the deformed algebra
   $ {\big( \fhg \big)}_\sigma \, $,  which coincides with  $ \fhg $  as a
   $ \kh $--module but is endowed with the deformed multiplication
   ``\;\raisebox{-5pt}{$ \dot{\scriptstyle \sigma} $}\;''  defined by
\begin{equation}  \label{eq: deform-multipl-fhg}
  a \,\raisebox{-5pt}{$ \dot{\scriptstyle \sigma} $}\, b  \; := \;
  \sigma\big(a_{(1)},b_{(1)}\big) \, a_{(2)} \, b_{(2)} \,
  \sigma^{-1}\big(a_{(3)},b_{(3)}\big)   \qquad \qquad  \forall \; a , b \in \fhg
\end{equation}
 As  $ \sigma $  is of the form  $ \, \sigma = \exp_*\!\big( \hbar \, \varsigma \big) \, $,
 \,in particular it is trivial modulo  $ \hbar \, $,  \,it follows from
 \eqref{eq: deform-multipl-fhg}  that the deformed multiplication
 ``\;\raisebox{-5pt}{$ \dot{\scriptstyle \sigma} $}\;''
 coincides with the old one modulo  $ \hbar \, $,
 that is  $ \; a \,\raisebox{-5pt}{$ \dot{\scriptstyle \sigma} $}\, b \, \equiv \,
 a \, b \;\; \big(\, \text{mod\ } \hbar\,\fhg \big) \; $.
 Therefore,  $ {\big( \fhg \big)}_\sigma $  is again commutative modulo
 $ \hbar \, $,  hence it is (again) a QFSHA, as claimed, say
 $ \, {\big( \fhg \big)}_\sigma = F_\hbar\big[\big[G_{(\sigma)}\big]\big] \, $.
 Then, in order to prove that the newly found Poisson (formal) group  $ G_{(\sigma)} $  is indeed
 $ G_\zeta $  as claimed it is enough to show that the
 Lie bracket induced in  $ \, \liem_\sigma \big/ \liem_\sigma^{\,2} \, $
 --- where  $ \, \liem_\sigma := \Ker \big( \epsilon_{{}_{F[[G_{(\sigma)}]]}} \big) \, $
 ---   is indeed  $ \, {[\ \,,\ ]}_\zeta \, $.
 \vskip3pt
   Let us take  $ \, \text{a} \, , \text{b} \in \liem_\sigma \big/ \liem_\sigma^{\,2} \, $;
   \,then we can pick  $ \, a , b \in J_\hbar := \Ker\big(\epsilon_{{}_\fhg}\big) \, $
   such that
   $ \; \text{a} = a \; \Big(\text{mod\ } \big(\, \hbar\,J_\hbar + J_\hbar^{\,2} \big) \Big) \;
   $  and
   $ \; \text{b} = b \;
   \Big(\text{mod\ } \big(\, \hbar\,J_\hbar + J_\hbar^{\,2} \big) \Big) \; $.
   Now, using the expansion
   $ \,\; \sigma \, = \, \exp_*\!\big( \hbar \, \varsigma \big) \, = \,
   \epsilon^{\otimes 2} \, + \, \hbar \, \varsigma \, + \, \cO\big(\hbar^2\big) \; $,
   \;formula  \eqref{eq: deform-multipl-fhg}  turns into
  $$
  \displaylines{
   a \,\raisebox{-5pt}{$ \dot{\scriptstyle \sigma} $}\, b  \,\; =   \hfill  \cr
   = \, \big( \epsilon\big(a_{(1)}\big) \epsilon\big(b_{(1)}\big) +
   \hbar \, \varsigma\big(a_{(1)} , b_{(1)}\big) \big) \, a_{(2)} \, b_{(2)} \,
   \big( \epsilon\big(a_{(3)}\big) \epsilon\big(b_{(3)}\big) + \hbar \,
   \varsigma\big(a_{(3)} , b_{(3)}\big) \big) \, + \, \cO\big(\hbar^2\big) \, =  \cr
   = \;\,  \epsilon\big(a_{(1)}\big) \, \epsilon\big(b_{(1)}\big) \, a_{(2)} \, b_{(2)} \,
   \epsilon\big(a_{(3)}\big) \, \epsilon\big(b_{(3)}\big) \; +   \hfill  \cr
   \hfill   + \, \hbar \, \Big( \varsigma\big(a_{(1)} , b_{(1)}\big) \, a_{(2)} b_{(2)} \,
   \epsilon\big(a_{(3)}\big) \epsilon\big(b_{(3)}\big) \, - \, \epsilon\big(a_{(1)}\big)
   \epsilon\big(b_{(1)}\big) \, a_{(2)} b_{(2)} \,
   \varsigma\big(a_{(3)} , b_{(3)}\big) \Big) \, + \, \cO\big(\hbar^2\big) \, =  \cr
   \hfill   = \;\,  a \, b \; + \; \hbar \,
   \Big( \varsigma\big(a_{(1)} \, , b_{(1)}\big) \, a_{(2)} \, b_{(2)} \, -
   \, a_{(1)} \, b_{(1)} \, \varsigma\big(a_{(2)} \, , b_{(2)}\big) \Big) \; +
   \; \cO\big(\hbar^2\big)  }
   $$
 where we took into account coassociativity and counitality properties.
 Therefore, using  ``$ \, {[\ \,,\ ]}_\sigma \, $''  and  ``$ \, [\ \,,\ ] \, $''
 to denote the commutator with respect to the new (deformed) and the old
 (undeformed) multiplication, we also have
  $$
  \displaylines{
   {[a\,,b\,]}_\sigma  \,\; := \;\,  a \,\raisebox{-5pt}{$ \dot{\scriptstyle \sigma} $}\, b \,
   - \, b \,\raisebox{-5pt}{$ \dot{\scriptstyle \sigma} $}\, a  \,\; =   \hfill  \cr
   = \;\,  a \, b \; + \; \hbar \,
   \Big(\, \varsigma\big(a_{(1)} \, , b_{(1)}\big) \, a_{(2)} \, b_{(2)} \, -
   \, a_{(1)} \, b_{(1)} \, \varsigma\big(a_{(2)} \, , b_{(2)}\big) \Big) \; + \;
   \cO\big(\hbar^2\big) \,\; -   \cr
   \hfill   - \;\,  b \, a \; - \; \hbar \; \Big(\, \varsigma\big(b_{(1)} \, ,
   a_{(1)}\big) \, b_{(2)} \, a_{(2)} \, - \, b_{(1)} \, a_{(1)} \,
   \varsigma\big(b_{(2)} \, , a_{(2)}\big) \Big) \; + \; \cO\big(\hbar^2\big)  \,\; =  \cr
   = \;\,  a \, b \, - \, b \, a \,\; +
 \;\,  \hbar \; \Big( - \varsigma\big(a_{(2)} \, , b_{(2)}\big) \, a_{(1)} \, b_{(1)} \, +
 \, \varsigma\big(b_{(2)} \, , a_{(2)}\big) \, b_{(1)} \, a_{(1)}  \,\; -   \hfill  \cr
   \qquad \quad \qquad \quad   - \;\,
   \varsigma\big(b_{(1)} \, , a_{(1)}\big) \, b_{(2)} \, a_{(2)} \, + \,
   \varsigma\big(a_{(1)} \, , b_{(1)}\big) \, a_{(2)} \, b_{(2)} \, \Big)  \,\; + \;\,
   \cO\big(\hbar^2\big)  \,\; =   \hfill  \cr
   \hfill   = \;\,  [\,a\,,b\,] \,\; + \;\,  \hbar \;
   \Big( - \varsigma_a\big(a_{(2)} \, , b_{(2)}\big) \, a_{(1)} \, b_{(1)} \, - \,
   \varsigma_a\big(b_{(1)} \, , a_{(1)}\big) \, b_{(2)} \, a_{(2)} \, \Big)  \,\; +
   \;\,  \cO\big(\hbar^2\big)  }
   $$
 where for the last step we used the fact that
 $ \; a_{(s)} \, b_{(s)} \,\underset{\hbar}{\equiv}\, b_{(s)} \, a_{(s)} \; $.
                                                                                 \par
   Recall that
   $ \; [\,a\,,b\,] \, = \, \hbar \, {\big\{ \overline{a} \, , \overline{b} \,\big\}}' \, $,
   \,where hereafter we write
   $ \, \overline{x} := x \; \big(\, \text{mod\ } \hbar\,J_\hbar \big) \, $  and
   $ \, \text{f\,}' \, $  to denote any lift in  $ J_\hbar $  of some given  $ \text{f} $
   in  $ J_\hbar \, $,  \,as we did before; similarly, we have
   $ \; {[\,a\,,b\,]}_\sigma = \,
   \hbar \, {\big\{ \overline{a} \, , \overline{b} \,\big\}}_\sigma' \, $.
   Then modulo  $ \, \hbar \, $  our previous computations give
\begin{equation}  \label{eq: ident-deform-Poisson-bra}
  {\big\{ \overline{a} \, , \overline{b} \,\big\}}_\sigma  \,\; = \;\,
  \big\{ \overline{a} \, , \overline{b} \,\big\}  \; - \;
  \overline{\varsigma}_a\big(\, \overline{a}_{(2)} \, , \overline{b}_{(2)} \big) \,
  \overline{a}_{(1)} \, \overline{b}_{(1)} \; - \;
  \overline{\varsigma}_a\big(\, \overline{b}_{(1)} \, ,
  \overline{a}_{(1)} \big) \, \overline{b}_{(2)} \, \overline{a}_{(2)}
\end{equation}
 For each  $ \, x \in \big\{\, \overline{a}_{(s)} \, ,
 \overline{b}_{(s)} \,\big|\, s=1,2 \,\big\} \, $  we have
 $ \, x = \epsilon(x) + x^+ \, $  with
 $ \, x^+ := \big( x - \epsilon(x) \big) \in J_\hbar \, $.
 Using such expansions in either factor of the products
 $ \, \overline{a}_{(s)} \, \overline{b}_{(s)} \, $  and
 $ \, \overline{b}_{(s)} \, \overline{a}_{(s)} \, $
 occurring within  \eqref{eq: ident-deform-Poisson-bra},
 and noting that
 $ \; \overline{a}_{(s)}^{\,+} \, \overline{b}_{(s)}^{\,+} \,
 \underset{\,\liem^2}{\equiv}\, 0 \,\underset{\,\liem^2}{\equiv}\,
 \overline{a}_{(s)}^{\,+} \, \overline{b}_{(s)}^{\,+} \, $, \,we get
%
%
 an equivalence modulo  $ \, \liem^2 = \liem_\sigma^{\,2} \, $
 (noting that  $ \, \liem = \liem_\sigma \, $  as  $ \k $--modules),  namely
  $$
  \displaylines{
   {\big\{ \overline{a} \, , \overline{b} \,\big\}}_\sigma'  \,\; = \;\,
   \big\{ \overline{a} \, , \overline{b} \,\big\}  \; - \;
   \overline{\varsigma}_a\big(\, \overline{a}_{(2)} \, ,
   \overline{b}_{(2)} \big) \, \overline{a}_{(1)} \, \overline{b}_{(1)} \; - \;
   \overline{\varsigma}_a\big(\, \overline{b}_{(1)} \, , \overline{a}_{(1)} \big) \,
   \overline{b}_{(2)} \, \overline{a}_{(2)}  \,\; \underset{\,\liem^2}{\equiv}   \hfill  \cr
   \underset{\,\liem^2}{\equiv} \;\,  \big\{ \overline{a} \, ,
   \overline{b} \,\big\}  \; - \; \overline{\varsigma}_a\big(\, \overline{a}_{(2)} \, ,
   \overline{b}_{(2)} \big) \, \overline{a}_{(1)} \,
   \epsilon\big(\,\overline{b}_{(1)}\big) \; - \;
   \overline{\varsigma}_a\big(\, \overline{a}_{(2)} \, , \overline{b}_{(2)} \big) \,
   \epsilon\big(\overline{a}_{(1)}\big) \, \overline{b}_{(1)} \; -  \cr
   \hfill   - \; \overline{\varsigma}_a\big(\, \overline{b}_{(1)} \, ,
   \overline{a}_{(1)} \big) \, \overline{b}_{(2)} \,
   \epsilon\big(\overline{a}_{(2)}\big) \; - \;
   \overline{\varsigma}_a\big(\, \overline{b}_{(1)} \, ,
   \overline{a}_{(1)} \big) \, \epsilon\big(\,\overline{b}_{(2)}\big) \,
   \overline{a}_{(2)}  \,\; =  \cr
   = \;\,  \big\{ \overline{a} \, , \overline{b} \,\big\}  \; - \;
   \overline{\varsigma}_a\big(\, \overline{a}_{(2)} \, ,
   \overline{b} \,\big) \, \overline{a}_{(1)} \; - \;
   \overline{\varsigma}_a\big(\, \overline{a} \, ,
   \overline{b}_{(2)} \big) \, \overline{b}_{(1)} \; - \;
   \overline{\varsigma}_a\big(\, \overline{b}_{(1)} \, ,
   \overline{a} \,\big) \, \overline{b}_{(2)} \; - \;
   \overline{\varsigma}_a\big(\, \overline{b} \, ,
   \overline{a}_{(1)} \big) \, \overline{a}_{(2)}  \,\; =  \cr
   = \;\,  \big\{ \overline{a} \, , \overline{b} \,\big\}  \; - \;
   \overline{\varsigma}_a\big(\, \overline{a}_{(2)} \, , \overline{b} \,\big) \,
   \overline{a}_{(1)} \; + \; \overline{\varsigma}_a\big(\, \overline{b}_{(2)} \, ,
   \overline{a} \big) \, \overline{b}_{(1)} \; - \;
   \overline{\varsigma}_a\big(\, \overline{b}_{(1)} \, ,
   \overline{a} \,\big) \, \overline{b}_{(2)} \; + \;
   \overline{\varsigma}_a\big(\, \overline{a}_{(1)} \, , \overline{b} \,\big) \,
   \overline{a}_{(2)}  \,\; =  \cr
   \hfill   = \;\,  \big\{ \overline{a} \, , \overline{b} \,\big\}  \, - \,
   \big(\, \overline{\varsigma}_a\big(\, \overline{a}_{(2)} , \overline{b} \,\big) \,
   \overline{a}_{(1)} - \,
   \overline{\varsigma}_a\big(\, \overline{a}_{(1)} ,
   \overline{b} \,\big) \, \overline{a}_{(2)} \,\big) \, - \,
   \big(\, \overline{\varsigma}_a\big(\, \overline{b}_{(1)} , \overline{a} \,\big) \,
   \overline{b}_{(2)} - \, \overline{\varsigma}_a\big(\, \overline{b}_{(2)} ,
   \overline{a} \,\big) \, \overline{b}_{(1)} \,\big)  }
   $$
 where the element in last line actually belongs to  $ \, \liem = \liem_\sigma \, $.
 When we reduce all this modulo  $ \, \liem^2 = \liem_\sigma^{\,2} \, $,
 \,and recalling the definition of the Lie bracket (for either Lie algebra structure)
 as being induced by the Poisson bracket, and that of the Lie cobracket in
 $ \, \liem \Big/ \liem^2 = \liem_\sigma \Big/ \liem_\sigma^{\,2} \, $
 --- which coincide as Lie coalgebras ---   as being induced by
 $ \, \Delta - \Delta^{\text{op}} \, $,  \,we eventually end up with
  $$  {[\,\text{a} \, , \text{b}\,]}_{(\sigma)}  \,\; = \;\,  {[\,\text{a} \, , \text{b}\,]}_*
   \; - \;  \zeta\big( \text{a}_{[2]} , \text{b} \big) \, \text{a}_{[1]}
   \; - \;  \zeta\big(\, \text{b}_{[1]} , \text{a} \big) \, \text{b}_{[2]}
   \,\; =: \;\,  {\big( {[\,\text{a} \, , \text{b}\,]}_* \big)}_\zeta  $$
   thus(cf.\ Definition \ref{eq: def_cocyc-bracket})
   the Lie bracket we were looking for is just
   $ \, {\big( {[\ \,,\ ]}_* \big)}_\zeta \, $.
\epf

\vskip9pt

\begin{obs}  \label{obs: trivial deform QFSHA}
 We would better point out that the 2-cocycles
 $ \sigma $  considered in
 Theorem \ref{thm: 2cocycle-deform-QFSHA}
 above are those of  ``trivial-modulo-$ \hbar $-type'',
 in that they are the identity modulo  $ \hbar \, $.
 With this assumption, deforming  $ \fhg $
 by such a  $ \sigma $  does not affect the Hopf structure
 of the semiclassical limit;
 in particular, it still reads as
 $ F\big[\big[\widetilde{G}\hskip0,5pt\big]\big] \, $,  with
 $ \widetilde{G} $  being the same formal group as  $ G $
 but with a different Poisson
 structure.  A more general 2-cocycle might be ``unfit'',
 in that the deformed Hopf algebra
 $ {\big( \fhg \big)}_\sigma $  may no longer be a QFSHA,
 in general.
\end{obs}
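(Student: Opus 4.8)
The plan is to observe that both assertions of this remark are immediate consequences of the computations already performed in the proof of Theorem~\ref{thm: 2cocycle-deform-QFSHA}(c), so the task is essentially one of extraction and emphasis rather than fresh computation. I would split the statement into two independent parts: (i) that a $2$--cocycle $\sigma$ which is trivial modulo $\hbar$ preserves the semiclassical \emph{Hopf} structure --- hence the formal group $G$ --- while altering only the Poisson bracket; and (ii) the cautionary claim that, without this triviality hypothesis, $(\fhg)_\sigma$ need not remain a QFSHA.

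For part (i) I would recall that $2$--cocycle deformation leaves coproduct, counit and unit unchanged and only replaces the product $m$ by $m_\sigma = \sigma * m * \sigma^{-1}$. Writing $\sigma = \exp_*(\hbar\,\varsigma) = \epsilon^{\otimes 2} + \cO(\hbar)$ and reducing the explicit formula $a \cdot_\sigma b = \sigma(a_{(1)},b_{(1)})\,a_{(2)}\,b_{(2)}\,\sigma^{-1}(a_{(3)},b_{(3)})$ modulo $\hbar$, the factors $\sigma$ and $\sigma^{-1}$ collapse to $\epsilon^{\otimes 2}$ and counitality gives $a \cdot_\sigma b \equiv a\,b \pmod{\hbar}$ --- exactly the reduction obtained at the outset of the proof of Theorem~\ref{thm: 2cocycle-deform-QFSHA}(c). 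Thus $m_\sigma \equiv m \pmod \hbar$, so the quotient $(\fhg)_\sigma\big/\hbar\,(\fhg)_\sigma$ coincides with $\fg$ not just as a coalgebra but as a \emph{commutative} topological Hopf algebra; in particular $(\fhg)_\sigma$ is again a QFSHA and its semiclassical formal group $\widetilde G$ equals $G$. The only thing that genuinely changes is the first-order part of the commutator: by \eqref{eq: def-bracket} the induced Poisson bracket is read off from the coefficient of $\hbar$ in $[\,a\,,b\,]_\sigma$, and Theorem~\ref{thm: 2cocycle-deform-QFSHA}(c) already identifies this with the $2$--cocycle deformed bracket ${\big({[\,\ ,\ ]}_*\big)}_\zeta$. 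This is precisely the assertion that $\widetilde G = G$ as a formal group but carries a (generally) different Poisson structure.

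For part (ii) I would not prove a theorem but rather isolate the mechanism and point to a counterexample. If $\sigma \equiv \sigma_0 \pmod \hbar$ with $\sigma_0 \neq \epsilon^{\otimes 2}$, then the same reduction shows $m_\sigma \equiv \sigma_0 * m * \sigma_0^{-1} \pmod \hbar$, i.e. the semiclassical product is a genuine classical $2$--cocycle deformation of the commutative Hopf algebra $\fg$, and such a deformation is in general \emph{non-commutative}. Since a QFSHA is required to have commutative semiclassical limit, any $\sigma_0$ producing a non-commutative deformed product makes $(\fhg)_\sigma$ fall outside $\QFSHA$; a concrete witness is obtained by lifting $\hbar$--constantly a classical $2$--cocycle on $\fg$ whose deformation is non-commutative. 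The only delicate point in the whole argument is here: since part (ii) is merely an existence (``might be unfit'') statement, its content lies entirely in exhibiting one such $\sigma_0$, and one must check that the lifted $\sigma$ is still a bona fide Hopf $2$--cocycle for $\fhg$ --- which, since reduction modulo $\hbar$ is a Hopf algebra morphism, follows routinely from the cocycle condition for $\sigma_0$ on $\fg$. All the substantive work for the observation is therefore already done inside the proof of Theorem~\ref{thm: 2cocycle-deform-QFSHA}.
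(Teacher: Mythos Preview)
Your proposal is correct. Note that in the paper this statement is an \emph{Observation} with no accompanying proof: the authors treat it as an immediate consequence of what was already shown in the proof of Theorem~\ref{thm: 2cocycle-deform-QFSHA}\textit{(c)}, where the line ``Therefore, $(\fhg)_\sigma$ is again commutative modulo $\hbar$, hence it is (again) a QFSHA'' is exactly your part~(i). Your part~(ii) goes further than the paper does --- the authors merely assert that a general $2$--cocycle ``might be unfit'' without exhibiting a mechanism or counterexample, whereas you sketch the pullback construction $\sigma = \sigma_0 \circ (\pi \times \pi)$ along the Hopf projection $\pi : \fhg \to \fg$; this is a sound way to justify the cautionary remark, and your observation that pullback along a Hopf morphism preserves the $2$--cocycle condition is correct.
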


\begin{exa}  \label{example: toral 2-cocycles for QFSHAs}
 Let  $ \, G := \textit{GL}_{\,n}(\k) \, $  be the general linear
 group over  $ \k \, $,  \,and
 $ \, \lieg := \mathfrak{gl}_n(\k) \, $  its tangent Lie algebra.
 It is well-known   --- cf.\ \cite{Dr}, \cite{CP}  ---
 that a quantization of  $ \lieg $  is provided by the QUEA
 $ \, \uhg = U_\hbar\big(\mathfrak{gl}_n(\k)\hskip-1pt\big) \, $
 defined as follows: it is the unital, associative,
 $ \hbar $--adically  complete  $ \kh $--algebra  with
 generators
  $$  F_1 \, , F_2 \, , \dots, F_{n-1} \, , \varGamma_1 \, ,
  \varGamma_2 \, , \dots , \varGamma_{n-1} \, , \varGamma_n \, , E_1 \, , E_2 \, , \dots , E_{n-1}  $$
 and relations (for all  $ \, i , j \in \{1,\dots,n-1\} \, $,
 $ \, k , \ell \in \{1,\dots,n\} \, $)
  $$
  \displaylines {
   \big[\varGamma_k \, , \varGamma_\ell\big] \, = \, 0
   \;\; ,  \quad  \big[\varGamma_k \, , F_j\big] \, = \,
   - \delta_{k,j} \, F_j \;\; ,
   \quad  [\varGamma_k \, , E_j] \, = \,
   +\delta_{k,j} \, E_j  \cr
   \big[E_i \, , F_j\big] \, = \, \delta_{i,j} \,
   {{\;\; e^{\hbar\,(\varGamma_i - \varGamma_{i+1})} -
   e^{\hbar\,(\varGamma_{i+1} - \varGamma_i)} \;\;} \over
   {\;\; e^{+\hbar} - e^{-\hbar} \;\;}}  \cr
   \hfill   \big[E_i \, , E_j\big] \, = \, 0  \;\; ,
   \qquad  \big[F_i \, , F_j\big] \, = \, 0   \hfill
\forall \;\; i \, , j \;\colon \vert i - j \vert > 1 \,
\phantom{.} \;  \cr
   \hfill   E_i^2 \, E_j - \left( q + q^{-1} \right) E_i \, E_j \, E_i
   + E_j \, E_i^2 \, = \, 0
\hfill  \forall \;\; i \, , j \;\colon \vert i - j \vert = 1 \,
\phantom{.} \;  \cr
   \hfill   F_i^2 \, F_j - \left( q + q^{-1} \right) F_i \, F_j \, F_i +
   F_j \, F_i^2 \, = \, 0
\hfill  \forall \;\; i \, , j \;\colon \vert i - j \vert = 1 \, . \;  \cr }
$$
 where  $ \, [X\,,Y] := X\,Y - Y\,X \, $.
 The (topological) Hopf algebra structure is given by
  $$
  \displaylines {
   \Delta(F_i) \, = \, F_i \otimes e^{\hbar\,(\varGamma_{i+1}
   - \varGamma_i)} + 1 \otimes
F_i \; ,  \; \qquad  S(F_i) \, = \, -F_i \,
e^{\hbar\,(\varGamma_i - \varGamma_{i+1})}
\; ,  \; \qquad  \epsilon(F_i) \, = \, 0  \cr
   \quad \,  \Delta(\varGamma_k) \, = \,
   \varGamma_k \otimes 1 + 1 \otimes \varGamma_k \; ,
   \qquad \qquad \qquad  S(\varGamma_k) \, = \,
   -\varGamma_k \; ,  \quad \quad \qquad
   \epsilon(\varGamma_k) \, = \, 0  \cr
   \Delta(E_i) \, = \, E_i \otimes 1 + e^{\hbar\,(\varGamma_i
   - \varGamma_{i+1})} \otimes
E_i \; ,  \qquad  S(E_i) \, = \,
-e^{\hbar\,(\varGamma_{i+1} - \varGamma_i)} \, E_i \; ,
\qquad
\epsilon(E_i) \, = \, 0  }
$$
 \vskip3pt
   It is also well-known   --- cf.\ \cite{Dr}, \cite{CP}
   ---   that a quantization of  $ \, G := \textit{GL}_{\,n}(\k) \, $
   is provided by the QFSHA
   $ \, \fhg = F_\hbar\big[\big[\textit{GL}_{\,n}(\k)\big]\big] \, $
   defined as follows: it is the unital, associative,
   $ I_\hbar $--adically  complete  $ \kh $--algebra
   generated by the elements of the set
   $ \, \big\{ x_{ij} \,\big|\, i, j = 1, \ldots, n+1 \big\} $
   \textsl{arranged in a  $ q $--matrix,  with
   $ \, q := \exp(\hbar) \, $},  \,with  $ I_\hbar $
   being the ideal generated by
   $ \, \big\{ \hbar \, , x_{1,1} \, , \dots , x_{n,n} \big\} \, $;
   \,this is a quick way to say that the given generators
   obey the relations
\begin{equation}  \label{eq: rel.s_xij}
\begin{aligned}
   \quad   x_{ij} \, x_{ik} \, = \, q \, x_{ik} \, x_{ij} \; ,
   \quad \quad  x_{ik} \, x_{hk} \, = \, q \, x_{hk} \, x_{ik}
   \quad \qquad  \forall \;\; j<k \, , i<h  \qquad  \\
   x_{il} \, x_{jk} \, = \, x_{jk} \, x_{il} \; ,
   \quad  x_{ik} \, x_{jl} - x_{jl} \, x_{ik} \, = \,
   \left( q - q^{-1} \right) \, x_{il} \, x_{jk}  \qquad   \forall \;\; i<j \, ,
   k<l  \;
\end{aligned}
\end{equation}
%
%
 whereas the comultiplication  $ \Delta \, $,  the counit
 $ \epsilon \, $,
and the antipode  $ S $  are given by matrix formulation
  $$
  \displaylines{
   \Delta\Big(\!{\big(x_{ij}\big)}_{i=1,\dots,n;}^{j=1,\dots,n;}
   \Big) \, := \, {\big(x_{ij}\big)}_{i=1,\dots,n;}^{j=1,\dots,n;}
   \otimes {\big(x_{ij}\big)}_{i=1,\dots,n;}^{j=1,\dots,n;}  \cr
   \epsilon\Big(\!{\big(x_{ij}\big)}_{i=1,\dots,n;}^{j=1,\dots,n;}
   \Big) \, := \, {\big(\delta_{ij}\big)}_{i=1,\dots,n;}^{j=1,\dots,n;}
   \quad ,  \qquad
   S\Big(\!{\big(x_{ij}\big)}_{i=1,\dots,n;}^{j=1,\dots,n;}\Big) \,
   := \, {\Big(\! {\big(x_{ij}\big)}_{i=1,\dots,n;}^{j=1,\dots,n;} \Big)}^{-1}}
   $$
 which in down-to-earth terms read, for all
 $ \, i, j = 1, \dots, n \, $,
  $$
  \Delta\big(x_{ij}\big) \, = \,
  {\textstyle \sum_{k=1}^n} x_{ik} \otimes x_{kj}  \;\; ,
  \qquad  \epsilon\big(x_{ij}\big) \, = \, \delta_{ij}  \;\; ,
  \qquad  S\big(x_{ij}\big) \, =
  \, {(-q)}^{j-i} D_q\!\left(\! {\big(x_{hk}\big)}_{h \neq j}^{k \neq i}
  \right)
  $$
 where  $ D_q $  denotes the so-called
 {\it quantum determinant\/},  defined on any
 $ q $--matrix  of (square) size  $ \ell $  by
  $$
  D_q\Big(\!{\big(x_{ij}\big)}_{i=1,\dots,\ell;}^{j=1,\dots,\ell;}\Big)
  \, := \,  {\textstyle \sum_{\sigma \in S_\ell}}
  {(-q)}^{l(\sigma)} x_{1,\sigma(1)} \, x_{2,\sigma(2)}
  \cdots x_{\ell,\sigma(\ell)}
  $$
 \vskip3pt
   We have also explicit identifications
   $ \, \fhg = {\uhg}^\ast \, $  as well as
   $ \, \uhg = {\fhg}^\star \, $,
   which can be described via the Hopf pairing
   $ \; \langle\,\ ,\ \rangle : \fhg \times \uhg
   \relbar\joinrel\relbar\joinrel\longrightarrow \kh \; $
   uniquely given by the following values on generators:
\begin{equation}   \label{eq: pairing fhg x uhg}
  \big\langle x_{i,j} \, , \varGamma_k \big\rangle \, = \,
  \delta_{i,j} \, \delta_{i,k}  \;\; ,  \quad  \big\langle x_{i,j} \, ,
  E_t \big\rangle \, = \, \delta_{i+1,j} \, \delta_{i,t}  \;\; ,
  \quad  \big\langle x_{i,j} \, , F_t \big\rangle \, = \,
  \delta_{i,j+1} \, \delta_{t,j}
\end{equation}
 \vskip5pt
   Now consider in  $ \, \uhg \,\widehat{\otimes}\, \uhg \, $
   the element
 $ \, \cF := \exp\Big(\hskip1pt \hbar \, 2^{-1}
 \sum_{k,\ell=1}^n \, \phi_{t,k} \, \varGamma_t \otimes
 \varGamma_k \Big) \, $
 that is a twist for  $ \uhg \, $   --- just as in
 Example \ref{example: toral twists for FoMpQUEAs},
 this trivially follows from the fact that the  $ \varGamma_t $'s
 are primitive.
 By  Proposition \ref{prop: duality-deforms}\textit{(a)},
 we can see this  $ \cF $  as a 2--cocycle
 $ \sigma_{{}_\cF} $  for  $ \, {\uhg}^* = \fhg \, $,
 simply given by  \textsl{evaluation at  $ \cF \, $},  namely
\begin{equation}   \label{eq: sigma_F}
  \sigma_{{}_\cF} \, : \, \fhg
  \times \fhg \relbar\joinrel\relbar\joinrel\longrightarrow \kh
  \quad ,  \qquad  (\varphi\,,\psi) \,\mapsto\, \big\langle\,
  \varphi \otimes \psi \, , \, \cF \,\big\rangle
\end{equation}
   \indent   Now, from  \eqref{eq: sigma_F}  and  \eqref{eq: pairing fhg x uhg},  direct calculation gives
  $$
  \displaylines{
   \sigma_{{}_\cF}\big( x_{i,\,r} \, , x_{\ell,\,h} \big)  \; = \;  \big\langle\, x_{i,\,r} \otimes x_{\ell,\,h} \, ,
   \, \cF \,\big\rangle  \; =   \hfill  \cr
   \qquad   = \;  {\textstyle \sum\limits_{m=0}^{+\infty}} {{\,\hbar^m\,2^{-m}\,} \over {\,m!\,}} \,
   \left\langle\, x_{i,\,r} \otimes x_{\ell,\,h} \, , \, {\left(\, {\textstyle \sum_{t,\,k=1}^n} \,
   \phi_{t,k} \, \varGamma_t \otimes \varGamma_k \right)}^m \,\right\rangle  \; =   \hfill  \cr
   = \;  {\textstyle \sum\limits_{m=0}^{+\infty}} {{\,\hbar^m\,2^{-m}\,} \over {\,m!\,}} \,
   \left\langle\, \Delta^{(m-1)}\big( x_{i,\,r} \otimes x_{\ell,\,h} \big) \, , \,
   {\left(\, {\textstyle \sum_{t,\,k=1}^n} \, \phi_{t,k} \,
   \varGamma_t \otimes \varGamma_k \right)}^{\otimes m} \,\right\rangle  }
   $$
 Let us stop and consider  $ \; \left\langle\, \Delta^{(m-1)}\big( x_{i,\,r} \otimes x_{\ell,\,h} \big) \, ,
 \, {\left(\, {\textstyle \sum_{t,\,k=1}^n} \, \phi_{t,k} \,
 \varGamma_t \otimes \varGamma_k \right)}^{\otimes m} \,\right\rangle \; $.  Then definitions give
  $$
  \displaylines{
   \left\langle\, \Delta^{(m-1)}\big( x_{i,\,r} \otimes x_{\ell,\,h} \big) \, ,
   \, {\left(\, {\textstyle \sum_{t,\,k=1}^n} \, \phi_{t,k} \,
   \varGamma_t \otimes \varGamma_k \right)}^{\otimes m} \,\right\rangle  \; =   \hfill  \cr
   = \,  {\textstyle \sum\limits_{\substack{s_1,\dots,\,s_{m-1}=1  \\
   e_1,\dots,\,e_{m-1}=1}}^n} \! \left\langle x_{i,\,s_1} \otimes x_{\ell,\,e_1}
   \otimes \cdots \otimes x_{s_{m-1},\,r} \otimes x_{e_{m-1},\,h} \, ,
   {\left( {\textstyle \sum_{t,\,k=1}^n} \, \phi_{t,k} \,
   \varGamma_t \otimes \varGamma_k \right)}^{\otimes m} \right\rangle  \; =  \cr
   \qquad   = \;  {\textstyle \sum\limits_{\substack{s_1,\dots,\,s_{m-1}=1  \\
   e_1,\dots,\,e_{m-1}=1}}^n} \, {\textstyle \prod\limits_{c=1}^m} \left\langle
   x_{s_{c-1},\,s_c} \otimes x_{e_{c-1},\,e_c} \, , \,
   {\textstyle \sum_{t,\,k=1}^n} \, \phi_{t,k} \, \varGamma_t \otimes \varGamma_k \,
   \right\rangle  \; =  \cr
   \hfill   = \;  {\textstyle \sum\limits_{\substack{s_1,\dots,\,s_{m-1}=1  \\
   e_1,\dots,\,e_{m-1}=1}}^n} \, {\textstyle \prod\limits_{c=1}^m} \,
   {\textstyle \sum_{t,\,k=1}^n} \, \phi_{t,k} \, \big\langle x_{s_{c-1},\,s_c} \, ,
   \varGamma_t \big\rangle \, \big\langle x_{e_{c-1},\,e_c} \, , \varGamma_k \, \big\rangle  }
   $$
 where we set  $ \, s_0 := i \, $,  $ \, s_m := r \, $,  $ \, e_0 := \ell \, $,  $ \, e_m := h \, $.
 Now, the formulas in  \eqref{eq: pairing fhg x uhg}  guarantee that
 $ \; \big\langle x_{s_{c-1},\,s_c} \, , \varGamma_t \big\rangle \, \big\langle x_{e_{c-1},\,e_c} \, ,
 \varGamma_k \, \big\rangle \, = \, 0 \; $
 whenever  $ \, s_{c-1} \not= s_c \, $  or  $ \, e_{c-1} \not= e_c \, $;
 \,therefore, from the previous computation one eventually gets
  $$
  \displaylines{
   \sigma_{{}_\cF}\big( x_{i,\,r} \, , x_{\ell,\,h} \big)  \; =   \hfill  \cr
   \quad   = \;  \delta_{i,r} \, \delta_{\ell,h} \,
   {\textstyle \sum\limits_{m=0}^{+\infty}} {{\,\hbar^m\,2^{-m}\,} \over {\,m!\,}} \, \left\langle\,
   \Delta^{(m-1)}\big( x_{i,r} \otimes x_{\ell,h} \big) \, , \,
   {\left(\, {\textstyle \sum_{t,\,k=1}^n} \, \phi_{t,k} \,
   \varGamma_t \otimes \varGamma_k \right)}^{\otimes m} \,\right\rangle  \; =   \hfill  \cr
   \quad \quad   = \;  \delta_{i,r} \, \delta_{\ell,h} \,
   {\textstyle \sum\limits_{m=0}^{+\infty}} {{\,\hbar^m\,2^{-m}\,} \over {\,m!\,}} \,
   {\left(\, {\textstyle \sum_{t,\,k=1}^n} \, \phi_{t,k} \, \big\langle x_{i,\,i} \, ,
   \varGamma_t \big\rangle \, \big\langle x_{\ell,\,\ell} \, , \varGamma_k \, \big\rangle \right)}^m  \; =
   \hfill  \cr
   \quad \quad \quad   = \;  \delta_{i,r} \, \delta_{\ell,h} \,
   {\textstyle \sum\limits_{m=0}^{+\infty}} {{\,\hbar^m\,2^{-m}\,} \over {\,m!\,}} \,
   {\left(\, {\textstyle \sum_{t,\,k=1}^n} \, \phi_{t,k} \, \delta_{i,t} \, \delta_{\ell,k} \right)}^m  \; =
   \hfill  \cr
   \quad \quad \quad \quad   = \;  \delta_{i,r} \, \delta_{\ell,h} \,
   {\textstyle \sum\limits_{m=0}^{+\infty}} {{\,\hbar^m\,2^{-m}\,} \over {\,m!\,}} \, {(\phi_{i,\ell})}^m  \; = \;
   \delta_{i,r} \, \delta_{\ell,h} \, \exp\big( \hbar \, 2^{-1} \phi_{i,\ell} \big)  \; = \;
   \delta_{i,r} \, \delta_{\ell,h} \, e^{\hbar \, \phi_{i,\ell} / 2}   \hfill  }
   $$
 in short
\begin{equation}  \label{eq: formula x sigma_F}
  \sigma_{{}_\cF}\big( x_{i,\,r} \, , x_{\ell,\,h} \big)  \; = \;  \delta_{i,r} \,
  \delta_{\ell,h} \, e^{\hbar \, \phi_{i,\ell} / 2}   \qquad \qquad
  \forall \;\; i \, , r , \ell , h \in \{1,\dots,n\}
\end{equation}
   \indent   Using this formula, the deformed product in  $ {\fhg}_{\sigma_{{}_\cF}} $
   can be described as follows:   %
  $$
  \displaylines{
   x_{i,\,j} \raise-1pt\hbox{$ \; \scriptstyle \dot\sigma_{{}_\cF} $}\, x_{\ell,\,t}  \; := \;
   \sigma_{{}_\cF}\big( {(x_{i,\,j})}_{(1)} \, , \, {(x_{\ell,\,t})}_{(1)} \big) \;
   {(x_{i,\,j})}_{(2)} \, {(x_{\ell,\,t})}_{(2)} \; \sigma_{{}_\cF}^{-1}\big( {(x_{i,\,j})}_{(3)} \, ,
   \, {(x_{\ell,\,t})}_{(3)} \big)  \; = \hfill  \cr
   = \;  {\textstyle \sum\limits_{r,\,s,\,h,\,k=1}^n} \sigma_{{}_\cF}\big( x_{i,\,r} \, , \, x_{\ell,\,h} \big) \;
   x_{r,\,s} \, x_{h,\,k} \; \sigma_{{}_\cF}^{-1}\big( x_{s,\,j} \, , \, x_{k,\,t} \big)  \; =
   \quad \qquad \qquad  \cr
   \hfill   = \;  \sigma_{{}_\cF}\big( x_{i,\,i} \, , \, x_{\ell,\,\ell} \big) \; x_{i,\,j} \, x_{\ell,\,t} \;
   \sigma_{{}_\cF}^{-1}\big( x_{j,\,j} \, , \, x_{t,\,t} \big)  \; =  \;  e^{\hbar \, (\phi_{i,\ell} \, - \,
   \phi_{j,t}) / 2} \; x_{i,\,j} \, x_{\ell,\,t}  }
   $$
 --- where we used  \eqref{eq: formula x sigma_F};  in short, we get
\begin{equation}  \label{eq: formula x sigma-deformed-product}
  x_{i,\,j} \raise-1pt\hbox{$ \; \scriptstyle \dot\sigma_{{}_\cF} $}\, x_{\ell,\,t}  \; = \;
  e^{\hbar \, (\phi_{i,\ell} \, - \, \phi_{j,t}) / 2} \; x_{i,\,j} \, x_{\ell,\,t}
  \qquad \qquad  \forall \;\; i \, , j , \ell , t \in \{1,\dots,n\}
\end{equation}
 \vskip5pt
   Note that this formula shows how the new, deformed product is equivalent modulo
   $ \, \hbar \, $  to the old one: this is a general fact, due to the very construction,
   namely because we are working with 2--cocycles of the form
   $ \, \exp\big( \hbar \, \varsigma \big) \, $  where  $ \, \varsigma \, $
   is some bilinear form on the QFSHA to be deformed.  By this same reason,
   \textit{any set of elements which generate, as an algebra, the QFSHA under exam,
   will also generate it (as an algebra again) w.r.t.\ the new, deformed product}.
   For this reason, the formula  \eqref{eq: formula x sigma-deformed-product}
   is enough to describe the deformed algebra  $ {\fhg}_{\sigma_{{}_\cF}} $
   as the latter is (again) generated   --- w.r.t.\ the new product ---
   by the  $ x_{i,\,j} $'s,  just like  $ \fhg $  was (with the old product).

                                                                                       \par
   More in detail, from the original presentation of  $ \fhg $  by generators
   --- the  $ x_{i,\,j} $'s  ---   and relations   --- namely, those in  \eqref{eq: rel.s_xij}  ---
   using  \eqref{eq: formula x sigma-deformed-product}  above we find a similar presentation of
   $ {\fhg}_{\sigma_{\!{}_\cF}} $  by generators   --- the  $ x_{i,\,j} $'s  again ---
   and relations, where the latter depend on ``multiparameters'' of the form
  $$  q_{r,s}  \, := \,  q \, e^{\hbar\,(\phi_{r,s} - \phi_{s,r})}   \qquad \qquad   \forall \;\; r, s \in \{1,\dots,n\}  $$
 Now, when writing these relations in detail, one finds the following outcome:
 \vskip3pt
   \textit{Our  $ {\fhg}_{\sigma_{\!{}_\cF}} $  is a concrete realization of the quantum function algebra over
   $ \textit{GL}(n) $  introduced in  \cite{JLZ}  (with  $ \, \hat{p} = \hat{q} \, $)   ---
   or also  in  \cite{Ma}  in the totally even case}.
 \vskip3pt
   In particular, this yields an independent construction for the quantum general linear groups
   in  \cite{JLZ}  (for  $ \, \hat{p} = \hat{q} \, $)  or  \cite{Ma}  (in the totally even case).
   Even more, it was proved in  \cite{AST}  that in the totally even case these
   ``multiparameter quantum  $ \textit{GL}_n $''  can all be realized as 2--cocycle twist deformations
   of the uniparameter, ``standard'' one (as considered, e.g., in  \cite{PW}).
   But now, with our realization, we ``recover'' this fact as an intrinsic feature
   of these specific multiparameter quantum groups.
 \vskip3pt
   Finally, this result is also extended to the general  \textsl{super case}
   --- i.e., for any  $ \textit{GL}(m|n) \, $,  not just the totally even case of
   $ \, m=0 \, $  or  $ \, n=0 \, $  ---   in  \cite{GGP}.
 \vskip7pt

   Let us now see how  \eqref{eq: formula x sigma-deformed-product}
   yields a modified Poisson bracket in the semiclassical limit of
   $ \, {\fhg}_{\sigma_{{}_\cF}} \, $.  Using such notation as
   $ \; \overline{x}_{r,\,s} := x_{r,\,s} \ \big(\, \text{mod} \,\
   \hbar\,{\fhg}_{\sigma_{{}_\cF}} \big) \, $,  \,the Poisson bracket inherited from
   $ \, {\fhg}_{\sigma_{{}_\cF}} \, $  is given, by definition, by
  $$  {\big\{ \overline{x}_{i,\,j} \, , \overline{x}_{\ell,\,t} \big\}}_{\sigma_{{}_\cF}}  \,\; := \;\;
  {{\; {\big[{x}_{i,\,j} \, , {x}_{\ell,\,t} \big]}_{\sigma_{{}_\cF}} \;}
  \over {\; \hbar \;}}  \ \ \Big(\, \text{mod} \,\ \hbar \, {\fhg}_{\sigma_{{}_\cF}} \Big)  $$
 Now
  $$  \displaylines{
   {\big[ {x}_{i,\,j} \, , {x}_{\ell,\,t} \big]}_{\sigma_{{}_\cF}}  \, = \;
 x_{i,\,j} \raise-1pt\hbox{$ \; \scriptstyle \dot\sigma_{{}_\cF} $}\, x_{\ell,\,t}  \, - \,
x_{\ell,\,t} \raise-1pt\hbox{$ \; \scriptstyle \dot\sigma_{{}_\cF} $}\, x_{i,\,j}  \; =   \hfill  \cr
   = \;  e^{\hbar \, (\phi_{i,\ell} \, - \, \phi_{j,t}) / 2} \; x_{i,\,j} \, x_{\ell,\,t}  \, - \,
e^{\hbar \, (\phi_{j,t} \, - \, \phi_{i,\ell}) / 2} \; x_{\ell,\,t} \, x_{i,\,j}  \; =
\qquad \qquad \qquad \qquad  \cr
   \hfill   = \;  e^{\hbar \, (\phi_{i,\ell} \, - \, \phi_{j,t}) / 2} \, \big[ x_{i,\,j} \, , x_{\ell,\,t} \big]
   \, + \, \big( e^{\hbar \, (\phi_{i,\ell} \, - \, \phi_{j,t}) / 2} - e^{\hbar \, (\phi_{j,t} \, - \, \phi_{i,\ell}) / 2} \big) \, x_{\ell,\,t} \, x_{i,\,j}  }  $$
 hence expanding the exponentials we get
  $$  {\big[ {x}_{i,\,j} \, , {x}_{\ell,\,t} \big]}_{\sigma_{{}_\cF}}  \, = \;
\big( 1 + \hbar \, (\phi_{i,\ell} \, - \, \phi_{j,t}) / 2 \big) \, \big[ x_{i,\,j} \, , x_{\ell,\,t} \big] \, + \,
\hbar \, \big( \phi_{i,\ell} \, - \, \phi_{j,t} \big) \, x_{\ell,\,t} \, x_{i,\,j} \, + \, \cO\big(\hbar^2\big)  $$
 from which we eventually get
\begin{equation}  \label{eq: deform-Pois-brack}
  {\big\{ \overline{x}_{i,\,j} \, , \overline{x}_{\ell,\,t} \big\}}_{\sigma_{{}_\cF}}  \, := \;
  \big\{ \overline{x}_{i,\,j} \, , \overline{x}_{\ell,\,t} \big\} \, + \,
  \Big(\, \overline{\phi}_{i,\ell} \, - \, \overline{\phi}_{j,t} \Big) \,
  \overline{x}_{\ell,\,t} \, \overline{x}_{i,\,j}
\end{equation}
 where  $ \, \big\{ \overline{x}_{i,\,j} \, , \overline{x}_{\ell,\,t} \big\} \, $
 denotes the old (undeformed) Poisson bracket and we took into account that
 $ \; \big[ \overline{x}_{i,\,j} \, , \overline{x}_{\ell,\,t} \big] \, = \, 0 \; $
 and that the deformed and undeformed product do coincide modulo  $ \hbar \, $.
                                                              \par
   In addition, the formula  \eqref{eq: deform-Pois-brack}
   for the Poisson bracket also induces, following the general recipe,
   a concrete description of the modified Lie bracket in the cotangent Lie bialgebra
   $ \, \lieg^* := \liem \big/ \liem^2 \, $,  \,where  $ \liem $  is the augmentation ideal of
   $ {\fhg}_{\sigma_{{}_\cF}} \, $. Indeed, the latter has as  $ \k $--basis  the set of cosets
   $ \, \big(\text{modulo\ } \liem^2 \,\big) $
  $$
  \Big\{\, \text{x}_{i,\,j} := \big( \overline{x}_{i,\,j} - \delta_{i,\,j} \big)  \,\
  \text{mod} \,\ \liem^2 \,\Big|\, i \, , j = 1, \dots, n \,\Big\}
  $$
 and for these elements from  \eqref{eq: deform-Pois-brack}
 we deduce the deformed Lie bracket as given by
  $$
  \displaylines{
   {\big[ \text{x}_{i,\,j} \, , \text{x}_{\ell,\,t} \big]}_{\sigma_{{}_\cF}}  = \;
   \big[ \text{x}_{i,\,j} \, , \text{x}_{\ell,\,t} \big] \;\;\; ,
   \quad  {\big[ \text{x}_{i,\,i} \, , \text{x}_{\ell,\,\ell} \big]}_{\sigma_{{}_\cF}}  = \;  \big[ \text{x}_{i,\,i} \, ,
   \text{x}_{\ell,\,\ell} \big]   \quad \qquad  \forall \;\; i \not= j \, , \; \ell \not= t  \cr
   {\big[ \text{x}_{i,\,i} \, , \text{x}_{\ell,\,t} \big]}_{\sigma_{{}_\cF}}  \, = \;
   \big[ \text{x}_{i,\,i} \, , \text{x}_{\ell,\,t} \big] \, + \, \Big(\, \overline{\phi}_{i,\ell} \, - \,
   \overline{\phi}_{i,t} \Big) \, \text{x}_{\ell,\,t}   \qquad \qquad  \forall \;\; \ell \not= t  \cr
   {\big[ \text{x}_{i,\,j} \, , \text{x}_{\ell,\,\ell} \big]}_{\sigma_{{}_\cF}}  \, := \;
   \big[ \text{x}_{i,\,i} \, , \text{x}_{\ell,\,t} \big] \, + \, \Big(\, \overline{\phi}_{i,\ell} \, - \,
   \overline{\phi}_{j,\ell} \Big) \, \text{x}_{i,\,j}   \qquad \qquad  \forall \;\; i \not= j  }
   $$
\end{exa}

\vskip13pt

\subsection{Deformations by polar 2--cocycle of QUEA's}  \label{subsec: 2coc-QUEAs}  {\ }
 \vskip7pt
   This subsection is dedicated to deformations by polar 2--cocycle of QUEA's.
   In this case the result that we achieve is somewhat surprising, in that we are indeed
   ``stretching the standard recipe'', as the 2-cocycles that we use to deform our Hopf
   $ \kh $--algebras  are valued in the field  $ \khp $  rather than in our ground ring  $ \kh \, $.
   Therefore,  \textit{a priori\/}  nothing even guarantees that the recipe would just work
   and produce a new Hopf algebra over  $ \kh \, $;  \,nonetheless, we eventually find quite a
   meaningful result, which also says that  \textit{the standard procedure of deformation by
   twist for QUEA's can be extended somewhat beyond its natural borders}.
 \vskip5pt
   We begin with two ancillary results.

\vskip11pt

\begin{lema}  \label{lemma: FACT-on-z-z'}  {\ }
 \vskip3pt
   Let  $ \, U_\hbar := \uhg \, $  be any QUEA, and
   $ \, J_\hbar := \Ker\big(\epsilon_{U_\hbar}\big) \, $.  For every  $ \, z \in U_\hbar \, $,
   \,there exists  $ \, N \in \NN \, $  such that
   $ \; \delta_n(z) \, \in \, \hbar^{\,\max(n,N)-N} J_\hbar^{\,\otimes n} \; $  for every
   $ \, n \in \NN \, $.
\end{lema}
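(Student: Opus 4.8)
The plan is to reduce the statement to a claim about a finite generating set of $U_\hbar$, verify it there directly from the definition of Drinfeld's functor $(\ )'$, and then propagate it through sums and products by means of an explicit co-Leibniz formula for $\delta_n$. Before starting I would record two harmless reductions. Since $\id-\iota\circ\epsilon$ is exactly the projection of $U_\hbar=\Bbbk[[\hbar]]\cdot 1\oplus J_\hbar$ onto $J_\hbar$, every $\delta_n$ already lands in $J_\hbar^{\widehat{\otimes}\,n}$; and since that splitting is $\Bbbk[[\hbar]]$-linear, one has $\hbar^k\,U_\hbar^{\widehat{\otimes}\,n}\cap J_\hbar^{\widehat{\otimes}\,n}=\hbar^k\,J_\hbar^{\widehat{\otimes}\,n}$. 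Hence the assertion is equivalent to: \emph{for every $z\in U_\hbar$ there is $N\in\NN$ with $\delta_n(z)\in\hbar^{\max(n,N)-N}\,U_\hbar^{\widehat{\otimes}\,n}$ for all $n$}; I will call such a $z$ \emph{$N$-admissible}, and note that being $N$-admissible for some $N$ is the same as being $N'$-admissible for all $N'\ge N$.

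Next I would treat the generators. By Proposition \ref{prop: exist-lifts-x_i} I fix a $\Bbbk$-basis $\{\overline y_i\}_{i\in I}$ of $\lieg$ together with lifts $y_i\in U_\hbar$ such that $\epsilon(y_i)=0$ and $x'_i:=\hbar\,y_i\in (U_\hbar)'$, and I recall that $U_\hbar$ is the $\hbar$-adic completion of the unital subalgebra $A_\hbar:=\Bbbk[[\hbar]]\big[\{y_i\}_{i\in I}\big]$ (a standard consequence of $A_\hbar$ surjecting onto $U_\hbar/\hbar U_\hbar=U(\lieg)$ together with $\hbar$-adic completeness). Since $x'_i\in (U_\hbar)'$, the definition of $(\ )'$ gives $\delta_n(x'_i)\in\hbar^n\,U_\hbar^{\widehat{\otimes}\,n}$; $\Bbbk[[\hbar]]$-linearity of $\delta_n$ turns this into $\hbar\,\delta_n(y_i)\in\hbar^n\,U_\hbar^{\widehat{\otimes}\,n}$, whence by $\hbar$-torsion-freeness $\delta_n(y_i)\in\hbar^{\,n-1}\,U_\hbar^{\widehat{\otimes}\,n}$ for $n\ge 1$, while $\delta_0(y_i)=\epsilon(y_i)=0$. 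Thus each $y_i$ is $1$-admissible, and every scalar of $\Bbbk[[\hbar]]$ is $0$-admissible since it is annihilated by $\delta_n$ for $n\ge1$.

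The core of the argument is the identity
\[
\delta_n(zw)\;=\!\!\sum_{\substack{A,B\subseteq\{1,\dots,n\}\\ A\cup B=\{1,\dots,n\}}}\!\!\iota_A\big(\delta_{|A|}(z)\big)\cdot\iota_B\big(\delta_{|B|}(w)\big),
\]
where $\iota_S$ places the $|S|$ tensor legs of an element of $U_\hbar^{\widehat{\otimes}\,|S|}$ into the positions listed by $S\subseteq\{1,\dots,n\}$ and inserts $1$ elsewhere, and the products are taken slotwise in $U_\hbar^{\widehat{\otimes}\,n}$. I would obtain it from $\Delta^{(n-1)}(zw)=\Delta^{(n-1)}(z)\,\Delta^{(n-1)}(w)$ together with the expansion $\Delta^{(n-1)}(z)=\sum_{S}\iota_S\big(\delta_{|S|}(z)\big)$ (got by writing each Sweedler leg $z_{(i)}=\epsilon(z_{(i)})\,1+z_{(i)}^{+}$ and using counitality — the case $n=2$ being Lemma \ref{lemma: technic-Hopf}(d)), after applying $(\id-\iota\,\epsilon)^{\otimes n}$ and observing that this projector acts as the identity on a summand with $A\cup B=\{1,\dots,n\}$ (each slot then carries a product of elements of $J_\hbar$, on which $\epsilon$ vanishes) and kills any summand leaving some slot filled only with $1$'s. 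Granting this, if $z$ is $N$-admissible and $w$ is $M$-admissible then the $(A,B)$-term lies in $\hbar^{(|A|-N)_{+}+(|B|-M)_{+}}\,U_\hbar^{\widehat{\otimes}\,n}$, and since $A\cup B=\{1,\dots,n\}$ forces $|A|+|B|\ge n$ this exponent is $\ge\max\big(0,\,n-N-M\big)$; the sum being finite, $zw$ is $(N+M)$-admissible. Combining this with the evident fact that $a\,z+b\,w$ is $\max(N,M)$-admissible for $a,b\in\Bbbk[[\hbar]]$, I conclude that every element of $A_\hbar$ is admissible — more precisely, a $\Bbbk[[\hbar]]$-combination of monomials of degree $\le D$ in the $y_i$'s is $D$-admissible.

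It remains to pass from $A_\hbar$ to its $\hbar$-adic completion $U_\hbar$, and this is the step I expect to be the real obstacle. For each fixed $N$ the set of $N$-admissible elements is $\hbar$-adically closed (the maps $\delta_n$ are continuous and $\hbar^{\max(n,N)-N}J_\hbar^{\widehat{\otimes}\,n}$ is a closed submodule), so what is needed is an approximation of an arbitrary $z\in U_\hbar$ by admissible elements \emph{of bounded degree}: writing $z=\sum_{s\ge0}\hbar^s z_s$ with $z_s\in A_\hbar$, the truncation $z^{(k)}=\sum_{s=0}^k\hbar^s z_s$ satisfies $\delta_n(z)\equiv\delta_n(z^{(k)})\ (\mathrm{mod}\ \hbar^{\,k+1})$, so taking $k=n-1$ yields $\delta_n(z)\in\hbar^{(n-\deg z^{(n-1)})_{+}}J_\hbar^{\widehat{\otimes}\,n}$, and one must bound $\deg z^{(n-1)}$ — equivalently, the ``$\hbar$-deficit'' $\deg z_s-s$ — uniformly in $s$. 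My plan for this last point is to exploit the description of $U_\hbar$ as Drinfeld's functor $\big((U_\hbar)'\big)^{\vee}$ applied to the QFSHA $(U_\hbar)'$ — using $(U_\hbar)'=\Bbbk\big[\big[\{x'_i\}_{i\in I}\cup\{\hbar\}\big]\big]$ and the ensuing presentation of $U_\hbar$ through the negative powers $\hbar^{-m}I_{(U_\hbar)'}^{\,m}$ — to pin down the relevant degrees, after which the closedness noted above concludes the proof.
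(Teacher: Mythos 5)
Your reduction to the generators and the co-Leibniz identity
$\delta_n(zw)=\sum_{A\cup B=\{1,\dots,n\}}\iota_A(\delta_{|A|}(z))\,\iota_B(\delta_{|B|}(w))$
are correct, and up to that point your route is genuinely different from the paper's: the paper never inducts over a generating set, but instead invokes $U_\hbar=\big(U_\hbar^{\,\prime}\big)^{\!\vee}$ to write $z\equiv\hbar^{-N}z'\pmod{\hbar\,U_\hbar}$ with $z'\in\big(I_\hbar^{\,\prime}\big)^N$, and then bounds $\delta_n(z')$ directly from $z'\in U_\hbar^{\,\prime}$ together with the fact that $I_\hbar^{\,\prime}$ is a Hopf ideal contained in $\hbar\,U_\hbar^{\,\prime}$. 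Your additivity of admissibility degrees would recover exactly the paper's estimate on elements of $\big(I_\hbar^{\,\prime}\big)^N$, so for elements of the (uncompleted) subalgebra $A_\hbar$ the two arguments are of comparable strength.

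The gap is the completion step, and it cannot be closed along the lines you propose, because the uniform bound on the ``$\hbar$-deficit'' $\deg z_s-s$ that your plan requires is simply false for a general element of the $\hbar$-adic completion. Take $z=\sum_{s\ge 0}\hbar^{s}\,y_1^{2s}$, which is a legitimate element of $U_\hbar$ (the series is $\hbar$-adically convergent) with $z_s=y_1^{2s}$ of deficit $s$, unbounded. For this $z$ one computes $\delta_{2k}(z)=\hbar^{k}\,(2k)!\;\overline{y_1}^{\,\otimes 2k}+\cO\big(\hbar^{k+1}\big)$: the term $s=k$ contributes $\hbar^{k}\delta_{2k}\big(y_1^{2k}\big)$ whose reduction modulo $\hbar^{k+1}$ is $(2k)!\,\overline{y_1}^{\,\otimes 2k}\ne 0$ in characteristic zero, the terms $s<k$ lie in $\hbar^{2k-s}\subseteq\hbar^{k+1}$ by your own estimate, and the terms $s>k$ lie in $\hbar^{s}\subseteq\hbar^{k+1}$. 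Hence $\delta_{2k}(z)$ has $\hbar$-adic valuation exactly $k$, and no single $N$ can satisfy $2k-N\le k$ for all $k$. The QDP description you appeal to cannot rescue this: it exhibits $U_\hbar$ only as the completion of the \emph{increasing union} $\bigcup_{N}\hbar^{-N}\big(I_\hbar^{\,\prime}\big)^{N}$, so an arbitrary $z$ is a limit of elements requiring unbounded $N$, which is precisely the phenomenon in the example. You have in fact isolated the genuine crux of the lemma: note that the paper's own proof controls $z$ only modulo $\hbar\,U_\hbar$ and leaves the tail unestimated, so the same example shows that the statement needs either a restriction on $z$ (it does hold, for instance, for $z\in\hbar^{-N}\big(I_\hbar^{\,\prime}\big)^{N}$, which is how the lemma is effectively used later for logarithms of twists and $R$--matrices) or a weaker conclusion; as stated for every $z\in U_\hbar$ it does not hold, and no completion of your argument (or of the paper's) can prove it.
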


\pf
 As Drinfeld's functors are inverse to each other
 --- cf.\ Theorem \ref{thm: QDP}\textit{(a)}  ---   applying  $ {(\ )}^\vee $  after  $ {(\ )}' $
 to the QUEA  $ U_\hbar $  we get  $ \, U_\hbar = {\big( U_\hbar^{\,\prime} \big)}^{\!\vee} \, $:
 \,letting  $ \, I_\hbar^{\,\prime} := \hbar\,U_\hbar^{\,\prime} +
 \Ker\!\big( \epsilon_{U_\hbar^{\,\prime}} \big) \, $,  this last identity reads
 \vskip7pt
   \centerline{ $ U_\hbar \, = \, \hbar\text{-adic completion of\ }
   \sum\limits_{n \geq 0} \hbar^{-n} {\big( I_\hbar^{\,\prime} \big)}^n \, = \,
   \hbar\text{-adic completion of\ }
   \bigcup\limits_{n \geq 0} \hbar^{-n} {\big( I_\hbar^{\,\prime} \big)}^n $ }
 \vskip5pt
 In particular, this implies that for our
 $ \, z \in \uhg \, $  there exist some  $ \, N \in \NN \, $  and
 $ \, z' \in {\big( I_\hbar^{\,\prime} \big)}^N \, $  such that
 $ \; z \, \equiv \hbar^{-N} z' \, \big(\!\!\! \mod \hbar\,\uhg \big) \; $.
 Now, given  $ \, n \in \NN \, $  we have
 $ \, \delta_n\big(z'\big) \in \hbar^n U_\hbar^{\,\otimes n} \, $
 because
 $ \, z' \in {\big( I_\hbar^{\,\prime} \big)}^N \subseteq U_\hbar^{\,\prime} \, $,
 and also
 $ \, \delta_n\big(z'\big) \in \!\! {\textstyle \sum\limits_{s_1 + \cdots + s_n = N}} \!\!\!
 \otimes_{i=1}^n \! {\big( I_\hbar^{\,\prime} \big)}^{s_i} \, $
 because  $ I_\hbar^{\,\prime} $  is a Hopf ideal; moreover,
 $ \, \Ker\!\big( \epsilon_{U_\hbar^{\,\prime}} \big) \subseteq
 \hbar\,U_\hbar^{\,\prime} \, $  again by construction, hence
 $ \, I_\hbar^{\,\prime} \subseteq \hbar\,U_\hbar^{\,\prime} \, $.
 In the end, all this yields
 $ \; \delta_n\big(z'\big) \in \hbar^{\,\max(n,N)} \, J_\hbar^{\,\otimes n} \, $,
 \,therefore  $ \; \delta_n(z) \, \in \, \hbar^{\,\max(n,N)-N} J_\hbar^{\,\otimes n} \; $
 as claimed.
\epf

\vskip5pt

   For the second, auxiliary result, we
 fix some more notation: namely, hereafter by  ``$ \log_* $''  and  ``$ \exp_* $''
 we denote the logarithm and the exponential with respect to the
 convolution product, whenever defined.

\vskip11pt

\begin{lema}  \label{lemma: properties polar 2-cocycle}
 Let\/  $ \uhg $  be any QUEA, and let\/  $ \chi $  be a  $ \kh $--bilinear
 form on  $ \uhg $  such that  $ \, \chi(z\,,1) = 0 = \chi(1\,,z) \, $  for any
 $ \, z \in \uhg \, $;  \,denote also by the same symbol  $ \chi $
 the scalar extension of  $ \chi $  to a\/  $ \khp $--bilinear  form for the\/
 $ \khp $--vector  space  $ \, \Uhg := \khp \otimes_\kh \uhg \, $.  Then:
 \vskip3pt
   (a)\; the formal expression  $ \, \sigma := \exp_*\!\big( \hbar^{-1} \chi \big) \, $
  uniquely provides a well-defined,  $ \khp $--valued  bilinear form for  $ \, \Uhg \, $;
 \vskip3pt
   (b)\;  $ \, \sigma(z\,,1) = \epsilon(z) = \sigma(1\,,z) \, $  for any  $ \, z \in \uhg \, $;
 \vskip3pt
   (c)\;  $ \sigma $  is orthogonal, i.e.\  $ \, \sigma_{2,1} \! = \sigma^{\,-1} $,
   \,iff\/  $ \chi $  is antisymmetric, i.e.\  $ \, \chi_{2,1} = -\chi \, $.
\end{lema}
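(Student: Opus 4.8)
The plan is to settle the three claims in turn, the whole weight resting on (a) — the assertion that the \textit{a priori} meaningless expression $\exp_*\!\big(\hbar^{-1}\chi\big)$ is actually well-defined. For (a) I would first record the combinatorial identity
$$ \chi^{*n}(a\,,b\,) \,=\, \chi^{\otimes n}\big(\,\delta_n(a)\,;\,\delta_n(b)\,\big) \qquad \forall\; a, b \in \uhg\,,\ n \in \NN\,, $$
where $\delta_n := {(\id - \iota\circ\epsilon)}^{\otimes n}\circ\Delta^{(n-1)}$ is the reduced iterated coproduct of Definition \ref{def: Drinfeld's functors}, and $\chi^{\otimes n}$ is the $\kh$--multilinear form on $J_\hbar^{\,\otimes n}$ sending $x_1\otimes\cdots\otimes x_n\,;\,y_1\otimes\cdots\otimes y_n$ to $\prod_{i}\chi(x_i,y_i)$. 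This follows at once from the definition of the convolution power and the hypothesis $\chi(z,1) = 0 = \chi(1,z)$: the latter forces $\chi(z,w) = \chi(z^+,w^+)$ (Lemma \ref{lemma: technic-Hopf}), which replaces $\Delta^{(n-1)}$ by $\delta_n$ in every factor of $\chi^{*n}(a,b) = \prod_{i}\chi\big(a_{(i)}\,,b_{(i)}\big)$. Now fix $a, b \in \uhg$. By Lemma \ref{lemma: FACT-on-z-z'} there are $N_a, N_b \in \NN$ with $\delta_n(a) \in \hbar^{\max(n,N_a)-N_a}\,J_\hbar^{\,\otimes n}$ and $\delta_n(b) \in \hbar^{\max(n,N_b)-N_b}\,J_\hbar^{\,\otimes n}$; since $\chi^{\otimes n}$ is $\kh$--multilinear and $\kh$--valued, the displayed identity gives, for the $\hbar$--adic valuation $v_\hbar$,
$$ v_\hbar\big(\chi^{*n}(a,b)\big) \,\geq\, \big(\max(n,N_a)-N_a\big) + \big(\max(n,N_b)-N_b\big)\,, $$
which equals $2n - N_a - N_b$ once $n \geq \max(N_a,N_b)$. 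Hence $v_\hbar\big(\hbar^{-n}\chi^{*n}(a,b)/n!\big) \geq n - N_a - N_b \to +\infty$ (recall $n! \in \k^{\times}$), so $\sum_{n\geq 0}\hbar^{-n}\,\chi^{*n}/n!$ converges $\hbar$--adically in the complete valued field $\khp$ at every pair $(a,b) \in \uhg\times\uhg$; extending $\khp$--bilinearly from $\uhg$ to $\Uhg = \khp\otimes_\kh\uhg$ yields the unique $\khp$--valued bilinear form $\sigma$, proving (a).

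Part (b) is immediate: since $\Delta^{(n-1)}(1) = 1^{\otimes n}$ and $\chi(z,1) = 0$, every factor of $\chi^{*n}(z,1) = \prod_{i}\chi\big(z_{(i)}\,,1\big)$ vanishes for $n \geq 1$, whence $\sigma(z,1) = \epsilon^{\otimes 2}(z,1) = \epsilon(z)$, and symmetrically $\sigma(1,z) = \epsilon(z)$. For part (c), I would note that the flip $\psi\mapsto\psi_{2,1}$ is an involutive automorphism of the convolution algebra of bilinear forms — a one-line check that $(\chi*\psi)_{2,1} = \chi_{2,1}*\psi_{2,1}$ — hence it commutes with $\exp_*$, so $\sigma_{2,1} = \exp_*\!\big(\hbar^{-1}\chi_{2,1}\big)$; on the other hand $\sigma^{-1} = \exp_*\!\big(-\hbar^{-1}\chi\big)$ by the standard identity $\exp_*(\phi)*\exp_*(-\phi) = \epsilon^{\otimes 2}$ (both exponentials being meaningful by (a), applied to $\pm\chi$). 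If $\chi_{2,1} = -\chi$ these two forms coincide, giving orthogonality; for the converse one uses that $\exp_*$ is injective on the class of forms of shape $\hbar^{-1}\psi$ with $\psi$ as in the hypothesis — this follows from the same valuation bookkeeping as in (a), or equivalently by restricting all forms to the Drinfeld subalgebra $\uhg' \subseteq \uhg$, which is an honest QFSHA, so that $\log_*$ is available and inverts $\exp_*$ — and then $\sigma_{2,1} = \sigma^{-1}$ forces $\hbar^{-1}\chi_{2,1} = -\hbar^{-1}\chi$, i.e.\ $\chi_{2,1} = -\chi$.

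The main obstacle is the convergence in (a): the very \emph{raison d'\^etre} of the ``polar'' objects is that multiplication by $\hbar^{-1}$ is literal nonsense on a $\kh$--module, and the only thing that rescues $\exp_*\!\big(\hbar^{-1}\chi\big)$ is that iterated \emph{reduced} coproducts of elements of a QUEA acquire an $\hbar$--divisibility growing linearly in $n$ — a phenomenon that is itself a shadow of the Quantum Duality Principle, since Lemma \ref{lemma: FACT-on-z-z'} rests on the identification $\uhg = {\big(\uhg'\big)}^{\!\vee}$ of Theorem \ref{thm: QDP}. A secondary delicate point is the injectivity of $\exp_*$ invoked in (c), which I would reduce to the QFSHA setting on $\uhg'$, where $\exp_*$ and $\log_*$ enjoy their usual formal-calculus properties.
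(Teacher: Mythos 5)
Your proof is correct and follows essentially the same route as the paper: part \textit{(a)} rests on exactly the paper's two ingredients, namely the reduction $\chi^{*n}(a,b)=\prod_i\chi\big(a^+_{(i)},b^+_{(i)}\big)$ with $\otimes_i\,a^+_{(i)}=\delta_n(a)$ and the $\hbar$--adic estimate of Lemma \ref{lemma: FACT-on-z-z'}, yielding the same valuation bound $n-N_a-N_b$ for the $n$--th term. Parts \textit{(b)} and \textit{(c)} are dismissed in the paper as ``standard identities for formal exponentials''; you merely spell them out, and your justification of the injectivity of $\exp_*$ needed for the converse in \textit{(c)} (via restriction to $\uhg'$, where $\log_*$ is available) is a legitimate filling of that gap.
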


\pf
 \textit{(a)}\,  Fix notation  $ \, U_\hbar := \uhg \, $  and
 $ \, J_\hbar := \Ker\big(\epsilon_{U_\hbar}\big) \, $.
 For any  $ \, z \in U_\hbar \, $,  set
\begin{equation}  \label{eq: zhat_z+}
  \hat{z} := \epsilon(z) \, ,  \;\;\; z^+ := z - \epsilon(z) = z - \hat{z} \, \in \, J_\hbar \; ,
  \quad  \text{hence}  \quad  z = z^+ + \hat{z}
\end{equation}
   \indent   The assumption  $ \, \chi(z\,,1) = 0 = \chi(1\,,z) \, $  for  $ \, z \in \uhg \, $
 implies (for all  $ \, u, v \in U_\hbar \, $)
\begin{equation}  \label{eq: expansion chi(u,v)}
  \chi(u,v) \, = \, \chi\big( u^+ \! + \hat{u} \, , v^+ \! + \hat{v} \big) \, = \,
  \chi\big( u^+ , v^+ \big)
\end{equation}
 \vskip5pt
   Now, for any  $ \, a \, , b \in U_\hbar \, $,  the formula
   $ \; \sigma = \exp_*\!\big( \hbar^{-1} \chi \,\big) \, = \,
\sum_{n \,\geq 0} \hbar^{-n} \chi^{*\,n} \big/ n! \; $  gives
\begin{equation}  \label{eq: expansionsigma(a,b)}
  \begin{aligned}
    &  \hskip-5pt  \sigma(a\,,b\,)  \; = \,
    {\textstyle \sum_{n \,\geq 0}} \, \hbar^{-n} \chi^{*\,n}(a\,,b\,) \Big/ n!  \,\; =  \\
    &  \hskip23pt   = \;
    {\textstyle \sum_{n \,\geq 0}} \,
    \hbar^{-n} {\textstyle \prod_{i=1}^n} \chi\big(a_{(i)},b_{(i)}\big) \Big/ n!  \; =
    \;  {\textstyle \sum_{n \,\geq 0}} \,
    \hbar^{-n} {\textstyle \prod_{i=1}^n} \chi\big(a^+_{(i)},b^+_{(i)}\big) \Big/ n!
  \end{aligned}
\end{equation}
 where we took into account that  $ \, \chi^{*\,k}(u\,,v\,) \, = \,
 \prod_{s=1}^k \! \chi\big(u_{(s)},v_{(s)}\big) \, = \,
 \prod_{s=1}^k \! \chi\big(u^+_{(s)},v^+_{(s)}\big) \, $  for each
 $ \, u, v \in U_\hbar \, $,  $ \, k \in \NN \, $,  by definitions along with
 \eqref{eq: expansion chi(u,v)}.  Now we notice that
 $ \, \otimes_{i=1}^n a^+_{(i)} = \delta_n(a) \, $  and
 $ \, \otimes_{i=1}^n b^+_{(i)} = \delta_n(b) \, $,
 hence  Lemma \ref{lemma: FACT-on-z-z'}  guarantees that
  $$  h^{-n} {\textstyle \prod\limits_{i=1}^n} \chi\big(a^+_{(i)},b^+_{(i)}\big)  \; \in \;
  \hbar^{-n+\,\max(n,A)-A+\max(n,B)-B}  \eqno \forall \;\; n \in \NN_+  $$
 whence in particular
\begin{equation}  \label{eq: h-adic expansion chi*n}
  \begin{aligned}
     &  \;\;  h^{-n} {\textstyle \prod\limits_{i=1}^n} \chi\big(a^+_{(i)},b^+_{(i)}\big)  \; \in \;
     \hbar^{-\min(A,B)} \, \kh   \qquad  \forall \;\; n \in \NN_+  \\
     &  h^{-n} {\textstyle \prod\limits_{i=1}^n} \chi\big(a^+_{(i)},b^+_{(i)}\big)  \; \in \;
     \hbar^{n-(A+B)} \, \kh  \qquad  \forall \;\; n \geq A+B  \\
  \end{aligned}
\end{equation}
 where  $ \, A \in \NN \, $,  resp.\  $ \, B \in \NN \, $,
 plays for  $ a \, $,  resp.\ for  $ b \, $,  the role of  $ N $  for  $ z $  in
 Lemma \ref{lemma: FACT-on-z-z'}  above; by this, the formal expansion for
 $ \sigma(a\,,b\,) $  in  \eqref{eq: expansionsigma(a,b)}  yields a well defined
 element in  $ \kh \, $,  hence  $ \sigma $  is a well-defined  $ \khp $--bilinear
 form of  $ \Uhg $  as claimed.
 \vskip3pt
   \textit{(b--c)}\,  Both claims are obvious, by construction,
   as they follow from standard identities for formal exponentials.
\epf

\vskip1pt

   The previous result leads us to introduce the following notion:

\vskip4pt

\begin{definition}  \label{def: polar 2cocycle}
 Let\/  $ \uhg $  be a QUEA, and  $ \, \Uhg := \khp \otimes_\kh \uhg \, $.
 Note that  $ \Uhg $  has a natural ``Hopf algebra structure'' of  $ \Uhg $
 induced by scalar extension from  $ \uhg $
 --- so that, in particular, the ``coproduct'' takes values in
 $ \, \khp \otimes_\kh \! \big( \uhg \,\widehat{\otimes}_\kh \uhg \big) \, $
 rather than in  $ \, \Uhg \otimes_\khp \Uhg \, $.
                                                          \par
%
%
   We call  \textit{polar 2-cocycle of}  $ \uhg $  any  $ \khp $--bilinear  form
   $ \, \sigma $  of  $ \Uhg $  which has the form
   $ \, \sigma := \exp_*\!\big( \hbar^{-1} \chi \big) \, $  for some  $ \kh $--bilinear
   form  $ \, \chi \in {\Big( \uhg^{\widehat{\otimes} 2} \Big)}^* \, $  of  $ \uhg $  such that
   $ \, \chi(z\,,1) = 0 = \chi(1\,,z) \, $  for all  $ \, z \in \uhg \, $,
   \,and in addition enjoys the 2--cocycle properties with respect to the above
   ``Hopf algebra structure'' of  $ \Uhg \, $.
\end{definition}

\vskip4pt

\begin{rmk}  \label{rmk: equiv-cond x polar 2-cocycle}
 The notion of ``polar 2-cocycle'' for a QUEA  $ \uhg $
 can also be cast in the following, equivalent shape.
 Recall that  $ \, \fhg := {\uhg}^* \, $  is a QFSHA
 (cf.\ \S \ref{equiv-&-(stand)-duality}),  and then
 $ \, {\Big( \uhg^{\widehat{\otimes} 2} \Big)}^* =
 {\uhg}^* \,\widetilde{\otimes}\, {\uhg}^* = \fhg \,\widetilde{\otimes}\, \fhg \, $.
 Given  $ \, \chi \in {\Big( \uhg^{\widehat{\otimes} 2} \Big)}^* \, $  as in
 Definition \ref{def: polar 2cocycle}  above, the condition
 $ \, \chi(z\,,1) = 0 = \chi(1\,,z) \, $  for all  $ \, z \in \uhg \, $
 means that  $ \, \chi \in J_{\fhg} \,\widetilde{\otimes}\, J_{\fhg} \, $,
 with  $ \, J_{\fhg} := \Ker\big( \epsilon_{\fhg} \big) \, $,
 \,hence we have
 $ \; \chi \, \in \, \hbar^2 \, {\big( J_{\fhg}^{\,\vee} \big)}^{\widehat{\otimes}\, 2}
 \subseteq \, \hbar^2 \, {\big( {\fhg}^\vee \big)}^{\widehat{\otimes}\, 2} \; $
 where  $ \, J_{\fhg}^{\,\vee} := \hbar^{-1} J_{\fhg} \, $  and  $ \, {\fhg}^\vee $
 is the QUEA defined in  \S \ref{subsec: QDP}  out of  $ \fhg \, $.
 Thus, it follows that
 $ \; \hbar^{-1} \chi \, \in \, \hbar \, {\big( {\fhg}^\vee \big)}^{\widehat{\otimes}\, 2} $,
 \,so  $ \, \sigma := \exp_*\!\big( \hbar^{-1} \chi \big) \, $
 is a well-defined element in  $ {\big( {\fhg}^\vee \big)}^{\widehat{\otimes}\, 2} \, $.
                                                         \par
   Now,  the requirement that  $ \, \sigma := \exp_*\!\big( \hbar^{-1} \chi \big) \, $
   be a polar 2-cocycle for  $ \uhg $  in the sense of
   Definition \ref{def: polar 2cocycle}  above is equivalent to the property of
   $ \sigma $  being a twist element for  $ {\fhg}^\vee \, $
   --- which makes perfectly sense in sight of
   Proposition \ref{prop: duality-deforms}.
\end{rmk}

\vskip3pt

   Clearly, every 2-cocycle for  $ \uhg $  is a polar 2--cocycle as well;
   the converse, instead, is not true, in general (counterexamples do exist).
   However, the key point is that  \textit{every polar 2--cocycle still provides
   a well-defined deformation by 2-cocycle of  $ \, \uhg $}
   --- in short, the construction of  \textsl{deformations by 2--cocycle\/}
   does properly extend to  \textsl{``deformations by \textit{polar}  2--cocycle''\/}
   as well: this is indeed our next result.

\vskip4pt

\begin{theorem}  \label{thm: polar 2cocycle-deform-QUEA}
 Let  $ \uhg $  be a QUEA, and  $ \, \sigma = \exp_*\!\big( \hbar^{-1} \chi \big) \, $  a polar 2--cocycle for it, as in  Definition \ref{def: polar 2cocycle}.  Then the procedure of 2-cocycle deformation by  $ \,\sigma $  applied to  $ \, \Uhg $  actually restricts to  $ \, \uhg $,  \,making the latter into a new QUEA.
\end{theorem}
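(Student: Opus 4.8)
The plan is to reduce the statement, via the Quantum Duality Principle, to an application of the \emph{standard} twist deformation for a QUEA, for which Theorem~\ref{thm: twist-deform-QUEA} is available. First I would set $ \, K := {\big( {\uhg}^{*} \big)}^{\!\vee} \, $, the QUEA obtained by applying Drinfeld's functor $ {(\ )}^{\vee} $ to the QFSHA $ {\uhg}^{*} $ linearly dual to $ \uhg \, $; by Theorem~\ref{thm: QDP}\textit{(a)} one has $ \, {K}' = {\uhg}^{*} \, $, and by \eqref{eq: QDP-duality} one has $ \, {K}^{*} = {\uhg}' \, $. By Remark~\ref{rmk: equiv-cond x polar 2-cocycle}, the requirement that $ \, \sigma = \exp_{*}\!\big( \hbar^{-1}\chi \big) \, $ be a polar $2$--cocycle for $ \uhg $ in the sense of Definition~\ref{def: polar 2cocycle} translates exactly into: $ \, \sigma \, $ is a well-defined element of $ \, K^{\,\widehat{\otimes}\, 2} \, $, congruent to $ 1 $ modulo $ \hbar \, $, and is a genuine \emph{twist} for the QUEA $ K \, $. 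Hence Theorem~\ref{thm: twist-deform-QUEA} applies verbatim to the pair $ (K,\sigma) \, $: the twist deformation $ \, K^{\sigma} \, $ is again a QUEA, over the same Lie algebra as $ K $ but with Lie cobracket deformed by the (Lie) twist $ \, c := \overline{\kappa_{a}} \, $, where $ \, \kappa := \hbar^{-1}\log_{*}(\sigma) \, $.

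Next I would transport this deformation back onto $ \uhg $ along the two functors linking $ K $ with $ \uhg \, $, namely $ {(\ )}' $ and linear duality. The guiding observation is that a twist deformation alters only the coproduct and the antipode, leaving the product, the unit and the counit of $ K $ unchanged; in particular the ideal $ \, I_{K} := \hbar\,K + \Ker(\epsilon_{K}) \, $, which is the datum defining $ {(\ )}^{\vee} $ on objects (equivalently, controlling $ {(\ )}' $), is the same for $ K $ and for $ K^{\sigma} \, $. On this basis I would argue that $ \, {\big( K^{\sigma} \big)}' \, $ coincides with $ \, {K}' = {\uhg}^{*} \, $ as a topological $ \kh $--module, the only difference being that it carries the $ \sigma $--twisted coproduct (which, \emph{a posteriori}, lands again in $ \, {\uhg}^{*} \,\widehat{\otimes}\, {\uhg}^{*} \, $). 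Dualizing --- using \eqref{eq: QDP-duality} together with Proposition~\ref{prop: duality-deforms}\textit{(a)}, which interchanges twist deformations and $ 2 $--cocycle deformations --- one then obtains that $ \, {\big( {\big( K^{\sigma} \big)}' \big)}^{\star} \, $ is a QUEA that coincides, as a $ \kh $--module and as a coalgebra, with $ \uhg $ itself, endowed with the product produced by the $ 2 $--cocycle recipe applied to $ \sigma \, $; unravelling the definitions, this product is precisely the restriction to $ \, \uhg \subseteq \Uhg \, $ of the $ \sigma $--deformed product of $ \Uhg \, $. This would establish, in one stroke, that the said restriction takes values in $ \uhg $ and that the resulting Hopf algebra is a QUEA.

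The heart of the argument --- and the only step that is not formal --- is the claim in the preceding paragraph that Drinfeld's functor is compatible with the $ \sigma $--twist deformation, i.e.\ that $ \, {\big( K^{\sigma} \big)}' = {K}' \, $ as $ \kh $--modules (equivalently, on the dual side, that $ {(\ )}^{\vee} $ commutes with the $ 2 $--cocycle deformation by $ \sigma $). This is genuinely delicate, because the $ \sigma $--twisted iterated coproducts $ \, \delta_{n}^{\sigma} \, $ differ from the $ \, \delta_{n} \, $'s, while the defining condition ``$ \, \delta_{n}(z) \in \hbar^{n}\,K^{\otimes n} \ \text{ for all }\ n \, $'' is sensitive to small perturbations. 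I expect to settle it by an $ \hbar $--adic estimate: since $ \, \sigma \equiv 1 \pmod{\hbar} \, $ and, by Lemma~\ref{lemma: twist-cond's/exp ==> twist-cond's/log}, the tensor legs of $ \kappa $ may be chosen inside $ \, \Ker(\epsilon_{K}) \, $, the terms distinguishing $ \, \delta_{n}^{\sigma}(z) \, $ from $ \, \delta_{n}(z) \, $ are assembled from the components $ \, \sigma_{ij}^{\pm 1} \, $ and from the $ \, \delta_{k}(z) \, $'s with $ \, k \le n \, $, and --- invoking Lemma~\ref{lemma: FACT-on-z-z'} for $ K $ together with the behaviour of $ \, \Ker(\epsilon_{K}) \, $ --- each such term carries enough extra powers of $ \hbar $ that $ \, \delta_{n}^{\sigma}(z) \in \hbar^{n}\,K^{\otimes n} \, $ holds exactly when $ \, \delta_{n}(z) \in \hbar^{n}\,K^{\otimes n} \, $ does, that is, exactly when $ \, z \in {K}' \, $. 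Once this membership equivalence is secured, the identifications of the second paragraph reduce to routine bookkeeping with the Quantum Duality Principle and Hopf duality.
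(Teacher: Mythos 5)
Your proposal is correct in outline but takes a genuinely different route from the paper's. The paper proves closure of $ \uhg $ under the $ \sigma $--deformed product \emph{directly}: it writes $ \uhg $ as the $ \hbar $--adic completion of $ \sum_{n} \widetilde{J}_\hbar^{\;n} $ with $ \, \widetilde{J}_\hbar := \hbar^{-1}\Ker\big(\epsilon_{{\uhg}'}\big) \, $, expands $ \, a \,\raisebox{-2pt}{$\dot{\scriptstyle\sigma}$}\, b \, $ term by term, and controls each summand $ \, \hbar^{-(t+\ell)}\chi^{*\,t}\big(a_{(1)},b_{(1)}\big)\,a_{(2)}b_{(2)}\,\chi^{*\,\ell}\big(a_{(3)},b_{(3)}\big) \, $ via $ \hbar $--adic estimates, the identity $ \, \sum_{t+\ell=n}(-1)^\ell\binom{n}{\ell}=0 \, $, and cocommutativity of $ \uhg $ modulo $ \hbar \, $. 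You instead transfer the problem across the QDP, viewing $ \sigma $ as an honest twist for $ \, K := {\big({\uhg}^*\big)}^{\!\vee} \, $ (which is exactly Remark \ref{rmk: equiv-cond x polar 2-cocycle}) and pulling the deformation back through $ {(\ )}' $ and linear duality. This is conceptually cleaner --- the hard work migrates to the coproduct side, where a twist leaves the product untouched --- and it matches the paper's own explanation in \S\ref{sec: deform.'s-QDP} of \emph{why} polar deformations work; the price is the duality bookkeeping and the compatibility of $ {(\ )}' $ with the twist.

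That compatibility is where your proposal has a real gap: the claim $ \, {\big(K^\sigma\big)}' = K' \, $ --- equivalently, that $ \, \Delta^\sigma = \Ad(\sigma)\circ\Delta \, $ maps $ \, K' = {\uhg}^* \, $ into $ \, {\big(K'\big)}^{\widetilde{\otimes}\,2} \, $ --- is only sketched, and it is precisely the ``detailed, careful analysis'' that cannot be waved away. Fortunately it is exactly the content of Theorem \ref{thm: polar twist-deform-QFSHA}, proved independently via Lemma \ref{lemma: propt.'s polar twist}\textit{(b)} (iterated commutators with $ \hbar^{-1}\chi $, $ \, \chi \in J_{\fhg}^{\;\widetilde{\otimes}\,2} \, $, gain enough powers of $ \hbar $ by Lemma \ref{lemma: technic-Hopf}\textit{(c)}), so you may cite it rather than redo the estimate; note that for the reverse inclusion $ {\big(K^\sigma\big)}' \subseteq K' $ you must untwist by $ \sigma^{-1} $, which requires first establishing $ K' \subseteq {\big(K^\sigma\big)}' $ so that $ \sigma^{-1} $ qualifies as a polar twist of the twisted object. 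A second, smaller point: Proposition \ref{prop: duality-deforms}\textit{(a)} is stated for a twist lying in $ H\otimes H $, whereas $ \sigma $ is only a \emph{polar} twist for $ K' $ (it lives in $ K^{\widehat{\otimes}\,2} $, not in $ {\big(K'\big)}^{\widetilde{\otimes}\,2} $), so the identity $ \, \big\langle m_\sigma(a,b),f\big\rangle = \big\langle a\otimes b,\Delta^\sigma(f)\big\rangle \, $ for $ \, a,b\in\uhg \, $, $ \, f\in{\uhg}^* \, $ should be verified directly from the definitions rather than quoted (it does hold). With these two points supplied, your argument closes and gives a shorter proof than the paper's.
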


\pf
 First of all, we have to explain the statement itself.
 To begin with, note that, by definitions and by
 Lemma \ref{lemma: properties polar 2-cocycle},
 we can perform the deformation by the 2--cocycle  $ \sigma $  onto the
 ``Hopf  $ \, \khp $--algebra''  $ \, \Uhg := \khp \,\widehat{\otimes}\, \uhg \, $.
 Our statement then claims the resulting deformed Hopf structure onto  $ \Uhg $
 actually ``restricts'' to a deformation of  $ \, \uhg $  itself: in turn, this amounts to
 claiming that  $ \uhg $  is closed for the  $ \sigma $--deformed  product in
 $ {\big( \Uhg \big)}_\sigma \, $   --- so we go and tackle this last problem.
 \vskip3pt
   Fix notation  $ \, U_\hbar := \uhg \, $,  $ \, J_\hbar := \Ker\big(U_\hbar\big) \, $,
   $ \, J_\hbar^{\,\prime} := \Ker\big(U_\hbar^{\,\prime}\big) \, $  and
   $ \, \widetilde{J_\hbar} := \hbar^{-1} J_\hbar^{\,\prime} \, $,  where
   $ \, U_\hbar^{\,\prime} := \uhg' \, $  is given in
   Definition \ref{def: Drinfeld's functors}\textit{(a)}.
   As it was mentioned in the proof of  Lemma \ref{lemma: FACT-on-z-z'},
   Theorem \ref{thm: QDP}\textit{(a)\/}  implies that
   $ \, U_\hbar = {\big( U_\hbar^{\,\prime} \big)}^{\!\vee} \, $,  \,that is
 \vskip7pt
   \centerline{ $ U_\hbar \, = \, \hbar\text{-adic completion of\ }
   \sum\limits_{n \geq 0} \hbar^{-n} {\big( I_\hbar^{\,\prime} \big)}^n $ }
\noindent
 where  $ \, I_\hbar^{\,\prime} := \hbar\,U_\hbar^{\,\prime} +
 \Ker\!\big( \epsilon_{U_\hbar^{\,\prime}} \big) = \hbar\,U_\hbar^{\,\prime} +
 J_\hbar^{\,\prime} \, $;  then a moment's thought shows that the previous
 expression of  $ U_\hbar $  reads also
\begin{equation}  \label{eq: uhg = vee-prim(uhg)}
 U_\hbar \, = \, \hbar\text{-adic completion of\ }
 {\textstyle \sum\limits_{n \geq 0}} \hbar^{-n} {\big( J_\hbar^{\,\prime} \big)}^n \, = \,
 \hbar\text{-adic completion of\ }
 {\textstyle \sum\limits_{n \geq 0}} \widetilde{J}_\hbar^{\raisebox{3pt}{$ \;
 \scriptstyle n $}}
\end{equation}
%
%

Note also that clearly  $ J_\hbar^{\,\prime} $  is a Hopf ideal in
   $ U_\hbar^{\,\prime} $,  and moreover
   $ \; J_\hbar^{\,\prime} \, \subseteq \, \hbar \, J_\hbar \; $
   (by construction); therefore for
   $ \, z' \in J_\hbar^{\,\prime \! \raisebox{1pt}{$ \, \scriptstyle N $}} \, $
   --- with  $ \, N \in \NN \, $  ---
   acting like in the proof of  Lemma \ref{lemma: FACT-on-z-z'},
   one sees that
\begin{equation}  \label{eq: h-adic delta_n(z)}
 \qquad   \delta_n\big(z'\big)  \; \in \; \hbar^n J_\hbar^{\,\otimes n}
 \;{\textstyle \bigcap}\; \Big(\, {\textstyle \sum_{\sum_i N_i = N}
 \bigotimes_{i=1}^n} J_\hbar^{\,\prime \! \raisebox{1pt}{$ \, \scriptstyle N_i $}} \Big)
 \; \subseteq \;  \hbar^{\,\max(n,N)} J_\hbar^{\,\otimes n}
\end{equation}
 \vskip3pt
   Again, for any  $ \, z \in U_\hbar \, $  we retain notation as in  \eqref{eq: zhat_z+}
   above, that is
\begin{equation}  \label{eq: zhat_z+ - BIS}
  \hat{z} := \epsilon(z) \, ,  \;\;\; z^+ := z - \epsilon(z) = z - \hat{z} \, \in \, J_\hbar \; ,
   \quad  \text{hence}  \quad  z = z^+ + \hat{z}
\end{equation}
 and we recall also that for all  $ \, u, v \in U_\hbar \, $  we have
\begin{equation}  \label{eq: expansion chi(u,v) - BIS}
  \chi(u,v) \, = \, \chi\big( u^+ \! + \hat{u} \, , v^+ \! + \hat{v} \big) \, = \,
  \chi\big( u^+ , v^+ \big)
\end{equation}
 \vskip5pt
   Thanks to  \eqref{eq: uhg = vee-prim(uhg)},  in order to prove that
   $ \, \uhg =: U_\hbar \, $  is closed for the  $ \sigma $--deformed  product
   $ \, \raisebox{-5pt}{$ \dot{\scriptstyle \sigma} $} \, $
   it is enough to show that
   $ \; \widetilde{J}_\hbar^{\raisebox{1pt}{$ \; \scriptstyle A $}}
   \,\raisebox{-5pt}{$ \dot{\scriptstyle \sigma} $}\;
   \widetilde{J}_\hbar^{\raisebox{1pt}{$ \; \scriptstyle B $}} \;
 \subseteq {\textstyle \sum\limits_{n \geq 0}}
 \widetilde{J}_\hbar^{\raisebox{3pt}{$ \; \scriptstyle n $}}
 \; $  for any  $ \, A \, , B \in \NN_+ \, $.
                                                                                    \par
   To begin with, we pick
   $ \, a \in \widetilde{J}_\hbar^{\raisebox{1pt}{$ \; \scriptstyle A $}} = \hbar^{-A}
   J_\hbar^{\,\prime \! \raisebox{1pt}{$ \; \scriptstyle A $}} \, $  and
   $ \, b \in \widetilde{J}_\hbar^{\raisebox{1pt}{$ \; \scriptstyle B $}} =
   \hbar^{-B} J_\hbar^{\,\prime \! \raisebox{1pt}{$ \; \scriptstyle B $}} \, $;  \,by definition,
  $$
  a \,\raisebox{-5pt}{$ \dot{\scriptstyle \sigma} $}\, b  \; := \;
  \sigma\big(a_{(1)},b_{(1)}\big) \, a_{(2)} \, b_{(2)} \, \sigma^{-1}\big(a_{(3)},b_{(3)}\big)
  $$
 whence expanding the formal formula
 $ \; \sigma = \exp_*\!\big( \hbar^{-1} \chi \,\big) \, = \, \sum_{n \,\geq 0} \hbar^{-n} \chi^{*\,n}
 \big/ n! \; $   --- much like in the proof of  Lemma \ref{lemma: properties polar 2-cocycle}  ---
 we get
\begin{equation}  \label{eq: expans-sigmaprod}
  \begin{aligned}
    &  \hskip-3pt   a \,\raisebox{-5pt}{$ \dot{\scriptstyle \sigma} $}\, b  \,\; = \;\,
    {\textstyle \sum_{t,\ell \,\geq 0}} \, \hbar^{-(t+\ell)} \, {{(-1)}^\ell} \, {(t!\,\ell!)}^{-1} \,
    \chi^{*\,t}\big(a_{(1)},b_{(1)}\big) \, a_{(2)} \, b_{(2)} \,
    \chi^{*\,\ell}\big(a_{(3)},b_{(3)}\big)  \,\; =  \\
    &  \hskip2pt   = \;\,  \epsilon\big(a'_{(1)}\big) \, \epsilon\big(b'_{(1)}\big) \, a'_{(2)}
    \, b'_{(2)} \, \epsilon\big(a'_{(3)}\big) \, \epsilon\big(b'_{(3)}\big)  \; +  \\
%
%
    &  \hskip7pt   + \,  {\textstyle \sum_{t+\ell \,> 0}} \,
    \hbar^{-(t+\ell)} \, {(-1)}^\ell \, {(t!)}^{-1} {(\ell!)}^{-1} \,
    \chi^{*\,t}\big(a_{(1)},b_{(1)}\big) \, a_{(2)} \, b_{(2)} \,
    \chi^{*\,\ell}\big(a_{(3)},b_{(3)}\big)  \,\; =  \\
    &  \hskip-3pt  = \;\,  a \cdot b  \,\; +  \\
%
%
    &  \hskip15pt  + \,  {\textstyle \sum_{t+\ell \,> 0}} \, \hbar^{-(t+\ell)} \, {(-1)}^\ell
    \, {(t!)}^{-1} {(\ell!)}^{-1} \, \chi^{*\,t}\big(a_{(1)},b_{(1)}\big) \, a_{(2)} \, b_{(2)} \,
    \chi^{*\,\ell}\big(a_{(3)},b_{(3)}\big)
  \end{aligned}
\end{equation}
 where we took into account coassociativity and counitality properties.
 \vskip5pt
   Let us analyze each summand in the very last line in  \eqref{eq: expans-sigmaprod}.
   From the identities
 $ \; \chi^{*\,k}(u\,,v\,) \, = \, \prod_{s=1}^k \chi\big(u_{(s)},v_{(s)}\big) \, = \, \
 \prod_{s=1}^k \chi\big(u^+_{(s)},v^+_{(s)}\big) \; $
%
   --- cf.\ \eqref{eq: expansion chi(u,v) - BIS}  ---   we get
  $$
  \displaylines{
   \quad   \chi^{*\,t}\big(a_{(1)},b_{(1)}\big) \, a_{(2)} \, b_{(2)} \,
   \chi^{*\,\ell}\big(a_{(3)},b_{(3)}\big)  \; =   \hfill  \cr
   = \;  {\textstyle \prod\limits_{i=1}^t} \, \chi\big(a_{(i)},b_{(i)}\big) \,
   a_{(t+1)} \, b_{(t+1)} \, {\textstyle \prod\limits_{j=1}^\ell} \,
   \chi\big(a_{(t+1+j)},b_{(t+1+j)}\big)  \; =  \cr
   \hfill   = \;  {\textstyle \prod\limits_{i=1}^t} \,
   \chi\big(a^+_{(i)},b^+_{(i)}\big) \, a_{(t+1)} \, b_{(t+1)} \,
   {\textstyle \prod\limits_{j=1}^\ell} \, \chi\big(a^+_{(t+1+j)},b^+_{(t+1+j)}\big)
   \quad  }
   $$
 Consider the expansion of  $ a $  as in  \eqref{eq: zhat_z+ - BIS}.
 Then, letting
 $ \, j_{t+1} : U_\hbar^{\,\otimes\, (t+\ell)} \!\relbar\joinrel\longrightarrow
 U_\hbar^{\,\otimes\,(t+\ell+1)} \, $  be the map given by
 $ \;  \mathop{\otimes}\limits_{s=1}^{t+\ell} x_s \, \mapsto \,
 \Big( \mathop{\otimes}\limits_{s=1}^t x_s \Big) \otimes 1 \otimes \Big(
\mathop{\otimes}\limits_{s=t+1}^{t+\ell} x_s \Big) \; $,  \,we have
  $$
  \displaylines{
   \Big( \mathop{\otimes}\limits_{i=1}^t \! a^+_{(i)} \Big) \otimes
   a^+_{(t+1)} \otimes \Big( \mathop{\otimes}\limits_{j=1}^\ell \! a^+_{(t+1+j)} \Big)  \; = \;
   \delta_{t+\ell+1}(a)  \,\; \in \;\,  \hbar^{\,\max(t+\ell+1,A)\,-A} \,
   U_\hbar^{\,\otimes (t+\ell+1)}  \cr
   \Big( \mathop{\otimes}\limits_{i=1}^t \! a^+_{(i)} \Big) \otimes \widehat{a}_{(t+1)}
   \otimes \Big( \mathop{\otimes}\limits_{j=1}^\ell \! a^+_{(t+1+j)} \Big)  \; =
   \;  j_{t+1}\big(\delta_{t+\ell}(a)\big)  \,\; \in \;\,  \hbar^{\,\max(t+\ell,A)\,-A} \,
   U_\hbar^{\,\otimes (t+\ell+1)}  }
   $$
 so that, summing up,
  $$
  \Big( \mathop{\otimes}\limits_{i=1}^t \! a^+_{(i)} \!\Big) \otimes a_{\,t+1} \otimes
  \Big( \mathop{\otimes}\limits_{j=1}^\ell \! a^+_{(t+1+j)} \!\Big)  =
  \delta_{t+\ell+1}(a) + j_{t+1}\big(\delta_{t+\ell}(a)\!\big)  \in  \hbar^{\,\max(t+\ell,A)\,-A} \,
  U_\hbar^{\,\otimes (t+\ell+1)}
  $$
 --- like in the proof of  Lemma \ref{lemma: FACT-on-z-z'}  ---
 and similarly with  $ b \, $,  resp.\  $ B \, $,  replacing  $ a \, $,  resp.\  $ A \, $.
 Eventually, for all  $ \, t + \ell > 0 \, $  this gives
\begin{equation}  \label{eq: chi*t & chi*ell}
  \begin{aligned}
     &  \hskip-5pt   \chi^{*\,t}\big(a_{(1)},b_{(1)}\big) \, a_{(2)} \, b_{(2)} \,
     \chi^{*\,\ell}\big(a_{(3)},b_{(3)}\big)  \; =   \hfill  \\
     &  \hskip-1pt   = \,  \chi^{*\,t}\big(a_{(1)},b_{(1)}\big) \,
     \widehat{a}_{(2)} \, b_{(2)} \, \chi^{*\,\ell}\big(a_{(3)},b_{(3)}\big)  \, + \,
     \chi^{*\,t}\big(a_{(1)},b_{(1)}\big) \, a^+_{(2)} \, b_{(2)} \,
     \chi^{*\,\ell}\big(a_{(3)},b_{(3)}\big)
%
%
  \end{aligned}
\end{equation}
 where for the two summands in second line, writing  $ \, n := t + \ell \, $,
 we have
  $$  \displaylines{
   \chi^{*\,t}\big(a_{(1)},b_{(1)}\big) \, \widehat{a}_{(2)} \, b_{(2)} \,
   \chi^{*\,\ell}\big(a_{(3)},b_{(3)}\big)  \; =   \hfill  \cr
   \hfill   = \; {\textstyle \prod\limits_{i=1}^t} \, \chi\big(a^+_{(i)},b^+_{(i)}\big) \,
   \widehat{a}_{(t+1)} \, b_{(t+1)} \, {\textstyle \prod\limits_{k=t+2}^{n+1}}
   \chi\big(a^+_{(k)},b^+_{(k)}\big)  \; \in \;  \hbar^{\,\max(n,A)\, - A +
   \,\max(n,B)\, - B} \, U_\hbar^{\,\otimes (n+1)}  \cr
   \chi^{*\,t}\big(a_{(1)},b_{(1)}\big) \, a^+_{(2)} \, b_{(2)} \,
   \chi^{*\,\ell}\big(a_{(3)},b_{(3)}\big)  \; =   \hfill  \cr
   \hfill   = \; {\textstyle \prod\limits_{i=1}^t} \, \chi\big(a^+_{(i)},b^+_{(i)}\big) \,
   a^+_{(t+1)} \, b_{(t+1)} \, {\textstyle \prod\limits_{k=t+2}^{n+1}}
   \chi\big(a^+_{(k)},b^+_{(k)}\big)  \; \in \;  \hbar^{\,\max(n+1,A)\, - A + \,\max(n,B)\,
   - B} \, U_\hbar^{\,\otimes (n+1)}  }  $$
 \vskip3pt
   \textsl{Let us now assume that}  $ \, A := 1 \, $,  \,so that  $ \, n := t + \ell > 0 \, $
   implies  $ \, n := t + \ell \geq 1 = A \, $.  Then the last estimates read
\begin{equation}  \label{eq: h-adic estimates}
  \begin{aligned}
     &  \chi^{*\,t}\big(a_{(1)},b_{(1)}\big) \, \widehat{a}_{(2)} \, b_{(2)} \,
     \chi^{*\,\ell}\big(a_{(3)},b_{(3)}\big)  \; \in \;  \hbar^{\,n - 1 + \,\max(n,B)\, - B} \,
     U_\hbar^{\,\otimes\, (n+1)}  \\
     &  \hskip5pt   \chi^{*\,t}\big(a_{(1)},b_{(1)}\big) \, a^+_{(2)} \, b_{(2)} \,
     \chi^{*\,\ell}\big(a_{(3)},b_{(3)}\big)  \; \in \;  \hbar^{\,n + \,\max(n,B)\, - B} \,
     U_\hbar^{\,\otimes\, (n+1)}
  \end{aligned}
\end{equation}
 The term in the second line, when plugged in  \eqref{eq: chi*t & chi*ell}
 and then in  \eqref{eq: expans-sigmaprod},  yields a contribution of the form
  $$
  {{\,{(-1)}^\ell\,} \over {\,t!\,\ell!\,}} \; \hbar^{-n} \,
\chi^{*\,t}\big(a_{(1)},b_{(1)}\big) \, a^+_{(2)} \, b_{(2)} \,
\chi^{*\,\ell}\big(a_{(3)},b_{(3)}\big)  \; \in \;  \hbar^{\,\max(n,B)\, - B} \,
U_\hbar^{\,\otimes\, (n+1)}
$$
 that belongs to  $ \, \hbar^{\,\max(n,B)\, - B} \, U_\hbar^{\,\otimes\, (n+1)} \, $,
 \,thus for growing  $ n $  these elements sum up to a convergent series in
 $ U_\hbar \, $,  \,and we are done.
                                                         \par
   As to the term in the first line, we split it into
\begin{equation}  \label{eq: split a-hat & b}
  \begin{aligned}
     &  \hskip-5pt   \chi^{*\,t}\big(a_{(1)},b_{(1)}\big) \,
     \widehat{a}_{(2)} \, b_{(2)} \, \chi^{*\,\ell}\big(a_{(3)},b_{(3)}\big)  \; =  \\
   &  \hskip-3pt   = \; \chi^{*\,t}\big(a_{(1)},b_{(1)}\big) \, \widehat{a}_{(2)} \,
   \widehat{b}_{(2)} \, \chi^{*\,\ell}\big(a_{(3)},b_{(3)}\big)  \, + \,
   \chi^{*\,t}\big(a_{(1)},b_{(1)}\big) \, \widehat{a}_{(2)} \, b^+_{(2)} \,
   \chi^{*\,\ell}\big(a_{(3)},b_{(3)}\big)
  \end{aligned}
\end{equation}
 Then for the first summand we have (almost by definition, or acting as before)
  $$
  \chi^{*\,t}\big(a_{(1)},b_{(1)}\big) \, \widehat{a}_{(2)} \,
  \widehat{b}_{(2)} \, \chi^{*\,\ell}\big(a_{(3)},b_{(3)}\big)  \; = \;
  \chi^{*\,(t+\ell\,)}(a\,,b\,)
  $$
 so when we plug every such term in  \eqref{eq: chi*t & chi*ell}
 and then in  \eqref{eq: expans-sigmaprod},  overall they sum up
 to give the contribution
  $$
  \displaylines{
   \quad   {\textstyle \sum_{t+\ell \,> 0}} \, \hbar^{-(t+\ell\,)} \, {(-1)}^\ell \,
   {(t!)}^{-1} {(\ell!)}^{-1} \, \chi^{*\,t}\big(a_{(1)},b_{(1)}\big) \,
   \widehat{a}_{(2)} \, \widehat{b}_{(2)} \, \chi^{*\,\ell}\big(a_{(3)},b_{(3)}\big)  \; =
   \hfill  \cr
   = \;  {\textstyle \sum\limits_{n\,>\,0}} \; {\textstyle \sum_{t+\ell\,=\,n}} \,
   \hbar^{-(t+\ell\,)} \, {(-1)}^\ell \, {(t!)}^{-1} {(\ell!)}^{-1} \, \chi^{*\,(t+\ell)}(a\,,b\,)  \; =
   \cr
   \hfill   = \;  {\textstyle \sum\limits_{n\,>\,0}} \; {{\,1\,} \over {\,n!\,}} \, \hbar^{-n} \,
   \bigg(\, {\textstyle \sum\limits_{t+\ell\,=\,n}} {(-1)}^\ell \, {n \choose \ell} \!\bigg) \;
   \chi^{*\,n}(a\,,b\,)  \; = \;\,  0   \quad  }
   $$
 just because of the combinatorial identity
 $ \,\; {\textstyle \sum\limits_{t+\ell=n}} {(-1)}^\ell \, \displaystyle{n \choose \ell} \; =
 \; 0 \;\, $.
 \vskip5pt
   Finally, we have to dispose of the summands of type
\begin{equation}  \label{eq: a-hat_b+}
  \chi^{*\,t}\big(a_{(1)},b_{(1)}\big) \, \widehat{a}_{(2)} \, b^+_{(2)} \,
  \chi^{*\,\ell}\big(a_{(3)},b_{(3)}\big)
\end{equation}
 for which the analogue of the first identity in  \eqref{eq: h-adic estimates}
 holds true, namely
\begin{equation}  \label{eq: h-adic estimate a-hat & b+}
   \chi^{*\,t}\big(a_{(1)},b_{(1)}\big) \, \widehat{a}_{(2)} \, b^+_{(2)} \,
   \chi^{*\,\ell}\big(a_{(3)},b_{(3)}\big)  \; \in \;
   \hbar^{\,n - 1 + \,\max(n+1,B)\, - B} \, U_\hbar^{\,\otimes\, (n+1)}
\end{equation}
 where  $ \, n := t + \ell \, $,  taking into account that
 $ \, \delta_{n+1}(b) \, \in \, \hbar^{\,\max(n+1,B)\, - B} \,
 U_\hbar^{\,\otimes\, (n+1)} \; $.
                                                                   \par
   Then we have to distinguish two cases, depending on  $ \, n := t + \ell \, $.
 \vskip3pt
   First we assume  $ \; n := t + \ell \geq B \; $.  Then
 $ \; n - 1 + \,\max(n+1,B)\, - B \, \geq \, n \; $,
 \,hence the first identity in  \eqref{eq: h-adic estimates}  yields
 $ \,\; \chi^{*\,t}\big(a_{(1)},b_{(1)}\big) \, \widehat{a}_{(2)} \, b^+_{(2)} \,
 \chi^{*\,\ell}\big(a_{(3)},b_{(3)}\big)  \, \in \,  \hbar^{\,n} \,
 U_\hbar^{\,\otimes\, (n+1)} \; $,
 \;and then, when plugged in  \eqref{eq: split a-hat & b},
 and subsequently in  \eqref{eq: chi*t & chi*ell}  and in
 \eqref{eq: expans-sigmaprod},  this provides to the expansion of
 $ \; a \,\raisebox{-5pt}{$ \dot{\scriptstyle \sigma} $}\, b \; $
 a contribution of the form
  $$
  \hbar^{-n} \, {{\,{(-1)}^\ell\,} \over {\,t!\,\ell!\,}} \,
  \chi^{*\,t}\big(a_{(1)},b_{(1)}\big) \, \widehat{a}_{(2)} \, b^+_{(2)} \,
  \chi^{*\,\ell}\big(a_{(3)},b_{(3)}\big)  \; \in \;
  \hbar^{-n} \, \hbar^{\,n} \, U_\hbar^{\,\otimes\, (n+1)}  \; = \;
  U_\hbar^{\,\otimes\, (n+1)}
  $$
 --- which is fair! ---   hence we are done with it.
 \vskip3pt
   Then we are left with the case  $ \; n := t + \ell \leq B-1 \; $.
   Tracking backwards our construction,
   all these case provide to  \eqref{eq: expans-sigmaprod}
   a contribution of the form
\begin{equation}  \label{eq: contrib x n leq B}
  \begin{aligned}
     &  {\textstyle \sum\limits_{t+\ell\,=\,1}^{B-1}} \, \hbar^{-(t+\ell)} \,
     {{\,{(-1)}^\ell\,} \over {\,t!\,\ell!\,}} \, \chi^{*\,t}\big(a_{(1)},b_{(1)}\big) \,
     \widehat{a}_{(2)} \, b^+_{(2)} \, \chi^{*\,\ell}\big(a_{(3)},b_{(3)}\big)  \; =  \\
     &  \hskip21pt   = \;
     {\textstyle \sum\limits_{n\,=\,1}^{B-1}} \, {{\,1\,} \over {\,n!\,}} \,
     \hbar^{-n} {\textstyle \sum\limits_{t+\ell\,=\,n}} \, {(-1)}^\ell \, {n \choose \ell} \,
     \chi^{*\,t}\big(a_{(1)},b_{(1)}\big) \, \widehat{a}_{(2)} \, b^+_{(2)} \,
     \chi^{*\,\ell}\big(a_{(3)},b_{(3)}\big)
  \end{aligned}
\end{equation}
                                                                      \par
   With no loss of generality,  \textsl{we can assume that}
   $ \; a \not\equiv 0 \, $,  $ \, b \not\equiv 0 \; \big(\, \text{mod}\
   \hbar\,U_\hbar \,\big) \; $.  Then for their corresponding cosets
   $ \; \overline{a} \, , \overline{b} \in U_\hbar \Big/ \hbar\,U_\hbar \cong \ug \; $
   we have  $ \, \overline{a} \in {\ug}_1 \, $  and
   $ \, \overline{b} \in {\ug}_{\!B} \, $,  \,where
   $ \, {\big\{ {\ug}_n \big\}}_{n \in \NN} \, $
   is the standard, coradical filtration of  $ \ug \, $,  \,and also
   $ \, \delta_1(\,\overline{a}\,) \not= 0 \, $  as well as
   $ \, \delta_n\big(\,\overline{b}\,\big) \not= 0 \, $  for  $ \, 1 \leq n \leq B \, $
   --- cf.\ \cite{Ga1},  Lemma 3.3.  Moreover, we recall that
   $ \, U_\hbar := \uhg \, $  \textsl{is cocommutative modulo}
   $ \, \hbar\,U_\hbar \, $,  \,as it is a QUEA: in particular, this implies that
   $ \, \delta_n\big(\,\overline{b}\,\big) \, $  \textsl{is a  \textit{symmetric}  tensor}
   --- for  $ \, 1 \leq n \leq B \, $  ---   hence we can write  $ \delta_n(b\,) $
   in the form
\begin{equation}  \label{eq: delta_n(b) = beta + O(h)}
  \delta_n(b\,)  \; = \;  b^+_{(1)} \otimes \cdots \otimes b^+_{(n)}  \; = \;
  \beta_{\langle 1 \rangle} \otimes \cdots \otimes \! \beta_{\langle n \rangle} \,
  + \, \cO_n\big( \hbar^1 \big)
\end{equation}
 (for  $ \, 1 \leq n \leq B \, $)  where
 $ \, \beta_{\langle 1 \rangle} \otimes \cdots \otimes \beta_{\langle n \rangle} \, $
 --- using some  $ \sigma $--notation  of sort,  as usual
 ---   is some  \textit{symmetric}  tensor in  $ U_\hbar^{\,\otimes\, n} $  and
 \textsl{hereafter  $ \cO_n\big( \hbar^s \big) $  stands for some element in}
 $ \, \hbar^s \, U_\hbar^{\,\otimes\, n} \, $,  \,for every  $ \, s , n \in \NN \, $.
 Then plugging  \eqref{eq: delta_n(b) = beta + O(h)}  in  \eqref{eq: a-hat_b+}
 we find
  $$
  \displaylines{
   \chi^{*\,t}\big(a_{(1)},b_{(1)}\big) \, \widehat{a}_{(2)} \, b^+_{(2)} \,
   \chi^{*\,\ell}\big(a_{(3)},b_{(3)}\big)  \; = \; {\textstyle \prod\limits_{i=1}^t} \,
   \chi\big(a^+_{(i)},b^+_{(i)}\big) \, \widehat{a}_{(t+1)} \, b^+_{(t+1)} \,
   {\textstyle \prod\limits_{k=t+2}^{t+\ell+1}} \chi\big(a^+_{(k)},b^+_{(k)}\big)  \; =
   \hfill  \cr
   \hfill   = \; {\textstyle \prod\limits_{i=1}^t} \,
   \chi\big(a^+_{(i)},\beta_{\langle i \rangle}\big) \, \widehat{a}_{(t+1)} \,
   \beta_{\langle t+1 \rangle} \, {\textstyle \prod\limits_{k=t+2}^{t+\ell+1}}
   \chi\big(a^+_{(k)},\beta_{\langle k \rangle}\big)  \; + \;
   \cO_1\big( \hbar^{\,t+\ell\,} \big)  }
   $$
 for all  $ \, t+\ell \leq B-1 \, $,  \,with
  $$
  {\textstyle \prod\limits_{i=1}^t} \,
  \chi\big(a^+_{(i)},\beta_{\langle i \rangle}\big) \, \widehat{a}_{(t+1)} \,
  \beta_{\langle t+1 \rangle} \, {\textstyle \prod\limits_{k=t+2}^{t+\ell+1}}
  \chi\big(a^+_{(k)},\beta_{\langle k \rangle}\big)  \,\; \in \;\, \hbar^{\,t+\ell-1} \,
  U_\hbar
  $$
   \indent
   Therefore, the contribution to  \eqref{eq: expans-sigmaprod}  given in
   \eqref{eq: contrib x n leq B}  now reads
%
%
  $$
  \displaylines{
   {\textstyle \sum\limits_{n\,=\,1}^{B-1}} \, {{\,1\,} \over {\,n!\,}} \,
   \hbar^{-n} {\textstyle \sum\limits_{t+\ell\,=\,n}} \, {(-1)}^\ell \,
   {n \choose \ell} \, \chi^{*\,t}\big(a_{(1)},b_{(1)}\big) \,
   \widehat{a}_{(2)} \, b^+_{(2)} \, \chi^{*\,\ell}\big(a_{(3)},b_{(3)}\big)  \; =   \hfill  \cr
   = \;  {\textstyle \sum\limits_{n\,=\,1}^{B-1}} \, {{\,1\,} \over {\,n!\,}} \,
   \hbar^{-n} {\textstyle \sum\limits_{t+\ell\,=\,n}} \, {(-1)}^\ell \, {n \choose \ell} \,
   {\textstyle \prod\limits_{i=1}^t} \, \chi\big(a^+_{(i)},\beta_{\langle i \rangle}\big) \,
   \widehat{a}_{(t+1)} \, \beta_{\langle t+1 \rangle} \,
   {\textstyle \prod\limits_{k=t+2}^{n+1}}
   \chi\big(a^+_{(k)},\beta_{\langle k \rangle}\big)  \,\; +  \cr
   \hfill   + \;\,  \cO_1\big( \hbar^0 \big)  }
   $$
 where in the last formula we have
  $$
  {\textstyle \sum\limits_{n\,=\,1}^{B-1}} \, {{\,1\,} \over {\,n!\,}} \,
  \hbar^{-n} \! {\textstyle \sum\limits_{t+\ell\,=\,n}} \! {(-1)}^\ell \, {n \choose \ell} \,
  {\textstyle \prod\limits_{i=1}^t} \, \chi\big(a^+_{(i)},\beta_{\langle i \rangle}\big) \,
  \widehat{a}_{(t+1)} \, \beta_{\langle t+1 \rangle} \,
  {\textstyle \prod\limits_{k=t+2}^{n+1}}
  \chi\big(a^+_{(k)},\beta_{\langle k \rangle}\big) \, \in \, \hbar^{-1} \, U_\hbar  $$
   \indent   Now, observe that, setting  $ \, n := t + \ell \, $,  we can re-write
\begin{equation}  \label{eq: chi*t&ell varPhi}
   {\textstyle \prod\limits_{i=1}^t} \, \chi\big(a^+_{(i)},\beta_{\langle i \rangle}\big) \,
   \widehat{a}_{(t+1)} \, \beta_{\langle t+1 \rangle} {\textstyle \prod\limits_{k=t+2}^{n+1}}
   \! \chi\big(a^+_{(k)},\beta_{\langle k \rangle}\big)  \, = \,  \varPhi \big( \delta_n(a)
   \otimes \beta_{\langle 1 \rangle} \otimes \cdots \otimes \beta_{\langle n+1 \rangle} \big)
\end{equation}
 with  $ \; \varPhi : U_\hbar^{\,\otimes\, 2(n+1)} \!\relbar\joinrel\relbar\joinrel\longrightarrow
 U_\hbar \; $  being the map given by the composition
  $$
  \varPhi  \; := \;  \mu \circ \big(\, \chi^{\otimes\, t} \otimes \id_{U_\hbar}^{\,\otimes 2}
  \otimes \chi^{\otimes (n-t)} \big) \circ \varsigma_{\,n+1}
  $$
 where
 \vskip3pt
   \textit{(1)}  $ \; \varsigma_{\,n+1} :
   U_\hbar^{\,\otimes\, (2n+1)} \!\relbar\joinrel\relbar\joinrel\longrightarrow
   U_\hbar^{\,\otimes\, (2n+1)} \; $  is the ``shuffle'' map
  $$
  x_1 \otimes \cdots \otimes x_n \otimes y_1
  \otimes \cdots \otimes y_n \otimes y_{n+1}  \;
  \mapsto \;  x_1 \otimes y_1 \otimes x_2 \otimes y_2 \otimes \cdots \otimes
  x_n \otimes y_n \otimes y_{n+1}
  $$
 and, considering  $ \kh $  as embedded into  $ U_\hbar $  via the unit map,
 \vskip3pt
   \textit{(2)}  $ \; \mu : U_\hbar^{\otimes\, n} \relbar\joinrel\relbar\joinrel\longrightarrow
   U_\hbar \; $  is the obvious  ($ n $--fold  iterated) multiplication by scalars.
 \vskip5pt
   Now recall that  $ \, \beta_{\langle 1 \rangle} \otimes \cdots \otimes
   \beta_{\langle n+1 \rangle} \, $  represents a tensor in  $ \sigma $--notation,
   so more explicitly we might write
  $ \, \beta_{\langle 1 \rangle} \otimes \cdots \otimes \beta_{\langle n+1 \rangle} \, = \,
  \sum_{s=1}^N \beta_{s,1} \otimes \beta_{s,\,n+1} \, $;  \,so in the formula we are dealing with what is written as a product \;
  $ {\textstyle \prod\limits_{i=1}^t} \chi\big(a^+_{(i)},\beta_{\langle i \rangle}\big) \,
  \widehat{a}_{(t+1)} \, \beta_{\langle t+1 \rangle} {\textstyle \prod\limits_{k=t+2}^{n+1}}
  \chi\big(a^+_{(k)},\beta_{\langle k \rangle}\big) $
 \; is actually a sum of several products as
  $ \; {\textstyle \prod\limits_{i=1}^t} \, \chi\big(a^+_{(i)},\beta_{s,\,i}\big) \,
  \widehat{a}_{(t+1)} \, \beta_{s,\,t+1} \, {\textstyle \prod\limits_{k=t+2}^{n+1}}
  \chi\big(a^+_{(k)},\beta_{s,\,k}\big) \; $.
 But then recall that this tensor
  $ \, \beta_{\langle 1 \rangle} \otimes \cdots \otimes \beta_{\langle n+1 \rangle} \, =
  \, \sum_{s=1}^N \beta_{s,1} \otimes \beta_{s,\,n+1} \, $
is  \textsl{symmetric},  therefore, the various products
  $ \; {\textstyle \prod\limits_{i=1}^t} \, \chi\big(a^+_{(i)},\beta_{s,\,i}\big) \,
  \widehat{a}_{(t+1)} \, \beta_{s,\,t+1} \, {\textstyle \prod\limits_{k=t+2}^{n+1}}
  \chi\big(a^+_{(k)},\beta_{s,\,k}\big) \; $
 actually all  \textsl{coincide\/}:
 letting  $ C_n $  be their ``common value'', we deduce that
  $$
  \displaylines{
   {\textstyle \sum\limits_{n\,=\,1}^{B-1}} \, {{\,1\,} \over {\,n!\,}} \, \hbar^{-n} \!
   {\textstyle \sum\limits_{t+\ell\,=\,n}} \! {(-1)}^\ell \, {n \choose \ell} \,
   {\textstyle \prod\limits_{i=1}^t} \, \chi\big(a^+_{(i)},\beta_{\langle i \rangle}\big) \,
   \widehat{a}_{(t+1)} \, \beta_{\langle t+1 \rangle} \,
   {\textstyle \prod\limits_{k=t+2}^{n+1}}
   \chi\big(a^+_{(k)},\beta_{\langle k \rangle}\big)  \; =   \hfill  \cr
   \hfill   = \;\,  {\textstyle \sum\limits_{n\,=\,1}^{B-1}} \, {{\,1\,} \over {\,n!\,}} \,
   \hbar^{-n} \bigg(\, {\textstyle \sum\limits_{t+\ell\,=\,n}} \! {(-1)}^\ell \,
   {n \choose \ell} \!\bigg) \, C_n  \,\; = \;\,  0  }
   $$
 again because of the identity
 $ \,\; {\textstyle \sum\limits_{t+\ell=n}} {(-1)}^\ell \, \displaystyle{n \choose \ell} \; =
 \; 0 \;\, $.
                                                                        \par
   Thus, also the last contributions to  \eqref{eq: expans-sigmaprod}  given in
   \eqref{eq: contrib x n leq B}  actually belong to $ U_\hbar \, $.
 \vskip7pt
   To sum up,  \textsl{we have proved that
\begin{equation}  \label{eq: deform-prod a&b in Uh (FIRST)}
  \begin{aligned}
     &  a \,\raisebox{-5pt}{$ \dot{\scriptstyle \sigma} $}\, b \; = \;  a \cdot b \, + \, z
     \qquad \qquad \textsl{with \ } \; z \in J_\hbar  \\
     &  \forall \;\;\;  a \in \widetilde{J}_\hbar^{\raisebox{1pt}{$ \; \scriptstyle 1 $}} =
     \hbar^{-1} J_\hbar^{\,\prime} \, , \;\; b \in \widetilde{J}_\hbar^{\raisebox{1pt}{$ \;
     \scriptstyle B $}} = \hbar^{-B} J_\hbar^{\,\prime \! \raisebox{1pt}{$ \;
     \scriptstyle B $}}
  \end{aligned}
\end{equation}
 and similarly   --- just switching roles of  $ a $  and  $ b $  ---   also}
\begin{equation}  \label{eq: deform-prod b&a in Uh (FIRST)}
  \begin{aligned}
     &  b \,\raisebox{-5pt}{$ \dot{\scriptstyle \sigma} $}\, a \; = \;  b \cdot a \, + \, x
     \qquad \qquad \textsl{with \ } \; x \in J_\hbar  \\
     &  \forall \;\;\;  a \in \widetilde{J}_\hbar^{\raisebox{1pt}{$ \; \scriptstyle 1 $}} =
     \hbar^{-1} J_\hbar^{\,\prime} \, , \;\; b \in \widetilde{J}_\hbar^{\raisebox{1pt}{$ \;
     \scriptstyle B $}} = \hbar^{-B} J_\hbar^{\,\prime \! \raisebox{1pt}{$ \;
     \scriptstyle B $}}
  \end{aligned}
\end{equation}
 \vskip3pt
   Let  $ \, {\big\langle \widetilde{J}_\hbar \big\rangle}_\kh^\sigma \, $
   the unital  $ \kh $--subalgebra  of
   $ \, {\big( \Uhg \big)}_{\!\sigma} := {\big( \khp \otimes_\kh U_\hbar \big)}_{\!\sigma} \, $
   generated by  $ \widetilde{J}_\hbar \, $.
 Now recall that  $ \, \widetilde{J}_\hbar := \hbar^{-1} J_\hbar^{\,\prime} \, $  with
 $ \, J_\hbar^{\,\prime} := \Ker\big(U_\hbar^{\,\prime}\big) \, $;  \,then
 $ \, U_\hbar^{\,\prime} = J_\hbar^{\,\prime} \oplus \kh \cdot 1 \, $,  \,which implies
 $ \; \Delta\big(J_\hbar^{\,\prime}\big) \, \subseteq \, J_\hbar^{\,\prime} \otimes 1 +
 J_\hbar^{\,\prime} \otimes J_\hbar^{\,\prime} + 1 \otimes J_\hbar^{\,\prime} \; $.
 Then we get also
  $ \; \Delta\big(\widetilde{J}_\hbar\big) \, \subseteq \, \widetilde{J}_\hbar \otimes 1
  + \widetilde{J}_\hbar \otimes \widetilde{J}_\hbar + 1 \otimes \widetilde{J}_\hbar \; $.
  Since the coalgebra structure is the same in  $ \Uhg $  and
  $ {\big( \Uhg \big)}_{\!\sigma} \, $,  it follows eventually from this that
  $ \, {\big\langle \widetilde{J}_\hbar \big\rangle}_\kh^\sigma \, $
  \textsl{is a Hopf  $ \kh $--subalgebra  inside\/}  $ {\big( \Uhg \big)}_{\!\sigma} \, $.
                                                         \par
   By repeated use of  \eqref{eq: deform-prod a&b in Uh (FIRST)}  or
   \eqref{eq: deform-prod b&a in Uh (FIRST)}  alike, we find that
   $ \; {\big\langle \widetilde{J}_\hbar \big\rangle}_\kh^\sigma \subseteq U_\hbar =
   {\big\langle \widetilde{J}_\hbar \big\rangle}_\kh \; $.
                                                         \par
   Now observe that the original product  ``$ \, \cdot \, $''  in  $ \, U_\hbar := \uhg \, $
   and  $ \Uhg $  can be obtained from
   ``$ \,\raisebox{-5pt}{$ \dot{\scriptstyle \sigma} $}\, $''
   through deformation via the inverse  $ 2 $--cocycle  $ \sigma^{-1} \, $.
   Thanks to this, we can reverse the roles of
   $ \, U_\hbar = {\big\langle \widetilde{J}_\hbar \big\rangle}_\kh \, $  and
   $ \, {\big\langle \widetilde{J}_\hbar \big\rangle}_\kh^\sigma \, $
   in the previous construction (with some care), thus eventually achieving the
   converse inclusion
   $ \; {\big\langle \widetilde{J}_\hbar \big\rangle}_\kh \subseteq {\big\langle
   \widetilde{J}_\hbar \big\rangle}_\kh^\sigma \; $.
 Therefore  $ \; {\big\langle \widetilde{J}_\hbar \big\rangle}_\kh \subseteq
 {\big\langle \widetilde{J}_\hbar \big\rangle}_\kh^\sigma \; $,  \,which in particular implies
 that  $ \; U_\hbar = {\big\langle \widetilde{J}_\hbar \big\rangle}_\kh \; $  is closed for the
 $ \sigma $--product,  \,q.e.d.
\epf

\vskip5pt

\begin{definition}  \label{def: polar 2-coc deform-QUEA}
 With assumptions as in  Theorem \ref{thm: polar 2cocycle-deform-QUEA},
 the new QUEA obtained from  $ \uhg $  through the process of 2--cocycle
 deformation by  $ \sigma $  of  $ \Uhg $  followed by restriction will be called
 \textsl{the polar 2--cocycle deformation of\/  $ \uhg $  by  $ \sigma $},
 and it will be denoted by  $ {\uhg}_\sigma \, $.
\end{definition}

\vskip7pt

   To complete our analysis, next result sheds light onto the new, polar 2--cocycle deformed QUEA  $ {\uhg}_\sigma \, $,  describing in detail its semiclassical limit:

\vskip9pt

\begin{theorem}  \label{thm: propt.'s qs-2cocycle-deform-QUEA}
 Let  $ \uhg $  be a QUEA over the Lie bialgebra
 $ \, \lieg = \big(\, \lieg \, ; \, [\,\ ,\ ] \, , \, \delta \,\big) \, $.
 Let\/  $ \sigma $  be a polar 2--cocycle for\/  $ \uhg \, $,  so
 $ \, \sigma = \exp_*\!\big( \hbar^{-1} \chi \big) \, $  for some
 $ \, \chi \in {\Big( {\uhg}^{\widehat{\otimes}\, 2} \Big)}^{\!*} \, $  with
 $ \, \chi(z\,,1) = 0 = \chi(1\,,z) \, $  for  $ \, z \in \uhg \, $.  Set also
 $ \; \chi_a := \chi - \chi_{2,1} \; $.
 Then:
 \vskip5pt
   (a)\;  $ \chi $  is antisymmetric, i.e.\  $ \, \chi_{2,1} = -\chi \, $,
   iff\/  $ \sigma $
   is orthogonal, i.e.\  $ \, \sigma_{2,1} = \sigma^{-1} \, $;
 \vskip5pt
%
%
   (b)\;  the  $ \Bbbk $--linear  map  $ \, \gamma \, := \,
   \chi_a \; \Big(\, \text{\rm mod} \; \hbar \,
   {\big( \uhg^{\widehat{\otimes}\, 2} \big)}^{\!*} \,\Big) \Big|_{\lieg \otimes \lieg} \; $
   from  $ \, \lieg \otimes \lieg \, $  to\/  $ \Bbbk $  is
   \textsl{antisymmetric 2--cocycle}  for the Lie bialgebra  $ \lieg \, $;
 \vskip5pt
   (c)\;  the polar 2--cocycle deformation  $ \, {\big( \uhg \big)}_\sigma $  of
   $ \, \uhg $  is a QUEA for the Lie bialgebra
   $ \, \lieg_\gamma = \big(\, \lieg \, ; \, {[\,\ ,\ ]}_\gamma \, , \, \delta \,\big) \, $
   which is the deformation of  $ \, \lieg $  by the 2--cocycle  $ \gamma \, $;
   in a nutshell, we have
   $ \; {\big( \uhg \big)}_\sigma \cong \, U_\hbar\big(\lieg_\gamma\big) \; $.
 \vskip3pt
   In particular, if\/  $ \sigma $  is\/  $ \kh $--valued
   --- i.e., it is an ordinary 2--cocycle for the Hopf\/  $ \kh $--algebra
   $ \uhg \, $  ---   or equivalently
   $ \, \chi \in \hbar \, {\big( \uhg^{\widehat{\otimes}\,2} \big)}^{\!*} \, $,
   \,then we have just  $ \, \gamma = 0 \, $  and
   $ \; {\big( \uhg \big)}_\sigma \cong \, U_\hbar\big(\lieg_\gamma\big) = \uhg \; $.
\end{theorem}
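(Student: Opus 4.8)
The plan is to follow the three–step pattern of Theorem~\ref{thm: twist-deform-QUEA} and Theorem~\ref{thm: 2cocycle-deform-QFSHA}, recycling the $\hbar$–adic bookkeeping already set up in the proof of Theorem~\ref{thm: polar 2cocycle-deform-QUEA}. Part~\textit{(a)} is immediate: it is exactly Lemma~\ref{lemma: properties polar 2-cocycle}\textit{(c)}, issuing from the standard identities $\sigma_{2,1} = \exp_*\!\big( \hbar^{-1}\chi_{2,1} \big)$ and $\sigma^{-1} = \exp_*\!\big( -\hbar^{-1}\chi \big)$ for the convolution exponential. For the well–definedness in part~\textit{(b)} there is likewise nothing to do: $\chi$ is $\kh$–valued on $\uhg$, so $\chi_a := \chi - \chi_{2,1}$ reduces modulo $\hbar$ to a $\Bbbk$–valued form on $U(\lieg)$, and $\gamma$ is its restriction to $\lieg\otimes\lieg \subseteq U(\lieg)^{\otimes 2}$; antisymmetry of $\gamma$ is inherited from $\chi_a$, which already settles the second half of \eqref{eq: cocyc-cond_Lie-bialg}.

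For the first half of \eqref{eq: cocyc-cond_Lie-bialg} --- in fact for the stronger identity $\partial_*(\gamma) + {[[\gamma,\gamma]]}_* = 0$ --- I would evaluate the Hopf $2$–cocycle identity \eqref{eq: 2-cocyc-cond.'s} for $\sigma$ on lifts $u, v, w \in \uhg$ of a triple of basis vectors $\overline{u},\overline{v},\overline{w}$ of $\lieg$, chosen as in Proposition~\ref{prop: exist-lifts-x_i} so that $u, v, w \in \widetilde{J}_\hbar = \hbar^{-1} J_\hbar^{\,\prime}$ and $\Delta(u) = u\otimes 1 + 1\otimes u + \hbar\,\tau_u + \cO\big(\hbar^2\big)$ with $\overline{\tau_u} - {(\overline{\tau_u})}_{2,1} = \delta(\overline{u})$ (and similarly for $v,w$). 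Expanding $\sigma = \exp_*\!\big( \hbar^{-1}\chi \big) = \epsilon^{\otimes 2} + \hbar^{-1}\chi + \hbar^{-2}\chi^{*2}\big/2 + \cdots$ and collecting terms, the apparent poles cancel exactly as in the proof of Theorem~\ref{thm: polar 2cocycle-deform-QUEA} --- by the combinatorial identity $\sum_{t+\ell=n} {(-1)}^\ell \binom{n}{\ell} = 0$, cocommutativity modulo $\hbar$ (which makes the iterated ``cobracket'' tensors $\delta_n(\cdot)$ symmetric), and the normalisation $\chi(z,1) = 0 = \chi(1,z)$ --- leaving a finite, $\kh$–valued expression whose reduction modulo $\hbar$, once the antisymmetrizer $\textsl{Alt}_{\,3}$ on $U(\lieg)^{\otimes 3}$ is applied, is precisely the Lie $2$–cocycle identity for $\gamma$ on $\lieg$. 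This is a verbatim parallel of the proof of Theorem~\ref{thm: 2cocycle-deform-QFSHA}\textit{(b)}, the only novelty being the pole cancellation.

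Part~\textit{(c)}: since deformation by $2$–cocycle leaves the coalgebra structure untouched, $(\uhg)_\sigma$ has the same coproduct as $\uhg$, hence the same semiclassical Lie cobracket $\delta$, and by Theorem~\ref{thm: polar 2cocycle-deform-QUEA} it is again a QUEA, over a Lie bialgebra $\widetilde{\lieg}$ that coincides with $\lieg$ as a Lie coalgebra; only the Lie bracket may change. To identify it I would take the lifts $u,v$ as above and expand $u \,\raisebox{-5pt}{$ \dot{\scriptstyle \sigma} $}\, v = \sigma\big(u_{(1)},v_{(1)}\big)\,u_{(2)}\,v_{(2)}\,\sigma^{-1}\big(u_{(3)},v_{(3)}\big)$ using the series for $\sigma$ and $\sigma^{-1}$: the pole–order terms again cancel by the same binomial identity, all ``bulk'' $\chi^{*k}$–contributions cancel among themselves, and what survives modulo $\hbar$ --- using the above form of $\Delta(u)$ and $\chi(z,1) = 0 = \chi(1,z)$ --- is the product $u\,v$ plus $\hbar$–terms whose $u \leftrightarrow v$ antisymmetrization, read modulo $\hbar$, equals the bracket $ {[\,\overline{u}\, , \overline{v}\,]}_\gamma = [\,\overline{u}\,,\overline{v}\,] + \gamma\big(\overline{u}_{[1]},\overline{v}\big)\,\overline{u}_{[2]} - \gamma\big(\overline{v}_{[1]},\overline{u}\big)\,\overline{v}_{[2]}$ of \eqref{eq: def_cocyc-bracket}. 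Hence $\widetilde{\lieg} = \lieg_\gamma$ and $(\uhg)_\sigma \cong U_\hbar\big(\lieg_\gamma\big)$, as desired. (Running this computation ahead of part~\textit{(b)} also yields an alternative proof of \textit{(b)}, since a QUEA always has a Lie bialgebra as its semiclassical limit.) The closing ``in particular'' is then immediate: $\sigma$ being $\kh$–valued is equivalent to $\chi \in \hbar \, {\big( {\uhg}^{\widehat{\otimes}\, 2} \big)}^{\!*}$, hence to $\overline{\chi_a} = 0$, hence to $\gamma = 0$, so that $\lieg_\gamma = \lieg$ and $(\uhg)_\sigma \cong \uhg$.

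The hard part is the $\hbar$–adic control in \textit{(b)} and \textit{(c)}: because $\sigma$ is a genuinely ``polar'' object, both the $2$–cocycle identity and the deformed product carry negative powers of $\hbar$, and one must check that these poles cancel and that the surviving finite part reduces modulo $\hbar$ to exactly the right thing. All the tools needed for this --- Lemma~\ref{lemma: FACT-on-z-z'}, the identification $\uhg = {\big( \uhg' \big)}^{\!\vee}$, the symmetry of $\delta_n$ modulo $\hbar$, and the identity $\sum_{t+\ell=n} {(-1)}^\ell \binom{n}{\ell} = 0$ --- are already in hand from the proof of Theorem~\ref{thm: polar 2cocycle-deform-QUEA}, so the argument is a (long but routine) amalgam of that proof with the semiclassical analysis of Theorem~\ref{thm: 2cocycle-deform-QFSHA}. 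The conceptual reason that all this works --- namely that, via Drinfeld's functors, $\sigma$ is an honest twist for ${\big( {\uhg}^* \big)}^{\!\vee}$ and $\gamma$ is, in the sense of Proposition~\ref{prop: duality-deforms x LbA's}, the $2$–cocycle on $\lieg$ dual to the semiclassical limit of that twist --- is taken up in \S\ref{sec: deform.'s-QDP}.
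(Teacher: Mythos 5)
Your proposal is correct and follows essentially the same route as the paper: part \textit{(a)} via Lemma \ref{lemma: properties polar 2-cocycle}\textit{(c)}, part \textit{(b)} by evaluating the Hopf $2$--cocycle identity on lifts $a,b,c$ with $\hbar\,a,\hbar\,b,\hbar\,c \in J_\hbar^{\,\prime}$, expanding $\exp_*\!\big(\hbar^{-1}\chi\big)$, controlling the poles with the tools from the proof of Theorem \ref{thm: polar 2cocycle-deform-QUEA}, and applying $\textsl{Alt}_{\,3}$ to land on the Lie $2$--cocycle identity for $\gamma$; part \textit{(c)} by the same expansion of the deformed commutator $a \,\raisebox{-5pt}{$\dot{\scriptstyle \sigma}$}\, b - b \,\raisebox{-5pt}{$\dot{\scriptstyle \sigma}$}\, a$ read modulo $\hbar$. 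The only cosmetic difference is that the paper invokes Milnor--Moore explicitly to identify the semiclassical limit as $U\big(\widehat{\lieg}\big)$ before computing the bracket, whereas you defer to Theorem \ref{thm: polar 2cocycle-deform-QUEA} for the QUEA property --- the substance is the same.
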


\pf
 \textit{(a)}\,  This follows from claim  \textit{(c)\/}  in
 Lemma \ref{lemma: properties polar 2-cocycle}.
 \vskip9pt
   \textit{(b)}\,  We are interested in the restriction to  $ \, \lieg \otimes \lieg \, $
   of the specialization of  $ \sigma $  modulo  $ \hbar \, $.  So we start with
   $ \, \text{a} \, , \text{b} \, , \text{c} \in \lieg \, $,  \,that we realize as
   $ \; \text{a} = a \; \big(\,\text{mod} \; \hbar \, U_\hbar \,\big) \; $,
   $ \; \text{b} = b \; \big(\,\text{mod} \; \hbar \, U_\hbar \,\big) \; $  and
   $ \; \text{c} = c \; \big(\,\text{mod} \; \hbar \, U_\hbar \,\big) \; $
   for some ``lifts''  $ \, a , b , c \in U_\hbar \, $.  By the identity
   $ \, U_\hbar = {\big( U_\hbar^{\,\prime} \,\big)}^{\!\vee} \, $
   and by  Lemma 3.3 in  \cite{Ga1},  \textsl{we can choose the lifts
   $ a $,  $ b $  and  $ c $  belong to
   $ \, \widetilde{J}_\hbar := \hbar^{-1} J_\hbar^{\,\prime} \, $,  \,so that
   $ \, a^{\,\prime} := \hbar\,a \, $,  $ \, b^{\,\prime} := \hbar\,b \, $  and
   $ \, c^{\,\prime} := \hbar\,c \, $  belong to  $ J_\hbar^{\,\prime} \, $}.
                                                                           \par
   As  $ \sigma $  is a normalized Hopf 2--cocycle for  $ U_\hbar \, $,
   it must obey the equality
\begin{equation}  \label{eq:Hopf-2-cocycle-proof}
   \sigma\big(b^{\,\prime}_{(1)} , c^{\,\prime}_{(1)}\big) \,
   \sigma\big(a^{\,\prime} , b^{\,\prime}_{(2)} c^{\,\prime}_{(2)}\big)  \; = \;
   \sigma\big(a^{\,\prime}_{(1)} , b^{\,\prime}_{(1)}\big) \,
   \sigma\big(a^{\,\prime}_{(2)} b^{\,\prime}_{(2)} , c^{\,\prime}\,\big)
\end{equation}
   \indent   Let us focus on the left hand side of
   \eqref{eq:Hopf-2-cocycle-proof}.  Expanding the exponential we get
\begin{align*}
   &  \sigma\big(b_{(1)}^{\,\prime},c_{(1)}^{\,\prime}\big) \,
   \sigma\big(a^{\,\prime},b_{(2)}^{\,\prime}c_{(2)}^{\,\prime}\big)  \; = \,
   \sum_{n,m\geq 0} \frac{\hbar^{-(n+m)}}{n!\,m!} \,
   \chi^{\ast\,n}\big(b_{(1)}^{\,\prime},c_{(1)}^{\,\prime}\big) \,
   \chi^{\ast\,m}\big(a^{\,\prime },b_{(2)}^{\,\prime }c_{(2)}^{\,\prime}\big)  \; =  \\
   &  \hskip7pt  = \;  \epsilon\big(a^{\,\prime}\big) \, \epsilon\big(b^{\,\prime}\big) \,
   \epsilon\big(c^{\,\prime}\big) \, + \, \hbar^{-1} \, \chi\big(b^{\,\prime},c^{\,\prime}\big)
   \, \epsilon\big(a^{\,\prime}\big) \, + \, \hbar^{-1} \,
   \chi\big(a^{\,\prime},b^{\,\prime}c^{\,\prime}\big) \, +  \\
  &  \hskip10pt  + \, \hbar^{-2} \, \chi\big(b_{(1)}^{\,\prime},c_{(1)}^{\,\prime}\big) \,
  \chi\big(a^{\,\prime},b_{(2)}^{\,\prime}c_{(2)}^{\,\prime}\big) \, + \,
  \hbar^{-2} \, 2^{-1} \, \chi^{\ast\,2}\big(b^{\,\prime},c^{\,\prime}\big) \,
  \epsilon\big(a^{\,\prime}\big) \, +  \\
   &  \hskip15pt  +  \, \hbar^{-2} \, 2^{-1} \,
   \chi^{\ast\,2}\big(a^{\,\prime},b^{\,\prime}c^{\,\prime }\big) \, +
   \sum_{n+m\geq 3} \frac{\hbar^{-(n+m)}}{n!\,m!} \,
   \chi^{\ast\,n}\big(b_{(1)}^{\,\prime},c_{(1)}^{\,\prime}\big) \,
   \chi^{\ast\,m}\big(a^{\,\prime},b_{(2)}^{\,\prime}c_{(2)}^{\,\prime}\big)
\end{align*}
 Then, noting that  $ \, \epsilon\big(a^{\,\prime}\big) = \epsilon\big(b^{\,\prime}\big) =
 \epsilon\big(c^{\,\prime}\big) = 0 \, $
 by construction, and analyzing all other summands as in the proof of claim
 Theorem \ref{thm: polar 2cocycle-deform-QUEA},  we obtain
\begin{equation}  \label{eq: sigma-primes}
 \begin{aligned}
    &  \sigma\big(b_{(1)}^{\,\prime},c_{(1)}^{\,\prime}\big) \,
    \sigma\big(a^{\,\prime},b_{(2)}^{\,\prime}c_{(2)}^{\,\prime}\big)  \; = \;
    \hbar^{-1} \chi\big(a^{\,\prime},b^{\,\prime}c^{\,\prime}\big) \, +  \\
    &  \hskip50pt   + \, \hbar^{-2} \chi\big(b_{(1)}^{\,\prime},c_{(1)}^{\,\prime}\big) \,
    \chi\big(a^{\,\prime},b_{(2)}^{\,\prime}c_{(2)}^{\,\prime}\big) \, +
    \,\hbar^{-2} \, 2^{-1} \, \chi^{\ast\,2}\big(a^{\,\prime},b^{\,\prime}c^{\,\prime}\big) \, +
    \\
    &  \hskip100pt   + \, \big(\, \textsl{sum of all terms with\ } n+m \geq 3 \,\big)
 \end{aligned}
\end{equation}
 Writing  $ \, z' = z^{\prime +} + \epsilon\big(z'\big) \, $  and using that
 $ \, \chi(z,1) = 0 = \chi(1,z) \, $  and
  $$
  \chi\big(x_{(1)}^{\,\prime},y_{(1)}^{\,\prime}z_{(1)}^{\,\prime}\big) \,
  \chi\big(x_{(2)}^{\,\prime},y_{(2)}^{\,\prime} \,
  \epsilon\big(z_{(2)}^{\,\prime}\big)\big) \, =
  \, \chi\big(x_{(1)}^{\,\prime},y_{(1)}^{\,\prime}z^{\,\prime }\big) \,
  \chi\big(x_{(2)}^{\,\prime},y_{(2)}^{\,\prime}\big)
  $$
 we have that
  $$
  \displaylines{
   \hskip5pt   \chi^{\ast\,2}\big(a^{\,\prime},b^{\,\prime}c^{\,\prime}\big)  \; = \;
   \chi\big(a_{(1)}^{\,\prime},b_{(1)}^{\,\prime}c_{(1)}^{\,\prime }\big) \,
   \chi(a_{(2)}^{\,\prime},b_{(2)}^{\,\prime}c_{(2)}^{\,\prime }\big)  \; =   \hfill  \cr
   \hskip17pt   = \;  \chi\big(a_{(1)}^{\,\prime +},b_{(1)}^{\,\prime +}c_{(1)}^{\,\prime +}\big) \,
   \chi\big(a_{(2)}^{\,\prime +},b_{(2)}^{\,\prime +}c_{(2)}^{\,\prime +}\big) \, + \,
   \chi\big(a_{(1)}^{\,\prime +},b_{(1)}^{\,\prime +}\big) \,
   \chi\big(a_{(2)}^{\,\prime +},b_{(2)}^{\,\prime +}c^{\,\prime}\big) \, +
   \hfill  \cr
   \hskip29pt   + \, \chi\big(a_{(1)}^{\,\prime +},c_{(1)}^{\,\prime +}\big) \,
   \chi\big(a_{(2)}^{\,\prime +},b^{\,\prime}c_{(2)}^{\,\prime +}\big) \, + \,
   \chi\big(a_{(1)}^{\,\prime +},b^{\,\prime}c_{(1)}^{\,\prime +}\big) \,
   \chi\big(a_{(2)}^{\,\prime +},c_{(2)}^{\,\prime +}\big) \, +
   \hfill  \cr
   \hfill   + \, \chi\big(a_{(1)}^{\,\prime +},b^{\,\prime}\big) \,
   \chi\big(a_{(2)}^{\,\prime +},c^{\,\prime}\big) \, + \,
   \chi\big(a_{(1)}^{\,\prime +},b_{(1)}^{\,\prime +}c^{\,\prime }\big) \,
   \chi\big(a_{(2)}^{\,\prime +},b_{(2)}^{\,\prime +}\big) \, + \,
   \chi\big(a_{(1)}^{\,\prime +},c^{\,\prime }\big) \,
   \chi\big(a_{(2)}^{\,\prime +},b^{\,\prime }\big)  }
   $$
 Now, taking into account that  $ \, z' = \hbar\,z \, $  and
 $ \, z_{(i)}^{\prime\ +} \in \hbar \, U_\hbar \, $,  \,
 we may re-write the expression above as
  $$
  \chi^{\ast\,2}\big(a^{\,\prime},b^{\,\prime}c^{\,\prime}\big)  \; = \;\,
  \hbar^4 \, \chi\big(a_{(1)},b\big) \, \chi\big(a_{(2)},c\big) \, + \,
  \hbar^4 \, \chi\big(a_{(1)},c\big) \, \chi\big(a_{(2)},b\big) \, + \, \cO\big(\hbar^5\big)
  $$
 Performing a similar analysis on the term
 $ \, \chi\big(b_{(1)}^{\,\prime},c_{(1)}^{\,\prime}\big) \,
 \chi\big(a^{\,\prime},b_{(2)}^{\,\prime}c_{(2)}^{\,\prime}\big) \, $
 we get
  $$
  \chi\big(b_{(1)}^{\,\prime},c_{(1)}^{\,\prime}\big) \,
  \chi\big(a^{\,\prime},b_{(2)}^{\,\prime}c_{(2)}^{\,\prime}\big)  \; = \;\,
  \hbar^4 \, \chi\big(b,c_{(1)}\big) \, \chi\big(a,c_{(2)}\big) \, +
  \, \hbar^4 \, \chi\big(b_{(1)},c\big) \, \chi\big(a,b_{(2)}\big) \, + \,
  \cO\big(\hbar^5\big)
  $$
 Moreover, with a similar (yet easier) analysis one finds also that
  $$
  \chi^{*n}\big(b'_{(1)},c'_{(1)}\big) \, \chi^{*m}\big(a',b'_{(2)}c'_{(2)}\big) \,
  \in \, \hbar^{+2(n+m)} \, \kh
  $$
 for all  $ \, n+m \geq 3 \, $,  \,so that the (last) summand
 ``$ \big(\, \textsl{sum of all terms with\ } n+m \geq 3 \,\big) $''
 in  \eqref{eq: sigma-primes}  is of type
 $ \, \cO\big( \hbar^{n+m} \big) = \cO\big( \hbar^3 \big) \, $.
%
 Putting all together in  \eqref{eq: sigma-primes}  we find
$$
 \begin{aligned}
   &  \hskip-5pt   \hbar^3 \, \sigma\big(b_{(1)},c_{(1)}\big) \,
   \sigma\big(a,b_{(2)}c_{(2)}\big)  \; =  \\
   &  \hskip-3pt   = \,  \hbar^2 \, \chi(a,bc) +
   \hbar^2 \, \chi\big(b,c_{(1)}\big) \, \chi\big(a,c_{(2)}\big) +
   \hbar^2 \, \chi\big(b_{(1)},c\big) \, \chi\big(a,b_{(2)}\big) \, +  \\
   &  \hskip5pt   + \, \hbar^2 \, 2^{-1} \, \chi\big(a_{(1)},b\big) \, \chi\big(a_{(2)},c\big) \, + \,
   \hbar^2 \, 2^{-1} \, \chi\big(a_{(1)},c\big) \, \chi\big(a_{(2)},b\big) \, + \,
   \cO\big(\hbar^3\big)
 \end{aligned}
$$
 An analogous treatment of the right hand side of
 \eqref{eq:Hopf-2-cocycle-proof}  yields
$$
 \begin{aligned}
   &  \hskip-5pt   \hbar^3 \, \sigma\big(a_{(1)},b_{(1)}\big) \,
   \sigma\big(a_{(2)}b_{(2)},c\,\big)  \; =  \\
   &  \hskip-3pt   = \,  \hbar^2 \, \chi(ab,c\,) + \hbar^2 \, \chi\big(a,b_{(1)}\big) \,
   \chi\big(b_{(2)},c\,\big) + \hbar^2 \, \chi\big(a_{(1)},b\,\big) \,
   \chi\big(a_{(2)},c\,\big) \, +  \\
   &  \hskip5pt   + \, \hbar^2 \, 2^{-1} \, \chi\big(a,c_{(1)}\big) \,
   \chi\big(b,c_{(2)}\big) \, +
   \, \hbar^2 \, 2^{-1} \, \chi\big(b,c_{(1)}\big) \, \chi\big(a,c_{(2)}\big) \, + \,
   \cO\big(\hbar^3\big)
 \end{aligned}
$$
 Altogether, this implies that
\begin{equation*}
  \begin{aligned}
    &  \hskip-7pt   \chi(a,bc\,) \, + \, \chi\big(b,c_{(1)}\big) \, \chi\big(a,c_{(2)}\big) \,
    + \, \chi\big(b_{(1)},c\,\big) \, \chi\big(a,b_{(2)}\big) \, +  \\
   &  \hskip-7pt \quad   + \, 2^{-1} \, \chi\big(a_{(1)},b\big) \, \chi\big(a_{(2)},c\,\big) \,
   + \, 2^{-1} \, \chi\big(a_{(1)},c\,\big) \, \chi\big(a_{(2)},b\,\big)  \;
   \underset{\hbar}{\equiv}  \\
   &  \hskip-11pt \qquad \qquad   \underset{\hbar}{\equiv} \;  \chi(ab,c\,) \, + \,
   \chi\big(a,b_{(1)}\big) \, \chi\big(b_{(2)},c\,\big) \, + \, \chi\big(a_{(1)},b\,\big) \,
   \chi\big(a_{(2)},c\,\big) \, +  \\
   &  \hskip-11pt \qquad \qquad \quad   + \, 2^{-1} \, \chi\big(a,c_{(1)}\big) \,
   \chi\big(b,c_{(2)}\big) \, + \, 2^{-1} \, \chi\big(b,c_{(1)}\big) \,
   \chi\big(a,c_{(2)}\big)
      \end{aligned}
\end{equation*}
 where (again)  $ \, \underset{\hbar}{\equiv} \, $  stands for ``congruent modulo
 $ \, \hbar \, \kh \, $'',
 that we re-write as
\begin{equation}  \label{eq: sigma-modh}
  \begin{aligned}
    &  \hskip-7pt   \chi(a,bc\,) \, + \, \chi\big(b,c_{(1)}\big) \, \chi\big(a,c_{(2)}\big) \,
    + \, \chi\big(b_{(1)},c\,\big) \, \chi\big(a,b_{(2)}\big) \, +  \\
   &  \hskip-7pt \quad   + \, 2^{-1} \, \chi\big(a_{(1)},b\big) \, \chi\big(a_{(2)},c\,\big) \,
   + \, 2^{-1} \, \chi\big(a_{(1)},c\,\big) \, \chi\big(a_{(2)},b\,\big)  \; -  \\
   &  \hskip-13pt \quad \qquad   - \;  \chi(ab,c\,) \, - \, \chi\big(a,b_{(1)}\big) \,
   \chi\big(b_{(2)},c\,\big) \, - \, \chi\big(a_{(1)},b\,\big) \, \chi\big(a_{(2)},c\,\big) \, -
   \\
   &  \hskip-19pt \qquad \qquad \quad   - \, 2^{-1} \, \chi\big(a,c_{(1)}\big) \,
   \chi\big(b,c_{(2)}\big) \, - \, 2^{-1} \, \chi\big(b,c_{(1)}\big) \, \chi\big(a,c_{(2)}\big)
   \; \underset{\hbar}{\equiv} \;  0
      \end{aligned}
\end{equation}
                                                       \par
   Consider now the action of the group algebra
   $ \, \Bbbk\big[\mathbb{S}_3\big] \, $  of the symmetric group
   $ \mathbb{S}_3 \, $ on  $ \big(U_{\hbar}^{\widehat{\otimes} 3}\big)^{*} \, $
   given by
   $ \, \sigma\,.\,\varphi(a,b,c) := \varphi\big(\sigma^{-1}\,.\,(a,b,c)\big) \, $,
   \,where the action of  $ \, \Bbbk\big[\mathbb{S}_3\big] \, $  on
   $ \, U_\hbar^{\widehat{\otimes} 3} \, $  is the natural one that permutes
   the tensor factors.  Then we let the antisymmetrizer  $ \textsl{Alt}_3 $
   act on both sides of  \eqref{eq: sigma-modh}:  using that
  $ \; \gamma \, := \, \chi_a \; \Big(\, \text{\rm mod} \; \hbar \,
  {\big( \uhg^{\widehat{\otimes}\, 2} \big)}^{\!*} \,\Big) \Big|_{\lieg \otimes \lieg} \; $
and that
  $ \; a \; \big(\hskip-7pt \mod \hbar\,U_\hbar \big) = \text{a} \; $,
  $ \; b \; \big(\hskip-7pt \mod \hbar\,U_\hbar \big) = \text{b} \; $
and
  $ \; c \; \big(\hskip-7pt \mod \hbar\,U_\hbar \big) = \text{c} \; $,
   \,a straightforward calculation eventually yields
\begin{equation*}   
  \partial_*(\gamma) \, + \, \text{c.p.} \, + \, {[[ \gamma \, , \gamma \,]]}_*  \,\; = \;\,  0
\end{equation*}
 This means exactly that  $ \, \gamma \, $  is a 2-cocycle for the Lie bialgebra
 $ \lieg $   --- according to  Definition \ref{def: cocyc-deform_Lie-bialg's}  ---
 that is obviously antisymmetric (by construction), q.e.d.

 \vskip9pt
   \textit{(c)}\,  First of all, we start by noting that
   $ \, {\uhg}_\sigma := {\big( \uhg \big)}_\sigma \, $  is equal to  $ \uhg $
   as a counital  $ \kh $--coalgebra  (by construction), but with the new product defined by
  $$
  m_{\sigma}(a,b\,)  \, = \,  a \,\raisebox{-5pt}{$ \dot{\scriptstyle \sigma} $}\, b  \, =
  \,
  \sigma(a_{(1)},b_{(1)}) \, a_{(2)} \, b_{(2)} \, \sigma^{-1}(a_{(3)},b_{(3)})
  \eqno \forall \;\, a, b \in \uhg  \qquad
  $$
 In particular, the  $ \kh $--module  $ \, {\uhg}_\sigma = \uhg \, $
 is still topologically free, so that  $ {\uhg}_\sigma $  is again a Hopf algebra in
 $ \cT_\otimeshat \, $,  cf.\ \S \ref{prel_Q-Groups}.
 Moreover, its semiclassical limit
 $ \, \overline{{\uhg}_\sigma} := {\uhg}_\sigma \Big/ \hbar \, {\uhg}_\sigma \, $
 as a coalgebra is the same as that of  $ \uhg \, $;
 hence it is again cocommutative connected (as these properties are not affected
 by  $ 2 $--cocycle  deformations).  Thus by Milnor-Moore Theorem we have
 $ \, \overline{{\uhg}_\sigma} = U\big( \widehat{\lieg} \big) \, $,  where
 $ \, \widehat{\lieg} = \Prim\big(\,\overline{{\uhg}_\sigma}\,\big) \, $
 is the space of primitive elements in  $ \overline{{\uhg}_\sigma} \, $,
 and as such it coincides with
 $ \, \Prim\big(\,\overline{\uhg}\,\big) = \Prim\big( U(\lieg) \big) = \lieg \, $
 as a Lie coalgebra; its Lie algebra structure, on the other hand, does depend on
 $ \sigma \, $.  Altogether, this shows that  $ {\uhg}_\sigma $  is indeed a QUEA,
 whose semiclassical limit is  $ U\big( \widehat{\lieg} \big) \, $;
 then we are only left to prove that the Lie bracket on  $ \widehat{\lieg} $
 coincides with that of  $ \lieg_\gamma \, $,  \,while also proving that  $ \gamma $
 is an antisymmetric 2--cocycle for the Lie bialgebra  $ \lieg \, $.
                                                        \par
   The Lie bracket in  $ \widehat{\lieg} $  is given by the commutator inside
   $ \, U\big( \widehat{\lieg} \big) = \overline{{\uhg}_\sigma} \, $,
   so we denote it by
   $ \,\; {[\text{a}\,,\text{b}]}_\sigma = \text{a}
   \;\overline{\raisebox{-5pt}{$ \dot{\scriptstyle \,\sigma} $}}\;
   \text{b} - \text{b} \;\overline{\raisebox{-5pt}{$ \dot{\scriptstyle \,\sigma} $}}\;
   \text{a} \;\, $  (for all  $ \, \text{a} \, , \text{b} \in \lieg \, $),  where
   $ \;\overline{\raisebox{-5pt}{$ \dot{\scriptstyle \,\sigma} $}}\; $
   is the product in  $ \, U\big( \widehat{\lieg} \big) = \overline{{\uhg}_\sigma} \, $
   induced by the  ($ \sigma $--deformed)  product in  $ {\uhg}_\sigma \, $.
   Therefore, we will compute such a commutator as the coset modulo
   $ \, \hbar\,U_\hbar \, $  of a commutator in  $ U_\hbar \, $,  \,namely
 $ \,\; {[\text{a}\,,\text{b}]}_\sigma = \text{a}
 \;\overline{\raisebox{-5pt}{$ \dot{\scriptstyle \,\sigma} $}}\; \text{b} -
 \text{b} \;\overline{\raisebox{-5pt}{$ \dot{\scriptstyle \,\sigma} $}}\;
 \text{a} \; = \; a \,\raisebox{-5pt}{$ \dot{\scriptstyle \sigma} $}\, b - b \,
 \raisebox{-5pt}{$ \dot{\scriptstyle \sigma} $}\, a \;\; \big(\, \text{mod} \;
 \hbar\,U_\hbar \,\big) \; $,
 where  \textsl{$ a $  and  $ b \, $,  like in the proof of claim  \textit{(c)},
 are lifts of\/  $ \text{\rm a} $  and  $ \text{\rm b} $   ---
 i.e.,  $ \, a \; \big(\, \text{\rm mod}\; \hbar \, U_\hbar \,\big) = \text{\rm a} \, $
 and  $ \, b \; \big(\, \text{\rm mod}\, \hbar \, U_\hbar \,\big) = \text{\rm b} \, $
 ---   such that  $ \; a^{\,\prime} := \hbar \, a \, \in J_\hbar^{\,\prime} \; $  and
 $ \; b^{\,\prime} := \hbar \, b \, \in J_\hbar^{\,\prime} \; $.}
 \vskip7pt
   We re-start back from  \eqref{eq: expans-sigmaprod},  which now gives
   (taking into account all the analysis carried out there, with  $ \, A = 1 = B \, $)
\begin{align*}
     &  a \,\raisebox{-5pt}{$ \dot{\scriptstyle \sigma} $}\, b - b
     \,\raisebox{-5pt}{$ \dot{\scriptstyle \sigma} $}\, a
     \;\; \underset{\hbar}{\equiv} \;\,  a \cdot b  \; - \;  b \cdot a  \,\; +  \\
     &  \hskip3pt   + \,  \hbar^{-3} \,
     \Big(\, \chi\big(a^{\prime+}_{(1)},b^{\prime+}_{(1)}\big)
     \big(\, \widehat{a}^{\,\prime}_{(2)} \, b^{\prime+}_{(2)} + a^{\prime+}_{(2)} \,
     \widehat{b}^{\,\prime}_{(2)} \big) \,
     - \, \chi\big(b^{\prime+}_{(1)},a^{\prime+}_{(1)}\big)
     \big(\, \widehat{b}^{\,\prime}_{(2)} \, a^{\prime+}_{(2)} + b^{\prime+}_{(2)} \,
     \widehat{a}^{\,\prime}_{(2)} \big) \!\Big)  \; -  \\
     &  \hskip6pt   - \,  \hbar^{-3} \, \Big(\! \big(\, \widehat{a}^{\,\prime}_{(1)} \,
     b^{\prime+}_{(1)} + a^{\prime+}_{(1)} \,
     \widehat{b}^{\,\prime}_{(1)} \big) \,
     \chi\big(a^{\prime+}_{(2)},b^{\prime+}_{(2)}\big) \, - \,
     \big(\, \widehat{b}^{\,\prime}_{(1)}
     \, a^{\prime+}_{(1)} + b^{\prime+}_{(1)} \, \widehat{a}^{\,\prime}_{(1)} \big) \,
     \chi\big(b^{\prime+}_{(2)},a^{\prime+}_{(2)}\big) \!\Big)
\end{align*}
 Second, letting
 $ \; \chi_a := \chi - \chi_{\,2,1} \; $,
 \,the previous formula greatly simplifies into
\begin{align*}
     &  a \,\raisebox{-5pt}{$ \dot{\scriptstyle \sigma} $}\, b - b \,
     \raisebox{-5pt}{$ \dot{\scriptstyle \sigma} $}\, a  \;\; =
     \;\,  a \cdot b  \, - \,  b \cdot a  \; + \;
     \hbar^{-3} \, \chi_a\big(a^{\prime+}_{(1)},b^{\prime+}_{(1)}\big)
     \big(\, \widehat{a}^{\,\prime}_{(2)} \, b^{\prime+}_{(2)} \, + \, a^{\prime+}_{(2)} \,
     \widehat{b}^{\,\prime}_{(2)} \big)  \,\; +  \\
      &  \hskip113pt   + \;\,  \hbar^{-3} \, \big(\, \widehat{b}^{\,\prime}_{(1)}
 \, a^{\prime+}_{(1)} + b^{\prime+}_{(1)} \, \widehat{a}^{\,\prime}_{(1)} \big) \,
 \chi_a\big(b^{\prime+}_{(2)},a^{\prime+}_{(2)}\big)  \; + \;  \cO(\hbar)  \,\; =  \\
%
     &  \hskip30pt   = \;\,  a \cdot b  \, - \,  b \cdot a  \; + \;
     \hbar^{-3} \, \Big(\, \chi_a\big(a^{\prime+}_{(1)},b'\big) \, a^{\prime+}_{(2)} \, - \,
     \chi_a\big(a^{\prime+}_{(2)},b'\big) \, a^{\prime+}_{(1)} \,\Big)  \,\; -  \\
     &  \hskip126pt   - \,  \hbar^{-3} \, \Big(\, \chi_a\big(b^{\prime+}_{(1)},a'\big) \,
     b^{\prime+}_{(2)} \, - \,
     \chi_a\big(b^{\prime+}_{(2)},a'\big) \, b^{\prime+}_{(1)} \Big)  \; + \;
     \cO_{1\!}(\hbar)
\end{align*}
   \indent   Now, let us write  $ \; z' \, = \, \hbar\,z \; $  for all
   $ \; z \, \in \, \{ a \, , b \, \} \; $:  \,then the last formula turns into
\begin{equation}  \label{eq: sigma-comm(a,b)}
  \begin{aligned}
     a \,\raisebox{-5pt}{$ \dot{\scriptstyle \sigma} $}\, b -
     b \,\raisebox{-5pt}{$ \dot{\scriptstyle \sigma} $}\, a  \;\; = \;\,
     a \cdot b  \;  &
     - \;  b \cdot a  \,\; + \; \hbar^{-1} \Big(\, \chi_a\big(a^+_{(1)},b\,\big) \,
     a^+_{(2)} \, - \, \chi_a\big(a^+_{(2)},b\,\big) \, a^+_{(1)} \Big)  \,\; -  \\
     &  - \;  \hbar^{-1} \Big(\, \chi_a\big(b^+_{(1)},a\,\big) \, b^+_{(2)} \, - \,
     \chi_a\big(b^+_{(2)},a\,\big) \, b^+_{(1)} \Big)  \; + \;  \cO_{1\!}(\hbar)
  \end{aligned}
\end{equation}
 Here we recall that, working with a QUEA, for  $ \, c \in \{a\,,b\,\} \, $
 we have
   $$
   \Delta(c\,)  \,\; = \;\,  c \otimes 1 \, + \, 1 \otimes c \, +
   \, c^+_{(1)} \otimes c^+_{(2)} \, + \, \cO_{2\!}\big(\hbar^2\big) \;\; ,
   \qquad   c^+_{(1)} \otimes c^+_{(2)} \, \in \, \hbar \, U_h^{\,\widehat{\otimes} 2}
   $$
and moreover   --- for every  $ \, c \in \{a\,,b\,\} \, $  and
$ \, \text{c} \in \{\text{a}\,,\text{b}\} \, $,
\,so that  $ c $  is a lift of  $ \text{c} $  ---
\begin{equation}  \label{eq: delta_a&b}
  \overline{\hbar^{-1}\big(c^+_{(1)} \! \otimes c^+_{(2)} - c^+_{(2)} \!
  \otimes c^+_{(1)}\big)}  \; = \;  \delta(\text{c})  \; =: \;
  \text{c}_{[1]} \otimes \text{c}_{[2]}
\end{equation}
 where hereafter any ``overlined'' object stands for
 ``its coset modulo  $ \hbar \, $'';
 \,in addition, we recall also that  $ \chi_a $  is antisymmetric.
 Then  \eqref{eq: sigma-comm(a,b)}
 and  \eqref{eq: delta_a&b}  altogether yield
\begin{small}
 \begin{align*}
     &  {[\text{a}\,,\text{b}]}_\sigma = \text{a} \;\overline{\raisebox{-5pt}{$
     \dot{\scriptstyle \,\sigma} $}}\;
     \text{b} - \text{b} \;\overline{\raisebox{-5pt}{$ \dot{\scriptstyle \,\sigma} $}}\;
     \text{a} \; = \;
     \overline{a \,\raisebox{-5pt}{$ \dot{\scriptstyle \sigma} $}\, b - b
     \,\raisebox{-5pt}{$ \dot{\scriptstyle \sigma} $}\, a}  \,\; =  \\
    &  \hskip35pt   = \,  \overline{a} \cdot \overline{b} -
    \overline{b} \cdot \overline{a}  \,\; + \;
    \overline{\hbar^{-1} \Big(\, \chi_a\big(a^+_{(1)},b\,\big) \, a^+_{(2)} \, - \,
    \chi_a\big(a^+_{(2)},b\,\big) \, a^+_{(1)} \Big)}  \,\; -  \\
     &  \hskip70pt   - \;  \overline{\hbar^{-1} \Big(\, \chi_a\big(b^+_{(1)},a\,\big) \,
     b^+_{(2)} \, - \,
     \chi_a\big(b^+_{(2)},a\,\big) \, b^+_{(1)} \Big)}  \,\; =  \\
     &  \hskip105pt   = \;\,  [\text{a}\,,\text{b}] \, + \,
     \gamma\big(\text{a}_{[1]},\text{b}\big) \, \text{a}_{[2]} \, - \,
     \gamma\big(\text{b}_{[1]},\text{a}\big) \, \text{b}_{[2]}  \,\; =  \\
     &
 \hskip140pt   = \;\,  [\text{a}\,,\text{b}] \, - \,
 \gamma\big(\text{a}_{[2]},\text{b}\big) \, \text{a}_{[1]} \, - \,
     \gamma\big(\text{b}_{[1]},\text{a}\big) \, \text{b}_{[2]}  \,\; =: \;\,
     {[\text{a}\,,\text{b}]}_\gamma
 \end{align*}
\end{small}
 hence  $ \; {[\text{a}\,,\text{b}]}_\sigma = {[\text{a}\,,\text{b}]}_\gamma \; $
 for all  $ \, \text{a} \, , \text{b} \in \lieg \, $,  \,in the sense of  \eqref{eq: def_cocyc-bracket},
 and we are done.
 \vskip7pt
   Finally, if in particular  $ \sigma $  is\/  $ \kh $--valued
   --- i.e., it is an ordinary 2--cocycle for the Hopf\/  $ \kh $--algebra  $ \uhg \, $  ---
   then we have
   $ \; \chi \, = \, \hbar \, \log_*(\sigma) \, \in \, \hbar \,
   {\big( \uhg^{\widehat{\otimes}\,2} \big)}^{\!*} \, $,  \,hence we have just
   $ \, \gamma = 0 \, $  and
   $ \; {\big( \uhg \big)}_\sigma \cong \, U_\hbar\big(\lieg_\gamma\big) = \uhg \; $.
\epf

\vskip5pt

\begin{exa}  \label{example: qs-2-coc x QUEA}
 Some concrete examples of polar 2--cocycles and deformation by them are treated
 in full depth in \cite[Section 5.2]{GaGa2}:  they concern the wide family of
 \textit{formal multiparameter QUEAs\/}  that we already treated in
 Example \ref{example: toral twists for FoMpQUEAs}.
 We then resume notations and formulas from there.
                                                \par
   Fix  $ \, n \in \NN_+ \, $  and  $ \, I := \{1,\ldots,n\} \, $.
   We choose a multiparameter matrix
   $ \, P := {\big(\, p_{i,j} \big)}_{i, j \in I} \in M_n\big( \kh \big) \, $  of Cartan type,
   with associated Cartan matrix  $ A \, $,  a realization
   $ \; \cR \, := \, \big(\, \lieh \, , \Pi \, , \Pi^\vee \,\big) \; $  of it and the
 (topological) Hopf algebra  $ \uRPhg \, $.
 Let  $ \, {\big\{ H_g \big\}}_{g \in \cG} \, $ be a  $ \kh $--basis in $ \lieh $,
 where  $ \cG $  is an index set with  $ \, |\cG| = \rk(\lieh) = t \, $.
                                                \par
   We consider special polar 2--cocycles of  $ \, \uRPhg \, $,  called ``toral'' as they are induced from the quantum torus.  Fix an antisymmetric,  $ \kh $--bilinear  map  $ \, \chi : \lieh \times \lieh \relbar\joinrel\longrightarrow \kh \, $,  \,that corresponds to  $ \, X = {\big(\, \chi_{g{}\gamma} = \chi(H_g\,,H_\gamma) \big)}_{g , \gamma \in \cG} \in \lieso_t\big(\kh\big) \, $.  Any such map  $ \chi $  also induces uniquely an antisymmetric,  $ \kh $--bilinear  map
  $$  \tilde{\chi}_{\scriptscriptstyle U} \, : \, U_{\!P,\hbar}^{\,\cR}(\lieh) \times U_{\!P,\hbar}^{\,\cR}(\lieh) \relbar\joinrel\relbar\joinrel\relbar\joinrel\longrightarrow \kh  $$
 as follows.  By definition,  $ U_{\!P,\hbar}^{\,\cR}(\lieh) $  is an  $ \hbar $--adically
 complete topologically free Hopf algebra isomorphic to
 $ \, \widehat{S}_\kh(\lieh) := \widehat{\bigoplus\limits_{n \in \NN} S_\kh^{\,n}(\lieh)} \, $,  \,the
 $ \hbar $--adic  completion of the symmetric algebra
 $ \, S_\kh(\lieh) = \bigoplus\limits_{n \in \NN} S_\kh^{\,n}(\lieh) \, $.  Then,
 $ \tilde{\chi}_{\scriptscriptstyle U} $ is defined as the unique  $ \kh $--linear
 (hence  $ \hbar $--adically  continuous) map
 $ \; U_{\!P,\hbar}^{\,\cR}(\lieh) \otimes U_{\!P,\hbar}^{\,\cR}(\lieh)
 \,{\buildrel \tilde{\chi}_{\scriptscriptstyle U} \over {\relbar\joinrel\relbar\joinrel\longrightarrow}}\, \kh \; $
 such that
%
%
\begin{equation}  \label{eq: chitilde}
  \begin{aligned}
    \tilde{\chi}_{\scriptscriptstyle U}(z,1) := \epsilon(z) =: \tilde{\chi}_{\scriptscriptstyle U}(1,z)
\qquad \qquad \qquad  \forall \;\; z \in \widehat{S}_\kh(\lieh)   \hskip45pt \qquad  \\
    \tilde{\chi}_{\scriptscriptstyle U}(x,y) := \chi(x,y)   \qquad \hskip55pt \qquad  \forall \;\; x, y \in  S_\kh^{\,1}(\lieh)   \qquad \hskip39pt  \\
   \hfill \qquad  \tilde{\chi}_{\scriptscriptstyle U}(x,y) := 0   \quad \qquad   \forall \;\; x \in S_\k^{\,r}(\lieh) \, ,   \; y \in S_\k^{\,s}(\lieh) \, : \, r, s \geq 1 \, , \; r+s > 2   \hskip5pt \hfill
  \end{aligned}
\end{equation}
 By construction,  \textsl{$ \tilde{\chi}_{\scriptscriptstyle U} $  is a normalized Hochschild 2--cocycle on  $ U_{\!P,\hbar}^{\,\cR}(\lieh) \, $},  \,that is
  $$  \epsilon(x) \,\tilde{\chi}_{\scriptscriptstyle U}(y,z) \, - \, \tilde{\chi}_{\scriptscriptstyle U}(xy,z) \, + \, \tilde{\chi}_{\scriptscriptstyle U}(x,yz) \, - \, \tilde{\chi}_{\scriptscriptstyle U}(x,y) \, \epsilon(z) \; = \; 0   \qquad  \forall \;\; x, y, z \in U_{\!P,\hbar}^{\,\cR}(\lieh)  $$
 By \cite[Lemma 5.2.3]{GaGa2}, the convolution powers of  $ \tilde{\chi}_{\scriptscriptstyle U} $  satisfy the following property: for all  $ \, H_+ ,H_- \in \lieh \, $  and  $ \, k, \ell, m \in \NN_+ \, $,  \,we have
  $$  { \tilde{\chi}_{\scriptscriptstyle U}}^{\,\ast\,m}\big(H_+^{\,k},H_-^{\,\ell}\,\big) \; = \;
   \begin{cases}
      \begin{array}{lr}
  \delta_{k,m} \, \delta_{\ell,m} \, {\big(m!\big)}^2 \, \chi(H_+,H_-)^m  &  \quad  \text{for} \;\;\;\; m \geq 1 \; , \\
  \delta_{k,0} \, \delta_{\ell,0} \,  &  \quad  \text{for} \;\;\;\; m = 0 \; .
      \end{array}
   \end{cases}  $$
 This allows one to define a polar 2--cocycle  $ \, \chi_{\scriptscriptstyle U} \, $  as the unique  $ \kh $--linear
%
%
 map from  $ \, U_{\!P,\hbar}^{\,\cR}(\lieh) \!\mathop{\otimes}\limits_\kh\!
 U_{\!P,\hbar}^{\,\cR}(\lieh) \, $  to  $ \khp $  given by the exponentiation of
 $ \, \hbar^{-1} \, 2^{-1} \, \tilde{\chi}_{\scriptscriptstyle U} \, $,  i.e.
 \vskip-5pt
  $$  \chi_{\scriptscriptstyle U}  \; := \;  e^{\hbar^{-1} 2^{-1} \tilde{\chi}_{\scriptscriptstyle U}}  \;
  = \;  {\textstyle \sum_{m \geq 0}} \, \hbar^{-m} \, {\tilde{\chi}_{\scriptscriptstyle U}}^{\,\ast\,m}
  \! \Big/ 2^m\,m!  $$
 By  \cite[Lemma 5.2.2]{GaGa2},  this  $ \, \chi_{\scriptscriptstyle U} $  is in fact a well-defined  \textsl{polar 2--cocycle\/}  for  $ \, U_{\!P,\hbar}^{\,\cR}(\lieh) \, $,  in the sense of  Definition \ref{def: polar 2cocycle}.  Moreover, one has, for all  $ H_+ \, , H_- \in \lieh \, $,  and setting  $ \, K_\pm := e^{\,\hbar\,H_\pm} \, $,
  $$  \chi_{\scriptscriptstyle U}^{\pm 1}(H_+,H_-) \, = \, \pm \hbar^{-1} \, 2^{-1}  \chi(H_+,H_-)
  \quad ,
   \qquad  \chi_{\scriptscriptstyle U}(K_+,K_-) \, = \, e^{\hbar \, 2^{-1} \chi(H_+,H_-)}  $$
 Assume now that  $ \, \chi $  satisfies the additional requirement
  $ \; \chi(S_i \, ,\,-\,) \, = \, 0 \, = \, \chi(\,-\,,S_i) \; $  for all  $ \, i \in I \, $,  \,
 where  $ \, S_i := 2^{-1} \big(\, T^+_i + T^-_i \big) \, $  for all  $ \, i \in I \, $.
 In particular, one has that
  $ \; \chi\big(\, T_i^+ \, , \, T \,\big) \, = \, \chi\big( -T_i^- \, , \, T \,\big) \; $  and
  $ \; \chi\big(\, T \, , T_i^+ \big) \, = \, \chi\big(\, T \, , -T_i^- \big) \; $
 for all  $ \, i \in I \, $  and  $ \, T \in \lieh \, $.  Then  $ \chi $  canonically induces a
 $ \kh $--bilinear  map
 $ \; \overline{\chi} : \overline{\lieh} \times \overline{\lieh} \relbar\joinrel\longrightarrow \kh \; $,
 \;where
$ \, \overline{\lieh} := \lieh \big/ \lies \, $  with
$ \, \lies := \textsl{Span}_\kh\big( {\{\, S_i \,\}}_{i \in I} \big)\, $,  \,given by
  $$
  \overline{\chi}\,\big(\, T' \! + \lies \, , T'' \! + \lies \,\big) \, := \, \chi\big(\,T',T''\big)
  \qquad  \forall \;\; T' , T'' \in \lieh
  $$
   \indent   Now, replaying the construction above with  $ \overline{\lieh} $  and
   $ \overline{\chi} $  replacing  $ \lieh $  and  $ \chi \, $,
   we can construct a normalized Hopf polar  $ 2 $--cocycle
   $ \, \overline{\chi}_{\scriptscriptstyle U} \!: U_{\!P,\hbar}^{\,\cR}\big(\,\overline{\lieh}\, \big)
   \times U_{\!P,\hbar}^{\,\cR}\big(\,\overline{\lieh}\,\big) \!\relbar\joinrel\longrightarrow \khp \, $.
   Moreover, since
   $ \, U_{\!P,\hbar}^{\,\cR}\big(\,\overline{\lieh}\,\big) \cong
   \widehat{S}_\kh\big(\,\overline{\lieh}\,\big) \, $,
   there exists a unique Hopf algebra epimorphism
   $ \; \pi : \uRPhg \relbar\joinrel\relbar\joinrel\twoheadrightarrow
   U_{\!P,\hbar}^{\,\cR}\big(\,\overline{\lieh}\,\big) \; $  given by
   $ \, \pi(E_i) := 0 \, $,  $ \, \pi(F_i) := 0 \, $   --- for  $ \, i \in I \, $  ---  and
   $ \, \pi(T) := (T + \lies) \, \in \,
   \overline{\lieh} \subseteq U_{\!P,\hbar}^{\,\cR}\big(\,\overline{\lieh}\,\big) \, $
   --- for  $ \, T \in \lieh \, $.  Then we consider
  $$
  \sigma_\chi \, := \, \overline{\chi}_{\scriptscriptstyle U} \circ (\pi \times \pi) \, : \,
  \uRPhg \times \uRPhg
  \relbar\joinrel\relbar\joinrel\relbar\joinrel\relbar\joinrel\twoheadrightarrow \khp
  $$
 which is  \textsl{automatically\/}  a normalized,  $ \khp $--valued
 Hopf polar  $ 2 $--cocycle  on  $ \uRPhg \, $.
                                                \par
   By Theorem \ref{thm: propt.'s qs-2cocycle-deform-QUEA},  one may define a
   ``deformed product'' on $ \uRPhg $  using  $ \sigma_\chi $  hereafter denoted by
   $ \, \scriptstyle \dot\sigma_\chi \, $.  Write
   $ \, X^{(n)_{\sigma_{\chi}}} = X\raise-1pt\hbox{$ \, \scriptstyle \dot\sigma_\chi $}
   \cdots \raise-1pt\hbox{$ \,\scriptstyle \dot\sigma_\chi $} X \, $  for the  $ n $--th
   power of any  $ \, X \in \uRPhg \, $  with respect to this deformed product.
                                                \par
   Directly from definitions, sheer computation yields the following formulas,
   relating the deformed product with the old one (for all  $ \; T', T'', T \in \lieh \; $,
   $ \; i \, , \, j \in I \, $):
  $$
  \displaylines{
   T' \raise-1pt\hbox{$ \, \scriptstyle \dot\sigma_\chi $} T''  \, = \;  T' \, T''  \quad ,
     \qquad  E_i \raise-1pt\hbox{$ \, \scriptstyle \dot\sigma_\chi $} F_j  \; = \,
     E_i \, F_j  \quad ,
     \qquad  F_j \raise-1pt\hbox{$ \, \scriptstyle \dot\sigma_\chi $} E_i  \; = \,  F_j \, E_i  \cr
   T \raise-1pt\hbox{$ \, \scriptstyle \dot\sigma_\chi $}  E_j  \; = \;
   T \, E_j \, + \, 2^{-1} \chi\big(T,T_j^+\big) \, E_j \quad ,
     \quad  E_j \raise-1pt\hbox{$ \, \scriptstyle \dot\sigma_\chi $} T  \; = \;
     E_j \, T \, + \, 2^{-1} \chi\big(T_j^+,T\big)\big)  \, E_j  \cr
   T \raise-1pt\hbox{$ \, \scriptstyle \dot\sigma_\chi $} F_j  \; = \;
   T \, F_j \, + \, 2^{-1} \chi\big(T,T_j^-\big)  \, F_j  \quad ,
     \quad  F_j \raise-1pt\hbox{$ \, \scriptstyle \dot\sigma_\chi $} T  \; = \;
     F_j \, T \, + \, 2^{-1} \chi\big(T_j^-,T\big) \, F_j  \cr
   E_i^{\,(m)_{\sigma_{\chi}}}  \, = \;  {\textstyle \prod_{\ell=1}^{m-1}}
   \sigma_\chi\Big( e^{\,+\hbar\,\ell\,T_i^+} \! , \, e^{\,+\hbar\,T_i^+} \Big) \, E_i^{\,m}  \; = \;
   E_i^{\,m}  \cr
   E_i^{\,m} \raise-1pt\hbox{$ \, \scriptstyle \dot\sigma_\chi $} E_j^{\,n}  \; = \;
   \sigma_\chi\Big(
e^{\,+\hbar\,m\,T_i^+} \! , \, e^{\,+\hbar\,n\,T_j^+} \Big) \, E_i^{\,m} \, E_j^{\,n}  \; = \;
e^{\, +\hbar \,m\,n\, 2^{-1} \mathring{\chi}_{ij} } E_i^{\,m} \, E_j^{\,n}  \cr
%
%
   E_i^{\,(m)_{\sigma_{\chi}}} \raise-1pt\hbox{$ \, \scriptstyle \dot\sigma_\chi $}\,
   E_j \,\raise-1pt\hbox{$ \, \scriptstyle \dot\sigma_\chi $}\, E_k^{\,(n)_{\sigma_\chi}}  = \,
   \Big( {\textstyle \prod_{\ell=1}^{m-1}} \sigma_\chi\Big( e^{\,+\hbar\,\ell\,T_i^+} \! ,
   \, e^{\,+\hbar\,T_i^+} \Big) \!\Big) \Big( {\textstyle \prod_{t=1}^{n-1}}
   \sigma_\chi\Big( e^{\,+\hbar\,t\,T_k^+} \! , \, e^{\,+\hbar\,T_k^+} \Big) \!\Big) \, \cdot
   \hfill  \cr
  \hfill \cdot \; \sigma_{\chi}\Big( e^{\,+\hbar\,m\,T_i^+} \! , \, e^{\,+\hbar\,T_j^+} \Big) \,
  \sigma_\chi\Big( e^{\,+\hbar\,(m\,T_i^+ + T_j^+)} \! , \, e^{\,+\hbar\,n\,T_k^+} \Big)
  \, E_i^{\,m} \, E_j \, E_k^{\,n}  \cr
   F_i^{\,(m)_{\sigma_\chi}}  \, = \;  {\textstyle \prod_{\ell=1}^{m-1}}
   \sigma_\chi^{-1}\Big( e^{\,-\hbar\,\ell\,T_i^-} \! , \,
   e^{\,-\hbar\,T_i^-} \Big) \, F_i^{\,m}  \; = \;  F_i^{\,m}  \cr
   F_i^{\,m} \raise-1pt\hbox{$ \, \scriptstyle \dot\sigma_\chi $} F_j^{\,n}  \; = \;
   \sigma_\chi^{-1}\Big( e^{\,-\hbar\,m\,T_i^-} \! , \,
   e^{\,-\hbar\,n\,T_j^-} \Big) \, F_i^{\,m} \, F_j^{\,n}  \; = \;
   e^{\, -\hbar \,m\,n\, 2^{-1} \mathring{\chi}_{ij}  } F_i^{\,m} \, F_j^{\,n}  \cr
   F_i^{\,(m)_{\sigma_{\chi}}} \raise-1pt\hbox{$ \, \scriptstyle \dot\sigma_\chi $}\,
   F_j \,\raise-1pt\hbox{$ \, \scriptstyle \dot\sigma_\chi $}\, F_k^{\,(n)_{\sigma_{\chi}}}  =
   \Big( {\textstyle \prod_{\ell=1}^{m-1}} \sigma_{\chi}^{-1}\Big( e^{\, -\hbar\,\ell\,T_i^-} \! , \,
   e^{\,-\hbar\,T_i^-} \Big) \!\Big) \Big( {\textstyle \prod_{t=1}^{n-1}}
   \sigma_{\chi}^{-1}\Big( e^{\,-\hbar\,t\,T_k^-} \! , \, e^{\,-\hbar\,T_k^-} \Big) \!\Big) \cdot
   \hfill  \cr
   \hfill \cdot \; \sigma_{\chi}^{-1}\Big( e^{\,-\hbar\,m\,T_i^-} \! , \, e^{\,-\hbar\,T_j^-} \Big) \,
   \sigma_{\chi}^{-1}\Big( e^{\,-\hbar\,(m\,T_i^- + T_j^-)} \! , \,
   e^{\,-\hbar\,n\,T_k^-} \Big) \, F_i^{\,m} \, F_j \, F_k^{\,n}  }
   $$
 Fix now  $ \, \mathring{X} := \! {\Big( \mathring{\chi}_{i{}j} =
 \chi\big(\,T_i^+,T_j^+\big) \!\Big)}_{i, j \in I} \, $  and define the multiparameter matrix
  $$  P_{(\chi)}  := \,  P \, + \, \mathring{X}  \, = \,  {\Big(\, p^{(\chi)}_{i{}j} := \,  p_{ij} + \mathring{\chi}_{i{}j} \Big)}_{\! i, j \in I}  \;\, ,   \!\quad
  \Pi_{(\chi)}  := \,  {\Big\{\, \alpha_i^{(\chi)}  := \,  \alpha_i \pm \chi\big(\, \text{--} \, , T_i^\pm \big)  \Big\}}_{i \in I}  $$
\noindent
 It turns out that  $ P_{(\chi)} $  is a matrix of Cartan type   --- the same of  $ P $  indeed ---
 and  $ \, \cR_{(\chi)} \, = \, \big(\, \lieh \, , \Pi_{(\chi)} \, , \Pi^\vee \,\big) \, $
 is a realization of it. Moreover, by \cite[Theorem 5.2.12]{GaGa2},
 there exists an isomorphism of topological Hopf algebras
  $$  {\big( \uRPhg \big)}_{\sigma_\chi}  \; \cong \;\,  U_{\!P_{(\chi)},\,\hbar}^{\,\cR_{(\chi)}}(\lieg)  $$
 which is the identity on generators.
 In short, every toral polar 2--cocycle deformation of a FoMpQUEA is another FoMpQUEA,
 whose multiparameter  $ \, P_{(\chi)} $  and realization  $ \cR_{(\chi)} $
 depend on the original  $ P $  and  $ \cR \, $,  as well as on  $ \chi \, $.
 Moreover, under some mild restrictions on the realizations, one proves that the FoMpQUEA  $ \uRPhg $  is isomorphic to a toral polar 2--cocycle deformation of the Drinfeld's standard double QUEA, see  \cite[Theorem 5.2.14]{GaGa2}.
                                                \par
   About the semiclassical limit, we have the following.  Taking everything modulo  $ \hbar \, $,
   the map  $ \, \chi : \lieh \times \lieh \relbar\joinrel\longrightarrow \kh \,$
   defines a similar antisymmetric,  $ \Bbbk $--bilinear  map
   $ \; \gamma := \big(\, \chi \!\mod \hbar \,\big) :
   \lieh_0 \times \lieh_0 \relbar\joinrel\longrightarrow \Bbbk \; $   --- where
   $ \, \lieh_0 := \lieh \Big/ \hbar \, \lieh \, = \, \overline{\lieh} \, $.
   Out of  $ \gamma $  one constructs a toral  $ 2 $--cocycle  $ \gamma_\lieg $
   for the Lie bialgebra  $ \lieg^{\bar{\Rpicc}}_{\bar{\Ppicc}} \, $,  \,and out of it the
   $ 2 $--cocycle  deformed Lie bialgebra
   $ \, {\big( \lieg^{\bar{\Rpicc}}_{\bar{\Ppicc}} \big)}_{\gamma\raisebox{-1pt} {$ {}_\lieg $}} \, $.
   Similarly as above, out of  $ \gamma $  we get the multiparameter matrix
   $ P_{(\gamma)} $  and its realization  $ \, \cR_{(\gamma)} \, $:
   then by construction  $ \, P_{(\gamma)} = \bar{P}_{(\chi)} \, $  and
   $ \, \cR_{(\gamma)} = \bar{\cR}_{(\chi)} \, $.  Attached to these we have
   $ \, U_{\!P_{(\chi)},\,\hbar}^{\,\cR_{(\chi)}}(\lieg) \, $  and
   $ \, \lieg_{\Ppicc_{(\gamma)}}^{\Rpicc_{(\gamma)}} =
   \lieg_{\bar{\Ppicc}_{(\chi)}}^{\bar{\Rpicc}_{(\chi)}} \, $,
   \,again connected via quantization/specialization, and
   $ \, \lieg_{\Ppicc_{(\gamma)}}^{\Rpicc_{(\gamma)}} \cong \,
   {\big( \lieg^{\bar{\Rpicc}}_{\bar{\Ppicc}} \big)}_{\gamma\raisebox{-1pt} {$ {}_\lieg $}} \,$
   as Lie bialgebras.  Actually, one has that
   \textsl{``deformation by (polar) 2--cocycle commutes with specialization''},
see \cite[Theorem 6.2.4]{GaGa2}:  with assumptions as above, we have that
$ {\big(\, \uRPhg \big)}_{\!\sigma_\chi} $ is a quanti\-zed universal enveloping algebra,
with semiclassical limit
$ \, U\Big(\! { {\big( \lieg^{\bar{\Rpicc}}_{\bar{\Ppicc}} \big)}_{\gamma\raisebox{-1pt}
{$ {}_\lieg $}}} \Big) \, \cong \, U\Big( \lieg_{\Ppicc_{(\gamma)}}^{\Rpicc_{(\gamma)}} \Big) \, $.
\end{exa}

\vskip5pt

\begin{rmk}  \label{rmk: polar 2-coc as formal q-2-coc}
 It is important to stress that  \textsl{our notion of  \textit{polar 2--cocycle}  did\/  \textrm{not}
 come out of the blue},  but rather was suggested by the previous example.
 Indeed, the authors first ``met'' these objects when studying polynomial-type QUEAs
 $ U_q(\lieg) $   --- i.e., QUEAs ``\`a la Jimbo-Lusztig'', defined over
 $ \Bbbk\big[q\,,q^{-1}\big] \, $:  these are standard Hopf algebras (no topology is involved),
 to which one can apply deformation by 2--cocycles and then obtain some
 ``multiparameter QUEAs''   --- cf.\  \cite{GaGa1}, \S 4.2.  Every such  \textsl{polynomial}
 $ U_q(\lieg) $  can be realized as a Hopf subalgebra of a  \textsl{formal}  $ \uhg \, $,
 hence it makes sense to try and extend the 2--cocycle and the associated deformation
 procedure used for  $ U_q(\lieg) $  to the larger Hopf algebra  $ \uhg \, $.
 When we fulfilled this task   --- in  \cite{GaGa2}  ---
 what we actually found was that the unique extension of the 2--cocycle of  $ U_q(\lieg) $  to
 $ \uhg $  actually is a  \textsl{polar\/}  2--cocycle  (and not a 2--cocycle any more),
 yet despite this the deformation procedure does extend from  $ U_q(\lieg) $  to the whole
 $ \uhg \, $.  Thus the very notion of ``polar 2--cocycle'' and the associated deformation
 procedure showed up as something real from this concrete example.
\end{rmk}

\vskip13pt

\subsection{Deformations by polar twist of QFSHA's}  \label{subsec: twist-QFSHAs}  {\ }
 \vskip7pt
   In this subsection we consider deformations by twist of QFSHA's,
   but again ``stretching the standard recipe'',
   much like in  \S \ref{subsec: 2coc-QUEAs}:
   in fact, rather than twists in the usual sense we
   consider some special twist elements belonging to the scalar extension from
   $ \kh $  to  $ \khp $  of our QFSHA.
   For these elements --- that we call ``polar twists'' ---   nothing ensures
   \textit{a priori\/}  that the deformation recipe would properly work on the given
   QFSHA   --- as it is defined on  $ \kh \, $;  \,nevertheless, we eventually
   find that this is indeed the case.  In other words, we prove
   --- with a parallel result to  Theorem \ref{thm: polar 2cocycle-deform-QUEA}
   ---   that  \textit{the standard procedure of deformation by twist for QFSHA's
   can be extended (beyond its natural borders) to the case of polar twist elements}.
 \vskip5pt
   We begin with a couple of technical lemmas:

\vskip11pt

\begin{lema}  \label{lemma: adj-act_F}
 Let  $ \fhg $  be a QFSHA, and  $ \, {\fhg}^\vee $
 the associated QUEA defined in  \S \ref{subsec: QDP}.
 Let  $ \, \varphi \in J_\hbar^{\,2} \, $,  \,with
 $ \, J_\hbar := \Ker\big(\epsilon_{\fhg}\big) \, $.  Then:
 \vskip5pt
   (a)\;  $ F := \exp\big( \hbar^{-1} \varphi \big) \, $
   is a well-defined element in  $ \, {\fhg}^\vee \; $;
 \vskip5pt
   (b)\;  $ \, \Ad(F)\big(f\big) := F \cdot f \cdot F^{-1} \, \in \, \fhg \, $
   for all  $ \, f \in \fhg \, $,  so that the adjoint action of  $ \, F $  onto
   $ \, {\fhg}^\vee $  actually restricts to  $ \, \fhg \; $.
\end{lema}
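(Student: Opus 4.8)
The plan is to move the entire computation into the QUEA $\fhg^\vee$ of \S\ref{subsec: QDP}, where the relevant elements acquire a transparent $\hbar$-adic order, and only at the very end to land back inside $\fhg$ via a closedness argument. Recall from Definition \ref{def: Drinfeld's functors}(b) that $\fhg^\vee$ is the $\hbar$-adic completion of $\sum_{n\ge 0}\hbar^{-n}I_{\fhg}^{\,n}$ with $I_{\fhg}=\hbar\,\fhg+J_\hbar$; in particular $\hbar^{-1}J_\hbar\subseteq\fhg^\vee$, so inside $\fhg^\vee$ one has $J_\hbar=\hbar\cdot(\hbar^{-1}J_\hbar)$ and hence $J_\hbar^{\,2}=\hbar^{2}(\hbar^{-1}J_\hbar)^{2}$. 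Since $\fhg^\vee$ is topologically free over $\kh$, division by $\hbar$ is unambiguous whenever possible, and the hypothesis $\varphi\in J_\hbar^{\,2}$ therefore yields a well-defined element $\hbar^{-1}\varphi\in\hbar\,(\hbar^{-1}J_\hbar)^{2}\subseteq\hbar\,\fhg^\vee$ --- this is exactly the kind of observation already exploited in Remark \ref{rmk: equiv-cond x polar 2-cocycle}. Consequently $\big(\hbar^{-1}\varphi\big)^{m}\in\hbar^{m}\fhg^\vee\to 0$ in the $\hbar$-adic topology, so the series $\exp\!\big(\hbar^{-1}\varphi\big)=\sum_{m\ge 0}\tfrac{1}{m!}\big(\hbar^{-1}\varphi\big)^{m}$ converges in the $\hbar$-adically complete algebra $\fhg^\vee$, and $F$ is invertible there with $F^{-1}=\exp\!\big(-\hbar^{-1}\varphi\big)$; this proves (a).

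For (b), since $\hbar^{-1}\varphi$ is topologically nilpotent in the complete $\kh$-algebra $\fhg^\vee$, the usual formal identity $\Ad\!\big(\exp x\big)=\exp\!\big(\ad x\big)$ applies, so that for every $f\in\fhg$
$$\Ad(F)(f)\;=\;F\cdot f\cdot F^{-1}\;=\;\sum_{n\ge 0}\frac{1}{n!}\,\hbar^{-n}\,(\ad\varphi)^{n}(f)\ \in\ \fhg^\vee\,,$$
where $(\ad\varphi)^{n}(f):=[\varphi,[\varphi,[\cdots,[\varphi,f]\cdots]]]$ with $n$ nested commutators. Writing $f=\epsilon(f)\cdot 1+f^{+}$ with $f^{+}\in J_\hbar$ and using $[\varphi,1]=0$, one has $(\ad\varphi)^{n}(f)=(\ad\varphi)^{n}(f^{+})$ for $n\ge 1$; then Lemma \ref{lemma: technic-Hopf}(c), applied with $k=n$, all $r_i=2$ and $s=1$, gives
$$(\ad\varphi)^{n}(f^{+})\ \in\ [\, J_\hbar^{\,2},\, [\, J_\hbar^{\,2},\, [\cdots [\, J_\hbar^{\,2},\, J_\hbar \,]\cdots ]\,]\,]\ \subseteq\ \hbar^{\,n}\,J_\hbar^{\,n+1}\,.$$
Hence each term $\tfrac{1}{n!}\hbar^{-n}(\ad\varphi)^{n}(f)$ lies in $J_\hbar^{\,n+1}\subseteq\fhg$, so every partial sum $S_{N}:=\sum_{n=0}^{N}\tfrac{1}{n!}\hbar^{-n}(\ad\varphi)^{n}(f)$ belongs to $\fhg$; at the same time $J_\hbar^{\,n+1}=\hbar^{\,n+1}(\hbar^{-1}J_\hbar)^{n+1}\subseteq\hbar^{\,n+1}\fhg^\vee$, so the series converges $\hbar$-adically in $\fhg^\vee$, with $\Ad(F)(f)-S_{N}\in\hbar^{\,N+1}\fhg^\vee$.

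It remains to check that the limit $\Ad(F)(f)=\lim_{N}S_{N}$ actually lies in $\fhg$, i.e. that $\fhg$ is $\hbar$-adically closed in $\fhg^\vee$. This follows from the Quantum Duality Principle: by Theorem \ref{thm: QDP}(a) the functors ${(\ )}'$ and ${(\ )}^\vee$ are mutually inverse, so $\fhg=\big(\fhg^\vee\big)'=\bigcap_{n\ge 0}\delta_{n}^{-1}\!\big(\hbar^{n}\,(\fhg^\vee)^{\widehat{\otimes}\, n}\big)$ by Definition \ref{def: Drinfeld's functors}(a); each map $\delta_{n}$ is $\kh$-linear and continuous (being assembled from Hopf morphisms in $\cT_\otimeshat$) and $\hbar^{n}(\fhg^\vee)^{\widehat{\otimes}\, n}$ is closed, so $\fhg$ is an intersection of closed subsets of $\fhg^\vee$, hence closed (cf.\ also \cite{Ga1}). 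Therefore $\Ad(F)(f)=\lim_{N}S_{N}\in\fhg$, which is assertion (b). Finally, applying the same reasoning to $-\varphi\in J_\hbar^{\,2}$ shows that $\Ad(F^{-1})=\Ad(F)^{-1}$ also preserves $\fhg$, so $\Ad(F)$ restricts to an algebra automorphism of $\fhg$.

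The main obstacle is that the naive estimates one would make inside $\fhg$ are too weak: the terms $\tfrac{1}{n!}\hbar^{-n}(\ad\varphi)^{n}(f)$ lie in $J_\hbar^{\,n+1}$ but do not tend to $0$ in the $\hbar$-adic topology of $\fhg$ itself, so the convergence genuinely has to be carried out inside $\fhg^\vee$, where the very same elements have $\hbar$-adic order $\ge n+1$. Thus the crux is the careful bookkeeping of the nested-commutator filtration through Lemma \ref{lemma: technic-Hopf}(c), together with the correct passage between the two (inequivalent) $\hbar$-adic topologies, the bridge being precisely the fact that $\fhg$ is a closed subalgebra of $\fhg^\vee$.
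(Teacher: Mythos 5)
Your proof is correct and follows essentially the same route as the paper's: part (a) is the identical observation that $\varphi\in J_\hbar^{\,2}$ forces $\hbar^{-1}\varphi\in\hbar\,\fhg^\vee$, and part (b) rests on the same identity $\Ad(\exp x)=\exp(\ad x)$ together with the same application of Lemma \ref{lemma: technic-Hopf}(c) placing the $n$--th term of the series in $J_\hbar^{\,n+1}$. The only divergence is at the very end: the paper simply concludes that a series whose terms lie in increasing powers of $J_\hbar$ converges in $\fhg$ itself --- which is legitimate, since a QFSHA is by its very definition complete for the $I_\hbar$--adic (weak) topology, not merely the $\hbar$--adic one --- whereas you treat this as an obstacle and instead take the limit $\hbar$--adically in $\fhg^\vee$, then invoke closedness of $\, \fhg = {\big(\fhg^\vee\big)}' \, $ there. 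Your closedness argument is sound (an intersection of preimages of closed sets under continuous $\kh$--linear maps $\delta_n$), so the detour costs nothing and makes explicit a convergence the paper asserts without comment; only your claim that the convergence \emph{has} to be carried out inside $\fhg^\vee$ is an overstatement.
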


\begin{proof}
 \textit{(a)}\;  The assumption  $ \, \varphi \in J_\hbar^{\;2} \, $
 implies  $ \; \varphi \, \in \, \hbar^2 \, {\big( J_\hbar^{\,\vee} \big)}^2 \subseteq \,
 \hbar^2 \, {\fhg}^\vee \, $,  \,where
 $ \; J_\hbar^{\,\vee} := \hbar^{-1} J_\hbar \, \subseteq \, {\fhg}^\vee \, $.
 Therefore  $ \; \hbar^{-1} \varphi \, \in \, \hbar \, {\fhg}^\vee \, $,  \,hence
 $ \, F := \exp\big( \hbar^{-1} \varphi \big) \, $  is indeed a well-defined element in
 $ {\fhg}^\vee \, $,  \,q.e.d.
 \vskip5pt
 \textit{(b)}\;  We compute  $ \, \Ad(F)\big(f\big) \, $,  $ \, f \in \fhg \, $:
 \,using the identity  $ \; \Ad\!\big(\!\exp(X)\big)(Y) \, = \,
 \exp\!\big(\!\ad(X)\big)(Y) \; $  and expanding the exponential into a
 power series we get
  $$
  \Ad(F)\big(f\big)  \; = \;  \Ad\!\big(\!\exp\!\big(\hbar^{-1} \varphi\big)\big)(f)  \; = \;
  \exp\!\big(\ad\!\big(\hbar^{-1} \varphi\big)\big)(f)  \; = \;
  {\textstyle \sum\limits_{n=0}^{+\infty}} \, \frac{\,1\,}{\,n!\,} \;
  {\ad\!\big( \hbar^{-1} \varphi \big)}^n(f)
  $$
 Now  Lemma \ref{lemma: technic-Hopf}\textit{(c)\/}   and the assumption
 $ \, \varphi \in J_{\!F_\hbar}^{\;2} \, $  together guarantee that
  $$  {\ad\!\big( \hbar^{-1} \varphi \big)}^n(f)  \, = \,
  {\ad\!\big( \hbar^{-1} \varphi \big)}^n\big(f_+\big)  \, \in \, (1-\delta_{s,0}) \,
  J_{\!F_\hbar}^{\;n+s}   \eqno \forall \; n \in \NN_+   \qquad
  $$
 with  $ \, s \in \NN \, $  such that  $ \, f \in J_{\!F_\hbar}^{\;s} \, $,  \,hence
 $
\; \Ad(F)\big(f\big) = {\textstyle \sum\limits_{n=0}^{+\infty}} \, \frac{\,1\,}{\,n!\,} \,
{\ad\!\big( \hbar^{-1} \varphi \big)}^n(f) \; $  is indeed a well-defined element
--- a  \textsl{convergent\/}  series! ---   of  $ \fhg \, $.
\end{proof}

\vskip9pt

\begin{lema}  \label{lemma: propt.'s polar twist}
 Let  $ \fhg $  be a QFSHA, and  $ \, {\fhg}^\vee $  the associated
 QUEA defined in  \S \ref{subsec: QDP}.  Let  $ \, \phi \in \fhg^{\widetilde{\otimes}\, 2} \, $
 be such that  $ \, (\id \otimes \epsilon)(\phi) = 0 = (\epsilon \otimes \id)(\phi) \, $.  Then:
 \vskip5pt
   (a)\;  $ \cF = \exp\big( \hbar^{-1} \phi \big) \, $  is a well-defined element in
   $ \, {\big( {\fhg}^\vee \big)}^{\widehat{\otimes}\, 2} \; $;
 \vskip5pt
   (b)\;  $ \, \cF \cdot (x \otimes y) \cdot \cF^{-1} \, \in \, \fhg^{\,\widetilde{\otimes}\, 2} \; $
   for all  $ \, x , y \in \fhg \, $,  so that the adjoint action of  $ \, \cF $  onto
   $ \, {\big( {\fhg}^\vee \big)}^{\widehat{\otimes}\, 2} $  actually restricts to
   $ \, {\fhg}^{\widetilde{\otimes}\, 2} \; $;
 \vskip5pt
   (c)\;  $ (\id \otimes \epsilon)(\cF\,) = 1 = (\epsilon \otimes \id)(\cF\,) \; $;
 \vskip5pt
   (d)\;  $ \cF $  is orthogonal, i.e.\  $ \, \cF_{2,1} = \cF^{-1} \, $,  \,iff\/  $ \phi $
   is antisymmetric, i.e.\  $ \, \phi_{\,2,1} = -\phi \, $;
\end{lema}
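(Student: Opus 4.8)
The plan is to reduce the whole statement to Lemma~\ref{lemma: adj-act_F}, applied not to $\fhg$ but to $\fhg^{\widetilde{\otimes}\, 2} = \fhg \widetilde{\otimes} \fhg$. This is again a QFSHA (a quantization of $G \times G$), and the Quantum Duality Principle --- Theorem~\ref{thm: QDP}(a), since $(\ )^\vee$ is a functor of tensor categories --- gives a canonical identification $\big( \fhg^{\widetilde{\otimes}\, 2} \big)^\vee = \big( {\fhg}^\vee \big)^{\widehat{\otimes}\, 2}$, compatibly with the natural inclusions of $\fhg^{\widetilde{\otimes}\, 2}$ into either side. The only preliminary point is to translate the hypothesis on $\phi$: using the $\kh$-module splitting $\fhg = \kh \cdot 1 \oplus J_\hbar$ of Lemma~\ref{lemma: technic-Hopf}(a), with $\, J_\hbar := \Ker\big(\epsilon_\fhg\big) \, $, one decomposes $\fhg^{\widetilde{\otimes}\, 2}$ into the four summands $\kh \cdot 1{\otimes}1 \oplus (J_\hbar{\otimes}1) \oplus (1{\otimes}J_\hbar) \oplus (J_\hbar \widetilde{\otimes} J_\hbar)$ and checks at once that $\, \Ker(\id \otimes \epsilon) \cap \Ker(\epsilon \otimes \id) = J_\hbar \widetilde{\otimes} J_\hbar \, $; this is the dual bookkeeping of the corresponding step in Lemma~\ref{lemma: twist-cond's/exp ==> twist-cond's/log}. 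Since $\, J_\hbar \widetilde{\otimes} J_\hbar = \big( J_\hbar \widetilde{\otimes} \fhg \big)\big( \fhg \widetilde{\otimes} J_\hbar \big) \subseteq \big( \Ker \epsilon_{\fhg^{\widetilde{\otimes}\, 2}} \big)^2 \, $, the hypothesis $(\id \otimes \epsilon)(\phi) = 0 = (\epsilon \otimes \id)(\phi)$ puts $\phi$ into the square of the augmentation ideal of $\fhg^{\widetilde{\otimes}\, 2}$, which is exactly what Lemma~\ref{lemma: adj-act_F} requires.

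Granting this, part (a) is Lemma~\ref{lemma: adj-act_F}(a) for $\fhg^{\widetilde{\otimes}\, 2}$, read through the identification $\big( \fhg^{\widetilde{\otimes}\, 2} \big)^\vee = \big( {\fhg}^\vee \big)^{\widehat{\otimes}\, 2}$; and part (b) is Lemma~\ref{lemma: adj-act_F}(b) for $\fhg^{\widetilde{\otimes}\, 2}$, which states that $\, \Ad(\cF\,)(g) = \cF \cdot g \cdot \cF^{-1} \in \fhg^{\widetilde{\otimes}\, 2} \, $ for every $\, g \in \fhg^{\widetilde{\otimes}\, 2} \, $, hence in particular for $\, g = x \otimes y \, $ with $\, x , y \in \fhg \, $. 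For part (c) I would note that $\, \id \otimes \epsilon \, $ and $\, \epsilon \otimes \id \, $ are continuous $\kh$-algebra homomorphisms $\big( {\fhg}^\vee \big)^{\widehat{\otimes}\, 2} \to {\fhg}^\vee$, so they commute with the $\hbar$-adically convergent exponential series; using the hypothesis (and that $\hbar^{-1}$ may be pulled through, $\fhg^\vee$ being $\hbar$-torsion free) one gets $(\id \otimes \epsilon)(\cF\,) = \exp\!\big( \hbar^{-1}(\id \otimes \epsilon)(\phi) \big) = \exp(0) = 1$, and symmetrically $(\epsilon \otimes \id)(\cF\,) = 1$.

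For part (d) the transposition $\, \tau \colon a \otimes b \mapsto b \otimes a \, $ is a continuous $\kh$-algebra automorphism of $\big( {\fhg}^\vee \big)^{\widehat{\otimes}\, 2}$, so $\, \cF_{2,1} = \exp\!\big( \hbar^{-1} \phi_{\,2,1} \big) \, $ --- note $\, \phi_{\,2,1} \in J_\hbar \widetilde{\otimes} J_\hbar \, $ as well --- while $\, \cF^{-1} = \exp\!\big( -\hbar^{-1}\phi \big) \, $; since $\exp$ is a bijection of $\, \hbar \big( {\fhg}^\vee \big)^{\widehat{\otimes}\, 2} \, $ onto $\, 1 + \hbar \big( {\fhg}^\vee \big)^{\widehat{\otimes}\, 2} \, $ with inverse $\log$, the equality $\, \cF_{2,1} = \cF^{-1} \, $ holds iff $\, \hbar^{-1}\phi_{\,2,1} = -\hbar^{-1}\phi \, $, i.e.\ iff $\, \phi_{\,2,1} = -\phi \, $. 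The genuinely delicate point throughout is the identification $\big( \fhg^{\widetilde{\otimes}\, 2} \big)^\vee = \big( {\fhg}^\vee \big)^{\widehat{\otimes}\, 2}$ together with the fact that the commutator estimates of Lemma~\ref{lemma: technic-Hopf}(c) for $\fhg^{\widetilde{\otimes}\, 2}$ force the series defining $\Ad(\cF\,)(g)$ to converge inside $\fhg^{\widetilde{\otimes}\, 2}$, and not merely inside $\big( {\fhg}^\vee \big)^{\widehat{\otimes}\, 2}$ --- this is precisely where the QDP, rather than soft formalities, does the real work.
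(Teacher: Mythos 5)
Your proposal is correct and follows essentially the same route as the paper: the paper likewise treats $\fhg^{\widetilde{\otimes}\,2} = F_\hbar[[G\times G\,]]$ as a QFSHA with $\big(\fhg^{\widetilde{\otimes}\,2}\big)^\vee = \big({\fhg}^\vee\big)^{\widehat{\otimes}\,2}$, observes that the counit hypotheses place $\phi$ in $J_\hbar^{\widetilde{\otimes}\,2} \subseteq \Ker\big(\epsilon_{\fhg^{\widetilde{\otimes}\,2}}\big)^2$, and deduces (a)--(b) from Lemma~\ref{lemma: adj-act_F}, dismissing (c)--(d) as formal consequences of the definitions. Your write-up merely makes explicit the bookkeeping for the augmentation ideal and the $\exp$/$\log$ argument for (c)--(d) that the paper leaves implicit.
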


\begin{proof}
 \textit{(a)--(b)}\;  The assumption
 $ \, (\id \otimes \epsilon)(\phi) = 0 = (\epsilon \otimes \id)(\phi) \, $  means that
 $ \, \phi \in J_\hbar^{\,\widetilde{\otimes}\, 2} \, $  where
 $ \, J_\hbar := \Ker\big( \epsilon_{\fhg} \big) \, $.
 Now note that
 $ \, F_\hbar[[G \times G\hskip1,5pt]] := {\fhg}^{\,\widetilde{\otimes}\, 2} \, $
 is in turn a QFSHA, for the group  $ \, G \times G \, $;
 \,furthermore, its augmentation ideal is
%
%
  $$
  \Ker\big(\epsilon_{F_\hbar[[G \times G\hskip1,5pt]]}\big)  \; = \;
\Ker\big(\epsilon_{\fhg}\big) \otimes \fhg \, + \, \fhg \otimes \Ker\big(\epsilon_{\fhg}\big)
$$
 so that  $ \, J_\hbar^{\,\widetilde{\otimes}\, 2}
 \subseteq {\Ker\big(\epsilon_{F_\hbar[[G \times G\hskip1,5pt]]}\big)}^2 \, $,
 and finally, we have also
  $$  {F_\hbar[[G \times G\hskip1,5pt]]}^\vee  \; = \;\,
  {\big( \fhg \,\widetilde{\otimes}\, \fhg \big)}^\vee \; = \;\,
  {\big( {\fhg}^\vee \big)}^{\widehat{\otimes}\, 2}
  $$
 Therefore, applying  Lemma \ref{lemma: adj-act_F}  above to the QFSHA
 $ F_\hbar[[G \times G\hskip1,5pt]] $ and to  $ \, \varphi := \phi \, $
 we get both claims  \textit{(a)\/}  and  \textit{(b)}.
 \vskip5pt
 \textit{(c)--(d)}\;  Both these claims follow at once from definitions,
 along with the assumption that
 $ \, (\id \otimes \epsilon)(\phi) = 0 = (\epsilon \otimes \id)(\phi) \, $.
\end{proof}

\vskip5pt

   The previous result leads us to introduce the notion of ``polar twist'', as follows:

\vskip5pt

\begin{definition}  \label{def: polar twist}
 Let\/  $ \fhg $  be a QFSHA, and  $ \, {\fhg}^\vee \, $  as in
 \ref{def: Drinfeld's functors}\textit{(b)}.  We call  \textit{polar twist (element) of}  $ \fhg $
 any element in
 $ \, \big( {\fhg}^{\widetilde{\otimes}\, 2} \,\big)^{\!\vee} =
 {\big( {\fhg}^\vee \,\big)}^{\widehat{\otimes}\, 2} \, $  of the form
 $ \, \cF := \exp\!\big( \hbar^{-1} \phi \big) \, $   --- for some
 $ \, \phi \in {\fhg}^{\widetilde{\otimes}\, 2} \, $  such that
 $ \, (\id \otimes \epsilon)(\phi) = 0 = (\epsilon \otimes \id)(\phi) \, $  ---
 which have the property of a twist element for the QUEA  $ {\fhg}^\vee \, $.
\end{definition}

\vskip4pt

   Of course, every twist for  $ \fhg $  is a polar twist too; the converse, instead,
   in general, is false (counterexamples do exist).
   However, \textit{every polar twist still provides a well-defined deformation by twist of
   $ \fhg $}   --- in other words, the construction of  \textsl{deformations by twist\/}
   properly extends to  \textsl{``deformations by  \textit{polar}  twist''\/}  as well:

\vskip7pt

\begin{theorem}  \label{thm: polar twist-deform-QFSHA}
 Let  $ \fhg $  be a QFSHA, and  $ \, \cF = \exp\big( \hbar^{-1} \phi \big) \, $
 a polar twist for it, as in  Definition \ref{def: polar twist}.
 Then the procedure of twist deformation by  $ \cF $  applied to the QUEA
 $ {\fhg}^\vee $  restricts to  $ \fhg \, $,  making the latter into a new QFSHA.
\end{theorem}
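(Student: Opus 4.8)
The plan is to exploit the Quantum Duality Principle and the companion result already proved for polar 2--cocycles (Theorem \ref{thm: polar 2cocycle-deform-QUEA}), thereby avoiding a repetition of the long, direct $ \hbar $--adic  estimates. The crucial observation is that the whole situation is the linear dual of the one treated in \S \ref{subsec: 2coc-QUEAs}. Indeed, set $ \, K := {\fhg}^\vee \, $, which is a QUEA by Theorem \ref{thm: QDP}, and note that by the QDP-duality formula \eqref{eq: QDP-duality} together with \S \ref{equiv-&-(stand)-duality} we have $ \, K^* = {\big( {\fhg}^\vee \big)}^* = {\big( {\fhg}^\star \big)}' \, $, and more relevantly $ \, \fhg \, $ sits inside $ \, {\big( {\fhg}^\vee \big)}^{\!\star} \, $ in the canonical way (this is precisely the content of Theorem \ref{thm: QDP}(a), since the functors $ {(\ )}' $ and $ {(\ )}^\vee $ are mutually inverse, whence $ \, {\big( {\fhg}^\vee \big)}^{\!\prime} = \fhg \, $).

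\textbf{Key steps.}
First I would make the statement precise exactly as in the opening lines of the proof of Theorem \ref{thm: polar 2cocycle-deform-QUEA}: by Lemma \ref{lemma: propt.'s polar twist}(a)--(c), the element $ \, \cF = \exp\big( \hbar^{-1} \phi \big) \, $ is a genuine twist for the QUEA $ \, {\fhg}^\vee \, $, so the standard twist-deformation construction of \S \ref{twist-deform.'s} produces a bona fide QUEA $ \, {\big( {\fhg}^\vee \big)}^{\!\cF} \, $ — this changes only the coproduct, via $ \, \Delta^{\cF}(x) = \cF\,\Delta(x)\,\cF^{-1} \, $, leaving the algebra structure untouched. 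The claim to be proved is that this deformed coproduct restricts to $ \, \fhg \, $, i.e.\ that $ \, \Delta^{\cF}(f) \in \fhg^{\,\widetilde{\otimes}\,2} \, $ for every $ \, f \in \fhg \, $. Second, I would argue that this is immediate from Lemma \ref{lemma: propt.'s polar twist}(b): there we already established that $ \, \cF \cdot (x \otimes y) \cdot \cF^{-1} \in \fhg^{\,\widetilde{\otimes}\,2} \, $ for all $ \, x, y \in \fhg \, $; since $ \fhg $ is a subalgebra of $ \, {\fhg}^\vee \, $ and (by the QDP, Theorem \ref{thm: QDP}(a)) its original coproduct $ \Delta $ maps $ \fhg $ into $ \, \fhg^{\,\widetilde{\otimes}\,2} \, $, writing $ \, \Delta(f) = f_{(1)} \otimes f_{(2)} \, $ with the factors in $ \fhg $ and applying Lemma \ref{lemma: propt.'s polar twist}(b) to the conjugation-by-$\cF$ gives $ \, \Delta^{\cF}(f) = \cF\,\Delta(f)\,\cF^{-1} \in \fhg^{\,\widetilde{\otimes}\,2} \, $, as wanted. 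The counit and unit are unchanged by twisting, so they restrict trivially; the twisted antipode is $ \, \cS^{\cF}(x) = v\,\cS(x)\,v^{-1} \, $ with $ \, v = \sum_{\cF} \cS(f'_1)\,f'_2 \, $ (notation of \S \ref{twist-deform.'s}), and since $ \cF^{-1} = \exp(-\hbar^{-1}\phi) $ has the same shape as $ \cF $, an application of Lemma \ref{lemma: adj-act_F}(b)/Lemma \ref{lemma: technic-Hopf}(c) shows $ \, v \in \fhg \, $ and $ \, v^{-1} \in \fhg \, $, so $ \cS^{\cF} $ restricts to $ \fhg $ as well. Third, once the Hopf structure is seen to restrict, one checks that the restricted object lies in $ \cHA_{\otimestilde} $ and is a QFSHA: as an algebra it is untouched, hence still commutative modulo $ \hbar $ (indeed $ \fhg $ is), topologically of the required type, and $ I $--adically complete; therefore $ \, \fhg \big/ \hbar\,\fhg \, $ is still a commutative formal function Hopf algebra, i.e.\ $ \fhg $ with the new coproduct is again a QFSHA.

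\textbf{Main obstacle.}
The only genuinely delicate point is the one already cleared by Lemma \ref{lemma: propt.'s polar twist}(b): that conjugating an element of $ \, \fhg^{\,\widetilde{\otimes}\,2} \, $ by the \emph{polar} element $ \cF $ — whose expansion involves negative powers of $ \hbar $ — lands back inside $ \, \fhg^{\,\widetilde{\otimes}\,2} \, $ and not merely in its scalar extension $ \, \khp \otimes_\kh \fhg^{\,\widetilde{\otimes}\,2} \, $. This is exactly the ``stretching the recipe'' phenomenon, and it is handled by the estimate in Lemma \ref{lemma: technic-Hopf}(c) on iterated commutators $ \, \big[ J_\hbar^{\,r_1}, \big[ \cdots, J_\hbar^{\,s} \big] \big] \subseteq \hbar^k J_\hbar^{\,\cdots} \, $, which provides enough powers of $ \hbar $ to absorb the $ \, \hbar^{-n} \, $ coming from $ \, \exp(\hbar^{-1}\phi) = \sum_n \frac{1}{n!}\,\hbar^{-n}\phi^{*n} \, $ via the $ \, \mathrm{Ad}(\exp(X)) = \exp(\mathrm{ad}\,X) \, $ identity, forcing the series to converge in $ \fhg^{\,\widetilde{\otimes}\,2} $. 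Thus the heart of the argument is really Lemma \ref{lemma: propt.'s polar twist}, and the present theorem is the natural packaging of it into a statement about Hopf structures; I would present the proof as a short deduction, with the commutativity-modulo-$\hbar$ observation and the QDP identification $ \, {\big( {\fhg}^\vee \big)}^{\!\prime} = \fhg \, $ doing the remaining work.
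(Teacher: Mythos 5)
Your proposal is correct and follows essentially the same route as the paper: the entire content of the theorem is delegated to Lemma \ref{lemma: propt.'s polar twist}\textit{(b)}, which guarantees that conjugation by $\cF$ maps $\fhg^{\,\widetilde{\otimes}\,2}$ into itself, so the twisted coproduct (and, by a parallel argument, the twisted antipode) restricts from ${\fhg}^\vee$ to $\fhg$. The extra detail you supply on the element $v$ and on checking the QFSHA axioms for the restricted object is consistent with the paper's brief ``the antipode can be dealt with similarly.''
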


\begin{proof}
 When deforming  $ {\fhg}^\vee $  by the twist  $ \cF $  one introduces on  $
{\fhg}^\vee $  the new coproduct  $ \Delta^\cF $  given by
$ \, \Delta^\cF := \Ad(\cF\,) \circ \Delta \, $.
Then  Lemma \ref{lemma: propt.'s polar twist}\textit{(b)\/}
ensures that  $ \Delta^\cF $  restricts to  $ \fhg \, $,
\,in that it maps the latter into  $ \, {\fhg}^{\widetilde{\otimes}\, 2} \, $.
The antipode can be dealt with similarly,
whence we conclude that the (deformed) Hopf structure of
$ {\big({\fhg}^{\vee\,}\big)}^\cF $  does restrict to  $ \fhg \, $,  \,q.e.d.
\end{proof}

\vskip7pt

\begin{definition}  \label{def: polar twist deform}
 With assumptions as in  Theorem \ref{thm: polar twist-deform-QFSHA},
 the new QFSHA obtained from  $ \fhg $  through the process of twist deformation
 (by  $ \cF \, $)  of  $ {\fhg}^\vee $  followed by restriction will be called
 \textsl{the polar twist deformation of\/  $ \fhg $  by  $ \cF $},  and denoted by
 $ {\fhg}^\cF \, $.
\end{definition}

\vskip4pt

   Finally, the next result describes in detail what exactly is the nature of the new,
   polar twist deformed QFSHA  $ {\fhg}^\cF $,  shedding light onto its semiclassical limit:

\vskip7pt

\begin{theorem}  \label{thm: propt.'s qs-twist-deform-QFSHA}
 Let  $ \fhg $  be a QFSHA over the Lie bialgebra
 $ \, \lieg = \big(\, \lieg \, ; \, [\,\ ,\ ] \, , \, \delta \,\big) \, $.  Set  $ \, \liem := \Ker\big(\epsilon_{{}_\fg}\big) \, $,
%
%
 \,so  $ \; \liem \Big/ \liem^2 \, \cong \, \lieg^* \, $
 and  $ \; \big( \liem \otimes \liem \big) \Big/ \big( \liem^2 \otimes \liem + \liem \otimes \liem^2 \,\big) \, \cong \, \lieg^* \otimes \lieg^* \, $
 as Lie bialgebras.  Let\/  $ \cF $  be a polar twist for  $ \fhg \, $,  of the form  $ \, \cF = \exp\big( \hbar^{-1} \phi \big) \, $  for some  $ \, \phi \in \fhg^{\widetilde{\otimes}\, 2} \, $,  \, and set also  $ \; \phi_a := \phi - \phi_{\,2,1} \; $.  Then:
 \vskip5pt
   (a)\;  $ \phi $  is antisymmetric, i.e.\  $ \, \phi_{\,2,1} = -\phi \, $,  \,iff\/  $ \cF $  is orthogonal, i.e.\  $ \, \cF_{2,1} = \cF^{-1} \, $;
 \vskip4pt
%
%
   (b)\;  the element  $ \, c \, := \, \bigg(\, \phi_a \, \Big(\, \text{\rm mod} \; \hbar \, \fhg^{\widetilde{\otimes}\, 2\,} \Big) \!\!\mod \big( \liem^2 \otimes \liem + \liem \otimes \liem^2 \,\big) \bigg) \; $  in  $ \; \big( \liem \otimes \liem \big) \Big/ \big( \liem^2 \otimes \liem + \liem \otimes \liem^2 \,\big) \, \cong \, \lieg^* \otimes \lieg^* \, $  is an  \textsl{antisymmetric twist}  for  $ \lieg^* \, $;
 \vskip4pt
    (c)\;  the polar twist deformation  $ \, {\big( \fhg \big)}^\cF $  of  $ \, \fhg $  is a QFSHA for the Poisson group  $ G^{\,c} $  whose cotangent  Lie bialgebra is
 $ \,\; {\Lie\,\big(G^{\,c}\big)}^* = \, {\big(\, \lieg^* \big)}^c \, = \,  \big(\, \lieg^* \, ; \, {[\,\ ,\ ]}_* \, , \, \delta_*^{\,c} \,\big) \;\, $
 that is the deformation of  $ \, \lieg^* $  by the twist  $ c \, $;  in short,  $ \; {\big( \fhg \big)}^\cF \cong \, F_\hbar\big[\big[G^{\,c}\big]\big] \; $.
                                                      \par
   In particular, if\/  $ \cF $  is\/  $ \kh $--valued   --- i.e., it is an ordinary twist for the Hopf\/  $ \kh $--algebra  $ \fhg \, $  ---   or equivalently  $ \, \phi \in \hbar \, \fhg^{\widetilde{\otimes}\,2} \, $,  \,then we have just  $ \, c = 0 \, $  and  $ \; {\big( \fhg \big)}^c \cong \, F_\hbar\big[\big[G^{\,c}\big]\big] = \fhg \; $.
\end{theorem}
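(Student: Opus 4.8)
The plan is to prove Theorem~\ref{thm: propt.'s qs-twist-deform-QFSHA} by dualizing, step by step, the analysis already carried out for polar 2--cocycle deformations of QUEA's in Theorem~\ref{thm: propt.'s qs-2cocycle-deform-QUEA}. Concretely, the QDP (Theorem~\ref{thm: QDP}) together with \eqref{eq: QDP-duality} tells us that $ \fhg^\vee $ is a QUEA with semiclassical limit $ U(\lieg^*) $, and a polar twist $ \cF = \exp\big( \hbar^{-1} \phi \big) $ for $ \fhg $ is, by Definition~\ref{def: polar twist}, nothing but a genuine twist for the QUEA $ \fhg^\vee $. Moreover $ \, {\fhg}^\cF $ is obtained by first performing the (ordinary) twist deformation of $ \fhg^\vee $ by $ \cF $ and then restricting, so $ {\big( \fhg^\vee \big)}^\cF $ is the QUEA $ U_\hbar\big( (\lieg^*)^{\,?} \big) $ determined by Theorem~\ref{thm: twist-deform-QUEA}, whose semiclassical limit is the twist deformation of $ \lieg^* $. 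Applying the functor $ {(\ )}^* $ (or rather observing that the restriction of the deformed structure to $ \fhg $ is exactly $ {\big( {\big( \fhg^\vee \big)}^\cF \big)}' $, up to the standard dualities) then returns a QFSHA whose cotangent Lie bialgebra is that twist deformation of $ \lieg^* $. So the heart of the proof is to identify the relevant twist element for $ \lieg^* $ and to check the bookkeeping of the dualities.

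First I would handle part \textit{(a)}: this is immediate from Lemma~\ref{lemma: propt.'s polar twist}\textit{(d)}, which already states that $ \cF $ is orthogonal iff $ \phi $ is antisymmetric. Next, for part \textit{(b)}, I would apply Theorem~\ref{thm: twist-deform-QUEA}\textit{(b)} directly to the QUEA $ H := \fhg^\vee $ over the Lie bialgebra $ \lieg^* $ and to the twist $ \cF = \exp(\hbar\,\kappa) $ with $ \kappa := \hbar^{-1}\log(\cF) = \hbar^{-1}\cdot\hbar^{-1}\phi $; note that $ \hbar^{-1}\phi \in \hbar\,{\big( \fhg^\vee \big)}^{\widehat\otimes 2} $ by the computation in Lemma~\ref{lemma: propt.'s polar twist}\textit{(a)}, so everything lives where it should and $ \kappa\in {\big( \fhg^\vee \big)}^{\widehat\otimes 2} $. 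Theorem~\ref{thm: twist-deform-QUEA}\textit{(b)} then gives that $ c := \overline{\kappa_a} = \kappa_a \pmod{\hbar} $ lands in $ \lieg^* \otimes \lieg^* $ and is an antisymmetric twist for $ \lieg^* $. It remains to check that this $ c $ coincides with the element described in the statement, namely $ \phi_a \pmod{\hbar} $ read modulo $ \liem^2\otimes\liem + \liem\otimes\liem^2 $ inside $ (\liem\otimes\liem)/(\liem^2\otimes\liem+\liem\otimes\liem^2) \cong \lieg^*\otimes\lieg^* $. This is a matter of unwinding: $ \kappa_a = \hbar^{-1}\big( \hbar^{-1}\phi - \hbar^{-1}\phi_{2,1} \big) = \hbar^{-2}\phi_a $, the class of $ \hbar^{-2}\phi_a $ modulo $ \hbar $ in $ \fhg^\vee $ is by construction the class of $ \phi_a $ in $ \big( J_\hbar/\hbar J_\hbar \big)^{\otimes 2}\cap(\cdots) $, and the isomorphism $ \liem/\liem^2 \cong \lieg^* $ together with the identification $ \lieg^*\otimes\lieg^* \cong (\liem\otimes\liem)/(\liem^2\otimes\liem+\liem\otimes\liem^2) $ recalled in the proof of Theorem~\ref{thm: 2cocycle-deform-QFSHA}\textit{(b)} turns this class into exactly the stated $ c $.

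For part \textit{(c)}, I would argue that $ {\fhg}^\cF $, being by Theorem~\ref{thm: polar twist-deform-QFSHA} a QFSHA with the same underlying $ \kh $--module and the same multiplication as $ \fhg $ (only the coproduct and antipode are twisted), is commutative modulo $ \hbar $ just like $ \fhg $, hence is again a QFSHA, say $ {\fhg}^\cF = F_\hbar\big[\big[ G^{(\cF)} \big]\big] $; and its semiclassical limit as an algebra is $ \fg $, so $ G^{(\cF)} $ is the same formal group as $ G $, only the Poisson structure — hence the Lie cobracket on the cotangent Lie bialgebra $ \liem/\liem^2 \cong \lieg^* $ — may change, the Lie bracket being untouched. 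To pin down the new Poisson cobracket I would run the dual of the computation in the proof of Theorem~\ref{thm: twist-deform-QUEA}\textit{(c)}: applying $ {(\ )}^\vee $ turns $ {\fhg}^\cF $ into $ {\big( \fhg^\vee \big)}^\cF = U_\hbar\big( (\lieg^*)^{\,c} \big) $, so by Theorem~\ref{thm: QDP}\textit{(b)} the semiclassical limit of $ {\fhg}^\cF $ is $ F\big[\big[ (G^*)^{\!*} \big]\big] $ with cotangent Lie bialgebra $ {\big( U_\hbar( (\lieg^*)^c ) \big)'\!/\hbar(\cdots) } $'s dual, which is $ (\lieg^*)^c = \big(\lieg^*; [\ ,\ ]_*, \delta_*^{\,c}\big) $ — exactly as claimed. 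Finally, the ``in particular'' clause is clear: if $ \cF $ is $ \kh $--valued then $ \hbar^{-1}\phi = \log(\cF) \in {\fhg}^{\widetilde\otimes 2} $, i.e.\ $ \phi \in \hbar\,{\fhg}^{\widetilde\otimes 2} $, whence $ \phi_a \equiv 0 \pmod{\hbar} $ and $ c = 0 $, so $ {\big(\fhg\big)}^\cF $ is a quantization of $ (\lieg^*)^0 = \lieg^* $, i.e.\ $ {\big(\fhg\big)}^\cF \cong \fhg $ up to equivalence.

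The main obstacle I anticipate is \emph{not} the Lie-theoretic identity — that is handed to us by Theorem~\ref{thm: twist-deform-QUEA} applied to $ \fhg^\vee $ — but rather the careful verification that ``twist-deform $ \fhg^\vee $ then restrict to $ \fhg $'' is compatible with the duality $ {(\ )}^\vee $/$ {(\ )}' $ and $ {(\ )}^* $/$ {(\ )}^\star $ in the way needed to transport the semiclassical statement back to $ \fhg $; in particular one must check that the semiclassical limit of the \emph{restricted} object $ {\fhg}^\cF \subseteq {\big(\fhg^\vee\big)}^\cF $ really is the formal group dual to $ (\lieg^*)^c $, rather than something only isomorphic to it after a further Drinfeld functor. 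Concretely, this requires knowing that $ {\big( {\big(\fhg^\vee\big)}^\cF \big)}' \equiv {\fhg}^\cF $ (or the analogous identity at the level of $ {(\ )}^* $), which should follow from Theorem~\ref{thm: QDP}\textit{(a)} together with the fact — established inside the proof of Theorem~\ref{thm: polar twist-deform-QFSHA} via Lemma~\ref{lemma: propt.'s polar twist}\textit{(b)} — that the twisted coproduct already maps $ \fhg $ into $ {\fhg}^{\widetilde\otimes 2} $, so that $ \fhg $ \emph{is} a QFSHA sitting inside the QUEA $ {\big(\fhg^\vee\big)}^\cF $ in the canonical way, forcing $ {\fhg}^\cF = {\big( {\big(\fhg^\vee\big)}^\cF \big)}' $ by uniqueness in the QDP equivalence. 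Once this compatibility is in place, computing $ \delta_*^{\,c} $ explicitly on $ \liem/\liem^2 $ by the dual of the $ \hbar $--expansion argument in Theorem~\ref{thm: twist-deform-QUEA}\textit{(c)} (replacing $ \ad_x(\kappa) $ considerations with the induced action on $ \liem/\liem^2 $) is routine, and identifying the outcome with the twist deformation $ \delta_* - \partial(c) $ of Definition~\ref{def: twist-deform_Lie-bialg's} closes the proof.
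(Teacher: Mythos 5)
Your proposal is correct, but it takes a genuinely different route from the paper. The paper proves claims \textit{(b)} and \textit{(c)} entirely ``downstairs'' in $ \fhg \, $:  for \textit{(b)} it rewrites the twist identity as  $ \big( \Delta \otimes \id \big)(\cF\,)\,{\big( \id \otimes \Delta \big)(\cF\,)}^{-1} = \cF_{12}^{-1}\cF_{23} \, $,  expands both sides via the Baker--Campbell--Hausdorff formula with careful  $ J_\hbar $--adic  valuation estimates, reduces modulo  $ \hbar $  and modulo  $ \liem^{[\otimes 3|4]} $,  and then applies the antisymmetrizer  $ \textsl{Alt}_3 $  to extract the Lie twist condition for  $ c \, $;  for \textit{(c)} it computes  $ \Ad(\cF)\big(\Delta(f)\big) $  explicitly to first order and reads off  $ \delta^{\,c} $.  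You instead transport everything upstairs to the QUEA  $ {\fhg}^\vee $  via Drinfeld's functor, invoke the already-proven Theorem \ref{thm: twist-deform-QUEA} for the (ordinary, trivial-mod-$\hbar$) twist  $ \cF = \exp(\hbar\,\hbar^{-2}\phi) $  of  $ {\fhg}^\vee $,  and bring the conclusion back via the QDP. This is legitimate and shorter --- it is essentially the content of the paper's later Theorem \ref{thm: polar twist-deform vs Drinfeld functor x QFSHA} read in reverse, whereas the paper proves the two theorems independently and only afterwards records that  $ c = c^\vee $.  What your route buys is conceptual economy and no new estimates; what the paper's route buys is a self-contained argument with explicit formulas and no reliance on the literal (not merely up-to-isomorphism) inverse property of the Drinfeld functors.

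One step of yours is written too loosely, though it is easily repaired. You justify  $ {\fhg}^\cF = \Big(\! {\big( {\fhg}^\vee \big)}^{\cF} \Big)^{\!\prime} $  by saying that  $ \fhg $  ``is a QFSHA sitting inside the QUEA  $ {\big( {\fhg}^\vee \big)}^{\cF} $  in the canonical way, forcing the equality by uniqueness in the QDP equivalence'': a QFSHA sub-Hopf-algebra of a QUEA need not coincide with the image of that QUEA under  $ {(\ )}' \, $,  so this as stated proves nothing. The correct (one-line) argument is that  $ {(\ )}^\vee $  is built solely from the algebra structure, the counit and  $ \hbar \, $,  none of which is altered by a twist; hence  $ {\big( {\fhg}^\cF \big)}^\vee = {\fhg}^\vee $  as topological algebras, and it carries the  $ \cF $--twisted  coproduct, i.e.\  $ {\big( {\fhg}^\cF \big)}^\vee = {\big( {\fhg}^\vee \big)}^{\cF} \, $;  applying  $ {(\ )}' $  and  $ \big( H^\vee \big)' = H $  then gives the desired identity, after which the on-the-nose identification of the induced cobracket on  $ \liem \big/ \liem^2 $  with  $ \delta_*^{\,c} $  follows from the compatibility built into  Theorem \ref{thm: QDP}\textit{(b)}  (or, as you note, from the explicit dual computation, which is exactly what the paper carries out).
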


\pf
 \textit{(a)}\,  This is a special case of  Lemma \ref{lemma: propt.'s polar twist}\textit{(d)}.

\vskip7pt

 \textit{(b)}\,  We start from the twist identity
  $ \,\; \cF_{1{}2} \, \big( \Delta \otimes \text{id} \big)(\cF\,)  \; = \,
  \cF_{2{}3} \, \big( \text{id} \otimes \Delta \big)(\cF\,) \;\, $
 that we re-write in the equivalent form
\begin{equation}   \label{eq: twist-modified}
  \big( \Delta \otimes \text{id} \big)(\cF\,) \, {\big( \text{id} \otimes \Delta \big)(\cF\,)}^{-1}  \;
  = \;\,  \cF_{1{}2}^{\,-1} \, \cF_{2{}3}
\end{equation}
 Replacing  $ \, \cF = \exp\!\big(\hbar^{-1}\phi\big) \, $,  we find
  $$
  \displaylines{
   \quad   \big( \Delta \otimes \text{id} \big)(\cF\,) \big) \cdot
   {\big( \text{id} \otimes \Delta \big)(\cF\,)}^{-1}  \,\; =   \hfill  \cr
   = \;\,  \big( \Delta \otimes \text{id} \big)\big(\exp\!\big(\hbar^{-1}\phi\big)\big) \cdot
   {\big( \text{id} \otimes \Delta \big)\big(\exp\!\big(\hbar^{-1}\phi\big)\big)}^{-1}  \; =  \cr
   \hfill   = \;\,  \exp\big(\,\hbar^{-1} \big( \Delta \otimes \text{id} \big)(\phi) \big) \cdot
   \exp\big(\!-\hbar^{-1} \big( \text{id} \otimes \Delta \big)(\phi) \big)   \quad  }
   $$
   \indent   Now we recall the  \textsl{Baker-Campbell-Hausdorff's formula},
   that is the formal identity
\begin{equation}   \label{eq: BCH-formula}
  \exp(X) \cdot \exp(Y)  \; = \;  \exp\big( \mathcal{B\,C\,H}(X,Y\,) \big)
\end{equation}
 which allows to express the product of two exponential as a single exponential: in it,
 $ \, \mathcal{B\,C\,H}(X,Y\,) \, $  is an explicit formal series given by
\begin{equation}   \label{eq: BCH-expansion}
  \begin{aligned}
    &  \mathcal{B\,C\,H}(X,Y\,)  \,\; := \;\,  \log\big( \exp(X) \, \exp(Y\,) \big)  \,\; =  \\
       &  \qquad \hfill   \,\; = \;\,
  {\textstyle \sum\limits_{n=1}^{+\infty}} \, \frac{\;{(-1)}^{n-1}\;}{\;n\;} \,
  {\textstyle \sum\limits_{\substack{r_i + s_i > 0  \\   1 \leq i \leq n}}}
  \frac{\; \big[\, X^{\bullet r_1} Y^{\bullet s_1} X^{\bullet r_2}
  Y^{\bullet s_2} \cdots X^{\bullet r_n} Y^{\bullet s_n} \big] \;}{\; \big( \sum_{i=1}^n (r_i + s_i)
  \big) \cdot \prod_{j=1}^n r_i!\,s_i! \;}
\end{aligned}
\end{equation}
 where we use notation
  $$
  \displaylines{
   \big[\, X^{\bullet r_1} Y^{\bullet s_1} \cdots X^{\bullet r_n} Y^{\bullet s_n} \big]  \,\; :=
   \hfill  \cr
   \hfill   = \;\,  \big[ \underbrace{X,\big[X,\cdots\big[X}_{r_1} ,
   \big[ \underbrace{Y,\big[Y,\cdots\big[Y}_{s_1} , \cdots ,
   \big[ \underbrace{X,\big[X,\cdots\big[X}_{r_n} ,
   \big[ \underbrace{Y,\big[Y,\cdots , Y}_{s_n} \big] \big] \big]
   \cdots \big] \big] \cdots \big] \big]  }
   $$
 with the silent assumption that the Lie monomial
 $ \, \big[\, X^{\bullet r_1} Y^{\bullet s_1} \cdots X^{\bullet r_n} Y^{\bullet s_n} \big] \, $
 is just  $ X $,  respectively  $ Y $,  when  $ \, n = 1 \, $  and  $ \, s_1 = 0 \, $,
 respectively  $ \, r_1 = 0 \, $,  while it is zero whenever  $ \, s_n > 1 \, $  or
 $ \, s_n = 0 \, $  and  $ \, r_n > 1 \, $.  In words, when
 $ \, S := \sum_{i=1}^n (r_i + s_i) > 1 \, $  the Lie monomial
 $ \, \big[\, X^{\bullet r_1} Y^{\bullet s_1} \cdots X^{\bullet r_n} Y^{\bullet s_n} \big] \, $
 is the composition of several operators  $ {\ad(X)}^{r_i} $  or  $ {\ad(Y)}^{s_i} $  to  $ Y $
 --- when  $ \, s_n = 1 \, $  ---   or to  $ X $   --- when  $ \, s_n = 0 \, $  and  $ \, r_n = 1 \, $.
 Looking up to second order,  \eqref{eq: BCH-expansion}  reads
\begin{equation}   \label{eq: BCH-expansion_truncated}
  \mathcal{B\,C\,H}(X,Y\,)  \,\; := \;\,  X \, + \, Y \, + \, \frac{\,1\,}{\,2\,} \, [X,Y\,] \, + \,
  \cO_{\mathcal{L}}(3)
\end{equation}
 where  $ \, \cO_{\mathcal{L}}(3) \, $  denotes a (formal) infinite linear combination
 of Lie monomials in  $ X $  and  $ Y $  of degree at least 3.
 \vskip3pt
   Setting now  $ \, X := \hbar^{-1} \big( \Delta \otimes \text{id} \big)(\phi) \, $  and
   $ \, Y := -\hbar^{-1} \big( \text{id} \otimes \Delta \big)(\phi) \, $,
   \,the above analysis yields, rewriting  \eqref{eq: BCH-formula},
\begin{equation}   \label{eq: AltDeltaF=CirNablaphi}
 \begin{aligned}
   \big( \Delta \otimes \text{id} \big)(\cF\,)  &
   \cdot {\big( \text{id} \otimes \Delta \big)(\cF\,)}^{-1}  \,\; =  \\
   &  = \;\,  \exp\Big(\, \mathcal{B\,C\, H}\Big(\, \hbar^{-1}
   \big( \Delta \otimes \text{id} \big)(\phi) \, , \, -\hbar^{-1} \big( \text{id} \otimes
   \Delta \big)(\phi) \Big) \Big)
 \end{aligned}
\end{equation}
 where the BCH series has to be expanded as in  \eqref{eq: BCH-expansion}.
 To this end, writing  $ \, \phi = \phi_1 \otimes \phi_2 \, $  (a sum being tacitly intended) with
 $ \, \phi_1 , \phi_2 \in  J_\hbar \, $
 (by  Lemma \ref{lemma: twist-cond's/exp ==> twist-cond's/log})  we have
  $$
  \displaylines{
   \big( \Delta \otimes \text{id} \big)(\phi)  \, = \,
   \Delta\big(\phi_1\big) \otimes \phi_2  \, = \,
   \phi_1 \otimes 1 \otimes \phi_2 \, + \, 1 \otimes \phi_1 \otimes \phi_2 \, +
   \, {\big( {(\phi_1)}_{(1)} \big)}^+ \otimes {\big( {(\phi_1)}_{(2)} \big)}^+ \otimes \phi_2  \, =
   \hfill  \cr
   \hfill   = \;  \phi_{1,3} \, + \, \phi_{2,3} \, + \,
   {\big( {(\phi_1)}_{(1)} \big)}^+ \otimes {\big( {(\phi_1)}_{(2)} \big)}^+ \otimes \phi_2  }
   $$
 where we expanded  $ \, \Delta\big(\phi_1\big) \, $  as in
 Lemma \ref{lemma: technic-Hopf}\textit{(d)\/}  and we used that
 $ \, \epsilon\big(\phi_1\big) = 0 \, $.  Note that in the expansion of
 $ \big( \Delta \otimes \text{id} \big)(\phi) $  we have
\begin{equation}   \label{eq: pieces(Dotimes1(phi)) J-adic}
  \big( \phi_{1,3} \, + \, \phi_{2,3} \big) \, \in \, J_\hbar^{(\otimes 3|2)}
 \;\; ,  \qquad
  {\big( {(\phi_1)}_{(1)} \big)}^{\!+} \otimes {\big( {(\phi_1)}_{(2)} \big)}^{\!+} \otimes \phi_2 \,
  \in \, J_\hbar^{\,\otimes 3}
\end{equation}
 where we introduced the notation
 $ \;  J_\hbar^{(\otimes 3|N)}
 := \sum_{\!\substack{a , b , c \,\geqslant\, 0  \\
 a + b + c \,\geqslant\, N}} \! J_\hbar^{\,a} \,\widetilde{\otimes}\,
 J_\hbar^{\,b} \,\widetilde{\otimes}\, J_\hbar^{\,c} \; $  (for  $ \, N \in \NN \, $).
                                                                                 \par
   A similar analysis for  $ \, \big( \text{id} \otimes \Delta \big)(\phi) \, $,
   \,just switching the roles of  $ \phi_1 $  and  $ \phi_2 \, $,  \,yields
  $$
  \displaylines{
   \big( \text{id} \otimes \Delta \big)(\phi)  \, = \,  \phi_1 \otimes \Delta\big(\phi_2\big)  \, = \,
   \phi_1 \otimes \phi_2 \otimes 1 \, + \, \phi_1 \otimes 1 \otimes \phi_2 \, + \,
   \phi_1 \otimes {\big( {(\phi_2)}_{(1)} \big)}^+ \otimes {\big( {(\phi_2)}_{(2)} \big)}^+  \, =   \hfill  \cr
   \hfill   = \;  \phi_{1,2} \, + \, \phi_{1,3} \, + \, \phi_1 \otimes {\big( {(\phi_2)}_{(1)} \big)}^+
   \otimes {\big( {(\phi_2)}_{(2)} \big)}^+  }
   $$
 with
\begin{equation}   \label{eq: pieces(1otimesD(phi)) J-adic}
  \big( \phi_{1,2} \, + \, \phi_{1,3} \big) \, \in \,
 J_\hbar^{(\otimes 3|2)}
 \;\; ,  \qquad
  \phi_1 \otimes {\big( {(\phi_2)}_{(1)} \big)}^{\!+} \otimes {\big( {(\phi_2)}_{(2)} \big)}^{\!+} \,
  \in \, J_\hbar^{\,\otimes 3}
\end{equation}
 Now, thanks to  Lemma \ref{lemma: technic-Hopf}\textit{(c)},
 from  \eqref{eq: pieces(Dotimes1(phi)) J-adic}  and
 \eqref{eq: pieces(1otimesD(phi)) J-adic}  we get
  $$
  \displaylines{
   \quad   \big[ \hbar\,X , \hbar\,Y \big]  \; = \;
   \big[ \big( \Delta \otimes \text{id} \big)(\phi) \, ,
   \big( \text{id} \otimes \Delta \big)(\phi) \big]  \,\; =   \hfill  \cr
   = \;\,  \big[\, \phi_{1,3} + \phi_{2,3} \, , \, \phi_{1,2} + \phi_{1,3} \,\big]  \; +
   \;  \hbar \cdot \cO\Big(
J_\hbar^{(\otimes 3|4)}
 \Big)  \,\; =   \quad  \cr
   \hfill   = \;\,  \big[\, \phi_{1,3} \, , \, \phi_{1,2} \,\big] + \big[\, \phi_{2,3} \, ,
   \, \phi_{1,2} \,\big] + \big[\, \phi_{2,3} \, , \, \phi_{1,3} \,\big]  \; + \;
   \hbar \cdot \cO\Big(
J_\hbar^{[\otimes 3|4]}  \Big)   \quad  }   $$
 hence
\begin{equation}   \label{eq: [X,Y] = sum(phi)}
  \!\!\!   \big[ X , Y \big]  \, = \,  \hbar^{-1} \Big( \hbar^{-1} \big[ \phi_{1,3} , \phi_{1,2} \big] +
  \hbar^{-1} \big[ \phi_{2,3} , \phi_{1,2} \big] + \hbar^{-1} \big[ \phi_{2,3} , \phi_{1,3} \big]  \,
  + \,  \cO\Big(
 J_\hbar^{[\otimes 3|4]}  \Big) \Big)
\end{equation}
 for some element  $ \, \cO\Big(
J_\hbar^{[\otimes 3|4]}
 \Big) \in
J_\hbar^{[\otimes 3|4]}  \, $,
 \,where hereafter we use notation
\begin{equation}   \label{eq: def-JotimesN}
%
J_\hbar^{[\otimes 3|N]}
 \, := \hskip-7pt \sum\limits_{\substack{a, b, c \,\geqslant\, 1  \\
  a+b+c \,\geqslant\, N}} \hskip-11pt J_\hbar^{\otimes a} \otimes J_\hbar^{\otimes b}
  \otimes J_\hbar^{\otimes c}  \; \subseteq \;
  {\Ker\Big( \epsilon_{{}_{{\fhg}^{\otimes 3}}} \Big)}^N   \qquad  \forall \;\; N \in \NN_+
\end{equation}
 Pushing the analysis further on, we find easily that
\begin{equation}   \label{eq: J-adic valuation x Lie monomial}
  \big[\, X^{\bullet r_1} Y^{\bullet s_1} \cdots X^{\bullet r_n} Y^{\bullet s_n} \big]  \; \in \;
  \hbar^{-1} \,
 J_\hbar^{[\otimes 3|S+1]}
   \qquad  \forall \;\;
  S := {\textstyle \sum\limits_{i=1}^n} \, (r_i + s_i) > 1 \,
\end{equation}
 looking at  \eqref{eq: BCH-expansion},  this tells us that the expansion of the
 BCH series occurring in  \eqref{eq: AltDeltaF=CirNablaphi},  when expanded as in
 \eqref{eq: BCH-expansion},  is actually given by  $ \hbar^{-1} $  multiplied by
 \textsl{a truly convergent series inside}  $ {\fhg}^{\otimes 3} \, $.
 In other words, tiding everything up
 --- from  \eqref{eq: AltDeltaF=CirNablaphi},  \eqref{eq: BCH-expansion},
 \eqref{eq: BCH-expansion_truncated},  \eqref{eq: [X,Y] = sum(phi)}  and
 \eqref{eq: J-adic valuation x Lie monomial}  altogether ---
 we find that there exists some
 $ \, \mathcal{Z} \in J_\hbar^{\otimes 3} \subseteq {\fhg}^{\otimes 3} \, $  such that
 $ \; \big( \Delta \otimes \text{id} \big)(\cF\,) \cdot
 {\big( \text{id} \otimes \Delta \big)(\cF\,)}^{-1} = \, \exp\big(\, \hbar^{-1} \mathcal{Z} \,\big) \; $
 Even more, by  \eqref{eq: BCH-expansion}  and  \eqref{eq: BCH-expansion_truncated}
 and the previous analysis we do know the expansion of this  $ \mathcal{Z} $
 up to second order, whence we find
\begin{equation}   \label{eq: DeltaF = exp(hZ) CONTR}
\begin{aligned}
   &  \hskip-6pt   \big( \Delta \otimes \text{id} \big)(\cF\,) \cdot
   {\big( \text{id} \otimes \Delta \big)(\cF\,)}^{-1}  \; =  \\
   &  \hskip-5pt   = \;\,  \exp\Big(\, \hbar^{-1} \, \Big( \big( \Delta \otimes \text{id} \big)(\phi) \,
   - \, \big( \text{id} \otimes \Delta \big)(\phi) \; -  \\
   &  \hskip-5pt   - \hbar^{-1} 2^{-1} \big[ \phi_{1,3} , \phi_{1,2} \big] -
   \hbar^{-1} 2^{-1} \big[ \phi_{2,3} , \phi_{1,2} \big] - \hbar^{-1} 2^{-1} \big[ \phi_{2,3} ,
   \phi_{1,3} \big]  \, + \,  \cO\Big( J_\hbar^{[\otimes\, 3|4]} \Big) \!\Big) \!\Big)
\end{aligned}
\end{equation}
 \vskip5pt
   Now we go and work instead on the right-hand side of  \eqref{eq: twist-modified}.
   Again, replacing  $ \, \cF = \exp\!\big(\hbar^{-1}\phi\big) \, $,  we find
\begin{equation*}   
 \begin{aligned}
   &  \cF_{1{}2}^{-1} \cdot \cF_{2{}3}  \; = \;  \exp\!\big(\! -\!\hbar^{-1} \phi \otimes 1 \big) \cdot
   \exp\!\big(\, \hbar^{-1} 1 \otimes \phi \big)  \,\; =
  \\
   &  \;\;\quad   = \;  \exp\!\big(\! -\!\hbar^{-1} \phi_{1,2} \big) \cdot
   \exp\!\big(\, \hbar^{-1} \phi_{2,3} \big)  \; = \;
   \exp\!\Big( \mathcal{B\,C\,H}\big(\! -\!\hbar^{-1} \, \phi_{1,2} \, , \, \hbar^{-1} \,
   \phi_{2,3} \,\big) \Big)
\end{aligned}
\end{equation*}
 Now for the computation
 of  $ \; \mathcal{B\,C\,H}\big(\! -\!\hbar^{-1} \, \phi_{1,2} \, , \hbar^{-1} \, \phi_{2,3} \big) \; $;
 \,to avoid possible confusion, we denote the second, right-hand instance of  $ \phi $  by
 $ \phi' \, $.  We begin noting that
  $$
  \big[\, \phi_{1,2} \, , \phi'_{2,3} \,\big]  \,\; = \;\,  \big[\, \phi_1 \otimes \phi_2 \otimes 1 \, ,
  1 \otimes \phi'_1 \otimes \phi'_2 \,\big]  \,\; = \;\,  \phi_1 \otimes \big[\, \phi_2 \, ,
  \phi'_1 \,\big] \otimes \phi'_2  \,\; \in \;\,  \hbar \,  J_\hbar^{[\otimes 3|3]}
  $$
 so that for  $ \; X := -\hbar^{-1} \, \phi_{1,2} \; $  and  $ \; Y := \hbar^{-1} \, \phi_{2,3} \; $
 we get, using  Lemma \ref{lemma: technic-Hopf}\textit{(c)},
  $$
  \big[ X , Y \big]  \; = \;  \big[ -\!\hbar^{-1} \, \phi_{1,2} \, , \hbar^{-1} \, \phi'_{2,3} \,\big]
  \; = \;  -\!\hbar^{-2} \big[ \, \phi_{1,2} \, , \phi'_{2,3} \,\big]  \; \in \;
  \hbar^{-2} \, \hbar \, J_\hbar^{\otimes 3}  \; = \; \hbar^{-1} J_\hbar^{[\otimes 3|3]}
  $$
 A second, similar step gives
 (with obvious notation  $ \phi \, $,  $ \phi' $  and  $ \phi'' \, $)
  $$  \displaylines{
   \big[\, \phi_{1,2} \, , \big[\, \phi'_{1,2} \, , \phi''_{2,3} \,\big] \big]  \,\; = \;\,
   \big[\, \phi_1 \otimes \phi_2 \otimes 1 \, , \phi'_1 \otimes \big[\, \phi'_2 \, ,
   \phi''_1 \,\big] \otimes \phi''_2 \,\big]  \,\; =   \hfill  \cr
   \hfill   = \;\,  \big[\, \phi_1 \, , \phi'_1 \,\big] \otimes \phi_2 \cdot \big[\, \phi'_2 \, ,
   \phi''_1 \,\big] \otimes \phi''_2 \, + \, \phi_1 \cdot \phi'_1 \otimes \big[\, \phi_2 \, ,
   \big[\, \phi'_2 \, , \phi''_1 \,\big] \big] \otimes \phi''_2  \,\; \in \;\,
   \hbar^2 \, J_\hbar^{[\otimes 3|4]}  }  $$
 so that
$ \; \big[ X , \big[ X , Y \,\big]\big] \, \in \, \hbar^{-3} \, \hbar^2 \, J_\hbar^{[\otimes 3|4]} \,
= \, \hbar^{-1} \, J_\hbar^{[\otimes 3|4]} \; $. More in general, iteration yields
\begin{equation}   \label{eq: iter[X,Y]-in-hJadic_x_[phi,phi]_SECOND}
  \big[\, X^{\bullet r_1} Y^{\bullet s_1} \cdots X^{\bullet r_n} Y^{\bullet s_n} \big]
\; \in \;  \hbar^{-S} \, \hbar^{S-1} \, J_\hbar^{\,[\otimes 3|S+1]}  \; = \;
\hbar^{-1} \, J_\hbar^{\,[\otimes 3|S+1]}
\end{equation}
 with notation as before, still using  Lemma \ref{lemma: technic-Hopf}\textit{(c)}.
 Tiding everything up we find that there exists
 $ \, \mathcal{W} \in J_\hbar^{\otimes 3} \subseteq {\fhg}^{\widetilde{\otimes}\, 3} \, $
 such that
 $ \; \cF_{1{}2}^{-1} \cdot \cF_{2{}3} \, = \, \exp\big(\, \hbar^{-1} \mathcal{W} \,\big) \; $;
 \,moreover, by  \eqref{eq: BCH-expansion}  and  \eqref{eq: BCH-expansion_truncated}
 along with the previous analysis we can write
\begin{equation}   \label{eq: F12-23=exp(...)}
  \cF_{1,2}^{-1} \cdot \cF_{2,3}  \; =  \;  \exp\Big(\, \hbar^{-1} \Big(\! -\phi_{1,2} \, + \,
  \phi_{2,3} \, - \, \hbar^{-1} \, 2^{-1} \big[\, \phi_{1,2} \, , \, \phi_{2,3} \,\big]  \, + \,
  \cO\Big( J_\hbar^{[\otimes\, 3|4]} \Big) \Big)
\end{equation}
   \indent   Finally, comparing  \eqref{eq: DeltaF = exp(hZ) CONTR},
   \eqref{eq: F12-23=exp(...)}  and \eqref{eq: twist-modified}  we get the identity in
   $ {\fhg}^{\widetilde{\otimes}\, 3} $
  $$  \displaylines{
   \big( \Delta \otimes \text{id} \big)(\phi) \, - \, \big( \text{id} \otimes \Delta \big)(\phi) \; -
   \hfill  \cr
   \hskip11pt   - \, \hbar^{-1} 2^{-1} \big[ \phi_{1,3} \, ,\, \phi_{1,2} \big] -
   \hbar^{-1} 2^{-1} \big[ \phi_{2,3} \, , \, \phi_{1,2} \big] - \hbar^{-1} 2^{-1} \big[ \phi_{2,3} \, ,
   \, \phi_{1,3} \big]  \, + \,  \cO\Big( J_\hbar^{[\otimes\, 3|4]} \Big)  \,\; =   \hfill  \cr
  \hfill   = \;\,  - \phi_{1,2} \, + \, \phi_{2,3} \, - \, \hbar^{-1} \, 2^{-1} \big[\, \phi_{1,2} \, ,
  \, \phi_{2,3} \,\big]  \, + \,  \cO\Big( J_\hbar^{[\otimes\, 3|4]} \Big)  }  $$
 that in turn, through simplification and reduction modulo
 $ \, \hbar \, {\fhg}^{\widetilde{\otimes}\, 3} \, $,  \,yields the following identity inside
 $ {\fg}^{\widetilde{\otimes}\, 3} \, $
\begin{equation}  \label{eq: polar twist(Lie) modulo m4}
 \begin{aligned}
    &  \big( \Delta \otimes \text{id} \big)\big(\,\overline{\phi}\,\big) \, - \,
    \big( \text{id} \otimes \Delta \big)\big(\,\overline{\phi}\,\big) \, + \,
    \overline{\phi}_{1,2} \, - \, \overline{\phi}_{2,3} \; +  \\
   &  \hskip41pt   + \; 2^{-1} \big\{\, \overline{\phi}_{1,2} \, ,
   , \overline{\phi}_{1,3} \,\big\} \, + \, \big\{\, \overline{\phi}_{1,2} \, ,
   \, \overline{\phi}_{2,3} \,\big\} \, + \, 2^{-1} \big\{\, \overline{\phi}_{1,3} \, ,
   \, \overline{\phi}_{2,3} \,\big\}  \; \underset{\liem^{[\otimes\, 3\,|4]}}{\equiv} \;  0
 \end{aligned}
\end{equation}
 where hereafter we adopt the notation for which  $ \, \overline{\varphi} \, $
 denotes the coset modulo  $ \, \hbar \, $  of any element
 $ \, \varphi \in {\fhg}^{\widetilde{\otimes}\, 3} \, $  with  $ \, n \in \NN_+ \, $.
 \vskip5pt

   Now let  $ \, \Bbbk\big[\mathbb{S}_3\big] \, $  act onto  $ {\fhg}^{\widetilde{\otimes}\, 3} $
   and consider in particular the action of the antisymmetrizer
 $ \; \textsl{Alt}_{\,3} := \big( \id - (1\,2) - (2\,3) - (3\,1) + (1\,2\,3) + (3\,2\,1) \big) \; $
 onto the above identity: this in turn yields a new identity.
 Within the latter, we have a first contribution of the form
  $$  \displaylines{
   \textsl{Alt}_{\,3\,}.\Big( \big( \Delta \otimes \text{id} \big)\big(\,\overline{\phi}\,\big) \, - \,
   \big( \text{id} \otimes \Delta \big)\big(\,\overline{\phi}\,\big) \Big)  \,\; =   \hfill  \cr
   \hfill   = \;\,  \big(\, \nabla \otimes \text{id} \big)\big(\,\overline{\phi}\,\big) -
   \big(\, \nabla \otimes \text{id} \big)\big(\,\overline{\phi}_{2,1}\big) \; + \; \text{c.p}  \,\; =
   \;\,  \big(\, \nabla \otimes \text{id} \big)\big(\,\overline{\phi_a}\,\big) \; + \; \text{c.p}  }  $$
 and a second contribution of the form
  $$  \textsl{Alt}_{\,3\,}.\big(\, \overline{\phi}_{1,2} \, - \,
  \overline{\phi}_{2,3} \,\big)  \,\; = \;\,  0  $$
   \indent   The third and last contribution is
  $$  \textsl{Alt}_{\,3\,}.\Big(\, 2^{-1} \big\{\, \overline{\phi}_{1,2} \, , \,
  \overline{\phi}_{1,3} \,\big\} \Big) \, + \,
  \textsl{Alt}_{\,3\,}.\Big(\, \big\{\, \overline{\phi}_{1,2} \, ,
  \, \overline{\phi}_{2,3} \,\big\} \Big) \, + \,
  \textsl{Alt}_{\,3\,}.\Big(\, 2^{-1} \big\{\, \overline{\phi}_{1,3} \, ,
  \, \overline{\phi}_{2,3} \,\big\} \Big)  $$
 We go and compute the first summand, as follows:
  $$
  \displaylines{
   \textsl{Alt}_{\,3\,}.\Big(\, 2^{-1} \big\{\, \overline{\phi}_{1,2} \, ,
   \, \overline{\phi}_{1,3} \,\big\} \Big)  \,\; = \;\,
   \big( \id - (2\,3) \big).\Big(\, 2^{-1} \big\{\, \overline{\phi}_{1,2} \, ,
   \, \overline{\phi}_{1,3} \,\big\} \Big) \; + \; \text{c.p.}  \,\; =   \hfill  \cr
   \hfill   = \;\,  \Big(\, 2^{-1} \big\{\, \overline{\phi}_{1,2} \, , \,
   \overline{\phi}_{1,3} \,\big\} \; - \; 2^{-1} \big\{\, \overline{\phi}_{1,3} \, ,
   \, \overline{\phi}_{1,2} \,\big\} \Big) \; + \; \text{c.p.}  \,\; = \;\,
   \big\{\, \overline{\phi}_{1,2} \, , \, \overline{\phi}_{1,3} \,\big\} \; + \; \text{c.p.}  }
   $$
 A similar analysis applies to the third summand, which yields
  $$  \displaylines{
   \textsl{Alt}_{\,3\,}.\Big(\, 2^{-1} \big\{\, \overline{\phi}_{1,3} \, , \, \overline{\phi}_{2,3} \,\big\} \Big)  \,\; = \;\,  \big( \id - (1\,2) \big).\Big(\, 2^{-1} \big\{\, \overline{\phi}_{1,3} \, , \, \overline{\phi}_{2,3} \,\big\} \Big) \; + \; \text{c.p.}  \,\; =   \hfill  \cr
   \hfill   = \;\,  \Big(\, 2^{-1} \big\{\, \overline{\phi}_{1,3} \, , \, \overline{\phi}_{2,3} \,\big\} \; - \; 2^{-1} \big\{\, \overline{\phi}_{2,3} \, , \, \overline{\phi}_{1,3} \,\big\} \Big) \; + \; \text{c.p.}  \,\; = \;\,  \big\{\, \overline{\phi}_{1,3} \, , \, \overline{\phi}_{2,3} \,\big\} \; + \; \text{c.p.}  }  $$
 whereas for the second summand instead we get
  $$
  \displaylines{
   \textsl{Alt}_{\,3\,}.\Big( \big\{\, \overline{\phi}_{1,2} \, , \,
   \overline{\phi}_{2,3} \,\big\} \Big)  \,\; = \;\,
   \big( \id - (1\,3) \big).\Big( \big\{\, \overline{\phi}_{1,2} \, ,
   \, \overline{\phi}_{2,3} \,\big\} \Big) \; + \; \text{c.p.}  \,\; =   \hfill  \cr
   \hfill   = \;\,  \big\{\, \overline{\phi}_{1,2} \, , \,
   \overline{\phi}_{2,3} \,\big\} \; - \; \big\{\, \overline{\phi}_{3,2} \, ,
   \, \overline{\phi}_{2,1} \,\big\} \; + \; \text{c.p.}  }
   $$
 Putting all these together we find
  $$  \displaylines{
   \textsl{Alt}_{\,3\,}.\Big(\, 2^{-1} \big\{\, \overline{\phi}_{1,2} \, , \,
   \overline{\phi}_{1,3} \,\big\} \Big) \, + \,
   \textsl{Alt}_{\,3\,}.\Big(\, \big\{\, \overline{\phi}_{1,2} \, , \,
   \overline{\phi}_{2,3} \,\big\} \Big) \, + \,
   \textsl{Alt}_{\,3\,}.\Big(\, 2^{-1} \big\{\, \overline{\phi}_{1,3} \, ,
   \, \overline{\phi}_{2,3} \,\big\} \Big)  \,\; =   \hfill  \cr
   = \;  \big\{\, \overline{\phi}_{1,2} \, , \, \overline{\phi}_{1,3} \big\} \, + \,
   \big\{\, \overline{\phi}_{1,2} \, , \, \overline{\phi}_{2,3} \big\} \, - \,
   \big\{\, \overline{\phi}_{3,2} \, , \, \overline{\phi}_{2,1} \big\} \, + \,
\big\{\, \overline{\phi}_{1,3} \, , \, \overline{\phi}_{2,3} \big\} \, + \, \text{c.p.}  \; = \;
\big\{\!\big\{\, \overline{\phi_a} \, , \, \overline{\phi_a} \,\big\}\!\big\}  }  $$
 where the very last identity follows from a routine calculation.
 Joint with the previously found identities,
 the latter gives yet the following, last one, which is the result of letting
 $ \, \textsl{Alt}_{\,3} \, $  act onto the congruence  \eqref{eq: polar twist(Lie) modulo m4}:
  $$  \Big(\! \big(\, \nabla \otimes \id \,\big)\big(\, \overline{\phi_a} \,\big) \, + \,
  \text{c.p.} \Big) \, + \, \big\{\!\big\{\, \overline{\phi_a} \, , \, \overline{\phi_a} \,\big\}\!\big\}
  \,\; \underset{\liem^{[\otimes\, 4]}}{\equiv} \;\,  0  $$
 At last, recalling that  $ \, c := \overline{\phi_a} \, \big(\, \text{mod\ } \liem^2 \big) \, $  in  $ \, \liem \Big/ \liem^2 = \lieg^* \, $,  and that in the latter Lie bialgebra the Lie cobracket, resp.\ the Lie bracket, is given by  $ \, \nabla \, $,  resp.\ by  $ \, [\,\ ,\ ] \, $,  reduced modulo  $ \liem^2 \, $,  the last formula above   --- in  $ \liem^{\widehat{\otimes}\, 3} $  ---   implies
  $$  \Big(\! \big(\, \delta \otimes \id \,\big)(c\,) \, + \, \text{c.p.} \Big) \; + \; [[\, c \, , c \,]]  \,\; = \;\,  0  $$
 within  $ {\big( \lieg^* \big)}^{\otimes 3} $,  \,which implies exactly that
 $ \, c \, $   --- which is antisymmetric by construction ---
 is an antisymmetric twist for the Lie bialgebra
 $ \, \lieg^* = \liem \Big/ \liem^2 \, $,  \;q.e.d.
 \vskip9pt
\textit{(c)}\,  We adopt the following notational convention: any element in  $ \fhg $  will be denoted by an italic letter, say  $ \, f \in \fhg \, $;  then its coset modulo  $ \, \hbar\,\fhg \, $  will be denoted with a line over that letter, say  $ \, \overline{f} := \big(\, f \ \text{mod} \ \hbar\,\fhg \,\big) \, $,  and finally the coset of the latter modulo  $ \, \liem^2 \, $  will be denoted by the corresponding letter in roman font, say  $ \, \text{f} := \big(\, \overline{f} \ \text{mod} \ \liem^2 \,\big) \, $.  Note also that every element in  $ \, \lieg^* = \liem \Big/ \liem^2 \, $  can be written as such an  $ \; \text{f} = \big(\, \overline{f} \ \text{mod} \ \liem^2 \,\big) \, $  for some  $ \, f \in J_\hbar \in \Ker\big( \epsilon_{{}_\fhg} \big) \, $.
                                                                  \par
   Similar notation will be used for elements in  $ {\fhg}^{\widetilde{\otimes}\,2} $  and their coset modulo  $ \hbar $  and (further on) modulo  $ \, \liem^{[\otimes 2\,|3]} := \liem \otimes \liem^2 + \liem^2 \otimes \liem \, $.

\vskip7pt

   Recall that the Lie cobracket induced on  $ \, \liem \Big/ \liem^2 = \lieg^* \, $  by the deformed quantization is defined by
\begin{equation*}  \label{eq: deltac(f)}
   \delta^{\,\cF}(\,\text{f}\,)  \; := \;
   \Big( \Delta^\cF - {\big( \Delta^\cF \big)}^{2{}1} \Big)\big(\,\overline{f}\,\big)
   \ \ \text{mod} \  \liem^{[\otimes 2\,|3]}  \; = \;
   \overline{\Delta^\cF(f)} - \overline{{\big( \Delta^\cF \big)}^{\text{op}}(f)}
   \,\ \ \text{mod} \  \liem^{[\otimes 2\,|3]}
\end{equation*}
 so we start computing  $ \, \Delta^\cF(f) \, $.  Definitions give
  $$  \displaylines{
   \Delta^\cF(f)  \,\; = \;\,  \Ad(\,\cF\,)\big(\Delta(f)\big)  \, = \,
   \Ad(\cF\,)\big(\, f_{(1)} \otimes f_{(2)} \big)  \,\; =   \hfill  \cr
   \hfill   = \;\,  \Ad(\cF\,)\Big(\! \big(\, f_{(1)} \otimes 1 \,\big)
   \cdot \big(\, 1 \otimes f_{(2)} \big) \!\Big)  \,\; = \;\,
   \Ad(\cF\,)\big(\, f_{(1)} \otimes 1 \,\big) \cdot \Ad(\cF\,)\big(\, 1 \otimes f_{(2)} \big)  }  $$
 In the last product, we focus on the first factor: thus we get
  $$  \displaylines{
   \Ad(\cF\,)\big(\, f_{(1)} \otimes 1 \big)  \,\; = \;\,
   \Ad\big(\exp\!\big( \hbar^{-1} \phi \big)\big)\big(\, f_{(1)} \otimes 1 \,\big)  \,\; = \;\,
   \exp\big(\ad\!\big( \hbar^{-1} \phi \big)\big)\big(\, f_{(1)} \otimes 1 \,\big)  \,\; =   \hfill  \cr
   = \;\,  {\textstyle \sum\limits_{n=0}^{+\infty}} \, \frac{\,1\,}{\,n!\,} \;
   {\ad\!\big( \hbar^{-1} \phi \big)}^n \big(\, f_{(1)} \otimes 1 \,\big)  \,\; = \;\,
   {\textstyle \sum\limits_{n=0}^{+\infty}} \, \frac{\,1\,}{\,n!\,} \;
   {\ad\!\big( \hbar^{-1} \phi_1 \otimes \phi_2 \big)}^n \big(\, f_{(1)} \otimes 1 \,\big)  \,\; =  \cr
   \hfill   = \;\,  {\textstyle \sum\limits_{n=0}^{+\infty}} \,
   \frac{\,1\,}{\,n!\,} \; {\ad\!\big( \hbar^{-1} \phi_1 \big)}^n
   \big(\, f_{(1)} \big) \otimes \phi_2^{\;n}  \,\; = \;\,
   f_{(1)} \otimes 1 \, + \, \big[\, \hbar^{-1} \phi_1 \, , f_{(1)} \,\big] \otimes \phi_2 \, + \,
   \cO(2)  }  $$
 that is in short
\begin{equation*}   \label{eq: Ad(F)(f(1)otimes1)}
  \Ad(\cF\,)\big(\, f_{(1)} \otimes 1 \big)  \,\; = \;\,  f_{(1)} \otimes 1 \, + \,
  \big[\, \hbar^{-1} \phi_1 \, , f_{(1)} \,\big] \otimes \phi_2 \, + \, \cO(2)
\end{equation*}
 where hereafter  $ \, \cO(2) \, $  denotes any element in  $ \, J_\hbar^{[\otimes 2\,|3]} \, $.  A similar calculation yields
\begin{equation*}   \label{eq: Ad(F)(1otimesf(2))}
  \Ad(\cF\,)\big(\, 1 \otimes f_{(2)} \big)  \,\; = \;\,  1 \otimes f_{(2)} \, +
  \, \phi_1 \otimes \big[\, \hbar^{-1} \phi_2 \, , f_{(2)} \,\big] \, + \, \cO(2)
\end{equation*}
 Pasting together the last two identities we find
  $$  \displaylines{
   \Delta^\cF(f)  \,\; = \;\,  \Ad(\cF\,)\big(\, f_{(1)} \otimes f_{(2)} \big)  \,\; = \;\,
   \Ad(\cF\,)\big(\, f_{(1)} \otimes 1 \,\big) \cdot \Ad(\cF\,)\big(\, 1 \otimes f_{(2)} \big)  \,\; =
   \hfill  \cr
   = \,  \big(\, f_{(1)} \otimes 1 + \big[\, \hbar^{-1} \phi_1 \, , f_{(1)} \,\big] \otimes \phi_2 +
   \cO(2) \big) \cdot \big(\, 1 \otimes f_{(2)} + \phi_1 \otimes \big[\, \hbar^{-1} \phi_2 \, ,
   f_{(2)} \,\big] + \cO(2) \big)  \, =  \cr
   \quad   = \;  f_{(1)} \otimes f_{(2)} \, + \, f_{(1)} \, \phi_1 \otimes \big[\, \hbar^{-1} \phi_2 \, ,
   f_{(2)} \,\big] \, + \, \big[\, \hbar^{-1} \phi_1 \, , f_{(1)} \,\big] \otimes \phi_2 \, f_{(2)} \, + \,
   \cO(2)  \,\; =   \hfill  \cr
   \qquad   = \;  f_{(1)} \otimes f_{(2)} \, + \, \epsilon\big(f_{(1)}\big) \,
   \phi_1 \otimes \big[\, \hbar^{-1} \phi_2 \, , f_{(2)} \,\big] \, + \, \big[\, \hbar^{-1} \phi_1 \, ,
   f_{(1)} \,\big] \otimes \phi_2 \, \epsilon\big(f_{(2)}\big) \, + \, \cO(2)  \,\; =   \hfill  \cr
   \hfill   = \;  f_{(1)} \otimes f_{(2)} \, + \, \phi_1 \otimes \big[\, \hbar^{-1} \phi_2 \, ,
   f \,\big] \, + \, \big[\, \hbar^{-1} \phi_1 \, , f \,\big] \otimes \phi_2 \, + \, \cO(2)  \,\; =  \cr
   \hfill   = \;  \Delta(f) \, + \, \phi_1 \otimes \big[\, \hbar^{-1} \phi_2 \, , f \,\big] \, + \,
   \big[\, \hbar^{-1} \phi_1 \, , f \,\big] \otimes \phi_2 \, + \, \cO(2)  }  $$
 so that, eventually, we get in short
\begin{equation*}   \label{eq: DeltaF(f)}
  \Delta^\cF(f)  \,\; \underset{J_\hbar^{[\otimes 2\,|3]}}{\equiv} \;\,  \Delta(f) \, + \,
  \phi_1 \otimes \big[\, \hbar^{-1} \phi_2 \, , f \,\big] \, + \,
  \big[\, \hbar^{-1} \phi_1 \, , f \,\big] \otimes \phi_2
\end{equation*}
 Therefore, for  $ \, \nabla_{\!\cF} := \Delta^\cF - {\big( \Delta^\cF \big)}^{2{}1} \, $  we get
\begin{equation}   \label{eq: NablaF(f)}
 \begin{aligned}
   \nabla_{\!\cF}(f)  &  \,\; \underset{J_\hbar^{[\otimes 2\,|3]}}{\equiv} \;\,
   \Delta(f) \, + \, \phi_1 \otimes \big[\, \hbar^{-1} \phi_2 \, , f \,\big] \, +
   \, \big[\, \hbar^{-1} \phi_1 \, , f \,\big] \otimes \phi_2 \, -  \\
   &  \hskip21pt   - \, \Delta^{\text{op}}\!(f) \, - \, \big[\, \hbar^{-1} \phi_2 \, ,
   f \,\big] \otimes \phi_1 \, - \, \phi_2 \otimes \Big[\, \hbar^{-1} \phi_1 \, , f \,\Big]  \,\; =  \\
   &  \hskip33pt   = \;\,  \nabla(f) \, + \,
   \phi_1^{(a)} \otimes \hbar^{-1} \Big[\, \phi_2^{(a)} , \, f \,\Big] \, + \,
   \hbar^{-1} \Big[\, \phi_1^{(a)} , \, f \,\Big] \otimes \phi_2^{(a)}  \; =  \\
   &  \hskip33pt   = \;\,  \nabla(f) \, - \, \phi_1^{(a)} \otimes \hbar^{-1}
   \Big[\, f \, , \, \phi_2^{(a)} \Big] \, - \,
   \hbar^{-1} \Big[\, f \, , \, \phi_1^{(a)} \Big] \otimes \phi_2^{(a)}
 \end{aligned}
\end{equation}
 where we used notation  $ \; \phi_a := \phi - \phi_{\,2{}1} =
 \phi_1^{(a)} \otimes \phi_2^{(a)} \; $.
 When we reduce the last identity in  \eqref{eq: NablaF(f)}  modulo
 $ \, \hbar \, J_\hbar^{\otimes 2} \, $  we end up with
  $$
  \nabla_{\!\cF}\big(\,\overline{f}\,\big)  \, \underset{\liem^{[\otimes 2\,|3]}}{\equiv} \,
  \nabla\big(\,\overline{f}\,\big) \, - \, \overline{\phi_1^{(a)}} \otimes \Big\{\, \overline{f} \, , \,
  \overline{\phi_2^{(a)}} \,\Big\} \, - \, \Big\{\, \overline{f} \, , \, \overline{\phi_1^{(a)}} \,\Big\}
  \otimes \overline{\phi_2^{(a)}}
  $$
 hence reducing the latter modulo  $ \liem^{[\otimes 2\,|3]} $  we find in
 $ \; \liem^{\otimes 2} \! \Big/ \liem^{[\otimes 3]} = \lieg^* \otimes \lieg^* \, $
 the identity
  $$
  \displaylines{
   \quad   \Big(\, \nabla_{\!\cF} \text{\ mod\ } \liem^{[\otimes 2\,|3]} \Big)(\,\text{f}\,)  \,\; =
   \;\,  \delta(\,\text{f}\,) \, - \, c_{\,1} \otimes \big[\, \text{f} \, , \, c_{\,2} \big] \, -
   \, \big[\, \text{f} \, , \, c_{\,1} \big] \otimes c_{\,2}  \,\; =   \hfill  \cr
   \hfill   = \;\,  \delta(\,\text{f}\,) \, - \, \big(\,\ad(\,\text{f}\,)\big)(c)  \,\; = \;\,
   \big(\, \delta - \partial_c \big)(\,\text{f}\,)  \,\; = \;\,  \delta^{\,c}(\,\text{f}\,)   \quad  }
   $$
 which means that the induced Lie cobracket on
 $ \, \liem \Big/ \liem^2 = \lieg^* \, $  is just  $ \delta^{\,c} $,  \,q.e.d.
\epf

\begin{exa}  \label{example: qs-tw x QFSHA}
 Let  $ \, G := \textit{GL}_{\,n}(\k) \, $  be the general linear group over  $ \k \, $,
 and  $ \, \lieg := \mathfrak{gl}_n(\k) \, $.  We consider the QUEA
 $ \, \uhg = U_\hbar\big(\,\mathfrak{gl}_n(\k)\hskip-1pt\big) \, $  and the QFSHA
 $ \, \fhg = F_\hbar\big[\big[ \textit{GL}_{\,n}(\k) \big]\big] \, $  introduced in
 Example \ref{example: toral 2-cocycles for QFSHAs}.  Letting  $ \lieb^- $  and
 $ \lieb^+ $  be the Borel Lie subalgebras in  $ \lieg $  of lower triangular and upper
 triangular matrices, respectively, the subalgebra  $ \uhbm $  of  $ \uhg $
 generated by the  $ F_i $'s  and the  $ \varGamma_k $'s  is a QUEA for  $ \lieb^- \, $,
 while the subalgebra  $ \uhbp $  generated by the  $ E_i $'s  and the
 $ \varGamma_k $'s  is a QUEA for  $ \lieb^+ \, $
 --- both being also Hopf subalgebras of  $ \uhg \, $, indeed.  Dually, the QFSHA
 $ \, \fhbm = {\uhbm}^* \, $  identifies with the Hopf algebra quotient of  $ \fhg $
 obtained by modding out the ideal generated by the  $ x_{i,\,j} $'s  with  $ \, i < j \, $;
 similarly, the QFSHA  $ \, \fhbp = {\uhbp}^* \, $  identifies with the Hopf algebra quotient of
 $ \fhg $  obtained by modding out the ideal generated by the  $ x_{i,\,j} $'s  with
 $ \, i > j \, $.  Therefore, from the presentation of  $ \fhg $  in
 Example \ref{example: toral 2-cocycles for QFSHAs}  one deduces the following
 presentations for these quotient Hopf algebras:  $ \fhbm $  is generated by the entries
 of the ``lower triangular  $ q $--matrix''
 $ \, {\big( x^-_{i,\,j} \big)}_{i=1,\dots,n;}^{j=1,\dots,n;} \, $  with  $ \, x^-_{i,\,j} := x_{i,\,j} \, $
 for all  $ \, i \geq j \, $  and  $ \, x^-_{i,\,j} := 0 \, $  for all  $ \, i < j \, $,  \,and similarly
 $ \fhbp $  is generated by the entries of the ``upper triangular  $ q $--matrix''
 $ \, {\big( x^+_{i,\,j} \big)}_{i=1,\dots,n;}^{j=1,\dots,n;} \, $  where
 $ \, x^+_{i,\,j} := x_{i,\,j} \, $  when  $ \, i \leq j \, $  and  $ \, x^+_{i,\,j} := 0 \, $  for
 $ \, i > j \, $.
 \vskip5pt
   Now we consider a new group  $ \, G \, $  which is ``double version'' of
   $ \textit{GL}_{\,n}(\k) \, $,  in that it is a Manin double of  $ B^- $  and  $ B^+ \, $;
   its tangent Lie algebra  $ \lieg $  then is the Manin double of  $ \lieb^- $  and
   $ \lieb^+ \, $;  in particular,  $ \, G = B^- \times B^+ \, $  as algebraic varieties
   (not as groups), with  $ B^- $  and  $ B^+ $  being embedded as subgroups,
   whereas  $ \, \lieg = \lieb^- \oplus \lieb^+ \, $  as vector spaces, with  $ \lieb^- $  and
   $ \lieb^+ $  being embedded as Lie subalgebras (this case is explained in detail in
   \cite{GaGa2}  when  $ G $  is a ``double version'' of a semisimple (connected) group:
   $ \, G = \textit{SL}_{\,n}(\k) \, $  is such an example, and  $ \textit{GL}_{\,n}(\k) $
   is just a very slight variation of that).
                                                                       \par
   For these new  $ G $  and  $ \lieg $,  a QUEA  $ \uhg $ is defined as follows:
   it is the unital, associative,  $ \hbar $--adically  complete  $ \kh $--algebra  with generators
  $$
  F_1 \, , F_2 \, , \dots, F_{n-1} \, , \varGamma^-_1 \, , \varGamma^-_2 \, , \dots ,
  \varGamma^-_{n-1} \, , \varGamma^-_n \, , \varGamma^+_1 \, , \varGamma^+_2 \, ,
  \dots , \varGamma^+_{n-1} \, , \varGamma^+_n \, ,
  E_1 \, , E_2 \, , \dots , E_{n-1}
  $$
 and relations
  $$
  \displaylines {
   \big[\varGamma^\pm_k \, , \varGamma^\pm_\ell\big] = 0  \; ,  \quad
   \big[\varGamma^\pm_k \, , F_j\big] = - \delta_{k,j} \, F_j \; ,
   \quad  [\varGamma^\pm_k \, , E_j] = +\delta_{k,j} \, E_j  \; ,
   \quad  \big[\varGamma^\pm_k \, , \varGamma^\mp_\ell\big] = 0  \cr
   \big[E_i \, , F_j\big] \, = \, \delta_{i,j} \, {{\;\; e^{\hbar\,(\varGamma^+_i -
   \varGamma^+_{i+1})} - e^{\hbar\,(\varGamma^-_{i+1} - \varGamma^-_i)} \;\;} \over
   {\;\; e^{+\hbar} - e^{-\hbar} \;\;}}  \cr
   \hfill   \big[E_i \, , E_j\big] \, = \, 0  \;\; ,  \qquad  \big[F_i \, , F_j\big] \, = \, 0   \hfill
\forall \;\; i \, , j \;\colon \vert i - j \vert > 1 \, \phantom{.} \;  \cr
   \hfill   E_i^2 \, E_j - \left( q + q^{-1} \right) E_i \, E_j \, E_i + E_j \, E_i^2 \, = \, 0
\hfill  \forall \;\; i \, , j \;\colon \vert i - j \vert = 1 \, \phantom{.} \;  \cr
   \hfill   F_i^2 \, F_j - \left( q + q^{-1} \right) F_i \, F_j \, F_i + F_j \, F_i^2 \, = \, 0
\hfill  \forall \;\; i \, , j \;\colon \vert i - j \vert = 1 \, . \;  \cr }
$$
 where  $ \, [X\,,Y] := X\,Y - Y\,X \, $  again.
 The Hopf structure then is given by
  $$
  \displaylines {
   \Delta(F_i) \, = \, F_i \otimes e^{\hbar\,(\varGamma^-_{i+1} - \varGamma^-_i)} + 1 \otimes
F_i \; ,  \;\;\quad  S(F_i) \, = \, -F_i \, e^{\hbar\,(\varGamma^-_i - \varGamma^-_{i+1})}
\; ,  \;\;\;\quad  \epsilon(F_i) \, = \, 0  \cr
   \quad \,  \Delta\big(\varGamma^\pm_k\big) \, = \, \varGamma^\pm_k \otimes 1 + 1
   \otimes \varGamma^\pm_k \; ,  \;\quad \qquad \qquad  S\big(\varGamma^\pm_k\big) \, =
   \, -\varGamma^\pm_k \; ,  \;\; \quad \qquad  \epsilon\big(\varGamma^\pm_k\big) \, = \, 0  \cr
   \Delta(E_i) \, = \, E_i \otimes 1 + e^{\hbar\,(\varGamma^+_i - \varGamma^+_{i+1})}
   \otimes
E_i \; ,  \;\;\quad  S(E_i) \, = \, -e^{\hbar\,(\varGamma^+_{i+1} - \varGamma^+_i)} \, E_i \; ,
\;\;\quad
\epsilon(E_i) \, = \, 0  }
$$
 \vskip3pt
   In fact, this  $ \uhg $  can be realized as a  \textsl{quantum double\/}  of  $ \uhbm $  and
   $ \uhbp \, $:  \,in particular, then, we have a decomposition
   $ \, \uhg = \uhbm \,\widehat{\otimes}\, \uhbp \, $  as coalgebras.
   Dually, the latter implies that for the QFSHA  $ \, \fhg := {\uhg}^* \, $
   we have an identification  \textsl{as algebras}
  $$
  \fhg  \, = \,  {\big(\uhbm \,\widehat{\otimes}\, \uhbp\big)}^*  \, =
  \,  {\uhbm}^* \,\widetilde{\otimes}\, {\uhbp}^*  \, = \,  \fhbm \,\widetilde{\otimes}\, \fhbp
  $$
 As a consequence, exploiting the presentations of  $ \fhbm $  and  $ \fhbp $
 given above, we find in a nutshell a presentation for  $ \fhg $
 as being the algebra generated by the entries of the  ``$ q $--matrix  in blocks''
  $$
  \begin{pmatrix}
   \!\; X^+  &  \!\! \mathbf{0}_{n \times n} \,  \\
   \ \mathbf{0}_{n \times n}  &  \!\! X^- \,
      \end{pmatrix}  \qquad  \text{with}  \quad  X^\pm :=
      {\big( x^\pm_{i,\,j} \big)}_{i=1,\dots,n;}^{j=1,\dots,n;}
      \text{\ \ \ as defined above (triangular).}
      $$
 Moreover, explicit identifications  $ \, \fhg = {\uhg}^\ast \, $  and  $ \, \uhg = {\fhg}^\star \, $
 can be encoded in the Hopf pairing
 $ \; \langle\,\ ,\ \rangle : \fhg \times \uhg \relbar\joinrel\relbar\joinrel\longrightarrow \kh \; $
 given on generators by the following values:
\begin{equation}   \label{eq: pairing x fhg uhg double}
 \hskip-3pt
  \begin{aligned}
   \hskip-11pt   \Big\langle x^-_{i,\,j} \, , {\textstyle \prod_{k=1}^n}
   {\big( \varGamma^+_k \big)}^{g_k} \Big\rangle = 0 = \big\langle x^-_{i,\,j} \, ,
   E_t \big\rangle  \;\; ,  \!\!\quad  \big\langle x^+_{i,\,j} \, , F_t \big\rangle = 0 =
   \Big\langle x^+_{i,\,j} \, , {\textstyle \prod_{k=1}^n} {\big( \varGamma^-_k \big)}^{g_k}
   \Big\rangle   \hskip-15pt  \\
   \big\langle x^-_{i,\,j} \, , F_t \big\rangle = \delta_{i,j+1} \, \delta_{t,j}  \qquad ,
   \qquad \qquad  \big\langle x^+_{i,\,j} \, , E_t \big\rangle = \delta_{i+1,j} \, \delta_{i,t}
   \hskip39pt  \\
   \Big\langle x^+_{i,\,j} \, , {\textstyle \prod_{k=1}^n} {\big( \varGamma^+_k \big)}^{g_k}
   \Big\rangle \, = \, \delta_{i,j} \, (1- \delta_{g_i,0}) \, {\textstyle \prod_{k\not=i}} \delta_{g_k,0} \,
   = \, \Big\langle x^-_{i,\,j} \, , {\textstyle \prod_{k=1}^n} {\big( \varGamma^-_k \big)}^{g_k}
   \Big\rangle
  \end{aligned}
\end{equation}
 In particular, from the first line in  \eqref{eq: pairing x fhg uhg double}  note that if
 $ \underline{\varGamma}_1 $  and  $ \underline{\varGamma}_2 $
 are two monomials in the  $ \varGamma^\pm_k $'s,  then for all  $ \, i = 1 , \dots , n \, $
 we have
\begin{equation}  \label{eq: x_ii as as characters}
  \Big\langle x^\pm_{i,\,i} \, , \,
  \underline{\varGamma}_1 \cdot \underline{\varGamma}_2 \Big\rangle  \; = \;
  \Big\langle x^\pm_{i,\,i} \, , \,
  \underline{\varGamma}_1 \Big\rangle \cdot \Big\langle x^\pm_{i,\,i} \, , \,
  \underline{\varGamma}_2 \Big\rangle
\end{equation}
 \vskip5pt
   Thanks to  Proposition \ref{prop: duality-"polar"deforms}  later on, any polar twist for
   $ \fhg $  can be seen as a polar 2--cocycle for  $ \, \uhg = {\fhg}^\star \, $.
   Now, some examples of the latter were introduced in
   Example \ref{example: qs-2-coc x QUEA}  above for a large class of QUEA,
   including that for  $ \, \lieg = \mathfrak{sl}_n(\k) \, $.  The same procedure can be
   applied to the present case, which is a slight variation of that case applied to
   $ \mathfrak{gl}_n(\k) $  instead of  $ \mathfrak{sl}_n(\k) \, $,  as follows.
 \vskip3pt
   Let  $ \lieh $  be the  $ \kh $--span  of
   $ \, B_\varGamma := \big\{\, \varGamma^+_k , \varGamma^-_k \,\big|\, k=1,
   \dots,n \,\big\} \, $.
   Then fix an antisymmetric,  $ \kh $--bilinear  map
   $ \, \chi : \lieh \times \lieh \relbar\joinrel\longrightarrow \kh \, $
   whose matrix of values on pairs of elements in the  $ \kh $--basis  $ B_\varGamma $  is
   $ \, X = {\Big(\, \chi^{\varepsilon,\,\eta}_{k,\,t} =
   \chi\big( \varGamma^\varepsilon_k \, ,
   \varGamma^\eta_t \,\big) \!\Big)}_{k,t=1,\dots,n;}^{\varepsilon , \,
   \eta \in \{+\,,-\}} \in
   \lieso_{2n}\big(\kh\big) \, $.  Any such map  $ \chi $  also induces uniquely an
   antisymmetric,
   $ \kh $--bilinear  map  $ \, \tilde{\chi}_{\scriptscriptstyle U} \, $  on
   $ \, U_\hbar(\lieh) = \widehat{S}_\kh(\lieh) := \widehat{\bigoplus\limits_{n \in \NN}
   S_\kh^{\,n}(\lieh)} \, $  with values in  $ \kh \, $,
 by setting
\begin{equation}  \label{eq: chitilde-BIS}
  \begin{aligned}
    \tilde{\chi}_{\scriptscriptstyle U}(z,1) := \epsilon(z) =:
    \tilde{\chi}_{\scriptscriptstyle U}(1,z)
\qquad \qquad \qquad  \forall \;\; z \in \widehat{S}_\kh(\lieh)   \hskip45pt \qquad  \\
    \tilde{\chi}_{\scriptscriptstyle U}(x,y) := \chi(x,y)   \qquad \hskip55pt \qquad
\forall \;\; x, y \in  S_\kh^{\,1}(\lieh)   \qquad \hskip39pt  \\
   \hfill \qquad  \tilde{\chi}_{\scriptscriptstyle U}(x,y) := 0   \quad \qquad
\forall \;\; x \in S_\k^{\,r}(\lieh) \, ,   \; y \in S_\k^{\,s}(\lieh) \, :
\, r, s \geq 1 \, , \; r+s > 2   \hskip5pt \hfill
  \end{aligned}
\end{equation}
 Then we can define a  $ \kh $--linear  map
 $ \; \chi_{\scriptscriptstyle U} := e^{\hbar^{-1} 2^{-1} \tilde{\chi}_{{}_U}} =
 {\textstyle \sum_{m \geq 0}} \, \hbar^{-m} \, {\tilde{\chi}_{{}_U}}^{\,\ast\,m} \!
 \Big/ 2^m\,m! \; $
 from  $ \, U_\hbar(\lieh) \mathop{\widehat{\otimes}}\limits_\kh U_\hbar(\lieh) \, $
 to  $ \khp $,  \,which   --- like in  \cite[Lemma 5.2.2]{GaGa2}  ---
 happens to be a well-defined  \textsl{polar 2--cocycle\/}  for
 $ \, U_\hbar(\lieh) \, $.
                                                           \par
   Assume now that  $ \, \chi $  satisfies the additional constraint
   $ \; \chi(S_i \, ,\,-\,) \, = \, 0 \, = \, \chi(\,-\,,S_i) $  \; for all
   $ \, i \in I := \{1,\dots,n-1\} \, $,  \,where
   $ \, S_i := 2^{-1} \big(\, \varGamma^+_i - \varGamma^+_{i+1} +
   \varGamma^-_i - \varGamma^-_{i+1} \big) \, $  for all  $ \, i \in I \, $;
   note that requiring  $ \; \chi(S_i \, ,\,-\,) \, = \, 0 \; $  for all  $ \, i \in I \, $
   is enough (by antisymmetry), and the latter in turn is equivalent to requiring
   $ \; \chi^{+,\,\eta}_{i,\,t} - \chi^{+,\,\eta}_{i+1,\,t} + \chi^{-,\,\eta}_{i,\,t} -
   \chi^{-,\,\eta}_{i+1,\,t} = 0 \; $  for all  $ \, i \in I \, $,  all  $ \, t = 1, \dots, n-1 \,$
   and all $ \, \eta \in \{+\,,-\} \, $.  Then  $ \chi $  induces a unique  $ \kh $--bilinear
   map  $ \; \overline{\chi} : \overline{\lieh} \times \overline{\lieh}
   \relbar\joinrel\longrightarrow \kh \; $,  \;where
   $ \, \overline{\lieh} := \lieh \big/ \lies \, $  with
   $ \, \lies := \textsl{Span}_\kh\big( {\{\, S_i \,\}}_{i = 1, \dots, n-1;} \big)\, $,
   \,given by
  $ \; \overline{\chi}\,\big( H' \! + \lies \, , H'' \! + \lies \,\big) \, := \,
  \chi\big(H',H''\big) \; $
 for all  $ \, H' , H'' \in \lieh \, $.
                                                           \par
   Now repeat the above construction with  $ \overline{\lieh} $  and
   $ \overline{\chi} $  replacing  $ \lieh $  and  $ \chi \, $:  \,this yields a
   polar  $ 2 $--cocycle  $ \overline{\chi}_{\scriptscriptstyle U} $  for
   $ U_\hbar\big(\,\overline{\lieh}\,\big) \, $.  But now the additional assumption on
   $ \chi $  implies that there exists a  \textsl{Hopf algebra epimorphism}
 $ \; \pi : \uhg \relbar\joinrel\relbar\joinrel\twoheadrightarrow
 \widehat{S}_\kh\big(\,\overline{\lieh}\,\big) \cong U_\hbar\big(\,\overline{\lieh}\,\big) \; $
 given by  $ \, \pi(E_i) := 0 \, $,  $ \, \pi(F_i) := 0 \, $   --- for  $ \, i = 1, \dots, n-1 \, $
 ---  and  $ \, \pi(T) := (T + \lies) \, \in \, \overline{\lieh} \subseteq
 U_\hbar\big(\,\overline{\lieh}\,\big) \, $   --- for  $ \, T \in \lieh \, $.  Finally, we set
 $ \; \sigma_\chi := \overline{\chi}_{{}_U} \circ (\pi \times \pi) \, $,
 \,which is a well-defined polar  $ 2 $--cocycle  for  $ \uhg \, $,
 \,again in the sense of  Definition \ref{def: polar 2cocycle}.  Note that
  $$  \sigma_\chi := \overline{\chi}_{{}_U} \circ (\pi \times \pi) = \exp\big( \hbar^{-1} \, 2^{-1} \,
  \widetilde{\overline{\chi}}_{{}_U} \big) \circ (\pi \times \pi) =
  \exp\big( \hbar^{-1} \, 2^{-1} \, \widetilde{\overline{\chi}}_{{}_U} \circ (\pi \times \pi) \big)
  $$
 \vskip5pt
   Now let us re-think the polar 2--cocycle  $ \sigma_\chi $  for  $ \uhg $
   as a polar twist for  $ \fhg \, $.  First of all,
%
%
 comparing  \eqref{eq: x_ii as as characters}  and  \eqref{eq: chitilde-BIS}
 we deduce that the form  $ \, \widetilde{\overline{\chi}}_{{}_U} \circ (\pi \times \pi) \, $
 in  $ {\Big( {\uhg}^{\widehat{\otimes}\, 2} \Big)}^* $  identifies with
 $ \; \varPhi_\chi \, := \,
%
%
 \sum\limits_{k,\,t=1}^n \sum\limits_{\varepsilon,\,\eta \in \{+,-\}} \!
 \chi_{k,\,t}^{\,\varepsilon,\,\eta} \, y^{\,\varepsilon}_{k,\,k} \otimes y^{\,\eta}_{t,\,t} \; $
 in  $ {\fhg}^{\widetilde{\otimes}\, 2} \, $,  \,where
 $ \, y^{\,\varsigma}_{\ell,\ell} := \log\big( x^{\,\varsigma}_{\ell,\ell} \big) \, $
 is a well-defined element in  $ \fhg \, $.

 Then, exponentiating yields
  $$
  \cF_\chi  \; := \;  \sigma_\chi  \; = \;  \exp\big( \hbar^{-1} \, 2^{-1} \,
  \widetilde{\overline{\chi}}_{{}_U} \circ (\pi \times \pi) \big)  \; = \;
  \exp\big( \hbar^{-1} \, 2^{-1} \, \varPhi_\chi \big)
  $$
 which is exactly the polar twist of  $ \fhg $  we were looking for.
 \vskip5pt
   We can also check directly that this  $ \cF_\chi $
   is indeed a polar twist by an explicit computation.
   We see this in the simplest case, when  $ \, n = 2 \, $;
   \,the other cases are quite similar,
   but require quite another shot of calculations.
                                                                             \par
   We need to compute the coproduct of the  $ x^{\,\varepsilon}_{t,\,t} $'s  in  $ \fhg \, $,
   which is defined (by construction) by the condition
   $ \; \big\langle\, \Delta\big( x^{\,\varepsilon}_{t,\,t} \big) , A \otimes Z \,\big\rangle \, = \,
   \big\langle\, x^{\,\varepsilon}_{t,\,t} \, , A \cdot Z \,\big\rangle \; $  for all  $ \, A \, , Z \in \uhg \, $;  \,since  $ \uhg $  admits the PBW-type basis
  $$  \mathcal{B}  \; := \;  \Big\{\, F^f \, {\big(\varGamma_1^-\big)}^{g_1^-} {\big(\varGamma_2^-\big)}^{g_2^-} {\big(\varGamma_1^+\big)}^{g_1^+} {\big(\varGamma_2^+\big)}^{g_2^+} E^e \,\Big|\, f, g_1^- , g_2^- , g_1^+ , g_2^+ , e \in \NN \,\Big\}  $$
 we can replace  $ A $ and  $ Z $  with any two PBW monomials from  $ \mathcal{B} \, $.  Now, let us say that a PBW monomial of the form  $ \, \mathcal{M} = F^f \, {\big( \varGamma_1^- \big)}^{g_1^-} {\big( \varGamma_2^- \big)}^{g_2^-} {\big( \varGamma_1^+ \big)}^{g_1^+} {\big( \varGamma_2^- \big)}^{g_2^+} E^e \, $  belongs to the root space  $ \, (e-\!f) \, \alpha \, $.  Then root/weight considerations easily show that  $ \; \big\langle\, x^{\,\varepsilon}_{t,\,t} \, , \mathcal{M} \,\big\rangle \neq 0 \; $  only for PBW monomials  $ \, \mathcal{M} \, $  in the root space 0, i.e.\ such that  $ \, e = f \, $.  A straightforward computation gives
  $$  E^e \cdot F^f  \; = \;  \sum\limits_{s=0}^{e \wedge f} {\big([s]_q!\big)}^2 \, {\bigg[{e \atop s}\bigg]}_{\!q} \, {\bigg[{f \atop s}\bigg]}_{\!q} \, F^{f-s} \, K_{e,f}(s) \, E^{e-s}  $$
 where  $ \, q := \exp(\hbar) \, $,  $ \, [r]_q := \displaystyle{{\,q^r - q^{-r}\,} \over {\,q - q^{-1}\,}} \, $,  $ \, [m]_q! := \prod\limits_{r=1}^m [r]_q \, $,  $ \, \displaystyle{{\bigg[{\ell \atop s}\bigg]}_{\!q} := {{\,[\ell]_q!\,}
 \over {\,[s]_q! \, [\ell-s]_q!\,}}} \, $  and
  $$  K_{e,f}(s)
 := \prod\limits_{r=1}^s {{\,\;q^{\,2s-e-f+1-r} K_+^{+1} - q^{\,r-1-2s+e+f\,} K_-^{-1}\;\,}
 \over {\,q^{\,r} - q^{\,-r}\,}}  $$
 with  $ \, K_+^{+1} := 1 \otimes {\exp\big(\! + \! \hbar \,
 \big( \varGamma_i^+ - \varGamma_{i+1}^+ \big) \big)}^{\pm 1} \, $  and
 $ \, K_-^{-1} := {\exp\big(\! - \! \hbar \, \big( \varGamma_i^- -
 \varGamma_{i+1}^- \big) \big)}^{\pm 1} \otimes 1 \, $.
 Then the product of two PBW monomials expands into
  $$
  \displaylines{
   \mathcal{M}' \cdot \mathcal{M}''  \; =   \hfill  \cr
   \quad   = \;  F^{f'} {\big(\varGamma_1^-\big)}^{\dot{g}_1^-}
   {\big(\varGamma_2^-\big)}^{\dot{g}_2^-} {\big(\varGamma_1^+\big)}^{\dot{g}_1^+}
   {\big(\varGamma_2^+\big)}^{\dot{g}_2^+} E^{e'} \, \cdot \,
   F^{f''} {\big(\varGamma_1^-\big)}^{\ddot{g}_1^-}
   {\big(\varGamma_2^-\big)}^{\ddot{g}_2^-} {\big(\varGamma_1^+\big)}^{\ddot{g}_1^+}
   {\big(\varGamma_2^+\big)}^{\ddot{g}_2^+} E^{e''}  \; =   \hfill  \cr
   = \;  \sum\limits_{s=0}^{e' \wedge f''} {\big([s]_q!\big)}^2 \,
   {\bigg[{e \atop s}\bigg]}_{\!q} \, {\bigg[{f \atop s}\bigg]}_{\!q} \,
   F^{f'} \underline{\varGamma}^{\,\underline{\dot{g}}} \, F^{f''-s} \,
   K_{e',f''}(s) \, E^{e'-s} \, \underline{\varGamma}^{\,\underline{\ddot{g}}} \, E^{e''}  \; =
   \quad \qquad  \cr
   \hfill   = \;  \sum\limits_{s=0}^{e' \wedge f''}
   q^{\,\mathcal{E}(\,\dot{g},\,f'',\,s\,,e',\,\ddot{g})} \, {\big([s]_q!\big)}^2 \,
   {\bigg[{e' \atop s}\bigg]}_{\!q} \, {\bigg[{f'' \atop s}\bigg]}_{\!q} \, F^{f'+f''-s}
   \underline{\varGamma}^{\,\underline{\dot{g}}} \, K_{e',f''}(s) \,
   \underline{\varGamma}^{\,\underline{\ddot{g}}} \, E^{e'-s+e''}  }
   $$
 where
  $ \, \underline{\varGamma}^{\,\underline{\dot{g}}} :=
  {\big(\varGamma_1^-\big)}^{\dot{g}_1^-} {\big(\varGamma_2^-\big)}^{\dot{g}_2^-}
  {\big(\varGamma_1^+\big)}^{\dot{g}_1^+}
{\big(\varGamma_2^+\big)}^{\dot{g}_2^+} \, $,
  $ \, \underline{\varGamma}^{\,\underline{\ddot{g}}} :=
  {\big(\varGamma_1^-\big)}^{\ddot{g}_1^-} {\big(\varGamma_2^-\big)}^{\ddot{g}_2^-}
  {\big(\varGamma_1^+\big)}^{\ddot{g}_1^+}
{\big(\varGamma_2^+\big)}^{\ddot{g}_2^+} \, $
 while  $ \, \mathcal{E}\big(\,\dot{g}\,,f'',s\,,e',\ddot{g}\big) \in \ZZ \, $
 is a suitable exponent.  When we expand the last expression w.r.t.\ the PBW basis
 $ \mathcal{B} \, $,  the part given by a linear combination of PBW monomials
 in the root space 0 is
  $$  {\big( \mathcal{M}' \cdot \mathcal{M}'' \big)}_0  \; = \;  \delta_{f',\,0} \, \delta_{f'',\,e'} \,
  \delta_{e'',\,0} \, q^{\,\mathcal{E}(\,\dot{g},\,e',\,e',e',\,\ddot{g})} \, {\big([e']_q!\big)}^2 \,
  \underline{\varGamma}^{\,\underline{\dot{g}}} \, K_{e',\,e'}\big(e'\big) \,
  \underline{\varGamma}^{\,\underline{\ddot{g}}}  $$
 Eventually, tiding everything up we find
  $$
  \displaylines{
   \quad   \Big\langle \Delta\big( x^{\,\varepsilon}_{t,\,t} \big) \, ,
   \mathcal{M}' \otimes \mathcal{M}'' \Big\rangle \; = \;
   \Big\langle\, x^{\,\varepsilon}_{t,\,t} \, , \mathcal{M}' \cdot \mathcal{M}'' \Big\rangle \; =
   \;  \Big\langle\, x^{\,\varepsilon}_{t,\,t} \, ,
   {\big( \mathcal{M}' \cdot \mathcal{M}'' \big)}_0 \,\Big\rangle  \; =   \hfill  \cr
   \hfill   = \;  \delta_{f',\,0} \, \delta_{f'',\,e'} \,
   \delta_{e'',\,0} \, q^{\,\mathcal{E}(\,\dot{g},\,e',\,e',\,e',\,\ddot{g})} \,
   {\big([e']_q!\big)}^2 \, \Big\langle\, x^{\,\varepsilon}_{t,\,t} \, , \,
   \underline{\varGamma}^{\,\underline{\dot{g}}} \, K_{e',\,e'}\big(e'\big) \,
   \underline{\varGamma}^{\,\underline{\ddot{g}}} \,\Big\rangle   \quad  }
   $$
 Now, a similar analysis yields (notation being obvious, hopefully)
  $$
  \Big\langle\, x^{\,\varepsilon}_{t,\,t} \, , \, \underline{\varGamma}^{\,\underline{\dot{g}}} \,
  K_{e',\,e'}\big(e'\big) \, \underline{\varGamma}^{\,\underline{\ddot{g}}} \,\Big\rangle  \; = \;
   {\Big\langle\, x^{\,\varepsilon}_{t,\,t} \, , \, \underline{\varGamma} \,
   \Big\rangle}^{\,\underline{\dot{g}}} \cdot
   \Big\langle\, x^{\,\varepsilon}_{t,\,t} \, , \, K_{e',\,e'}\big(e'\big) \Big\rangle \cdot
   {\Big\langle\, x^{\,\varepsilon}_{t,\,t} \, , \,
   \underline{\varGamma} \,\Big\rangle}^{\,\underline{\ddot{g}}}
   $$
 and finally direct computation gives
 $ \; \Big\langle\, x^{\,\varepsilon}_{t,\,t} \, , \, K_{e',\,e'}\big(e'\big) \Big\rangle \,
 = \, \delta_{e',0} \; $,  \,which is a direct consequence of the assumption
 $ \; \chi(S_i \, ,\,-\,) \, = \, 0 \, = \, \chi(\,-\,,S_i) \; $  for  $ \, i = 1,\dots,n-1 \, $.
 Therefore, we end up with
  $$
  \Big\langle \Delta\big( x^{\,\varepsilon}_{t,\,t} \big) \, ,
  \mathcal{M}' \otimes \mathcal{M}'' \Big\rangle \; \not= \;  0
  $$
 only for monomials with  $ \, \big(f',e'\big) = (0,0) = \big(f'',e''\big) \, $,
 \,and for them we have
  $$
  \displaylines{
   \quad   \Big\langle \Delta\big( x^{\,\varepsilon}_{t,\,t} \big) \, ,
   \mathcal{M}' \otimes \mathcal{M}'' \Big\rangle  \, = \,
   \Big\langle \Delta\big( x^{\,\varepsilon}_{t,\,t} \big) \, ,
   \underline{\varGamma}^{\,\underline{\dot{g}}} \otimes
   \underline{\varGamma}^{\,\underline{\ddot{g}}} \Big\rangle  \, =   \hfill  \cr
   \hfill   = \,  \Big\langle x^{\,\varepsilon}_{t,\,t} \, ,
   \underline{\varGamma}^{\,\underline{\dot{g}}} \cdot
   \underline{\varGamma}^{\,\underline{\ddot{g}}} \Big\rangle  \, = \,
   \Big\langle x^{\,\varepsilon}_{t,\,t} \, ,
   \underline{\varGamma}^{\,\underline{\dot{g}}} \Big\rangle \cdot
   \Big\langle x^{\,\varepsilon}_{t,\,t} \, , \underline{\varGamma}^{\,\underline{\ddot{g}}}
   \Big\rangle  \, = \,  \Big\langle x^{\,\varepsilon}_{t,\,t} \, , \mathcal{M}'
   \Big\rangle \cdot \Big\langle x^{\,\varepsilon}_{t,\,t} \, , \mathcal{M}''
   \Big\rangle   \quad  }
   $$
 which in short means  $ \; \Delta\big( x^{\,\varepsilon}_{t,\,t} \big) \, =
 \, x^{\,\varepsilon}_{t,\,t} \otimes x^{\,\varepsilon}_{t,\,t} \; $,  \,i.e.\
 $ x^{\,\varepsilon}_{t,\,t} $  is group-like.  Therefore, for
 $ \, y^{\,\varepsilon}_{t,\,t} := \log\big( x^{\,\varepsilon}_{t,\,t} \big) \, $
 instead we have
 $ \; \Delta\big( y^{\,\varepsilon}_{t,\,t} \big) \, = \,
 y^{\,\varepsilon}_{t,\,t} \otimes 1 + 1 \otimes y^{\,\varepsilon}_{t,\,t} \; $,
 \,i.e.\  $ y^{\,\varepsilon}_{t,\,t} $  is primitive.
                                                     \par
   Eventually, as all the  $ y^{\,\varepsilon}_{t,\,t} $'s  are primitive,
 a trivial computation shows that  $ \cF_\chi $  does obey condition
   \eqref{eq: twist-cond.'s},  hence it is indeed a polar twist, as claimed.
\end{exa}

\vskip13pt

\subsection{Duality issues}  \label{subsec: duality issues}  {\ }
 \vskip7pt
   The procedures of deformation by twist or by  $ 2 $--cocycle,
   both for Lie bialgebras and for Hopf algebras, are dual
   to each other, in the sense of  Proposition \ref{prop: duality-deforms x LbA's}
   and  Proposition \ref{prop: duality-deforms}.
   Because of this, we are lead to compare these two procedures \textsl{before\/}
   and  \textsl{after\/}  specialization, as follows.

\begin{prop}  \label{prop: F=sigma before-&-after}
 Let  $ \uhg $  and  $ \fhg $  be a QUEA and a QFSHA which are dual to each other,
 that is  $ \, \fhg = {\uhg}^* \, $  and  $ \, \uhg = {\fhg}^\star \, $.
 Then let  $ \cF $  be a twist for  $ \uhg \, $,  and  $ \sigma $  be a
 $ 2 $--cocycle  for
 $ \fhg \, $.  Assume that both  $ \cF $  and  $ \sigma $  are trivial modulo
 $ \hbar \, $,  \,so that there exists a corresponding twist  $ \, c_{{}_\cF} $
 for
 $ \, \lieg $
 (induced by  $ \cF $  via  Theorem \ref{thm: twist-deform-QUEA})  and a corresponding
 $ 2 $--cocycle  $ \, \zeta_\sigma $  for  $ \, \lieg^* $  (induced by  $ \sigma $  via
 Theorem \ref{thm: 2cocycle-deform-QFSHA}).  Finally, we identify twists for  $ \uhg $
 and  $ 2 $--cocycles  for  $ \fhg $ via  Proposition \ref{prop: duality-deforms},
 and similarly twists for  $ \, \lieg $  and  $ 2 $--cocycles  for  $ \, \lieg^* $
   \hbox{via  Proposition \ref{prop: duality-deforms x LbA's}.}
                                                                              \par
   Then the following holds: if  $ \, \cF = \sigma \, $,  \,then
   $ \, c_{{}_\cF} = \zeta_\sigma \, $.   \hfill  $ \square $
\end{prop}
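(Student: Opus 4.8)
The plan is to run the two constructions — the one of Theorem~\ref{thm: twist-deform-QUEA}, which produces $c_{{}_\cF}$, and the one of Theorem~\ref{thm: 2cocycle-deform-QFSHA}, which produces $\zeta_\sigma$ — side by side, and to observe that, read through the duality $\fhg = {\uhg}^*$, $\uhg = {\fhg}^\star$ of \S \ref{equiv-&-(stand)-duality}, they literally compute the same element. First I would make the identification of Proposition~\ref{prop: duality-deforms}(a) explicit: the $2$--cocycle $\sigma_{{}_\cF}$ attached to $\cF \in {\uhg}^{\widehat{\otimes}\,2}$ is simply evaluation, $\sigma_{{}_\cF}(\varphi,\psi) = \langle \varphi \otimes \psi , \cF \rangle$, so the hypothesis ``$\cF = \sigma$'' means exactly $\sigma = \sigma_{{}_\cF}$. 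The structural fact I would then use is that the canonical identification $\big( \fhg^{\widetilde{\otimes}\,2} \big)^* \cong {\uhg}^{\widehat{\otimes}\,2}$ is an isomorphism of topological algebras, where the left side carries the convolution product and the right side its ordinary (tensor) product: indeed the coproduct of $\fhg \,\widetilde{\otimes}\, \fhg$ is dual to the product of $\uhg \,\widehat{\otimes}\, \uhg$. Consequently ``$\log_*$'' on the left is transported to the algebra ``$\log$'' on the right, and dividing by $\hbar$ is obviously compatible; hence, under $\sigma = \sigma_{{}_\cF}$, the element $\varsigma := \hbar^{-1}\log_*(\sigma)$ of Theorem~\ref{thm: 2cocycle-deform-QFSHA} is carried precisely to the element $\kappa := \hbar^{-1}\log(\cF\,)$ of Theorem~\ref{thm: twist-deform-QUEA}. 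Since the flip $(\ )_{2,1}$ on the two sides corresponds under this identification, and since $\varsigma_a$ and $\kappa_a$ are defined by the very same formula $x \mapsto x - x_{2,1}$, it follows at once that $\varsigma_a$ is carried to $\kappa_a$ — in particular there is no sign discrepancy, the conventions of the two theorems having been fixed (cf.\ Remarks~\ref{rmk: differ-Mj-Nosotros_x_def-twist} and~\ref{rmk: differ-Mj-Nosotros_x_def-2coc}) precisely to this end.

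Next I would push this through specialization. On the QUEA side one reduces $\kappa_a$ modulo $\hbar$ and lands, by Theorem~\ref{thm: twist-deform-QUEA}(b), in $\lieg \otimes \lieg$, obtaining $c_{{}_\cF} = \overline{\kappa_a}$; on the QFSHA side one reduces $\varsigma_a$ modulo $\hbar$ and checks, as in Theorem~\ref{thm: 2cocycle-deform-QFSHA}(b), that it descends to a functional on $\big(\liem\big/\liem^2\big)^{\otimes 2} = \lieg^*\otimes\lieg^*$, obtaining $\zeta_\sigma = \overline{\varsigma_a}$. What remains to be verified is that these two reductions agree under the canonical embedding $\lieg \otimes \lieg \hookrightarrow \big(\lieg^*\otimes\lieg^*\big)^*$, i.e.\ that $\langle \overline{\kappa_a}\,,\,\text{u}\otimes\text{v}\rangle = \overline{\varsigma_a}(u\otimes v)$ for all $u,v \in \liem$ with cosets $\text{u},\text{v} \in \lieg^*$. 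This is precisely the statement that the pairing $\lieg \times \lieg^* \to \Bbbk$ making $\lieg$ and $\lieg^* = \liem\big/\liem^2$ mutually dual Lie bialgebras is the reduction modulo $\hbar$ of the Hopf pairing $\langle\ ,\ \rangle : \fhg \times \uhg \to \kh$, read on the appropriate subspaces — a standard fact of the Quantum Duality Principle setting (cf.\ \cite{Ga1}). Granting it, $c_{{}_\cF}$ and $\zeta_\sigma$ are one and the same element of $\lieg \otimes \lieg = \big(\lieg^*\otimes\lieg^*\big)^*$; and since the identification of twists for $\lieg$ with $2$--cocycles for $\lieg^*$ in Proposition~\ref{prop: duality-deforms x LbA's}(a) is precisely this embedding, the conclusion $c_{{}_\cF} = \zeta_\sigma$ holds in the sense meant by the statement.

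The main obstacle is exactly that last verification: matching the two ``reduce modulo $\hbar$'' procedures. On the face of it they are different operations — the algebra reduction ${\uhg}^{\widehat{\otimes}\,2} \twoheadrightarrow U(\lieg)^{\otimes 2}$ followed by the projection onto $\lieg \otimes \lieg$ used in Theorem~\ref{thm: twist-deform-QUEA}(b), versus the ``restrict and pass to the cotangent quotient'' procedure of Theorem~\ref{thm: 2cocycle-deform-QFSHA}(b) — and reconciling them requires pinning down how $\lieg$ sits inside $U(\lieg)$ against how $\lieg^*$ is cut out of $F[[G\,]]$, and that both are governed by one and the same semiclassical pairing. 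Everything else is bookkeeping: checking that $\log$, division by $\hbar$, and the flip commute with the functorial identifications of \S \ref{equiv-&-(stand)-duality}. Once that point is settled, no further computation is needed — in particular no manipulation of twist or cocycle identities — since Theorems~\ref{thm: twist-deform-QUEA} and~\ref{thm: 2cocycle-deform-QFSHA} have already produced $c_{{}_\cF}$ and $\zeta_\sigma$ by literally the same recipe applied to two spaces that the duality identifies.
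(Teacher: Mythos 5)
Your proposal is correct and follows essentially the same route as the paper, which simply states that the proof is obtained by tracking both constructions of $ c_{{}_\cF} $ and $ \zeta_\sigma $ and comparing the outcomes. You have merely made explicit the bookkeeping the paper leaves implicit — the transport of $ \log_* $ to $ \log $ under the duality $ \big( \fhg^{\widetilde{\otimes}\,2} \big)^* \cong {\uhg}^{\widehat{\otimes}\,2} $, the compatibility of the flip, and the matching of the two reductions modulo $ \hbar $ via the semiclassical pairing — which is a reasonable expansion of the same argument.
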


   The  \textit{proof\/}  of the above claim is trivial
   --- just track the whole construction of both  $ \, c_{{}_\cF} $  and
   $ \, \zeta_\sigma \, $,  \,and compare the outcomes.

\vskip7pt

   A similar duality result holds for deformations by polar  $ 2 $--cocycles  and deformations by polar twists.
   Indeed, let us first notice that, by the very definitions, if  $ \uhg $  and  $ \fhg $  are a QUEA and a QFSHA in duality   --- i.e.,  $ \, \fhg = {\uhg}^* \, $  and  $ \, \uhg = {\fhg}^\star \, $  ---   then  \textit{any polar  $ 2 $--cocycle  for  $ \uhg $  is automatically a polar twist for  $ \fhg \, $,  \,and viceversa}   --- see  Proposition \ref{prop: duality-"polar"deforms}  later on.
                                                                            \par
   Once this is settled, next result (which mirrors  Proposition \ref{prop: F=sigma before-&-after}  above) holds too, whose proof again follows by direct comparison of the two deformation procedures (just tracking the whole construction of  $ \, \gamma_\sigma $  and  $ \, c_{{}_\cF} \, $,  \,and comparing the outcomes):

\vskip9pt

\begin{prop}  \label{prop: polar F=sigma before-&-after}
 Let  $ \uhg $  and  $ \fhg $  be a QUEA and a QFSHA which are dual to each other,
 that is
 $ \, \fhg = {\uhg}^* \, $  and  $ \, \uhg = {\fhg}^\star \, $.
 Then let  $ \sigma $  be a polar  $ 2 $--cocycle  for  $ \uhg \, $,  and  $ \cF $  be a polar twist for
 $ \fhg \, $.
 Let  $ \, \gamma_\sigma $  be the  $ 2 $--cocycle  for  $ \, \lieg $
 induced by  $ \sigma $
 via  Theorem \ref{thm: propt.'s qs-2cocycle-deform-QUEA},  and let
 $ \, c_{{}_\cF} $
 be the twist for  $ \, \lieg^* $  induced by  $ \cF $  via
 Theorem \ref{thm: propt.'s qs-twist-deform-QFSHA}.
 Finally, we identify polar  $ 2 $--cocycles  for  $ \uhg $  and polar twists
for  $ \fhg $  as mentioned above, and similarly we identify twists for
$ \, \lieg $  and
$ 2 $--cocycles  for  $ \, \lieg^* $
   \hbox{via  Proposition \ref{prop: duality-deforms x LbA's}.}
Then the following holds: if  $ \, \sigma = \cF \, $,  \,then
   $ \, \gamma_\sigma = c_{{}_\cF} \, $.   \hfill  $ \square $
\end{prop}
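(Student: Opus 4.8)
The plan is to prove the statement by unwinding both deformation procedures in parallel, exploiting the duality between Drinfeld's functors recorded in \eqref{eq: QDP-duality}. The key observation is the one already flagged in the paper: if $\sigma=\exp_*\!\big(\hbar^{-1}\chi\big)$ is a polar $2$--cocycle for $\uhg$, then via Remark \ref{rmk: equiv-cond x polar 2-cocycle} it is precisely an ordinary twist element for the QUEA $\,{\fhg}^{\vee}=\big({\uhg}^{*}\big)^{\vee}={\big({\uhg}'\big)}^{\star}$, so calling it $\cF$ makes literal sense; dually, a polar twist $\cF=\exp\!\big(\hbar^{-1}\phi\big)$ for $\fhg$ is, by Definition \ref{def: polar twist}, a twist element for $\,{\fhg}^{\vee}\,$, hence — once one identifies $\,\uhg={\fhg}^{\star}\,$ — an ordinary $2$--cocycle for the QFSHA in duality, i.e.\ a polar $2$--cocycle $\sigma$ for $\uhg$. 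Thus the hypothesis ``$\sigma=\cF$'' is meaningful as an identity of elements of $\,{\big({\uhg}'\big)}^{\star}\otimes{\big({\uhg}'\big)}^{\star}\,$, and moreover it forces $\chi=\phi$ at the level of the infinitesimal objects (both being $\hbar\,\log$, computed in the appropriate convolution/exponential sense, of the common element $\sigma=\cF$).

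First I would spell out, from Theorem \ref{thm: propt.'s qs-2cocycle-deform-QUEA}(b), the explicit recipe producing $\gamma_\sigma$: it is the antisymmetrization $\chi_a=\chi-\chi_{2,1}$, reduced modulo $\hbar$ and restricted to $\lieg\otimes\lieg$, viewed as an element of $\Hom_\k(\lieg\wedge\lieg,\k)$, i.e.\ a $2$--cocycle for the Lie bialgebra $\lieg$. Dually, from Theorem \ref{thm: propt.'s qs-twist-deform-QFSHA}(b), $c_{{}_\cF}$ is obtained from $\phi_a=\phi-\phi_{2,1}$ by reducing modulo $\hbar$ and then modulo $\liem^2\otimes\liem+\liem\otimes\liem^2$, landing in $\big(\liem\otimes\liem\big)\big/\big(\liem^2\otimes\liem+\liem\otimes\liem^2\big)\cong\lieg^{*}\otimes\lieg^{*}$, and is a twist for $\lieg^{*}$. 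The next step is to observe that, since $\sigma=\cF$ entails $\chi=\phi$, we have $\chi_a=\phi_a$ as elements of $\big({\uhg}'\big)^{\star}\otimes\big({\uhg}'\big)^{\star}$; so the only thing to check is that the two \emph{different-looking} reduction maps — ``mod $\hbar$, restrict to $\lieg\otimes\lieg$'' on the QUEA side, versus ``mod $\hbar$, then mod $\liem^{[\otimes 2|3]}$'' on the QFSHA side — land on the \emph{same} element once one uses the canonical identification $\big(\lieg^{*}\otimes\lieg^{*}\big)^{*}=\lieg\otimes\lieg$ through which twists for $\lieg$ become $2$--cocycles for $\lieg^{*}$ and vice versa (Proposition \ref{prop: duality-deforms x LbA's}). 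This is exactly the semiclassical shadow of \eqref{eq: QDP-duality}: passing from $\uhg$ to $\big({\uhg}'\big)^{\star}$ at the quantum level specializes, modulo $\hbar$, to passing from $U(\lieg)$ to $F[[G^{*}]]$, i.e.\ to dualizing the Lie bialgebra $\lieg$; and under that dualization, ``restriction to the primitives $\lieg$'' on one side corresponds to ``projection to the cotangent space $\liem/\liem^2=\lieg^{*}$, then dualize'' on the other.

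Concretely I would argue as follows: both $\gamma_\sigma$ and $c_{{}_\cF}$ are, by construction, the class modulo $\hbar$ of the common antisymmetric tensor $\chi_a=\phi_a\in J_{\fhg}^{\,\vee}\widehat\otimes J_{\fhg}^{\,\vee}$ (with $J_{\fhg}:=\Ker(\epsilon_{\fhg})$, $J_{\fhg}^{\,\vee}:=\hbar^{-1}J_{\fhg}$), bearing in mind that $\chi(z,1)=0=\chi(1,z)$ places $\chi$ — and hence $\chi_a$ — inside $J_{\fhg}\widetilde\otimes J_{\fhg}$, so $\hbar^{-1}\chi\in\hbar\,\big({\fhg}^{\vee}\big)^{\widehat\otimes 2}$ and its class mod $\hbar$ lives in $\big({\fhg}^{\vee}/\hbar{\fhg}^{\vee}\big)^{\otimes 2}=U(\lieg^{*})^{\otimes 2}$. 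On the QUEA side, viewing $\lieg=\Prim\big(U_0\big)$ inside $U_0=\uhg/\hbar\uhg$, the restriction ``$\chi_a \bmod\hbar\,\big|_{\lieg\otimes\lieg}$'' picks out a bilinear form on $\lieg$; pairing against it the elements of $\lieg^{*}\otimes\lieg^{*}$ recovers, via the canonical pairing $\lieg\otimes\lieg=\big(\lieg^{*}\otimes\lieg^{*}\big)^{*}$, exactly the projection of the same class $\overline{\chi_a}=\overline{\phi_a}\in U(\lieg^{*})^{\otimes 2}$ to $\big(\liem/\liem^2\big)^{\otimes 2}=\lieg^{*}\otimes\lieg^{*}$, because the coradical filtration on $U(\lieg^{*})$ meets the primitives $\lieg^{*}$ in its degree-$1$ piece and the pairing $\langle U(\lieg)_1/\ldots\,,\,U(\lieg^{*})_1/\ldots\rangle=\langle\lieg,\lieg^{*}\rangle$ is the tautological one. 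Hence $\gamma_\sigma$ and $c_{{}_\cF}$ are the two avatars — one a $2$--cocycle for $\lieg$, the other a twist for $\lieg^{*}$ — of a single element of $\lieg\otimes\lieg=\big(\lieg^{*}\otimes\lieg^{*}\big)^{*}$, which is precisely the content of the asserted identity $\gamma_\sigma=c_{{}_\cF}$ under the identifications of Proposition \ref{prop: duality-deforms x LbA's}. The main obstacle — and the one point I would be careful about — is the bookkeeping that reconciles the two reductions: on the QFSHA side one must first go modulo $\hbar$ and \emph{then} modulo $\liem^{[\otimes 2|3]}$, and one has to verify that the intermediate object (the class of $\phi_a$ in $U(\lieg^{*})^{\otimes 2}$, which a priori involves higher filtration pieces) projects correctly and that no spurious contribution survives; this is exactly where one invokes that $\chi_a=\phi_a$ already lies in $J_{\fhg}^{\,\vee}\widehat\otimes J_{\fhg}^{\,\vee}$, so that its mod-$\hbar$ reduction lies in $\lieg^{*}\cdot U(\lieg^{*})\,\widehat\otimes\,\lieg^{*}\cdot U(\lieg^{*})$ and the further projection to $\lieg^{*}\otimes\lieg^{*}$ is well defined and compatible with the dual restriction on the $\uhg$ side. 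Everything else is the routine direct comparison of recipes already alluded to in the paragraph preceding the statement.
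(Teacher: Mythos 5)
Your argument is correct and follows exactly the route the paper takes --- the paper's own ``proof'' is just the one-line remark that the claim follows by tracking both constructions and comparing outcomes, which is precisely what you spell out: $\,\sigma=\cF\,$ forces $\,\chi=\phi\,$, and the two reductions of $\,\chi_a=\phi_a\,$ modulo $\hbar$ (restriction to $\,\lieg\otimes\lieg\,$ on the QUEA side, projection to $\,\liem\big/\liem^2\,$ on the QFSHA side) match under the tautological pairing underlying Proposition \ref{prop: duality-deforms x LbA's}. The only blemish is the third paragraph's claim that $\,\hbar^{-1}\chi\in\hbar\,{\big({\fhg}^{\vee}\big)}^{\widehat{\otimes}\,2}\,$ has a meaningful class mod $\hbar$ in $\,{U(\lieg^*)}^{\otimes 2}$ (that class would be zero); the relevant reduction is mod $\,\hbar\,\fhg^{\widetilde{\otimes}\,2}\,$, as you correctly state in your second paragraph, so this slip does not affect the argument.
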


\bigbreak

\section{Deformations vs.\ QDP}  \label{sec: deform.'s-QDP}

   When we interchange (contravariantly) QUEA's and QFSHA's via linear duality,
   the interaction of such a process with deformation processes (either by twist
   or by 2-cocycle) is clear.  By  Proposition \ref{prop: duality-deforms},
   any twist, resp.\ 2-cocycle, for a given quantum algebra
   (a QUEA or QFSHA, say) is automatically a 2-cocycle, resp.\ a twist,
   for its dual (a QFSHA or a QUEA, respectively),
   and the corresponding deformation processes ``commute'' with dualization.
                                                                           \par
   On the other hand, in this section we investigate how the deformation procedures
   interact when we interchange (covariantly) QUEA's and QFSHA's via Drinfeld's
   functors, as in  Theorem \ref{thm: QDP}.  In other words, we analyze how deformation
   procedures behave with respect to the Quantum Duality Principle.
%
%

\subsection{Some auxiliary results}  \label{subsec: auxiliary results}  {\ }
 \vskip7pt
   We begin with a key observation, which shows a crucial fact:
   our ``polar 2--cocycles'' for any QUEA and ``polar twists''
   for any QFSHA are actually  \textsl{standard\/}
   2--cocycles and twists, respectively, for the QFSHA and
   for the QUEA that are associated with the original quantum
   group via Drinfeld's functors from the QDP.
                                                                           \par
   Here is the precise result:

\vskip9pt

\begin{lema}
\label{lemma: polar 2-cocycle = 2-cocycle & polar twist x Fhg = twist}{\ }
 \vskip3pt
   \textit{(a)\,  Let  $ \uhg $  be a QUEA, and let  $ {\uhg}' $
   be its associated QFSHA following  Theorem \ref{thm: QDP}.
   Let  $ \, \sigma := \exp_*\!\big( \hbar^{-1} \chi \big) \, $  be a
   \textsl{polar 2--cocycle}  for  $ \uhg $  as in  Definition \ref{def: polar 2cocycle}.
   Then the restriction  $ \sigma{\big|}_{{\uhg}' \times {\uhg}'} $  of
   $ \, \sigma $  to  $ \, {\uhg}' \times {\uhg}' \, $  is a well-defined,
   $ \kh $--valued  bilinear form on  $ \, {\uhg}' \, $,  of the form
   $ \; \sigma' = \exp_*\!\big( \hbar^{+1} \chi' \big) \, $  with
   $ \, \chi' := {\big( \hbar^{-2} \, \chi \big)}{\big|}_{{\uhg}' \times {\uhg}'} \, $,
   \,and this  $ \, \sigma' := \exp_*\!\big( \hbar^{+1} \chi' \big) \, $  is a
   \textsl{2-cocycle}  for  $ \, {\uhg}' \, $.}
 \vskip3pt
   \textit{(b)\,  Let  $ \fhg $  be a QFSHA, and let  $ {\fhg}^\vee $
   be its associated QUEA following  Theorem \ref{thm: QDP}.  Let
   $ \, \cF := \exp\!\big( \hbar^{-1} \phi \big) \, $  be a
   \textsl{polar twist}  for  $ \fhg $
   as in  Definition \ref{def: polar twist}.
   Then  $ \, \cF := \exp\!\big( \hbar^{-1} \phi \big) =
   \exp\!\big( \hbar^{+1} \phi^\vee \big) \, $  with
   $ \, \phi^\vee := \hbar^{-2} \phi
   \in {\big( {\fhg}^\vee \big)}^{\widehat{\otimes}\,2} \, $,
   \,and, in these terms,
   $ \, \cF^\vee := \exp\!\big( \hbar^{+1} \phi^\vee \big) \, $
   is a  \textsl{twist}  for  $ \, {\fhg}^\vee \, $.}
\end{lema}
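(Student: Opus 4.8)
The plan is to deduce both statements from the duality dictionary already assembled in the excerpt — Remark \ref{rmk: equiv-cond x polar 2-cocycle}, the QDP--duality identity \eqref{eq: QDP-duality}, and Proposition \ref{prop: duality-deforms} — each time reducing the actual verification to a short $\hbar$--adic estimate of the kind used in the proofs of Lemma \ref{lemma: properties polar 2-cocycle} and Lemma \ref{lemma: propt.'s polar twist}. It is convenient to do part (b) first, since there it is essentially a matter of restating the relevant definition.

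For (b), I would first record that the hypothesis $ \, (\id\otimes\epsilon)(\phi)=0=(\epsilon\otimes\id)(\phi) \, $ says exactly $ \, \phi\in J_\hbar^{\,\widetilde{\otimes}\,2} \, $ with $ \, J_\hbar:=\Ker\big(\epsilon_{\fhg}\big) \, $, hence $ \, \phi\in{\Ker\big(\epsilon_{\fhg^{\widetilde{\otimes}\,2}}\big)}^2\subseteq I_{\fhg^{\widetilde{\otimes}\,2}}^{\,2} \, $, just as in the proof of Lemma \ref{lemma: propt.'s polar twist}. The very construction of the functor $ \, {(\ )}^\vee \, $ in Definition \ref{def: Drinfeld's functors}(b), applied to the QFSHA $ \, \fhg^{\widetilde{\otimes}\,2}=F_\hbar[[\,G \times G\,]] \, $, then gives $ \, \hbar^{-2}\phi\in\big(\fhg^{\widetilde{\otimes}\,2}\big)^{\!\vee}={\big(\fhg^\vee\big)}^{\widehat{\otimes}\,2} \, $. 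Setting $ \, \phi^\vee:=\hbar^{-2}\phi \, $ one has $ \, \hbar^{-1}\phi=\hbar\,\phi^\vee \, $, so that $ \, \cF=\exp\!\big(\hbar^{-1}\phi\big)=\exp\!\big(\hbar\,\phi^\vee\big) \, $ is a well-defined element of $ \, {\big(\fhg^\vee\big)}^{\widehat{\otimes}\,2} \, $, which we rename $ \, \cF^\vee \, $. That $ \, \cF^\vee \, $ is a twist for $ \, \fhg^\vee \, $ is now precisely the requirement imposed on a polar twist in Definition \ref{def: polar twist}. So for (b) nothing remains once this bookkeeping is recorded.

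For (a), the conceptual backbone is the chain: a polar $2$--cocycle $\sigma$ for $\uhg$ is, by Remark \ref{rmk: equiv-cond x polar 2-cocycle}, a twist for the QUEA $ \, \fhg^\vee \, $ with $ \, \fhg:=\uhg^* \, $; by \eqref{eq: QDP-duality} one has $ \, \fhg^\vee=(\uhg^*)^\vee=(\uhg')^\star \, $; and by Proposition \ref{prop: duality-deforms}(a) a twist for the QUEA $ \, (\uhg')^\star \, $ is a $2$--cocycle for its dual QFSHA $ \, \big((\uhg')^\star\big)^*=\uhg' \, $. What then needs checking is only that the object produced this way is the one in the statement — the restriction $\sigma\big|_{{\uhg}'\times{\uhg}'}$, of the asserted exponential shape — and this I would verify by a direct estimate. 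For $ \, a,b\in\uhg':=(\uhg)' \, $ one has $ \, \delta_n(a)\in\hbar^n\,\uhg^{\otimes n} \, $ and $ \, \delta_n(b)\in\hbar^n\,\uhg^{\otimes n} \, $ by the definition of $ \, {(\ )}' \, $; combining this with the identity $ \, \chi^{*\,n}(a,b)=\prod_{i=1}^n\chi\big(a^+_{(i)},b^+_{(i)}\big) \, $ from the proof of Lemma \ref{lemma: properties polar 2-cocycle}, with $ \, \bigotimes_i a^+_{(i)}=\delta_n(a) \, $, $ \, \bigotimes_i b^+_{(i)}=\delta_n(b) \, $, and with the fact that $\chi$ is $\kh$--valued, one gets $ \, \chi^{*\,n}(a,b)\in\hbar^{2n}\,\kh \, $ for every $ \, n\in\NN \, $. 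In particular $ \, \hbar^{-2}\chi(a,b)\in\kh \, $ (case $n=1$), so $ \, \chi':=\big(\hbar^{-2}\chi\big)\big|_{{\uhg}'\times{\uhg}'} \, $ is a $\kh$--valued bilinear form on $\uhg'$, and since convolution is scalar--bilinear one has $ \, (\chi')^{*\,n}=\hbar^{-2n}\,\chi^{*\,n} \, $ on $\uhg'$, whence
\[
\sigma(a,b)\;=\;\sum_{n\geq 0}\frac{\hbar^{-n}}{n!}\,\chi^{*\,n}(a,b)\;=\;\sum_{n\geq 0}\frac{\hbar^{n}}{n!}\,(\chi')^{*\,n}(a,b)\;=\;\exp_*\!\big(\hbar\,\chi'\big)(a,b)\,,
\]
an $\hbar$--adically convergent series in $\kh$. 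This gives $ \, \sigma\big|_{{\uhg}'\times{\uhg}'}=\sigma':=\exp_*\!\big(\hbar\,\chi'\big) \, $, a well-defined $\kh$--valued bilinear form of the claimed form; the same estimate applied to $ \, \sigma^{-1}=\exp_*\!\big(-\hbar^{-1}\chi\big) \, $ shows $\sigma'$ is convolution--invertible on $\uhg'$ with inverse $ \, \exp_*\!\big(-\hbar\,\chi'\big) \, $. Finally, since $ \, {(\ )}' \, $ is a functor to $\QFSHA$ (Theorem \ref{thm: QDP}(a)) the counit and coproduct of $\uhg'$ are the restrictions of those of $\uhg$, so restricting to arguments in $\uhg'$ the unitality relation \eqref{eq: unitary-cond.'s x 2-cocycle} for $\sigma$ (valid by Lemma \ref{lemma: properties polar 2-cocycle}(b)) and the $2$--cocycle identity \eqref{eq: 2-cocyc-cond.'s} for $\sigma$ on $\Uhg$ (valid by Definition \ref{def: polar 2cocycle}) shows $\sigma'$ is an honest $2$--cocycle for $\uhg'$, which completes (a).

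The main obstacle is not depth but care in identifications: one must be sure that the several ambient objects — $ \, \Uhg \, $, $ \, \uhg \, $, $ \, \uhg' \, $, $ \, \fhg^\vee=(\uhg')^\star \, $, and the evaluation pairing $ \, \uhg'\times(\uhg')^\star\to\kh \, $ — are matched up compatibly, so that ``restriction of $\sigma$ to $\uhg'$'', ``image of a twist under the embedding of Proposition \ref{prop: duality-deforms}'', and ``evaluation of $\sigma$ against that pairing'' all denote one and the same bilinear form. Once this dictionary is pinned down, the $\hbar$--adic estimates above are routine, and both (a) and (b) follow.
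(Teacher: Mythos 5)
Your proposal is correct and follows essentially the same route as the paper: part (b) is the direct unwinding of Definition \ref{def: polar twist} (which is all the paper says), and the heart of part (a) is exactly the paper's computation, namely that for $a,b\in\uhg'$ one has $\delta_n(a),\delta_n(b)\in\hbar^n\,\uhg^{\widehat\otimes\,n}$, whence $\chi^{*\,n}(a,b)=\prod_{i}\chi\big(a^+_{(i)},b^+_{(i)}\big)\in\hbar^{2n}\,\kh$ and $\sigma\big|_{\uhg'\times\uhg'}=\exp_*\!\big(\hbar\,\chi'\big)$ is a well-defined $\kh$--valued form. The extra framing via Remark \ref{rmk: equiv-cond x polar 2-cocycle}, \eqref{eq: QDP-duality} and Proposition \ref{prop: duality-deforms}, and the explicit restriction of unitality, invertibility and the cocycle identity, are harmless additions that the paper leaves implicit.
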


\begin{proof}
   \textit{(a)}\,  We retain notation from the proof of
   Lemma \ref{lemma: properties polar 2-cocycle},
   and we proceed along the same lines.
   Thus we set  $ \, U_\hbar := \uhg \, $  and
   $ \, J_\hbar := \Ker\big(U_\hbar\big) \, $,  \,and we write
\begin{equation*}   
  \hat{z} := \epsilon(z) \, ,  \;\;\; z^+ := z -
  \epsilon(z) = z - \hat{z} \, \in \, J_\hbar \; ,
  \quad  \text{hence}  \quad  z = z^+ + \hat{z}
  \qquad  \forall \;\; z \in U_\hbar \;\;
\end{equation*}
 We already saw that
 $ \; \chi(u,v) \, = \, \chi\big( u^+ , v^+ \big) \; $  for all  $ \, u , v \in U_\hbar \, $,
 and then for any  $ \, a \, , b \in U_\hbar \, $  we have
\begin{equation}  \label{eq: expansionsigma(a,b) - BIS}
  \sigma(a\,,b\,)  \; = \;  {\textstyle \sum_{n \,\geq 0}} \,
  \hbar^{-n} \, {\textstyle \prod_{i=1}^n}
  \chi\big(a^+_{(i)},b^+_{(i)}\big) \Big/ n!
\end{equation}
 where  $ \, \otimes_{i=1}^n a^+_{(i)} = \delta_n(a) \, $  and
 $ \, \otimes_{i=1}^n b^+_{(i)} = \delta_n(b) \, $.
                                             \par
   Now, restricting to  $ U_\hbar^{\;\prime} $  we get that
   $ \, a', b' \in U_\hbar^{\;\prime} \, $  yields
   $ \, \delta_n\big(a'\big) , \delta_n\big(b'\big) \in \hbar^n \,
   {U_\hbar}^{\!\!\widehat{\otimes}\,n} \, $;
   \,also, in the sequel we can clearly assume
   $ \, \epsilon\big(a'\big) = 0 = \epsilon\big(b'\big) \, $.  Then we get
  $$
  {\textstyle \prod_{i=1}^n}
  \chi'\big(a^{\prime\,+}_{(i)},b^{\prime\,+}_{(i)}\big) \, =
  \, {\textstyle \prod_{i=1}^n} \hbar^{-2}
  \chi\big(a^{\prime\,+}_{(i)},b^{\prime\,+}_{(i)}\big)
  \; \in \;  \hbar^{-2n} \, \hbar^{2n} \, \kh  \; = \;  \kh  \;\;\;\;
  \eqno \forall \;\; n \in \NN_+
  $$
 whence, just like in  \eqref{eq: expansionsigma(a,b) - BIS},
 we eventually get
\begin{equation*}  
  \sigma'\big(a',b'\,\big)  \; = \;
  {\textstyle \sum_{n \,\geq 0}} \, \hbar^{+n} \, {\textstyle \prod_{i=1}^n}
  \chi'\big(a^{\prime\,+}_{(i)},b^{\prime\,+}_{(i)}\big) \Big/ n!
  \; \in \;  \kh   \qquad  \forall \; a' , b' \in U_\hbar^{\;\prime}
\end{equation*}
 which proves the claim.
 \vskip5pt
   \textit{(b)}\,  This follows directly from
   Definition \ref{def: polar twist}.
\end{proof}

\vskip3pt

   As a direct consequence, we have the following significant result:

\vskip5pt

\begin{prop}  \label{prop: duality-"polar"deforms}
 Let\/  $ U_\hbar $  be a QUEA and  $ F_\hbar $  be a QFSHA that are dual to each other, i.e.\ such that  $ \, F_\hbar = {(U_\hbar)}^\ast \, $  and  $ \, U_\hbar = {(F_\hbar)}^\star \, $.  Then:
 \vskip2pt
   (a)\;\;  $ \sigma $  is a polar 2--cocycle for\/
   $ U_\hbar $  $ \, \iff \, $  $ \sigma $
   is a polar twist for\/  $ F_\hbar \; $;
 \vskip2pt
   (b)\;\;  $ \cF $  is a polar twist for\/
   $ F_\hbar $  $ \, \iff \, $  $ \cF $
   is a polar 2--cocycle for\/  $ U_\hbar \; $.
\end{prop}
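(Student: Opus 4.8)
The plan is first to observe that claims (a) and (b) are one and the same biconditional, merely with the deforming element renamed; hence it suffices to prove, for a fixed element $\sigma$, that ``$\sigma$ is a polar $2$--cocycle for $U_\hbar$'' and ``$\sigma$ is a polar twist for $F_\hbar$'' are equivalent. Both notions have already been rephrased in terms of Drinfeld's functors, so the proof will amount to a translation between the two descriptions, with the Hopf pairing $F_\hbar = (U_\hbar)^*$ serving as the dictionary; no genuinely new computation is required.

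Concretely, I would unwind the two sides in parallel. On one side, by Definition~\ref{def: polar 2cocycle}, $\sigma$ is a polar $2$--cocycle for $U_\hbar$ precisely when $\sigma = \exp_*\!\big(\hbar^{-1}\chi\big)$ for some $\chi \in \big(U_\hbar^{\widehat{\otimes} 2}\big)^*$ with $\chi(z,1)=0=\chi(1,z)$ for all $z\in U_\hbar$, and $\sigma$ satisfies the $2$--cocycle identities relative to the scalar--extended Hopf structure of $\Uhg$. By Remark~\ref{rmk: equiv-cond x polar 2-cocycle} (here using $F_\hbar = (U_\hbar)^*$), the normalization $\chi(z,1)=0=\chi(1,z)$ is equivalent to $\chi \in J_{F_\hbar}\,\widetilde{\otimes}\,J_{F_\hbar} \subseteq \hbar^2\big((F_\hbar)^\vee\big)^{\widehat{\otimes} 2}$, so that $\sigma = \exp\!\big(\hbar^{-1}\chi\big)$ is a well--defined element of $\big((F_\hbar)^\vee\big)^{\widehat{\otimes} 2}$, and moreover that same Remark identifies ``$\sigma$ is a polar $2$--cocycle for $U_\hbar$'' with ``$\sigma$ is a twist element for the QUEA $(F_\hbar)^\vee$''. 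On the other side, Definition~\ref{def: polar twist} declares $\sigma$ to be a polar twist for $F_\hbar$ exactly when $\sigma\in\big((F_\hbar)^{\widetilde{\otimes} 2}\big)^\vee = \big((F_\hbar)^\vee\big)^{\widehat{\otimes} 2}$ has the shape $\exp\!\big(\hbar^{-1}\phi\big)$ for some $\phi\in(F_\hbar)^{\widetilde{\otimes} 2}$ with $(\id\otimes\epsilon)(\phi)=0=(\epsilon\otimes\id)(\phi)$, and is a twist element for $(F_\hbar)^\vee$. These two characterizations agree once I check that the ``form data'' match: under the Hopf pairing identification $\big(U_\hbar^{\widehat{\otimes} 2}\big)^* \cong (F_\hbar)^{\widetilde{\otimes} 2}$ the convolution product is carried to the componentwise algebra product, hence $\exp_*$ to $\exp$; under it $\chi$ goes to an element $\phi\in(F_\hbar)^{\widetilde{\otimes} 2}$, and the normalization $\chi(z,1)=0=\chi(1,z)$ for $z\in U_\hbar$ dualizes exactly to $(\id\otimes\epsilon)(\phi)=0=(\epsilon\otimes\id)(\phi)$ (test against $U_\hbar\otimes\Bbbk\,1$ and $\Bbbk\,1\otimes U_\hbar$). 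Therefore $\sigma$ is a polar $2$--cocycle for $U_\hbar$ iff it is a twist for $(F_\hbar)^\vee$ iff it is a polar twist for $F_\hbar$; this is (a), and (b) is the same statement. One may also invoke Lemma~\ref{lemma: polar 2-cocycle = 2-cocycle & polar twist x Fhg = twist} to make the passage to $(F_\hbar)^\vee$ fully explicit.

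I expect the main (indeed only) delicate point to be the bookkeeping of scalar extensions and completions: one must make sure the ``$\hbar^{-1}$'' appearing in $\exp_*\!\big(\hbar^{-1}\chi\big)$ on the QUEA side and in $\exp\!\big(\hbar^{-1}\phi\big)$ on the QFSHA side refer to the \emph{same} element of $\big((F_\hbar)^\vee\big)^{\widehat{\otimes} 2}$, once $\big(U_\hbar^{\widehat{\otimes} 2}\big)^*$ has been identified with $(F_\hbar)^{\widetilde{\otimes} 2}$ and embedded into $\big((F_\hbar)^\vee\big)^{\widehat{\otimes} 2}$; but this is precisely the inclusion $J_{F_\hbar}\,\widetilde{\otimes}\,J_{F_\hbar}\subseteq \hbar^2\big((F_\hbar)^\vee\big)^{\widehat{\otimes} 2}$ already established in Remark~\ref{rmk: equiv-cond x polar 2-cocycle}, so no new estimate is needed. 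As a conceptual cross--check, the matching of ``the $2$--cocycle condition for $\Uhg$'' with ``the twist condition for $(F_\hbar)^\vee$'' is an instance of Proposition~\ref{prop: duality-deforms}, applied to $(F_\hbar)^\vee$ and its dual $\big((F_\hbar)^\vee\big)^* = \big((F_\hbar)^\star\big)' = (U_\hbar)'$ via the identity~\eqref{eq: QDP-duality}.
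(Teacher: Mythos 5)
Your proposal is correct and follows essentially the same route as the paper: the authors likewise reduce the statement to a direct comparison of Definition~\ref{def: polar 2cocycle} (via Remark~\ref{rmk: equiv-cond x polar 2-cocycle}) with Definition~\ref{def: polar twist}, using the duality relations $F_\hbar^{\,\vee} = (U_\hbar^{\,\prime})^\star$ and $U_\hbar^{\,\prime} = (F_\hbar^{\,\vee})^\ast$ from~\eqref{eq: QDP-duality}. Your write-up merely makes explicit the bookkeeping (matching of normalizations, $\exp_*$ versus $\exp$, and the inclusion $J_{F_\hbar}\,\widetilde{\otimes}\,J_{F_\hbar}\subseteq \hbar^2\big((F_\hbar)^\vee\big)^{\widehat{\otimes}2}$) that the paper leaves as ``follows directly from the very definitions.''
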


\begin{proof}
 The proof follows directly from the very definitions
 of polar 2--cocycle and polar twist, along with the observation that
 $ \, F_\hbar = {(U_\hbar)}^\ast \, $  and
 $ \, U_\hbar = {(F_\hbar)}^\star \, $  imply
 $ \, F_\hbar^{\,\vee} = {(U_\hbar^{\,\prime})}^\star \, $
 and  $ \, U_\hbar^{\,\prime} = {\big(F_\hbar^{\,\vee}\big)}^\ast \, $,
 \,by  \eqref{eq: QDP-duality}.
\end{proof}

\vskip13pt

\subsection{Drinfeld's functors and ``polar deformations''}
\label{subsec: Drinf-funct.'s & polar deform.'s}  {\ }
 \vskip7pt
   In this subsection we analyze the interaction between the
   process of ``polar deformation'' and the action of a Drinfeld's functor on
   some quantum group.
                                                           \par
   We begin with deformations by polar twist of a QFSHA and the application
   to the latter of Drinfeld's functor  $ {(\ )}^\vee \, $;
   \,then we shall look at deformation by polar 2--cocycle of a QUEA and the
   application to the latter of Drinfeld's functor  $ {(\ )}'\, $.

\vskip11pt

\begin{free text}  \label{vee-functor & polar twist}
 \textbf{Deformations by polar twist under $ \, \fhg \mapsto \fhg^\vee \, $.}
 We look now what happens with deformations by polar twist for a QFSHA
 when the latter is acted upon by the functor  $ {(\ )}^\vee $  which associates with it a
 QUEA.  Here is our result:
\end{free text}

\vskip9pt

\begin{theorem}  \label{thm: polar twist-deform vs Drinfeld functor x QFSHA}  {\ }
 \vskip3pt
   Let\/  $ \fhg $  be a QFSHA.  Let  $ \, \cF = \exp\!\big( \hbar^{-1} \phi \big) \, $
   be a polar twist for  $ \fhg \, $,  with  $ \, \phi \in {\fhg}^{\widetilde{\otimes}\, 2} \, $
   (cf.\ Definition \ref{def: polar twist}).
 Set  $ \, \phi^\vee := \hbar^{-1} \, \log(\cF\,) = \hbar^{-2} \phi \, $,  $ \, \phi_a :=
 \phi - \phi_{\,2,1} \; $  and  $ \; \phi^\vee_a := \phi^\vee - \phi^\vee_{\,2,1} \; $.
 Then we have:
 \vskip5pt
   (a)\;  $ \phi $  is antisymmetric, i.e.\  $ \, \phi_{\,2,1} = -\phi \, $,  \,iff\/  $ \cF $
   is orthogonal, i.e.\  $ \, \cF_{2,1} = \cF^{-1} \, $,  \,iff\/  $ \phi^\vee $
   is antisymmetric, i.e.\  $ \, \phi^\vee_{\,2,1} = -\phi^\vee \, $;
 \vskip4pt
   (b)\;  $ \, \cF = \exp\!\big( \hbar \, \phi^\vee \big) \, $
   is a twist element for the QUEA  $ \, \uhgs := \fhg^\vee \, $.
 \vskip4pt
   (c)\;  Let  $ \, c \, $  be the antisymmetric twist of\/  $ \lieg^* $
   corresponding to  $ \cF $  as provided by
   Theorem \ref{thm: propt.'s qs-twist-deform-QFSHA},  and let  $ \, c^\vee $
   be the similar twist provided by  Theorem \ref{thm: twist-deform-QUEA}
   along with claim  \textit{(b)\/}  above.  Then  $ \; c = c^\vee \; $.
\end{theorem}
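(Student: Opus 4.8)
\textbf{Claim (a).} The plan is to observe that all three properties are instances of the single elementary fact that exponentiation of an antisymmetric element gives an orthogonal one and vice versa. Since $\phi^\vee = \hbar^{-2}\phi$, the conditions $\phi_{2,1}=-\phi$ and $\phi^\vee_{2,1}=-\phi^\vee$ are literally equivalent, and the equivalence with $\cF_{2,1}=\cF^{-1}$ is exactly Lemma \ref{lemma: propt.'s polar twist}\textit{(d)} (applied to $\fhg$) — or, read the other way, Theorem \ref{thm: twist-deform-QUEA}\textit{(a)} applied to the QUEA $\fhg^\vee$ with the genuine twist $\cF=\exp(\hbar\,\phi^\vee)$. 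So (a) requires no real work beyond citing these.

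\textbf{Claim (b).} This is precisely Lemma \ref{lemma: polar 2-cocycle = 2-cocycle \& polar twist x Fhg = twist}\textit{(b)}: a polar twist $\cF=\exp(\hbar^{-1}\phi)$ for $\fhg$ is, by its very definition (Definition \ref{def: polar twist}), an element of $\big({\fhg}^{\widetilde{\otimes}2}\big)^\vee = \big({\fhg}^\vee\big)^{\widehat{\otimes}2}$ enjoying the twist identities with respect to the Hopf structure of $\fhg^\vee$. Writing $\phi^\vee := \hbar^{-2}\phi$ one has $\cF = \exp(\hbar\,\phi^\vee)$, so the statement is just a change of notation recording that $\cF$ is a bona fide twist for the QUEA $\uhgs := \fhg^\vee$ in the sense of Definition \ref{def: twist \& R-mat.'s}\textit{(b)}. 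Nothing beyond unwinding definitions (and invoking Lemma \ref{lemma: propt.'s polar twist}\textit{(a),(c)} for well-definedness and unitarity) is needed here.

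\textbf{Claim (c).} This is the heart of the matter, and the strategy is to show that both $c$ and $c^\vee$ arise, through two formally different specialization recipes, from the \emph{same} semiclassical object. On the QFSHA side, Theorem \ref{thm: propt.'s qs-twist-deform-QFSHA}\textit{(b)} builds $c$ as the image of $\phi_a = \phi - \phi_{2,1}$ first modulo $\hbar\,\fhg^{\widetilde{\otimes}2}$ and then modulo $\liem^2\otimes\liem + \liem\otimes\liem^2$, landing in $\big(\liem\otimes\liem\big)\big/\big(\liem^2\otimes\liem+\liem\otimes\liem^2\big)\cong\lieg^*\otimes\lieg^*$. On the QUEA side, applying Theorem \ref{thm: twist-deform-QUEA}\textit{(b)} to $\uhgs = \fhg^\vee$ with the twist $\cF=\exp(\hbar\,\phi^\vee)$ produces $c^\vee$ as the image of $\phi^\vee_a = \hbar^{-2}\phi_a$ modulo $\hbar\,(\fhg^\vee)^{\widehat{\otimes}2}$, landing in $\lieg^{**}\otimes\lieg^{**}$ where $\lieg^{**}$ is the space of primitives of $\overline{\fhg^\vee}=U(\lieg^*)$, i.e.\ $\lieg^{**}=(\lieg^*)$ itself. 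The plan is to exhibit a canonical identification of the two target spaces under which the two images coincide; concretely, this is exactly the statement, from Theorem \ref{thm: QDP}\textit{(b)} and the construction of $\fhg^\vee$, that the primitive Lie algebra of $\overline{\fhg^\vee}$ is $\lieg^* = \liem\big/\liem^2$, the identification being induced by $z\mapsto \hbar^{-1}z \ (\mathrm{mod}\ \hbar\,\fhg^\vee)$ for $z\in\liem$. Tracking a representative $z^{(1)}_a\otimes z^{(2)}_a$ of $\phi_a$ with $z^{(i)}_a\in\liem$ through both recipes, one has on the one side $\phi_a\ (\mathrm{mod}\ \hbar)\ (\mathrm{mod}\ \liem^{[\otimes2|3]})$ and on the other $\hbar^{-2}\phi_a = (\hbar^{-1}z^{(1)}_a)\otimes(\hbar^{-1}z^{(2)}_a)$ reduced modulo $\hbar$; the factor $\hbar^{-2}$ is precisely absorbed by the two copies of the identification $\liem/\liem^2\cong\mathrm{Prim}(\overline{\fhg^\vee})$, $z\mapsto\overline{\hbar^{-1}z}$. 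The main obstacle — and the only point requiring care — is to make this bookkeeping rigorous: one must check that a lift of an element of $\liem/\liem^2$ to $\liem\subseteq J_\hbar$ and the corresponding lift of a primitive of $\overline{\fhg^\vee}$ to $\widetilde{J}_\hbar=\hbar^{-1}J_\hbar$ differ exactly by the scalar $\hbar^{-1}$ modulo higher-order terms (using Lemma \ref{lemma: adj-act_F} and the structure $\fhg^\vee = $ $\hbar$-adic completion of $\sum_n\hbar^{-n}J_\hbar^{\,n}$), so that the two specialization procedures, though carried out over $\kh$ respectively $\khp$ and in formally unrelated quotient spaces, genuinely produce the same element of $\lieg^*\otimes\lieg^*$. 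Once this identification is in place, $c = c^\vee$ follows immediately; and the whole argument is, as promised in the paper's style, best presented as a short direct comparison of the two constructions rather than a fresh computation.
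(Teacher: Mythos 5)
Your proposal is correct and follows essentially the same route as the paper: claim (a) by construction, claim (b) by citing Lemma \ref{lemma: polar 2-cocycle = 2-cocycle & polar twist x Fhg = twist}\textit{(b)}, and claim (c) by directly tracking the two constructions of $c$ and $c^\vee$ through the identification $\lieg^* = \liem\big/\liem^2 \cong \Prim\big({\fhg}^\vee\big/\hbar\,{\fhg}^\vee\big)$, with the factor $\hbar^{-2}$ absorbed by the two copies of $z\mapsto\hbar^{-1}z$. In fact you supply more detail for (c) than the paper does, which simply declares it ``a careful yet entirely straightforward check.''
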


\pf
 \textit{(a)}\,  This follows directly by construction.
 \vskip9pt
   \textit{(b)}\,  This is granted by
   Lemma \ref{lemma: polar 2-cocycle = 2-cocycle & polar twist x Fhg = twist}\textit{(b)}.
 \vskip9pt
   \textit{(c)}\,  This follows by a careful   --- yet entirely straightforward ---
   check, just tracking both constructions involved (of  $ c $
   and of  $ c^\vee $  alike).
\epf

\vskip9pt

\begin{free text}  \label{prime-functor & polar 2-cocycles}
 \textbf{Deformations by polar 2--cocycle under  $ \, \uhg \mapsto \uhg' \, $.}
 Given a QUEA, we can apply on it Drinfeld's functor  $ {(\ )}' \, $;
 \,we now see what happens when a deformation by polar 2--cocycle is performed.
 Our result reads as follows:
\end{free text}

\vskip7pt

\begin{theorem}  \label{thm: polar 2cocycle-deform vs Drinfeld functor x QUEA}  {\ }
 \vskip3pt
   Let\/  $ \uhg $  be a QUEA.  Let  $ \, \sigma = \exp_*\!\big( \hbar^{-1} \chi \big) \, $
   be a polar 2--cocycle for\/  $ \uhg \, $,  with
   $ \, \chi \in {\big( \uhg^{\widehat{\otimes}\, 2} \,\big)}^* \, $
   (cf.\ Definition \ref{def: polar 2cocycle}).
 Set  $ \, \chi' := \hbar^{-1} \, \log_*(\sigma) = \hbar^{-2} \, \chi \, $,
 $ \, \chi_a := \chi - \chi_{\,2,1} \; $  and  $ \; \chi'_a := \chi' - \chi'_{2,1} \; $.
 Then we have:
 \vskip3pt
   (a)\;  $ \chi $  is antisymmetric, i.e.\  $ \, \chi_{\,2,1} = -\chi \, $,  \,iff\/  $ \sigma $
   is orthogonal, i.e.\  $ \, \sigma_{2,1} = \sigma^{-1} \, $,  \,iff\/  $ \chi' $
   is antisymmetric, i.e.\  $ \, \chi'_{2,1} = -\chi' \, $;
 \vskip1pt
   (b)\;  $ \, \sigma = \exp_*\!\big( \hbar \, \chi' \big) \, $  is a 2-cocycle for the QFSHA
   $ \, F_\hbar\big[\big[G^*\big]\big] := \uhg' \, $.
 \vskip1pt
   (c)\;  Let  $ \, \gamma \, $  be the antisymmetric 2-cocycle of\/
   $ \lieg $  corresponding to  $ \sigma $  as provided by
   Theorem \ref{thm: polar 2cocycle-deform-QUEA},  and let
   $ \, \gamma' $  be the similar 2-cocycle provided by
   Theorem \ref{thm: 2cocycle-deform-QFSHA}  along with claim  \textit{(b)\/}  above
   --- using the identification  $ \, \lieg^{**} = \lieg \, $.
   Then  $ \; \gamma = \gamma' \; $.
\end{theorem}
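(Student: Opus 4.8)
\textbf{Proof strategy for Theorem \ref{thm: polar 2cocycle-deform vs Drinfeld functor x QUEA}.}
The plan is to treat the three claims in sequence, leaning on the duality machinery already assembled in \S\ref{subsec: auxiliary results} and on the two ``polar'' theorems from \S\ref{subsec: 2coc-QUEAs} and \S\ref{subsec: 2coc-QFSHAs}. Claim \textit{(a)} is immediate: by Lemma \ref{lemma: properties polar 2-cocycle}\textit{(c)} the conditions ``$\chi$ antisymmetric'' and ``$\sigma$ orthogonal'' are equivalent, and since $\chi' = \hbar^{-2}\chi$ differs from $\chi$ only by an invertible scalar in $\khp$, antisymmetry of $\chi$ and of $\chi'$ are visibly the same condition; so all three properties coincide. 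No real work is needed here.

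For claim \textit{(b)}, the key observation is that this is essentially a restatement of Lemma \ref{lemma: polar 2-cocycle = 2-cocycle \& polar twist x Fhg = twist}\textit{(a)}, which asserts precisely that the restriction of $\sigma = \exp_*\!\big(\hbar^{-1}\chi\big)$ to $\uhg' \times \uhg'$ is a well-defined $\kh$-valued $2$-cocycle of the form $\exp_*\!\big(\hbar^{+1}\chi'\big)$ with $\chi' = (\hbar^{-2}\chi)\big|_{\uhg' \times \uhg'}$. First I would invoke that lemma to obtain well-definedness and the $2$-cocycle property; then I would observe, via Theorem \ref{thm: QDP}\textit{(b)}, that $\uhg'$ is a quantization of $F[[G^*]]$, i.e.\ it is the QFSHA $F_\hbar[[G^*]]$, so the statement makes sense as written. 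The only point requiring a word of care is that the $2$-cocycle condition \eqref{eq: 2-cocyc-cond.'s} on $\uhg'$ follows from the corresponding condition on the ``Hopf algebra structure'' of $\Uhg$ by restriction, using that $\uhg'$ is a sub-bialgebra of $\uhg$ (hence of $\Uhg$) and that $\sigma$ takes values in $\kh$ there; this is a direct check, not an obstacle.

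For claim \textit{(c)}, the plan is to compare the two recipes producing the semiclassical $2$-cocycle. On one side, Theorem \ref{thm: polar 2cocycle-deform-QUEA}, or rather Theorem \ref{thm: propt.'s qs-2cocycle-deform-QUEA}\textit{(b)}, yields $\gamma := \chi_a \!\pmod{\hbar}\big|_{\lieg \otimes \lieg}$, an antisymmetric $2$-cocycle for $\lieg$. On the other side, applying Theorem \ref{thm: 2cocycle-deform-QFSHA}\textit{(b)} to the ordinary $2$-cocycle $\sigma = \exp_*\!\big(\hbar\,\chi'\big)$ for the QFSHA $\uhg' = F_\hbar[[G^*]]$ produces an antisymmetric $2$-cocycle $\gamma'$ for the cotangent Lie bialgebra of $G^*$, namely $\varsigma_a \!\pmod{\hbar}$ where $\varsigma = \hbar^{-1}\log_*(\sigma) = \chi'$; under the canonical identification $\lieg^{**} = \lieg$ this is a $2$-cocycle for $\lieg$. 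The heart of the argument is then to show these two cosets agree. I would do this by tracking, in parallel, the two constructions: in both cases one expands $\sigma$ to first order in $\hbar$, antisymmetrizes, and reads off the degree-$(\le 1)$ part on the relevant Lie (co)algebra; the point is that the relation $\hbar^2\chi' = \chi$ exactly compensates the different powers of $\hbar$ appearing in the ``polar'' expansion $\exp_*\!\big(\hbar^{-1}\chi\big)$ versus the ``standard'' expansion $\exp_*\!\big(\hbar^{+1}\chi'\big)$, and the QDP identification $\uhg'/\hbar\,\uhg' = F[[G^*]]$ matches the relevant subspaces $\lieg \subseteq \uhg/\hbar\,\uhg$ and $\liem_{\uhg'}/\liem_{\uhg'}^2 \cong \lieg^{**}$ compatibly (this is Lemma 3.3 of \cite{Ga1}, already used in the proof of Theorem \ref{thm: propt.'s qs-2cocycle-deform-QUEA}). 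The main obstacle is purely bookkeeping: one must verify that the natural map from ``primitive-part of $\uhg$ modulo $\hbar$'' to ``$\liem/\liem^2$ of $\uhg'$ modulo $\hbar$'' intertwines the two antisymmetrized evaluations, i.e.\ that no spurious lower-filtration or higher-order-in-$\hbar$ terms survive — but this has already been handled, in both directions, inside the proofs of Theorem \ref{thm: propt.'s qs-2cocycle-deform-QUEA} and Theorem \ref{thm: 2cocycle-deform-QFSHA}, so invoking those computations one concludes $\gamma = \gamma'$, as asserted. (One may also note this is the polar counterpart of Proposition \ref{prop: F=sigma before-&-after}, and indeed follows from Proposition \ref{prop: polar F=sigma before-&-after} combined with Proposition \ref{prop: duality-"polar"deforms}, giving an alternative, essentially formal, route to the same conclusion.)
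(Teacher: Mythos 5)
Your proposal is correct and follows essentially the same route as the paper: claim \textit{(a)} by construction (via Lemma \ref{lemma: properties polar 2-cocycle}), claim \textit{(b)} as a direct invocation of Lemma \ref{lemma: polar 2-cocycle = 2-cocycle & polar twist x Fhg = twist}\textit{(a)}, and claim \textit{(c)} by carefully tracking the two constructions of $\gamma$ and $\gamma'$ in parallel — which is exactly what the paper does (your observation that $\hbar^2\chi'=\chi$ cancels against the lifts $a'=\hbar a$ is the right bookkeeping). Only your parenthetical ``alternative, essentially formal'' route for \textit{(c)} is overstated, since Proposition \ref{prop: polar F=sigma before-&-after} compares linear duals rather than Drinfeld-functor images and so does not by itself yield $\gamma=\gamma'$; but as this is offered only as an aside, the main argument stands.
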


\pf
 \textit{(a)}\,  This follows directly by construction.
 \vskip3pt
   \textit{(b)}\,  This is true because of
   Lemma \ref{lemma: polar 2-cocycle = 2-cocycle & polar twist x Fhg = twist}\textit{(a)}.
 \vskip3pt
   \textit{(c)}\,  Much like for
   Theorem \ref{thm: polar twist-deform vs Drinfeld functor x QFSHA}\textit{(c)},
   this follows again by a straightforward check, just carefully tracking both
   constructions involved (of  $ \gamma $
   and of  $ \gamma $  alike).
\epf

\vskip5pt

\begin{rmk}  \label{rmk: duality Drinf-fctr.s-vs-polar-deform.'s}
 Recall that the notions of twist and that of 2-cocycle are dual to each other
 (cf.\ Proposition \ref{prop: duality-deforms}),
 and the same holds for those of polar twist and polar 2--cocycle
 (cf.\ \S \ref{subsec: duality issues}).  Moreover, Drinfeld's functors are also
 dual to each other, in the sense of  \eqref{eq: QDP-duality}.
 Taking all this into account, it turns out easily that
 Theorem \ref{thm: polar twist-deform vs Drinfeld functor x QFSHA}  and
 Theorem \ref{thm: polar 2cocycle-deform vs Drinfeld functor x QUEA}
 above are also ``dual to each other'', in that either one of these two statements
 can be deduced from the other by a duality argument.
\end{rmk}

\vskip13pt

\subsection{Drinfeld's functors and (standard) deformations}
\label{subsec: Drinf-funct.'s & standard deform.'s}  {\ }
 \vskip7pt
   We analyze now the interaction between the process of deformation
   --- in the standard sense ---   and the action of a Drinfeld's functor on some
   quantum group.
                                                           \par
   We begin with deformations by twist of a QUEA and the application to the latter of
   Drinfeld's functor  $ {(\ )}' \, $;  \,then we shall look at deformation by 2-cocycle of any
   QUEA, to which we apply Drinfeld's functor  $ {(\ )}'\, $.  In both cases (obviously
   related by duality) we have to adopt a slightly stronger assumption, namely that the
   given twist, resp.\ 2-cocycle, is additionally an  $ R $--matrix,  resp.\ a
   $ \varrho $--comatrix.

\vskip11pt

\begin{free text}  \label{prime-functor & twist}
 \textbf{Deformations by twist under  $ \, \uhg \mapsto \uhg' \, $.}
 As a first step, we look what happens for deformations by twist of a QUEA
 when the latter is acted upon by the functor  $ {(\ )}' $  which associates with
 it a QFSHA.  It turns out that we find a relevant result when we make the stronger
 assumption that the given twist is in fact a  \textsl{(quantum)  $ R $--matrix  twist},
 as in  Definition \ref{def: twist & R-mat.'s}\textit{(d)}.  Here is our result:
\end{free text}

\vskip3pt

\begin{theorem}  \label{thm: twist-deform vs Drinfeld functor x QUEA}  {\ }
 \vskip3pt
   Let\/  $ \uhg $  be a QUEA, and let\/  $ \cF $  be a twist for\/  $ \uhg $  s.t.\
   $ \; \phi \equiv 1 \; \big(\, \text{\rm mod} \; \hbar \, \uhg^{\widehat{\otimes}\, 2} \,\big) $;
   \,then  $ \, \phi := \hbar^{-1} \log(\cF\,) \in \uhg^{\widehat{\otimes}\, 2} \, $,  \,and
   $ \, \cF = \exp(\hbar\,\phi) \, $.  Set also
   $ \, \phi' := \hbar^{-2} \log(\cF\,) = \hbar^{-2} \phi \, $,  $ \; \phi_a := \phi - \phi_{2,1} \; $
   and  $ \; \phi'_a := \phi' - \phi'_{2,1} \; $.
 Assume in addition that  $ \cF $  is indeed a  \textsl{(quantum)  $ R $--matrix  twist},
 as in  Definition \ref{def: twist & R-mat.'s}\textit{(d)}.
%
 Then we have:
 \vskip5pt
   (a)\;  $ \phi $  is antisymmetric, i.e.\  $ \, \phi_{2,1} = -\phi \, $,  \,iff\/  $ \cF $
   is orthogonal, i.e.\  $ \, \cF_{2,1} = \cF^{-1} \, $,  \,iff\/  $ \phi' $  is antisymmetric,
   i.e.\  $ \, \phi'_{2,1} = -\phi' \, $;
 \vskip3pt
   (b)\;  $ \, \cF = \exp\big( \hbar^{-1} \phi' \big) \, $  is a polar twist for the QFSHA
   $ \, F_\hbar\big[\big[G^*\big]\big] := \uhg' \, $.
 \vskip3pt
   (c)\;  Let  $ \, c \, $  be the antisymmetric twist of\/  $ \lieg $  corresponding to
   $ \cF $  as provided by  Theorem \ref{thm: twist-deform-QUEA},  and let
   $ \, c^{\,\prime} $  be the similar twist provided by
   Theorem \ref{thm: propt.'s qs-twist-deform-QFSHA}  along with claim
   \textit{(b)\/}  above   --- using the identification  $ \, \lieg^{**} = \lieg \, $.
   Then  $ \; c = c^{\,\prime} \; $.
\end{theorem}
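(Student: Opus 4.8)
The plan is to dualize \textit{Theorem \ref{thm: polar 2cocycle-deform vs Drinfeld functor x QUEA}} (about polar $2$-cocycles on a QUEA under the functor $(\ )'$) via the standard duality machinery, using the fact that an $R$-matrix twist on $\uhg$ becomes a $\varrho$-comatrix $2$-cocycle on the dual QFSHA $\fhg := {(\uhg)}^*$. First I would set $\fhg := {(\uhg)}^*$, so that $\uhg = {(\fhg)}^\star$, and I would transport the data: the twist $\cF$ on $\uhg$ corresponds, via the natural pairing, to a $2$-cocycle $\sigma$ on $\fhg$ (this uses \textit{Proposition \ref{prop: duality-deforms}}, in the form where $R$-matrix twists for $\uhg$ correspond to $\varrho$-comatrix $2$-cocycles for $\fhg$). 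Crucially, $\sigma$ will then be a \textsl{polar} $2$-cocycle for the QUEA ${(\fhg)}^\vee$ --- indeed $\cF = \exp(\hbar\phi)$ with $\phi \equiv 1 \bmod \hbar$ translates, under the pairing and the identity ${({\uhg}^*)}^\vee = {({\uhg}')}^\star$ from \eqref{eq: QDP-duality}, into precisely the ``$\exp(\hbar^{-1}\chi)$'' shape required by \textit{Definition \ref{def: polar 2cocycle}}.

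Part \textit{(a)} is purely formal: antisymmetry of $\phi$, orthogonality of $\cF$, and antisymmetry of $\phi' = \hbar^{-2}\phi$ are visibly equivalent because the three statements differ only by multiplying the argument of $\exp$ by a scalar and by the manipulations of \textit{Lemma \ref{lemma: twist-cond's/exp ==> twist-cond's/log}} that ensure $\phi$ lands in $\Ker(\epsilon)^{\otimes 2}$. Part \textit{(b)} is the heart: I would verify it directly by showing $\cF = \exp(\hbar^{-1}\phi')$, with $\phi' := \hbar^{-2}\phi$, is a well-defined element of ${\big({\uhg'}^{\vee}\big)}^{\widehat\otimes 2} = {\big(\uhg^{\widehat\otimes 2}\big)}^{\vee\vee}$ and that it has the twist property there. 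The key point is that since $\cF$ is an $R$-matrix twist, the element $\phi \in \uhg^{\widehat\otimes 2}$ is, modulo $\hbar$, an $R$-matrix for $U(\lieg)$ whose semiclassical part is a classical $r$-matrix; more to the point, $\log(\cF) = \hbar\phi$ with $\phi$ a unit-like tensor means that, viewed inside the larger algebra $\uhg' = F_\hbar[[G^*]]$ (which contains $\uhg$ only after the scalar extension $\kh \to \khp$ and the $I'_\hbar$-adic procedures of \textit{Definition \ref{def: Drinfeld's functors}}), the factor $\hbar^{-2}$ is exactly what is needed for $\hbar^{-1}\phi' = \hbar^{-3}\phi$ to be controlled: one checks $\phi \in J_{\uhg'}^{\,2} = \hbar^2 (\uhg'^\vee)$-type containment using that, for an $R$-matrix twist, $\phi - \phi_{2,1}$ and the relevant commutators acquire the extra $\hbar$-divisibility. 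This is the step I expect to be the main obstacle: one must carefully track how the $R$-matrix conditions \eqref{eq: R-mat_properties}, combined with \eqref{eq: twist-cond.'s}, force $\log(\cF)$ and its iterated coproducts to lie deep enough in the filtration of $\uhg'$ for the ``polar'' exponential to converge --- this is the mirror image of \textit{Lemma \ref{lemma: polar 2-cocycle = 2-cocycle & polar twist x Fhg = twist}}, and I would invoke that lemma (or its proof technique) rather than redo the estimate from scratch.

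For part \textit{(c)}, I would simply compare the two recipes for the induced Lie-level twist. On one side, \textit{Theorem \ref{thm: twist-deform-QUEA}} produces $c = \overline{\kappa_a} \in \lieg\otimes\lieg$ from $\kappa = \hbar^{-1}\log(\cF)$, i.e.\ from $\phi$; on the other side, applying \textit{Theorem \ref{thm: propt.'s qs-twist-deform-QFSHA}} to the polar twist $\cF = \exp(\hbar^{-1}\phi')$ on $\uhg' = F_\hbar[[G^*]]$ produces $c^{\,\prime} = \big(\phi'_a \bmod \hbar \bmod \liem^2\otimes\liem + \liem\otimes\liem^2\big)$, where $\phi'_a = \hbar^{-2}\phi_a$. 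Since passing from $\uhg$ to $\uhg'$ corresponds, at the semiclassical level, to the linear-duality identification $\Lie(G^*)^* = \lieg^{**} = \lieg$, and since the $\hbar$-rescaling $\phi \mapsto \hbar^{-2}\phi$ in the QUEA $\uhg'$ precisely compensates the difference between the $\hbar^{-1}$-normalization used in \textit{Theorem \ref{thm: twist-deform-QUEA}} and the $\hbar^{-2}$-normalization implicit in the $(\ )'$-functor, the two cosets coincide. I would make this precise by choosing lifts in $\widetilde{J}_\hbar := \hbar^{-1}J'_\hbar$ (as in the proof of \textit{Theorem \ref{thm: propt.'s qs-2cocycle-deform-QUEA}}), rewriting both $\overline{\kappa_a}$ and $\overline{\phi'_a}$ as the image of the same element of $\hbar\cdot\big(J'_\hbar\big)^{\widehat\otimes 2}$ modulo the appropriate ideals, and observing that the identification of $\lieg$ with $\lieg^{**}$ is the identity. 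This step is routine bookkeeping once \textit{(b)} is in hand, so I expect no genuine difficulty there; the whole theorem is the ``$R$-matrix version'' dual of \textit{Theorem \ref{thm: polar 2cocycle-deform vs Drinfeld functor x QUEA}}, and can alternatively be deduced from it via \textit{Proposition \ref{prop: duality-deforms}} and \eqref{eq: QDP-duality}, in the spirit of \textit{Remark \ref{rmk: duality Drinf-fctr.s-vs-polar-deform.'s}}.
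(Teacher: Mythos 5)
There is a genuine gap in part \textit{(b)}, which is the heart of the theorem. You correctly identify the crux --- that one must show $\hbar\log(\cF) = \hbar^2\phi$ lands in $\big({\uhg}'\big)^{\widetilde{\otimes}\,2}$, so that $\cF$ has the shape demanded by Definition \ref{def: polar twist} --- but the tool you propose to invoke cannot deliver it. Lemma \ref{lemma: polar 2-cocycle = 2-cocycle & polar twist x Fhg = twist} (and likewise Theorem \ref{thm: polar 2cocycle-deform vs Drinfeld functor x QUEA}, which you want to dualize) goes in the \emph{easy} direction: it starts from a polar object, i.e.\ from the hypothesis that the relevant tensor already lies in the Drinfeld subalgebra/dual, and deduces that it restricts to a standard object. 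The present claim is the converse and genuinely hard direction: starting from a standard $R$--matrix twist of $\uhg$, one must prove that its logarithm gains an extra order of $\hbar$--adic depth relative to $\uhg'$. This is precisely the $\hbar$--adic valuation theorem of Enriquez--Halbout (\cite[Theorem 0.1]{EH}, recorded in the paper as Proposition \ref{prop: R(rho)-(co)matrices}\textit{(a)}), whose proof makes essential use of the hexagon relations \eqref{eq: R-mat_properties}; it is not a formal translation under the pairing, nor a "mirror image" of the cited lemma, and your proposal supplies no substitute argument. Relatedly, your duality framing is inverted: the dual partner of this theorem is Theorem \ref{thm: 2cocycle-deform vs Drinfeld functor x QFSHA}, which the paper \emph{derives from} the present one, so no duality argument can be used to prove part \textit{(b)} here --- it must be established directly (or by citing \cite{EH}).

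Parts \textit{(a)} and \textit{(c)} of your proposal are fine and match the paper's (equally brief) treatment: \textit{(a)} is formal, and \textit{(c)} is a bookkeeping comparison of the two recipes producing $c$ and $c^{\,\prime}$, where your observation that the rescaling $\phi \mapsto \hbar^2\phi$ exactly compensates the change of normalization between Theorem \ref{thm: twist-deform-QUEA} and Theorem \ref{thm: propt.'s qs-twist-deform-QFSHA} is the right point. But \textit{(c)} presupposes \textit{(b)}, so the gap above blocks the whole argument.
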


\pf
 \textit{(a)}\,  This follows directly by construction.
 \vskip3pt
   \textit{(b)}\,  This is proved in  \cite[Theorem 0.1]{EH}.
 Note that the overall assumption there is that  $ \cR $  be an  $ R $--matrix,
 in the standard sense   --- so that  $ \uhg $  is quasitriangular.
 \textsl{Nevertheless},  \textsl{all the arguments used there to
 prove the main result only apply the defining properties of an
 ``$ R $--matrix''  in the sense of  Definition \ref{def: twist & R-mat.'s},
 namely  \eqref{eq: R-mat_properties}  and the right-hand side of
 \eqref{eq: twist-cond.'s};  the assumption  \eqref{eq: quasi-cocommutative}, instead,
 is  \textit{never}  used}.  Therefore, the same arguments,
 and the whole proof, used in  \cite{EH}  to prove Theorem 0.1
 actually do prove also the present statement, that is actually stronger.
 \vskip3pt
   \textit{(c)}\,  Here again, the proof follows from a straightforward,
   careful checking procedure, keeping track of both constructions involved
   (of  $ c $  and of  $ c^{\,\prime} $  alike), much like for
   Theorem \ref{thm: polar twist-deform vs Drinfeld functor x QFSHA}\textit{(c)\/}
   and for
   Theorem \ref{thm: polar 2cocycle-deform vs Drinfeld functor x QUEA}\textit{(c)}.
\epf

\vskip7pt

\begin{free text}  \label{vee-functor & 2-cocycles}
 \textbf{Deformations by 2-cocycle under  $ \, \fhg \mapsto \fhg^\vee \, $.}
 As a second step, we look what happens to deformations of a QFSHA by 2-cocycle when we apply Drinfeld's functor  $ {(\ )}^\vee \, $.  Here again, we get a relevant result under the stronger assumption that the given 2-cocycle is in fact a  \textsl{(quantum)  $ \varrho $--comatrix  2-cocycle},  as in  Definition \ref{def: 2-cocyc.'s & rho-comat.'s}\textit{(d)}.  Our result reads as follows:
\end{free text}

\vskip3pt

\begin{theorem}  \label{thm: 2cocycle-deform vs Drinfeld functor x QFSHA}  {\ }
 \vskip3pt
   Let\/  $ \fhg $  be any QFSHA, and let  $ \sigma $  be a 2-cocycle for  $ \fhg $
   such that  $ \; \sigma \equiv 1 $\allowbreak
$ \; \Big(\, \text{\rm mod} \; \hbar \,
{\big( \fhg^{\widetilde{\otimes}\, 2} \,\big)}^{\!\star} \,\Big) \, $;  \,then
$ \, \varsigma := \hbar^{-1} \log(\sigma) \in
{\big( \fhg^{\widetilde{\otimes}\, 2} \,\big)}^{\!\star} \, $,  \,and
$ \, \sigma = \exp\!\big( \hbar\,\varsigma \big) \, $.  Set also
$ \, \varsigma^\vee := \hbar \, \log(\sigma) = \hbar^2 \, \varsigma \, $,
$ \; \varsigma_a := \varsigma - \varsigma_{\,2,1} \; $  and
$ \; \varsigma^\vee_a := \varsigma^\vee - \varsigma^\vee_{\,2,1} \; $.
 Assume in addition that  $ \sigma $  is a  \textsl{(quantum)  $ \varrho $--comatrix
 2-cocycle},  as in  Definition \ref{def: 2-cocyc.'s & rho-comat.'s}\textit{(d)}.
                                                          \par
   Then the following holds true:
 \vskip5pt
   (a)\;  $ \varsigma $  is antisymmetric, i.e.\  $ \, \varsigma_{\,2,1} = -\varsigma \, $,
   \,iff\/  $ \sigma $  is orthogonal, i.e.\  $ \, \sigma_{2,1} = \sigma^{-1} \, $,
   \,iff\/  $ \varsigma^\vee $  is antisymmetric, i.e.\
   $ \, \varsigma^\vee_{\,2,1} = -\varsigma^\vee \, $;
 \vskip3pt
   (b)\;  $ \, \sigma = \exp\!\big( \hbar^{-1} \varsigma^\vee \big) \, $
   is a polar 2--cocycle for the QUEA  $ \, \uhgs := \fhg^\vee \, $;
 \vskip3pt
   (c)\;  Let  $ \, \gamma \, $  be the antisymmetric 2-cocycle of\/  $ \lieg^* $  corresponding to  $ \sigma $  as provided by  Theorem \ref{thm: 2cocycle-deform-QFSHA},  and let  $ \, \gamma^\vee $  be the similar 2-cocycle provided by  \ref{thm: polar 2cocycle-deform-QUEA}  along with claim  \textit{(b)\/}  above.  Then  $ \; \gamma = \gamma^\vee \; $.
\end{theorem}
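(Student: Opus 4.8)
\textit{(a)}\, This is immediate from the standard formal identities relating $\exp$ and $\log$: since $\varsigma^\vee = \hbar^2\,\varsigma$ differs from $\varsigma = \hbar^{-1}\log(\sigma)$ only by a rescaling, antisymmetry of either $\varsigma$ or $\varsigma^\vee$ is equivalent to antisymmetry of $\hbar^{-1}\log(\sigma)$, which in turn is equivalent to orthogonality of $\sigma$, exactly as in Theorem~\ref{thm: 2cocycle-deform-QFSHA}\textit{(a)}.

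\textit{(b)}\, The plan is to deduce this from Theorem~\ref{thm: twist-deform vs Drinfeld functor x QUEA}\textit{(b)} by a duality argument, in the spirit of Remark~\ref{rmk: duality Drinf-fctr.s-vs-polar-deform.'s}. First I would pass to the linearly dual QUEA $\, U_\hbar := {\fhg}^\star \, $, so that $ \, \fhg = {U_\hbar}^* \, $ and Proposition~\ref{prop: duality-deforms}\textit{(b)} applies: since $\sigma$ is a $\varrho$--comatrix $2$--cocycle for $\fhg$ which is trivial modulo $\hbar$, the corresponding element $\cF_\sigma$ on $U_\hbar$ is a (quantum) $R$--matrix twist for $U_\hbar$, again trivial modulo $\hbar$, hence $\cF_\sigma$ satisfies all the hypotheses of Theorem~\ref{thm: twist-deform vs Drinfeld functor x QUEA}. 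By part \textit{(b)} of that theorem, $\cF_\sigma$ is then a polar twist for the QFSHA $\, {U_\hbar}' = {\big({\fhg}^\star\big)}' \, $, which by the duality identity~\eqref{eq: QDP-duality} coincides with $\, {\big({\fhg}^\vee\big)}^* \, $. Now $\fhg^\vee$ and $ {\big({\fhg}^\vee\big)}^* $ form a QUEA and a QFSHA dual to each other, so Proposition~\ref{prop: duality-"polar"deforms}\textit{(b)} tells us that being a polar twist for $ {\big({\fhg}^\vee\big)}^* $ is the same as being a polar $2$--cocycle for $\fhg^\vee$. Unwinding the (contravariant, $\hbar$--linear, identity modulo $\hbar$) identifications one checks that the bilinear form on $\fhg^\vee$ obtained this way is precisely $\sigma = \exp\!\big(\hbar^{-1}\varsigma^\vee\big)$, which is claim \textit{(b)}.

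\textit{(c)}\, Here I would argue once more by duality. On one side, $\gamma$ is the antisymmetric $2$--cocycle of $\lieg^*$ attached to $\sigma$ by Theorem~\ref{thm: 2cocycle-deform-QFSHA}; on the other, $\gamma^\vee$ is the one attached by Theorem~\ref{thm: polar 2cocycle-deform-QUEA} to $\sigma$ regarded --- via \textit{(b)} --- as a polar $2$--cocycle for $\fhg^\vee$. Both are antisymmetric bilinear forms on $\liem/\liem^2 \cong \lieg^*$, and each is read off as the semiclassical leading term of the \emph{same} Hopf-level object $\sigma$, the only difference being whether one deforms $\fhg$ directly or first applies Drinfeld's functor $ {(\ )}^\vee $. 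Hence $\gamma = \gamma^\vee$ follows either from a direct and entirely routine comparison of the two defining recipes --- tracking the $\hbar$--adic leading terms through the $\textsl{Alt}_{\,3}$--computations in the proofs of Theorem~\ref{thm: 2cocycle-deform-QFSHA} and Theorem~\ref{thm: polar 2cocycle-deform-QUEA} --- or, more conceptually, by combining Proposition~\ref{prop: F=sigma before-&-after} and Proposition~\ref{prop: polar F=sigma before-&-after} with the duality of Drinfeld's functors, as in Remark~\ref{rmk: duality Drinf-fctr.s-vs-polar-deform.'s}.

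The only genuinely delicate point is the bookkeeping in \textit{(b)}: one must verify that the chain of identifications $\, \sigma \leftrightarrow \cF_\sigma \leftrightarrow \big(\text{polar twist on } {\big({\fhg}^\vee\big)}^*\big) \leftrightarrow \big(\text{polar }2\text{-cocycle on } \fhg^\vee\big) \, $ is compatible with all the functorial duals involved, and in particular that passing through the identity $\, {\big({\fhg}^\star\big)}' = {\big({\fhg}^\vee\big)}^* \, $ does not alter the form. Everything else is formal, and boils down to the already established Theorem~\ref{thm: twist-deform vs Drinfeld functor x QUEA}.
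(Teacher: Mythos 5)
Your proposal is correct and follows essentially the same route as the paper: part (a) by formal exponential identities, part (b) by dualizing Theorem \ref{thm: twist-deform vs Drinfeld functor x QUEA}\textit{(b)} via Proposition \ref{prop: duality-deforms}, the identity \eqref{eq: QDP-duality} and Proposition \ref{prop: duality-"polar"deforms}, and part (c) by direct comparison of the two constructions. You merely spell out in full the duality bookkeeping that the paper's proof leaves as a one-line remark, and that unwinding is carried out correctly.
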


\pf
 \textit{(a)}\,  This is obvious, by standard identities for formal exponentials.
 \vskip3pt
   \textit{(b)}\,
 This claim is the dual to
 Theorem \ref{thm: twist-deform vs Drinfeld functor x QUEA}\textit{(b)},
 so it follows from that one via a duality argument   --- involving the results in
 \S \ref{subsec: auxiliary results}, in particular
 Proposition \ref{prop: duality-"polar"deforms}.
 \vskip3pt
   \textit{(c)}\,  Once more, as in previous cases, the claim follows from direct
   checking, keeping track of the two involved 2-cocycles   ---  $ \gamma $
   and of  $ \gamma^\vee $  ---   were constructed.
\epf

\vskip7pt

\begin{rmk}  \label{rmk: duality Drinf-fctr.s-vs-deform.'s}
 Much like as we did in  Remark \ref{rmk: duality Drinf-fctr.s-vs-polar-deform.'s},
 we notice here as well that   --- by the same reasons as before ---
 Theorem \ref{thm: twist-deform vs Drinfeld functor x QUEA}  and
 Theorem \ref{thm: 2cocycle-deform vs Drinfeld functor x QFSHA}
 above are once more ``dual to each other'', in that either one of these two statements
 can be deduced from the other by a duality argument.
\end{rmk}

\vskip15pt

\section{Morphisms in the ``(co)quasitriangular'' case}  \label{sec: (co)quasitriangular}  {\ }
   In this section we focus onto  \textsl{$ R $--matrices\/}  and  \textsl{$ \varrho $--comatrices}.
                                                                           \par
   First, we investigate what happens with  \textsl{$ R $--matrices\/}  and
   \textsl{$ \varrho $--comatrices\/}  with respect to the Quantum Duality Principle,
   namely when Drinfeld's functors are applied.  This leads us to introduce the weaker notions of ``polar  $ R $--matrix''  and ``polar  $ \rho $--comatrix''  which eventually will prove quite significant.
                                                                           \par
   Second, we consider the standard constructions providing morphisms between a
   Hopf algebra  $ H $  and its dual when an  $ R $--matrix  or a  $ \varrho $--comatrix
   is available, looking at how this applies to QUEAs and to QFSHAs.
   It turns out that the construction then improves, in a way that involves Drinfeld's
   functors, again; moreover, we prove that it can also be extended to the case when
   $ R $--matrices  or  $ \rho $--comatrices  are replaced with their ``polar'' counterparts.

\vskip11pt

 \subsection{$ R $--matrices  and  $ \varrho $--comatrices  w.r.t.\ QDP:
 polar (co)matrices}  \label{subsec: R-mat&rho-comat vs. QDP}  {\ }
 \vskip7pt
   In next two results, we explain how  $ R $--matrices  and  $ \rho $--comatrices
   ``behave well'' with respect to Drinfeld's functors and the Quantum Duality Principle.
   In fact, this leads us to introduce the notions of  \textsl{``polar  $ R $--matrix''\/}  and
   of  \textsl{``polar  $ \rho $--comatrix''},  which are straight analogue of the notions of
   ``polar twist''and of ``polar 2--cocycle''.
 \vskip3pt
   We begin introducing some more bare definitions:

\begin{definition}  \label{def: polar R-matrix}
 {\ }
 \vskip3pt
    \textit{(a)}\,  Let  $ \fhg $  be a QFSHA.  We call  \textsl{``polar  $ R $--matrix''}  of
    $ \fhg $  any  $ R $--matrix  $ \cR $  for  $ \, \uhgs := {\fhg}^\vee \, $  such that
    $ \; \cR \, \equiv \, 1^{\otimes 2} \;\; \text{mod} \,
    \Big( \hbar \, {\fhg}^\vee \,\widehat{\otimes}\, {\fhg}^\vee \Big) \; $.
 \vskip3pt
    \textit{(b)}\,  Let  $ \uhg $  be a QUEA.  We call  \textsl{``polar  $ \rho $--comatrix''}
    of  $ \uhg $  any  $ \varrho $--comatrix  $ \rho \, $  for  $ \, \fhgs := {\uhg}' \, $
    such that  $ \; \rho \, \equiv \, \epsilon^{\otimes 2} \;\; \text{mod} \,
    \Big( \hbar \, {\big( {\uhg}' \,\widetilde{\otimes}\, {\uhg}' \,\big)}^\star \Big) \; $.
\end{definition}

\vskip3pt

\begin{rmk}  \label{rmk: duality polar R-mat/polar rho-comat}
 In the same spirit of  Proposition \ref{prop: duality-deforms}  and \eqref{eq: QDP-duality},  it is clear that the two notions of ``polar  $ R $--matrix''  and of ``polar  $ \varrho $--comatrix''  are dual to each other.  Namely, let  $ \uhg $  and  $ \fhg $  be a QUEA and a QFSHA which are dual to each other, i.e.\  $ \, \fhg = {\uhg}^* \, $  and  $ \, \uhg = {\fhg}^\star \, $.  Then, through the identification  $ \, \fhg \,\widetilde{\otimes}\, \fhg = {\big( \uhg \,\widehat{\otimes}\, \uhg \big)}^* \, $,  \,we have that  \textit{$ \, \cR = \rho \, $  is a polar  $ R $--matrix  of  $ \fhg $  if and only if it is a polar  $ \varrho $--comatrix  of  $ \, \uhg \, $}.
\end{rmk}

\vskip3pt

\begin{obs's}  \label{obs's: about-def_polar-gad}  {\ }
 \vskip1pt
   \textit{(a)}\,  With assumptions as in  Definition \ref{def: polar R-matrix}\textit{(a)\/}
   above, let  $ \cR $  be any polar  $ R $--matrix  for a QFSHA  $ \fhg \, $:  \,since
   $ \; \cR \, \equiv \, 1^{\otimes 2} \;\; \text{mod} \, \hbar \; $,  \,we can write  $ \cR $
   in the form  $ \; \cR \, = \, \exp\!\big( \hbar^{+1} \, \theta \big) \; $  for some
   $ \, \theta \in {\fhg}^\vee \,\widehat{\otimes}\, {\fhg}^\vee \, $.
                                                                 \par
   Similarly, if  $ \rho $  is any polar  $ \varrho $--comatrix  for a QUEA  $ \uhg \, $,
   \,then we can write it in the form
   $ \; \rho \, = \, \exp_*\!\big( \hbar^{+1} \, \varsigma \big) \; $  for some
   $ \, \varsigma \in {\big( {\uhg}' \,\widetilde{\otimes}\, {\uhg}' \,\big)}^\star \, $.
 \vskip1pt
   \textit{(b)}\,  Note that in the very definition of ``polar  $ R $--matrix'',
   resp.\ of ``polar  $ \varrho $--comatrix'',  we assume a condition which is quite close,
   yet  \textsl{weaker},  than the one demanded for the definition of
   ``polar twist'', resp.\ of ``polar 2--cocycle'', in  Definition \ref{def: polar 2cocycle},
   resp.\ in  Definition \ref{def: polar twist}.  In fact, our choice for these definitions about
   $ R $--matrices  and  $ \varrho $--comatrices  is motivated by
   Proposition \ref{prop: R(rho)-(co)matrices}  below,
   which eventually implies that  \textsl{when the two setups overlap, the stronger
   condition for twists/2--cocycles actually holds true}
   --- cf.\  Theorem \ref{thm: twist-deform vs Drinfeld functor x QUEA}  and
   Theorem \ref{thm: 2cocycle-deform vs Drinfeld functor x QFSHA}.
\end{obs's}

\vskip3pt

   The key result about polar  $ R $--matrices  and polar  $ \varrho $--comatrices  is the following:

\vskip3pt

\begin{prop}  \label{prop: R(rho)-(co)matrices}  {\ }
 \vskip5pt
   (a)\,  Let  $ \, \uhg $  be a QUEA, and let  $ \, {\uhg}' $  be the QFSHA associated to it by the Quantum Duality Principle, as in  Theorem \ref{thm: QDP}.  Then for any  $ R $--matrix  of  $ \, \uhg $  of the form  $ \, \cR = \exp\!\big( \hbar\,\theta \big) \, $,  \,with  $ \, \theta = \hbar^{-1} \log\,(\cR) \in {\uhg}^{\widehat{\otimes}\, 2} \, $,  \,we have
  $$  \vartheta  \, := \,  \hbar^2\,\theta  \, = \,  \hbar^{+1} \log\,(\cR)  \; \in \;  {\big( {\uhg}' \,\big)}^{\widetilde{\otimes}\, 2}  $$
 \vskip5pt
   (b)\,  Let  $ \, \fhg $  be a QFSHA, and let  $ \, {\fhg}^\vee $  be the QUEA associated to it by the Quantum Duality Principle, as in  Theorem \ref{thm: QDP}.  Then for any  $ \varrho $--comatrix  of  $ \, \fhg $  of the form  $ \, \rho = \exp_*\!\big( \hbar\,\varsigma \big) \, $,  \,with  $ \, \varsigma = \hbar^{-1} \log_*(\rho) \in {\Big( {\fhg}^{\widehat{\otimes}\, 2} \Big)}^{\!*} \, $,  \,we have
  $$  \zeta  \, := \,  \hbar^2\,\varsigma  \, = \,  \hbar^{+1} \log_*(\rho)  \; \in \;  {\Big( {\big( {\fhg}^\vee \,\big)}^{\widehat{\otimes}\, 2} \Big)}^{\!*}  $$
\end{prop}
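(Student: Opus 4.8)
The plan is to prove both statements by tracking the $\hbar$-adic and $I$-adic (resp.\ $J$-adic) filtrations involved, using the explicit descriptions of Drinfeld's functors from Definition \ref{def: Drinfeld's functors} and the alternative description in Proposition \ref{prop: exist-lifts-x_i}. Since statements (a) and (b) are dual to each other --- via the identification $\fhg = {\uhg}^*$, $\uhg = {\fhg}^\star$, Proposition \ref{prop: duality-deforms}, and the QDP-duality \eqref{eq: QDP-duality} --- it suffices to carry out (a) in full and then deduce (b) by a duality argument (mirroring Remark \ref{rmk: duality Drinf-fctr.s-vs-polar-deform.'s}).

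For part (a): by Remarks \ref{rmks: about R-matrix twists}\textit{(b)}, every $R$-matrix is automatically unitary, so $(\epsilon \otimes \id)(\cR) = 1 = (\id \otimes \epsilon)(\cR)$; combined with $\cR = \exp(\hbar\,\theta)$ and Lemma \ref{lemma: twist-cond's/exp ==> twist-cond's/log} (whose argument applies verbatim to $\cR$ in place of $\cF$), we may assume $\theta \in J_\hbar^{\,\widehat{\otimes}\,2}$ where $J_\hbar := \Ker(\epsilon_{\uhg})$. The first main step is to observe that an $R$-matrix satisfies \eqref{eq: R-mat_properties}, namely $(\Delta \otimes \id)(\cR) = \cR_{13}\,\cR_{23}$ and $(\id \otimes \Delta)(\cR) = \cR_{13}\,\cR_{12}$. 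Taking $\hbar$-adic expansions of these identities --- exactly as in the proof of Theorem \ref{thm: twist-deform-QUEA}\textit{(b)}, reading them modulo $\hbar^n$ --- one extracts, first at order $1$, that $\overline{\theta}$ lies in $\lieg \otimes \lieg$ (the classical limit is an $r$-matrix), and more importantly one shows by induction that $\delta_n(\theta_s) \in \hbar^{\,n-1}\uhg^{\,\widehat{\otimes}\,n}$ for the ``Sweedler components'' $\theta_s$ of $\theta$, using the fact that iterating \eqref{eq: R-mat_properties} forces $\delta_n(\cR$-components$)$ to be divisible by high powers of $\hbar$. In other words, both tensor legs of $\theta$ already lie inside $\uhg^{\,\prime}$ after multiplying by one power of $\hbar$: concretely, $\hbar\,\theta_s \in \uhg^{\,\prime}$ for each $s$, hence $\hbar^2\theta = \vartheta \in {\big( \uhg^{\,\prime} \big)}^{\widetilde{\otimes}\,2}$. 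This last passage is where one invokes Definition \ref{def: Drinfeld's functors}\textit{(a)}: the condition $\delta_n(y) \in \hbar^n\,\uhg^{\,\otimes n}$ for all $n$ is precisely membership in $\uhg^{\,\prime}$, and the $R$-matrix relations supply exactly this bound (up to the one ``lost'' power of $\hbar$ that is compensated by $\hbar^2\theta$ rather than $\hbar\theta$).

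The cleanest route, which I would actually take to avoid re-deriving the divisibility estimates by hand, is to quote the now-available machinery: by Lemma \ref{lemma: twist-cond's/exp ==> twist-cond's/log}, $\theta$ lies in $J_\hbar^{\,\widehat{\otimes}\,2} = J_{\fhg}\,\widetilde{\otimes}\,J_{\fhg}$ where $\fhg := \uhg^*$; then $\theta \in \hbar^2\,{\big( J_{\fhg}^{\,\vee} \big)}^{\widehat{\otimes}\,2}$ by the very construction of $J_{\fhg}^{\,\vee} := \hbar^{-1}J_{\fhg} \subseteq \fhg^\vee$, exactly as in Remark \ref{rmk: equiv-cond x polar 2-cocycle}. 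But the relevant filtration is on $\uhg^{\,\prime}$, not on $\fhg^\vee$; so instead I would use the description of $\uhg^{\,\prime}$ from Proposition \ref{prop: exist-lifts-x_i}: writing $\overline{\theta} = \sum_i \overline{y}_i \otimes \overline{z}_i$ in $\lieg \otimes \lieg$ and using the lifts $x'_i = \hbar\,y_i \in \uhg^{\,\prime}$ given there, one checks that $\theta - \hbar^{-2}\big(\sum_i x'_i \otimes x'_i\big)$-type corrections iterate down through the $I'_\hbar$-adic filtration, so that $\hbar^2\theta$ converges inside $\uhg^{\,\prime\,\widetilde{\otimes}\,2}$. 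The main obstacle I anticipate is precisely this convergence/filtration bookkeeping: one must verify that the higher-order $\hbar$-corrections to $\theta$ (beyond its classical limit) land in progressively higher powers of $I'_\hbar$, which requires carefully unwinding the $R$-matrix identities \eqref{eq: R-mat_properties} in the style of the $\hbar$-adic expansions in the proof of Theorem \ref{thm: twist-deform-QUEA}\textit{(b)}, including the use of the coradical-degree argument there to control the components $\dot{\overline{\theta_s}}$. Part (b) then follows by the duality dictionary: applying $(a)$ to $\uhg := \fhg^\star$, whose $R$-matrix is the given $\varrho$-comatrix $\rho$ of $\fhg$ via Proposition \ref{prop: duality-deforms}, and noting ${\big( \fhg^\star \big)}' = {\big( \fhg^\vee \big)}^*$ by \eqref{eq: QDP-duality}, transports the conclusion $\vartheta \in {\big( \uhg^{\,\prime} \big)}^{\widetilde{\otimes}\,2}$ into the assertion $\zeta \in {\big( {\big( \fhg^\vee \big)}^{\widehat{\otimes}\,2} \big)}^*$, as claimed.
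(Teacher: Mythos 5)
Your part \textit{(b)} coincides with the paper's: the statement is deduced from \textit{(a)} by the duality dictionary $\fhg = {\uhg}^*$, ${\big( {\fhg}^\vee \big)}^* = {\big( {\fhg}^\star \big)}'$ from \eqref{eq: QDP-duality}, and Proposition \ref{prop: duality-deforms}. The divergence is in part \textit{(a)}: the paper does \emph{not} prove it directly, but cites Enriquez--Halbout \cite[Theorem 0.1]{EH}, merely observing that the quasi-cocommutativity hypothesis \eqref{eq: quasi-cocommutative} assumed there is never used, so the argument applies to weak $R$--matrices in the sense of Definition \ref{def: twist & R-mat.'s}\textit{(c)}.

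There is a genuine gap in your substitute for that citation. The whole content of claim \textit{(a)} is the $\hbar$--adic valuation estimate you describe as ``one shows by induction that $\delta_n$ of the components of $\cR$ is divisible by high powers of $\hbar$'' and then again as ``the main obstacle I anticipate.'' That estimate is precisely the theorem of \cite{EH} --- a substantial result with a nontrivial proof --- and your sketch never carries out the induction: it identifies \eqref{eq: R-mat_properties} as the source of the divisibility and asserts the conclusion, but the passage from the multiplicative identities $(\Delta\otimes\id)(\cR)=\cR_{13}\cR_{23}$ to bounds of the form $\delta_n(\vartheta)\in\hbar^n\,{\uhg}^{\otimes n}$ on the \emph{logarithm} is exactly where all the work lies (one must control how the BCH-type corrections distribute over the tensor legs, order by order). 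Moreover, your ``cleanest route'' paragraph misapplies Remark \ref{rmk: equiv-cond x polar 2-cocycle}: that remark concerns bilinear forms $\chi\in{\big(\uhg^{\widehat{\otimes}\,2}\big)}^{*}=\fhg\,\widetilde{\otimes}\,\fhg$, for which $\chi\in J_{\fhg}^{\,\widetilde{\otimes}\,2}$ immediately yields $\chi\in\hbar^2{\big(\fhg^\vee\big)}^{\widehat{\otimes}\,2}$ because $\hbar^{-1}J_{\fhg}\subseteq\fhg^\vee$ by construction. Here instead $\theta$ lives in $\uhg^{\widehat{\otimes}\,2}$ itself, and the analogous inclusion $\hbar\,J_{\uhg}\subseteq{\uhg}'$ is \emph{false} in general (e.g.\ $\hbar\,y_i y_j\notin{\uhg}'$ for the lifts of Proposition \ref{prop: exist-lifts-x_i}), so membership of $\hbar^2\theta$ in ${\big({\uhg}'\big)}^{\widetilde{\otimes}\,2}$ cannot be obtained by filtration bookkeeping alone and genuinely requires the $R$--matrix identities. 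Either quote \cite[Theorem 0.1]{EH} (with the observation that quasi-cocommutativity is not used), as the paper does, or supply the full inductive estimate.
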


\begin{proof}
 \textit{(a)}\,  This is proved in  \cite[Theorem 0.1]{EH}.  Indeed, the overall assumption there is that  $ \cR $  be an  $ R $--matrix,  in the standard sense   --- so that  $ \uhg $  is quasitriangular.  \textsl{Nevertheless},  \textsl{all the arguments used there to prove the main result only apply the defining properties of an  ``$ R $--matrix''  in the sense of  Definition \ref{def: twist & R-mat.'s},  namely  \eqref{eq: R-mat_properties}  and the right-hand side of  \eqref{eq: twist-cond.'s};  the assumption  \eqref{eq: quasi-cocommutative}, instead,  is  \textit{never}  used}.  Therefore, the same arguments, and the whole proof, used in  \cite{EH}  to prove Theorem 0.1 actually do prove also the present, stronger statement.
 \vskip3pt
   \textit{(b)}\,  This follows from claim  \textit{(a)},
   by duality, using the duality relation  \eqref{eq: QDP-duality}, the fact that
 $ \, {\fhg}^\vee \, $  is a QUEA when  $ \fhg $  is a QFSHA, and
 Proposition \ref{prop: duality-deforms}.
\end{proof}

   The previous result has the following, important consequence:

\begin{theorem}  \label{thm: (co)mat vs. polar (co)mat}  {\ }
 Let  $ \, \uhg $  be a QUEA and  $ \, \fhg $  be a QFSHA; let  $ \, \fhgs := {\uhg}' $
 be the QFSHA and  $ \, \uhgs := {\fhg}^\vee $  be the QUEA provided by the
 Quantum Duality Principle, as in  Theorem \ref{thm: QDP}.  Then the following holds:
 \vskip3pt
   (a)\,  every  $ R $--matrix  for  $ \, \uhg $  which is congruent to  $ 1^{\otimes 2} $
   modulo  $ \hbar $  is a polar  $ R $--matrix  for  $ \, {\uhg}' \, $;
 \vskip3pt
   (b)\,  every  $ \varrho $--comatrix  for  $ \, \fhg $  which is congruent to
   $ \epsilon^{\otimes 2} $  modulo  $ \hbar $  is a polar  $ \varrho $--comatrix  for
   $ \, \fhg^\vee \, $;
 \vskip3pt
   (c)\,  every  $ \varrho $--comatrix  for  $ \, {\uhg}' $  which is congruent to
   $ \epsilon^{\otimes 2} $  modulo  $ \hbar $  is a polar  $ \varrho $--comatrix  for
   $ \, \uhg $  itself;
 \vskip3pt
   (d)\,  every  $ R $--matrix  for  $ \, {\fhg}^\vee $  which is congruent to
   $ 1^{\otimes 2} $  modulo  $ \hbar $  is a polar  $ R $--matrix  for  $ \, \fhg $  itself.
\end{theorem}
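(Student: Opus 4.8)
\textbf{Proof plan for Theorem \ref{thm: (co)mat vs. polar (co)mat}.}
The plan is to reduce all four claims to the content of Proposition \ref{prop: R(rho)-(co)matrices} together with the duality dictionary already established, so that essentially no new computation is needed. I would first treat (a), which is the prototype. Let $\cR$ be an $R$--matrix for $\uhg$ with $\cR \equiv 1^{\otimes 2} \pmod{\hbar}$. Then $\theta := \hbar^{-1}\log(\cR) \in \uhg^{\widehat{\otimes}\,2}$ and $\cR = \exp(\hbar\,\theta)$, exactly as in the hypothesis of Proposition \ref{prop: R(rho)-(co)matrices}\textit{(a)}. Applying that proposition gives $\vartheta := \hbar^2\,\theta = \hbar^{+1}\log(\cR) \in {\big({\uhg}'\big)}^{\widetilde{\otimes}\,2}$, hence $\cR = \exp\!\big(\hbar^{-1}\vartheta\big)$ with $\vartheta \in {\big({\uhg}'\big)}^{\widetilde{\otimes}\,2} = {\big({\uhg}^{\widehat{\otimes}\,2}\big)}^\vee \cap \ldots$; more to the point, $\cR$ lies in $\big({\uhg}'\big)^{\widetilde{\otimes}\,2}$ and is of the form $\exp\!\big(\hbar^{-1}\vartheta\big)$ expected in Observations \ref{obs's: about-def_polar-gad}\textit{(a)}. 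It remains to check that $\cR$ is an $R$--matrix for the QUEA $\big({\uhg}'\big)^\vee = \uhg$: but this is the original hypothesis. By Definition \ref{def: polar R-matrix}\textit{(a)} this says precisely that $\cR$ is a polar $R$--matrix for $\fhgs = {\uhg}'$, once one observes that the congruence condition $\cR \equiv 1^{\otimes 2} \pmod{\hbar\,{\uhg}'\,\widehat{\otimes}\,{\uhg}'}$ in that definition is implied by $\cR = \exp\!\big(\hbar^{-1}\vartheta\big)$ with $\vartheta \in \hbar^2\,{\big({\uhg}'\big)}^{\widetilde{\otimes}\,2}$, itself a restatement of Proposition \ref{prop: R(rho)-(co)matrices}\textit{(a)}.

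For (b), the argument is the mirror image: given a $\varrho$--comatrix $\rho$ for $\fhg$ with $\rho \equiv \epsilon^{\otimes 2} \pmod{\hbar}$, set $\varsigma := \hbar^{-1}\log_*(\rho) \in {\big({\fhg}^{\widehat{\otimes}\,2}\big)}^*$ and apply Proposition \ref{prop: R(rho)-(co)matrices}\textit{(b)} to get $\zeta := \hbar^2\,\varsigma \in {\big({\big({\fhg}^\vee\big)}^{\widehat{\otimes}\,2}\big)}^*$, so $\rho = \exp_*\!\big(\hbar^{-1}\zeta\big)$ with $\zeta$ in the appropriate module; then $\rho$ being a $\varrho$--comatrix for $\fhg = \big({\fhg}^\vee\big)^{\!*}$ is, by Definition \ref{def: polar R-matrix}\textit{(b)}, exactly the assertion that $\rho$ is a polar $\varrho$--comatrix for $\fhg^\vee$. (One uses here that ${\big({\fhg}^\vee \,\widetilde{\otimes}\, {\fhg}^\vee\big)}^\star = {\fhg}^{\widehat{\otimes}\,2}$, i.e.\ the identification $F_\hbar[[G\times G]]^\vee = \big({\fhg}^\vee\big)^{\widehat{\otimes}\,2}$ already used in the proof of Lemma \ref{lemma: propt.'s polar twist}.) Claims (c) and (d) follow from (a) and (b) by duality: applying Proposition \ref{prop: duality-deforms} and the QDP-duality relations \eqref{eq: QDP-duality}, a $\varrho$--comatrix for ${\uhg}'$ is the same datum as an $R$--matrix for $\big({\uhg}'\big)^\star = \uhg$ in the sense of the dictionary of Remark \ref{rmk: duality polar R-mat/polar rho-comat}, and the polar $\varrho$--comatrix condition for $\uhg$ matches the polar $R$--matrix condition after transposing the tensor-linear duality; symmetrically for (d). So (c) is obtained by feeding (a) through the duality functors, and (d) by feeding (b) through them.

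The substance of the whole theorem is therefore concentrated in Proposition \ref{prop: R(rho)-(co)matrices}, whose part \textit{(a)} in turn rests on \cite[Theorem 0.1]{EH} (as noted in the excerpt, only the right-hand identity of \eqref{eq: twist-cond.'s} together with \eqref{eq: R-mat_properties} are used there, never \eqref{eq: quasi-cocommutative}). Assuming that proposition, the present theorem is purely a matter of unwinding Definition \ref{def: polar R-matrix} and checking that the QDP identifications of \S\ref{subsec: QDP} are being applied in the right direction. The one genuinely delicate point — and thus the main obstacle — is bookkeeping the two distinct $\hbar$-rescalings: in Definition \ref{def: polar R-matrix} the "polar" object is only required to be congruent to the trivial one modulo $\hbar$, whereas the exponential-form conclusion of Proposition \ref{prop: R(rho)-(co)matrices} places the exponent in $\hbar^2$ times the relevant completed tensor square; one must verify that these are compatible and, in particular, that the weaker congruence hypothesis in Definition \ref{def: polar R-matrix} is exactly what is needed (and no more) so that the chain of identifications $\uhg = \big({\uhg}'\big)^\vee$, $\fhg = \big({\fhg}^\vee\big)^*$, and \eqref{eq: QDP-duality} can be invoked without circularity — this is the content flagged in Observations \ref{obs's: about-def_polar-gad}\textit{(b)}. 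I would therefore spend the bulk of the written proof making this $\hbar$-degree accounting explicit, and dispatch the four claims in a few lines each by citation.
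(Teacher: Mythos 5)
Your treatment of claims \textit{(a)} and \textit{(b)} coincides with the paper's: both are read off from Proposition \ref{prop: R(rho)-(co)matrices} together with the definition of polar $R$--matrix / polar $\varrho$--comatrix, and your $\hbar$--degree bookkeeping there is fine. The problem is in how you dispose of \textit{(c)} and \textit{(d)}.

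Your derivation of \textit{(c)} rests on the identification ``$ \big({\uhg}'\big)^\star = \uhg $'', which is false: the linear dual of the QFSHA $ {\uhg}' $ is, by \eqref{eq: QDP-duality}, $ \big({\uhg}'\big)^\star = \big({\uhg}^*\big)^\vee = {\fhg}^\vee = \uhgs $ (a quantization of $ U(\lieg^*) $), whereas the identity you want, $ \big({\uhg}'\big)^\vee = \uhg \, $, involves the Drinfeld functor $ {(\ )}^\vee $, not the duality functor $ {(\ )}^\star $. You have conflated the two. Moreover, even setting that aside, the duality dictionary (Proposition \ref{prop: duality-deforms}, Remark \ref{rmk: duality polar R-mat/polar rho-comat}, \eqref{eq: QDP-duality}) interchanges \textit{(a)} with \textit{(b)} and \textit{(c)} with \textit{(d)}: dualizing \textit{(a)} turns ``$R$--matrix for $\uhg$'' into ``$\varrho$--comatrix for $\fhg = {\uhg}^*$'' and ``polar $R$--matrix for ${\uhg}'$'' into ``polar $\varrho$--comatrix for $\big({\uhg}'\big)^\star = {\fhg}^\vee$'', i.e.\ it reproduces \textit{(b)}, not \textit{(c)}. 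So ``feeding \textit{(a)} through the duality functors'' cannot yield \textit{(c)}; it only recovers something you already have. The correct (and simpler) route, which is the one the paper takes, is substitution rather than duality: claim \textit{(b)} holds for an \emph{arbitrary} QFSHA, so apply it with $ \fhg := {\uhg}' $ and use $ \big({\uhg}'\big)^\vee = \uhg $ from Theorem \ref{thm: QDP}\textit{(a)} to get \textit{(c)}; symmetrically, apply \textit{(a)} with $ \uhg := {\fhg}^\vee $ and use $ \big({\fhg}^\vee\big)' = \fhg $ to get \textit{(d)}. If you insist on a duality argument, it can only be used to deduce \textit{(d)} from \textit{(c)} (or vice versa), after one of the two has been established by the substitution above.
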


\begin{proof}
 Claims  \textit{(a)\/}  and  \textit{(b)\/}  follow directly from
 Proposition \ref{prop: R(rho)-(co)matrices}, claims  \textit{(a)\/}  and  \textit{(b)},
 respectively.  Now claim  \textit{(c)\/}  follows from claim  \textit{(b)\/}
 applied to the QFSHA  $ \, F_\hbar\big[\big[G^*\big]\big] := {\uhg}' \, $,
 \,since  $ \, {F_\hbar\big[\big[G^*\big]\big]}^\vee \! = {\big( {\uhg}' \,\big)}^\vee \! = \uhg \, $
 by  Theorem \ref{thm: QDP}\textit{(a)}.  Similarly, claim  \textit{(d)\/}
 follows from  \textit{(a)\/}  applied to the QUEA
 $ \, U_\hbar\big(\lieg^*\big) := {\fhg}^\vee \, $,  \,since
 $ \, {U_\hbar\big(\lieg^*\big)}' = {\big( {\fhg}^\vee \,\big)}' = \fhg \, $
 by  Theorem \ref{thm: QDP}\textit{(a)\/}  again.
\end{proof}

\vskip13pt

\subsection{Morphisms from  $ R \, $--matrices  and  $ \varrho \, $--comatrices}
\label{subsec: morph.'s_R-mat/rho-comat}
  {\ }
 \vskip11pt
   We shall now explore what happens if we apply the general constructions leading to
   Proposition \ref{prop: morph.'s from R-mat},  resp.\ to
   Proposition \ref{prop: morph.'s from rho-comat},
   is (tentatively) applied to a QUEA, resp.\ a QFSHA, as the Hopf algebra $ H $  to start with,
   and a suitable  $ R $--matrix,  resp.\  $ \varrho $--comatrix,  for it.
   To begin with, we check that  Proposition \ref{prop: morph.'s from R-mat}
   still makes sense when  $ \, H := \uhg \, $  is a QUEA:

\vskip9pt

\begin{prop}  \label{prop: Phi from R x QUEA}
 Let  $ \, \uhg \, $  be a QUEA, and  $ \, \fhg := {\uhg}^* \, $  be its dual QFSHA, as in
 \S \ref{equiv-&-(stand)-duality}.  Let  $ \; \cR = \cR_1 \otimes \cR_2 \, $  (in Sweedler's notation) be
 an  $ R $--matrix for  $ \uhg \, $.  Then there exist two morphisms of topological Hopf algebras
  $$
  \displaylines{
   \overleftarrow{\Phi}_{\!\cR} : \fhg := {\uhg}^* \!\relbar\joinrel\relbar\joinrel\longrightarrow
   \uhg^{\text{\rm cop}}  \;\; ,  \qquad  \eta \, \mapsto \, \overleftarrow{\Phi}_{\!\cR}(\eta)
   := \eta(\cR_1) \, \cR_2  \cr
   \overrightarrow{\Phi}_{\!\cR} : \fhg :=
   {\uhg}^* \!\relbar\joinrel\relbar\joinrel\longrightarrow \uhg^{\text{\rm op}}  \;\; ,
   \qquad  \eta \, \mapsto \, \overrightarrow{\Phi}_{\!\cR}(\eta) := \cR_1 \, \eta(\cR_2)  }
   $$
\end{prop}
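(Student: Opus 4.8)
The plan is to invoke the general statement of Proposition \ref{prop: morph.'s from R-mat}, specialized to the pair consisting of the QUEA $\uhg$ and its dual QFSHA $\fhg = {\uhg}^*$, and then to check the three technical points that guarantee the construction survives in the topological setting. First I would recall that, by \S\ref{equiv-&-(stand)-duality}, $\fhg := {\uhg}^*$ is genuinely an object of $\QFSHA$, that $\uhg = {\fhg}^\star$, and that the natural pairing $\langle\,\ ,\ \rangle \colon \fhg \times \uhg \to \kh$ is a Hopf pairing of topological Hopf algebras; in particular every $\eta \in \fhg$ is a continuous $\kh$-linear functional on $\uhg$, so the expressions $\eta(\cR_1)\,\cR_2$ and $\cR_1\,\eta(\cR_2)$ make sense — here $\cR = \cR_1 \otimes \cR_2 \in \uhg\,\widehat{\otimes}\,\uhg$ and applying $\eta$ to one tensor leg yields a well-defined element of $\uhg$ precisely because $\eta$ is continuous for the $\hbar$-adic topology and $\uhg$ is $\hbar$-adically complete.

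With well-definedness in hand, the core verification is that $\overleftarrow{\Phi}_{\!\cR}$ and $\overrightarrow{\Phi}_{\!\cR}$ are morphisms of Hopf algebras in the topological sense. This is exactly the content of the proof of Proposition \ref{prop: morph.'s from R-mat}\textit{(a)}: the two identities in \eqref{eq: R-mat_properties} say that $\overleftarrow{\Phi}_{\!\cR}$ preserves multiplication and comultiplication respectively (the comultiplication going to $\uhg^{\mathrm{cop}}$ because of the order of the tensor legs), while unitarity of $\cR$ — which holds automatically by Remarks \ref{rmks: about R-matrix twists}\textit{(b)} — gives that $\overleftarrow{\Phi}_{\!\cR}$ preserves unit and counit. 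Thus I would simply transcribe that argument, checking at each line that the manipulations (applying $\eta$ to a tensor leg, multiplying inside $\uhg$, using the pairing axioms) are continuous and hence legitimate in $\cHA_\otimeshat$. The computation for $\overrightarrow{\Phi}_{\!\cR}$ is the left–right mirror image, landing in $\uhg^{\mathrm{op}}$.

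The only genuinely topological point I expect to need care is continuity of the maps themselves: one must see that $\overleftarrow{\Phi}_{\!\cR}$ and $\overrightarrow{\Phi}_{\!\cR}$ send $\fhg$ into $\uhg$ continuously with respect to the relevant topologies ($I_\hbar$-adic on $\fhg$, $\hbar$-adic on $\uhg$). This follows because $\cR \equiv 1\otimes 1$ modulo $\hbar\,\uhg^{\widehat{\otimes}2}$ is \emph{not} needed here — rather, one uses that $\cR$ is a fixed element of the complete module $\uhg\,\widehat{\otimes}\,\uhg$, so its expansion $\cR = \sum_{n\ge 0}\hbar^n\,\cR^{(n)}$ (with $\cR^{(n)} \in (\uhg/\hbar\uhg)^{\otimes 2}$, or more precisely lifts thereof) shows that for $\eta$ vanishing on higher terms of the $I_\hbar$-filtration the image lies in high powers of $\hbar$. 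I would package this as: the pairing identifies $\fhg$ with continuous functionals on $\uhg$, and pairing such a functional against one leg of a fixed element of the completed tensor square is manifestly $\hbar$-adically continuous into $\uhg$. So the main (indeed the only) obstacle is bookkeeping with the two topologies and the completed tensor product, and once that is settled the statement is a direct specialization of Proposition \ref{prop: morph.'s from R-mat}.
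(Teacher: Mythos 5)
Your proposal is correct and follows essentially the same route as the paper: the paper also establishes well-definedness by expanding $\cR$ as an $\hbar$-adically convergent series $\sum_{n\geq 0}\hbar^n\,\cR'_n\otimes\cR''_n$ and applying $\eta$ termwise (which is just your "continuous functional against one leg of the completed tensor square" argument in coordinates), and then likewise delegates the Hopf-morphism axioms to the formal check already done in Proposition \ref{prop: morph.'s from R-mat}. Your side remark that no triviality of $\cR$ modulo $\hbar$ is needed is accurate and consistent with the statement.
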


\begin{proof}
 This is straightforward.  Let us write the given  $ R $--matrix  as an
 $ \hbar $--adically  convergent series
 $ \; \cR = \sum_{n \geq 0} \hbar^n \, \cR'_n \otimes \cR''_n \; $
 (where each  $ \, \cR'_n \otimes \cR''_n \, $  is written in Sweedler's notation).  Then
 $ \,\; \overleftarrow{\Phi}_{\!\cR}(\eta) \, := \, \eta(\cR_1) \, \cR_2 \, = \,
 \sum_{n \geq 0} \hbar^n \, \eta\big(\cR'_n\big) \otimes \cR''_n \;\, $
 for every  $ \, \eta \in {\uhg}^* \, $,  where the last term is a well-defined,  $ \hbar $--adically
 convergent series in  $ \uhg \, $,  \,so that  $ \, \overleftarrow{\Phi}_{\!\cR}(\eta) \in \uhg \, $
 and the map  $ \, \overleftarrow{\Phi}_{\!\cR} \, $  is well-defined.
 The properties making this map  $ \overleftarrow{\Phi}_{\!\cR} $
 into a (topological) Hopf morphism then are proved via a formal check, just like for
 Proposition \ref{prop: morph.'s from R-mat}.  The proof for  $ \overrightarrow{\Phi}_{\!\cR} $
 is pretty similar.
\end{proof}

   Dually,  Proposition \ref{prop: morph.'s from rho-comat}
   still makes sense when  $ \, H := \fhg \, $  is a QFSHA:

\begin{prop}  \label{prop: Psi from rho x QFSHA}
 Let  $ \, \fhg \, $  be a QFSHA, and  $ \, \uhg := {\fhg}^\star \, $
 be its dual QUEA, as in  \S \ref{equiv-&-(stand)-duality}.
 Let  $ \, \rho \, $  be a  $ \varrho $--comatrix for  $ \fhg \, $.
 Then there exist two morphisms of topological Hopf algebras
  $$
  \displaylines{
   \overleftarrow{\Psi}_{\!\rho} : \fhg \!\relbar\joinrel\relbar\joinrel\longrightarrow
   {\big( {\fhg}^\star \big)}^{\text{\rm cop}} = \uhg^{\text{\rm cop}}  \;\; ,
   \qquad  \ell \, \mapsto \, \overleftarrow{\Psi}_{\!\rho}(\ell) := \rho(\ell \, , - )  \cr
   \overrightarrow{\Psi}_{\!\rho} :
   \fhg \!\relbar\joinrel\relbar\joinrel\longrightarrow{\big( {\fhg}^\star \big)}^{\text{\rm op}} =
   \uhg^{\text{\rm op}}  \;\; ,  \qquad
   \ell \, \mapsto \, \overrightarrow{\Psi}_{\!\rho}(\ell) := \rho( - , \ell\,)  }
   $$
\end{prop}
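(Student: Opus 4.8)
The plan is to mirror the proof of Proposition \ref{prop: Phi from R x QUEA}, exploiting the duality between the two statements via Remark \ref{rmk: duality morph.'s from R-mat/rho-comat}. Recall that a $\varrho$--comatrix $\rho$ for $\fhg$ is, by definition, a convolution-invertible element of $\big( \fhg^{\widetilde\otimes\, 2} \big)^{\star}$; since $\fhg$ is an object of $\cHA_\otimestilde$ and $\uhg := {\fhg}^\star$ is its dual QUEA with $\fhg = \uhg^*$, we may unwind $\rho$ as an $\hbar$--adically convergent series and check that the two prescribed maps land in $\uhg$ and are Hopf morphisms.

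First I would observe that $\overleftarrow{\Psi}_{\!\rho}$ and $\overrightarrow{\Psi}_{\!\rho}$ are exactly the maps $\ell \mapsto \rho(\ell\,,-)$ and $\ell \mapsto \rho(-,\ell\,)$ obtained from the general construction of Proposition \ref{prop: morph.'s from rho-comat}, applied to $H := \fhg$; the only thing that needs verification is that these prescriptions make sense in the topological/complete setting, i.e.\ that $\rho(\ell\,,-)$ and $\rho(-,\ell\,)$ are genuine continuous $\kh$--linear functionals on $\fhg$ (hence elements of $\uhg = {\fhg}^\star$), and that the Hopf-morphism identities survive. For the first point: write $\rho$ as a convergent sum in $\big( \fhg^{\widetilde\otimes\, 2} \big)^{\star}$ and note that for fixed $\ell \in \fhg$ the partial evaluation $\rho(\ell\,,-)$ is continuous with respect to the $I_\hbar$--adic topology on $\fhg$ and the $\hbar$--adic topology on $\kh$ — this is precisely the compatibility required for membership in ${\fhg}^\star = \uhg$, by the description in \S\ref{equiv-&-(stand)-duality}. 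For the second point: the two defining identities \eqref{eq: rho-comat_prop.'s} of a $\varrho$--comatrix translate, upon partial evaluation, into the statements that $\overleftarrow{\Psi}_{\!\rho}$ preserves the product (from the first identity) and the coproduct up to opposition (from the second), exactly as in the proof of Proposition \ref{prop: morph.'s from rho-comat}; unitality of $\rho$ — guaranteed by Remark \ref{rmks: about rho-comatrix 2-cocycles}\textit{(b)} — gives preservation of unit and counit. The identification of the codomain as $\uhg^{\text{cop}}$, resp.\ $\uhg^{\text{op}}$, is built into which factor of $\rho$ one evaluates against, just as in the finite-dimensional case.

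Alternatively, and perhaps more cleanly, I would deduce the whole statement from Proposition \ref{prop: Phi from R x QUEA} by duality: since $\fhg$ is a QFSHA, its dual $\uhg := {\fhg}^\star$ is a QUEA with ${\uhg}^* = \fhg$, and a $\varrho$--comatrix $\rho$ for $\fhg$ is, through the identification $\fhg \,\widetilde\otimes\, \fhg = \big( \uhg \,\widehat\otimes\, \uhg \big)^{*}$, the same datum as an $R$--matrix $\cR$ for $\uhg$ (cf.\ Remark \ref{rmk: duality polar R-mat/polar rho-comat} and Proposition \ref{prop: duality-deforms}). Applying Proposition \ref{prop: Phi from R x QUEA} to this $\uhg$ and $\cR$ produces Hopf morphisms $\overleftarrow{\Phi}_{\!\cR}, \overrightarrow{\Phi}_{\!\cR} : {\uhg}^* = \fhg \to \uhg^{\text{cop}}, \uhg^{\text{op}}$, and a direct check of definitions identifies these with $\overleftarrow{\Psi}_{\!\rho}$ and $\overrightarrow{\Psi}_{\!\rho}$ respectively (this is the content of Proposition \ref{prop: duality-morph.'s_R-mat/rho-comat} in the present setting).

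The only genuine subtlety — and hence the main point requiring care rather than the main obstacle — is the topological bookkeeping: one must confirm that the pairing $\fhg \times \uhg \to \kh$ behaves well enough that partial evaluation of $\rho$ is jointly continuous, so that the series defining $\overleftarrow{\Psi}_{\!\rho}(\ell)$ converges in $\kh$ and the resulting functional is continuous in the sense of \S\ref{equiv-&-(stand)-duality}. Once this is in place, every remaining verification is the same purely formal check as in Proposition \ref{prop: morph.'s from rho-comat} and Proposition \ref{prop: Phi from R x QUEA}, and I would simply note that it goes through verbatim rather than reproduce it.
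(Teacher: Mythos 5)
Your proposal is correct and its main line — observing that $\rho$ lives in ${\big(\fhg\,\widetilde{\otimes}\,\fhg\big)}^\star$, so that partial evaluation $\rho(\ell\,,-)$ is a continuous $\kh$--linear functional on $\fhg$ and hence an element of ${\fhg}^\star = \uhg$, after which the Hopf-morphism identities follow by the same formal check as in the general (non-topological) case — is exactly the argument the paper gives. The duality route via Proposition \ref{prop: Phi from R x QUEA} that you sketch as an alternative is also sound, but the paper does not use it here.
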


\begin{proof}
 Again, this is straightforward, the main point being to show that the given maps
 are indeed well-defined.  We consider  $ \overleftarrow{\Psi}_{\!\rho} \, $,
 the case of  $ \overrightarrow{\Psi}_{\!\rho} $  is similar.
                                                                     \par
   By definition,  $ \, \rho \in {\big( \fhg \,\widetilde{\otimes}\, \fhg \big)}^\star \, $,
   the latter being the space of all continuous  $ \kh $--linear  maps from
   $ \, \fhg \,\widetilde{\otimes}\, \fhg \, $  to  $ \, \kh \, $.
   Then for any  $ \, \ell \in \fhg \, $  the ``left-hand evaluation'' of  $ \rho $  in  $ \ell $
   yields clearly a continuous  $ \kh $--linear  map
   $ \; \overleftarrow{\Psi}_{\!\rho} := \rho(\ell \, , - ) : \fhg \longrightarrow \kh \, , \,
   f \mapsto \rho(\ell \, , f) \; $;  \;thus  $ \overleftarrow{\Psi}_{\!\rho} $
   is well defined, q.e.d.
                                                                     \par
   The Hopf properties of this map  $ \overleftarrow{\Psi}_{\!\rho} $
   are then proved by direct check.
\end{proof}

\vskip9pt

   Note that  $ \, {\uhg}^{\text{cop}} \, $  is obviously a QUEA for the Lie bialgebra
   $ \lieg^{\text{cop}} $  (the ``co-opposite'' to  $ \lieg \, $,  i.e.\ one reverses the
   Lie cobracket),  and similarly  $ \, {\uhg}^{\text{op}} \, $  is a QUEA for the Lie bialgebra
   $ \lieg^{\text{op}} $  (the ``opposite'' to  $ \lieg \, $,  using reversed Lie bracket).
   Then both previous results provide morphisms between quantum groups of
   \textsl{different\/}  nature, namely a QFSHA as domain and a QUEA as codomain.
                                                           \par
   Nevertheless, we shall now show that both previous results can be refined,
   eventually yielding morphisms that connect quantum groups of the  \textsl{same\/}
   nature, namely both QFSHA'a in one case and both QUEA's in the other case.

\vskip9pt

\begin{theorem}  \label{thm: refined R-morph.'s}
 Let  $ \, \uhg \, $  be a QUEA, let  $ \, \fhg := {\uhg}^* \, $  be its dual QFSHA, as in
 \S \ref{equiv-&-(stand)-duality},  and let  $ \, \fhgs := {\uhg}' \, $,  resp.\
 $ \, \uhgs := {\fhg}^\vee \, $,  be the QFSHA, resp.\ the QUEA, introduced in  \S \ref{subsec: QDP}.  Let  $ \; \cR = \cR^i \otimes \cR_i \, $  (sum, possibly infinite, over repeated indices) be an  $ R $--matrix for  $ \, \uhg \, $,
which is congruent to  $ 1^{\otimes 2} $  modulo  $ \hbar \, $.
                                                               \par
   Then, for the two morphisms
  $ \; \fhg \,{\buildrel \overleftarrow{\Phi}_{\!\cR} \over
  {\relbar\joinrel\relbar\joinrel\longrightarrow}}\, {\uhg}^{\text{\rm cop}} \; $
 and
  $ \; \fhg \,{\buildrel \overrightarrow{\Phi}_{\!\cR} \over
  {\relbar\joinrel\relbar\joinrel\longrightarrow}}\, {\uhg}^{\text{\rm op}} \; $
 in  Proposition \ref{prop: Phi from R x QUEA},  the following holds:
 \vskip5pt
   \textit{(a)}\;  they take values inside  $ \, {\uhg}' \, $,
   \,and so they corestrict to morphisms
  $$
  \begin{matrix}
   \overleftarrow{\Phi}'_{\!\cR} : \fhg
   \relbar\joinrel\relbar\joinrel\relbar\joinrel\relbar\joinrel\longrightarrow
   {\big(\uhg'\,\big)}^{\text{\rm cop}} = {\fhgs}^{\text{\rm cop}}   \phantom{{}_{\big|}}  \\
   \overrightarrow{\Phi}'_{\!\cR} : \fhg
   \relbar\joinrel\relbar\joinrel\relbar\joinrel\relbar\joinrel\longrightarrow
   {\big(\uhg'\,\big)}^{\text{\rm op}} = {\fhgs}^{\text{\rm op}}   \phantom{{}^{\big|}}
   \end{matrix}   \leqno \text{and}
   $$
\noindent
 between QFSHA's for mutually dual (formal) Poisson groups;
 \vskip5pt
   \textit{(b)}\;  they uniquely extend to
   $ \, \fhg^\vee = {\big( {\uhg}^* \big)}^\vee \, $,  \,i.e.\ they extend to morphisms
  $$  \begin{matrix}
   \overleftarrow{\Phi}^\vee_{\!\cR} : \uhgs :=
   {\fhg}^\vee \relbar\joinrel\relbar\joinrel\relbar\joinrel\relbar\joinrel\longrightarrow
   {\uhg}^{\text{\rm cop}}   \phantom{{}_{\big|}}  \\
   \overrightarrow{\Phi}^\vee_{\!\cR} : \uhgs :=
   {\fhg}^\vee \relbar\joinrel\relbar\joinrel\relbar\joinrel\relbar\joinrel\longrightarrow
   {\uhg}^{\text{\rm op}}   \phantom{{}^{\big|}}
   \end{matrix}   \leqno \text{and}
   $$
\noindent
 between QUEA's for mutually dual Lie bialgebras.
\end{theorem}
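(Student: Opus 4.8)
The plan is to prove both statements by reducing them to the description of $\uhg'$ given in Proposition~\ref{prop: exist-lifts-x_i} together with the ``polar $R$--matrix'' machinery of Proposition~\ref{prop: R(rho)-(co)matrices}. Throughout I work only with $\overleftarrow{\Phi}_{\!\cR}$; the statements for $\overrightarrow{\Phi}_{\!\cR}$ are entirely symmetric (one swaps the two tensor legs of $\cR$ and the roles of $\op$ and $\cop$), so no separate argument is needed. I also fix notation $U_\hbar := \uhg$, $J_\hbar := \Ker(\epsilon_{U_\hbar})$, $F_\hbar := \fhg = U_\hbar^{\,*}$, $J_{F_\hbar} := \Ker(\epsilon_{F_\hbar})$.

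\textbf{Step 1: corestriction to $\uhg'$ (claim (a)).} Since $\cR$ is an $R$--matrix congruent to $1^{\otimes 2}$ modulo $\hbar$, we may write $\cR = \exp(\hbar\,\theta)$ with $\theta = \hbar^{-1}\log(\cR) \in U_\hbar^{\,\widehat\otimes 2}$, and by Proposition~\ref{prop: R(rho)-(co)matrices}(a) the element $\vartheta := \hbar^2\,\theta = \hbar\log(\cR)$ actually lies in $\big(\uhg'\big)^{\widetilde\otimes 2}$; equivalently $\cR = \exp\!\big(\hbar^{-1}\vartheta\big)$ with $\vartheta \in \big(\uhg'\big)^{\widetilde\otimes 2} \subseteq \hbar^2\big(\uhg'\big)^{\widetilde\otimes 2}$ inside $U_\hbar^{\,\widehat\otimes 2}$, so in fact $\cR \in \big(\uhg'\big)^{\widetilde\otimes 2}$. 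Now recall that $\fhg = U_\hbar^{\,*}$ is the full topological dual, so $\overleftarrow{\Phi}_{\!\cR}(\eta) = \eta(\cR_1)\,\cR_2 = (\eta \otimes \id)(\cR)$ is obtained by pairing the first leg of $\cR$ against $\eta$; since $\cR \in U_\hbar \,\widehat\otimes\, \uhg'$ and the pairing $\fhg \times U_\hbar \to \kh$ is the tautological one, $(\eta\otimes\id)(\cR) \in \uhg'$ for every $\eta \in \fhg$. Thus $\overleftarrow{\Phi}_{\!\cR}$ takes values in $\uhg' = \fhgs$; since the Hopf morphism property was already established in Proposition~\ref{prop: Phi from R x QUEA}, corestriction yields a Hopf morphism $\overleftarrow{\Phi}'_{\!\cR}: \fhg \to \big(\uhg'\big)^{\cop} = {\fhgs}^{\cop}$. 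That both source and target are QFSHA's for mutually dual formal Poisson groups is then immediate: $\fhg$ quantizes $G$ while $\uhg' = \fhgs$ quantizes $F[[G^*]]$ by Theorem~\ref{thm: QDP}(b), and passing to ${}^{\cop}$ only changes the Poisson structure, not the underlying formal group.

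\textbf{Step 2: extension to $\fhg^\vee = \uhgs$ (claim (b)).} Here the point is that $\overleftarrow{\Phi}'_{\!\cR}: \fhg \to \uhg'$, being a morphism of topological Hopf algebras over $\kh$, automatically extends after applying Drinfeld's functor $(\ )^\vee$, since $(\ )^\vee$ is a \emph{functor} $\QFSHA \to \QUEA$ by Theorem~\ref{thm: QDP}(a): concretely, $\overleftarrow{\Phi}'_{\!\cR}$ first extends by scalars from $\kh$ to $\khp$, then restricts and completes, producing $\big(\overleftarrow{\Phi}'_{\!\cR}\big)^\vee: \fhg^\vee \to \big(\uhg'\big)^\vee = \uhg$ (using $\big(\uhg'\big)^\vee = \uhg$ from Theorem~\ref{thm: QDP}(a)), and this intertwines $\cop$ on the target, giving $\overleftarrow{\Phi}^\vee_{\!\cR}: \uhgs := \fhg^\vee \to \uhg^{\cop}$. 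I should check that this $\overleftarrow{\Phi}^\vee_{\!\cR}$ is genuinely the unique extension of the original $\overleftarrow{\Phi}_{\!\cR}$ (not merely of its corestriction): this holds because $\fhg$ sits densely inside $\fhg^\vee$ after the scalar-extension-and-completion procedure (an element of $\fhg^\vee$ is an $\hbar$--adic limit of elements $\hbar^{-n}x$ with $x \in I_{\fhg}^{\,n}$, and $I_{\fhg} \subseteq \fhg$), so any continuous $\kh$--linear extension is determined by its values on $\fhg$, and $\overleftarrow{\Phi}^\vee_{\!\cR}$ restricted to $\fhg$ is the corestriction of $\overleftarrow{\Phi}_{\!\cR}$ composed with the inclusion $\uhg' \hookrightarrow \uhg$, i.e. $\overleftarrow{\Phi}_{\!\cR}$ itself. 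Finally, $\fhg^\vee = \uhgs$ quantizes $U(\lieg^*)$ by Theorem~\ref{thm: QDP}(b) while $\uhg$ quantizes $U(\lieg)$, so the two endpoints are QUEA's for the mutually dual Lie bialgebras $\lieg^*$ and $\lieg^{\cop}$ (with $\lieg^{\cop}$ dual to $\lieg$ in the relevant ``mutually dual'' sense once one reverses the cobracket), as claimed.

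\textbf{Main obstacle.} The genuinely substantive input is Step~1, and specifically the assertion $\cR \in \big(\uhg'\big)^{\widetilde\otimes 2}$, which rests entirely on Proposition~\ref{prop: R(rho)-(co)matrices}(a) --- i.e. on the Enriquez--Halbout result \cite{EH} that an $R$--matrix trivial modulo $\hbar$ has $\hbar\log(\cR)$ landing in $\big(\uhg'\big)^{\widetilde\otimes 2}$ rather than merely $\hbar\log(\cR) \in \hbar\,U_\hbar^{\,\widehat\otimes 2}$. The delicate point, already flagged in the paper, is that \cite{EH} is stated for honest quasitriangular QUEAs, whereas here $\cR$ is only a ``weak'' $R$--matrix in the sense of Definition~\ref{def: twist & R-mat.'s}(c); one must verify (as the authors note) that the $\cop$--co\-commutativity constraint \eqref{eq: quasi-cocommutative} is never invoked in the proof of \cite[Theorem~0.1]{EH}, so that the weaker hypotheses \eqref{eq: R-mat_properties} and the right-hand side of \eqref{eq: twist-cond.'s} suffice. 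Granting that, everything else is bookkeeping: the functoriality of $(\ )^\vee$ and the duality dictionary do all the remaining work, and no new estimates are required.
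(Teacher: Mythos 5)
Your Step 1 contains a genuine gap. From Proposition \ref{prop: R(rho)-(co)matrices}\textit{(a)} you correctly get $\vartheta := \hbar\log(\cR) \in \big({\uhg}'\big)^{\widetilde{\otimes}\,2}$, but the conclusion you draw from it, namely $\cR \in \big({\uhg}'\big)^{\widetilde{\otimes}\,2}$, is false: $\hbar^{-1}\vartheta$ need not lie in $\big({\uhg}'\big)^{\widetilde{\otimes}\,2}$ (that space is only a $\kh$--module), and neither does its exponential. A one--line test case: for $\cR = \exp(\hbar\,h\otimes h)$ with $h$ primitive, one has $(\delta_1\otimes\delta_1)(\cR) = \cR - 1\otimes 1 = \hbar\,h\otimes h + \cO(\hbar^2) \notin \hbar^2\,{\uhg}^{\widehat{\otimes}\,2}$, so $\cR \notin \big({\uhg}^{\widehat{\otimes}\,2}\big)' = \big({\uhg}'\big)^{\widetilde{\otimes}\,2}$. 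You then silently switch to the weaker assertion ``$\cR \in \uhg\,\widehat{\otimes}\,{\uhg}'\,$'' and pair the first leg against $\eta$; but that assertion (in any completed sense strong enough to make the pairing land in $\uhg'$) is precisely what has to be proved, and it does not follow from the false intermediate claim. The actual argument is the bookkeeping you skipped: write $\hbar^{-1}\vartheta = \theta^i\otimes\vartheta_i$ with $\theta^i := \hbar^{-1}\vartheta^i \in J_{\uhg}$ (using $J_{{\uhg}'}\subseteq\hbar\,J_{\uhg}$) and $\vartheta_i\in J_{{\uhg}'}$, expand the exponential, observe that the $n$--th term has second leg in $J_{{\uhg}'}^{\;n}$, and conclude that $\sum_n \frac{1}{n!}\,\eta\big(\theta^j_{[n]}\big)\,\vartheta_{[n],j}$ converges in the $I_{{\uhg}'}$--adic topology of ${\uhg}'$. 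This mixed--topology convergence is the whole content of part \textit{(a)} and cannot be replaced by a membership statement for $\cR$ itself.

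Your Step 2, by contrast, is a genuinely different and essentially valid route: instead of the paper's direct estimate (which re-expands $\cR$ with the $\hbar^{-1}$ pushed onto the \emph{second} leg and uses continuity of $\mu\in\big({\uhg}'\big)^\star$ to get $\hbar$--adic convergence in $\uhg$), you apply the functor $(\ )^\vee$ to the morphism $\overleftarrow{\Phi}'_{\!\cR}:\fhg\to{\fhgs}^{\text{\rm cop}}$ from part \textit{(a)} and use $\big({\uhg}'\big)^\vee=\uhg$. This is cleaner, but two caveats: first, it presupposes part \textit{(a)}, so it inherits the gap above; second, your uniqueness justification via ``density'' is imprecise --- $\fhg$ is not $\hbar$--adically dense in $\fhg^\vee$ (its image in $\fhg^\vee\big/\hbar\,\fhg^\vee = U(\lieg^*)$ is just $\k\cdot 1$); what one actually uses is that $\fhg^\vee$ is topologically generated over $\khp$ by $\fhg$, so a continuous $\khp$--linear (algebra) extension is determined by its restriction. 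Finally, if you only construct the extension functorially, you should still note that it agrees with the formula $\mu\mapsto\mu(\cR^i)\,\cR_i$, since that formula is what gets used downstream (e.g.\ in Theorem \ref{thm: adj-morph.'s (R)}).
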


\begin{proof}
 \textit{(a)}\;  Recall that
 $ \, \Ker\big(\epsilon_{{\uhg}'}\big) =: J_{{\uhg}'} \subseteq \big( J_{{\uhg}'}
 + \kh \, 1_{{\uhg}'} \big) =: I_{{\uhg}'} \, $,  \,and  $ \uhg $  is a topological
 Hopf algebra with respect to the  $ I_{{\uhg}'} $--adic  topology.
                                                                 \par
   Recall also that, by  Proposition \ref{prop: R(rho)-(co)matrices}\textit{(a)},
   we can write  $ \; \cR \, := \, \exp\big( \hbar^{-1} \, \vartheta \big) \; $  with
   $ \, \vartheta \in {\uhg}' \,\widetilde{\otimes}\, {\uhg}' \, $;  \,we write the latter as
   $ \, \vartheta = \vartheta^i \otimes \vartheta_i \, $  (sum over repeated indices),
   where  $ \, \vartheta^i , \vartheta_i \in J_{{\uhg}'} \, $.  Then we have
  $$
  \hbar^{-1} \, \vartheta  \; = \;  \hbar^{-1} \, \vartheta^i \otimes \vartheta_i  \; = \;
  \big( \hbar^{-1} \, \vartheta^i \big) \otimes \vartheta_i  \; = \;  \theta^i \otimes \vartheta_i
  $$
 with  $ \, \theta^i := \hbar^{-1} \vartheta^i \in \hbar^{-1} J_{{\uhg}'} \subseteq J_{\uhg}
 := \Ker\big(\epsilon_{\uhg}\big) \, $,  \,where the latter inclusion follows by
 the basic properties of  $ {\uhg}' $,  cf.\  \cite{Ga1}.  Now writing
 $ \, {\big( \theta^i \otimes \vartheta_i \big)}^n = \theta^j_{[n]} \otimes \vartheta_{[n],j} \, $
 for each  $ \, n \in \NN \, $,  we have in particular  $ \, \theta^j_{[n]} \in J_{\uhg}^{\;n} \, $
 and  $ \, \vartheta_{[n],j} \in J_{{\uhg}'}^{\;n} \, $,  for every  $ \, n \in \NN \, $.
 When we expand  $ \cR \, $,  by all this we find
\begin{equation*}  \label{eq: R-mat U' expan-1}
 \cR  \; = \;  \exp\big( \hbar^{-1} \, \vartheta \big)  \, = \,
 \exp\big( \theta^i \otimes \vartheta_i \big)  \, = \,
 {\textstyle \sum\limits_{n \geq 0}} {{\,1\,} \over {\,n!\,}} \,
 {\big( \theta^i \otimes \vartheta_i \big)}^n  \, = \,
 {\textstyle \sum\limits_{n \geq 0}} {{\,1\,} \over {\,n!\,}} \,
 \big( \theta^j_{[n]} \otimes \vartheta_{[n],j} \big)
\end{equation*}
 Therefore, for every  $ \, \eta \in \fhg := {\uhg}^* \, $  we have
  $$
  \overleftarrow{\Phi}_{\!\cR}(\eta)  \; = \;  \eta\big(\cR^s\big) \,\cR_s  \; = \;
  {\textstyle \sum\limits_{n \geq 0}} {{\,1\,} \over {\,n!\,}} \, \eta\big(\theta^j_{[n]}\big) \,
  \vartheta_{[n],j}  $$
 which describes a well-defined element (a convergent series, in the relevant topology!) of
 $ {\big( {\uhg}' \,\big)}^{\text{cop}} $   --- equal to  $ {\uhg}' $  as a  $ \kh $--module
 ---   exactly because  $ \, \vartheta_{[n],j} \in J_{{\uhg}'}^{\;n} \, $  for each
 $ \, n \in \NN \, $.  Thus  $ \Phi_{\!\cR}^\leftarrow $  corestricts to
 $ \, {\big( {\uhg}' \,\big)}^{\text{cop}} = {\fhgs}^{\text{cop}} \, $  as claimed, q.e.d.
 \vskip5pt
   The proof for  $ \overrightarrow{\Phi}_{\!\cR} $  goes exactly the same,
   just switching left and right.
 \vskip7pt
   \textit{(b)}\;  We begin acting as in the proof of  \textit{(a)\/}  above,
   but switching the roles of left and right hand sides.  Namely, we write
 $ \; \hbar \, \vartheta \, = \, \hbar \, \big( \vartheta^i \otimes \vartheta_i \big) = \,
 \theta^i \otimes \vartheta_i \; $
 where  $ \, \theta_i := \hbar^{-1} \vartheta_i \in \hbar^{-1} J_{{\uhg}'} \subseteq J_{\uhg}
 := \Ker\big(\epsilon_{\uhg}\big) \, $,  and also
 $ \, {\big( \vartheta^i \otimes \theta_i \big)}^n = \vartheta^j_{[n]} \otimes \theta_{[n],j} \, $,
 with  $ \, \vartheta^j_{[n]} \in J_{{\uhg}'}^{\;n} \, $  and  $ \, \theta_{[n],j} \in J_{\uhg}^{\;n} \, $,
 for all  $ \, n \in \NN \, $.  Then expanding  $ \cR $  yields
\begin{equation*}  \label{eq: R-mat U' expan-2}
 \cR  \; = \;  \exp\big( \hbar^{-1} \, \vartheta \big)  \, = \,
 \exp\big( \vartheta^i \otimes \theta_i \big)  \, = \,
 {\textstyle \sum\limits_{n \geq 0}} {{\,1\,} \over {\,n!\,}} \,
 {\big( \vartheta^i \otimes \theta_i \big)}^n  \, = \,
 {\textstyle \sum\limits_{n \geq 0}} {{\,1\,} \over {\,n!\,}} \,
 \big( \vartheta^j_{[n]} \otimes \theta_{[n],j} \big)
\end{equation*}
 \vskip-5pt
\noindent
 hence for every  $ \, \mu \in {\uhg}^* \, $  we have
 \vskip-5pt
\begin{equation}  \label{eq: R-mat U' expan-3}
  \overleftarrow{\Phi}_{\!\cR}(\mu)  \; := \;  \mu\big(\cR^s\big) \,\cR_s  \; = \;
  {\textstyle \sum\limits_{n \geq 0}} {{\,1\,} \over {\,n!\,}} \,
  \mu\big(\vartheta^j_{[n]}\big) \, \theta_{[n],j}
\end{equation}
 \vskip-0pt
   Now, recall that  $ \, {\big( {\uhg}^* \big)}^\vee = {\big( {\uhg}' \,\big)}^\star \, $,  \,by  \eqref{eq: QDP-duality}.  Then we consider the formula  \eqref{eq: R-mat U' expan-3}  for any  $ \, \mu \in {\big( {\uhg}^* \big)}^\vee = {\big( {\uhg}' \,\big)}^\star \, $   --- which contains  $ {\uhg}^* \, $.  As all coefficients  $ \mu\big(\vartheta'_n) $  belong to  $ \kh \, $,  every partial sum in the right-hand side formal series is a well-defined element in  $ {\uhg}^{\text{cop}} $   --- equal to  $ \uhg $  as a  $ \kh $--module.  In addition, since  $ \, \vartheta^j_{[n]} \in J_{{\uhg}'}^{\;n} \subseteq I_{{\uhg}'}^{\;n} \, $   --- for each  $ \, n \in \NN \, $  ---   and  $ \, \mu : {\uhg}' \!\longrightarrow \kh \, $  is  \textsl{continuous\/}  (with respect to the  $ I_{{\uhg}'}^{\;n} $--adic  topology on the left and the  $ \hbar $--adic  topology on the right), for every  $ \, s \in \NN \, $  there exist  $ \, n_s \, $  such that  $ \, \mu\big(\vartheta^j_{[n]}) \in \hbar^{n_s} \kh \, $  for all  $ \, n \geq n_s \, $.  This ensures that the formal series in  \eqref{eq: R-mat U' expan-3}  is actually convergent in the  $ \hbar $--adic  topology of  $ \uhg \, $,  thus describing a well-defined element in  $ \uhg \, $.  Letting  $ \mu $  range freely inside  $ {\big( {\uhg}^* \big)}^\vee \, $,  this proves that  $ \overleftarrow{\Phi}_{\!\cR} $  does indeed extend from  $ {\uhg}^* $  to  $ \, {\big( {\uhg}^* \big)}^\vee = {\fhgs}^\vee =: \uhgs \, $,  \;q.e.d.
 \vskip3pt
   Switching left and right in the arguments above we get the proof for
   $ \overrightarrow{\Phi}_{\!\cR} $  too.
\end{proof}

\vskip5pt

\begin{rmk}
 Claim  \textit{(a)\/}  of  Theorem \ref{thm: refined R-morph.'s}
 above appears also in  \cite{EK}, \S 4.5.
\end{rmk}

\vskip3pt

   In the dual framework, the parallel result holds true as well:

\vskip9pt

\begin{theorem}  \label{thm: refined rho-morph.'s}
 Let  $ \, \fhg \, $  be a QFSHA, let  $ \, \uhg := {\fhg}^\star \, $  be its dual QUEA, as in
 \S \ref{equiv-&-(stand)-duality},  and let also  $ \, \uhgs := {\fhg}^\vee \, $,  resp.\
 $ \, \fhgs := {\uhg}' \, $,  be the QUEA, resp.\ the QFSHA, introduced in
 \S \ref{subsec: QDP}.  Let  $ \, \rho \, $  be a  $ \varrho $--comatrix for  $ \, \fhg \, $,
which is congruent to  $ \epsilon^{\otimes 2} $  modulo  $ \hbar \, $.
                                                               \par
   Then, for the two morphisms
  $ \; \fhg \,{\buildrel \overleftarrow{\Psi}_{\!\rho} \over
  {\relbar\joinrel\relbar\joinrel\longrightarrow}}\, {\uhg}^{\text{\rm cop}} \; $
 and
  $ \; \fhg \,{\buildrel \overrightarrow{\Psi}_{\!\rho} \over
  {\relbar\joinrel\relbar\joinrel\longrightarrow}}\, {\uhg}^{\text{\rm op}} \; $
 in  Proposition \ref{prop: Psi from rho x QFSHA},  the following holds:
 \vskip3pt
   \textit{(a)}\;  they take values inside  $ \, {\uhg}' \, $,  \,so they corestrict to morphisms
  $$
  \begin{matrix}
  \overleftarrow{\Psi}'_{\!\rho} : \fhg
  \relbar\joinrel\relbar\joinrel\relbar\joinrel\relbar\joinrel\longrightarrow
  {\big(\uhg'\,\big)}^{\text{\rm cop}} = {\fhgs}^{\text{cop}}   \phantom{{}_{\big|}}  \\
   \overrightarrow{\Psi}'_{\!\rho} :
   \fhg \relbar\joinrel\relbar\joinrel\relbar\joinrel\relbar\joinrel\longrightarrow
   {\big(\uhg'\,\big)}^{\text{\rm op}} = {\fhgs}^{\text{op}}   \phantom{{}^{\big|}}
   \end{matrix}   \leqno \text{and}
   $$
\noindent
 between QFSHA's for mutually dual (formal) Poisson groups.
 \vskip3pt
   \textit{(b)}\;  they uniquely extend to  $ \, \uhgs = {\fhg}^\vee \, $,  \,i.e.\ they
   extend to morphisms
  $$
  \begin{matrix}
   \overleftarrow{\Psi}^\vee_{\!\rho} : \uhgs =
   \fhg^\vee \relbar\joinrel\relbar\joinrel\relbar\joinrel\relbar\joinrel\longrightarrow
   {\uhg}^{\text{\rm cop}}   \phantom{{}_{\big|}}  \\
   \overrightarrow{\Psi}^\vee_{\!\rho} : \uhgs =
   \fhg^\vee \relbar\joinrel\relbar\joinrel\relbar\joinrel\relbar\joinrel\longrightarrow
   {\uhg}^{\text{\rm op}}   \phantom{{}^{\big|}}
   \end{matrix}   \leqno \text{and}
   $$
\noindent
 between QUEA's for mutually dual Lie bialgebras.
\end{theorem}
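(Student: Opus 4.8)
The plan is to derive Theorem~\ref{thm: refined rho-morph.'s} from Theorem~\ref{thm: refined R-morph.'s} by a pure duality argument, exactly in the spirit of Remark~\ref{rmk: duality morph.'s from R-mat/rho-comat} and Proposition~\ref{prop: duality-morph.'s_R-mat/rho-comat}. First I would set up the dictionary: since $\fhg$ is a QFSHA and $\uhg := {\fhg}^\star$, we have $\fhg = {\uhg}^*$ back again (the functors ${(\ )}^*$ and ${(\ )}^\star$ are inverse to each other, by~\S\ref{equiv-&-(stand)-duality}), so the pair $\big(\uhg, \fhg\big)$ is a dual pair of the type handled in Theorem~\ref{thm: refined R-morph.'s}, just with the roles of QUEA and QFSHA interchanged as ``starting object''. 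Moreover, through the identification $\fhg \,\widetilde{\otimes}\, \fhg = {\big( \uhg \,\widehat{\otimes}\, \uhg \big)}^*$, Proposition~\ref{prop: duality-deforms}\textit{(b)} tells us that the $\varrho$--comatrix $\rho$ for $\fhg$ is at the same time an $R$--matrix $\cR := \rho$ for $\uhg$, and the hypothesis $\rho \equiv \epsilon^{\otimes 2}$ modulo $\hbar$ translates precisely into $\cR \equiv 1^{\otimes 2}$ modulo $\hbar$, so the hypotheses of Theorem~\ref{thm: refined R-morph.'s} are met.

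Next I would invoke Proposition~\ref{prop: duality-morph.'s_R-mat/rho-comat} to identify the relevant morphisms: with $K := \uhg$ and $\varGamma := \fhg = K^*$, and with $\cR = \rho$, we get canonical identifications $\overleftarrow{\Phi}_{\!\cR} = \overleftarrow{\Psi}_{\!\rho}$ and $\overrightarrow{\Phi}_{\!\cR} = \overrightarrow{\Psi}_{\!\rho}$ as maps $\fhg \longrightarrow {\uhg}^{\text{\rm cop}}$, resp.\ $\fhg \longrightarrow {\uhg}^{\text{\rm op}}$. Once this identification is in place, claim \textit{(a)} of Theorem~\ref{thm: refined rho-morph.'s} is literally claim \textit{(a)} of Theorem~\ref{thm: refined R-morph.'s} rewritten: the corestriction to ${\uhg}' = \fhgs$ of $\overleftarrow{\Phi}_{\!\cR}$, resp.\ $\overrightarrow{\Phi}_{\!\cR}$, provided there is exactly the corestriction to ${\uhg}' = \fhgs$ of $\overleftarrow{\Psi}_{\!\rho}$, resp.\ $\overrightarrow{\Psi}_{\!\rho}$, that we want; the fact that ${\big(\uhg'\big)}^{\text{\rm cop}}$ and ${\big(\uhg'\big)}^{\text{\rm op}}$ are QFSHA's for mutually dual formal Poisson groups is part of Theorem~\ref{thm: QDP}\textit{(b)} together with the observation that taking co-opposite/opposite corresponds to reversing the Lie cobracket/bracket of $\lieg^*$. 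Likewise, claim \textit{(b)} of Theorem~\ref{thm: refined rho-morph.'s} is claim \textit{(b)} of Theorem~\ref{thm: refined R-morph.'s} transported along the identifications: the unique extension of $\overleftarrow{\Phi}_{\!\cR}$, resp.\ $\overrightarrow{\Phi}_{\!\cR}$, to $\fhg^\vee = {\big({\uhg}^*\big)}^\vee = \uhgs$ is the sought extension $\overleftarrow{\Psi}^\vee_{\!\rho}$, resp.\ $\overrightarrow{\Psi}^\vee_{\!\rho}$.

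The one point that needs genuine care — and which I expect to be the main (modest) obstacle — is checking that the various dualities and Drinfeld functors are being applied to the \emph{right} objects, i.e.\ that the identification $\fhg^\vee = {\big({\uhg}'\big)}^\star$ coming from \eqref{eq: QDP-duality} is compatible with the extension statement of Theorem~\ref{thm: refined R-morph.'s}\textit{(b)}, so that ``extend to $\fhg^\vee$'' in the $\rho$--picture really does match ``extend to ${\big({\uhg}^*\big)}^\vee$'' in the $\cR$--picture, with the codomains ${\uhg}^{\text{\rm cop}}$, resp.\ ${\uhg}^{\text{\rm op}}$, left unchanged (here one uses $\uhg = {\fhg}^\star$ again). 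This is bookkeeping with the functorial identities in~\S\ref{equiv-&-(stand)-duality} and~\eqref{eq: QDP-duality}, not a new idea, so I would verify it and then simply state that the rest follows \emph{verbatim} from Theorem~\ref{thm: refined R-morph.'s}. I would close by remarking, parallel to the Remark following Theorem~\ref{thm: refined R-morph.'s}, that claim~\textit{(a)} here is also essentially the dual of the statement in~\cite{EK}, \S 4.5.

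\begin{proof}
 Since $\fhg$ is a QFSHA and $\uhg := {\fhg}^\star$, the functors ${(\ )}^*$ and ${(\ )}^\star$ being inverse to each other (\S\ref{equiv-&-(stand)-duality}) yield $\fhg = {\uhg}^*$, so $\big(\uhg,\fhg\big)$ is a mutually dual pair as in Theorem~\ref{thm: refined R-morph.'s}. Through the identification $\fhg \,\widetilde{\otimes}\, \fhg = {\big( \uhg \,\widehat{\otimes}\, \uhg \big)}^*$, Proposition~\ref{prop: duality-deforms}\textit{(b)} shows that the $\varrho$--comatrix $\rho$ for $\fhg$ is equally an $R$--matrix $\cR := \rho$ for $\uhg$, and the assumption $\rho \equiv \epsilon^{\otimes 2}$ modulo $\hbar$ becomes $\cR \equiv 1^{\otimes 2}$ modulo $\hbar$; hence the hypotheses of Theorem~\ref{thm: refined R-morph.'s} hold for $\cR$.

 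By Proposition~\ref{prop: duality-morph.'s_R-mat/rho-comat} (with $K := \uhg$, $\varGamma := \fhg = K^*$, $\cR = \rho$) we have canonical identifications
  $$
  \overleftarrow{\Phi}_{\!\cR} = \overleftarrow{\Psi}_{\!\rho} \, : \, \fhg
  \relbar\joinrel\longrightarrow {\uhg}^{\text{\rm cop}}  \quad , \qquad
  \overrightarrow{\Phi}_{\!\cR} = \overrightarrow{\Psi}_{\!\rho} \, : \, \fhg
  \relbar\joinrel\longrightarrow {\uhg}^{\text{\rm op}}  \; .
  $$
 Applying Theorem~\ref{thm: refined R-morph.'s}\textit{(a)} to $\cR$, the morphisms $\overleftarrow{\Phi}_{\!\cR}$ and $\overrightarrow{\Phi}_{\!\cR}$ corestrict to ${\big(\uhg'\big)}^{\text{\rm cop}} = {\fhgs}^{\text{cop}}$ and ${\big(\uhg'\big)}^{\text{\rm op}} = {\fhgs}^{\text{op}}$, respectively; under the identifications above these corestrictions are precisely $\overleftarrow{\Psi}'_{\!\rho}$ and $\overrightarrow{\Psi}'_{\!\rho}$, which proves claim \textit{(a)}. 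Likewise, Theorem~\ref{thm: refined R-morph.'s}\textit{(b)} provides unique extensions of $\overleftarrow{\Phi}_{\!\cR}$, $\overrightarrow{\Phi}_{\!\cR}$ to $\fhg^\vee = {\big({\uhg}^*\big)}^\vee = {\big({\uhg}'\big)}^\star{}^{\,\vee}$ (using $\fhg = {\uhg}^*$ and \eqref{eq: QDP-duality}), with unchanged codomains ${\uhg}^{\text{\rm cop}}$, ${\uhg}^{\text{\rm op}}$; translating back along the above identifications, these are exactly the extensions $\overleftarrow{\Psi}^\vee_{\!\rho}$, $\overrightarrow{\Psi}^\vee_{\!\rho}$ to $\uhgs = {\fhg}^\vee$ asserted in claim \textit{(b)}. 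Finally, the statements about mutually dual formal Poisson groups, resp.\ mutually dual Lie bialgebras, follow from Theorem~\ref{thm: QDP}\textit{(b)} together with the fact that taking co-opposite, resp.\ opposite, reverses the Lie cobracket, resp.\ bracket, of $\lieg^*$.
\end{proof}
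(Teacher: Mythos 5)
Your proposal is correct, but it takes a genuinely different route from the paper. The paper proves this theorem directly: it invokes Proposition \ref{prop: R(rho)-(co)matrices}\textit{(b)} to write $\rho = \exp_*\!\big(\hbar^{-1}\zeta\big)$ with $\zeta \in {\uhg}'\,\widetilde{\otimes}\,{\uhg}'$, then uses unitarity ($\zeta(1,a)=0$, resp.\ $\zeta \in {\big(\Ker(\epsilon_{{\uhg}'})\big)}^{\widetilde{\otimes}\,2}$ together with $\Ker(\epsilon_{{\uhg}'}) \subseteq \hbar\,\Ker(\epsilon_{\uhg})$) to show that $\hbar^{-1}\zeta$ lies in $\fhg^\star \otimes {\uhg}'$ for part \textit{(a)}, resp.\ in ${\big({\fhg}^\vee\big)}^* \otimes \uhg$ for part \textit{(b)}, with an explicit discussion of convergence and of what the mixed tensor product means; corestriction and extension then follow by evaluating $\rho(\ell,-)$. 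You instead transport Theorem \ref{thm: refined R-morph.'s} across the identification $\overleftarrow{\Phi}_{\!\cR} = \overleftarrow{\Psi}_{\!\rho}$, $\overrightarrow{\Phi}_{\!\cR} = \overrightarrow{\Psi}_{\!\rho}$ of Proposition \ref{prop: duality-morph.'s_R-mat/rho-comat}, after noting via Proposition \ref{prop: duality-deforms} that $\rho$ is an $R$--matrix for $\uhg$ congruent to $1^{\otimes 2}$ modulo $\hbar$. This is legitimate and not circular, since Theorem \ref{thm: refined R-morph.'s} and the identification of the unrefined morphisms are both established independently; it is shorter, avoids redoing the tensor-factor bookkeeping, and makes Theorem \ref{thm: duality x refined R-morph.'s/rho-morph.'s} automatic. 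What it gives up is the element-level information about where the two legs of $\hbar^{-1}\zeta$ actually live, which the paper's direct argument makes explicit. One small slip: in your final paragraph you write $\fhg^\vee = {\big({\uhg}^*\big)}^\vee = {\big(\!\big({\uhg}'\big)^{\!\star}\big)}^{\!\vee}$, whereas \eqref{eq: QDP-duality} gives ${\big({\uhg}^*\big)}^\vee = {\big({\uhg}'\big)}^{\!\star}$ with no further $\vee$; this is harmless, since $\fhg^\vee = {\big({\uhg}^*\big)}^\vee$ already follows from $\fhg = {\uhg}^*$ and is all you need.
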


\begin{proof}
 \textit{(a)}\,  By the assumption  $ \, \rho \equiv \epsilon^{\otimes 2} \pmod \hbar \, $  and by
 Proposition \ref{prop: R(rho)-(co)matrices}\textit{(b)},  we can write  $ \rho $  in the form
 $ \; \rho \, = \, \exp_*\!\big(\, \hbar^{-1} \, \zeta \,\big) \; $
 for some  $ \; \zeta \in {\Big(\! {\big( {\fhg}^\vee \big)}^{\widehat{\otimes} \, 2} \,\Big)}^{\!*} \, $.
 Then
  $$
  \displaylines{
   \zeta \, \in \, {\Big(\! {\big( {\fhg}^\vee \big)}^{\widehat{\otimes} \, 2} \,\Big)}^{\!*}  \, = \,
   {\big( {\fhg}^\vee \,\widehat{\otimes}\, {\fhg}^\vee \,\big)}^*  \, = \,
   {\big( {\fhg}^\vee \,\big)}^* \,\widetilde{\otimes}\, {\big( {\fhg}^\vee \,\big)}^*  \, =   \hfill  \cr
   \hfill   = \,  {\big( {\fhg}^\star \,\big)}' \,\widetilde{\otimes}\, {\big( {\fhg}^\star \,\big)}'  \, = \,
   {\uhg}' \,\widetilde{\otimes}\, {\uhg}'}
 $$
 --- thanks to  \eqref{eq: QDP-duality}  ---   hence
 $ \; \hbar^{-1} \, \zeta \, \in \, \hbar^{-1} \, {\uhgs}' \,\widetilde{\otimes}\, {\uhgs}' \; $.
 Now, the right-hand side of  \eqref{eq: 2-cocyc-cond.'s}  for  $ \, \sigma := \rho \, $  implies
 $ \; \zeta(1\,,a) = 0 \; $  for all  $ \, a \in \fhg \, $,  \,hence
 $ \, \zeta(-,a) \in \Ker\big( \epsilon_{{\uhg}'} \big) \, $  and so
 $ \; \hbar^{-1} \, \zeta(-,a) \, \in \, {\big( {\uhg}' \big)}^\vee = \uhg = {\fhg}^\star \; $  for all
 $ \, a \in \fhg \, $.  This implies that
\begin{equation}  \label{eq: hmeno1Zeta in fhg*_ot_uhg'}
  \hbar^{-1} \, \zeta(-\,,-) \; \in \; {\big( {\uhg}' \,\big)}^{\!\vee} \!\otimes {\uhg}' \, = \,
  {\fhg}^\star \otimes {\uhg}'
\end{equation}
 where hereafter we are being temporarily sloppy with the tensor product
 --- we fix this later on.  Clearly,  \eqref{eq: hmeno1Zeta in fhg*_ot_uhg'}  implies
  $ \; \rho \, = \, \exp_*\!\big(\hbar^{-1} \, \zeta\big) \, \in \, {\fhg}^\star \otimes {\uhg}' \; $
 as well.  Therefore we get at once  $ \; \Psi_\rho^\leftarrow(\ell) := \rho(\ell\,,-) \, \in \, {\uhg}' \; $
 for all  $ \, \ell \in \fhg \, $,  \;q.e.d.  This proves the claim about  $ \overleftarrow{\Psi}_{\!\rho} \, $,
 \,and that concerning  $ \overrightarrow{\Psi}_{\!\rho} $  is entirely similar.
 \vskip3pt
   It remains to ``dot your  \textsl{i\/}'s''  about the tensor product in
   \eqref{eq: hmeno1Zeta in fhg*_ot_uhg'}.  In fact,  \textit{a priori\/}
   we have  $ \, \rho \in {\fhg}^\star \,\widehat{\otimes}\, {\fhg}^\star =
   \uhg \,\widehat{\otimes}\, \uhg \, $,  \,hence also
  $$
  \hbar^{-1} \, \zeta  \; \in \;  {\fhg}^\star \,\widehat{\otimes}\, {\fhg}^\star  = \,
  \uhg \,\widehat{\otimes}\, \uhg
  $$
 --- where the (completed, topological) tensor product
 ``$ \,\widehat{\otimes}\, $''  is considered.
 On the other hand, we have found that
  $$
  \zeta  \; \in \;  {\big( {\fhg}^\star \big)}' \,
  \widetilde{\otimes}\, {\big( {\fhg}^\star \big)}'  \, = \,  {\uhg}' \,\widetilde{\otimes}\, {\uhg}'
  $$
 --- where the (completed, topological) tensor product
 ``$ \,\widetilde{\otimes}\, $''  is used.  Then the critical point is:
 what kind of tensor product  ``$ \, \otimes \, $''  is taken in
 \eqref{eq: hmeno1Zeta in fhg*_ot_uhg'}?
                                                                 \par
   Instead of giving a direct, formal answer to this question, we point out the following.
   First observe that  $ \, {\uhg}' \,\widetilde{\otimes}\, {\uhg}' = {\big( \uhg \,\widehat{\otimes}\,
   \uhg \big)}' \, $  naturally embeds into  $ \, \uhg \,\widehat{\otimes}\, \uhg \, $.
   Then, when  $ \; \hbar^{-1} \, \zeta \, \in \, \uhg \,\widehat{\otimes}\, \uhg \; $
   is expanded into some (suitably convergent) series
   $ \, \hbar^{-1} \, \zeta = \beta^i \otimes \beta_i \, $
   (summing over repeated indices) with  $ \, \beta^i , \beta_i \in \uhg \, $  for all  $ i \, $,
   \,what we proved above is that we actually have
   $ \, \beta_i \in {\uhg}' \; \big(\! \subseteq \uhg \big) \, $  for all indices  $ i \, $.
   This is what we loosely wrote as
 $ \; \hbar^{-1} \, \zeta \, \in \, \uhg \otimes {\uhg}' \, = \, {\fhg}^\star \otimes {\uhg}' \; $
 in  \eqref{eq: hmeno1Zeta in fhg*_ot_uhg'}  above.
%
 \vskip7pt
   \textit{(b)}\,  Acting as in part  \textit{(a)},  we find
   $ \, \rho = \exp_*\!\big( \hbar^{-1} \, \zeta \,\big) \, $  with
   $ \; \zeta \in {\big( {\uhg}' \,\big)}^{\widetilde{\otimes}\, 2} \; $  and
   $ \; \zeta \in {\big( \Ker\big(\epsilon_{\uhg}\big) \big)}^{\widehat{\otimes}\, 2} \, $
   too, so  $ \; \zeta \in {\big( \Ker\big(\epsilon_{{\uhg}'}\big) \big)}^{\widetilde{\otimes}\, 2}
   \; $.  Since  $ \, \Ker\big(\epsilon_{{\uhg}'}\big) \subseteq \hbar \,
   \Ker\big(\epsilon_{\uhg}\big) \, $,  \,this implies that, expanding
   $ \, \hbar^{-1} \, \zeta \, $  as a (convergent) series
   $ \; \hbar^{-1} \, \zeta \, = \, \beta^i \otimes \beta_i \, $,  \,we can assume
   $ \, \beta^i \in {\uhg}' \, $.  As  $ \, {\uhg}' = {\big( {\fhg}^\star \big)}' =
   {\big( {\fhg}^\vee \,\big)}^* \, $,  we end up with
\begin{equation}  \label{eq: h-1Zeta in U ot U'}
  \hbar^{-1} \, \zeta \, = \, \beta^i \otimes \beta_i  \; \in \;
  {\big( {\fhg}^\vee \,\big)}^* \otimes \uhg
\end{equation}
 where again the meaning of the tensor product  ``$ \, \otimes \, $''
 considered in this formula (along with the corresponding convergence issues) is
 handled just as in part  \textit{(a)}.  Finally, from \eqref{eq: h-1Zeta in U ot U'}
 it follows at once that  $ \overleftarrow{\Psi}_{\!\rho} $  extends from  $ \fhg $  to
 $ {\fhg}^\vee $  as claimed.  This proves our statement for
 $ \overleftarrow{\Psi}_{\!\rho} \, $,  and the case of  $ \overrightarrow{\Psi}_{\!\rho} $
 is entirely similar.
\end{proof}

\vskip9pt

\begin{free text}  \label{duality-propt.'s}
 \textbf{Duality properties.}  When we deal with a QUEA and a QFSHA which are dual to each other, it makes sense to compare the previous results.  The outcome is that  Proposition \ref{prop: duality-morph.'s_R-mat/rho-comat} turns to an enhanced version (with trivial proof), as follows:
\end{free text}

\vskip9pt

\begin{theorem}  \label{thm: duality x refined R-morph.'s/rho-morph.'s}
 Let  $ \, \uhg $  be a QUEA,  $ \fhg \, $  a QFSHA, which are dual to each other, i.e.\  $ \, \fhg = {\uhg}^* \, $  and  $ \, \uhg = {\fhg}^\star \, $.  Let  $ \, \cR = \rho \, $  be an  $ R $--matrix  for  $ \, \uhg $  and a  $ \varrho $--comatrix for  $ \, \fhg \, $,  which is trivial modulo  $ \hbar \, $,  i.e.\  congruent to  $ 1^{\otimes 2} $,  resp.\ to  $ \epsilon^{\otimes 2} $,  modulo  $ \hbar \, $.
 Then, for the morphisms in  Proposition \ref{prop: Phi from R x QUEA},  Proposition \ref{prop: Psi from rho x QFSHA},  Theorem \ref{thm: refined R-morph.'s}  and  Theorem \ref{thm: refined rho-morph.'s}  we have the following identifications
  $$  \overleftarrow{\Phi}_{\!\cR} = \overleftarrow{\Psi}_{\!\rho} \; ,  \,\; \overleftarrow{\Phi}'_{\!\cR} = \overleftarrow{\Psi}'_{\!\rho} \; ,  \,\; \overleftarrow{\Phi}^\vee_{\!\cR} = \overleftarrow{\Psi}^\vee_{\!\rho}
   \quad \text{and} \quad
      \overrightarrow{\Phi}_{\!\cR} = \overrightarrow{\Psi}_{\!\rho} \; ,  \,\; \overrightarrow{\Phi}'_{\!\cR} = \overrightarrow{\Psi}'_{\!\rho} \; ,  \,\; \overrightarrow{\Phi}^\vee_{\!\cR} = \overrightarrow{\Psi}^\vee_{\!\rho} \eqno \hskip4pt \square  $$
\end{theorem}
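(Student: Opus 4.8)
The plan is to trace everything back to the single evaluation formula underlying both families of maps, and then to let two soft principles — ``corestriction changes nothing'' and ``a unique extension is determined by its restriction'' — propagate the equalities. First I would recall that, by  Proposition \ref{prop: Phi from R x QUEA}  and  Proposition \ref{prop: Psi from rho x QFSHA},  for $\, \eta \in \fhg = {\uhg}^* \,$  one has  $\, \overleftarrow{\Phi}_{\!\cR}(\eta) = \eta(\cR_1)\,\cR_2 \in \uhg \,$,  while for  $\, \ell \in \fhg \,$  one has  $\, \overleftarrow{\Psi}_{\!\rho}(\ell) = \rho(\ell\,,-) \in {\fhg}^\star = \uhg \,$;  both are thus maps  $\, \fhg \to {\uhg}^{\text{cop}} \,$.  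Pairing via  $\, \langle\,\ ,\ \rangle : \fhg \times \uhg \to \kh \,$,  for every  $\, f \in \fhg \,$  I would compute
\[
  \big\langle\, \eta(\cR_1)\,\cR_2 \, , \, f \,\big\rangle \; = \;
  \big\langle\, \cR_1 \otimes \cR_2 \, , \, \eta \otimes f \,\big\rangle \; = \;
  \big\langle\, \cR \, , \, \eta \otimes f \,\big\rangle
\]
where in the last term one uses the identification  $\, \fhg \,\widetilde{\otimes}\, \fhg = {\big( \uhg \,\widehat{\otimes}\, \uhg \big)}^* \,$  of  Remark \ref{rmk: duality polar R-mat/polar rho-comat}.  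The hypothesis  $\, \cR = \rho \,$  says exactly that  $\cR$  and  $\rho$  correspond under this identification, so  $\, \langle\, \cR \, , \, \eta \otimes f \,\rangle = \rho(\eta\,,f) \,$;  hence  $\, \overleftarrow{\Phi}_{\!\cR}(\eta) = \overleftarrow{\Psi}_{\!\rho}(\eta) \,$  for all  $\eta \,$,  i.e.\  $\, \overleftarrow{\Phi}_{\!\cR} = \overleftarrow{\Psi}_{\!\rho} \,$.  Evaluating in the other tensor slot gives  $\, \overrightarrow{\Phi}_{\!\cR} = \overrightarrow{\Psi}_{\!\rho} \,$  in the same way.  (This step is just  Proposition \ref{prop: duality-morph.'s_R-mat/rho-comat}  read in the present setup.)

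Next I would dispose of the corestricted maps.  By  Theorem \ref{thm: refined R-morph.'s}(a),  $\overleftarrow{\Phi}'_{\!\cR}$  and  $\overrightarrow{\Phi}'_{\!\cR}$  are nothing but  $\overleftarrow{\Phi}_{\!\cR}$  and  $\overrightarrow{\Phi}_{\!\cR}$  with codomain cut down to  $\, {\uhg}' = \fhgs \,$;  likewise  Theorem \ref{thm: refined rho-morph.'s}(a)  presents  $\overleftarrow{\Psi}'_{\!\rho}$,  $\overrightarrow{\Psi}'_{\!\rho}$  as the corestrictions of  $\overleftarrow{\Psi}_{\!\rho}$,  $\overrightarrow{\Psi}_{\!\rho}$  to the same submodule  $\, {\uhg}' \subseteq \uhg \,$.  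Since a corestriction is literally the same underlying map, the equalities of the previous paragraph immediately yield  $\, \overleftarrow{\Phi}'_{\!\cR} = \overleftarrow{\Psi}'_{\!\rho} \,$  and  $\, \overrightarrow{\Phi}'_{\!\cR} = \overrightarrow{\Psi}'_{\!\rho} \,$.

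Finally, for the  $\vee$--versions  I would invoke the uniqueness of extension built into  Theorem \ref{thm: refined R-morph.'s}(b)  and  Theorem \ref{thm: refined rho-morph.'s}(b):  each of  $\overleftarrow{\Phi}^\vee_{\!\cR}$,  $\overrightarrow{\Phi}^\vee_{\!\cR}$  is the \emph{unique} topological Hopf morphism extending  $\overleftarrow{\Phi}_{\!\cR}$,  $\overrightarrow{\Phi}_{\!\cR}$  from  $\, \fhg = {\uhg}^* \,$  to  $\, {\fhg}^\vee = {\big({\uhg}^*\big)}^\vee = \uhgs \,$,  and similarly  $\overleftarrow{\Psi}^\vee_{\!\rho}$,  $\overrightarrow{\Psi}^\vee_{\!\rho}$  are the unique extensions of  $\overleftarrow{\Psi}_{\!\rho}$,  $\overrightarrow{\Psi}_{\!\rho}$.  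Two morphisms that coincide on  $\fhg$  therefore have the same unique extension, so  $\, \overleftarrow{\Phi}^\vee_{\!\cR} = \overleftarrow{\Psi}^\vee_{\!\rho} \,$  and  $\, \overrightarrow{\Phi}^\vee_{\!\cR} = \overrightarrow{\Psi}^\vee_{\!\rho} \,$,  which completes the argument.

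The only point carrying genuine content is the bookkeeping of the first paragraph: one must be careful to read the hypothesis  ``$\cR = \rho$''  through the correct identification  $\, \fhg \,\widetilde{\otimes}\, \fhg = {\big( \uhg \,\widehat{\otimes}\, \uhg \big)}^* \,$  (equivalently, through  \eqref{eq: QDP-duality}  and  Proposition \ref{prop: R(rho)-(co)matrices}),  and to check that the one-slot evaluations defining the  $\Phi$'s  and the  $\Psi$'s  are compatible with the Hopf pairing.  Everything afterwards is formal, which is why the statement can be asserted with a trivial proof.
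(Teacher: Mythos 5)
Your proof is correct and follows exactly the route the paper intends: the paper marks this result as having a "trivial proof" (it is the quantum-group instance of Proposition \ref{prop: duality-morph.'s_R-mat/rho-comat}), and your three steps — the pairing computation identifying $\overleftarrow{\Phi}_{\!\cR}$ with $\overleftarrow{\Psi}_{\!\rho}$ under $\cR=\rho$, then corestriction for the primed maps, then uniqueness of extension for the $\vee$--versions — are precisely the bookkeeping that justifies that claim.
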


\vskip9pt

\begin{free text}  \label{compar-morph.'s}
 \textbf{Comparing morphisms (1).}  Let us fix assumptions as in
 Theorem \ref{thm: refined R-morph.'s}:  $ \uhg $  is a given QUEA,
 $ \fhg $  its dual
 QFSHA, and  $ \, \cR = \cR^s \otimes \cR_s \, $  is a (quantum)
 $ R $--matrix  of  $ \uhg \, $.
 Then from  Theorem \ref{thm: refined R-morph.'s}  we have Hopf algebra
 morphisms
\begin{equation}  \label{eq: morph.'s Phi'(R)}
   \fhg \,{\buildrel {\overleftarrow{\Phi}'_{\!\cR}} \over {\relbar\joinrel\relbar\joinrel\relbar\joinrel\relbar\joinrel\longrightarrow}}\,
%
%
 {\fhgs}^{\text{\rm cop}}
     \;\; ,  \;\;\qquad
   \fhg \,{\buildrel {\overrightarrow{\Phi}'_{\!\cR}} \over {\relbar\joinrel\relbar\joinrel\relbar\joinrel\relbar\joinrel\longrightarrow}}\,
%
%
 {\fhgs}^{\text{\rm op}}
\end{equation}
\noindent
 between QFSHA's for mutually dual (formal) Poisson groups, and
\begin{equation}  \label{eq: morph.'s Phi^v(R)}
   \uhgs
%
%
 \,{\buildrel {\overleftarrow{\Phi}^\vee_{\!\cR}} \over {\relbar\joinrel\relbar\joinrel\relbar\joinrel\relbar\joinrel
 \relbar\joinrel\longrightarrow}}\, {\uhg}^{\text{\rm cop}}
     \;\;\; ,  \quad \qquad
   \uhgs
%
%
 \,{\buildrel {\overrightarrow{\Phi}^\vee_{\!\cR}} \over {\relbar\joinrel\relbar\joinrel\relbar\joinrel\relbar\joinrel
 \relbar\joinrel\longrightarrow}}\, {\uhg}^{\text{\rm op}}
\end{equation}
\noindent
 between QUEA's for mutually dual Lie bialgebras, which we re-write in the form
\begin{equation}  \label{eq: morph.'s Phi^v(R) - ALT}
   {\uhgs}^{\text{\rm cop}}
%
%
 \,{\buildrel {\overleftarrow{\Phi}^\vee_{\!\cR}} \over {\relbar\joinrel\relbar\joinrel\relbar\joinrel\relbar
 \joinrel\relbar\joinrel\longrightarrow}}\, \uhg
     \;\;\; ,  \quad \qquad
   {\uhgs}^{\text{\rm op}}
%
%
 \,{\buildrel {\overrightarrow{\Phi}^\vee_{\!\cR}} \over {\relbar\joinrel\relbar\joinrel\relbar\joinrel\relbar\joinrel
 \relbar\joinrel\longrightarrow}}\, \uhg
\end{equation}
 that is entirely equivalent.  We now go and compare  \eqref{eq: morph.'s Phi'(R)}  and  \eqref{eq: morph.'s Phi^v(R) - ALT}.
                                                                  \par
   Recall that  $ \, \fhgs := {\uhg}' \, $  and  $ \, \uhgs := {\fhg}^\vee \, $,
   \,which are in duality because  $ \uhg $  and  $ \fhg $  are in duality (by construction)
   and we can apply  \eqref{eq: QDP-duality}.  Then also  $ {\fhgs}^{\text{\rm cop}} $
   and  $ {\uhgs}^{\text{\rm op}} $  are in duality, as well as  $ {\fhgs}^{\text{\rm op}} $
   and  $ {\uhgs}^{\text{\rm cop}} \, $.
 \vskip3pt
   We are now ready to compare the morphisms in  \eqref{eq: morph.'s Phi'(R)}
   with those in  \eqref{eq: morph.'s Phi^v(R) - ALT}.
   Namely, we have a couple of diagrams
 \vskip-13pt
%
\begin{equation}  \label{eq: morph.'s SIN-DES/DES-SIN (R)}
  \xymatrix{
    \fhg \ar@{->}[rr]^{\hskip-1pt \overleftarrow{\Phi}'_{\!\cR}} \ar@{~}[d]  &
    &  \ar@{~}[d] {\fhgs}^{\text{\rm cop}}
   &   \fhg \ar@{->}[rr]^{\hskip-1pt \overrightarrow{\Phi}'_{\!\cR}} \ar@{~}[d]  &
   &  \ar@{~}[d] {\fhgs}^{\text{\rm op}}  \\
    \uhg \ar@{<-}[rr]_{\hskip1pt \overrightarrow{\Phi}^\vee_{\!\cR}}  &
    &  {\uhgs}^{\text{\rm op}}   &   \uhg \ar@{<-}[rr]_{\hskip1pt \overleftarrow{\Phi}^\vee_{\!\cR}}  &  &  {\uhgs}^{\text{\rm cop}}  }
\end{equation}
 where the vertical, twisting lines denote a relationship of mutual (Hopf) duality.
 Next result tells us that the link between the morphisms on top row and those underneath
 is indeed ``the best possible one'':
\end{free text}

\vskip9pt

\begin{theorem}  \label{thm: adj-morph.'s (R)}
 The two morphisms in left-hand side, resp.\ in right-hand side, of
 \eqref{eq: morph.'s SIN-DES/DES-SIN (R)}  are adjoint to each other, that is for all
 $ \, \eta \in \uhgs \, $  and  $ \, f \in \fhg \, $  we have
  $$
  \Big\langle\, \overrightarrow{\Phi}^\vee_{\!\cR}(\eta) \, ,
  f \Big\rangle  \; = \;  \Big\langle \eta \, ,
  \overleftarrow{\Phi}'_{\!\cR}(f) \Big\rangle
   \qquad  \text{and}  \qquad
      \Big\langle\, \overleftarrow{\Phi}^\vee_{\!\cR}(\eta) \, ,
      f \Big\rangle  \; = \;  \Big\langle \eta \, ,
      \overrightarrow{\Phi}'_{\!\cR}(f) \Big\rangle
      $$
 where by  ``$ \, \big\langle\,\ ,\ \big\rangle $''
 we denote the pairing between any two Hopf algebras in duality.
\end{theorem}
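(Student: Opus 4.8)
The proof is, in essence, the observation that each of the two asserted identities computes one and the same ``double evaluation'' of $\cR$ in two different ways. The plan is to unwind the four morphisms explicitly and match the resulting scalars; the only real work lies in justifying that these scalars are well defined in the ``QDP-twisted'' setting, and that has already been carried out in the proof of Theorem \ref{thm: refined R-morph.'s}.

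First I would recall the explicit formulas. Writing $\cR = \cR_1 \otimes \cR_2$ in Sweedler-type notation (tacit summation), Proposition \ref{prop: Phi from R x QUEA} gives $\overrightarrow{\Phi}_{\!\cR}(\mu) = \cR_1 \, \mu(\cR_2)$ and $\overleftarrow{\Phi}_{\!\cR}(\mu) = \mu(\cR_1) \, \cR_2$ for $\mu \in \fhg = {\uhg}^*$; by Theorem \ref{thm: refined R-morph.'s}, the map $\overleftarrow{\Phi}'_{\!\cR} : \fhg \to {\fhgs}^{\mathrm{cop}}$ is the corestriction of $\overleftarrow{\Phi}_{\!\cR}$, while $\overrightarrow{\Phi}^\vee_{\!\cR} : \uhgs = {\fhg}^\vee \to {\uhg}^{\mathrm{op}}$ is the extension of $\overrightarrow{\Phi}_{\!\cR}$, and in both cases the image is still given by the same formula once convergence of the pertinent series is established. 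Concretely, by Proposition \ref{prop: R(rho)-(co)matrices} one may write $\cR = \exp\!\big( \hbar^{-1} \vartheta \big)$ with $\vartheta \in {\big( {\uhg}' \big)}^{\widetilde{\otimes}\, 2}$, and expand $\cR = \sum_{n \geq 0} \frac{1}{n!} \, \theta^j_{[n]} \otimes \vartheta_{[n],j}$, where $\theta^j_{[n]}$ lies in the $n$-th power of $\Ker\!\big( \epsilon_{\uhg} \big)$ and $\vartheta_{[n],j}$ in the $n$-th power of $\Ker\!\big( \epsilon_{{\uhg}'} \big)$, exactly as in the proof of Theorem \ref{thm: refined R-morph.'s} — together with the symmetric expansion with the two tensor legs of $\cR$ exchanged.

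Then I would carry out the two evaluations for $\eta \in \uhgs$ and $f \in \fhg$. Using the expansion in which $\eta$ acts on the second leg, $\overrightarrow{\Phi}^\vee_{\!\cR}(\eta) = \sum_{n \geq 0} \frac{1}{n!} \, \theta^j_{[n]} \, \eta(\vartheta_{[n],j}) \in \uhg$, and since the first pairing is the standard duality $\fhg = {\uhg}^*$ this gives $\big\langle \overrightarrow{\Phi}^\vee_{\!\cR}(\eta) , f \big\rangle = \sum_{n \geq 0} \frac{1}{n!} \, f(\theta^j_{[n]}) \, \eta(\vartheta_{[n],j})$, a series convergent in $\kh$ because $\eta$ is continuous on ${\uhg}'$, so that $\eta(\vartheta_{[n],j}) \to 0$ $\hbar$-adically while $f(\theta^j_{[n]}) \in \kh$. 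On the other side the same expansion yields $\overleftarrow{\Phi}'_{\!\cR}(f) = \sum_{n \geq 0} \frac{1}{n!} \, f(\theta^j_{[n]}) \, \vartheta_{[n],j} \in {\uhg}'$, and pairing with $\eta \in \uhgs = {\big( {\uhg}' \big)}^\star$ produces $\big\langle \eta , \overleftarrow{\Phi}'_{\!\cR}(f) \big\rangle = \sum_{n \geq 0} \frac{1}{n!} \, f(\theta^j_{[n]}) \, \eta(\vartheta_{[n],j})$. These two expressions agree term by term, which proves the first identity; the second is the left–right mirror of it, obtained by interchanging the two tensor legs of $\cR$ throughout — equivalently, by applying the first identity to $\cR_{2,1}$, which is an $R$-matrix for ${\uhg}^{\mathrm{op}}$ by Remarks \ref{rmks: about R-matrix twists}(c). (Via the identifications of Theorem \ref{thm: duality x refined R-morph.'s/rho-morph.'s}, the companion statements for the $\varrho$-comatrix morphisms $\overleftarrow{\Psi}$, $\overrightarrow{\Psi}$ then come for free.)

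The delicate point — the part that could be mistaken for the crux, but which has in fact already been settled upstream — is the well-definedness and mutual compatibility of the scalar $(f \otimes \eta)(\cR)$ in this mixed setting, where $f$ is evaluated on a tensor leg living in $\uhg$ while $\eta \in {\fhg}^\vee = {\big( {\uhg}' \big)}^\star$ is evaluated on a leg landing in ever higher powers of the augmentation ideal of ${\uhg}'$. Convergence of the relevant series — in the $\hbar$-adic topology of $\kh$ and of $\uhg$, respectively in the $I_{{\uhg}'}$-adic topology of ${\uhg}'$ — is precisely what Theorem \ref{thm: refined R-morph.'s} and its proof guarantee, and it is the same mechanism that makes the QDP-duality identification ${\big( {\uhg}^* \big)}^\vee = {\big( {\uhg}' \big)}^\star$ of \eqref{eq: QDP-duality} interact correctly with both pairings. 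Once this is in hand, the theorem is immediate.
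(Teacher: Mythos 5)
Your proposal is correct and follows essentially the same route as the paper: the paper's proof is the one-line computation $\big\langle\, \overleftarrow{\Phi}^\vee_{\!\cR}(\eta) \, , f \big\rangle = \big\langle \eta\,,\cR^s \big\rangle \, \big\langle\, \cR_s \, , f \,\big\rangle = \big\langle \eta \, , \overrightarrow{\Phi}'_{\!\cR}(f) \big\rangle$, i.e.\ both sides are the same double evaluation of $\cR$, with the other identity handled as the left--right mirror. You merely make explicit the convergence bookkeeping (via the expansion from Theorem \ref{thm: refined R-morph.'s}) that the paper leaves implicit in that scalar-swapping step.
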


\pf
 It is enough to prove half of the claim   --- the other one being entirely similar ---   say the right-hand side.  Direct computation yields
  $$
  \displaylines{
   \quad   \Big\langle\, \overleftarrow{\Phi}^\vee_{\!\cR}(\eta) \, ,
   f \Big\rangle  \; = \;
   \Big\langle \big\langle \eta\,,\cR^s \big\rangle \, \cR_s \, ,
   f \Big\rangle  \; = \;
   \big\langle \eta\,,\cR^s \big\rangle \, \big\langle\, \cR_s \, ,
   f \,\big\rangle  \; =   \hfill  \cr
   \hfill   = \;
   \big\langle\, \cR_s \, , f \,\big\rangle \, \big\langle \eta\,,\cR^s
   \big\rangle  \; = \;  \Big\langle \eta \, , \cR^s \big\langle\, \cR_s \, ,
   f \,\big\rangle \!\Big\rangle  \; = \;  \Big\langle \eta \, , \overrightarrow{\Phi}'_{\!\cR}(f) \Big\rangle   \quad  }
   $$
 for all  $ \, \eta \in \uhgs \, $  and  $ \, f \in \fhg \, $,
 \,hence we are done.
\epf

\vskip7pt

   As a second step, let now start with assumptions as in
   Theorem \ref{thm: refined rho-morph.'s}:  $ \fhg $  is a given QFSHA,
   $ \uhg $  its dual QUEA, and  $ \, \rho \, $  is a (quantum)
   $ \varrho $--comatrix  of  $ \fhg \, $.  Then
   Theorem \ref{thm: refined rho-morph.'s}  provides Hopf algebra morphisms
\begin{equation}  \label{eq: morph.'s Psi'(rho)}
   \fhg \,{\buildrel {\overleftarrow{\Psi}'_{\!\rho}} \over {\relbar\joinrel\relbar\joinrel\relbar\joinrel\relbar\joinrel\longrightarrow}}\,
   {\fhgs}^{\text{\rm cop}}
     \;\; ,  \;\;\qquad
   \fhg \,{\buildrel {\overrightarrow{\Psi}'_{\!\rho}} \over
   {\relbar\joinrel\relbar\joinrel\relbar\joinrel\relbar\joinrel\longrightarrow}}\,
   {\fhgs}^{\text{\rm op}}
\end{equation}
\noindent
 between QFSHA's for mutually dual (formal) Poisson groups, and
\begin{equation}  \label{eq: morph.'s Psi^v(rho)}
   \uhgs \,{\buildrel {\overleftarrow{\Psi}^\vee_{\!\rho}} \over {\relbar\joinrel\relbar\joinrel\relbar\joinrel\relbar\joinrel
   \relbar\joinrel\longrightarrow}}\, {\uhg}^{\text{\rm cop}}
     \;\;\; ,  \quad \qquad
   \uhgs \,{\buildrel {\overrightarrow{\Psi}^\vee_{\!\rho}} \over {\relbar\joinrel\relbar\joinrel\relbar\joinrel\relbar\joinrel
   \relbar\joinrel\longrightarrow}}\, {\uhg}^{\text{\rm op}}
\end{equation}
\noindent
 between QUEA's for mutually dual Lie bialgebras;
 we re-write the latter as
\begin{equation}  \label{eq: morph.'s Psi^v(rho) - ALT}
   {\uhgs}^{\text{\rm cop}} \,{\buildrel
   {\overleftarrow{\Psi}^\vee_{\!\rho}} \over {\relbar\joinrel\relbar\joinrel\relbar\joinrel\relbar\joinrel
   \relbar\joinrel\longrightarrow}}\, \uhg
     \;\;\; ,  \quad \qquad
   {\uhgs}^{\text{\rm op}} \,{\buildrel
   {\overrightarrow{\Psi}^\vee_{\!\rho}} \over {\relbar\joinrel\relbar\joinrel\relbar\joinrel
   \relbar\joinrel\relbar\joinrel\longrightarrow}}\, \uhg
\end{equation}
 that is entirely equivalent.  We now go and compare
 \eqref{eq: morph.'s Psi'(rho)}  and  \eqref{eq: morph.'s Psi^v(rho) - ALT}.
                                                                  \par
%
%
 Acting as before (for the morphisms induced by an  $ R $--matrix),
 we find diagrams
 \vskip-13pt
%
\begin{equation}  \label{eq: morph.'s SIN-DES/DES-SIN (rho)}
 \vbox{
  \xymatrix{
    \fhg \ar@{->}[rr]^{\hskip-1pt \overleftarrow{\Psi}'_{\!\rho}} \ar@{~}[d]  &
    &  \ar@{~}[d] {\fhgs}^{\text{\rm cop}}
   &   \fhg \ar@{->}[rr]^{\hskip-1pt \overrightarrow{\Psi}'_{\!\rho}} \ar@{~}[d]  &
   &  \ar@{~}[d] {\fhgs}^{\text{\rm op}}  \\
    \uhg \ar@{<-}[rr]_{\hskip1pt \overrightarrow{\Psi}^\vee_{\!\rho}}  &  &
    {\uhgs}^{\text{\rm op}}   &   \uhg \ar@{<-}[rr]_{\hskip1pt
    \overleftarrow{\Psi}^\vee_{\!\rho}}  &  &  {\uhgs}^{\text{\rm cop}}  }
 }
\end{equation}
 where the vertical, twisting lines denote a relationship of mutual (Hopf) duality.
 Again, the link between the morphisms on top row and those underneath turns
 out to be ``the best possible one'', as the following result claims:

\vskip9pt

\begin{theorem}  \label{thm: adj-morph.'s (rho)}
 The two morphisms in left-hand side, resp.\ in right-hand side, of
 \eqref{eq: morph.'s SIN-DES/DES-SIN (R)}  are adjoint to each other,
 that is for all  $ \, \eta \in \uhgs \, $
   \hbox{and  $ \, f \in \fhg \, $  we have}
  $$  \Big\langle\, \overrightarrow{\Psi}^\vee_{\!\rho}(\eta) \, ,
  f \Big\rangle  \; = \;  \Big\langle \eta \, ,
  \overleftarrow{\Psi}'_{\!\rho}(f) \Big\rangle
   \qquad  \text{and}  \qquad
      \Big\langle\, \overleftarrow{\Psi}^\vee_{\!\rho}(\eta) \, ,
      f \Big\rangle  \; = \;  \Big\langle \eta \, ,
      \overrightarrow{\Psi}'_{\!\rho}(f) \Big\rangle
      $$
 where by  ``$ \, \big\langle\,\ ,\ \big\rangle $''
 we denote the pairing between any two Hopf algebras in duality.
\end{theorem}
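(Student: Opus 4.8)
The plan is to deduce the statement from its $R$--matrix counterpart, Theorem~\ref{thm: adj-morph.'s (R)}, by a duality argument, and then, for completeness, to indicate the direct computation that parallels the proof of that theorem. For the duality route I would set $\, \uhg := {\fhg}^\star \,$, so that $\, \fhg = {\uhg}^* \,$ and the two are Hopf algebras in duality; under the natural identification $\, {\big( \fhg \,\widetilde{\otimes}\, \fhg \big)}^\star = \uhg \,\widehat{\otimes}\, \uhg \,$, Proposition~\ref{prop: duality-deforms}(b) turns the $\varrho$--comatrix $\, \rho \,$ for $\, \fhg \,$ into an $R$--matrix $\, \cR := \rho \,$ for $\, \uhg \,$, and since $\, \rho \equiv \epsilon^{\otimes 2} \,$ modulo $\, \hbar \,$ this $\, \cR \,$ is congruent to $\, 1^{\otimes 2} \,$ modulo $\, \hbar \,$. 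Thus the hypotheses of Theorem~\ref{thm: duality x refined R-morph.'s/rho-morph.'s} hold, giving the identifications $\, \overrightarrow{\Phi}^\vee_{\!\cR} = \overrightarrow{\Psi}^\vee_{\!\rho} \,$, $\, \overleftarrow{\Phi}'_{\!\cR} = \overleftarrow{\Psi}'_{\!\rho} \,$ and, symmetrically, $\, \overleftarrow{\Phi}^\vee_{\!\cR} = \overleftarrow{\Psi}^\vee_{\!\rho} \,$, $\, \overrightarrow{\Phi}'_{\!\cR} = \overrightarrow{\Psi}'_{\!\rho} \,$.

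Next I would invoke Theorem~\ref{thm: adj-morph.'s (R)} for this very $\, \cR \,$, which tells us that for all $\, \eta \in \uhgs \,$ and $\, f \in \fhg \,$ one has
$$
  \big\langle\, \overrightarrow{\Phi}^\vee_{\!\cR}(\eta) \, , f \big\rangle \; = \; \big\langle \eta \, , \overleftarrow{\Phi}'_{\!\cR}(f) \big\rangle
  \qquad \text{and} \qquad
  \big\langle\, \overleftarrow{\Phi}^\vee_{\!\cR}(\eta) \, , f \big\rangle \; = \; \big\langle \eta \, , \overrightarrow{\Phi}'_{\!\cR}(f) \big\rangle \,.
$$
Substituting into these the identifications just recorded turns them into exactly the two adjointness relations to be proved, so the statement follows at once.

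For the direct proof — which mirrors that of Theorem~\ref{thm: adj-morph.'s (R)} — I would first use Theorem~\ref{thm: refined rho-morph.'s} together with Proposition~\ref{prop: R(rho)-(co)matrices}(b) to realize $\, \rho \,$ as a (topologically) convergent sum $\, \rho = \rho^{[1]} \otimes \rho_{[2]} \,$ in Sweedler--like notation, with $\, \rho^{[1]} , \rho_{[2]} \in \fhgs = {\uhg}' = {\big( {\fhg}^\vee \big)}^* \,$, so that the pairings $\, \langle f \, , \rho^{[1]} \rangle \,$ for $\, f \in \fhg \,$ and $\, \langle \eta \, , \rho_{[2]} \rangle \,$ for $\, \eta \in \uhgs = {\fhg}^\vee \,$ are well defined. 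In these terms $\, \overrightarrow{\Psi}^\vee_{\!\rho}(\eta) = \langle \eta \, , \rho_{[2]} \rangle \, \rho^{[1]} \,$ and $\, \overleftarrow{\Psi}'_{\!\rho}(f) = \langle f \, , \rho^{[1]} \rangle \, \rho_{[2]} \,$, whence a short expansion using only bilinearity of the pairing gives
$$
  \big\langle \overrightarrow{\Psi}^\vee_{\!\rho}(\eta) \, , f \big\rangle \; = \; \big\langle \langle \eta \, , \rho_{[2]} \rangle \, \rho^{[1]} \, , f \big\rangle \; = \; \langle \eta \, , \rho_{[2]} \rangle \, \langle f \, , \rho^{[1]} \rangle \; = \; \langle f \, , \rho^{[1]} \rangle \, \langle \eta \, , \rho_{[2]} \rangle \; = \; \big\langle \eta \, , \langle f \, , \rho^{[1]} \rangle \, \rho_{[2]} \big\rangle \; = \; \big\langle \eta \, , \overleftarrow{\Psi}'_{\!\rho}(f) \big\rangle \,,
$$
the second identity being obtained by the left--right symmetric computation.

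I expect no serious obstacle here: whichever route is chosen, the argument is essentially formal. The one point deserving care — and the only place where anything beyond bookkeeping enters — is the compatibility of the several Hopf pairings involved, namely that the pairing $\, \langle \uhgs \, , \fhgs \rangle \,$ between the Drinfeld transforms restricts to, and is compatible with, the pairing $\, \langle \uhg \, , \fhg \rangle \,$ on the relevant subspaces; this is guaranteed by the duality relation \eqref{eq: QDP-duality} together with the explicit realizations already produced in the proofs of Theorem~\ref{thm: refined R-morph.'s} and Theorem~\ref{thm: refined rho-morph.'s}, and once it is taken for granted the computation above closes the proof.
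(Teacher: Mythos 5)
Your proposal is correct, and in fact contains the paper's own argument as its ``secondary'' route. The paper proves this statement by the bare direct computation
$\big\langle\, \overrightarrow{\Psi}^\vee_{\!\rho}(\eta) \, , f \big\rangle = \big\langle\, \rho(-,\eta) \, , f \,\big\rangle = \rho(f,\eta) = \big\langle\, \eta \, , \rho(f,-) \,\big\rangle = \big\langle \eta \, , \overleftarrow{\Psi}'_{\!\rho}(f) \big\rangle$,
keeping $\rho$ as a bilinear form and letting the common value $\rho(f,\eta)$ mediate between the two (a priori different, but compatible) pairings; this is exactly your second computation, written without Sweedler notation. Your primary route --- identifying $\rho$ with an $R$--matrix $\cR$ for $\uhg$ via Proposition \ref{prop: duality-deforms}, invoking Theorem \ref{thm: duality x refined R-morph.'s/rho-morph.'s} to identify the $\Psi$--morphisms with the $\Phi$--morphisms, and then quoting Theorem \ref{thm: adj-morph.'s (R)} --- is a legitimately different deduction which the paper does not use here, although it uses arguments of precisely this kind elsewhere (e.g.\ Theorem \ref{thm: 2cocycle-deform vs Drinfeld functor x QFSHA}\textit{(b)}); it is not circular, since Theorem \ref{thm: adj-morph.'s (R)} is itself proved by direct computation, and it buys you the result as a formal corollary of already-established duality identifications at the cost of invoking more machinery. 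One small imprecision in your direct route: writing $\rho = \rho^{[1]} \otimes \rho_{[2]}$ with \emph{both} legs in $\fhgs = {\uhg}'$ is more than the paper establishes (and more than is true in general: one only gets one leg in $\uhg'$ at a time, cf.\ \eqref{eq: hmeno1Zeta in fhg*_ot_uhg'} and the proof of Theorem \ref{thm: refined rho-morph.'s}); but your computation only needs $\rho^{[1]}$ to pair with $f \in \fhg = {\uhg}^*$ and $\rho_{[2]}$ to pair with $\eta \in {\fhg}^\vee = {\big( {\uhg}' \big)}^\star$, i.e.\ the split $\, \rho \in \uhg \otimes {\uhg}' \,$, which is exactly what those results provide, so the slip is harmless.
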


\pf
 We prove the left-hand side of the claim, the other side being similar.
 By direct computation we find
  $$  \Big\langle\, \overrightarrow{\Psi}^\vee_{\!\rho}(\eta) \, ,
  f \Big\rangle  \; = \;  \big\langle\, \rho\,(-, \eta) \, ,
  f \,\big\rangle  \; = \;  \rho\big(\,f,\eta\big)  \; = \;
  \big\langle\, \eta \, , \rho\,(\,f,-) \,\big\rangle  \; = \;
  \Big\langle \eta \, ,\overleftarrow{\Psi}'_{\!\rho}(f) \Big\rangle  $$
 for all  $ \, \eta \in \uhgs \, $  and  $ \, f \in \fhg \, $,  \,as requested.
\epf

\vskip13pt

\subsection{Morphisms from polar  $ R \, $--matrices  and polar  $ \varrho \,
$--comatrices}  \label{subsec: morph.'s_polar R-mat/rho-comat}
  {\ }
 \vskip7pt
   We shall now explore what happens when the constructions leading to
   Proposition \ref{prop: morph.'s from R-mat}  or
   Proposition \ref{prop: morph.'s from rho-comat},  respectively, is (tentatively)
   applied to a QFSHA and a polar  $ R $--comatrix  for it, or to a QUEA and a polar  $\varrho $--comatrix  for it, respectively.  Much like for deformations by polar twists or polar  $ 2 $--cocycles,
   we eventually achieve a nice result by ``watching through the lens'' of the QDP.
%
%
                                                         \par
   As a first result, we find that the construction of Hopf morphisms as in
   Proposition \ref{prop: morph.'s from R-mat}  can be applied again
   (though it might not be done through direct application of the abstract,
   general recipe) when the Hopf algebra under scrutiny is a QFSHA and its
   $ R $--matrix  is indeed only (!) a polar  $ R $--matrix.

\vskip11pt

\begin{prop}  \label{prop: morph.'s from polar R-mat}
 Let  $ \fhg $  be a QFSHA, and  $ \cR $  a polar  $ R $--matrix  for it.
 Then the recipes in  Proposition \ref{prop: morph.'s from R-mat}
 provide two well-defined morphisms
  $$  \begin{matrix}
   \overleftarrow{\underline{\Phi}}_{\cR} : \fhgs := {\big( \uhgs \big)}^*
   = {\big( \fhg^\vee \big)}^* \relbar\joinrel\relbar\joinrel\longrightarrow
   {\big( {\fhg}^\vee \big)}^{\text{\rm cop}} \! = {\uhgs}^{\text{\rm cop}}   \phantom{{}_{\big|}}  \\
   \overrightarrow{\underline{\Phi}}_{\cR} : \fhgs := {\big( \uhgs \big)}^* =
   {\big( \fhg^\vee \big)}^* \relbar\joinrel\relbar\joinrel\longrightarrow
   {\big( \fhg^\vee \big)}^{\text{\rm op}} \! = {\uhgs}^{\text{\rm op}}
   \phantom{{}^{\big|}}
      \end{matrix}  $$
%
\end{prop}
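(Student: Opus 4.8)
The plan is to recognize that this statement is, at bottom, nothing but Proposition~\ref{prop: Phi from R x QUEA} applied to the QUEA $\fhg^\vee$, once the definition of a polar $R$-matrix is unwound. The reason it is worth isolating as a separate statement — and the reason it is \emph{not} a direct instance of the ``abstract, general recipe'' of Proposition~\ref{prop: morph.'s from R-mat} — is that $\cR$ is not an $R$-matrix for $\fhg$ itself (it does not even live in $\fhg^{\widetilde{\otimes}\,2}$, only in ${\big({\fhg}^\vee\big)}^{\widehat{\otimes}\,2}$), so one cannot run the recipe with the QFSHA $\fhg$ in the role of the Hopf algebra $H$; instead one must reroute through $\fhg^\vee$, which is exactly the point where the QDP enters.

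First I would recall the set-up. By Definition~\ref{def: polar R-matrix}\textit{(a)}, a polar $R$-matrix $\cR$ for $\fhg$ is, by definition, an $R$-matrix for the QUEA $\uhgs := \fhg^\vee$ (congruent to $1^{\otimes 2}$ modulo $\hbar \, {\fhg}^\vee \,\widehat{\otimes}\, {\fhg}^\vee$, although that extra congruence is not actually needed for the present conclusion). By Theorem~\ref{thm: QDP}\textit{(a)} the algebra $\fhg^\vee$ is indeed a QUEA, and by \S\ref{equiv-&-(stand)-duality} its linear dual ${\big(\fhg^\vee\big)}^*$ is a QFSHA, which is precisely $\fhgs$. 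Thus the data at hand — a QUEA together with an $R$-matrix for it — is exactly the input of Proposition~\ref{prop: Phi from R x QUEA}.

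The key step is then to apply Proposition~\ref{prop: Phi from R x QUEA} to the QUEA $H := \fhg^\vee$ and to the $R$-matrix $\cR$ for it. This yields two morphisms of topological Hopf algebras
\[
\overleftarrow{\Phi}_{\!\cR} : {\big(\fhg^\vee\big)}^* \longrightarrow {\big(\fhg^\vee\big)}^{\text{cop}} , \qquad \eta \,\mapsto\, \eta(\cR_1)\,\cR_2 ,
\]
\[
\overrightarrow{\Phi}_{\!\cR} : {\big(\fhg^\vee\big)}^* \longrightarrow {\big(\fhg^\vee\big)}^{\text{op}} , \qquad \eta \,\mapsto\, \cR_1\,\eta(\cR_2) ,
\]
and, reading ${\big(\fhg^\vee\big)}^* = \fhgs$ together with ${\big(\fhg^\vee\big)}^{\text{cop}} = \uhgs^{\text{cop}}$ and ${\big(\fhg^\vee\big)}^{\text{op}} = \uhgs^{\text{op}}$, these are precisely the maps $\overleftarrow{\underline{\Phi}}_{\cR}$ and $\overrightarrow{\underline{\Phi}}_{\cR}$ of the statement.

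The only thing left to check is well-definedness, i.e.\ that these formulas produce elements of $\fhg^\vee$ and morphisms in the right category; this is routine and proceeds exactly as in the proof of Proposition~\ref{prop: Phi from R x QUEA}. Since $\fhg^\vee$ is topologically free one writes $\cR = \sum_{n \geq 0} \hbar^n\, \cR'_n \otimes \cR''_n$ as an $\hbar$-adically convergent series in ${\big(\fhg^\vee\big)}^{\widehat{\otimes}\,2}$, whence for $\eta \in \fhgs$ one gets $\overleftarrow{\Phi}_{\!\cR}(\eta) = \sum_{n \geq 0} \hbar^n\, \eta(\cR'_n)\, \cR''_n$, a convergent series in $\fhg^\vee$ (the factors $\hbar^n$ guarantee convergence, and the congruence $\cR \equiv 1^{\otimes 2}$ modulo $\hbar$ makes this even more transparent), and similarly for $\overrightarrow{\Phi}_{\!\cR}$; the Hopf-algebra properties then follow by the same formal bookkeeping as in Proposition~\ref{prop: morph.'s from R-mat}. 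So there is no genuine obstacle here: the entire content of the proof is the observation that the correct Hopf algebra on which to run the recipe is $\fhg^\vee$ rather than $\fhg$, after which the assertion is a direct appeal to Proposition~\ref{prop: Phi from R x QUEA}.
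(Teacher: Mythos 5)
Your proposal is correct and coincides with the paper's own proof, which likewise just applies Proposition \ref{prop: Phi from R x QUEA} to the QUEA $\,\uhgs := \fhg^\vee\,$ equipped with its $R$--matrix $\cR$ (the paper states this in one line). Your extra unwinding of Definition \ref{def: polar R-matrix} and the convergence check are exactly the details already contained in that earlier proposition.
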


\begin{proof}
 This follows from a direct application of  Proposition \ref{prop: Phi from R x QUEA}
 to the QUEA  $ \, \uhgs :=  \fhg^\vee \, $  and its  $ R $--matrix  $ \cR \, $.
\end{proof}

\vskip3pt

   The previous result provide morphisms from a QFSHA to a QUEA, just like
   Proposition \ref{prop: Phi from R x QUEA}  did.  Following a similar path,
   we shall now improve such a result   --- much like we did in
   \S \ref{subsec: morph.'s_R-mat/rho-comat}  ---
   finding a couple of morphisms between QFSHA's and another couple
   between QUEA's.

\vskip11pt

\begin{theorem}  \label{thm: refined morph.'s from polar R-mat}
 Assume that  $ \, \cR $  is a  \textsl{polar  $ R $--matrix}  for the QFSHA
 $ \, \fhg \, $,  \,i.e.\ an $ R $--matrix  for the QUEA
 $ \, {\fhg}^\vee =: \uhgs \, $,  \,of the form
 $ \; \cR \, = \, \exp\big( \hbar^{-1} r \big) \; $  for some
 $ \, r \in {\fhg}^{\widetilde{\otimes}\, 2} \, $.  Then, for the two morphisms
 $ \, \overleftarrow{\underline{\Phi}}_{\cR} \, $  and
 $ \, \overrightarrow{\underline{\Phi}}_{\cR} \, $  in
 Proposition \ref{prop: morph.'s from polar R-mat}  above, the following holds:
 \vskip2pt
   \textit{(a)}\,  they corestrict to morphisms
  $$  \begin{matrix}
   \overleftarrow{\underline{\Phi}}'_{\cR} \! : \fhgs = \!
   {\big( \fhg^\vee \big)}^{\!*} \!\longrightarrow\!
   {\Big(\!\hskip-1pt {\big( {\fhg}^\vee \big)}^{\text{\rm cop}} \Big)}^{\!\prime} \! =
   \! {\Big(\!\hskip-1pt {\big( {\fhg}^\vee \big)}' \Big)}^{\!\text{\rm cop}} \!\! =
   {\fhg}^{\text{\rm cop}}   \phantom{{}_{\big|}}  \\
   \overrightarrow{\underline{\Phi}}'_{\cR} : \fhgs = \!
   {\big( \fhg^\vee \big)}^{\!*} \!\longrightarrow\!
   {\Big(\! {\big( {\fhg}^\vee \big)}^{\text{\rm op}} \Big)}^{\!\prime} \! =
   \! {\Big(\! {\big( {\fhg}^\vee \big)}' \Big)}^{\!\text{\rm op}} \! =
   {\fhg}^{\text{\rm op}}   \phantom{{}^{\big|}}
      \end{matrix}   \leqno \text{and}  $$
%
%
%
between QFSHA's for mutually dual (formal) Poisson groups;
 \vskip2pt
   \textit{(b)}\,  they extend to morphisms
  $$  \begin{matrix}
   \overleftarrow{\underline{\Phi}}^\vee_{\cR} : \, \uhg = {\fhg}^\star =
   {\Big(\! {\big( \fhg^\vee \big)}^* \Big)}^{\!\vee} \relbar\joinrel\relbar\joinrel\relbar\joinrel\longrightarrow
   {\big( {\fhg}^\vee \big)}^{\text{\rm cop}} = {\uhgs}^{\text{\rm cop}}   \phantom{{}_{\big|}}  \\
   \overrightarrow{\underline{\Phi}}^\vee_{\cR} : \,
   \uhg = {\fhg}^\star = {\Big(\! {\big( \fhg^\vee \big)}^* \Big)}^{\!\vee}
   \relbar\joinrel\relbar\joinrel\relbar\joinrel\relbar\joinrel\longrightarrow
   {\big( {\fhg}^\vee \big)}^{\text{\rm op}} = {\uhgs}^{\text{\rm op}}
   \phantom{{}^{\big|}}
      \end{matrix}   \leqno \text{and}  $$
%
%
%
between QUEA's for mutually dual Lie bialgebras.
\end{theorem}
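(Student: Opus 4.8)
The plan is to reduce everything to the results already established for the $R$-matrix case in \S\ref{subsec: morph.'s_R-mat/rho-comat}, via a single application of Drinfeld's functor $(\ )^\vee$ together with the duality relation \eqref{eq: QDP-duality}. The key observation is that a polar $R$-matrix $\cR=\exp(\hbar^{-1}r)$ for $\fhg$ is, by Definition \ref{def: polar R-matrix}\textit{(a)}, an honest $R$-matrix for the QUEA $\uhgs:=\fhg^\vee$, and moreover it is congruent to $1^{\otimes 2}$ modulo $\hbar$ (this is built into the very definition of polar $R$-matrix). Thus $\cR$ is precisely an $R$-matrix for a QUEA that is trivial modulo $\hbar$ --- exactly the hypothesis of Theorem \ref{thm: refined R-morph.'s}. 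So the strategy is: apply Theorem \ref{thm: refined R-morph.'s} with $\uhg$ there replaced by our $\uhgs=\fhg^\vee$, and then translate the conclusions back through the QDP identifications.

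Here is how the translation goes, step by step. First I would invoke Theorem \ref{thm: refined R-morph.'s}\textit{(a)} applied to the QUEA $\uhgs=\fhg^\vee$: its dual QFSHA is $(\fhg^\vee)^* = \fhgs$ (by the definition of $\fhgs$ in the statement), and the theorem says the morphisms $\overleftarrow{\underline{\Phi}}_{\cR},\overrightarrow{\underline{\Phi}}_{\cR}$ from $\fhgs$ to $(\fhg^\vee)^{\text{cop}}$ and $(\fhg^\vee)^{\text{op}}$ corestrict to maps valued in $(\fhg^\vee)'$. Now $(\fhg^\vee)' = \fhg$ by Theorem \ref{thm: QDP}\textit{(a)} (Drinfeld's functors are mutually inverse), so the corestrictions land in $\fhg^{\text{cop}}$ and $\fhg^{\text{op}}$, which proves part \textit{(a)}; the ``mutually dual (formal) Poisson groups'' assertion is automatic since $\fhgs$ and $\fhg$ quantize dual Poisson formal groups --- indeed $\fhgs=(\fhg^\vee)^*$ quantizes $F[[(G^*)^*]]$ while $\fhg$ quantizes $F[[G]]$. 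Second, I would invoke Theorem \ref{thm: refined R-morph.'s}\textit{(b)} for the same QUEA $\uhgs=\fhg^\vee$: the morphisms extend to $((\fhg^\vee)^*)^\vee$. Since $((\fhg^\vee)^*)^\vee = (\fhg^\vee)^\star{}^\vee$ --- no, more carefully, one uses $((\fhg^\vee)^*)^\vee = (\fhg^\vee)$'s double-dual-type identity; the clean way is $\uhg = \fhg^\star$ together with \eqref{eq: QDP-duality}, which gives $\uhg = \fhg^\star = ((\fhg^\vee)^*)^\vee$, so the extended morphisms have domain $\uhg$ and codomain $(\fhg^\vee)^{\text{cop}}=\uhgs^{\text{cop}}$, resp.\ $\uhgs^{\text{op}}$, and these are QUEAs for mutually dual Lie bialgebras (namely $\lieg$ and $\lieg^*$, up to the op/cop twist), proving part \textit{(b)}.

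The one genuinely non-routine point --- and the step I expect to be the main obstacle --- is verifying that the hypothesis ``$\cR$ is an $R$-matrix congruent to $1^{\otimes2}$ modulo $\hbar$'' really is met, together with the required form $\cR=\exp(\hbar\,\theta^\vee)$ with $\theta^\vee$ in $((\fhg^\vee)^\vee)^{\widetilde\otimes 2}$-type spaces so that Theorem \ref{thm: refined R-morph.'s} applies verbatim. This is essentially the content of Proposition \ref{prop: R(rho)-(co)matrices} and Theorem \ref{thm: (co)mat vs. polar (co)mat}: a polar $R$-matrix $\cR=\exp(\hbar^{-1}r)$ for $\fhg$ satisfies $r\in\fhg^{\widetilde\otimes 2}$, and rewriting $\hbar^{-1}r = \hbar\cdot(\hbar^{-2}r)$ with $\hbar^{-2}r\in((\fhg^\vee)^{\widehat\otimes 2})$ exhibits $\cR$ in the ``$\exp(\hbar\cdot)$'' form appropriate for the QUEA $\fhg^\vee$. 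One must also double-check that the various coproduct/antipode-level identities feeding Theorem \ref{thm: refined R-morph.'s} are genuinely insensitive to passing from a strict $R$-matrix to one arising from a polar $R$-matrix --- but this is exactly the point already flagged in Observations \ref{obs's: about-def_polar-gad}\textit{(b)} and proved in the underlying analysis, so no new argument is needed; one simply cites it. All remaining verifications (Hopf-morphism properties, compatibility of the op/cop decorations, continuity of the relevant series) are the same formal checks as in Proposition \ref{prop: Phi from R x QUEA} and Theorem \ref{thm: refined R-morph.'s}, carried out mutatis mutandis.
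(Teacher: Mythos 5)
Your proposal is correct and follows essentially the same route as the paper: the paper's proof likewise establishes the QDP chains of identities $\big((\fhg^\vee)^{\text{cop}}\big)'=\big((\fhg^\vee)'\big)^{\text{cop}}=\fhg^{\text{cop}}$ and $\big((\fhg^\vee)^*\big)^\vee=\big((\fhg^\vee)'\big)^\star=\fhg^\star=\uhg$ and then cites Theorem \ref{thm: refined R-morph.'s} applied to the QUEA $\uhgs:=\fhg^\vee$ with its $R$--matrix $\cR$. Your extra verification that $\cR\equiv 1^{\otimes 2}$ modulo $\hbar$ is immediate from Definition \ref{def: polar R-matrix}\textit{(a)}, so no further argument is needed there.
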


\begin{proof}
 First of all, note that the chain of identities
  $$  {\Big(\! {\big( {\fhg}^\vee \big)}^{\text{\rm cop}} \Big)}'  \, = \,
  {\Big(\! {\big( {\fhg}^\vee \big)}' \Big)}^{\!\text{\rm cop}}  \, = \,
  {\fhg}^{\text{\rm cop}}  $$
 --- and similarly with superscript  ``op''  instead of ``cop'' throughout ---
 is obvious from definitions along with the fact that Drinfeld's functors
 $ \, {(\ )}' \, $  and  $ \, {(\ )}^\vee \, $  are inverse to each other.
 Similarly, it is also obviously true the chain of identities
  $$  {\Big(\! {\big( \fhg^\vee \big)}^* \Big)}^{\!\vee}  \, = \;
  {\Big(\! {\big( \fhg^\vee \big)}' \Big)}^{\!\star}  \, = \;
  {\fhg}^\star  \, = \;  \uhg  $$
                                                                   \par
   As to the rest of the claim, everything follows from  Theorem \ref{thm: refined R-morph.'s}  applied to the QUEA  $ \, \uhgs := \fhg^\vee \, $  along with its  $ R $--matrix  $ \cR \, $.
\end{proof}

\vskip7pt

   Now we go for the dual constructions, concerning polar  $ \rho $--comatrices  for a QUEA:

\vskip9pt

\begin{prop}  \label{prop: morph.'s from polar rho-comat}
 Assume that  $ \rho $  is a  \textsl{polar  $ \varrho $--comatrix}
 for the QUEA  $ \, \uhg \, $,  \,i.e.\ an element of the form
 $ \; \rho \, = \, \exp_*\!\big( \hbar^{-1} \varrho \big) \; $  for some
 $ \, \varrho \in \Big( {\uhg}^{\widehat{\otimes}\, 2} \Big)^* \, $
 --- taking into account  Lemma \ref{lemma: properties polar 2-cocycle} ---
 which obeys  \eqref{eq: rho-comat_prop.'s}.
                                                      \par
   Then the recipes in  Proposition \ref{prop: morph.'s from rho-comat}
   provide two well-defined morphisms
  $$  \begin{matrix}
   \overleftarrow{\underline{\Psi}}_{\,\rho} \; : \; \fhgs := \uhg' \relbar\joinrel\relbar\joinrel\longrightarrow
   \Big(\! {\big( \uhg' \,\big)}^\star \Big)^{\text{\rm cop}} =
   \big( {\fhgs}^\star \big)^{\text{\rm cop}} = \, {\uhgs}^{\text{\rm cop}}
   \phantom{{}_{\big|}}  \\
   \overrightarrow{\underline{\Psi}}_{\,\rho} \; : \;
   \fhgs := \uhg' \relbar\joinrel\relbar\joinrel\longrightarrow
   \Big(\! {\big( \uhg' \,\big)}^\star \Big)^{\text{\rm op}} =
   \big( {\fhgs}^\star \big)^{\text{\rm op}} = \, {\uhgs}^{\text{\rm op}}
   \phantom{{}^{\big|}}
      \end{matrix}   \leqno \text{and}  $$
%
%
%
\end{prop}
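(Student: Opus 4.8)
The plan is to exploit the fact that this statement is, in a precise sense, the dual of Proposition~\ref{prop: morph.'s from polar R-mat}: by Definition~\ref{def: polar R-matrix}(b), a polar $\varrho$--comatrix of the QUEA $\uhg$ is nothing but a genuine $\varrho$--comatrix for the QFSHA $\fhgs := \uhg'\,$. Thus my approach will be, first, to reconcile the ``exponential'' presentation $\rho = \exp_*\!\big(\hbar^{-1}\varrho\big)$ appearing in the hypothesis with that definition, and then simply to invoke the QFSHA--version of the construction, namely Proposition~\ref{prop: Psi from rho x QFSHA}, applied to the pair $\big(\fhgs\,,\rho\big)\,$; the codomains will be rewritten at the end through the Quantum Duality Principle.

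Concretely, I would first note that, by Lemma~\ref{lemma: properties polar 2-cocycle}, the formal expression $\rho = \exp_*\!\big(\hbar^{-1}\varrho\big)$ does define a well-defined, $\khp$--valued bilinear form on $\Uhg := \khp \otimes_\kh \uhg\,$. Next I would show that its restriction $\rho\big|_{\uhg'\times\uhg'}$ is in fact $\kh$--valued: this is the very same computation carried out in the proof of Lemma~\ref{lemma: polar 2-cocycle = 2-cocycle & polar twist x Fhg = twist}(a) --- one expands $\rho = \sum_{n\geq0}\hbar^{-n}\varrho^{*\,n}\big/n!$ and uses Lemma~\ref{lemma: FACT-on-z-z'} to control the $\hbar$--adic order of $\delta_n(z)$ for $z \in \uhg'\,$, so that each summand, and hence the whole series, lands in $\kh\,$. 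Since $\uhg$ and $\uhg'$ share the (restricted) coalgebra structure, the $\varrho$--comatrix relations \eqref{eq: rho-comat_prop.'s} satisfied by $\rho$ on $\Uhg$ descend verbatim to $\uhg'\,$; hence $\rho$ is an honest $\varrho$--comatrix for the QFSHA $\fhgs := \uhg'\,$. Applying Proposition~\ref{prop: Psi from rho x QFSHA} to $\fhgs$ and to this $\rho$ then yields two topological Hopf algebra morphisms $\overleftarrow{\Psi}_{\!\rho}:\fhgs \to \big(\fhgs^\star\big)^{\text{\rm cop}}$ and $\overrightarrow{\Psi}_{\!\rho}:\fhgs \to \big(\fhgs^\star\big)^{\text{\rm op}}\,$, given respectively by $\ell \mapsto \rho(\ell\,,-)$ and $\ell \mapsto \rho(-\,,\ell)\,$. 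Finally, by the duality relation \eqref{eq: QDP-duality} --- together with Theorem~\ref{thm: QDP}(a), which makes Drinfeld's functors mutually inverse --- one has $\fhgs^\star = \big(\uhg'\big)^\star = \big(\uhg^*\big)^\vee = \uhgs\,$, so these are precisely the asserted morphisms $\overleftarrow{\underline{\Psi}}_{\,\rho}$ and $\overrightarrow{\underline{\Psi}}_{\,\rho}\,$.

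The step I expect to be the main obstacle is the passage from the ``polar'', \textit{a priori} only $\khp$--valued, object $\rho$ to a genuine $\kh$--valued $\varrho$--comatrix on $\uhg'\,$: this is exactly where the Quantum Duality Principle is essential, via the $\hbar$--adic estimates of Lemma~\ref{lemma: FACT-on-z-z'} on $\delta_n\big(\uhg'\big)\,$, just as in Lemma~\ref{lemma: polar 2-cocycle = 2-cocycle & polar twist x Fhg = twist}. By contrast, everything that follows --- that $\overleftarrow{\underline{\Psi}}_{\,\rho}$ and $\overrightarrow{\underline{\Psi}}_{\,\rho}$ preserve product, coproduct, unit, counit and antipode --- is purely formal, being already established in Proposition~\ref{prop: morph.'s from rho-comat} and Proposition~\ref{prop: Psi from rho x QFSHA}, while the chain of identifications of the codomains is an immediate consequence of \eqref{eq: QDP-duality}.
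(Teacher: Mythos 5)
Your proposal is correct and follows essentially the same route as the paper: the paper's proof simply applies Proposition~\ref{prop: Psi from rho x QFSHA} to the QFSHA $\, \fhgs := \uhg' \,$ together with its $\varrho$--comatrix $\rho\,$, and then rewrites the codomain via the identifications $\, {\big(\uhg'\big)}^\star = \uhgs \,$ coming from \eqref{eq: QDP-duality}. The only difference is that you additionally verify that the $\khp$--valued exponential $\, \rho = \exp_*\!\big(\hbar^{-1}\varrho\big) \,$ restricts to a $\kh$--valued $\varrho$--comatrix on $\uhg'\,$ --- a point the paper instead folds into Definition~\ref{def: polar R-matrix}\textit{(b)} (where a polar $\varrho$--comatrix is by definition a genuine $\varrho$--comatrix for $\uhg'\,$), so your extra step is a legitimate and welcome filling-in of that reconciliation rather than a change of method.
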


\begin{proof}
 Everything follows from definitions once we apply
 Proposition \ref{prop: Psi from rho x QFSHA}  to the QFSHA
 $ \, F_\hbar[[G^*]] := \uhg' \, $,
 \,also taking into the account the chain of (obvious) identities
 $ \; {\big( \uhg' \,\big)}^\star = {\fgs}^\star = \uhgs \; $.
\end{proof}

\vskip7pt

   Once again, the previous result provides morphisms from a QFSHA to a QUEA,
   and now we ``enhance'' it   --- like we did with
   Theorem \ref{thm: refined morph.'s from polar R-mat}  ---
   finding morphisms between QFSHA's and morphisms between QUEA's:

\vskip9pt

\begin{theorem}  \label{thm: refined morph.'s from polar rho-comat}
 Assume that  $ \rho $  is a  \textsl{polar  $ \varrho $--comatrix}  for the QUEA
 $ \, \uhg \, $,  \,i.e.\ an element of the form
 $ \; \rho \, = \, \exp_*\!\big( \hbar^{-1} \varrho \big) \; $  for some
 $ \, \varrho \in \Big( {\uhg}^{\widehat{\otimes}\, 2} \Big)^* \, $
 --- taking into account  Lemma \ref{lemma: properties polar 2-cocycle} ---
 which obeys  \eqref{eq: rho-comat_prop.'s}.
  Then, for the two morphisms  $ \, \overleftarrow{\underline{\Psi}}_{\,\rho} \, $  and
  $ \, \overrightarrow{\underline{\Psi}}_{\,\rho} \, $
  in  Proposition \ref{prop: morph.'s from polar rho-comat}  above,
  the following holds:
 \vskip2pt
   \textit{(a)}\,  they corestrict to morphisms
  $$  \begin{matrix}
   \overleftarrow{\underline{\Psi}}'_{\,\rho} \; : \; F_\hbar[[G^*]] = \uhg'
   \relbar\joinrel\longrightarrow {\Big(\! \Big(\! {\big( \uhg' \,\big)}^\star \Big)^{\text{\rm cop\,}} \Big)}'
   = \big( {\uhg}^* \big)^{\text{\rm cop}} =: {\fhg}^{\text{\rm cop}}
   \phantom{{}_{\big|}}  \\
   \overrightarrow{\underline{\Psi}}'_{\,\rho} \; : \; F_\hbar[[G^*]] = \uhg'
   \relbar\joinrel\longrightarrow
   {\Big(\! \Big(\! {\big( \uhg' \,\big)}^\star \Big)^{\text{\rm op\,}} \Big)}' =
   \big( {\uhg}^* \big)^{\text{\rm op}} =: {\fhg}^{\text{\rm op}}   \phantom{{}^{\big|}}
      \end{matrix}   \leqno \text{and}
      $$
%
%
%
between QFSHA's for mutually dual (formal) Poisson groups;
 \vskip2pt
   \textit{(b)}\,  they extend to morphisms
  $$  \begin{matrix}
   \overleftarrow{\underline{\Psi}}^\vee_{\,\rho} \; : \; \uhg =
   {\big( \uhg' \,\big)}^{\!\vee} \relbar\joinrel\longrightarrow
   \Big(\! {\big( \uhg' \,\big)}^\star \Big)^{\text{\rm cop}} =
   \big( {\fhgs}^\star \big)^{\text{\rm cop}} = \, {\uhgs}^{\text{\rm cop}}
   \phantom{{}_{\big|}}  \\
   \overrightarrow{\underline{\Psi}}^\vee_{\,\rho} \; : \; \uhg =
   {\big( \uhg' \,\big)}^{\!\vee} \relbar\joinrel\longrightarrow
   \Big(\! {\big( \uhg' \,\big)}^\star \Big)^{\text{\rm op}} =
   \big( {\fhgs}^\star \big)^{\text{\rm op}} = \, {\uhgs}^{\text{\rm op}}
   \phantom{{}^{\big|}}
      \end{matrix}   \leqno \text{and}
      $$
%
%
%
 between QUEA's for mutually dual Lie bialgebras.
\end{theorem}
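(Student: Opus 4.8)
The plan is to reduce the statement to Theorem~\ref{thm: refined rho-morph.'s}, applied not to $\fhg$ but to the QFSHA $\fhgs := \uhg'$ supplied by the Quantum Duality Principle. The first thing to check is that the hypotheses of that theorem are met in the present setting. By Definition~\ref{def: polar R-matrix}\textit{(b)} a polar $\varrho$--comatrix for $\uhg$ is, by its very construction, an honest $\varrho$--comatrix $\rho$ for the QFSHA $\uhg'$, and one which is congruent to $\epsilon^{\otimes 2}$ modulo $\hbar$; the presentation $\rho = \exp_*\!\big(\hbar^{-1}\varrho\big)$ with $\varrho \in {\big(\uhg^{\widehat{\otimes}\,2}\big)}^*$ used in the statement is reconciled with this description exactly as in the parallel $2$--cocycle case --- the series defining $\rho$ converges by Lemma~\ref{lemma: properties polar 2-cocycle}, while the facts that its restriction to $\uhg' \times \uhg'$ is $\kh$--valued and trivial there modulo $\hbar$ follow, verbatim as in the proof of Lemma~\ref{lemma: polar 2-cocycle = 2-cocycle & polar twist x Fhg = twist}\textit{(a)} and of Proposition~\ref{prop: R(rho)-(co)matrices}\textit{(b)}, together with Proposition~\ref{prop: duality-"polar"deforms} and Theorem~\ref{thm: (co)mat vs. polar (co)mat}\textit{(c)}. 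Thus $\rho$ qualifies as a $\varrho$--comatrix for $\uhg'$ which is trivial modulo $\hbar$, and Theorem~\ref{thm: refined rho-morph.'s} applies to it.

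Next one unwinds the QDP identifications. The morphisms $\overleftarrow{\underline{\Psi}}_{\,\rho}$ and $\overrightarrow{\underline{\Psi}}_{\,\rho}$ of Proposition~\ref{prop: morph.'s from polar rho-comat} were defined as nothing but the morphisms $\overleftarrow{\Psi}_{\,\rho}$, $\overrightarrow{\Psi}_{\,\rho}$ of Proposition~\ref{prop: Psi from rho x QFSHA} attached to the QFSHA $\uhg'$ and to $\rho$, with codomain computed through the chain ${\big(\uhg'\big)}^\star = {\fhgs}^\star = \uhgs$, which is an instance of \eqref{eq: QDP-duality} and Theorem~\ref{thm: QDP}. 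Applying Theorem~\ref{thm: refined rho-morph.'s}\textit{(a)} to the QFSHA $\uhg'$ then shows that these two morphisms corestrict, in their cop, resp.\ op, forms, to the QFSHA ${\big({\big(\uhg'\big)}^\star\big)}' = {\big(\uhgs\big)}'$; and since $\,{(\ )}'$ is inverse to $\,{(\ )}^\vee$ by Theorem~\ref{thm: QDP}\textit{(a)}, one has ${\big(\uhgs\big)}' = {\big({\fhg}^\vee\big)}' = \fhg = {\uhg}^*$, which is precisely claim~\textit{(a)}, the two formal Poisson groups involved being mutually dual because the corresponding assertion holds in Theorem~\ref{thm: refined rho-morph.'s}. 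In the same way, Theorem~\ref{thm: refined rho-morph.'s}\textit{(b)} applied to $\uhg'$ shows that the morphisms extend to the QUEA ${\big(\uhg'\big)}^\vee = \uhg$ --- again by Theorem~\ref{thm: QDP}\textit{(a)} --- with codomain the cop/op--twisted ${\big(\uhg'\big)}^\star = \uhgs$, which is claim~\textit{(b)}, the two Lie bialgebras being mutually dual for the same reason.

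A second, equivalent route is to obtain the whole statement from Theorem~\ref{thm: refined morph.'s from polar R-mat} by a pure duality argument: a polar $\varrho$--comatrix for $\uhg$ is the same datum as a polar $R$--matrix for the dual QFSHA ${\uhg}^*$ (Remark~\ref{rmk: duality polar R-mat/polar rho-comat}), and Drinfeld's functors are dual to one another by \eqref{eq: QDP-duality}, so one merely transports the four morphisms of Theorem~\ref{thm: refined morph.'s from polar R-mat} through the relevant Hopf pairings, exactly as the passage from Theorem~\ref{thm: refined R-morph.'s} to Theorem~\ref{thm: refined rho-morph.'s}. I do not anticipate any genuine obstacle: there is no new analytic input, all the convergence issues having been settled in \S\ref{subsec: R-mat&rho-comat vs. QDP}, and the only point demanding care is the bookkeeping of the nested Drinfeld functors and of the cop/op twistings --- in particular, keeping straight which ``$\,{}'\,$'' and which ``$\,{}^\vee\,$'' acts on which algebra, so that the collapses ${\big(\uhgs\big)}' = \fhg$ and ${\big(\uhg'\big)}^\vee = \uhg$ get invoked in the correct slots. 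The remainder is a routine transcription of Theorem~\ref{thm: refined rho-morph.'s}.
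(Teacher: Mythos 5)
Your proposal is correct and follows essentially the same route as the paper: the paper's proof likewise consists of applying Theorem~\ref{thm: refined rho-morph.'s} to the QFSHA $\,\fhgs := {\uhg}'\,$ with its $\varrho$--comatrix $\rho$ (which is one by Definition~\ref{def: polar R-matrix}\textit{(b)}), and then unwinding the QDP identifications via the inverseness of $\,{(\ )}'$ and $\,{(\ )}^\vee$ together with $\,{(\ )}' \circ {(\ )}^\star = {(\ )}^* \circ {(\ )}^\vee\,$. Your extra verification of the hypotheses and the alternative duality route are fine but not needed, since the triviality modulo $\hbar$ is already built into the definition of a polar $\varrho$--comatrix.
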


\begin{proof}
 As the functors  $ \, {(\ )}' \, $  and  $ \, {(\ )}^\vee \, $
 are inverse to each other, definitions yield
  $$  {\Big(\! \Big(\! {\big( \uhg' \,\big)}^{\!\star} \Big)^{\!\!\text{\rm cop\,}}
  \Big)}^{\!\prime} \! = \! {\Big(\! \Big(\! {\big( \uhg' \,\big)}^\star \Big)'
  \,\Big)}^{\!\!\text{\rm cop}} \!\! = {\Big(\! \Big(\! {\big( \uhg' \,\big)}^{\!\vee}
  \Big)^{\!*} \,\Big)}^{\!\!\text{\rm cop}} \!\! = \big( {\uhg}^* \big)^{\text{\rm cop}}
  =: {\fhg}^{\text{\rm cop}}  $$
 --- and similarly with superscript  ``op''  instead of ``cop'' throughout ---
 also thanks to  $ \, {(\ )}' \circ {(\ )}^\star = {(\ )}^* \circ {(\ )}^\vee \, $.
 Basing on this, the entire claim follows at once from
 Theorem \ref{thm: refined rho-morph.'s}  applied to the QFSHA
 $ \, \fhgs := {\uhg}' \, $  and to its  $ \varrho $--comatrix  $ \rho \, $.
\end{proof}

\vskip9pt

\begin{free text}  \label{duality-propt.'s x polar}
 \textbf{Duality properties.}  If we consider a QUEA and a QFSHA which
 are dual to each other, we can compare the previous results:
 thus we find the following ``polar analogue''
 --- whose proof is trivial again ---   of
 Theorem \ref{thm: duality x refined R-morph.'s/rho-morph.'s}:
\end{free text}

\vskip9pt

\begin{theorem}  \label{thm: duality x refined polar R-morph.'s/polar rho-morph.'s}
 Let  $ \, \uhg $  be a QUEA,  $ \fhg \, $
 a QFSHA, which are dual to each other, i.e.\
 $ \, \fhg = {\uhg}^* \, $  and  $ \, \uhg = {\fhg}^\star \, $.
 Let  $ \, \rho = \cR \, $  be a polar  $ \rho $--comatrix  for
 $ \, \uhg $  and a polar  $ R $--matrix for  $ \, \fhg \, $.
                                                                  \par
   Then, for the morphisms in  Proposition \ref{prop: morph.'s from polar R-mat},
   Proposition \ref{prop: morph.'s from polar rho-comat},
   Theorem \ref{thm: refined morph.'s from polar R-mat}  and
   Theorem \ref{thm: refined morph.'s from polar rho-comat}
   we have the following identifications
  $$  \overleftarrow{\underline{\Phi}}_{\!\cR} =
  \overleftarrow{\underline{\Psi}}_\rho \; ,  \,\; \overleftarrow{\underline{\Phi}}'_{\!\cR} =
  \overleftarrow{\underline{\Psi}}'_\rho \; ,  \,\; \overleftarrow{\underline{\Phi}}^\vee_{\!\cR} = \overleftarrow{\underline{\Psi}}^\vee_\rho
   \hskip9pt \text{and} \hskip9pt
      \overrightarrow{\underline{\Phi}}_{\!\cR} =
      \overrightarrow{\underline{\Psi}}_\rho \; ,  \,\; \overrightarrow{\underline{\Phi}}'_{\!\cR} =
      \overrightarrow{\underline{\Psi}}'_\rho \; ,  \,\; \overrightarrow{\underline{\Phi}}^\vee_{\!\cR} = \overrightarrow{\underline{\Psi}}^\vee_\rho
      \eqno \hskip4pt \square  $$
\end{theorem}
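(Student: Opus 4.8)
The plan is to deduce this statement from Theorem \ref{thm: duality x refined R-morph.'s/rho-morph.'s}, applied not to the original pair $(\uhg,\fhg)$ but to the pair of Drinfeld-associated quantum groups. Set $\uhgs := \fhg^\vee$ and $\fhgs := \uhg'$. First I would record that, by \eqref{eq: QDP-duality} together with Theorem \ref{thm: QDP}\textit{(a)}, the hypotheses $\fhg = \uhg^*$ and $\uhg = \fhg^\star$ force $\uhgs = \fhg^\vee = {(\uhg')}^\star = {(\fhgs)}^\star$ and $\fhgs = \uhg' = {(\fhg^\vee)}^* = {(\uhgs)}^*$, so that $\uhgs$ and $\fhgs$ are again a QUEA and a QFSHA dual to each other. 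Next, by Proposition \ref{prop: duality-"polar"deforms}, the common object $\rho = \cR$ is, on the nose, a genuine $\varrho$--comatrix for $\fhgs$ and a genuine $R$--matrix for $\uhgs$; moreover, being a \emph{polar} object it is by definition congruent to $\epsilon^{\otimes 2}$, resp. to $1^{\otimes 2}$, modulo $\hbar$, which is precisely the triviality-modulo-$\hbar$ hypothesis demanded in Theorem \ref{thm: duality x refined R-morph.'s/rho-morph.'s}. Finally, under the natural pairing $\fhgs \,\widetilde{\otimes}\, \fhgs = {\big( \uhgs^{\,\widehat{\otimes}\, 2} \big)}^*$ the two interpretations of $\rho = \cR$ coincide (Proposition \ref{prop: duality-deforms}), so the hypothesis ``$\cR = \rho$'' of that theorem is met for the pair $(\uhgs,\fhgs)$.

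The heart of the argument is then a tautological identification of morphisms. Tracking the construction: in the proofs of Proposition \ref{prop: morph.'s from polar R-mat} and Theorem \ref{thm: refined morph.'s from polar R-mat}, the maps $\overleftarrow{\underline{\Phi}}_{\!\cR}$, $\overleftarrow{\underline{\Phi}}'_{\!\cR}$, $\overleftarrow{\underline{\Phi}}^\vee_{\!\cR}$ (and their rightward siblings) are \emph{defined} to be the morphisms $\overleftarrow{\Phi}_{\!\cR}$, $\overleftarrow{\Phi}'_{\!\cR}$, $\overleftarrow{\Phi}^\vee_{\!\cR}$ produced by Proposition \ref{prop: Phi from R x QUEA} and Theorem \ref{thm: refined R-morph.'s} applied to the QUEA $\uhgs$ and its $R$--matrix $\cR$. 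Dually, the proofs of Proposition \ref{prop: morph.'s from polar rho-comat} and Theorem \ref{thm: refined morph.'s from polar rho-comat} make $\overleftarrow{\underline{\Psi}}_{\,\rho}$, $\overleftarrow{\underline{\Psi}}'_{\,\rho}$, $\overleftarrow{\underline{\Psi}}^\vee_{\,\rho}$ equal to $\overleftarrow{\Psi}_{\!\rho}$, $\overleftarrow{\Psi}'_{\!\rho}$, $\overleftarrow{\Psi}^\vee_{\!\rho}$ from Proposition \ref{prop: Psi from rho x QFSHA} and Theorem \ref{thm: refined rho-morph.'s} applied to the QFSHA $\fhgs$ and its $\varrho$--comatrix $\rho$. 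Once this is spelled out, the six claimed equalities $\overleftarrow{\underline{\Phi}}_{\!\cR} = \overleftarrow{\underline{\Psi}}_{\,\rho}$, $\overleftarrow{\underline{\Phi}}'_{\!\cR} = \overleftarrow{\underline{\Psi}}'_{\,\rho}$, $\overleftarrow{\underline{\Phi}}^\vee_{\!\cR} = \overleftarrow{\underline{\Psi}}^\vee_{\,\rho}$ and the three rightward ones are literally the conclusion of Theorem \ref{thm: duality x refined R-morph.'s/rho-morph.'s} for the dual pair $(\uhgs,\fhgs)$ carrying $\cR = \rho$; in particular the elementary pairing computation $\langle \eta(\cR_1)\,\cR_2 , f \rangle = \langle \cR , \eta \otimes f \rangle = \rho(\eta,f)$ that identifies $\overleftarrow{\Phi}_{\!\cR}$ with $\overleftarrow{\Psi}_{\!\rho}$, and the observation that corestrictions and extensions of equal morphisms are again equal, are already contained in that proof and need not be repeated.

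The one place that genuinely needs care — and which I expect to be the only real obstacle, modest as it is — is the bookkeeping of the Drinfeld-functor identities, i.e. checking that the domains and codomains of the two sides of each equality are \emph{literally} the same object. For instance $\overleftarrow{\underline{\Phi}}'_{\!\cR}$ lands in ${\big( {(\fhg^\vee)}' \big)}^{\text{cop}} = \fhg^{\text{cop}}$, whereas $\overleftarrow{\underline{\Psi}}'_{\,\rho}$ lands in ${\big( \big({(\uhg')}^\star\big)' \big)}^{\text{cop}} = {(\uhg^*)}^{\text{cop}} = \fhg^{\text{cop}}$: agreement here uses $ {(\ )}' \circ {(\ )}^\star = {(\ )}^* \circ {(\ )}^\vee$ from \eqref{eq: QDP-duality} and the fact that $ {(\ )}'$ and $ {(\ )}^\vee$ are mutually inverse (Theorem \ref{thm: QDP}\textit{(a)}). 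The analogous checks — $\overleftarrow{\underline{\Phi}}_{\!\cR}$ and $\overleftarrow{\underline{\Psi}}_{\,\rho}$ both with domain $\fhgs = \uhg' = {(\uhgs)}^*$ and codomain ${(\uhgs)}^{\text{cop}}$; $\overleftarrow{\underline{\Phi}}^\vee_{\!\cR}$ and $\overleftarrow{\underline{\Psi}}^\vee_{\,\rho}$ both with domain $\uhg$ and codomain ${(\uhgs)}^{\text{cop}}$; and the three ``$\text{op}$''-flavoured pairs — are carried out in exactly the same way by threading the symbols $ {(\ )}'$, $ {(\ )}^\vee$, $ {(\ )}^*$, $ {(\ )}^\star$, $\text{cop}$, $\text{op}$ through in the correct order. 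With this verified, the invocation of Theorem \ref{thm: duality x refined R-morph.'s/rho-morph.'s} closes the argument.
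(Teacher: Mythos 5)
Your proposal is correct and is essentially the argument the paper has in mind: the paper declares the proof ``trivial'' precisely because, as you spell out, all six underlined morphisms are by construction the morphisms of Proposition \ref{prop: Phi from R x QUEA}, Proposition \ref{prop: Psi from rho x QFSHA}, Theorem \ref{thm: refined R-morph.'s} and Theorem \ref{thm: refined rho-morph.'s} applied to the dual pair $\big(\uhgs := \fhg^\vee ,\, \fhgs := \uhg'\big)$, so the claim is literally Theorem \ref{thm: duality x refined R-morph.'s/rho-morph.'s} for that pair. Your checks that $\uhgs$ and $\fhgs$ are again mutually dual (via \eqref{eq: QDP-duality}), that the polar hypotheses give exactly the triviality-mod-$\hbar$ hypotheses needed there, and that the sources and targets match up under $ {(\ )}' \circ {(\ )}^\star = {(\ )}^* \circ {(\ )}^\vee $ are exactly the bookkeeping the paper leaves implicit.
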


\vskip7pt

\begin{free text}  \label{compar-(polar)morph.'s}
 \textbf{Comparing morphisms (2).}  We shall now compare
 morphisms among quantum groups provided by a polar  $ R $--matrix
 as above; the analysis is pretty similar to what we did with  $ R $--matrices,
 so we can be quicker.
                                                                      \par
   Let us start with a QFSHA  $ \fhg \, $,  \,with dual QUEA denoted by
   $ \uhg \, $,  \,and a polar  $ R $--matrix  $ \cR $  for  $ \fhg \, $.
   Then  Theorem \ref{thm: refined morph.'s from polar R-mat}
   gives a couple of diagrams
 \vskip-15pt
%
\begin{equation}  \label{eq: morph.'s SIN-DES/DES-SIN (polar R)}
 \vbox{
  \xymatrix{
    \fhgs \ar@{->}[rr]^{\hskip-1pt \overleftarrow{\underline{\Phi}}'_{\cR}} \ar@{~}[d]  &
    &  \ar@{~}[d] {\fhg}^{\text{\rm cop}}
   &   \fhgs \ar@{->}[rr]^{\hskip-1pt \overrightarrow{\underline{\Phi}}'_{\cR}}
   \ar@{~}[d]  &  &  \ar@{~}[d] {\fhg}^{\text{\rm op}}  \\
    \uhgs \ar@{<-}[rr]_{\hskip1pt \overrightarrow{\underline{\Phi}}^\vee_{\cR}}  &
    &  {\uhg}^{\text{\rm op}}   &   \uhgs \ar@{<-}[rr]_{\hskip1pt
    \overleftarrow{\underline{\Phi}}^\vee_{\cR}}  &  &  {\uhg}^{\text{\rm cop}}  }
 }
\end{equation}
 where the vertical, twisting lines denote a relationship of mutual (Hopf)
 duality while the horizontal arrows are Hopf algebra morphisms.
 Next result is the ``polar analogue'' of  Theorem \ref{thm: adj-morph.'s (R)},
 telling us that the morphisms on top row and those underneath are
 ``as close as possible'':
\end{free text}

\vskip7pt

\begin{theorem}  \label{thm: adj-morph.'s (polar R)}
 The two morphisms in left-hand side, resp.\ in right-hand side, of
 \eqref{eq: morph.'s SIN-DES/DES-SIN (polar R)}  are adjoint to each other,
 that is for all  $ \, \eta \in \uhg \, $  and  $ \, f \in \fhgs \, $  we have
  $$  \Big\langle\, \overrightarrow{\underline{\Phi}}^\vee_\cR(\eta) \, ,
  f \Big\rangle  \; = \;  \Big\langle \eta \, ,
  \overleftarrow{\underline{\Phi}}'_\cR(f) \Big\rangle
   \qquad  \text{and}  \qquad
      \Big\langle\, \overleftarrow{\underline{\Phi}}^\vee_\cR(\eta) \, ,
      f \Big\rangle  \; = \;  \Big\langle \eta \, ,\overrightarrow{\underline{\Phi}}'_\cR(f) \Big\rangle  $$
 where by  ``$ \, \big\langle\,\ ,\ \big\rangle $''
 we denote the pairing between any two Hopf algebras in duality.
\end{theorem}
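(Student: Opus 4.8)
The plan is to mimic the proof of Theorem \ref{thm: adj-morph.'s (R)}, using the explicit formulas for the maps together with the defining pairing between a QUEA and its dual QFSHA, now extended via Drinfeld's functors. First I would recall the relevant identifications. By Theorem \ref{thm: refined morph.'s from polar R-mat}, the morphism $\overleftarrow{\underline{\Phi}}'_{\cR}$ is the corestriction to $(\uhg^*) = \fhg$ of the map $\overleftarrow{\underline{\Phi}}_{\cR} : \fhgs = {(\fhg^\vee)}^* \to {(\fhg^\vee)}^{\text{cop}}$, while $\overrightarrow{\underline{\Phi}}^\vee_{\cR}$ is the extension to $\uhg = {\fhg}^\star = {\big({(\fhg^\vee)}^*\big)}^{\vee}$ of $\overrightarrow{\underline{\Phi}}_{\cR} : \fhgs \to {(\fhg^\vee)}^{\text{op}}$. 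Both are built from the same polar $R$--matrix $\cR$ of $\uhgs := \fhg^\vee$, through the recipes of Proposition \ref{prop: morph.'s from polar R-mat}, namely $\overleftarrow{\underline{\Phi}}_{\cR}(\eta) = \eta(\cR_1)\,\cR_2$ and $\overrightarrow{\underline{\Phi}}_{\cR}(\eta) = \cR_1\,\eta(\cR_2)$, with $\cR = \cR_1 \otimes \cR_2$ in Sweedler notation. The pairing $\langle\,\ ,\ \rangle$ between $\uhg$ and $\fhg$ restricts/extends compatibly to the pairing between $\uhgs = \fhg^\vee$ and $\fhgs = \uhg' = {(\fhg^\vee)}^*$, since these are dual objects (cf.\ \S \ref{equiv-&-(stand)-duality} and \eqref{eq: QDP-duality}); this compatibility is exactly what allows the formal computation to go through unchanged.

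Next I would carry out the core computation, which is the same bookkeeping as in Theorem \ref{thm: adj-morph.'s (R)} but with the polar maps. It suffices to prove one of the two adjunction identities, say the second, the first being symmetric (left/right swap). For $\eta \in \uhg$ and $f \in \fhgs$, expand $\cR$ as an $\hbar$--adically convergent series $\cR = \sum_{n \geq 0} \hbar^{-n}\, r^n/n!$ with $r \in {\fhg}^{\widetilde{\otimes}\, 2}$ — more precisely, using Proposition \ref{prop: R(rho)-(co)matrices}(a) one knows the coefficients land in the right spaces so that all pairings below are well-defined convergent expressions. Then one computes
$$
\big\langle\, \overleftarrow{\underline{\Phi}}^\vee_\cR(\eta) \, , f \,\big\rangle \; = \; \big\langle\, \eta(\cR_1)\, \cR_2 \, , f \,\big\rangle \; = \; \eta(\cR_1)\, \langle\, \cR_2 \, , f \,\rangle \; = \; \langle\, \cR_2 \, , f \,\rangle\, \eta(\cR_1) \; = \; \big\langle\, \eta \, , \cR_1\, \langle\, \cR_2 \, , f \,\rangle \,\big\rangle \; = \; \big\langle\, \eta \, , \overrightarrow{\underline{\Phi}}'_\cR(f) \,\big\rangle \, ,
$$
where the commutativity of the scalar factors is used in the middle, and the last equality is the definition of $\overrightarrow{\underline{\Phi}}_{\cR}$ (corestricted/extended appropriately). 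The key subtlety — not a real obstacle, but the point requiring care — is checking that the convergence statements guaranteed by Theorem \ref{thm: refined morph.'s from polar R-mat} and Proposition \ref{prop: R(rho)-(co)matrices}(a) are strong enough that all the infinite sums hidden in $\cR_1 \otimes \cR_2$ pair legitimately against $\eta \in \uhg$ and $f \in \fhgs$, and that one may freely reorder scalars and move them across the pairing; this is the analogue of the implicit convergence arguments in the proof of Proposition \ref{prop: Phi from R x QUEA} and Theorem \ref{thm: refined R-morph.'s}.

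The main obstacle is therefore not the formal identity — which is a one-line manipulation once the objects are in place — but rather confirming that the polar nature of $\cR$ does not break the pairing: one must verify that restricting $\overleftarrow{\underline{\Phi}}^\vee_\cR$ to $\uhg \subseteq {(\fhg^\vee)}^{**}$ and $\overrightarrow{\underline{\Phi}}'_\cR$ to $\fhgs = {(\fhg^\vee)}^*$ yields maps whose images pair into $\kh$ rather than merely into $\khp$. This is precisely where Theorem \ref{thm: refined morph.'s from polar R-mat} does the heavy lifting: it already establishes that $\overrightarrow{\underline{\Phi}}'_\cR$ corestricts to ${\fhg}^{\text{op}}$ and $\overleftarrow{\underline{\Phi}}^\vee_\cR$ extends to $\uhg = {\uhgs}^{\text{cop}} \to$ wait, reads to ${(\fhg^\vee)}^{\text{cop}} = {\uhgs}^{\text{cop}}$, so both sides of each claimed identity land in spaces that pair perfectly against each other by \eqref{eq: QDP-duality}. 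Once this is in hand, I would simply run the two-line computation displayed above (and its mirror image), and conclude. In short: invoke the structural results, reduce to the explicit Sweedler-type formulas, and perform the same direct pairing manipulation as in Theorem \ref{thm: adj-morph.'s (R)}.
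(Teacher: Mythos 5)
Your proof is correct and in substance identical to the paper's: the paper simply observes that, by definition, a polar $R$--matrix for $\fhg$ is an ordinary $R$--matrix (trivial mod $\hbar$) for $\uhgs:=\fhg^\vee$, so the claim is Theorem \ref{thm: adj-morph.'s (R)} applied to the pair $\big(\uhgs,\fhgs\big)$ via Theorem \ref{thm: (co)mat vs. polar (co)mat} --- and the Sweedler-type pairing computation you carry out explicitly is exactly the one proving that cited theorem. Your added care about convergence is legitimate but already discharged by Theorem \ref{thm: refined morph.'s from polar R-mat} and Proposition \ref{prop: R(rho)-(co)matrices}, as you note.
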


\begin{proof}
 The proof follows from  Theorem \ref{thm: adj-morph.'s (R)}
 along with  Theorem \ref{thm: (co)mat vs. polar (co)mat}.
\end{proof}

   Similarly, let  $ \uhg $  be a QUEA, with dual QFSHA denoted by
   $ \fhg \, $,  \,and let  $ \rho $  be a polar  $ \varrho $--comatrix
   $ \uhg \, $.  Then  Theorem \ref{thm: refined morph.'s from polar rho-comat}
   yields a couple of diagrams
 \vskip-15pt
%
\begin{equation}  \label{eq: morph.'s SIN-DES/DES-SIN (polar rho)}
 \vbox{
  \xymatrix{
    \fhgs \ar@{->}[rr]^{\hskip-1pt \overleftarrow{\underline{\Psi}}'_{\rho}} \ar@{~}[d]
    &  &  \ar@{~}[d] {\fhg}^{\text{\rm cop}}
   &   \fhgs \ar@{->}[rr]^{\hskip-1pt \overrightarrow{\underline{\Psi}}'_{\rho}}
   \ar@{~}[d]  &  &  \ar@{~}[d] {\fhg}^{\text{\rm op}}  \\
    \uhgs \ar@{<-}[rr]_{\hskip1pt \overrightarrow{\underline{\Psi}}^\vee_{\rho}}
    &  &  {\uhg}^{\text{\rm op}}   &
    \uhgs \ar@{<-}[rr]_{\hskip1pt \overleftarrow{\underline{\Psi}}^\vee_{\rho}}  &  &
    {\uhg}^{\text{\rm cop}}  }
 }
\end{equation}
 where the vertical, twisting lines denote a relationship of mutual (Hopf) duality
 while the horizontal arrows are Hopf algebra morphisms.  We get now the
 ``polar analogue'' of  Theorem \ref{thm: adj-morph.'s (rho)},
 which claims that the morphisms on top row of
 \eqref{eq: morph.'s SIN-DES/DES-SIN (polar rho)}  and those underneath are
 ``as close as possible'':

\vskip9pt

\begin{theorem}  \label{thm: adj-morph.'s (polar rho)}
 The two morphisms in left-hand side, resp.\ in right-hand side, of
 \eqref{eq: morph.'s SIN-DES/DES-SIN (polar rho)}  are adjoint to each other,
 that is for all  $ \, \eta \in \uhg \, $  and  $ \, f \in \fhgs \, $  we have
  $$
  \Big\langle\, \overrightarrow{\underline{\Psi}}^\vee_\rho(\eta) \, ,
  f \Big\rangle  \; = \;  \Big\langle \eta \, ,
  \overleftarrow{\underline{\Psi}}'_\rho(f) \Big\rangle
   \qquad  \text{and}  \qquad
      \Big\langle\, \overleftarrow{\underline{\Psi}}^\vee_\rho(\eta) \, ,
      f \Big\rangle  \; = \;  \Big\langle \eta \, ,\overrightarrow{\underline{\Psi}}'_\rho(f) \Big\rangle
      $$
 where  ``$ \, \big\langle\,\ ,\ \big\rangle $''  denotes the pairing
 between any two Hopf algebras in duality.
\end{theorem}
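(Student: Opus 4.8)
The plan is to reduce Theorem \ref{thm: adj-morph.'s (polar rho)} entirely to already-established facts, exactly mirroring the proof of Theorem \ref{thm: adj-morph.'s (polar R)}. The key structural observation is that a polar $\varrho$--comatrix $\rho$ for the QUEA $\uhg$ is, via Theorem \ref{thm: (co)mat vs. polar (co)mat}(c), a genuine $\varrho$--comatrix for the QFSHA $\fhgs := \uhg'$ (congruent to $\epsilon^{\otimes 2}$ modulo $\hbar$), and the morphisms $\overleftarrow{\underline{\Psi}}'_\rho$, $\overrightarrow{\underline{\Psi}}'_\rho$, $\overleftarrow{\underline{\Psi}}^\vee_\rho$, $\overrightarrow{\underline{\Psi}}^\vee_\rho$ of Theorem \ref{thm: refined morph.'s from polar rho-comat} are by construction precisely the morphisms $\overleftarrow{\Psi}'_\rho$, $\overrightarrow{\Psi}'_\rho$, $\overleftarrow{\Psi}^\vee_\rho$, $\overrightarrow{\Psi}^\vee_\rho$ produced by Theorem \ref{thm: refined rho-morph.'s} when the latter is applied to the QFSHA $\fhgs = \uhg'$ together with its (ordinary) $\varrho$--comatrix $\rho$. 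Once this identification is in place, the desired adjunction identities are nothing but the identities of Theorem \ref{thm: adj-morph.'s (rho)} read off for that QFSHA.

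Concretely, first I would make the bookkeeping of dualities explicit. Applying Theorem \ref{thm: refined rho-morph.'s} to $\fhgs := \uhg'$ in place of ``$\fhg$'': its dual QUEA is $(\uhg')^\star = \fgs^\star = \uhgs$ by \eqref{eq: QDP-duality}; its Drinfeld QFSHA $(\uhg')^\vee$-companion, namely ${(\fhgs^\star)}' = (\uhgs)'$, equals $((\uhg')^\star)' = (\uhg')^* =: \fhg$ again by \eqref{eq: QDP-duality} and the fact that Drinfeld's functors are mutually inverse (Theorem \ref{thm: QDP}(a)). So the four morphisms of Theorem \ref{thm: refined rho-morph.'s} for this input read $\overleftarrow{\Psi}'_\rho , \overrightarrow{\Psi}'_\rho : \fhgs \to \fhg^{\text{cop}},\ \fhg^{\text{op}}$ (corestrictions) and $\overleftarrow{\Psi}^\vee_\rho , \overrightarrow{\Psi}^\vee_\rho : \uhg = \fhg^\vee \to \uhgs^{\text{cop}},\ \uhgs^{\text{op}}$ (extensions); matching this codomain data with Theorem \ref{thm: refined morph.'s from polar rho-comat} and using the chain of identities recorded in its proof, one sees these coincide with the underlined $\underline{\Psi}$'s. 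This is the step requiring care, because one must check that the corestriction/extension steps of the two theorems are literally the same construction and not merely ``compatible''; but both are defined by the same evaluation-of-$\rho$ formula followed by the same topological restriction, so the coincidence is forced.

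With the identification settled, I would simply invoke Theorem \ref{thm: adj-morph.'s (rho)} applied to $\fhgs$: it gives, for all $\eta \in \uhgs$ and $f \in \fhg$,
$$
\big\langle\, \overrightarrow{\Psi}^\vee_{\!\rho}(\eta)\,,\, f \,\big\rangle \; = \; \big\langle\, \eta\,,\, \overleftarrow{\Psi}'_{\!\rho}(f) \,\big\rangle
\qquad\text{and}\qquad
\big\langle\, \overleftarrow{\Psi}^\vee_{\!\rho}(\eta)\,,\, f \,\big\rangle \; = \; \big\langle\, \eta\,,\, \overrightarrow{\Psi}'_{\!\rho}(f) \,\big\rangle \, ,
$$
which, upon substituting the underlined names and recalling that in the present setting $f$ ranges over $\fhgs$ while $\eta$ ranges over $\uhg$ (the roles of the two quantum groups being swapped relative to Theorem \ref{thm: refined rho-morph.'s}), are exactly the claimed identities. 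Alternatively, and perhaps more transparently for the reader, the whole statement follows from Theorem \ref{thm: adj-morph.'s (rho)} by the same ``duality argument'' already used in Theorem \ref{thm: adj-morph.'s (polar rho)}'s sibling: one can also deduce it from Theorem \ref{thm: adj-morph.'s (polar R)} via the duality between polar $R$--matrices and polar $\varrho$--comatrices (Remark \ref{rmk: duality polar R-mat/polar rho-comat}) together with Theorem \ref{thm: duality x refined polar R-morph.'s/polar rho-morph.'s}. I expect no genuine obstacle here; the only real work is the careful identification of the two families of morphisms described in the previous paragraph, after which everything is a direct unwinding of the pairing, just as in the one-line computations in the proofs of Theorems \ref{thm: adj-morph.'s (R)} and \ref{thm: adj-morph.'s (rho)}.
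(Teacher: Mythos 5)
Your proposal is correct and follows essentially the same route as the paper, which likewise disposes of this statement by reducing it, via the QDP identification of a polar $\varrho$--comatrix for $\uhg$ with an ordinary $\varrho$--comatrix for $\fhgs := \uhg'$, to the already-proved adjointness theorem for the non-polar case. The only cosmetic remark is that the direction of the identification you need is immediate from Definition \ref{def: polar R-matrix}\textit{(b)} itself (rather than from Theorem \ref{thm: (co)mat vs. polar (co)mat}\textit{(c)}, which is the converse implication), but this does not affect the argument.
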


\begin{proof}
 Here again, the proof follows from  Theorem \ref{thm: adj-morph.'s (R)}
 and  Theorem \ref{thm: (co)mat vs. polar (co)mat}.
\end{proof}

\vskip13pt

\subsection{Semiclassical morphisms induced by specialization}
\label{subsec: semiclass-morph.'s by special}
  {\ }
 \vskip9pt
   We will now go and study the semiclassical limit of the various morphisms
   among quantum groups, considered in \S\S \ref{subsec: morph.'s_R-mat/rho-comat}
   and  \ref{subsec: morph.'s_polar R-mat/rho-comat}  above, induced by  $ R $--matrices,
   $ \varrho $--comatrices  and their  ``\textsl{polar\/}  counterparts''.
 \vskip3pt
   First we consider the case of an  $ R $--matrix  $ \cR $  for a given QUEA  $ \uhg \, $,
   \,whose dual QFSHA is  $ \fhg \, $.  With this assumptions, we recall the existence of
   the Hopf algebra morphisms in  \eqref{eq: morph.'s SIN-DES/DES-SIN (R)},
   which by  Theorem \ref{thm: adj-morph.'s (R)}  are pairwise mutually adjoint.
                                                                        \par
   Specialising  $ \hbar $  to 0, the left-hand side of
   \eqref{eq: morph.'s SIN-DES/DES-SIN (R)}  provides two mutually adjoint morphisms
 $ \; \fg \,{\buildrel \overleftarrow{\Phi}'_{\!\cR}{\big|}_{\hbar=0} \over {\relbar\joinrel\relbar\joinrel\relbar\joinrel\relbar\joinrel\longrightarrow}}\, {\fgs}^{\text{\rm cop}} \; $
 and
 $ \; {\ugs}^{\text{\rm op}} \,{\buildrel
 \overrightarrow{\Phi}^\vee_{\!\cR}{\big|}_{\hbar=0} \over {\relbar\joinrel\relbar\joinrel\relbar\joinrel
 \relbar\joinrel\longrightarrow}}\, \ug \, $,
 \;the first being a morphism of  \textsl{Poisson\/}  Hopf algebras, the second one of
 \textsl{co-Poisson\/}  Hopf algebras.  As they are mutually adjoint,
 each one of them defines one and the same morphism of formal Poisson groups
%
 $ \; \phi^+_{{}_\cR} : G^*_{\text{\rm op}} \!\relbar\joinrel\longrightarrow G \; $
 where  $ \, G^*_{\text{\rm op}} \, $  is the  \textsl{opposite\/}
 (i.e., with opposite product) formal Poisson group to  $ G^* \, $.  Note that
 $ \, \phi^+_{{}_\cR} : G^*_{\text{\rm op}} \!\relbar\joinrel\longrightarrow G \, $
 is directly defined by  $ \overleftarrow{\Phi}'_{\!\cR}{\big|}_{\hbar=0} \, $,
 \,while the morphism of Lie bialgebras
 $ \; d\phi^+_{{}_\cR} : \lieg^*_{\text{\rm op}}
 \!\relbar\joinrel\longrightarrow \lieg \; $
 can be deduced directly from  $ \, \overrightarrow{\Phi}^\vee_{\!\cR}{\big|}_{\hbar=0} \, $,
 \,by restriction to  $ \lieg^*_{\text{\rm op}} $  and corestriction to  $ \lieg \, $.
                                                                        \par
   Similarly, specialising  $ \hbar $  to 0  the right-hand side of
   \eqref{eq: morph.'s SIN-DES/DES-SIN (R)}  yields two mutually adjoint morphisms
 $ \; \fg \,{\buildrel \overrightarrow{\Phi}'_{\!\cR}{\big|}_{\hbar=0} \over
 {\relbar\joinrel\relbar\joinrel\relbar\joinrel\relbar\joinrel\longrightarrow}}\,
 {\fgs}^{\text{\rm op}} \; $
 and
 $ \; {\ugs}^{\text{\rm cop}} \,{\buildrel
 \overleftarrow{\Phi}^\vee_{\!\cR}{\big|}_{\hbar=0} \over {\relbar\joinrel\relbar\joinrel\relbar\joinrel
 \relbar\joinrel\longrightarrow}}\, \ug \; $
 which in turn defines one single morphism of formal Poisson groups
%
 $ \; \phi^-_{{}_\cR} : G^*_{\text{\rm cop}} \!\relbar\joinrel\longrightarrow G \; $
 where now  $ \, G^*_{\text{\rm cop}} \, $  denotes the  \textsl{co-opposite\/}
 formal Poisson group to  $ G^* \, $
 --- i.e., with same product but opposite Poisson structure.
 This goes along with its associated morphism of Lie bialgebras
 $ \; d\phi^-_{{}_\cR} : \lieg^*_{\text{\rm cop}}
 \!\relbar\joinrel\longrightarrow \lieg \; $.
 In short, we have pairs of morphisms
\begin{equation}  \label{eq: morph.'s form-Pois-grps (R)}
  G^*_{\text{\rm op}} {\buildrel \phi^+_{{}_\cR} \over {\relbar\joinrel\relbar\joinrel\longrightarrow}}\, G
     \; ,   \quad
  G^*_{\text{\rm cop}} {\buildrel \phi^-_{{}_\cR} \over {\relbar\joinrel\relbar\joinrel\longrightarrow}}\, G
     \qquad \text{and} \qquad
  \lieg^*_{\text{\rm op}} {\buildrel d\phi^+_{{}_\cR} \over {\relbar\joinrel\relbar\joinrel\longrightarrow}}\, \lieg
     \; ,   \quad
  \lieg^*_{\text{\rm cop}} {\buildrel d\phi^-_{{}_\cR} \over {\relbar\joinrel\relbar\joinrel\longrightarrow}}\, \lieg
\end{equation}
 of formal Poisson groups and of Lie  bialgebras,
 respectively.
 \vskip7pt
   Second, we consider the case of a  $ \varrho $--comatrix  $ \rho $
   for a given QFSHA  $ \fhg \, $,  \,with dual QUEA  $ \uhg \, $.
   In this case, there exist Hopf algebra morphisms as in
   \eqref{eq: morph.'s SIN-DES/DES-SIN (rho)},
   which are pairwise mutually adjoint due to  Theorem \ref{thm: adj-morph.'s (rho)}.
                                                                        \par
   Acting as before, specialising  $ \hbar $  to 0 we find that the
   semiclassical limits of these (quantum) morphisms eventually
   define two pairs of morphisms
\begin{equation}  \label{eq: morph.'s form-Pois-grps (rho)}
  G^*_{\text{\rm op}} {\buildrel \psi^+_\rho \over {\relbar\joinrel\relbar\joinrel\longrightarrow}}\, G
     \; ,   \quad
  G^*_{\text{\rm cop}} {\buildrel \psi^-_\rho \over {\relbar\joinrel\relbar\joinrel\longrightarrow}}\, G
     \qquad \text{and} \qquad
  \lieg^*_{\text{\rm op}} {\buildrel d\psi^+_\rho \over {\relbar\joinrel\relbar\joinrel\longrightarrow}}\, \lieg
     \; ,   \quad
  \lieg^*_{\text{\rm cop}} {\buildrel d\psi^-_\rho \over {\relbar\joinrel\relbar\joinrel\longrightarrow}}\, \lieg
\end{equation}
 of formal Poisson groups and of Lie  bialgebras, respectively.
 \vskip7pt
   Third, to compare the two constructions, assume that,
   given mutually dual quantum groups  $ \uhg $  and  $ \fhg \, $,
   \,we pick a single element  $ \, \cR = \rho \, $,  \,thought of simultaneously
   as an  $ R $--matrix  for  $ \uhg $  and as a  $ \varrho $--comatrix  for
   $ \fhg \, $,  \,much in the spirit of  Proposition \ref{prop: duality-deforms}
   and  Theorem \ref{prop: duality-morph.'s_R-mat/rho-comat}.
   Then morphisms as in  \eqref{eq: morph.'s form-Pois-grps (R)}  and
   \eqref{eq: morph.'s form-Pois-grps (rho)}  are defined: but in addition,
   directly by  Theorem \ref{thm: duality x refined R-morph.'s/rho-morph.'s}
   we get at once that
  $$
  \phi^+_{{}_\cR}  \, = \,  \psi^+_\rho  \;\; ,  \quad   \phi^-_{{}_\cR}  \, = \,
  \psi^-_\rho
   \quad \qquad \text{and} \qquad \quad
      d\phi^+_{{}_\cR}  \, = \,  d\psi^+_\rho  \;\; ,  \quad   d\phi^-_{{}_\cR}  \, = \,
      d\psi^-_\rho  $$
 \vskip9pt
   If one works instead with polar  $ R $--matrices  and polar  $ \varrho $--comatrices,
   the roles of  $ G $  and  $ G^* $  are reversed, but for the rest the analysis is
   entirely similar (so we may be more sketchy).  Therefore, assume we have dual
   quantum groups  $ \uhg $  and  $ \fhg \, $.
                                                                               \par
   Given a polar  $ R $--matrix  $ \cR $  for  $ \fhg \, $,  \,the Hopf algebra
   morphisms in  Theorem \ref{thm: refined morph.'s from polar R-mat}  give rise
   (through their semiclassical limit) to two pairs of morphisms
\begin{equation}  \label{eq: morph.'s form-Pois-grps (polar R)}
  G_{\text{\rm op}} \,{\buildrel \underline{\phi}^+_{\,\cR} \over {\relbar\joinrel\relbar\joinrel\longrightarrow}}\, G^*
     \; ,   \quad
  G_{\text{\rm cop}} \,{\buildrel \underline{\phi}^-_{\,\cR} \over {\relbar\joinrel\relbar\joinrel\longrightarrow}}\, G^*
     \qquad \text{and} \qquad
  \lieg_{\text{\rm op}} \,{\buildrel d\underline{\phi}^+_{\,\cR} \over {\relbar\joinrel\relbar\joinrel\longrightarrow}}\, \lieg^*
     \; ,   \quad
  \lieg_{\text{\rm cop}} \,{\buildrel d\underline{\phi}^-_{\,\cR} \over {\relbar\joinrel\relbar\joinrel\longrightarrow}}\, \lieg^*
\end{equation}
 of formal Poisson groups and of Lie  bialgebras, respectively.
 \vskip7pt
   Similarly, if  $ \rho $  is a polar $ \varrho $--comatrix  for  $ \uhg \, $,
   \,the Hopf algebra morphisms in
   Theorem \ref{thm: refined morph.'s from polar rho-comat}  define
   (via their semiclassical limit) two pairs of morphisms
\begin{equation}  \label{eq: morph.'s form-Pois-grps (polar rho)}
  G_{\text{\rm op}} \,{\buildrel \underline{\psi}^+_{\,\rho} \over {\relbar\joinrel\relbar\joinrel\longrightarrow}}\, G^*
     \; ,   \quad
  G_{\text{\rm cop}} \,{\buildrel \underline{\psi}^-_{\,\rho} \over {\relbar\joinrel\relbar\joinrel\longrightarrow}}\, G^*
     \qquad \text{and} \qquad
  \lieg_{\text{\rm op}} \,{\buildrel d\underline{\psi}^+_{\,\rho} \over {\relbar\joinrel\relbar\joinrel\longrightarrow}}\, \lieg^*
     \; ,   \quad
  \lieg_{\text{\rm cop}} \,{\buildrel d\underline{\psi}^-_{\,\rho} \over {\relbar\joinrel\relbar\joinrel\longrightarrow}}\, \lieg^*
\end{equation}
 of formal Poisson groups and of Lie  bialgebras, respectively.
 \vskip7pt
   Finally, if  $ \, \cR = \rho \, $   --- in the spirit of
   Proposition \ref{prop: duality-deforms},  more precisely like in
   Theorem \ref{thm: duality x refined polar R-morph.'s/polar rho-morph.'s}
   ---   then  Theorem \ref{thm: duality x refined polar R-morph.'s/polar rho-morph.'s}
   gives at once
  $$
  \underline{\phi}^+_{\,\cR}  \, = \,  \underline{\psi}^+_{\,\rho}  \;\; ,
  \quad   \underline{\phi}^-_{\,\cR}  \, = \,  \underline{\psi}^-_{\,\rho}
   \quad \qquad \text{and} \qquad \quad
      d\underline{\phi}^+_{\,\cR}  \, = \,  d\underline{\psi}^+_{\,\rho}
      \;\; ,  \quad   d\underline{\phi}^-_{\,\cR}  \, = \,  d\underline{\psi}^-_{\,\rho}
      $$
 \vskip9pt
   Studying in depth all the morphisms introduced above seems to be quite an
   interesting problem; we cannot, however, cope with in the present paper
   --- we just finish with a comparison with previous results.
                                                             \par
   Assume we have an  $ R $--matrix  $ \cR $  for a given QUEA  $ \uhg \, $,
   \,whose dual QFSHA is  $ \, \fhg := {\uhg}^* \, $.  It is well-known that
   the ``semiclassical limit'' of  $ \cR \, $,  that is
   $ \, \displaystyle{r := {{\,\cR - 1^{\otimes 2}\,} \over \hbar} \pmod{\hbar}} \, $,
   is in turn a ``classical  $ r $--matrix''  for the Lie bialgebra  $ \lieg \, $.
   Then Lie bialgebra morphisms
   $ \; \lieg^*_{\text{\rm op}} {\buildrel \varphi^+_r \over {\relbar\joinrel\relbar\joinrel\longrightarrow}}\, \lieg \; $  and
   $ \; \lieg^*_{\text{\rm cop}} {\buildrel \varphi^-_r \over {\relbar\joinrel\relbar\joinrel\longrightarrow}}\, \lieg \; $  are
   defined directly through  $ r $  itself   --- with no need of $ \cR \, $,
   nor of  $ \uhg \, $,  nor  $ \fhg \, $,  \,cf.\  \cite{CP}, \S 2.1,  or
   \cite{Mj}, \S 8.1.  Tracking the various constructions involved
   --- in particular, the functor  $ \, \fhg \mapsto {\fhg}^\vee =: \uhgs \, $
   ---   by direct comparison one immediately sees that
  $$  \varphi^+_r  \, = \,  d\phi^+_{{}_\cR}
   \quad \qquad \text{and} \qquad \quad
      \varphi^-_r  \, = \,  d\phi^-_{{}_\cR}  $$
 In particular, we get that  \textit{the morphisms  $ d\phi^\pm_{{}_\cR} $
 depend on  $ r $  alone, rather than on  $ \cR \, $,
 hence the same is true for the morphisms  $ \phi^\pm_{{}_\cR} \, $};
 indeed, both facts can also be easily proved by direct inspection.
 Similarly, one can prove, via direct analysis again, or by a duality argument
 from the previous result, that  \textit{the morphisms  $ \psi^\pm_\rho $  and
 $ d\psi^\pm_\rho $  depend only on the \textsl{``classical  $ \varrho $--comatrix''}
 $\displaystyle{\rho_0 := {{\,\rho - \epsilon^{\otimes 2}\,} \over \hbar} \pmod{\hbar}} $
 alone},  rather than on  $ \rho \, $.

\bigskip
 \bigskip

\bigskip

\end{document}